\documentclass{amsart}
\usepackage{amssymb, graphics, color, enumitem}
\usepackage[all]{xy}

\usepackage{tikz}
\usepackage{mathrsfs}
\usepackage[utf8]{inputenc}
\usepackage[cyr]{aeguill}
\usepackage[a4paper,margin=2.2cm]{geometry}
\usepackage{comment}
\usepackage{hyperref}

\newtheorem{lemma}{Lemma}[subsection]
\newtheorem{proposition}[lemma]{Proposition}
\newtheorem{corollary}[lemma]{Corollary}
\newtheorem{theorem}[lemma]{Theorem}
\newtheorem{theorem_int}{Theorem}

\newtheorem{definition}[lemma]{Definition}
\newtheorem{remark}[lemma]{Remark}
\newtheorem{question}{Question}

\newtheorem{ex}[lemma]{Example}

\newtheorem{convention}[lemma]{Convension}

\newtheorem{assumption}[lemma]{Assumption}

\newcommand\bl{Bl}
\newcommand\epi{\epsilon}
\newcommand\epio{\epsilon}

\newcommand\ie{i\@.e\@. }

\newcommand\Pol{\mathrm P}
\newcommand\pa{ \partial}
\newcommand{\la}{\mathrm a}
\newcommand{\m}{\mathrm b}
\newcommand{\n}{\mathrm c}
\newcommand{\K}{\mathcal K}

\newcommand\bbC{\mathbb C}

\newcommand\bbG{\mathbb G}
\newcommand\bbH{\mathbb H}

\newcommand\bbN{\mathbb N}
\newcommand\bbP{\mathbb P}

\newcommand\bbR{\mathbb R}
\newcommand\bbS{\mathbb S}
\newcommand\bbT{\mathbb T}

\newcommand\bbZ{\mathbb Z}

\newcommand{\PP}{\mathbb P}

\newcommand\be{Bl}
\renewcommand\Re{\operatorname{Re}}

\usepackage{color}

\newcommand{\lrp}[1]{\left( {#1} \right)}

\newcommand\edp{\mathrm{x}_0}
\newcommand{\Sc}{\mathcal{Z}}

\newcommand\bX{\overline{X}}

\newcommand\hH{\widehat{H}}

\newcommand\hD{\widehat{D}}

\newcommand\CI{\mathcal{C}^{\infty}}

\newcommand\Diff{\operatorname{Diff}}
\newcommand\Ric{\operatorname{Rc}}
\newcommand\Hom{\operatorname{Hom}}

\newcommand\ord{\operatorname{ord}}
\newcommand\cC{\mathcal{C}}
\newcommand\cG{\mathcal{G}}

\newcommand\cA{\mathcal{A}}
\newcommand\cE{\mathcal{E}}
\newcommand\cD{\mathcal{D}}
\newcommand\chD{\widehat{\mathcal{D}}}
\newcommand\cF{\mathcal{F}}
\newcommand\cL{\mathcal{L}}
\newcommand\cR{\mathcal{R}}

\newcommand\cV{\mathcal{V}}
\newcommand\cU{\mathcal{U}}
\newcommand\cQ{\mathcal{Q}}

\newcommand\pr{\operatorname{pr}}

\newcommand\phg{\operatorname{phg}}
\newcommand\Spec{\operatorname{Spec}}
\newcommand\Id{\operatorname{Id}}
\newcommand\rank{\operatorname{rk}}

\newcommand\homega{\widehat{\omega}}

\newcommand\halpha{\hat{\alpha}}

\newcommand\bM{\overline{M}}

\newcommand\hP{\widehat{P}}

\renewcommand\sc{\operatorname{sc}}

\newcommand\mf{\operatorname{mf}}

\newcommand\cH{\mathcal{H}}

\newcommand\cZ{\mathcal{Z}}

\newcommand\ind{\operatorname{ind}}

\newcommand\cN{\mathcal{N}}
\newcommand\cM{\mathcal{M}}

\newcommand\hV{\widehat{V}}
\newcommand\cP{\mathcal{P}}

\newcommand\ALE{\operatorname{ALE}}

\newcommand\cB{\mathcal{B}}

\newcommand\cO{\mathcal{O}}
\newcommand\cS{\mathcal{S}}

\newcommand\Aut{\operatorname{Aut}}
\newcommand\ext{\operatorname{ext}}
\newcommand\Skew{\operatorname{skew}}
\newcommand\rf{\operatorname{rf}}
\newcommand\lf{\operatorname{lf}}
\newcommand\fb{\operatorname{bf}}
\newcommand\Euc{\operatorname{Euc}}
\newcommand\coker{\operatorname{coker}}
\newcommand\hM{\widehat{M}}
\newcommand\chM{\widehat{\mathcal{M}}}
\newcommand\gt{\mathfrak{t}}
\newcommand\cbl{\mathcal{B}\ell}
\newcommand\hbeta{\hat{\beta}}

\begin{document}
\title[Gluing construction of  $Z$-extremal metrics and  BHE $3$-folds]
{A gluing construction of $Z$-extremal K\"ahler metrics and new examples of Bismut Hermitian-Einstein $3$-folds}

\keywords{Complex manifolds, Special Hermitian metrics, K\"ahler geometry, supersymmetric supergravity IIB models}

\author{Vestislav Apostolov}
\address{D\'epartement de Math\'ematiques, Universit\'e du Qu\'ebec \`a Montr\'eal, and, Institute of Mathematics and Informatics, Bulgarian Academy of Sciences }
\email{apostolov.vestislav@uqam.ca}

\author{Abdellah Lahdili}
\address{D\'epartement de Math\'ematiques, Universit\'e du Qu\'ebec \`a Montr\'eal}
\email{lahdili.abdellah@gmail.com}

\author{Kuan-Hui Lee}
\address{Department of mathematics and statistic, McGill University}
\email{kuan-hui.lee@mcgill.ca}

\author{Fr\'ed\'eric Rochon}
\address{D\'epartement de Math\'ematiques, Universit\'e du Qu\'ebec \`a Montr\'eal}
\email{rochon.frederic@uqam.ca}

%\thanks{V.A. is  grateful to X.X. Chen, T. Collins, G. Grantcharov, J. Streets and S. Sun  for their interest and discussions. He was supported in part by the NSERC Discovery Grant RGPIN-2023-03316. K.-H. L.was supported in part by a CIRGET post-doctoral fellowship and a FRQ Team Grant. F.R. was supported in part by the NSERC Discovery Grant RGPIN-2025-04223.}

\begin{abstract} We present a  gluing construction for a class of 6th order geometric non-linear PDE's on a K\"ahler manifold or orbifold, arising in the study of Bismut Hermitian Einstein complex $3$-folds, the supersymmetric ${\rm AdS}_3 \times Y_7$ and ${\rm AdS}_2 \times Y_9$  solutions of type IIB and $D=11$ supergravity models in physics, and  in Dervan's Z-critical K\"ahler metrics. As applications, we obtain a desingularization theorem for orbifolds with crepant singularities admitting a solution of the PDE,  as well as a blowing-up of points theorem  for smooth complex $3$-folds admitting such solutions. These results yield new examples of regular Bismut Hermitian Einstein structures  on the smooth manifolds $3(S^2\times S^4)\sharp 4(S^3\times S^3)$, $5(S^2\times S^4)\sharp 6(S^3\times S^3)$ and $S^1\times 9(S^2\times S^3)$, as well as infinite topological types of smooth compact $7$-dimensional manifolds $Y_7$,  providing new solutions to the supersymmetric  ${\rm AdS}_3 \times Y_7$  type IIB  and ${\rm AdS}_2 \times T^2 \times Y^7$ type $D=11$ supergravity models.

\end{abstract}

%\subjectclass{53C21, 53C25, 53C55}

\maketitle

\tableofcontents

\numberwithin{equation}{section}

\section{Introduction}

\subsection{Motivation} Non-K\"ahler,  Calabi--Yau type Hermitian  manifolds are subject of intense recent interest in mathematical physics and complex geometry (see e.g.  \cite{fernandez2014non, finosurvey, JFS,garciafern2018canonical,ivanov2010heterotic,phong2019geometric,picard2024strominger,st-geom, tosatti2015non}).  A prominent special class consists of Hermitian manifolds  whose fundamental $2$-form is $\partial \bar\partial$-closed and  whose {Bismut--Ricci form} is zero: such Hermitian structures are known in physics literature as \emph{Calabi-Yau geometries with strong torsion}~\cite{StromingerSST}  and in mathematics literature as \emph{Bismut--Hermitian--Einstein} (BHE for short) structures~\cite{JFS, GRFBook,ABLS}. They solve the Hermite--Einstein equations on a certain holomorphic vector bundle associated to the Hermitian manifold~\cite{JFS}, arise as fixed points of the pluriclosed flow~\cite{PCF,PCFReg},  and are also related to solutions of the supersymmetric ${\rm AdS}_3 \times Y_7$ and ${\rm AdS}_2 \times Y_9$ of type IIB  and type D=11 supergravity models introduced in ~\cite{CGMS, GK}. The main motivation of this article is to develop an existence theory for (non-K\"ahler) compact BHE  Hermitian manifolds in complex dimension $3$, as a sequel to~\cite{ABLS, ALL2026}  where the main geometric features of this class of manifolds have been obtained.

\smallskip
It was established in \cite{ABLS} that any compact, non-K\"ahler BHE manifold $(N, J_N, g_N)$ of complex dimension $m+1$ admits a pair of commuting Killing fields $\{V, W\}$ of unit norm, defining a balanced transversal Hermitian structure of complex dimension $m$, which satisfies a number of geometric relations. Conversely, any balanced complex $m$-dimensional manifold $(M, g, J)$ verifying these relations locally gives rise to an $(m+1)$-dimensional BHE manifold, obtained as a principal $2$-dimensional torus bundle over $M$. This correspondence can be made global if we assume that  the Killing fields $\{V, W\}$ generate a $2$-dimensional torus $T^2$ inside the isometry group of $(N, g_N)$, in which case the transversal geometry descends to  $M:=N/T^2$,  defining a complex $m$-dimensional balanced orbifold. We refer to such a situation as  a \emph{quasi regular BHE structure}. We say that the BHE structure is \emph{regular} if, furthermore, $M$ is smooth, i.e. the $T^2$-action is free.

When $m=1,2$, the above correspondence specializes to K\"ahler geometries on $M$. In the case $m=1$, Gauduchon--Ivanov~\cite{GauduchonIvanov} showed that $M$ must be a Riemann surface with positive  constant Gauss curvature, thus leading to a remarkable rigidity result for compact non-K\"ahler BHE complex surfaces: 
\begin{theorem_int}\label{thm:m=2}\cite{GauduchonIvanov} Suppose $(N, J_N, g_N)$ is a compact non-K\"ahler BHE complex surface. Then $(N, J_N, g_N)$  is a Hopf surface isometrically covered by $\bbR \times S^3$.
\end{theorem_int}
When $m=2$, the results in \cite{ABLS} identify the orbifold K\"ahler geometry in the quasi-regular case,  and further reduce the existence of such geometries to solving a PDE problem on $M$, reminiscent of the famous Calabi problem in K\"ahler geometry: it seeks for a K\"ahler metric $g_{\varphi}$ with K\"ahler  form $\omega_{\varphi}= \omega_0 + dd^c\varphi$ in a given K\"ahler deRham cohomology class $2\pi \alpha:=[\omega_0] \in H^{1,1}(M, \bbR)$,  which satisfies  
\begin{equation}\label{PDE-0}
\Sc^{\beta}(\omega_{\varphi}):=\Delta_{{\varphi}}R_{\varphi} + \frac{R_{\varphi}^2}{2} -\|{\rm Rc}_{\varphi}\|_{{\varphi}}^2-\|\theta_{\varphi}\|_{\varphi}^2 =0, \qquad R_{\varphi}>0,\end{equation}
where $R_{\varphi}$, ${\rm Rc}_{\varphi}$, $\Delta_{{\varphi}}$ and $\| \cdot \|_{{\varphi}}$ denote, respectively, the scalar curvature, Ricci tensor, Laplacian,  and tensorial norm of $g_{\varphi}$, and $\theta_{\varphi}$ denotes the harmonic representative with respect to $g_{\varphi}$ of a given deRham class $2\pi\beta \in H^{1,1}(M, \bbR)$, orthogonal to $\alpha$ with respect to the global cup product of $H^2(M, \bbR)$. 
Conversely, given an orbifold K\"ahler surface $M$ solving \eqref{PDE-0} and a smooth manifold $N$ which is a principal $T^2$ orbifold bundle over $M$ whose Euler class belongs to ${\rm span}_{\bbR}\{c_1(M), \beta\}$, the construction from \cite{ABLS} allows one to obtain a quasi regular  BHE structure on $N$. If, furthermore,  $M$ is a \emph{smooth} K\"ahler surface, we obtain a \emph{regular} BHE structure on $N$.

\bigskip
Solutions of \eqref{PDE-0} are still  scarce. When $M$ is a \emph{smooth} compact  K\"ahler surface, prior to this work, the only solutions known to us appear on $M= \bbP^1\times \bbP^1$  with $(\omega, \theta_{\omega})$ being suitable combinations of Fubini--Study metrics on the factors, and  on $M=\bbP(E) \to C$  with $C$ being an elliptic curve  and $E$ a polystable rank $2$ vector bundle over $C$, in which case $(\omega, \theta_{\omega})$ again correspond to forms which decompose as a linear combination of the flat metric on $\bbC$ and the Fubini-Study metric on $\bbP^1$ when pulled back to the universal cover $\bbC \times \bbP^1$ of $\bbP(E)$. These special solutions underline the so-called \emph{Samelson} BHE locally homogeneous 3-folds, covered respectively by the Lie groups
\begin{equation}\label{samelson}
S^3 \times S^3, \qquad \bbR^3\times S^3.
\end{equation}
Furthermore, in \cite{ALL2026}, a family of orbifold K\"ahler surfaces satisfying \eqref{PDE-0} were constructed, leading to the following result
\begin{theorem_int}\label{thm:previous}\cite{ABLS,ALL2026} Suppose $(N, J_N, g_N)$ is a compact non-K\"ahler BHE complex $3$-fold.
\begin{enumerate}
\item[(a)] The complex dimension $h^{1,1}_{\rm BC}(N)$ of Bott--Chern cohomology group $H^{1,1}_{\rm BC}(N, \bbC)$ is at least $2$-dimensional. 
\item[(b)] If $(N, J_N, g_N)$ is \emph{regular} and $h^{1,1}_{\rm BC}(N)=2$, then $(N, J_N, g_N)$ locally homogeneous and is covered by the Samelson's geometries \eqref{samelson}.
\item[(c)] There are (non-regular) quasi-regular,  non-K\"ahler BHE structures on $S^3\times S^3$ and $S^1\times S^2 \times S^3$, such that  $h^{1,1}(N)=2$ but which are not locally homogeneous.
\end{enumerate}
\end{theorem_int}
The above result disproves the rigidity phenomenon of Theorem~\ref{thm:m=2} for BHE 3-folds. It, however,  leaves open the possibility that the Samelson geometries are the only \emph{regular} BHE $3$-folds. 
\begin{question}\label{q:main} Are there \emph{regular} compact non-K\"ahler BHE $3$-folds which are not isometrically covered by the Samelson geometries \eqref{samelson}?
\end{question}
%By the correspondence in \cite{ABLS} and Theorem~\ref{thm:previous}-(b), Question~\ref{q:main} is equivalent to ask whether or not there exists a smooth  compact K\"ahler surface $M$ with second Betti number $b_2(M) \geq 3$, which admits a K\"ahler metric solving \eqref{PDE-0}. 
The main motivation of this work is to give a positive answer to  Question~\ref{q:main}, by establishing the following existence result.
\begin{theorem_int}\label{thm:main} The following smooth $6$-manifolds admit non-K\"ahler regular BHE structures:
\begin{itemize}
\item[(a)]  $N=3(S^2\times S^4) \sharp 4(S^3 \times S^3)$ with $h^{1,1}_{\rm BC}(N)=6$;  
\item[(b)] $N=5(S^2\times S^4) \sharp 6(S^3 \times S^3)$ with $h^{1,1}_{\rm BC}(N)=8$;  
%\item[(c)] $N=6(S^2\times S^4) \sharp 7(S^3 \times S^3)$ with $h^{1,1}_{\rm BC}(N)=9$
\item[(c)] $N=S^1 \times 9(S^2 \times S^3)$ with $h^{1,1}_{\rm BC}(N)=10$. 
%In particular, such structures exist on  $N=3(S^2\times S^4) \sharp 4(S^3 \times S^3)$.
%\item[(b)] The smooth manifold $N=S^1 \times \sharp 9(S^2 \times S^3)$ admits regular BHE structures with $h^{1,1}_{\rm BC}(N)=10$.
\end{itemize}
\end{theorem_int}
In the above statement and henceforth, $kN' \sharp \ell N''$ denotes the (smooth) connected sum of $k$-copies
of the smooth manifold $N'$ with $\ell$-copies of the smooth manifold $N''$.

\bigskip
Recently, it was shown in \cite[Theorem 1.3]{streets-topo} that the universal cover of a compact regular non-K\"ahler BHE 3-fold must be one of the following $6$-manifolds:
\begin{equation*}\label{eq:streets}
(k-1)(S^2 \times S^4)\sharp k (S^3\times S^3), \, 1\leq k \leq 8, \qquad \bbR\times 9(S^2\times S^3), \qquad  \bbR^3\times S^3.  
\end{equation*}
In view of  Theorem~\ref{thm:main} and Samelson's examples \eqref{samelson}, it remains an open problem whether or not $N=(k-1)(S^2 \times S^4)\sharp k (S^3\times S^3)$ does admit regular BHE structures for $k=2,3, 5, 7$.

\subsection{General Approach} We shall study the PDE \eqref{PDE-0} in arbitrary complex dimensions $m \geq 2$, independent of its relevance to the BHE construction when $m=2$.
It was observed in \cite{ALL2026} that on a given compact K\"ahler manifold or orbifold $M$ of complex dimension $m\geq 2$, the geometric quantity $\Sc^{\beta}$ in \eqref{PDE-0} has an infinite dimensional momentum map interpretation, similar to the Donaldson--Fujiki setup~\cite{donaldson-moment, fujiki-moment} for the scalar curvature of a K\"ahler manifold. 
One of the consequences of this discovery led to a natural 
extension of \eqref{PDE-0},  similar to Calabi's introduction of the notion of extremal K\"ahler metrics as a generalization of constant scalar curvature K\"ahler metrics,   removing the obstructions coming from the relevant Futaki invariant~\cite{ALL2026}. Indeed, following \cite{ALL2026}, for given real $(1,1)$-deRham classes $\alpha, \beta \in H^{1,1}(M, \bbR)$ on $M$ such that 
\[ \alpha >0,  \qquad  \beta \cdot \alpha^{m-1}[M]=0,\] and a fixed maximal compact torus $\bbT$ in the group ${\Aut}_{\rm red}(M)$ of \emph{reduced automorphisms} of $M$, one wants to find a $\bbT$-invariant K\"ahler metric $\omega_{\varphi}=\omega_0 + dd^c \varphi$  solving
\begin{equation}\label{PDE}
\Sc^{\beta}(\omega_{\varphi}) = c_{\alpha, \beta} + \langle \xi^{\rm ext}_{\alpha, \beta}, \mu_{\varphi}\rangle,
\end{equation}
where $c_{\alpha, \beta}:= 2m(m+1)\left(\frac{(c_1^2(M) + \beta^2)\cdot \alpha^{m-2}}{\alpha^{m}}\right)$ is a topological constant, $\xi^{\rm ext}_{\alpha, \beta} \in {\rm Lie}(\bbT)$ is  determined by $(\alpha, \beta, \bbT)$,  and $\mu_{\varphi} := \mu_{\omega_0} + d^c \varphi$ is the normalized $\bbT$-momentum map of $\omega_{\varphi}$ of zero mean. Solutions to \eqref{PDE-0} appear when \eqref{PDE}  has a solution  and the a priori data $(c_{\alpha, \beta}, \xi_{\alpha, \beta}^{\rm ex})$ associated to $(\alpha, \beta)$ is zero. 
%The extension \eqref{PDE} is motivated from similar considerations which  led Calabi to introduce the notion of extremal K\"ahler metrics as a generalization of constant scalar curvature K\"ahler metrics: it allows to remove the obstructions coming from the relevant Futaki invariant~\cite{ALL2026}.   
In our geometric situation,  \eqref{PDE-0} would thus correspond to the case of K\"ahler metrics of zero scalar curvature in the Calabi setup. 

We shall refer in this paper to the quantity $\Sc^{\beta}(\omega)$ as the \emph{$\Sc^{\beta}$-scalar curvature} of $\omega$,  and to a solution of \eqref{PDE} as a \emph{$\Sc^{\beta}$-extremal} K\"ahler metric. Notice that \eqref{PDE} is a non-linear 6th-order PDE with respect to the $\bbT$-invariant K\"ahler potential $\varphi$, with an additional pseudodifferential term corresponding to the quantity $\|\theta_{\varphi}\|^2_{\varphi}$. From an analytical viewpoint, this is a harder geometric PDE to solve than the  Calabi problem. When the pseudodifferential term vanishes, K\"ahler metrics with constant $\Sc^0$-scalar curvature have been already studied in K\"ahler geometry,  in the framework of Fano manifolds. This is the case of $\alpha = c_1(M)$ and $\beta=0$ when the  solutions to \eqref{PDE} are precisely the critical points of the energy functional ${E}_1$ introduced in \cite{Chen-Tian1},   and further studied in \cite{Chen-Wang, Song-Weinkove}. Notice that a K\"ahler--Einstein metric in $c_1(M)$ is an example of $\cZ^0$-extremal metric.  K\"ahler metrics with constant $\Sc^0$-scalar curvature also appear in the recent works~\cite{Dervan, Dervan-Hallem} as a special case of a more general notion of \emph{$Z$-critical K\"ahler metrics}. In particular, a suitable stability condition, similar to K-stability in the constant scalar curvature case, is introduced in~\cite{Dervan} (see also \cite[Appendix A]{ALL2026}) and is  related to the existence problem \eqref{PDE}. Finally, solutions of \eqref{PDE} with $\beta=0$ and $c_{\alpha, \beta}=0= \xi_{\alpha, \beta}^{\rm ex}$ have been studied in complex dimensions $m=3,4$ in the framework of supersymmetric ${\rm AdS}_3 \times Y_7$  and ${\rm AdS}_2 \times Y_9$ of type IIB  and type D=11 supergravity~\cite{GK,CGMS}, and in complex dimensions $m=2,3$ in the framework of $\nabla$-Einstein Sasaki manifolds~\cite{fino-et-al}. In this special case (i.e. $\beta=0$ and $c_{\alpha,0}=0=\xi_{\alpha, 0}^{\rm ext}$),  the PDE \eqref{PDE} is also referred to as  the \emph{Box equation}. Therefore, besides the solutions of \eqref{PDE-0} when $m=2$ underlying the existence results for BHE structures of Theorem~\ref{thm:main}, the  general existence results for \eqref{PDE}  we present in this paper are applicable to all these related geometric problems.

\subsection{Desingularization and Blowing-up Theorems} Thus motivated, our main technical result in this paper is a general existence theorem for solutions of \eqref{PDE} defined on smooth compact K\"ahler manifolds $\widehat{M}$, obtained by a desingularization of a known solution on a (certain type of) compact  K\"ahler orbifold $M$. More precisely, 
we consider the following geometric setting.  
%\vesti{Can we introduce a family $\omega_{\la}$ of $\Sc^{\beta}$-extremal metrics with $|\la-\la_0|< \delta_0$,  and then claim existence on the resolution for $|\la-\la_0| <\delta_0'<\delta_0$ and $0< \epi < \epi_0$?} 
Let $(M, \omega)$ be a compact complex $m$-dimensional K\"ahler orbifold $(m\geq 2)$ with only isolated singular points $\{p_1, \ldots, p_\ell\}$ modelled on $\bbC^m/\Gamma_i$ ($\Gamma_i \subset U(m)$),  such that $\omega$ is a $\Sc^{\beta}$-extremal K\"ahler metric with positive scalar curvature $R_{\omega}>0$. Here we assume that $(M, \omega)$ is invariant under the Hamiltonian isometric action of a maximal torus $\bbT$ which, by \cite{ALL2026},  is also a maximal torus in the group $\Aut_{\rm red}(M)$. Notice that, by the usual definition of  smooth orbifold tensors (see e.g.~\cite{Boyer-Galicki-book}), each point $p_i$ is fixed under the $\bbT$-action and there is an induced \emph{linear} $\bbT$-action on $\bbC^m$ commuting with $\Gamma_i$. We assume, furthermore,  that $\bl: \widehat M \to M$ is a  $\bbT$-equivariant smooth K\"ahler resolution of $M$, i.e. $\widehat{M}$  is a smooth K\"ahler manifold with $\bbT \subset \Aut_{\rm red}(\widehat{M})$ and $\bl$ is a bimeromorphic proper $\bbT$-equivariant complex analytic morphism such that 
$\bl : \widehat{M}\setminus (E_1 \cup \cdots \cup E_{\ell}) \to M\setminus\{p_1, \ldots, p_{\ell}\}$ is a biholomorphism, where $E_i:= \bl^{-1}(p_i), \, i=1, \ldots, \ell$ denote the exceptional divisors. In particular,  for each $i=1, \ldots, \ell$, $\bbC^m/\Gamma_i$ admits a $\bbT$-equivariant K\"ahler resolution 
\[ \bl_i : X_i \to \bbC^m/\Gamma_i.\]
In the above setup, our main result gives a sufficent condition for $\widehat M$ to admit an $\Sc^{\hat\beta}$-extremal K\"ahler metric  in the K\"ahler classes {$\hat \alpha_{\epi}: = \bl^*[\omega] +\epi^2\left([\omega_1] +\cdots + [\omega_{\ell}]\right)$ for all sufficiently  small $\epi$, where $\hat\beta=\bl^*(\beta)$ is the pullback of $\beta$ to $\widehat M$ and the $(1,1)$-class $[\omega_i]$ on $X_i$ is seen as a $(1,1)$-class on the resolution $\widehat{M}$ (see Lemma~\ref{fs.6})}.
\begin{theorem_int}[Desingularization Theorem]\label{thm:desingularization} Let $(M, \omega, \bbT)$ be a $\Sc^{\beta}$-extremal compact K\"ahler orbifold  as above and $ \bl : \widehat{M}\to M$ a $\bbT$-equivariant K\"ahler resolution with induced local $\bbT$-equivariant K\"ahler resolutions 
$\bl_i : X_i \to \bbC^m/\Gamma_i$. %(a) Suppose that each $X_i$ admits a $\bbT$-invariant ALE K\"ahler metric with zero $\Sc^0$-scalar curvature and non-negative scalar curvature. Then  for all sufficiently small real numbers $\epi_i>0, \, i=1, \ldots, \ell$ the K\"ahler class $\hat \alpha:= \bl^*[\omega]- \sum_{i=1}^{\ell} \epi_i[E_i]$  on $\widehat M$ admits a $\bbT$-invariant  $\Sc^{\hat \beta}$-extremal K\"ahler metric.
Suppose that each  $X_i$ admits a $\mathbb{T}$-invariant asymptotically locally Euclidean (ALE) Ricci-flat K\"ahler metric $\omega_i$. Then  there exists $\epi_0>0$ such that 
for any  $0<\epi< \epi_0$, $\widehat M$ admits a   $\bbT$-invariant  $\Sc^{\hat \beta}$-extremal K\"ahler metric $\hat\omega_{\epsilon}$ 
with everywhere positive scalar curvature, belonging to the K\"ahler class  $\hat \alpha_{\epi}= \bl^*[\omega] + \epi^2\left(\sum_{i=1}^{\ell} [\omega_i]\right),$
where  $\hat \beta := \bl^* (\beta)$.  {Furthermore, the family $\hat \omega_{\epsilon}$ admits  a polyhomogeneous expansion as $\epsilon\searrow 0$  on a suitable manifold with corners.  In particular, on any compact set outside the exceptional divisor of $\bl: \widehat M \to M$, it $C^{\infty}$-converges  as $\epsilon\searrow 0$  to $\bl^*(\omega)$.} 
\end{theorem_int}
In the above statement, the ALE condition for the Ricci-flat K\"ahler metrics $\omega_i$ on $X_i$ is  introduced as in \cite{Arezzo-Pacard2006},  in terms of the existence of a K\"ahler potential $\Phi_i$ defined outside a compact subset of $X_i$
and satisfying there 
\[\omega_i =  dd^c \Phi_i, \qquad \Phi_i= \begin{cases}\bl_i^*\left(\frac{1}{4}\|z\|^2 +  \cO(\|z\|^{-2})\right) \, \text{when}\,  m=2,\\ \bl_i^*\left(\frac{1}{4}\|z\|^2  + a\|z\|^{4-2m} + \cO(\|z\|^{3-2m})\right) \, \text{when}\,  m\geq 3, \end{cases} \] 
where $a \in \bbR$ is a real constant and $\|z\|$ is the Euclidean norm of $\bbC^m$.   The existence of a Ricci-flat metric $\omega_i$ on $X_i$ with such  asymptotic is a non-trivial problem. It has been solved by D. Joyce~\cite{Joyce} in the case when $X_i$ is a \emph{crepant} resolution of $\bbC^m/\Gamma_i$: in complex dimensions $m=2,3$,  $\bbC^m/\Gamma_i$ admits a crepant resolution as soon as $\Gamma_i \subset SU(m)$ but in higher dimension it is not known in general which singularities admit crepant resolutions.

The method of proof of Theorem~\ref{thm:desingularization} that we develop in this paper allows us  to relax the Ricci-flat assumption on $\omega_i$ with the weaker condition that $\omega_i$ is a $\bbT$-invariant ALE K\"ahler metric with zero $\Sc^0$-scalar curvature and non-negative scalar  curvature, as well as to work with  weaker decays of the potential of $\omega_i$ at infinity, see Theorem~\ref{thm:general} below for the precise statement. However,  under these more general assumptions, we cannot conclude that the scalar curvature of the produced $\Sc^{\hat \beta}$-extremal metric on $\widehat{M}$ is everywhere positive: the Ricci-flat assumption for $(X_i, \omega_i)$ in Theorem~\ref{thm:desingularization} is an instance where we can effectively control the sign of the desingularized metrics over the exceptional divisors, see Corollary~\ref{c:Ricci-flat ALE}. Another such situation appears if the  resolutions  $\bl_i : X_i \to \bbC^m/\Gamma_i$ admit $\bbT$-invariant ALE K\"ahler metrics $\omega_i$ with zero $\Sc^0$-scalar curvature and \emph{positive} scalar curvature of a certain decay at infinity, see Corollary~\ref{psc.8}. While we do not know of any example of such ALE K\"ahler spaces in complex dimension $m=2$,  we use the Calabi ansatz to construct in Proposition~\ref{p:new ALE} explicit examples on  $\cO_{\bbP^{m-1}}(-(m-2))\to \bbC^m/\bbZ_{m-2}$ for each $m \geq 3$.  In complex dimension $m=3$, this leads to an asymptotically Euclidean (AE) model on $\cO_{\bbP^2}(-1) \to \bbC^3$  which, similarly to the Burns-Simanca metric~\cite{burns-simanca} in the zero scalar curvature case, can be used to obtain a general blowing up  result for $\Sc^{\beta}$-extremal K\"ahler $3$-folds.
{\begin{theorem_int}[Blowing-up Theorem]\label{thm:blow-up} Let $(M, \omega_\la), \la \in B(\la_0, \delta_0) \subset \bbR^r$ be a smooth family of $\Sc^{\beta_\la}$-constant K\"ahler metrics with positive scalar curvature defined on a smooth compact complex $3$-dimensional manifold $M$ with $\Aut_{\rm red}(M)=\{1\}$. Let  $p_1, \ldots, p_\ell\in M$  be a finite set of points. Then, there exists positive reals $\epi_0$ and $\delta_0' < \delta_0$ such that for any  $0< \epi<\epi_0$  and  any $\|\la-\la_0\|< \delta_0'$,  the blow-up $Bl_{p_1, \ldots, p_\ell}: \widehat{M} \to M$ of $M$ at the points $p_1, \ldots, p_\ell$  admits a smooth family $\hat \omega_{\la, \epi}$ of $\cZ^{\hat\beta_\la}$-constant K\"ahler metrics with positive scalar curvature  in the K\"ahler class {$\hat\alpha_{\la,\epi} := Bl_{p_1, \ldots, p_\ell}^*([\omega_{\la}]) - \epi^2\left(\sum_{i=1}^{\ell}[E_i]\right),$ where $[E_i] \in H^{1,1}(\widehat{M}, \bbR)$ are the Poincar\'e dual of the exceptional divisors  and $\hat\beta_{\la} = Bl_{p_1, \ldots, p_\ell}^*(\beta_{\la})$.} { 
Furthermore, the family $\hat \omega_{\la, \epsilon}$ admits  a polyhomogeneous expansion as $\epsilon\searrow 0$  on a suitable manifold with corners and, on any compact set outside $\cup_{i=1}^{\ell} E_i$, it $C^{\infty}$-converges  as $\epsilon\searrow 0$  to $\bl_{p_1,\ldots, p_{\ell}}^*(\omega_\la)$.} 
\end{theorem_int}}
%\fr{I strengthened the hypotheses to be able to prove the Theorem.  This stronger assumption does not compromise Theorem 7.  Another option would be to keep the torus action, but allow only one blow-up.  One can then iterate the Theorem to blow up many points and prove Theorem 7}\vesti{Can we weakened this to $\Aut_{\rm red}(M)=\{1\}$?}

\bigskip  Theorems~\ref{thm:desingularization} and \ref{thm:blow-up} above can be thought of as analogs of the celebrated Arezzo--Pacard theorem~\cite{Arezzo-Pacard2006} and its extensions~\cite{Arezzo-Pacard2009, arezzo-pacard-singer, rollin-singer1, rollin-singer2, sz1,sz2}, obtained in the case of Calabi extremal K\"ahler orbifolds. Our method of proof similarly uses gluing techniques. There are, however, some fundamental differences compared to these previous works. 

First, the geometric problem \eqref{PDE} studied here is much more rigid than the Calabi extremal  condition studied in the above mentioned works. To see this, suppose that $m=2$ and $\bl: \widehat{M} \to M$ is  an equivariant resolution of the $\Sc^{\beta}$-extremal K\"ahler orbifold $M$. If a family of $\Sc^{\hat \beta}$-extremal K\"ahler metrics $\hat \omega_{\epi}$ existed on $\widehat{M}$, and as in the construction of Theorem~\ref{thm:desingularization},  uniformly  
$C^{\infty}$ converged to $\bl^*(\omega)$  (as $\epi \to 0$) on each compact subset away from a divisor of $\widehat{M}$, then  (by integrating \eqref{PDE}) we would get
\begin{equation}\label{chern-constraint} c_1^2(\widehat{M}) = c_1^2(M).\end{equation}
One can not expect to desingularize any $\Sc^{\beta}$-extremal orbifold as the above topological condition imposes strong rigidity both on the admissible singularities and on their resolutions. Similarly, we cannot expect to extend Theorem~\ref{thm:blow-up} in complex dimension $2$, see also Remark~\ref{nbu.1} below.  From this point of view, Theorem~\ref{thm:desingularization} is closer in spirit to the Kummer type constructions of Calabi--Yau metrics on K3-surfaces~\cite{kobayashi-K3, lebrun-singer, donaldson-K3}.  

The second  novel feature of the $\Sc^{\beta}$-extremal problem is the appearance of the term $\|\theta_{\varphi}\|^2_{\varphi}$ in the PDE \eqref{PDE}: This leads us to consider pseudodifferential operators in the PDE analysis, which in turn introduces an additional analytic difficulty compared to the methods in \cite{Arezzo-Pacard2006}. We solve this by incorporating the  $b$-surgery calculus of Mazzeo-Melrose \cite{Mazzeo-MelroseETA}.

\subsection{Applications} We first apply  Theorem~\ref{thm:desingularization} to obtain many  smooth examples of $\Sc^{\beta}$-extremal K\"ahler complex surfaces.  Our general strategy is to take orbifold quotients of known smooth solutions and use Theorem~\ref{thm:desingularization} to produce new examples on the corresponding resolutions. To this end, similarly to the constructions of constant scalar K\"ahler metrics by Rollin--Singer in \cite{rollin-singer1, rollin-singer2},  one can consider orbifold ruled K\"ahler surfaces $M$ obtained as global isometric quotients of $\widetilde{M}=\widetilde{\Sigma}\times \bbP^1$ where $\widetilde{\Sigma}=\bbP^1, \bbC$ or $\bbH$  is a simply connected Riemann surface, and  $\widetilde{M}$ is endowed with a product K\"ahler metric $\tilde\omega=\m\omega_{\tilde\Sigma} + \n \omega_{\bbP^1}, \, \, \m>0, \, \n>0$ of  constant Gauss curvature metrics on each factor. Notice that $\tilde\omega$ satisfies $\Sc^{\tilde \beta}(\tilde\omega)=const$ with respect to the primitive  closed $(1,1)$-form $\tilde\theta_{\la,\m,\n}=\la(-\n \omega_{\widetilde{\Sigma}} + \m \omega_{\bbP^1})$ on $\widetilde{M}$. If $(\tilde \omega_{\m, \n}, \tilde\theta_{\la, \m, \n})$  descends to $M$, we thus obtain a $\Sc^{\beta}$-extremal compact orbifold $(M, \omega_{\m, \n}, \theta_{\la, \m, \n})$.  It is shown in \cite{rollin-singer2} that such orbifolds $M$ arise from stable parabolic structures on smooth ruled surfaces. In order to apply Theorem~\ref{thm:desingularization} to these orbifolds, we need to further make sure that all the orbifold singularities carry Ricci-flat K\"ahler ALE resolutions:  in complex dimension $2$,  these are well understood  by the work of Kronheimer~\cite{Kronheimer1989}.  
In our case, this amounts to consider parabolic structures with weights $1/2$ %and $1/4$, 
so that $M$ has only cyclic singularities $A_1= \bbC^2/\langle \pm {\rm Id} \rangle$. %$=\frac{1}{2}(1,1)$, $A_3=\frac{1}{4}(1, 3)$ and $\frac{1}{4}(1,1)$,   where we use the notation $\frac{1}{q}(s, r)$ to denote the quotient of $\bbC^2$ by the cyclic group $\Gamma \subset U(2)$ generated by ${\rm diag}\left(e^{\frac{2i\pi s}{q}}, e^{\frac{2i\pi r}{q}} \right)$, $q, r, s \in \bbN$. 
We can then apply Theorem~\ref{thm:desingularization} with the Calabi--Eguchi--Hanson metric~\cite{Calabi,EH} on $\cO_{\bbP^1}(-2)\to A_1$.
%for the $A_1$ singularities, the multi Eguchi-Hanson metric with $3$ poles for $A_3$, and the $\bbZ_2$-quotient of the Eguchi--Hanson metric in the case of $\frac{1}{4}(1, 1)$. 
Implementing this strategy to the smooth ruled surface $\pi_2: \bbP^1 \times \bbP^1\to \bbP^1$ (where $\pi_2$ is the projection to the second factor) we get an existence result for K\"ahler metrics with constant $\Sc^{\beta}$-scalar curvature and positive scalar curvature on a large family of rational complex surfaces. To state it,   
%let\[ \widehat{M}_k := \text{k-fold blow-up of} \,  \, \bbP^1 \times \bbP^2,\] which,  for $k\geq 1$,  can also be viewed as a $(k+1)$-fold blow-up of $\bbP^2$. \begin{theorem}\label{thm:secondary} The smooth manifold $\widehat{M}_k$  admits (families of) K\"ahler metrics  with constant $\Sc^{\hat\beta}$-scalar curvature and positive scalar curvature. For $k=4,6,7$, the sign of $\Sc^{\hat\beta}$ of these K\"ahler metrics is positive negative or zero. For $k=8$, the  sign of $\Sc^{\hat\beta}$ is negative or zero,  and for $k\geq 9$ it is negative.\end{theorem}
for any point $p\in \bbP^1\times \bbP^1$, we denote by $\bl^2_p(\bbP^1\times\bbP^1)$ the two step iterated blow up of $p$, consisting of first blowing up $p$ and then blowing up the point of intersection of the exceptional divisor with the direct image of the fiber of $\pi_2$ through $p$. If $p_1, \ldots, p_k \in \bbP^1\times \bbP^1$ are points with pairwise distinct projection on each factor, we denote by  \[\widehat{M}_{2k}:=\bl^2_{p_1, \ldots, p_k}(\bbP^1 \times \bbP^1)\] the two step iterated blow up of $\bbP^1\times \bbP^1$ at each of the points $p_1, \ldots, p_k$. Clearly $\widehat{M}_{2k}$ is a rational complex surface diffeomorphic to $\bbP^2  \sharp (2k+1)\bar \bbP^2$. 
\begin{theorem_int}\label{thm:secondary} Let $\{p_1, \ldots, p_k\} \in \bbP^1\times \bbP^1$ be any collection of  points such that the projections to either factor are pairwise distinct. For $k\geq 2$, the two step iterated blow-up $\widehat{M}_{2k}$  of $\bbP^1\times \bbP^1$ at $p_1, \ldots, p_k$  admits (families of) K\"ahler metrics with constant $\Sc^{\hat\beta}$-scalar curvature and positive scalar curvature. When $k=2,3$, the sign of $\Sc^{\hat\beta}$ of these metrics is positive negative or zero. For $k=4$, this sign is negative or zero,  and for $k\geq 5$ the sign of $\Sc^{\hat\beta}$ is negative.\end{theorem_int}
The examples of zero $\Sc^{\hat\beta}$-scalar curvature on $\widehat{M}_4$ and $\widehat{M}_6$  lead, via the results in \cite{ABLS}, to the existence of BHE structures on the manifolds described in parts (a), (b)  of Theorem~\ref{thm:main}, respectively, whereas the examples of zero $\cZ^{0}$-scalar curvature on $\widehat{M}_{8}$ lead to the BHE structures of Theorem~\ref{thm:main}-(c). Notice that the product of $\widehat{M}_{8}$  with a flat complex elliptic curve yields  new solutions (in complex dimension $3$) of the \emph{Box equation} of GK geometry studied in \cite{GK,CGMS}.

\bigskip 
Beyond the examples in Theorem~\ref{thm:secondary},  we can use Theorem~\ref{thm:desingularization} to obtain  smooth K\"ahler surfaces with  positive constant $\Sc^{0}$-scalar curvature and positive scalar curvature from Fano K\"ahler--Einstein orbifold surfaces with only crepant isolated singularities. The following  such orbifold examples are obtained in \cite{OSS2016}: 
\begin{itemize}
\item $M=\bbP^2/\bbZ_3$ where $\bbZ_3$ acts by 
$\zeta \cdot [z_0, z_1, z_2] = [z_0, \zeta z_1, \zeta^2 z_2]$ ($\zeta=e^{2\pi i/3}$). The resolution  $\widehat{M}$ is a toric  surface with $b_2(\widehat{M})=7$ which admits toric K\"ahler metrics with positive constant $\cZ^0$-scalar curvature and positive scalar curvature in suitable K\"ahler classes arbitrary close to the boundary of the K\"ahler cone;
\item $M=(Bl_{p_0,p_1,p_2}(\bbP^2))/\bbZ_2$ is the Cayley cubic, viewed as an orbifold quotient by the Cremona involution of the K\"ahler--Einstein del Pezzo surface obtained by blowing up 3 points of $\bbP^2$, see \cite{OSS2016}. The minimal resolution $\widehat{M}$ of $M$ is then a complex surface diffeomorphic to the blow-up of $\bbP^2$ at $6$ points, which has no non-trivial holomorphic vector fields  and admits K\"ahler metrics with positive constant $\cZ^0$-scalar curvature and positive scalar curvature in suitable K\"ahler classes arbitrary close to the boundary of the K\"ahler cone.
\end{itemize}

Similarly, Theorem~\ref{thm:blow-up} leads to 
\begin{theorem_int}\label{thm:boxed} (a) Let $S$ be a smooth K\"ahler-Einstein delPezzo complex surface with $\Aut_{\circ}(S)=1$,  and $\Sigma$  a compact complex curve of genus ${\bf g}\geq 2$.  Then the blow up of $M:= S \times \Sigma$ at any collection of points admits a K\"ahler metric with $\Sc^{0}=0$.

(b) Let $M=\bbP(E) \to \Sigma$ be the total space of a projectively flat   Hermitian rank $3$ vector bundle $E$ over a compact complex curve $\Sigma$ of genus ${\bf g} \geq 2$. Then the blow up of $M$ at any collection of points admits a K\"ahler metric with $\Sc^{0}=0$.
\end{theorem_int}
Theorem~\ref{thm:boxed} provides a robust existence result for complex $3$-dimensional solutions of the \emph{Box equation} (i.e. solutions of \eqref{PDE-0} with $\theta_{\varphi}=0$) appearing in the so-called  GK geometry \cite{GK,CGMS}; multiplying these examples by a compact elliptic curve further leads to infinitely many complex $4$-dimensional examples.

\subsection{Outlook} {  In this paper, we developed a gluing method,  based on \cite{Mazzeo-MelroseETA}, in order to obtain existence results for $\Sc^{\beta}$-extremal metrics. While the single surgery space of Mazzeo and Melrose has been used  before (see \cite{Benabida, Najafpour}) to tackle the existence of other special K\"ahler metrics (Ricci-flat,cscK, etc),  we go one step further by making use of the $b$-surgery \emph{pseudodifferential} calculus of \cite{Mazzeo-MelroseETA}.  This is particularly well-adapted to our present situation,  where the non-linear geometric PDE has a linearization which is a self-adjoint pseudodifferential operator, rather than a differential operator. On the other hand,  our method can possibly be used to extend our main results in a few  complementary directions, an endeavor which,  due to space limitation,  we postpone to future works. Below we list some of these directions.

\begin{itemize}

\item It is plausible that with some extra work our methods extend to \emph{partial} orbifold resolutions of a $\Sc^{\beta}$-extremal orbifold $M$. This can lead to new quasi-regular examples of BHE 3-folds, obtained for instance from the explicit orbifold construction in \cite{ALL2026} by resolving only the isolated $A_k$-singularities.
\item It is interesting to obtain a similar result to Theorem~\ref{thm:desingularization} but considering smoothings of $M$ instead of resolutions. This can  lead to further examples of regular BHE 3-folds as follows: the orbifold $M_0=(\bbP^1\times \bbP^1)/D_4$ with $D_4$ being the dihedral group of order $8$ admits an orbifold solution of \eqref{PDE-0} and has two $A_1$, one $A_3$ and one $\frac{1}{4}(1,1)$ T-singularity; by the results in \cite{Suvaina,OSS2016}, $M_0$ admits a global smoothing to a family of smooth K\"ahler surfaces $M_t, \, t\neq 0$ with $c_1^2(M_t)=1$. If we can show the existence of smooth solutions of \eqref{PDE-0} on $M_t$, this would lead to  regular examples of BHE $3$-folds on $N=6(S^2\times S^4) \sharp 7(S^3 \times S^3)$.
%\item Some of the new smooth solutions of \eqref{PDE-0} we obtained in this paper, as well as the orbifold solutions in \cite{ALL2026},  are  toric, so  the examples on $S^3\times S^3$ and in Theorem~\ref{thm:main}-(a) have a $T^4$-symmetry. The underlying complex structures $J_N$ are instances of LVMB complex manifolds. It is natural to study the moduli space of these BHE solutions within this class of manifolds.
\item In \cite{Arezzo-Pacard2009,arezzo-pacard-singer,sz1, sz2}, the existence results for  Calabi extremal K\"ahler metrics are obtained by relaxing the assumption $\Aut_{\rm red}(M)=\{1\}$ to certain conditions on the blown-up points and their stabilizer groups. 
%Furthermore, \cite{Arezzo-Pacard2006, Arezzo-Pacard2009} proves the existence of extremal K\"ahler metrics on blowups and resolutions in K\"ahler classes obtained by varying with independent scales the exceptional classes $[E_i]$. 
We expect similar generalizations to hold true for  Theorem~\ref{thm:blow-up}.
\item We emphasized in  Theorems~\ref{thm:blow-up}, \ref{thm:secondary} and ~\ref{thm:boxed} applications of our techniques in complex dimensions $2$ and $3$.  One can use Theorem~\ref{thm:general} combined with the ALE examples of solutions of \eqref{PDE-0} on $\cO_{\PP^{m-1}}(-(m-2)), \, m\geq 4$ presented in Proposition~\ref{p:new ALE}  and/or the Calabi Ricci-flat ALE examples on $\cO_{\PP^{m-1}}(-m)$ to generate higher dimensional $\Sc^{\hat \beta}$-extremal metrics out of $\Sc^{\beta}$-extremal orbifolds with $\frac{1}{(m-2)}(1, \cdots, 1)$ and/or $\frac{1}{m}(1, \cdots, 1)$ quotient singularities. 
\end{itemize}
}

\subsection{Outline} The paper is organized as follows: In Section~\ref{s:2}, we explain how to apply the desingularization Theorem~\ref{thm:desingularization} in complex dimension $m=2$ in order to obtain smooth $\Sc^{\beta}$-extremal K\"ahler surfaces from orbifold examples, establishing Theorem~\ref{thm:secondary}. %The main idea here is to generate orbifold examples with by taking finite quotients with only ADC singularities of the product $\Sigma\times \PP^1$ of Riemann surfaces, or of a delPezzo K\"ahler--Einstein complex surface $S$, following respectively constructions by Rollin--Singer~\cite{rollin-singer1, rollin-singer2}  and Odaka--Spotti--Sun~\cite{OSS2016} in the constant scalar curvature case. 
In Section~\ref{s:3}, we specialize the constructions in Section~\ref{s:2} to the case when $c_{\alpha,\beta}=0$,  and use further symmetry arguments to generate solutions with $\xi_{\alpha, \beta}^{\rm ext}=0$, thus leading to the existence results stated in Theorem~\ref{thm:main}. Section~\ref{s:4} is the technical core of the paper,  and can be read in isolation from the rest of the paper. It relies on the $b$-calculus of Melrose and the $b$-surgery calculus of Mazzeo-Melrose to obtain the general gluing Theorem~\ref{thm:general}. In this  statement,  the hypotheses for the ALE models are rather weak: we only require at this stage that each quotient singularity $X_i$ admits an ALE K\"ahler metric (in a weak sense formulated in Definition~\ref{ALE.1}) which solves \eqref{PDE-0} and which has a \emph{non-negative scalar curvature}. The conclusion is then that the resolution admits a solution to \eqref{PDE} without a sign control of  the scalar curvature over the exceptional divisor.  In Corollary~\ref{c:Ricci-flat ALE},  we improve the last point in the case when the $X_i$ are Ricci-flat, and thus establish Theorem~\ref{thm:desingularization}. Another geometric situation in which we control the positivity of the scalar curvature over the exceptional divisor in Theorem~\ref{thm:general} is formulated in Corollary~\ref{c:blow-up}: here we require instead of the vanishing of the Ricci tensor that each singularity $X_i$ admits an ALE solution of \eqref{PDE-0} with $\theta_{\varphi}=0$ and with \emph{strictly positive} scalar curvature with a given decay at infinity.  In the final Section~\ref{s:5}, we establish Theorem~\ref{thm:blow-up}  by using   Corollary~\ref{c:blow-up} and providing in Proposition~\ref{p:new ALE} an explicit construction of an AE solution of \eqref{PDE-0} with positive scalar curvature on $\cO_{\PP^2}(-1)$. 

\section{Examples of $\Sc^{\beta}$-extremal complex surfaces: deriving Theorem~\ref{thm:secondary} from Theorem~\ref{thm:desingularization}}\label{s:2} We explain in this section how to construct orbifolds $M$ with constant $\Sc^{\beta}$ scalar curvature to which Theorem~\ref{thm:desingularization} applies. 

As mentioned in the introduction, $M$ will be of the form 
\[ (\widetilde{\Sigma} \times \bbP^1)/G \qquad \text{with}  \qquad \widetilde{\Sigma}=\bbP^1, \, \bbC, \, \bbH,\]
where $G$ is a discrete group acting semi-freely by preserving the product structure on $\tilde{M}=\widetilde{\Sigma}\times \bbP^1$ and the product K\"ahler metric
\[ \tilde \omega_{\m, \n}= \m \omega_{\tilde{\Sigma}} + \n \omega_{\bbP^1}, \, \qquad \m>0, \,  \n>0, \]
where $\omega_{\tilde{\Sigma}}$ stands for the K\"ahler metric of constant scalar curvature $s_{\widetilde{\Sigma}}=2, 0$ or $-2$ on $\widetilde{\Sigma}$,  according to  as whether $\widetilde{\Sigma}= \bbP^1, \, \bbC, \, \bbH$. We further let
\[ \tilde \theta_{\la, \m, \n} := \la\left( -\m \omega_{\tilde{\Sigma}} + \n \omega_{\bbP^1}\right), \qquad \la \in \bbR.\] %\fr{I changed the roles of b and c in the definition of $\tilde \theta_{\la, \m, \n}$, otherwise the class is not primitive I think}
It is easy to check that $\tilde\theta_{\la, \m, \n}$ is a closed $\tilde\omega_{\m, \n}$-primitive $2$-form on $\widetilde M$,  and the $\Sc^{[\tilde\theta_{\la, \m, \n}]}$-scalar curvature (see \eqref{PDE-0}) and scalar curvature of $\tilde\omega_{\m, \n}$  are
\[ \Sc^{[\tilde\theta_{\la, \m, \n}]}(\tilde \omega_{\m, \n})=\frac{2s_{\widetilde{\Sigma}}}{\m\n} -4\la^2, \qquad R_{\tilde\omega_{\m, \n}}= \left(\frac{s_{\widetilde{\Sigma}}}{\m} + \frac{2}{\n}\right).\]
%\fr{I multiplied the RHS of the first equation by 2 since we are using the definition of $\Sc$ with $\nabla R$ instead of $\frac12\nabla R$}
We can thus define an orbifold K\"ahler structure $(\omega_{\m, \n.}, \theta_{\la, \m, \n})$ on $M$ with constant $\Sc^{\beta_{\la,\m, \n}}$ curvature and constant scalar curvature, where we have set $\beta_{\la, \m, \n} : = [\theta_{\la, \m, \n}]$ for the induced $(1,1)$-class on $M$.
Letting 
\[\n s_{\widetilde{\Sigma}} + 2\m >0, \]
we can make sure the scalar curvature $R_{\omega_{\m, \n}}>0$.

 \subsection{Constructing orbifolds via parabolic ruled surfaces} In \cite{rollin-singer1,rollin-singer2}, the authors explain how to construct orbifolds $M$ as above out of a \emph{stable parabolic structure} of a smooth ruled complex surface $\pi: \bbP(E) \to \Sigma$,  where $\Sigma$ is a compact complex curve and $E$ is a rank $2$ holomorphic vector bundle. The data introducing a \emph{parabolic structure} on $\bbP(E)$ consist of $k$ points $p_1=(\sigma_1, f_1), \ldots, p_k=(\sigma_k, f_k) \in \bbP(E)$ with $\sigma_i=\pi(p_i)\in \Sigma$ and $f_i \in \pi^{-1}(\sigma_i)$, such that $\sigma_i$ are pairwise distinct, and  associated positive rational numbers $v_i=r_i/q_i, \, \gcd(r_i, q_i)=1$. The parabolic structure $(\bbP(E); p_1, \ldots p_k;  v_1, \ldots, v_k)$ is then called \emph{stable} if for any holomorphic line sub-bundle $F\subset E$, we have
 \[ S_F^2 - \sum_{f_i \in S_F}v_i + \sum_{f_j\notin S_F}v_j >0, \qquad S_F := \bbP(F) \subset \bbP(E). \]
Using  a theorem of Metha--Seshadri~\cite{mehta-seshadri}, the following result is established in \cite[Thm.3.3.1]{rollin-singer1} and \cite[Prop.3.2.1]{rollin-singer2}
\begin{theorem}\label{thm:rollin-singer}\cite{rollin-singer1,rollin-singer2} Let $\pi: \bbP(E) \to \Sigma$ be a smooth ruled surface endowed with a stable parabolic structure $(p_1, \ldots, p_k; v_1, \ldots, v_k)$ where $v_j= r_j/q_j$, $r_j, q_j \in \bbN, \, \gcd(r_j, q_j)=1$. Suppose that the orbifold Riemann surface $\bar\Sigma$ obtained from $\Sigma$ by introducing an orbifold singularity of order $q_i$ at each $\sigma_i=\pi(p_i)$ is a good orbifold surface, i.e. is an  orbifold quotient of $\widetilde{\Sigma}=\bbP, \bbC, \bbH$. Then there exists a semi-free isometric group action of $G$ on  $\widetilde{\Sigma} \times \bbP^1$,  associated to an irreducible representation  $\rho: \pi_1^{orb}(\bar\Sigma) \to PU(2)$ of the orbifold fundamental group $\pi_1^{orb}(\bar\Sigma)$ of $\bar\Sigma$, such that the orbifold  complex surface $M=(\widetilde{\Sigma} \times \bbP^1)/G$ has $k$ pairs of isolated orbifold singular points  $(m_i^+, m_i^-)$ where $m_i^+$ is a cyclic singularity of type $\frac{1}{q_i}\left(1, r_i \right)$ and $m_i^-$ is a  cyclic singularity of type $\frac{1}{q_i}(1, q_i-r_i)$. Furthermore, the minimal resolution $\widehat{M}$ of $M$ is biholomorphic to an iterated blow-up of the points $p_i \in \bbP(E)$,  according to an algorithm described in \cite[Prop.2.1.1]{rollin-singer1}.
\end{theorem}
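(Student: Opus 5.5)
This is a cited result of Rollin--Singer, so I would reconstruct their argument rather than produce a new one. The argument has three steps: an analytic input, a construction of the group action, and an analysis of the singularities.

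\emph{Step 1: the analytic input.} Since $\bar\Sigma$ is a good orbifold, we may write $\bar\Sigma=\widetilde{\Sigma}/\Gamma$ with $\Gamma\cong\pi_1^{orb}(\bar\Sigma)$ acting isometrically on $\widetilde{\Sigma}$. The orbifold group contains elements $\gamma_i$ of order $q_i$, namely the loops around the $\sigma_i$. The Mehta--Seshadri theorem, in its projective/orbifold form, identifies stable parabolic bundles with weights $v_i$ with irreducible unitary representations. Concretely, it gives an irreducible $\rho:\pi_1^{orb}(\bar\Sigma)\to PU(2)$ such that $\rho(\gamma_i)$ is conjugate to $\mathrm{diag}(1,e^{2\pi i r_i/q_i})$. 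The flat $\bbP^1$-bundle $(\widetilde{\Sigma}\times\bbP^1)/\Gamma$ obtained from the diagonal action $\gamma\cdot(z,w)=(\gamma z,\rho(\gamma)w)$ then recovers $\bbP(E)$ away from the fibres over the $\sigma_i$, after a sequence of elementary transformations at the $p_i$. Set $G$ to be the image of this diagonal action. Because $\rho$ takes values in $PU(2)$, $G$ acts by isometries of every product metric $\tilde\omega_{\m,\n}$. Because $\Gamma$ acts freely away from the preimages of the $\sigma_i$, the action is semi-free with isolated fixed points.

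\emph{Step 2: the local models.} At a preimage $\tilde\sigma_i$ of $\sigma_i$, the stabilizer is the cyclic group generated by $\gamma_i$. It acts on the tangent space of $\widetilde{\Sigma}$ by $e^{2\pi i/q_i}$, and on the fibre $\bbP^1$ by the Möbius map $w\mapsto e^{2\pi i r_i/q_i}w$. This map has exactly two fixed points, $0$ and $\infty$. At these points the derivatives are $e^{\pm 2\pi i r_i/q_i}$. The quotient therefore has two isolated cyclic singularities, of types $\frac1{q_i}(1,r_i)$ and $\frac1{q_i}(1,q_i-r_i)$. Since $\gcd(r_i,q_i)=1$, no other point of the fibre is fixed by a nontrivial power of $\gamma_i$, so these are the only singular points over $\sigma_i$.

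\emph{Step 3: identifying the resolution.} I would resolve each $\frac1q(1,r)$ point minimally by Hirzebruch--Jung strings. The strings have self-intersections $-b_j$, determined by the continued fractions of $q/r$ and $q/(q-r)$. I would then compare with the iterated blow-up of $\bbP(E)$ at $p_i$ prescribed by the algorithm of \cite[Prop.2.1.1]{rollin-singer1}. The singular fibre of the resolution is the union of two HJ strings and the proper transform of the orbifold fibre, and it must be a chain of rational curves that contracts successively to a smooth $\bbP^1$ fibre. The duality between the continued fractions of $q/r$ and $q/(q-r)$ (Riemenschneider duality) guarantees exactly this. The resulting surface is a ruled surface birational to $\bbP(E)$ over $\Sigma$, with the contracted point at $f_i$.

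I expect the main obstacle to be the global matching in Step 3. One must show that the ruled surface obtained after contraction is exactly $\bbP(E)$ with the flag $f_i$, rather than an elementary transform of it. This amounts to the precise dictionary between parabolic weights/flags and the holonomy eigenline of $\rho(\gamma_i)$. It is also exactly where the stability condition, with $v_i$ rather than $1-v_i$, enters. I would handle it by tracking the local degree change of the elementary transformation at each $p_i$, and invoke Mehta--Seshadri as stated in \cite{rollin-singer1,rollin-singer2}.
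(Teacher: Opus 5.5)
Your Steps 1 and 2 correctly produce $G$ and $M$ with the stated singularities. The combinatorial part of Step 3 is also right: the proper transform of the orbifold fibre is a $(-1)$-curve, and the chain blows down to a smooth fibre.

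The gap is the last assertion of the theorem, which Step 3 leaves open. You defer the identification of the blown-down surface with $\bbP(E)$ and the points $p_i$ to ``the precise dictionary'' and to Mehta--Seshadri ``as stated in'' \cite{rollin-singer1,rollin-singer2}, i.e.\ to the result itself. The tool you propose cannot supply it. The blown-down surface agrees with $\bbP(E)$ over $\Sigma\setminus\{\sigma_1,\ldots,\sigma_k\}$, so the two differ by elementary transformations at \emph{some} points of the fibres $F_i=\pi^{-1}(\sigma_i)$. Counting degrees tells you neither at which points, nor that the contracted chain lands on $p_i$.

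Your diagnosis of the difficulty is also misplaced. The only freedom in blowing down the chain is which of its two end curves survives. If one choice yields $(\bbP(E),p_i)$, the other yields the elementary transform at $p_i$. With weight $1-v_i$ at the new point, that transform is again stable (the stability inequality is unchanged section by section) and corresponds to the same $\rho$. So stability plays no role at this stage; it is used only in Step 1, to obtain an irreducible $\rho$.

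What is missing is the local form of the Mehta--Seshadri correspondence. Near $\sigma_i$, set $t=z^{q_i}$ and let $e_1,e_2$ be the flat (multivalued) eigensections of $\rho(\gamma_i)$. Up to a twist, $E$ is the extension framed by $e_1$ and $t^{v_i}e_2=z^{r_i}e_2$; in this frame the flat metric is $\mathrm{diag}(1,|t|^{2v_i})$, and $f_i$ is the line of $t^{v_i}e_2$ at $t=0$. In your Step 2 coordinates (with orientation conventions chosen compatibly), this means the $\bbZ_{q_i}$-invariant map $(z,w)\mapsto(t,u)=(z^{q_i},z^{-r_i}w)$ extends the identification over the punctured disc to a birational map from $M$ to $\bbP(E)$ near $F_i$.

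Lifted to the toric resolutions of $m_i^+$ and $m_i^-$ (near $m_i^-$, $1/u=z^{r_i}w^{-1}$ is regular), this map is a morphism. It maps the end curve of the Hirzebruch--Jung string of $m_i^+$ that does not meet the orbifold fibre isomorphically onto $F_i$. It contracts every other curve of the singular fibre to $p_i=\{t=0,\,u=\infty\}$. Hence $\widehat M\to\bbP(E)$ is a composition of blow-downs centred over the $p_i$, which is the claim. The map $(z,w)\mapsto(z^{q_i},z^{q_i-r_i}w)$ gives the other blow-down, onto the elementary transform with weight $1-v_i$.

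The paper itself only cites \cite{rollin-singer1,rollin-singer2}. Alternatively, you can run your argument in the opposite direction. Start from the iterated blow-up of $\bbP(E)$ and contract the two Hirzebruch--Jung strings in each singular fibre to get an orbifold ruled surface over $\bar\Sigma$. Then use Mehta--Seshadri, via the parabolic/orbifold-bundle correspondence, to show it is the flat quotient $(\widetilde\Sigma\times\bbP^1)/G$. In that direction the biholomorphism with the iterated blow-up holds by construction, and the same local dictionary does the work of identifying the orbifold with the flat quotient.
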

In the above statement, by a  singularity of type $\frac{1}{q}(r, s)$ we mean $\bbC^2/\Gamma$ where $\Gamma$ is the cyclic subgroup of $U(1)\times U(1)$ generated by ${\rm diag}(e^{\frac{2\pi i r}{q}}, e^{\frac{2\pi i s}{q}} )$.

\subsection{Orbifold examples with isolated $A_1$  singularities.} We now apply Theorem~\ref{thm:rollin-singer} to the smooth ruled surface $\pi: \bbP(\cO_{\bbP^1} \oplus \cO_{\bbP^1})\to \bbP^1$, i.e. to $\pi_2:  \bbP^1 \times \bbP^1 \to \bbP^1$ where $\pi_2$ is the projection on the second factor. We thus take $k$ points $p_i=(f_i, \sigma_i) \in \bbP^1\times \bbP^1$ with $\sigma_i \neq \sigma_j$ when $i\neq j$,  and associate weights $v_i=1/2$. As any line sub-bundle of $\cO_{\bbP^1} \oplus \cO_{\bbP^1}$ is trivial, the stability condition is clearly satisfied provided that $k\geq 2$ and $f_i \neq f_j$ for $i\neq j$. The orbifold surface $\bar\Sigma$ in this case has orbifold points of order $2$, thus is a good orbifold, see Propositions~2.1.1 and 2.1.2 in \cite{rollin-singer2}. Furthermore, the associated orbifold $M= (\widetilde{\Sigma} \times \bbP^1)/G$ has $k$ pairs of cyclic orbifold points $A_1=\frac{1}{2}(1,1)$ to which Theorem~\ref{thm:desingularization} applies with the orbifold K\"ahler structure $(\omega_{\m, \n}, \theta_{\la, \m, \n})$  (assuming $\n s_{\widetilde \Sigma} + 2\m >0$) on $M$, and the Calabi--Eguchi--Hanson~\cite{Calabi,EH} K\"ahler Ricci-flat ALE metric on $\cO_{\bbP^1}(-2) \to A_1$. We notice that on the resulting smooth K\"ahler surface $\bl:\widehat{M}\to M$, the $2k$ $A_1$ singularities of $M$ are replaced with $(-2)$ rational curves $E_i,$ $ i=1, \ldots, 2k$, i.e. $\widehat{M}$ is biholomorphic to the minimal resolution $\widehat{M}_{2k}$ of $M$ in  Theorem~\ref{thm:rollin-singer}. Indeed, in our special case, the resolution is achieved by the two-step iterated blow up of each $p_i$  (as described in Theorem~\ref{thm:secondary}) which creates a string of $(-2)$-$ (-1)$-$(-2)$ curves for each pair of points $(m_i^+, m_i^-)$ on $M$.

\subsection{Proof of Theorem~\ref{thm:secondary}}
 By the discussion in the previous subsection and applying Theorem~\ref{thm:desingularization}, we have obtained  $4$ real parameter families of $\Sc^{\hat\beta_{\la, \m, \n}}$-extremal K\"ahler metrics $\hat \omega_{\la, \m, \n, \epi}$ on $\widehat{M}_{2k}$,  belonging to the  K\"ahler classes 
\[ \hat \alpha_{\m, \n, \epi} := \bl^*[\omega_{\m, \n}] - \epi^2 \sum_{i=1}^{2k}[E_i],\]
where $\hat \beta_{\la, \m, \n}= \bl^*[\theta_{\la,\m, \n}]$, 
$E_i$ are the $(-2)$-curves of the resolution and $[E_i]$ denote the Poincar\'e duals of $E_i$ in $H^2(\widehat{M}, \bbZ)$. {Notice that the second cohomology group of $\cO_{\PP^1}(-2)$ is one dimensional, so that the cohomology class $[\omega_i]$ of the Calabi--Eguchi--Hanson metric on $\cO_{\PP^1}(-2)$ is a negative multiple of $[E_i]$.}
In order to complete the proof of Theorem~\ref{thm:secondary}, it remains to show that $\Sc^{\hat\beta_{\la, \m, \n}}(\hat\omega_{\la, \m, \n, \epi})$ is constant and to determine its sign. To this end, we analyse  below the cases $k=2,3,4$ separately,  in the Examples~\ref{zero-scalar}, \ref{zero-scalar1} and \ref{elliptic},  providing  an alternative more explicit construction of $M$.

\bigskip

Suppose $k=2$: the orbifold Riemann surface $\bar\Sigma$ is then $\bbP^1$ with two singularities  of order $2$. This is a good orbifold $\bar\Sigma = \bbP^1/\bbZ_2$ where the action of $\bbZ_2$ on $\bbP^1$ is given in homogeneous coordinates by $[z_0, z_1] \to [z_0, -z_1]$. It is not hard to see that in this case $M$ is given by the following
\begin{ex}\label{zero-scalar} Let $M= (\bbP^1 \times \bbP^1)/\bbZ_2$ be the orbifold quotient  under  the  diagonal $\bbZ_2$  action
\[([z_0, z_1], [w_0, w_1]) \to  ([z_0, - z_1], [w_0, -w_1]).\]
$M$ is a toric complex orbifold surface with  4 isolated $A_1:=\bbC^2/\langle\pm {\rm Id}\rangle$ quotient singularities. The product  K\"ahler metric $\tilde\omega_{\m,\n}:= \m\omega_{\bbP^1} + \n \omega_{\bbP^1}$ $(\m>0, \n>0)$  and the corresponding harmonic primitive $(1,1)$-form $\tilde \beta_{\la, \m, \n}:= \la(-\m\omega_{\bbP^1} + \n\omega_{\bbP^1}), \, \la \in \bbR$  on $\bbP^1\times \bbP^1$  descend to $M$ and define a (toric) K\"ahler orbifold metric $\omega_{\m, \n}$ of constant $\Sc^{\beta_{\la,\m, \n}}$-scalar curvature. Let $\widehat{M}$ be the toric resolution of $M$,  obtained by replacing the $A_1$-singularities with 4 exceptional $(-2)$-curves or, equivalently,  seen as the toric blow-up of $\bbP^1\times \bbP^1$ at the four fixed points of the torus action.
By Theorem~\ref{thm:desingularization} we obtain $\Sc^{{\hat \beta}_{\la, \m, \n}}$-extremal toric K\"ahler metrics $\hat \omega_{\la, \m,\n, \epi}$ on $\widehat{M}$. Furthermore, using   the invariance of $(\widehat{M}, [\hat\alpha_{\m, \n, \epi}], [\hat\beta_{\la, \m, \n}])$ under the two reflectional $\bbZ_2$-symmetries induced by the permutations of $[z_0, z_1]$ and $[w_0,w_1]$-variables,   it follows that $\xi_{\hat\alpha_{\m, \n, \epi}, \hat \beta_{\la, \m, \n}}^{\ext}=0$,  whereas  \eqref{chern-constraint} yields that  $\Sc^{\hat \beta_{\la, \m, \n}}(\hat \omega_{\la, \m,\n,\epi})$ is  a  positive multiple of $\Sc^{\beta_{\la, \m, \n}}(\omega_{\m,\n})=\left(\frac{2}{\m \n} -4\la^2\right)$. In particular, according to the sign of $(1-\la^2 \m\n)$, we get toric K\"ahler metrics on $\widehat{M}$ with  positive scalar curvature and constant positive, zero or negative $\Sc^{\hat\beta_{\la,\m, \n}}(\hat\omega_{\la,\m, \n, \epi})$-scalar curvature. 
\end{ex}
We next analyse the case $k=3$. The orbifold Riemann surface $\bar\Sigma$ is now a $\bbP^1$ with 3 orbifold points of order $2$. One can realize (see \cite[Prop. 1.2.2] {rollin-singer2}) $\bar\Sigma = \bbP^1/D_2$ where $D_2$ is the dihedral group of order $4$ acting on $\bbP^1$. Using this, we can now describe $M$ as follows:
\begin{ex}\label{zero-scalar1} Let $M=(\bbP^1 \times \bbP^1)/D_2$ be the orbifold quotient under the diagonal action of the dihedral group $D_2$. Then $(\omega_{\m, \n}, \theta_{\la, \m, \n})$ defines a $\Sc^{\beta_{\la, \m, \n}}$-extremal orbifold metric with constant positive scalar curvature and $\Sc^{\beta_{\la, \m, \n}}(\omega_{\m, \n})= \left(\frac{2}{\m\n}-4\la^2 \right)$ on $M$; furthermore,  $M$ has 6 isolated $A_1$ singularities and  no non-trivial holomorphic vector fields~\cite[Prop.2.1.2]{rollin-singer2}. The resolution $\widehat{M}_6$ of $M$ has no non-zero holomorphic vector fields either. Theorem~\ref{thm:desingularization} then yields a family $\hat \omega_{\la,\m, \n, \epi}$ of K\"ahler metrics on $\widehat{M}$  which have constant $\Sc^{\hat\beta_{\la,\m, \n}}$-scalar curvature with the sign of $(1-\la^2\m\n)$ and positive scalar curvature.
\end{ex}
We now describe the case $k=4$. Then $\bar\Sigma$ is $\bbP^1$ with $4$ orbifold points of order $2$.  It follows that  $\bar\Sigma= \bbT^2/\bbZ_2$, where $\bbT^2:= \bbC/(\bbZ \oplus \sqrt{-1}\bbZ)$ and the $\bbZ_2$-action is defined on $\bbC$ by $z \to -z$. This $\bbZ_2$-action descends to $\bbT^2$ where it has 
$4$ fixed points and the quotient $\bbT^2/\bbZ_2$ is the pillow case.  We can now describe $M$ as follows.
\begin{ex}\label{elliptic} Let $M= (\bbT^2\times \bbP^1)/\bbZ_2$ be the quotient by the diagonal $\bbZ_2$-action $(z, [w_0, w_1]) \to (-z, [w_0, -w_1])$. The resulting orbifold $M$ has 8 isolated $A_1$-singularities and the pair $(\omega_{\m, \n}, \theta_{\la, \m, \n}), \m>0, \n>0$  (induced from  the product structure on  the universal cover $\bbC \times \bbP^1$) defines K\"ahler metrics  on $M$ of constant positive scalar curvature  and $\Sc^{\beta_{\la, \m, \n}}(\omega_{\m, \n}) = -4\la^2$. Furthermore, $\omega_{\m, \n}$ has an $S^1$ Hamiltonian isometry corresponding to the $S^1$-action on $\bbP^1$ which descends to $M$. By Theorem~\ref{thm:desingularization}, 
the  resolution $\widehat{M}$ of $M$  with $8$ copies of the Eguchi--Hanson ALE metric--- which is equivariantly biholomorphic to the minimal resolution $\widehat{M}_8$ by Theorem~\ref{thm:rollin-singer}---  admits a family of $S^1$-invariant $\Sc^{\hat\beta_{\la, \m, \n}}$-extremal K\"ahler metrics $\hat \omega_{\la, \m, \n, \epi}$ with positive scalar curvature. Using further the invariance of $(\widehat{M}_8, \hat\alpha=[\hat\omega_{\la,\m, \n, \epi}], \hat \beta= [\hat\theta_{\la, \m, \n}], S^1)$ by the $\bbZ_2$-symmetry corresponding to the reflection $[w_0, w_1] \to [w_1, w_0]$ on $\bbP^1$ (which in turn induces a reflection on the Lie algebra ${\rm Lie}(S^1)$), we have  $\xi^{\ext}_{\hat\alpha, \hat \beta}=0$, i.e. $\Sc^{\hat\beta}(\hat\omega_{\la, \m, \n, \epi})$ is constant. By \eqref{chern-constraint} this constant is proportional to $-\la^2$, i.e. can be either negative or zero. The latter case  occurs iff $\la=0$, showing that  the K\"ahler metrics $\hat{\omega}_{0, \m, \n, \epi}$ is a solution  \eqref{PDE-0} with $\hat\beta=0$. 
\end{ex}

\begin{remark}
Taking the product of $(\widehat{M}_8, \hat{\omega}_{0, \m, \n, \epi})$  with a flat elliptic curve $T^2$  we obtain complex $3$-dimensional solutions of the ``Box equation'' of \cite{CGMS, GK}.
\end{remark}

We finally analyze the case $k\geq 5$. In this case, the orbifold Euler characteristic $\chi^{orb}(\bar\Sigma)= 2- k/2 <0$, showing that $\bar\Sigma$ is a good orbifold covered by $\bbH$, i.e. $M= (\bbH\times \bbP^1)/\pi_1^{orb}(\bar \Sigma)$. As observed in  \cite[Thm.3.4.1]{rollin-singer1}, $M$ (and hence its minimal resolution $\widehat{M}_{2k}$), has no non-trivial holomorphic vector fields. Furthermore, using the product structure of $\bbH^2\times \bbP^1$, $M$ has induced K\"ahler metrics $\omega_{\m, \n}$  and corresponding closed primitive $(1,1)$-forms $\theta_{\la, \m, \n}$ with $R_{\omega_{\m, \n}}= \frac{2}{\m\n}(\m-\n)$ and $\Sc^{\beta_{\la, \m, \n}}(\omega_{\m, \n})= -\left(\frac{2}{\m\n} +4\right)<0$.  As $M$ has $2k$ $A_1$ singularities,  Theorem~\ref{thm:desingularization} yields K\"ahler metrics $\hat\omega_{\la, \m, \n, \epi}$ (with $\m>\n$)  on $\widehat{M}_{2k}$
with positive scalar curvatures and constant negative $\Sc^{\hat\beta_{\la, \m, \n}}$-scalar curvature. 

This concludes the proof of Theorem~\ref{thm:secondary}, modulo Theorem~\ref{thm:desingularization}. 
\hfill{$\square$}

\begin{remark} The connected component of identity $\Aut_\circ(Bl_p(\bbP^1 \times \bbP^1))$ of the automorphism group  of the blow-up of $\bbP^1 \times \bbP^1$ at a single point $p$ is not reductive, nor is the group $\Aut_\circ(\widehat{M}_2) \cong  \Aut_\circ(Bl_p(\bbP^1 \times \bbP^1))$ of the two-step iterated blow-up.
By the Lichnerowicz--Matsushima obstruction established in \cite{ALL2026}, $\widehat{M}_2$ cannot admit a K\"ahler metric with positive scalar curvature and constant $\Sc^{\beta}$-scalar curvature. From the view point 
of parabolic structures on $\bbP^1 \times \bbP^1$, notice that if such a structure is  defined by a single point, then it is unstable. Moreover,  the corresponding orbifold $\bar\Sigma$ is not a good orbifold in this case.
\end{remark} 

\subsection{$\Sc^{0}$-extremal K\"ahler surfaces from delPezzo orbi-surfaces} Here we provide examples of smooth K\"ahler surfaces with  positive scalar curvature and (positive) constant $\Sc^0$ scalar curvature,  obtained  as desingularizations of K\"ahler--Einstein orbifold surfaces with crepant singularities. These are examples of $Z$-critical metrics in the sense of \cite{Dervan}.

\begin{ex}  Let $M = \bbP^2/\bbZ_3$ where $\bbZ_3$ acts in homogeneous coordinates on $\bbP^2$ by  \[ \zeta_3 \cdot [z_0, z_1, z_2] = [z_0, \zeta_3 z_1, \zeta_3^2 z_2], \qquad \zeta_3^3=1.\] Then $M$ is a K\"ahler--Einstein orbifold with 3 isolated $A_2$ singularities  and an induced toric action. By Theorem~\ref{thm:desingularization}, applied with a toric multi-Eguchi-Hanson  metric on resolution of $A_2$~\cite{GibbonsHawking, Hitchin-inst},  the equivariant resolution $\widehat{M}$  of $M$ admits invariant (toric) $\Sc^0$-extremal metrics in the K\"ahler classes $\hat \alpha_{\epi}=\bl^*(c_1(M)) -\epi^2\left(\sum_{i=1}^6 E_i\right)$, where $E_i$ are the $(-2)$ exceptional curves of the resolution. As $\widehat{M}$ is a toric surface with $c_1^2(\widehat{M})=3$ which has a  $\bbZ_3$ rotational symmetry (corresponding to cyclically permuting $[z_0, z_1, z_2]$) preserving $\hat\alpha_{\epi}$, the extremal vector $\xi^{\ext}_{\hat\alpha_{\epi}, 0}=0$, i.e. the metrics have constant positive $\Sc^0$-scalar curvature. \end{ex}

\begin{ex} Let $M=(Bl_{p_0,p_1,p_2}(\bbP^2))/\bbZ_2$ be the Cayley cubic, viewed as an orbifold quotient by the Cremona involution of the K\"ahler--Einstein  delPezzo surface  obtained by blowing up 3 points of $\bbP^2$, see \cite{OSS2016}. $M$ is then a K\"ahler--Einstein (and hence  $\Sc^0$-extremal) orbifold delPezzo surface with trivial connected automorphism group and 4 isolated $A_1$ orbifold singularities. By Theorem~\ref{thm:desingularization}, the minimal  resolution  $\widehat{M} \cong \bbP^2 \sharp 6 \bar{\bbP}^2$ of  $M$, obtained by blowing the 4 singular points of $M$ and introducing $(-2)$ curves $E_i, \, i=1, \ldots, 4$,  admits a 1-parameter family of K\"ahler metrics with constant $\Sc^0$-scalar curvature in the K\"ahler class $\bl^*(c_1(M)) -\epi^2\left(\sum_{i=1}^4 [E_i]\right)$, where the parameter $\epi>0$ is small enough.
\end{ex}

\section{Examples of regular BHE 3-folds: deriving Theorem~\ref{thm:main} from Theorem~\ref{thm:secondary}}\label{s:3}

\subsection{Proof of Theorem~\ref{thm:main}}
By \cite{ABLS},  we can construct  regular non-K\"ahler BHE structures on the total space $N$ of a principal $T^2$-bundle over a smooth K\"ahler surface $(\widehat{M}, \hat\omega, \hat\beta)$  solving \eqref{PDE-0}, provided that 
the  Euler classes $\gamma_1, \gamma_2 \in H^2(\widehat{M}, \bbZ)$ belong to the $\bbR$-span of $c_1(\widehat{M})$ and $\hat \beta$. We next describe the smooth solutions of \eqref{PDE-0} on $\widehat{M}_{2k}$ given by Theorem~\ref{thm:secondary},  underlying the examples of Theorem~\ref{thm:main} via the above construction. 

\smallskip
In the case $k=2$, we take $\n=1, \m \in \bbN, \la=1/\m$,  and  
let $N$ be the principal $T^2$-bundle over $\widehat{M}_4$  with Euler classes  $\gamma_1=c_1(\widehat{M}_4), \, \gamma_2=\frac{1}{4\pi}[\hat\beta_{\m,1,1}] \in H^2(\widehat{M}_4, \bbZ)$. As explained in Example~\ref{zero-scalar}, we can think of $\widehat{M}_4$ as the toric blow-up of $\bbP^1\times \bbP^1$ at the $4$ distinct fixed points of the toric action, so by the blow-up formula 
\[ \gamma_1=2([A]+[B]) - [F_1] - [F_2] -[F_3] -[F_4], \qquad \gamma_2=-\m [A] + [B],\] where  $F_i$ are the $(-1)$-curves of $\widehat{M}_4$ (seen as the toric blow-up of $\bbP^1\times \bbP^1$) and  $[A], [B]$ stand for the pullbacks to $\widehat{M}_4$ of the generators of $H^2(\bbP^1\times \bbP^1, \bbZ)$ (given up to a factor of $\frac{1}{4\pi}$ by the cohomology classes of the Fubini--Study metrics on each factor).  It follows from the above description that $\gamma_1, \gamma_2$ split a factor $\bbZ^2$ in $H^2(\widehat{M}_4)\cong \bbZ^5$. This shows that $N$ is simply connected (see \cite[Prop.11.5]{topology-T2}) and, furthermore, 
by  \cite[Prop.12 \& Thm.13]{grantcharov2008calabi} (see also \cite[Thm.1.3]{streets-topo}),  $N\cong 3(S^2\times S^4) \sharp 4(S^3 \times S^3)$. This  yields Theorem~\ref{thm:main}~(a).

\smallskip
In the case $k=3$, we  take $\n=1, \m\in  \bbN, \la=1/\m$ and argue similarly. Notice that $\widehat{M}_6$ is diffeomorphic to the blow up $\widetilde{M}_6$ of $\bbP^1\times \bbP^1$ at $6$ distinct points, and the underlying complex structures on $(\bbP^1\times \bbP^1) \sharp 6 \bar{\bbP}^2$ are in the same connected component of almost complex structures. Thus, $c_1(\widehat{M}_6)= c_1(\widetilde{M}_6) \in H^2(\widehat{M}_6, \bbZ)$ and we can consider a principal $T^2$-bundle $N$ over $\widehat{M}_6$ with Euler classes 
\[ \gamma_1 = c_1(\widehat{M}_6)=2([A]+[B]) -\sum_{i=1}^6 [\tilde F_i], \qquad \gamma_2 =-\m [A] + [B],\]
where $\tilde{F_i}$ are the $(-1)$-curves of $\widetilde{M}_6$. Using \cite{grantcharov2008calabi, topology-T2, streets-topo} again, we conclude that $N\cong 5(S^2\times S^4) \sharp 6(S^3 \times S^3)$ and  Theorem~\ref{thm:main}~(b) follows.

\smallskip
Finally, we consider the case $k=4$ and $\la=0$. We then have a family of solutions $\hat\omega_{0,\m, \n, \epi}$ of \eqref{PDE-0} on $\widehat{M}_8$ with $\hat\beta=0$. As $\widehat{M}_8$ is simply connected, $c_1(\widehat{M}_8)$ is primitive in $H^2(\widehat{M}_8, \bbZ)$ and $c_1(\widehat{M}_8)=w_2(\widehat{M}_8) \, {\rm mod}  \, 2$, the principal $S^1$-bundle $Y$ over $\widehat{M}_8$ with Euler class $c_1(\widehat{M}_8)$ is 
diffeomorphic to $\sharp 9 (S^2\times S^3)$ (see e.g. \cite{topology-S1}). By \cite{ABLS},  
 $N=S^1\times 9(S^2 \times S^3)$  admits a family of  BHE structures and   Theorem~\ref{thm:main}~(c) follows.

\smallskip
To compute the dimension $h^{1,1}_{\rm BC}(N)$  of the Bott-Chern cohomology group $H^{1,1}_{\rm BC}(N, \bbC)$ in each case,  we use that $h^{1,1}_{\rm BC}(N) = b_2(\widehat{M}_{2k})=2 +2k$ (see \cite[Prop.2.21]{ABLS}).
\hfill{$\square$}
\section{A general gluing result and  Proof of Theorem~\ref{thm:desingularization}}\label{s:4}
\subsection{Analytic preliminaries: the $b$-calculus of Melrose} \label{b.0}

In this subsection, we will review the main definitions and properties of the $b$-calculus of Melrose \cite{MelroseAPS}.  This will be the occasion to adapt some of the results to the specific setting that we will need in this paper. In particular, we will extend the regularity result~\cite[Prop.~5.61]{MelroseAPS} for elliptic differential $b$-operators to the case of  a class of elliptic \emph{pseudodifferential} $b$-operators. In the interest of space, this review is not entirely self-contained.  We will in particular assume some familiarity of the reader with manifolds with corners, see e.g.\cite{Grieser,Melrose1992,MelroseAPS}  and \cite[\S~2]{HHM1995} for further details.

Let $W$ be a compact manifold with boundary $\pa W$.  We assume that $W$ is connected, but $\pa W$ may have several connected components.  Let $x\in\CI(W)$ be a boundary defining function for $W$, i.e. $x\ge 0$ on $W$, $x^{-1}(0)=\pa W$ and $dx$ is nowhere vanishing on $\pa W$.  The space of \emph{$b$-vector fields} on $W$ is the Lie subalgebra of vector fields given by 
$$
  \cV_b(W):= \{ \xi\in \CI(W;TW)\; | \; {\xi_{|_{\partial W}}} \; \mbox{is tangent to} \; \pa W\}.
$$ 
Near a point $p\in \pa W$, let $(x,y_1,\ldots, y_{\dim W-1})$ be coordinates with $p$ corresponding to the origin and $\pa W$ corresponding to the hypersurface $x=0$.  In these coordinates, a $b$-vector field takes the form
$$
    \xi= ax\frac{\pa}{\pa x} + \sum_{i=1}^{\dim W-1} b_i\frac{\pa }{\pa y_i}
$$
for some smooth functions $a$ and $b_1,\ldots, b_{\dim W-1}$.  In particular, $\cV_b(W)$ is a locally free projective $\CI(W)$-module of rank $\dim W$.  By the Serre-Swan theorem \cite{Swan1962}, there is a vector bundle ${}^{b}TW\to W$ on $W$, called the \emph{ $b$-tangent bundle}, which comes with a canonical identification
$$
   \CI(W;{}^{b}TW)= \cV_b(W).  
$$
This identification is in fact induced by a map of vector bundles $a_b: {}^{b}TW \to TW$ which can be seen as an anchor map giving  $({}^{b}TM,a_b)$ the structure of a Lie algebroid.  On the interior of $W$, the anchor map induces an isomorphism
\begin{equation}
   a_b: {}^{b}TW|_{W\setminus\pa W} \to T(W\setminus \pa W).
\label{b.1}\end{equation}
\begin{definition}   
A \emph{$b$-metric} is a Riemannian metric on $W\setminus\pa W$ which induces via \eqref{b.1} a bundle metric on ${}^{b}TW$. 
\label{b.2}\end{definition}
By \cite{ALN04, Bui}, any $b$-metric is complete of infinite volume with bounded geometry.  We will not assume that the corresponding bundle metric is \emph{smooth} all the way to the boundary, \ie that $g_b\in \CI(W; S^2({}^{b}T^*W))$ where ${}^{b}T^*W$ is the vector bundle dual to ${}^{b}TW$. Instead,  we will assume that $g_b$ is \emph{polyhomogeneous}, meaning
$$
    g_b\in \cA^{\cE}_{\phg}(W; S^2({}^bT^*W)= \cA_{\phg}^\cE(W)\otimes_{\CI(W)} \CI(W; S^{2}({}^bT^*W))
$$ 
for some index set $\cE$, where $\cA_{\phg}^{\cE}(W)$ denotes the space of smooth functions $f$ on $W\setminus \pa W$ admitting a polyhomogeneous expansion 
$$
     f\sim \sum_{(z,k)\in \cE} a_{\lambda,k} x^{z}(\log x)^{k}
$$
near the boundary in the sense of \cite[(5.67), (5.73)]{MelroseAPS}, where the coefficients $a_{\lambda,k}$ are smooth functions on $\pa W$.  

Using the convention that $\bbN$ is the set of nonnegative integers, recall from \cite[Definition~5.23]{MelroseAPS} that an \emph{index set} is a discrete subset of $\bbC\times \bbN$ such that 
\begin{gather*}
 \{(z_j,k_j)\}\subset \cE, \quad |z_j|+k_j\to \infty \quad \Longrightarrow \quad \Re z_j\to \infty; \\
 (z,k)\in \cE \quad \Longrightarrow \quad (z+j,\ell) \in \cE \quad \forall j\in \bbN, \; \forall \ell\in \{0,\ldots,k\}.
\end{gather*}
The second condition ensures in particular that $\cA^\cE_{\phg}(W)$ is a $\CI(W)$-module.  Given an index set $\cE$, we define its \emph{real part} by
$$
   \Re(\cE):= \{ (\Re(z),k)\; | \; (z,k)\in \cE\}\subset \bbR\times \bbN.
$$
We will say that an index set $\cE$ is real if $\Re(\cE)=\cE$.
Notice that the space $\bbR\times \bbN$ is totally ordered by the order relation
%\footnote{\color{blue} VA: is the sense of the second inequality correct? Should we write ``either .... or ....'' as logic dictates?}
$$
       (z_1,k_1)<(z_2,k_2) \; \Longleftrightarrow \; \mbox{either} \, z_1< z_2 \; \mbox{or else}  \; z_1=z_2 \; \mbox{and} \; k_1>k_2.
$$
For an index set $\cE$, we will use the notation $\Re(\cE)> (r,k)$ (or $\cE>(r,\ell)$ if $\cE$ is real) for some $(r,\ell)\in \bbR\times \bbN$ to mean that
$$
          (\Re z,k)> (r,\ell) \quad \forall (z,k)\in \cE.
$$
Similarly, we will write $\Re (\cE)\ge (r,\ell)$ (or $\cE\ge (r,\ell)$ if $\cE$ is real) for some $(r,\ell)\in \bbR\times \bbN$ if 
$$
 (\Re z,k)\ge  (r,\ell) \quad \forall (z,k)\in \cE.
$$
We will also use at times the simpler notation $z$ to denote $(z,0)\in \bbC\times \bbN$. Thus, $\Re(\cE)\ge 0$ will mean that $\Re(\cE)\ge (0,0)$.   

The space $\Diff^k_b(W)$ of \emph{differential $b$-operators of order $k$} corresponds to a finite linear combination of  differential operators generated by multiplication by elements of $\CI(W)$ and the composition of up to $k$ $b$-vector fields.  More generally, as explained in \cite[\S~2.4]{MelroseAPS} or \cite{ALN04}, given vector bundles $E_1\to W$ and $E_2\to W$, we can define the space {$\Diff^k_b(W;E_1,E_2)$} of differential $b$-operators acting on sections of $E_1$ and taking values in sections of $E_2$.  Replacing $\CI(W)$ by $\cA^\cE_{\phg}(W)$ for an index set $\cE$ such that $\Re \cE\ge 0$ yields more generally the space of $b$-differential operators $\Diff^{k}_{b,\cE}(W;E_1,E_2)$ of order $k$ with coefficients in 
$\cA^{\cE}_{\phg}(W;\Hom(E_1,E_2))$.

To construct good parametrices for differential $b$-operators, Melrose introduced in \cite{MelroseAPS} the notion of \emph{pseudodifferential $b$-operators}.  Their Schwartz kernels are defined on the \emph{$b$-double space}%~\footnote{\color{blue} V.A. maybe double b-space?} 
$$
   W_b^2:= [W^2; (\pa W)^2] \quad \mbox{with blow-down map} \quad \bl_b: W^2_b\to W^2
$$   
obtained by blowing up the corner $(\pa W)^2$ of $W^2$ in the sense of \cite[\S~4.1]{MelroseAPS}, see also \cite[\S~2.2]{HHM1995}.  Denote by $\fb=\fb(W):=\bl_b^{-1}((\pa W)^2)$ the new boundary hypersurface created by the blow-up of the corner and let 
 $\lf= \overline{\bl_b^{-1}(\pa W\times (W\setminus\pa W))}$ and $\rf= \overline{\bl_b^{-1}( (W\setminus\pa W)\times \pa W)}$ be the lifts of the boundary hypersurfaces $\pa W\times W$ and $W\times \pa W$ to $W^2_b$.  If $\pr_L: W^2\to W$ and $\pr_R: W^2\to W$ denote respectively the projections on the left and on the right factors, let $\pi_L:=\pr_L\circ\be_b$ and $\pi_R:= \pr_R\circ \be_b$ be the corresponding maps on $W^2_b$.  Let 
 $$
     {}^{b}\Omega(W)= |\Lambda^{\dim W}({}^{b}T^*W)|
 $$ 
be the bundle of $b$-densities on $W$ and set 
$$
   {}^{b}\Omega_R(W):= \pi_R^*({}^{b}\Omega(W)).
$$
Similarly, if $E_1$ and $E_2$  are vector bundles on $W$, we consider the vector bundle 
$$
  \Hom_b(E_1,E_2):= \pi_L^{*}E_2\otimes \pi_R^{*}E_1^*
$$
on $M^2_b$ and   let 
$$
   \cD_b:= \overline{\be_b^{-1}(\cD\setminus \pa \cD)}
$$
be the lift of the diagonal $\cD$ in $W^2$ to $W^2_b$. We then consider the space 
$$
    I^q(W^2_b; \cD_b; \Hom_b(E_1,E_2)\otimes {}^{b}\Omega_R(W))
$$
of conormal distributions of order $q$ with respect to $\cD_b$ in the sense of \cite[Definition~18.2.6]{Hormander3} taking values in the vector bundle $\Hom_b(E_1,E_2)\otimes {}^{b}\Omega_R(W)$.  Thus, away from $\cD_b$, this space corresponds to smooth sections of $\Hom_b(E_1,E_2)\otimes {}^{b}\Omega_R(W)$, while along $\cD_b$, it corresponds to distributions with singularities conormal to $\cD_b$.  

Recall from \cite{MelroseAPS, Melrose1992} that an \emph{index family} $\cE$ for $W^2_b$ is a triple of index sets $(\cE(\fb),\cE(\lf),\cE(\rf))$ to which we can associate the ring $\cA_{\phg}^{\cE}(W^2_b)$ of smooth functions on $W^2_b\setminus \pa W^2_b$ admitting  polyhomogeneous expansions at $\fb$, $\lf$ and $\rf$ respectively specified by $\cE(\fb),\cE(\lf)$ and $\cE(\rf)$.  We will say that the index family $\cE$ is \emph{real} if the corresponding index sets are real.  For an index family $\cE$ for $W^2_b$, set 
$$
    I^{q,\cE}_{\phg}(W^2_b; \cD_b; \Hom_b(E_1,E_2)\otimes {}^{b}\Omega_R(W)):= \cA^{\cE}_{\phg}(W^2_b)\otimes_{\CI(W^2_b)} I^q(W^2_b; \cD_b; \Hom_b(E_1,E_2)\otimes {}^{b}\Omega_R(W)).
$$
\begin{definition}
For $\cE$ an index family for $W^2_b$ and $E_1,E_2$ vector bundles on $W$, there is an associated space of pseudodifferential $b$-operators of order $q$ acting on sections of $E_1$ and taking values in sections of $E_2$,  defined by
$$
   \Psi^{q,\cE}_b(W;E_1,E_2):= I^{q,\cE}_{\phg}(W^2_b; \cD_b; \Hom_b(E_1,E_2)\otimes {}^{b}\Omega_R(W)).
$$
\label{b.3}\end{definition}
\begin{remark}
Compared to \cite{MelroseAPS} or \cite{MazzeoEdge}, in this definition we allow the singular part of the conormal distributions to have a polyhomogeneous expansion at $\fb$ instead of requiring that the singular conormal part has a smooth expansion at $\fb$.  This is to ensure that 
$$
    \Diff^m_{b,\cH}(W;E_1,E_2)\subset \Psi^{m,\cE}_b(W;E_1,E_2)
$$
for an index family $\cE$ such that $\cE(\fb)=\cH$. 
\label{b.4}\end{remark}

The pushforward theorem of Melrose \cite[Theorem~5]{Melrose1992} yields the following result.
\begin{proposition}
Let $\cE$ be an index family for $W^2_b$ and $\cF$ and index set for $W$.  If $\Re(\cE(\rf)+ \cF)>0$, then
$$
    A\sigma:= (\pi_L)_*(A\pi_R^*\sigma)
$$
is well-defined for $A\in \Psi^{q,\cE}_b(W;E_1,E_2)$ and $\sigma\in \cA^{\cF}_{\phg}(W;E_1)$ with $A\sigma\in \cA^{\cG}_{\phg}(W;E_2)$ for the index set
$$
    \cG:= \cE(\lf)\overline{\cup} (\cE(\fb)+\cF),
$$
where $\overline{\cup}$ denotes the extended union of index sets \cite[(5.120)]{MelroseAPS}.
\label{b.5}\end{proposition}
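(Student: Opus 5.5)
We will obtain Proposition~\ref{b.5} as a direct application of Melrose's pushforward theorem \cite[Theorem~5]{Melrose1992}, after expressing $A\sigma$ as the pushforward of a polyhomogeneous conormal density under the $b$-fibration $\pi_L: W^2_b\to W$.

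\emph{Step 1: the integrand.} Write $A=K\,\nu_R$, where $K\in I^{q,\cE}_{\phg}(W^2_b;\cD_b;\Hom_b(E_1,E_2))$ and $\nu_R$ is a smooth nonvanishing section of ${}^b\Omega_R(W)$. Since $\pi_R$ is a $b$-fibration, the pullback $\pi_R^*\sigma$ is polyhomogeneous on $W^2_b$. Its index sets are $\cF$ at $\fb$ and at $\rf$, and $\bbN$ (that is, smooth) at $\lf$, because $\pi_R^*x=\rho_{\fb}\rho_{\rf}$ up to a smooth positive factor. Multiplying index families, $A\pi_R^*\sigma$ is then a conormal distribution at $\cD_b$ with index sets $\cE(\fb)+\cF$ at $\fb$, $\cE(\lf)$ at $\lf$ and $\cE(\rf)+\cF$ at $\rf$, valued in $\pi_L^*E_2\otimes{}^b\Omega_R$. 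Polyhomogeneity of the singular part at $\fb$, allowed by Remark~\ref{b.4}, causes no trouble, since the index set at $\fb$ simply adds.

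\emph{Step 2: the conormal singularity.} The lifted diagonal $\cD_b$ is transversal to the fibres of $\pi_L$; indeed, $\pi_L$ restricted to $\cD_b$ is a diffeomorphism onto $W$. Hence fibre integration of a conormal distribution at $\cD_b$ yields a function that is smooth in the interior. Near $\fb$ we can use projective coordinates $(x,s=x'/x,y,y')$: the conormal singularity sits at $s=1$, $y=y'$, uniformly in $x$. Integrating in $(s,y')$ therefore produces something smooth in $y$ with the same behaviour in $x$ as the coefficients. So only the polyhomogeneous structure at the boundary faces needs to be tracked. Following Melrose, we split $K$ with a cutoff into a piece supported near $\cD_b$, handled as just described, and a piece that is smooth up to the diagonal.

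\emph{Step 3: the pushforward.} We rewrite ${}^b\Omega_R$ as $\pi_L^*({}^b\Omega(W))^{-1}\otimes{}^b\Omega(W^2_b)$, since $\pi_L^*{}^b\Omega\otimes\pi_R^*{}^b\Omega\cong{}^b\Omega(W^2_b)$. Then $A\pi_R^*\sigma$ is a polyhomogeneous $b$-density on $W^2_b$, twisted by a density pulled back from the base. The pushforward theorem applies to $\pi_L$ under one condition: integrability at the faces that $\pi_L$ maps into the interior of $W$. That face is $\rf$, so we need $\Re(\cE(\rf)+\cF)>0$, which is exactly our hypothesis. The exponents of $\pi_L$ are $\pi_L^*x=\rho_{\lf}\rho_{\fb}$, so both $\lf$ and $\fb$ map to $\pa W$ with exponent one. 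The theorem then gives the index set $\cE(\lf)\,\overline{\cup}\,(\cE(\fb)+\cF)$, with dividing by the base $b$-density preserving it. The extended union, and the log terms it can create, arise when exponents at $\lf$ and $\fb$ coincide.

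The main technical point, which we expect to be the real obstacle, is justifying Step 2 when the conormal part is merely polyhomogeneous at $\fb$ rather than smooth. We expect it to reduce to the smooth case by writing $K$ locally as a finite sum of terms $\rho_{\fb}^z(\log\rho_{\fb})^k$ times kernels smooth in $\rho_{\fb}$, plus a remainder with arbitrarily high vanishing order.
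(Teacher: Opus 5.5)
Your proposal is correct and follows the same route as the paper, which simply invokes Melrose's pushforward theorem (via \cite[(5.158)]{MelroseAPS}, \cite[Proposition~3.28]{MazzeoEdge} or \cite[Theorem~3.6]{KR1}) and notes that allowing a polyhomogeneous conormal part at $\fb$ (Remark~\ref{b.4}) does not affect the argument. You make explicit the steps the paper leaves to those references: the pullback under $\pi_R$, the density bookkeeping, the integrability condition at $\rf$, the unit exponents of $\pi_L$ at $\lf$ and $\fb$, and the separate treatment of the diagonal singularity.
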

\begin{proof}
This well-known result follows from the pushforward theorem, see for instance \cite[(5.158)]{MelroseAPS} or \cite[Proposition~3.28]{MazzeoEdge} (where $\cE(\fb)=\bbN$ and a different convention for densities is used),  or else \cite[Theorem~3.6]{KR1} when $M$ in the statement of that theorem is the manifold with boundary $W$.  The proof still works when we take into account the weaker condition we imposed on the conformal distributions as indicated in Remark~\ref{b.4}.
\end{proof}

Using the $b$-triple space %~\footnote{\color{blue} VA: Or triple $b$-space?} 
\cite[(5.1)]{Mazzeo-MelrosePhi}, the pushforward theorem of Melrose also yields the following composition result.
\begin{proposition}\label{b.6}
If $\cE$ and $\cF$ are two index families on $W^2_b$ such that 
$$
       \Re(\cE(\rf)+ \cF(\lf))>0,
$$
then
$$
      \Psi^{q,\cE}_b(W;E_2,E_3)\circ \Psi^{q',\cF}_b(W;E_1,E_2)\subset \Psi^{q+q',\cG}_{b}(W;E_1,E_3)
$$
for the index family $\cG$ given by
$$
\begin{aligned}
   \cG(\fb)&= (\cE(\fb)+\cF(\fb))\overline{\cup} (\cE(\lf)+ \cF(\rf)), \\
    \cG(\lf)&= (\cE(\fb)+\cF(\lf))\overline{\cup} \cE(\lf), \\
   \cG(\rf)&= (\cE(\rf)+\cF(\fb))\overline{\cup} \cF(\rf).
\end{aligned}   
$$
\end{proposition}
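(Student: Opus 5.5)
The plan is to realize the composition as a pushforward from the $b$-triple space $W^3_b$, following \cite[(5.1)]{Mazzeo-MelrosePhi} and the standard argument of \cite[\S~5]{MelroseAPS}. Recall that $W^3_b$ is obtained from $W^3$ by first blowing up the triple corner $(\pa W)^3$, and then the lifts of the three codimension-two corners $(\pa W)^2\times W$, $W\times(\pa W)^2$ and the middle one. There are three $b$-fibrations
$$
\pi_{S},\pi_{C},\pi_{F}: W^3_b\to W^2_b,
$$
lifting the projections of $W^3$ onto the left two, outer two and right two factors. These are transversal to the lifted diagonals. For $A\in\Psi^{q,\cE}_b(W;E_2,E_3)$ and $B\in \Psi^{q',\cF}_b(W;E_1,E_2)$, the Schwartz kernel of $A\circ B$ is then
$$
\kappa_{A\circ B}= (\pi_C)_*\bigl(\pi_S^*\kappa_A\cdot \pi_F^*\kappa_B\bigr),
$$
where the product of the pulled back densities is a $b$-density in the middle variable, after contracting $E_2$.

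The key steps, in order, are as follows.

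First, label the boundary hypersurfaces of $W^3_b$. Following Mazzeo--Melrose, these are $\fb_{T}$ (the front face from the triple corner), the three faces $\fb_{S},\fb_C,\fb_F$ from the blown up double corners, and the three original faces $\lf_T,\mf_T,\rf_T$ given by $\{x_i=0\}$ for $i=1,2,3$. Compute for each of $\pi_S,\pi_C,\pi_F$ which faces of $W^3_b$ are mapped onto which faces of $W^2_b$. For instance, $\pi_S^*$ sends $\fb$ to $\fb_T\cup\fb_S$, $\lf$ to $\lf_T\cup\fb_F$, and $\rf$ to $\mf_T\cup \fb_C$. The analogous lists hold for $\pi_F$, and $\pi_C$ maps the $b$-fibration structure accordingly.

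Second, use the pullback theorem to obtain the polyhomogeneous index sets of $\pi_S^*\kappa_A\cdot\pi_F^*\kappa_B$ at each face. For example, on $\fb_T$ the index set is $\cE(\fb)+\cF(\fb)$, on $\mf_T$ it is $\cE(\rf)+\cF(\lf)$, and on $\fb_C$ it is $\cE(\lf)+\cF(\rf)$.

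Third, handle the conormal singularities. These are conormal to the lifted diagonals, which meet transversally along the triple diagonal, and the pushforward along the fibres of $\pi_C$ is transversal to them. Hence the conormal order adds to $q+q'$, by the standard product and pushforward of conormal distributions (\cite[Thm.~5.3]{Melrose1992} combined with Hörmander's calculus). This works verbatim when the singular parts carry polyhomogeneous rather than smooth expansions at $\fb$, as allowed in Remark~\ref{b.4}, since these coefficients are carried along as parameters.

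Finally, apply Melrose's pushforward theorem \cite[Theorem~5]{Melrose1992} to $\pi_C$. The face $\mf_T$ is the only face mapped into the interior of $W^2_b$, so integrability there is exactly the hypothesis $\Re(\cE(\rf)+\cF(\lf))>0$. This is where the $b$-density convention matters, and it should be checked against the density conventions. The remaining faces produce extended unions:
\begin{itemize}
\item $\fb_T$ and $\fb_C$ map to $\fb$, giving $(\cE(\fb)+\cF(\fb))\overline{\cup}(\cE(\lf)+\cF(\rf))$;
\item $\fb_S$ and $\lf_T$ map to $\lf$, giving $(\cE(\fb)+\cF(\lf))\overline{\cup}\cE(\lf)$;
\item $\fb_F$ and $\rf_T$ map to $\rf$, giving $(\cE(\rf)+\cF(\fb))\overline{\cup}\cF(\rf)$.
\end{itemize}
The faces $\lf_T$ and $\rf_T$ carry only the $\cE(\lf)$ and $\cF(\rf)$ data respectively, since the other kernel is smooth up to the corresponding face there.

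The main obstacle is the bookkeeping of this face-to-face correspondence together with the density factors. The $b$-densities ${}^b\Omega_R$ pulled back by $\pi_S$ and $\pi_F$ must combine into a $b$-density for the fibres of $\pi_C$ with no spurious shifts of the index sets. I would check this first in local projective coordinates near $\fb_T$, namely $s=x_1/x_2$, $s'=x_3/x_2$, $x_2$, to confirm that no weight shifts appear.
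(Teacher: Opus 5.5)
Your proposal follows the paper's argument. The paper proves this by citing the composition theorem \cite[Theorem~5.3]{KR1} in the special case where the boundary fibration is the map to a point, together with the $b$-triple space of Mazzeo--Melrose; that is exactly the pullback/pushforward argument you spell out, and the paper treats Remark~\ref{b.4} the same way you do.

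There is one bookkeeping slip in your first step. Take the labelling your later steps actually use, with $\fb_S$, $\fb_C$, $\fb_F$ the front faces of the blow-ups of $\{x_1=x_2=0\}$, $\{x_1=x_3=0\}$ and $\{x_2=x_3=0\}$. Then $\pi_S$ maps $\lf_T$ and $\fb_C$ onto $\lf$, and maps $\mf_T$ and $\fb_F$ onto $\rf$; you had $\fb_C$ and $\fb_F$ the other way round. With this correction, your index sets on $\fb_C$ and $\mf_T$ and your final extended unions are consistent and correct.
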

\begin{proof}
This is the particular case of \cite[Theorem~5.3]{KR1} where $M$ in the statement of that theorem is the manifold with boundary $W$ with boundary fibration the projection onto a point.  Again, the proof still works when we take into account Remark~\ref{b.4}.
\end{proof}

If the index family $\cE$ is such that $\Re(\cE(\fb))\ge 0$, the restriction to $\fb$ is well-defined.  This allows us to define the indicial family as follows.  For our choice of boundary defining function $x\in \CI(W)$, let us also denote by $x$ the pull-back $\pi_L^*x$ and by $x'$ the pull-back $\pi_R^* x$.  Then $s:= \frac{x}{x'}$ induces a smooth function on $W^2_b\setminus (\lf\cup \rf)$.  On the interior of $\mathring{\fb}$, there is also an identification
$$
   \mathring{\fb}\cong \pa W\times ]0,\infty[
$$  
with $s$ corresponding to the projection on the factor $]0,\infty[$.
\begin{definition}
The \emph{indicial family} of an operator $A\in \Psi^{q,\cE}_b(W;E_1,E_2)$ for $\cE$ such that $\Re(\cE(\fb)\setminus\{(0)\})> 0$  is the family of operators $\bbC\ni \lambda\mapsto I(A,\lambda)$ defined by 
$$
   (I(A,\lambda)f)(w)= \int_{\pa W\times ]0,\infty[} A|_{\fb}(w,w',s) s^{-\lambda} f(w')
$$
for $f\in \CI(\pa W;E_1)$, where the integral on the left is in the variables $(w',s)$ and is using the density coming from $A$ and is in the sense distributions.  
\label{b.7}\end{definition}
Using the variable $u=\log s$, the indicial family corresponds essentially to the weighted Fourier transform of $A|_{\fb}$ in the $u$ variable, in fact a Fourier transform when $\Re \lambda=0$.  When $A\in \Diff^m_{b,\cE}(W;E_1,E_2)$ is a differential $b$-operator with index set such that $\Re(\cE)\ge 0$, then the indicial family can also be defined by 
$$
  I(A,\lambda)f:=( x^{-\lambda}A x^{\lambda} \widetilde{f})|_{x=0}
$$
for $f\in \CI(\pa W)$, where $\widetilde{f}\in \CI(W)$ is any smooth function with $\widetilde{f}|_{\pa W}=f$.

\begin{definition}
An \emph{indicial root} of the indicial family $I(A,\lambda)$ for $A\in \Psi^{q,\cE}_{b}(W;E_1,E_2)$ with  $q\in \bbN$ and $\Re(\cE(\fb)\setminus\{0\})\ge 0$ is a complex number $\zeta$ such that 
$$
    I(A,\zeta): L^{2,q}(\pa W; E_1)\to L^2(\pa W;E_2)
$$ 
is not invertible, where $L^2(\pa W;E_2)$ is the space of $L^2$-integrable sections of $E_2$ and  $L^{2,q}(\pa W; E_1)$ is the $L^2$-Sobolev space of order $q$ of sections of $E_1$.  A \emph{critical weight} of $I(A,\lambda)$ is the real part of an indicial root of $I(A,\lambda)$.
\label{b.8}\end{definition}

Using the notion of principal symbol for conormal distributions, there is a corresponding principal symbol for pseudodifferential $b$-operators.  To describe it, let $\mathcal{Q}: {}^{b}T^*W\to W$ be the bundle map and let $\cS^q({}^{b}T^*W; \mathcal{Q}^*\Hom(E_1,E_2))$ be the space of sections $\sigma$ of $\mathcal{Q}^*\Hom(E_1,E_2)$ such that
\begin{equation}
    \sup_{u,\xi} \frac{|\pa^{a}_u \pa^{b}_{\xi} \sigma|}{(1+|\xi|^2)^{\frac{q-|\be|}2}}< \infty \quad \forall {a, b}\in\bbN^{\dim W}
\label{b.9}\end{equation}
in any local trivialization over some open set $\cU$ of $E_1$, $E_2$ and ${}^{b}T^*W$ with 
$$
    {}^{b}T^*W|_{\cU}\cong \cU\times \bbR^{\dim W} \ni (u,\xi),
$$
and where the norm in \eqref{b.9} is with respect to any choice of fiberwise norm on $\mathcal{Q}^*\Hom(E_1,E_2)$ coming from the pull-back of a choice of fiberwise norm of $\Hom(E_1,E_2)$ on $W$.  If we set 
$$
 \cS^{[q]}({}^{b}T^*W; \mathcal{Q}^*\Hom(E_1,E_2)):= \cS^q({}^{b}T^*W; \mathcal{Q}^*\Hom(E_1,E_2))/ \cS^{q-1}({}^{b}T^*W; \mathcal{Q}^*\Hom(E_1,E_2)),
$$
then the principal symbol map is of the form
$$
     {}^{b}\sigma_q: \Psi^{q,\cE}_b(W;E_1,E_2)\to  (\mathcal{Q}^*\cA^{\cE(\fb)}_{\phg}(W))\otimes_{q^*\CI(W)}  \cS^{[q]}({}^{b}T^*W; \mathcal{Q}^*\Hom(E_1,E_2)).
$$

\begin{definition}
An operator $P\in \Psi^{q,\cE}_b(W;E_1,E_2)$ is $b$-elliptic if ${}^{b}\sigma_q(P)$ is invertible, namely if there exists $p\in \cS^{[-q]}({}^{b}T^*W; q^*\Hom(E_1,E_2))$ such that 
$$
      p \cdot {}^{b}\sigma_q(P)= {}^b\sigma_0(\Id_{E_1}) \quad \mbox{and} \quad   {}^{b}\sigma_q(P)\cdot p= {}^b\sigma_0(\Id_{E_2}).
$$
\label{b.10}\end{definition}

For an elliptic differential $b$-operator $P$, the set of indicial roots is discrete by analytic Fredholm theory \cite[\S~5.3]{MelroseAPS}.  In fact, the inverse of the indicial family is a meromorphic family of operators with poles precisely at the indicial roots.  In this case, the \emph{order} of an indicial root is the order of the corresponding pole.  We also define the \emph{order} $\ord(\delta)$ of a critical weight $\delta$ to be the maximal order among the indicial roots with real part equal to $\delta$.     If $z$ is an indicial root of order $p$, the \emph{rank} of the indicial root is
$$
  \rank(z):= \dim \left\{  u= \sum_{j=1}^p \frac{u_j}{(\lambda-z)^j} \quad \Big| \quad u_j\in \CI(\pa W;E_1), \quad I(P,\lambda)u \; \mbox{is holomorphic near $\lambda=z$}  \right\}.
$$
We also define the \emph{rank} $\rank(\delta)$ of a critical weight $\delta$ to be the sum of the ranks of the indicial roots with real part equal to $\delta$.

For elliptic pseudodifferential $b$-operators, we can construct a good parametrix as follows.

\begin{theorem}\label{b.11}
Let $P\in \Psi^{q,\cE}_b(W;E_1,E_2)$ be an elliptic pseudodifferential $b$-operator of order $q\in \bbN$ with index family $\cE$ such that $\Re(\cE(\fb)\setminus\{0\})\ge 0$.  Suppose that $I(P,\lambda)$ is also the indicial family of an elliptic differential $b$-operator of order $q$.  If $\delta\in \bbR$ is not a critical weight of $I(P,\lambda)$ and $]\delta-\mu_R, \delta+\mu_L[$ contains no critical weight for some $\mu_L>0$ and $\mu_R>0$ such that
$$
      \Re(\cE(\lf))>\mu_L+\delta\quad \mbox{and } \quad \Re(\cE(\rf))>\mu_R-\delta, 
$$ 
then there exist $Q\in \Psi^{-q,\cQ}_b(W;E_2,E_1)$ and $R\in \Psi^{-\infty,\cR}_b(W;E_2,E_2)$ such that
$$
    PQ=\Id -R
$$
for index families $\cQ$ and $\cR$ such that
$$
\begin{gathered}
    \Re(\cQ(\fb)\setminus\{0\})> 0, \quad \Re(\cQ(\lf))\ge (\mu_L+\delta, \ord(\mu_L+\delta)-1), \quad \Re(\cQ(\rf))\ge (\mu_R-\delta, \ord(\delta-\mu_R)-1), \\
    \cR(\fb)=\cR(\lf)=\emptyset, \quad \Re(\cR(\rf))\ge (\mu_R-\delta,\ord(\delta-\mu_R)-1).
\end{gathered}    
$$
\end{theorem}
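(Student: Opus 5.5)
This is the standard three-step parametrix construction of \cite[\S~5]{MelroseAPS}: a symbolic step, an indicial step at $\fb$, and then removal of the remaining expansions at $\lf$. The only new ingredient is that $P$ is pseudodifferential, which we handle using Propositions~\ref{b.5} and \ref{b.6} in place of the differential calculus.

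\emph{Step 1: symbolic inversion.} By $b$-ellipticity (Definition~\ref{b.10}), we choose a symbol inverse $p$ of ${}^b\sigma_q(P)$. We quantize it and iterate in the usual way, summing asymptotically in order. This gives $Q_0\in\Psi^{-q,\cE_0}_b(W;E_2,E_1)$ with $PQ_0=\Id-R_0$ and $R_0\in\Psi^{-\infty,\cE_1}_b$, where the index families are built from $\cE$ through the composition formula of Proposition~\ref{b.6}. Since $Q_0$ can be taken supported near $\cD_b$ and vanishing at $\lf,\rf$, the condition $\Re(\cE(\rf)+\cF(\lf))>0$ holds trivially. The error $R_0$ is smoothing but is not small at $\fb$.

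\emph{Step 2: inverting the indicial family at $\fb$.} The restriction $R_0|_{\fb}$ has indicial family $I(R_0,\lambda)=\Id-I(P,\lambda)I(Q_0,\lambda)$. By hypothesis, $I(P,\lambda)$ is the indicial family of an elliptic differential $b$-operator, so analytic Fredholm theory applies: $I(P,\lambda)^{-1}$ is meromorphic with poles exactly at the indicial roots. Hence $\lambda\mapsto I(P,\lambda)^{-1}$ is holomorphic on the strip $\delta-\mu_R<\Re\lambda<\delta+\mu_L$, which contains no critical weight.

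We then set $G(\lambda):=I(P,\lambda)^{-1}-I(Q_0,\lambda)$ on the line $\Re\lambda=\delta$, and take its inverse Mellin transform in $s$ (equivalently, the Fourier transform in $u=\log s$). This produces a kernel $G_{\fb}$ on $\fb$ whose behaviour as $s\to\infty$ and $s\to0$ is read off by shifting contours. Since $\lf$ and $\rf$ meet $\fb$ at $s=\infty$ and $s=0$ respectively (up to convention), the first poles encountered lie at real parts $\delta+\mu_L$ and $\delta-\mu_R$ or beyond. The contributions of these poles give expansions with leading terms $(\mu_L+\delta,\ord-1)$ at $\lf\cap\fb$ and $(\mu_R-\delta,\ord-1)$ at $\rf\cap\fb$; the $-1$ in the log order comes from the pole order.

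We extend $G_{\fb}$ off $\fb$ to an operator $Q_1$ and set $Q':=Q_0+Q_1$. Then $PQ'=\Id-R'$ with $R'|_{\fb}=0$, that is, $\cR'(\fb)>0$. Here Proposition~\ref{b.6} is needed again, with the hypotheses $\Re\cE(\lf)>\mu_L+\delta$ and $\Re\cE(\rf)>\mu_R-\delta$ guaranteeing that $P$ composes with $Q'$ and does not worsen the $\lf$ and $\rf$ exponents.

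\emph{Step 3: removing the error at $\fb$ and $\lf$.} We iterate the $\fb$-step to push the order of vanishing of the error at $\fb$ to infinity. At each stage we solve $I(P,\lambda-k)$-type equations, which requires avoiding indicial roots; these produce log terms that are absorbed into the index sets. An asymptotic (Borel) summation then gives $\cR(\fb)=\emptyset$.

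The expansion of the error at $\lf$ is removed by solving $P$ formally in the left variable, term by term in the $\lf$ expansion. Each step uses the normal operator at $\lf$ and generates exponents only in $\Re\ge\mu_L+\delta$, so after summation we obtain $\cR(\lf)=\emptyset$. The exponents at $\rf$ are inherited from $\cQ(\rf)$ via $\cG(\rf)=(\cE(\rf)+\cF(\fb))\overline\cup\cF(\rf)$, which gives the stated bound on $\cR(\rf)$.

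\emph{Main obstacle.} The delicate point is Step 2 in the pseudodifferential setting. One must check that the inverse Mellin transform of $I(P,\lambda)^{-1}-I(Q_0,\lambda)$ is genuinely polyhomogeneous at both ends of $\fb$, with precisely the claimed leading orders. This needs uniform symbol estimates on $I(P,\lambda)^{-1}$ as $|\Im\lambda|\to\infty$ in the strip, and we obtain these from the assumption that $I(P,\lambda)$ coincides with the indicial family of an elliptic differential $b$-operator, so that Melrose's estimates \cite[\S~5.3]{MelroseAPS} can be quoted directly.

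A second point to check is that the off-diagonal terms of $P$ at $\lf$ and $\rf$ do not feed back into the $\fb$ error. This is where the strict inequalities on $\Re\cE(\lf)$ and $\Re\cE(\rf)$ enter, through the extended-union formulas of Proposition~\ref{b.6}.
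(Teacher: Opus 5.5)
Your Steps~1 and~2 are fine. Step~3 has a genuine gap, however, and the ``second point to check'' in your last paragraph is false.

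By Proposition~\ref{b.6}, the $\fb$ index set of $P\circ Q$ always contains the cross term $\cE(\lf)+\cQ(\rf)$. Suppose you remove the error at $\fb$-order $\alpha$ by solving the normal-operator equation and adding a correction $Q_\alpha$ that vanishes to order $\alpha$ at $\fb$. The $\rf$ exponents of $Q_\alpha$ do not improve with $\alpha$: they are inherited from the $\rf$ expansion of the error, which is only $\ge(\mu_R-\delta,\ldots)$. Hence $PQ_\alpha$ reintroduces $\fb$ error terms at the fixed orders $\cE(\lf)+\cQ_\alpha(\rf)$. Their real parts exceed $\mu_L+\mu_R$, but they do not grow with $\alpha$, and they are generically nonzero: each is the integral, over the interior middle variable, of the $\lf$ coefficient of $P$ against the $\rf$ coefficient of $Q_\alpha$. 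Once $\alpha$ passes these levels, every new correction re-pollutes orders you had already cleared. So the iterated $\fb$-step cannot reach $\cR(\fb)=\emptyset$ when $P$ is genuinely pseudodifferential, i.e.\ when $\cE(\lf)\neq\emptyset$, as for $\bbS_b$ in Lemma~\ref{la.8}. The strict inequalities on $\Re\cE(\lf)$ and $\Re\cE(\rf)$ only ensure that this feedback sits at positive $\fb$-order, which is what Step~2 needs. They do not make it disappear.

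The missing idea is the order of operations.
\begin{itemize}
\item After obtaining $PQ_0=\Id-R_0$ with $\Re\cR_0(\fb)>0$, first clear the $\lf$ expansion of the error completely, as in \cite[Lemma~5.44]{MelroseAPS}. This is where the hypothesis that $I(P,\lambda)$ is the indicial family of a differential operator is used, and it yields $\cR_0(\lf)=\emptyset$.
\item After that, no further normal-operator solves are needed. Proposition~\ref{b.6} gives, for $R_0^k$, the index sets $\cR_k(\fb)=\cR_0(\fb)+\cR_{k-1}(\fb)$, since the cross term $\cR_0(\lf)+\cR_{k-1}(\rf)$ is now empty. It also gives $\cR_k(\lf)=\emptyset$ and $\cR_k(\rf)=\cR_{k-1}(\rf)\overline{\cup}(\cR_{k-1}(\fb)+\cR_0(\rf))$.
\item The Neumann series $S\sim\sum_{k\ge1}R_0^k$ can therefore be summed asymptotically at $\fb$. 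This gives $(\Id-R_0)(\Id+S)=\Id-R$ with $R$ as claimed, and you take $Q=Q_0(\Id+S)$.
\end{itemize}
Without first emptying $\lf$, even this Neumann series would stall, because $R_0^2$ would carry $\fb$ terms at the fixed orders $\cR_0(\lf)+\cR_0(\rf)$. With your ordering (first $\fb$ to infinite order, then $\lf$), the first half of Step~3 cannot be carried out.
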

\begin{proof}
Replacing $P$ by $x^{-\delta}Px^{\delta}$, we can assume that $\delta=0$.  Since the operator $P$ is elliptic and $0$ is not a critical weight, we can proceed as in \cite{MelroseAPS} to find an operator $Q_0\in \Psi^{-q,\cQ_0}_b(W;E_2,E_1)$ with 
$$
\Re(\cQ_0(\fb)\setminus\{0\})> 0, \quad \Re(\cQ_0(\lf))\ge (\mu_L, \ord(\mu_L)-1), \quad \Re(\cQ_0(\rf))\ge (\mu_R, \ord(-\mu_R)-1),
$$
and 
$$
      PQ_0=\Id -R_0
$$
for $R_0\in \Psi^{-\infty,\cR_0}_b(W;E_2,E_2)$ with 
$$
\cR_0(\fb)>0, \quad   \cR_0(\lf)> \mu_L \quad \mbox{and} \quad \cR_0(\rf)\ge (\mu_R, \ord(-\mu_R)-1).
$$
Proceeding as in \cite[Lemma~5.44]{MelroseAPS} and using our assumption that the indicial family of $P$ is also the indicial family of a differential $b$-operator, we can choose $Q_0$ more carefully and suppose that $\cR_0(\lf)=\emptyset$. Using Proposition~\ref{b.6}, notice then that there is $\mu>0$ such that for $k\in \bbN$, 
$$
    R_0^k\in \Psi^{-\infty,\cR_k}_b(W;E_2)
$$
with
$$
       \Re(\cR_k(\fb))>k\mu, \quad \cR_k(\lf)=\emptyset \quad \mbox{and} \quad \Re(\cR_k(\rf))\ge (\mu_R,\ord(-\mu_R)-1)
$$
and
$$
    \cR_k(\rf)= \cR_{k-1}(\rf)\overline{\cup} (\cR_{k-1}(\fb)+\cR_{0}(\rf)).
$$
This means that there exists an asymptotic sum
$$
  S\sim \sum_{k=1}^{\infty} R_0^k
$$
with $S\in \Psi^{-\infty,\cS}(W;E_2)$ such that
$$
     (\Id-R_0)(\Id+S)= \Id -R
$$
with
$$
\cS(\lf)=\emptyset, \quad \Re(\cS(\fb))>0 \quad \mbox{and} \quad \Re{\cS(\rf)}\ge (\mu_R,\ord(-\mu_R)-1),
$$
where $R\in \Psi^{-\infty,\cR}_b(W;E_2)$ is as in the statement of the theorem (with $\delta=0$).  Hence it suffices to take $Q=Q_0(\Id+S)$ and use Proposition~\ref{b.6} to conclude that $Q\in \Psi^{-q,\cQ}_b(W;E_2,E_1)$ as in the statement of the theorem.  
\end{proof}

This parametrix can be used to determine when an elliptic pseudodifferential $b$-operator is Fredholm.  To describe this,  let us choose connections and bundle metrics on $E_1$ and $E_2$ as well as a $b$-metric for $W$. This induces $L^2$-spaces $L^2_b(W;E_i)$ and associated $L^2$-Sobolev spaces $L^{2,k}_b(W;E_i)$.  
\begin{corollary}
Let $P\in\Psi^{q,\cE}_b(W;E_1,E_2)$ and $\delta$ be as in Theorem~\ref{b.11}.  Then $P$  induces a Fredholm operator
\begin{equation}
        P: x^{\delta}L^{2,q+k}_b(W;E_1)\to x^{\delta}L^{2,k}_b(W;E_2)
\label{b.15a}\end{equation}
for each $k\in \bbN$.
\label{b.15}\end{corollary}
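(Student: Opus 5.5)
We reduce to $\delta=0$ by conjugation, use the parametrix of Theorem~\ref{b.11} on both sides, and show that all error terms are compact on the weighted Sobolev spaces. Since $x^{-\delta}Px^{\delta}\in\Psi^{q,\cE'}_b(W;E_1,E_2)$, with $\cE'(\lf)=\cE(\lf)-\delta$, $\cE'(\rf)=\cE(\rf)+\delta$ and $\cE'(\fb)=\cE(\fb)$, and since multiplication by $x^{\delta}$ is an isometry $L^{2,k}_b\to x^{\delta}L^{2,k}_b$ (because $x\partial_x x^{\delta}=x^{\delta}(x\partial_x+\delta)$), it suffices to treat $\delta=0$. The hypotheses then become $\Re\cE(\lf)>\mu_L$ and $\Re\cE(\rf)>\mu_R$, with $0$ not a critical weight.

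First we check boundedness. Write $P=P_0+P_1$, where $P_0$ is cut off near $\cD_b$ and has trivial expansions at $\lf$ and $\rf$, and $P_1$ is smoothing with the expansions at $\lf$ and $\rf$. Standard $b$-calculus mapping properties (a Schur test on $W^2_b$ with the $b$-density) show that $P_0:L^{2,q+k}_b\to L^{2,k}_b$ is bounded. Allowing the $\fb$ expansion with $\Re(\cE(\fb)\setminus\{0\})\ge0$ only multiplies by bounded polyhomogeneous functions, and $b$-derivatives preserve this. The positivity of $\Re\cE(\lf)$ and $\Re\cE(\rf)$ makes the Schur test work for $P_1$.

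Next we build right and left parametrices. Theorem~\ref{b.11} gives $PQ=\Id-R$, with $R\in\Psi^{-\infty,\cR}_b$, $\cR(\fb)=\cR(\lf)=\emptyset$ and $\Re\cR(\rf)>0$. For the left parametrix, apply Theorem~\ref{b.11} to the formal adjoint $P^*$, taken with respect to the $b$-densities and bundle metrics. $P^*$ lies in $\Psi^{q,\cE^*}_b$, with the $\lf$ and $\rf$ index sets swapped, so the roles of $\mu_L$ and $\mu_R$ swap. It is still elliptic, and its indicial family $I(P^*,\lambda)=I(P,-\bar\lambda)^*$ is again that of an elliptic differential $b$-operator (the adjoint one). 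Its critical weights are the negatives of those of $P$, so $0$ is still not critical and the interval condition is symmetric. We obtain $P^*Q'=\Id-R'$, and taking adjoints gives $Q'^*P=\Id-R'^*$. Here $R'^*$ has empty index sets at $\fb$ and $\rf$ and positive index set at $\lf$.

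It remains to show that the remainders $R$ and $R'^*$ are compact on $L^{2,k}_b$. Each is a smoothing operator whose kernel vanishes to positive order at every boundary face of $W^2_b$: at $\fb$ and at one of $\lf,\rf$ it vanishes to infinite order, and at the other it vanishes to a positive order. After including the $b$-derivatives, such a kernel is the $L^2$ limit of smooth kernels compactly supported in the interior of $W^2_b$, so the operator is Hilbert–Schmidt on $L^2_b$, and in fact into $L^{2,\infty}_b$, hence compact on $L^{2,k}_b$. We need $Q$ and $Q'^*$ to be bounded $L^{2,k}_b\to L^{2,q+k}_b$, which holds by the boundedness argument above, since their index sets at $\lf$ and $\rf$ have positive real parts and $\Re(\cQ(\fb)\setminus\{0\})>0$. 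With these, $P$ has a right and a left inverse modulo compact operators, so \eqref{b.15a} is Fredholm by Atkinson's theorem.

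The main obstacle is making the left parametrix rigorous. We must check that the hypotheses of Theorem~\ref{b.11} transfer to $P^*$, in particular that the index-set conditions at $\lf$ and $\rf$ and the "indicial family of a differential operator" condition both survive taking adjoints. We must also check that the order shifts $\ord(\cdot)-1$ still leave strictly positive decay, which is the case because $\mu_L,\mu_R>0$.
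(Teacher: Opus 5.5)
Your argument is correct, but you obtain the left parametrix by a different route from the paper.

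The paper uses the right parametrix $Q$ of Theorem~\ref{b.11} on both sides. Since $I(Q,\lambda)=I(P,\lambda)^{-1}$, Proposition~\ref{b.6} gives $QP=\Id-R'$ with $R'\in\Psi^{-\infty,\cR'}_b$, whose index sets at $\fb$, $\lf$ and $\rf$ all have positive real part. (The displayed identity there reads $QR=\Id-R'$, but $QP$ is what is meant.) The compactness of $R$ and $R'$ on the weighted Sobolev spaces is then quoted from Melrose's Lemma~5.33.

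You instead apply Theorem~\ref{b.11} to $P^*$ and take adjoints. This is exactly the mechanism of Corollary~\ref{b.12}, which the paper proves immediately afterwards.

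The paper's route needs no information about $P^*$. However, its left remainder vanishes only to finite positive order at $\fb$, so it relies on the compactness lemma for such residual operators.

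Your route needs the adjoint bookkeeping you carry out, all of which is correct:
\begin{itemize}
\item swapping the $\lf$ and $\rf$ index sets;
\item the identity $I(P^*,\lambda)=I(P,-\bar\lambda)^*$, so the critical weights are reflected and $\mu_L,\mu_R$ trade places;
\item the fact that the indicial family of $P^*$ is that of the adjoint differential $b$-operator.
\end{itemize}
In exchange, both of your remainders vanish to infinite order at $\fb$ and at one side face. Compactness then reduces to the elementary Hilbert--Schmidt estimate you give.

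You also make explicit the boundedness of $P$, $Q$ and $Q'^*$ on the relevant spaces, which the paper takes for granted.
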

\begin{proof}
Replacing $P$ by $x^{-\delta}Px{\delta}$, we can assume that $\delta=0$.    
Then the parametrix $Q$ of Theorem~\ref{b.11}  inverts $P$ on the right modulo the error term $R$.  Using Proposition~\ref{b.6} and the fact that the indicial family of the parametrix $Q$ is the inverse of the one of $P$, we see that 
$$
   QR=\Id - R'  \quad \mbox{for} \; R'\in\Psi^{-\infty, \cR'}_{b}(W;E_2)
$$
for an index family $\cR'$ such that $\Re(\cR'(\fb))>0$, $\Re(\cR'(\lf))>0$ and $\Re(\cR'(\rf))>0$.  By \cite[Lemma~5.33]{MelroseAPS}, see also \cite[Corollary~7.6]{KR1} in the case $M=W$ with boundary fiber bundle mapping $\pa W$ onto a point, the operators $R$ and $R'$ are compact when acting on $x^{\delta}L^{2,k}_b(W;E_2)$ and $x^{\delta}L^{2,q+k}_b(W;E_1)$.  This shows that \eqref{b.15a} is invertible modulo compact operators, hence that the map \eqref{b.15a} is Fredholm. 
\end{proof}

We can also construct a finer left parametrix.  Given $P\in \Psi^{q,\cE}_b(W;E_1,E_2)$, we can use bundle metrics on $E_1$ and $E_2$ and a choice of $b$-metric to define its formal adjoint $P^*\in \Psi^{q,\overline{\cE}}_b(W;E_2,E_1)$ with index family $\overline{\cE}$ defined by
$$
    \overline{\cE}(\fb)= \overline{\cE(\fb)}, \quad  \overline{\cE}(\lf)= \overline{\cE(\rf)} \quad \mbox{and} \quad  \overline{\cE}(\rf)= \overline{\cE(\lf)},
$$  
where we use the notation
$$
   \overline{\cF}= \{ (\overline{z},k)\; | \; (z,k)\in \cF\}
$$
for $\cF$ an index set.  Applying Theorem~\ref{b.11} to the formal adjoint yields the following.
\begin{corollary}\label{b.12}
Let $P\in \Psi^{q,\cE}_b(W;E_1,E_2)$ be an elliptic pseudodifferential $b$-operator of order $q\in \bbN$ with index family $\cE$ such that $\Re(\cE(\fb)\setminus\{0\})> 0$.  Suppose that $I(P,\lambda)$ is also the indicial family of an elliptic differential $b$-operator of order $q$.  If $\delta\in \bbR$ is not a critical weight of $I(P^*,\lambda)$ and $]\delta-\mu^*_R, \delta+\mu^*_L[$ contains no critical weight for some $\mu^*_L>0$ and $\mu^*_R>0$ such that
$$
      \Re(\cE(\rf))>\mu^*_L+\delta\quad \mbox{and } \quad \Re(\cE(\lf))>\mu^*_R-\delta, 
$$ 
then there exists $Q\in \Psi^{-q,\cQ}_b(W;E_2,E_1)$ and $R\in \Psi^{-\infty,\cR}_b(W;E_2,E_2)$ such that
\begin{equation*}
    QP=\Id -R
%\label{b.12a}
\end{equation*}
for index families $\cQ$ and $\cR$ such that
$$
\begin{gathered}
    \Re(\cQ(\fb)\setminus\{0\})> 0, \quad \Re(\cQ(\rf))\ge (\mu^*_L+\delta, \ord(\mu^*_L+\delta)-1), \quad \Re(\cQ(\lf))\ge (\mu^*_R-\delta, \ord(\delta-\mu^*_R)-1), \\
    \cR(\fb)=\cR(\rf)=\emptyset, \quad \Re(\cR(\lf))\ge (\mu^*_R-\delta,\ord(\delta-\mu^*_R)-1).
\end{gathered}    
$$
\end{corollary}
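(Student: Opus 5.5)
The plan is to apply Theorem~\ref{b.11} to the formal adjoint $P^*\in\Psi^{q,\overline{\cE}}_b(W;E_2,E_1)$ and take adjoints of the resulting right parametrix. First we check that $P^*$ satisfies the hypotheses of Theorem~\ref{b.11}.

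\begin{itemize}
\item \emph{Ellipticity and order.} $P^*$ is $b$-elliptic of order $q$, since ${}^b\sigma_q(P^*)$ is the pointwise adjoint of ${}^b\sigma_q(P)$ with respect to the chosen bundle metrics.
\item \emph{Index set at $\fb$.} $\Re(\overline{\cE}(\fb)\setminus\{0\})=\Re(\cE(\fb)\setminus\{0\})>0$.
\item \emph{Indicial family.} The indicial family of $P^*$ is, up to a shift coming from the $b$-density and the choice of $b$-metric, the adjoint family $\lambda\mapsto I(P,-\overline{\lambda})^*$. If $I(P,\lambda)=I(D,\lambda)$ for an elliptic differential $b$-operator $D$, then $I(P^*,\lambda)=I(D^*,\lambda)$, where $D^*$ is the formal adjoint of $D$. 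This is again an elliptic differential $b$-operator of order $q$.
\item \emph{Weights.} By assumption $\delta$ is not a critical weight of $I(P^*,\lambda)$, and $]\delta-\mu^*_R,\delta+\mu^*_L[$ contains no critical weights of $I(P^*,\lambda)$.
\item \emph{Conditions at $\lf$ and $\rf$.} Since $\overline{\cE}(\lf)=\overline{\cE(\rf)}$ and $\overline{\cE}(\rf)=\overline{\cE(\lf)}$, the hypotheses $\Re(\cE(\rf))>\mu^*_L+\delta$ and $\Re(\cE(\lf))>\mu^*_R-\delta$ are exactly the conditions $\Re(\overline{\cE}(\lf))>\mu^*_L+\delta$ and $\Re(\overline{\cE}(\rf))>\mu^*_R-\delta$ required in Theorem~\ref{b.11}.
\end{itemize}

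Theorem~\ref{b.11} then gives $Q'\in\Psi^{-q,\cQ'}_b(W;E_1,E_2)$ and $R'\in\Psi^{-\infty,\cR'}_b(W;E_1,E_1)$ with $P^*Q'=\Id-R'$. Their index families satisfy
\[
\Re(\cQ'(\fb)\setminus\{0\})>0, \qquad \Re(\cQ'(\lf))\ge(\mu^*_L+\delta,\ord(\mu^*_L+\delta)-1),
\]
\[
\Re(\cQ'(\rf))\ge(\mu^*_R-\delta,\ord(\delta-\mu^*_R)-1),
\]
together with $\cR'(\fb)=\cR'(\lf)=\emptyset$ and $\Re(\cR'(\rf))\ge(\mu^*_R-\delta,\ord(\delta-\mu^*_R)-1)$. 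Here the orders are those of the critical weights of $I(P^*,\lambda)$.

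Next we take formal adjoints of $P^*Q'=\Id-R'$, using $(AB)^*=B^*A^*$ and $(P^*)^*=P$. On kernels, the adjoint is the pullback under the involution of $W^2_b$ exchanging the two factors, followed by complex conjugation and the adjoint on fibers. The result is $QP=\Id-R$ with $Q:=(Q')^*$ and $R:=(R')^*$.

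Under the adjoint operation the index sets at $\lf$ and $\rf$ are swapped and conjugated, while $\fb$ is preserved. Real parts are unchanged, so $Q$ and $R$ have exactly the index families stated in the corollary: the bound at $\lf$ of $Q'$ becomes the bound at $\rf$ of $Q$, and $\cR(\fb)=\cR(\rf)=\emptyset$.

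The step needing the most care is the identification of the indicial family of $P^*$ and of its critical weights with those of $D^*$. The $b$-density $x^{-1}dx\,dy$ and a polyhomogeneous, rather than smooth, $b$-metric could shift the spectral parameter. I would first handle this by noting that only the restriction of the kernel to $\fb$ matters. There the adjoint corresponds to the involution $s\mapsto s^{-1}$, which turns $s^{-\lambda}$ into $s^{\lambda}$. Hence the indicial roots of $P^*$ are $-\overline{\zeta}$, shifted by a weight fixed by the choice of density, exactly as for the differential operator $D^*$. This is why the hypotheses are phrased directly in terms of $I(P^*,\lambda)$.

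One further check: the composition in the adjoint identity is justified by Proposition~\ref{b.6}, since the relevant conditions $\Re(\cdot)>0$ at $\rf$ and $\lf$ hold by the bounds above together with $\mu^*_L,\mu^*_R>0$.
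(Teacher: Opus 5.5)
Your proposal is correct and matches the paper's proof: apply Theorem~\ref{b.11} to the formal adjoint $P^*$ to get $P^*\widetilde{Q}=\Id-\widetilde{R}$, then take formal adjoints to obtain $QP=\Id-R$ with $Q=\widetilde{Q}^*$ and $R=\widetilde{R}^*$, the index sets at $\lf$ and $\rf$ being swapped. Your explicit check of the hypotheses, namely that $\overline{\cE}(\lf)=\overline{\cE(\rf)}$ and $\overline{\cE}(\rf)=\overline{\cE(\lf)}$ turn the stated conditions into those of Theorem~\ref{b.11}, spells out what the paper compresses into ``by assumption''; note also that your $R=(R')^*$ acts on sections of $E_1$, which is correct, so the bundles $E_2,E_2$ written for $R$ in the statement are a typo.
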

\begin{proof}
By assumption, Theorem~\ref{b.11} can be applied to the formal adjoint $P^*$ to obtain a right parametrix $\widetilde{Q}$ such that
$$
    P^*\widetilde{Q}=\Id- \widetilde{R}.
$$
Taking the formal adjoint in this equation then yields
$$
     \widetilde{Q}^*P=\Id- \widetilde{R}^*,
$$
so that it suffices to take $Q=\widetilde{Q}^*$ and $R=\widetilde{R}^*$.
\end{proof}

This can be used to obtain the following regularity result.
\begin{corollary}\label{b.13}
Let $P$ and $\delta$ be as in Corollary~\ref{b.12}.  If $\sigma\in x^{-\mu^*_L-\delta}L^2_b(W;E_1)$ is such that $P\sigma=f\in \cA^{\cF}_{\phg}(W;E_2)$ for an index set $\cF$ such that
$\Re(\cF)>-\mu^*_L-\delta$, then in fact $\sigma\in\cA^{\cG}_{\phg}(W;E_1)$ with index set 
\begin{equation*}
    \cG= \left( \cQ(\lf)\overline{\cup} (\cQ(\fb)+\cF) \right)\cup \cR(\lf).
%\label{b.13b}
\end{equation*}
\end{corollary}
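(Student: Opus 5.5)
We apply the left parametrix of Corollary~\ref{b.12} to $\sigma$. That corollary gives $Q\in \Psi^{-q,\cQ}_b(W;E_2,E_1)$ and $R\in\Psi^{-\infty,\cR}_b(W;E_1,E_1)$ with $QP=\Id-R$, where $\cR(\fb)=\cR(\rf)=\emptyset$. The plan is to write
$$
\sigma = QP\sigma + R\sigma = Qf + R\sigma,
$$
and to treat the two terms separately with Proposition~\ref{b.5}. The only real point is to justify that each of $QP\sigma$, $Qf$ and $R\sigma$ makes sense and that the identity holds for $\sigma$, which is merely weighted $L^2$ rather than polyhomogeneous.

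\emph{The term $R\sigma$.} Since $\cR(\fb)=\cR(\rf)=\emptyset$, the kernel of $R$ is smooth in the interior, vanishes to infinite order at $\fb$ and $\rf$, and is polyhomogeneous at $\lf$ with index set $\cR(\lf)$. Pairing it in the right variable with $\sigma\in x^{-\mu^*_L-\delta}L^2_b$ converges absolutely, because rapid vanishing at $\rf$ and $\fb$ absorbs any polynomial weight. Derivatives in the left variables fall on the kernel, so $R\sigma$ is smooth in the interior. Its expansion at $\pa W$ comes from the $\lf$ expansion alone. This gives $R\sigma\in\cA^{\cR(\lf)}_{\phg}(W;E_1)$. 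To verify this I would write the kernel near $\lf$ in the projective coordinates $(x', w', x/x')$ and integrate the $\lf$ expansion against $\sigma$. Alternatively, I would approximate $\sigma$ by functions in $\dot{C}^\infty$ and use the pushforward theorem with uniform control.

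\emph{The term $Qf$.} By Proposition~\ref{b.5}, $Qf$ is defined as soon as $\Re(\cQ(\rf)+\cF)>0$. This holds because $\Re(\cQ(\rf))\ge(\mu^*_L+\delta,\cdot)$ and $\Re(\cF)>-\mu^*_L-\delta$. The proposition then gives $Qf\in\cA^{\cQ(\lf)\overline{\cup}(\cQ(\fb)+\cF)}_{\phg}$.

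\emph{The identity $Q(P\sigma)=(QP)\sigma$.} This is the step I expect to need the most care. It is an associativity statement for $\sigma$ only in $x^{-\mu^*_L-\delta}L^2_b$. I would show that $Q$ extends boundedly from $x^{-\mu^*_L-\delta}L^2_b$ to some weighted space $x^{-\mu^*_L-\delta-\eta}L^2_b$. For this I would use the decay $\Re\cQ(\rf)\ge\mu^*_L+\delta$, combined with Schur-type bounds on the off-diagonal part and $b$-Sobolev boundedness of the conormal part (as in \cite[Prop.~5.44]{MelroseAPS}). The logarithmic term allowed at the endpoint, $\ord(\mu^*_L+\delta)-1$, forces a loss, so I would take a slightly larger weight. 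Then I would approximate $\sigma$ in $x^{-\mu^*_L-\delta}L^2_b$ by compactly supported smooth $\sigma_j$. For these, $QP\sigma_j=\sigma_j-R\sigma_j$ holds by the composition Proposition~\ref{b.6}. Passing to the limit uses continuity of $Q$, $P$ and $R$ on the relevant weighted Sobolev spaces, which gives $QP\sigma=\sigma-R\sigma$ in the distributional sense.

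Combining the three steps, $\sigma=Qf+R\sigma$ lies in $\cA^{\cG}_{\phg}(W;E_1)$ with $\cG=(\cQ(\lf)\overline{\cup}(\cQ(\fb)+\cF))\cup\cR(\lf)$, as claimed.
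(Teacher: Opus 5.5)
Your decomposition $\sigma=Qf+R\sigma$ is exactly the paper's proof, and so is your handling of $R\sigma$ through the very residual kernel and of $Qf$ through Proposition~\ref{b.5}. The paper simply applies $Q$ to both sides of $P\sigma=f$ without comment.

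The gap is in your justification of the step you single out, $Q(P\sigma)=(QP)\sigma$. You propose to show that $Q$ extends boundedly from $x^{-\mu^*_L-\delta}L^2_b$ (really from $x^{-\mu^*_L-\delta}L^{2,-q}_b$, where $P\sigma_j\to P\sigma$) to a slightly worse weighted space. This fails in the case the statement is built to include.

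\begin{itemize}
\item Corollary~\ref{b.12} only gives $\Re(\cQ(\rf))\ge(\mu^*_L+\delta,\ord(\mu^*_L+\delta)-1)$. When $\mu^*_L+\delta$ is a critical weight of $P^*$, the kernel of $Q$ genuinely has a term $x'^{\mu^*_L+\delta}(\log x')^k$ at $\rf$.
\item Pairing this term with $\sigma=x'^{-\mu^*_L-\delta}v$, $v\in L^2_b$, gives $\int(\log x')^k v\,dx'/x'$. This diverges for suitable $v$, even when $k=0$.
\item So $Q$ is not even defined on $x^{-\mu^*_L-\delta}L^2_b$. Taking a larger target weight cannot help, because the condition at $\rf$ involves only the domain weight.
\item A concrete case: for $P=x\partial_x$ with $\delta=-\frac12$, $\mu^*_L=\frac12$, the left parametrix is $g\mapsto\int_0^x g\,dx'/x'$. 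It diverges on $g=1/|\log x'|$, which lies in $L^2_b$ near $0$.
\end{itemize}

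Your density argument does work as written when $\mu^*_L+\delta$ is not a critical weight of $P^*$, since the bound at $\rf$ is then strict.

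To close the endpoint case, proceed in two steps.

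\begin{itemize}
\item First run your argument with a left parametrix $Q'$ from Corollary~\ref{b.12}, built at a non-critical weight $\delta'$ slightly above $\delta+\mu^*_L$. Take $\mu'_R=\delta'-\delta-\mu^*_L$ and $\mu'_L$ small.
\item The hypotheses of Corollary~\ref{b.12} hold for these choices. Critical weights are discrete. $\Re\cE(\rf)$ has a gap above $\mu^*_L+\delta$. And $\Re\cE(\lf)>\mu^*_R-\delta>-\mu^*_L-\delta$.
\item Now $\Re(\cQ'(\rf))>\mu^*_L+\delta$ strictly. Hence $Q'$ is bounded from $x^{-\mu^*_L-\delta}L^{2,-q}_b$ to $x^{-\mu^*_L-\delta-\eta}L^2_b$, and density gives $\sigma=Q'f+R'\sigma$.
\item Thus $\sigma$ is polyhomogeneous with all exponents of real part $\ge-\mu^*_L-\delta$. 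Terms with real part exactly $-\mu^*_L-\delta$ are incompatible with $\sigma\in x^{-\mu^*_L-\delta}L^2_b$, so $\sigma\in x^{-\mu^*_L-\delta+\eta}L^2_b$ for some $\eta>0$.
\item On that space the original $Q$ is bounded. Your density argument then yields $\sigma=Qf+R\sigma$, with the stated index set $\cG$.
\end{itemize}
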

\begin{proof}
The fact $\sigma\in x^{-\mu^*_L-\delta}L^2_b(W;E_1)$ ensures that $P\sigma$ is well-defined, while the condition on $\cF$ ensures that $Qf$ is well-defined when $Q$ is the left parametrix of Corollary~\ref{b.12}.   Applying the left parametrix $Q$ on both sides of $P\sigma=f$ yields
$$
    \sigma= Qf + R\sigma.
$$
Since $R$ is very residual in the sense of \cite{MazzeoEdge} and $\cR(\fb)=\cR(\rf)=\emptyset$, the term $R\sigma$ is well-defined and is an element of $\cA^{\cR(\lf)}_{\phg}(W;E_1)$.  The result thus follows by applying Proposition~\ref{b.5} to the term $Qf$.
\end{proof}

When $f=0$, this yields the following.
\begin{corollary}
Let $P\in\Psi^{q,\cE}_b(W;E_1,E_2)$ be an elliptic pseudodifferential $b$-operator of order $q\in \bbN$ with $\Re(\cE(\fb)\setminus\{0\})\ge 0$ such that its indicial family is also the indicial family of differential $b$-operator. Then for each $\kappa\in \bbR$ such that $\Re(\cE(\rf))>-\kappa$ and $\Re(\cE(\lf))>\kappa$, the weighted $L^2$-kernel $\ker_{x^{\kappa}L^2_b}P:= (\ker P)\cap  x^{\kappa}L^2_b(W;E_1)$ is finite dimensional with 
$$
\ker_{x^{\kappa}L^2_b}P\subset \cA^{\cG}_{\phg}(W;E_1)
$$
for some index set $\cG$ such that $\Re(\cG)>\kappa$.  More precisely, $\Re(\cG)\ge (-r,\ord(r)-1)$ if there exists a highest critical weight $r$ of $P^*$ such that $\Re(\cE(\lf))>-r>\kappa$, while otherwise $\Re(\cG)> -r$ for all $r\in\bbR$ such that $\Re(\cE(\lf))>-r>\kappa$.
\label{b.14}\end{corollary}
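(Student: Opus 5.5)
The plan is to derive Corollary~\ref{b.14} from Corollary~\ref{b.13} with $f=0$ (taking $\cF=\emptyset$), and to obtain finite dimensionality from the Fredholm property in Corollary~\ref{b.15}.

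\emph{Step 1: choice of weight.} Fix $\sigma\in\ker P\cap x^{\kappa}L^2_b(W;E_1)$. I want to apply Corollary~\ref{b.13} with weight $\delta$ and $\mu^*_L>0$ chosen so that $-\mu^*_L-\delta=\kappa$, i.e.\ $\delta=-\kappa-\mu^*_L$. The critical weights of $I(P^*,\lambda)$ are discrete. Since $P$ is elliptic with indicial family coming from a differential $b$-operator, the indicial family of $P^*$ is, up to conjugation and the reflection $\lambda\mapsto-\overline{\lambda}$ adapted to the $b$-density convention, again of that form, so the hypotheses of Corollary~\ref{b.12} make sense.

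Take $\mu^*_L>0$ small enough that:
\begin{itemize}
\item $\delta$ is not a critical weight of $P^*$;
\item $]\delta,\delta+\mu^*_L]$ contains no critical weight;
\item $\Re(\cE(\rf))>\mu^*_L+\delta=-\kappa$ holds, which is given.
\end{itemize}
Next choose $\mu^*_R>0$ as large as permitted. It must satisfy $]\delta-\mu^*_R,\delta[$ free of critical weights and $\Re(\cE(\lf))>\mu^*_R-\delta=\mu^*_R+\kappa+\mu^*_L$. Both constraints allow some $\mu^*_R>0$, using $\Re(\cE(\lf))>\kappa$ and discreteness.

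\emph{Step 2: polyhomogeneity and the index set.} Corollary~\ref{b.13} with $\cF=\emptyset$ gives $\sigma\in\cA^{\cG}_{\phg}$ with $\cG=\cQ(\lf)\cup\cR(\lf)$. Both pieces satisfy $\Re\ge(\mu^*_R-\delta,\ord(\delta-\mu^*_R)-1)$.

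Now optimize. Suppose there is a highest critical weight $r$ of $P^*$ with $\Re(\cE(\lf))>-r>\kappa$. The interval $]r,\delta[$ then has no critical weights after shrinking $\mu^*_L$, so one can take $\delta-\mu^*_R=r$. This gives $\Re(\cG)\ge(-r+0,\ldots)$; a slight care with signs ($\mu^*_R-\delta=-r$) yields exactly $\Re(\cG)\ge(-r,\ord(r)-1)$. Otherwise $\mu^*_R$ can be pushed arbitrarily, subject only to $\Re(\cE(\lf))>-r$, which gives $\Re(\cG)>-r$ for each admissible $r$. In either case $\Re(\cG)>\kappa$, since $-r>\kappa$, or since $\mu^*_R>0$ gives $\mu^*_R-\delta>\kappa+\mu^*_L$.

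A single index set $\cG$ works for all elements of the kernel, because the parametrix $Q,R$ depends only on $P$ and $\delta$.

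\emph{Step 3: finite dimensionality.} By Step 2, the kernel lies in $x^{\kappa'}L^{2,k}_b$ for some $\kappa'>\kappa$ and all $k$. Choose $\kappa'$ non-critical for $P$. Corollary~\ref{b.15} then gives that $P:x^{\kappa'}L^{2,q}_b\to x^{\kappa'}L^2_b$ is Fredholm, so its kernel is finite dimensional.

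Alternatively, $\sigma=R\sigma$ with $R$ compact on $x^{\kappa}L^2_b$, since $\cR(\fb)=\cR(\rf)=\emptyset$ and $\cR(\lf)>\kappa$. Hence the unit ball of the kernel is compact, which also gives finite dimensionality.

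The main obstacle is the bookkeeping in Step 1: matching the critical weights of $P^*$ with the conditions on $\cE(\lf)$ and $\cE(\rf)$, which swap under adjunction. In particular one must justify that the required parameters $\mu^*_L,\mu^*_R$ exist given only $\Re(\cE(\rf))>-\kappa$ and $\Re(\cE(\lf))>\kappa$.
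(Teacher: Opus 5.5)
Your proof is correct and follows the paper's route: Corollary~\ref{b.13} with $f=0$ gives polyhomogeneity, Corollary~\ref{b.15} at a slightly larger non-critical weight gives finite dimensionality, and a second application of Corollary~\ref{b.13} with $\delta-\mu^*_R$ pushed down to $r$ gives the sharp index set. Your choice $\delta=-\kappa-\mu^*_L$ makes $\Re(\cG)>\kappa$ follow directly from the lower bound on $\cQ(\lf)$, whereas the paper obtains it a posteriori from membership in $x^{\kappa}L^2_b$. One small fix: Corollary~\ref{b.12} only requires the open interval $]\delta,\delta+\mu^*_L[$ to contain no critical weight, so drop the closed endpoint $-\kappa$ from your condition; as written it would wrongly exclude the case where $-\kappa$ is a critical weight of $P^*$.
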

\begin{proof}
The  condition $\Re(\cE(\rf))>-\kappa$ is to ensure that $P$ acts on $x^{\kappa}L^2_b(W;E_1)$.  Choose   $\delta\in \bbR$ such that $\Re(\cE(\rf))>\delta>-\kappa$ and such that $\delta$ is not a critical weight of $P^*$.  
For this choice of $\delta$, we can then apply  Corollary~\ref{b.13} with $f=0$ to conclude that the elements of $\ker_{x^{\kappa}L^2_b}P$ are polyhomogeneous for some index set $\cG$.  Since all these elements must be in $x^{\kappa}L^2_b(W;E_1)$, we can suppose that $\Re(\cG)>\kappa$.  Thus, there exists $\nu>0$ such that $\Re(\cE(\lf))>\kappa+\nu$ and 
$$
    \ker_{x^{\kappa}L^2_b}P \subset x^{\kappa+\nu}L^2_b(W;E_1)\cap \ker P.
$$
Choosing $\nu$ such that $\kappa+\nu$ is not a critical weight of $P$, we conclude from Corollary~\ref{b.15} that $ \ker_{x^{\kappa}L^2_b}P$ is finite dimensional.  
Finally, pick  $\delta\in ]-\kappa-\nu,-\kappa[$ such that $\delta$ is not a critical weight of $P^*$.  Applying Corollary~\ref{b.13}, we see that  $\Re(\cG)\ge (-r,\ord(r)-1)$ if there exists a highest critical weight $r$ of $P^*$ such that $\Re(\cE(\lf))>-r>\kappa$, while otherwise $\Re(\cG)> -r$ for all $r\in\bbR$ such that $\Re(\cE(\lf))>-r>\kappa$.
\end{proof}

When the pseudodifferential operator is invertible, we can also provide a pseudodifferential characterization.  We will need the following one.
\begin{corollary}\label{b.16}
Suppose that $P\in \Psi^{q,\cE}_b(W;E_1,E_1)$ is formally self-adjoint with $\cE$ a real index family and that $P$ satisfies the assumptions of Theorem~\ref{b.11} with $\delta=0$ and $\mu_L=\mu_R=\mu>0$.  If $P$ is invertible as a Fredholm operator from $L^{2,q}_b(W;E_1)$ to $L^2_b(W;E_1)$, then 
$$
   P^{-1}\in \Psi^{-q,\cG}_{b}(W;E_1,E_1)
$$
with $\cG$ a real index family such that
$$
  \cG(\lf)=\cG(\rf)\ge (\mu,\ord(\mu)-1) \quad \mbox{and} \quad \cG(\fb)\ge 0.
$$
\end{corollary}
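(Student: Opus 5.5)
The plan is the classical one from \cite[\S~5]{MelroseAPS}: express $P^{-1}$ through the right and left parametrices of Theorem~\ref{b.11} and Corollary~\ref{b.12}, and read off its index family from the composition result, Proposition~\ref{b.6}.

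Since $P$ is formally self-adjoint and $\cE$ is real, we have $P^*=P$. Hence the hypotheses of Theorem~\ref{b.11} with $\delta=0$, $\mu_L=\mu_R=\mu$ are also the hypotheses of Corollary~\ref{b.12} with $\mu^*_L=\mu^*_R=\mu$. We therefore get
$$
PQ=\Id-R, \qquad Q'P=\Id-R'.
$$
Here $Q,Q'\in\Psi^{-q}_b$ have index sets at $\lf$ and $\rf$ bounded below by $(\mu,\ord(\mu)-1)$, and $\Re(\cdot(\fb)\setminus\{0\})>0$. For the remainders, $R$ has $\cR(\fb)=\cR(\lf)=\emptyset$ and $\cR(\rf)\ge(\mu,\ord(\mu)-1)$, while $R'$ is supported only at $\lf$ with the same bound. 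Since $P$ is self-adjoint, the critical weights of $I(P,\lambda)$ are symmetric about $0$, so $\ord(-\mu)=\ord(\mu)$. If some index sets are complex, we pass to their real parts; realness of $\cE$ ensures the construction can be done with real index sets.

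Applying $P^{-1}$ (bounded on $L^2_b$) to these identities gives $P^{-1}=Q+P^{-1}R$ and $P^{-1}=Q'+R'P^{-1}$. Substituting the second into the first yields
$$
P^{-1}=Q+Q'R+R'P^{-1}R.
$$

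We treat the three terms in turn.
\begin{itemize}
\item The term $Q'R$ is handled by Proposition~\ref{b.6}. Since $\cR(\lf)=\emptyset$, the composition condition $\Re(\cQ'(\rf)+\cR(\lf))>0$ holds trivially. The result is in $\Psi^{-\infty}_b$ with $\fb$ index set $\cQ'(\lf)+\cR(\rf)\ge 2\mu$ and with $\rf$ index set $\ge(\mu,\ord(\mu)-1)$.
\item The term $R'P^{-1}R$ is the main obstacle, because we do not yet know that $P^{-1}$ lies in the calculus. We argue at the level of Schwartz kernels. $R$ is smooth on $W^2_b$, vanishes to infinite order at $\fb$ and $\lf$, and is polyhomogeneous at $\rf$; so $R(\cdot,w')$ is a smooth family in $w'$ with values in $\dot{\CI}(W)$, polyhomogeneous in $x'$. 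Likewise, $R'$ is a family of residual operators with range spanned by functions polyhomogeneous at $\lf$. The boundedness $P^{-1}:L^2_b\to L^{2,q}_b$ then shows that $R'P^{-1}R$ has kernel of the form $\sum a(w)b(w')$, a product-type polyhomogeneous kernel on $W^2$. Concretely, write $R'P^{-1}R=R'\circ(P^{-1}R)$, where $P^{-1}R$ maps into $L^2_b$ continuously and polyhomogeneously in $w'$. This kernel vanishes to infinite order at $\fb$ and has index sets $\ge(\mu,\ord(\mu)-1)$ at $\lf$ and $\rf$.
\item The term $Q$ already lies in the calculus with the required index sets.
\end{itemize}

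Collecting terms, $P^{-1}\in\Psi^{-q,\cG}_b$ with $\cG(\fb)\ge0$ (coming from $Q$, since $\cQ(\fb)$ contains $0$ and otherwise has positive real part) and with $\cG(\lf)$, $\cG(\rf)$ both $\ge(\mu,\ord(\mu)-1)$. Finally, $P^{-1}$ is self-adjoint, so $\cG(\lf)=\overline{\cG(\rf)}$, and since $\cG$ is real we may take $\cG(\lf)=\cG(\rf)$ by enlarging both to their union.
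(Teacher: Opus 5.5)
Your argument is the paper's own proof. You use the same identity $P^{-1}=Q+Q'R+R'P^{-1}R$ (their $Q_2+Q_1R_2+R_1P^{-1}R_2$), Proposition~\ref{b.6} for the middle term, the very-residual/boundedness argument for the triple product, and formal self-adjointness to get $\cG(\lf)=\cG(\rf)$. One slip: $R'P^{-1}R$ does not vanish to infinite order at $\fb$; its kernel is of product type in $(w,w')$, polyhomogeneous in $x$ and in $x'$, so its $\fb$ index set is $\cR'(\lf)+\cR(\rf)\ge 2\mu$ (the paper only records $\cR(\fb)>0$), and this is harmless for $\cG(\fb)\ge 0$.
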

\begin{proof}
By Theorem~\ref{b.11} and Corollary~\ref{b.12}, there exist parametrices $Q_i\in \Psi^{-q,\cQ_i}_b(W;E_1,E_1)$ for $i\in\{1,2\}$ such that
$$
  PQ_2=\Id-R_2 \quad \mbox{and} \quad Q_1P=\Id-R_1
$$
for $R_i\in\Psi^{-\infty,\cR_i}(W;E_1,E_1)$, where $\cQ_i$ and $\cR_i$ are real index families such that
$$
   \cQ_i(\lf)\ge (\mu,\ord(\mu)-1), \quad \cQ_i(\rf)\ge (\mu,\ord(\mu)-1), \quad \cQ_i(\fb)\ge 0
$$
and 
$$
   \cR_2(\lf)=\cR_1(\fb)=\cR_2(\fb)=\cR_1(\rf)=\emptyset, \quad \cR_2(\rf)\ge (\mu,\ord{\mu}-1), \quad \cR_1(\lf)\ge (\mu,\ord{\mu}-1).
$$
Using these parametrices, we find that 
$$
   P^{-1}= P^{-1}\Id= P^{-1}(PQ_2+R_2)= Q_2 + P^{-1}R_2 \quad \mbox{and}  \quad P^{-1}=\Id P^{-1}= (Q_1P+R_1)P^{-1}= Q_1+ R_1P^{-1}.
$$ 
Substituting the first equation in the second one, we find that
\begin{equation}
   P^{-1}= Q_2 + (Q_1+R_1 P^{-1})R_2= Q_2+ Q_1R_2 + R_1P^{-1}R_2.
\label{b.17}\end{equation}
Since $R_1$ and $R_2$ are very residual in the sense of \cite{MazzeoEdge} and $P^{-1}$ is a bounded operator when acting on $L^2_{b}(W;E_1)$, notice that $R_1P^{-1}R_2\in \Psi^{-\infty,\cR}_b(W;E_1)$ for some real index family $\cR$ suc that
$$
      \cR(\lf)\ge (\mu,\ord{\mu}-1), \quad \cR(\rf)\ge (\mu,\ord{\mu}-1) \quad \mbox{and} \quad \cR(\fb)>0. 
$$
Hence, the result follows from \eqref{b.17} by applying Proposition~\ref{b.6}, the fact that  $\cG(\lf)=\cG(\rf)$ following a posteriori from the fact that $P$ is formally self-adjoint. 
\end{proof}

We will also need a less precise but more robust version of this corollary from \cite[Theorem~4.20]{MazzeoEdge}.
\begin{corollary}\label{b.18}
Let $P$ be as in Theorem~\ref{b.11} with $\delta=0$.  Suppose as well that $0$ is not a critical weight of $I(P^*,\lambda)$.  Let $\Pi_1$ and $\Pi_2$ be the $L^2_b$-orthogonal projections on the $L^2_b$-nullspaces of $P$ and $P^*$ respectively.  Then the unique operator $G: L^2_b(W;E_2)\to L^{2,q}_b(W;E_1)$ such that
$$
     GP=\Id-\Pi_1 \quad \mbox{and} \quad PG=\Id-\Pi_2
$$
is an element of $\Psi^{-q,\cG}_b(W;E_2,E_1)$ for an index family $\cG$ such that
$$
   \Re(\cG(\lf))>0, \quad \Re(\cG(\rf))>0 \quad \mbox{and} \quad \Re(\cG(\fb)\setminus\{0\})>0.
$$
\end{corollary}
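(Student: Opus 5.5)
The plan is to mimic the proof of Corollary~\ref{b.16}, using Theorem~\ref{b.11} and Corollary~\ref{b.12} with $\delta=0$. Since $0$ is not a critical weight of $I(P,\lambda)$ nor of $I(P^*,\lambda)$, and the critical weights form a discrete set, we can choose $\mu_L,\mu_R,\mu^*_L,\mu^*_R>0$ small enough that the hypotheses of both results hold. This gives a right parametrix $Q_2$ with $PQ_2=\Id-R_2$, where $\cR_2(\fb)=\cR_2(\lf)=\emptyset$ and $\Re(\cR_2(\rf))>0$. It also gives a left parametrix $Q_1$ with $Q_1P=\Id-R_1$, where $\cR_1(\fb)=\cR_1(\rf)=\emptyset$ and $\Re(\cR_1(\lf))>0$. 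Both $Q_1$ and $Q_2$ have index sets with strictly positive real parts at $\lf$ and $\rf$, and $\Re(\cQ_i(\fb)\setminus\{0\})>0$. By Corollary~\ref{b.15}, $P:L^{2,q}_b\to L^2_b$ is Fredholm, so $G$ exists as a bounded operator and $\Pi_1,\Pi_2$ have finite rank.

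Next we describe the projections. By Corollary~\ref{b.14} applied with $\kappa=0$ to $P$, and to $P^*$ (whose indicial family is also that of a differential $b$-operator, being the adjoint family), $\ker_{L^2_b}P$ and $\ker_{L^2_b}P^*$ are finite dimensional and consist of polyhomogeneous sections with index sets of strictly positive real part. Hence
$$
\Pi_1=\sum_j \phi_j\otimes\overline{\phi_j}\, \nu_b \quad\mbox{and}\quad \Pi_2=\sum_j \psi_j\otimes\overline{\psi_j}\,\nu_b,
$$
with $\nu_b$ the $b$-density, lie in $\Psi^{-\infty,\cP_i}_b$ with $\cP_i(\fb)=\emptyset$ and $\Re(\cP_i(\lf)),\Re(\cP_i(\rf))>0$. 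These are pulled back from $W^2$, so they vanish to infinite order at $\fb$ only in the product sense; the index sets at $\fb$ are sums of those at $\lf$ and $\rf$, which is still strictly positive.

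For the main step, we follow the identity used in \eqref{b.17}. Using $GP=\Id-\Pi_1$ and $PG=\Id-\Pi_2$ we compute
$$
G=G(PQ_2+R_2)+\Pi_2\text{-terms}=(\Id-\Pi_1)Q_2+GR_2,
$$
and similarly $G=Q_1(\Id-\Pi_2)+R_1G$. Substituting the second expression into $GR_2$ gives
$$
G=(\Id-\Pi_1)Q_2+Q_1(\Id-\Pi_2)R_2+R_1GR_2.
$$
We then treat each term separately. The compositions $\Pi_1Q_2$, $Q_1R_2$ and $Q_1\Pi_2R_2$ are handled by Proposition~\ref{b.6}; its condition on $\rf+\lf$ holds because all the relevant index sets are positive. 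The term $R_1GR_2$ is very residual in the sense of \cite{MazzeoEdge}: its kernel is $R_1$ applied on the left and $R_2$ applied on the right, with $G$ bounded on $L^2_b$ in between. It is therefore polyhomogeneous on $W^2$ with positive index sets at $\lf$ and $\rf$, and lifts to $W^2_b$ with $\fb$ index set given by the sum, which is positive.

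The main obstacle is bookkeeping the index set at $\fb$ so as to retain $\Re(\cG(\fb)\setminus\{0\})>0$. The only source of the $0$ term is $Q_2$ and $Q_1R_2$. For the latter, $\cG(\fb)$ contains $(\cQ_1(\fb)+\emptyset)\overline{\cup}(\cQ_1(\lf)+\cR_2(\rf))$, which is positive. For the former, the only possible $0$ comes from $Q_2$ itself. We also have to confirm that the extended unions at $\lf$ and $\rf$ stay strictly positive, which follows from the explicit formulas in Proposition~\ref{b.6}.
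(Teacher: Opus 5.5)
Your proposal is correct and follows the paper's route. The paper likewise uses Corollary~\ref{b.14} to see that $\Pi_1,\Pi_2$ are very residual, then reruns the argument of Corollary~\ref{b.16} with the parametrices of Theorem~\ref{b.11} and Corollary~\ref{b.12}, which amounts to your identity $G=(\Id-\Pi_1)Q_2+Q_1(\Id-\Pi_2)R_2+R_1GR_2$. One small slip: the lifted kernels of $\Pi_i$ have $\fb$ index set $\cP_i(\lf)+\cP_i(\rf)$ rather than $\emptyset$, as you note yourself a sentence later; this is still positive, so the argument is unaffected.
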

\begin{proof}
By Corollary~\ref{b.14}, the projections $\Pi_1$ and $\Pi_2$ are very residual operators in the sense of \cite{MazzeoEdge}.  The result therefore follow using the  parametrices of Theorem~\ref{b.11} and Corollary~\ref{b.12} as in the proof Corollary~\ref{b.16}.
\end{proof}

\subsection{Linear analysis for orbifold $\Sc^{\beta}$-extremal metrics } \label{la.0}
We suppose that $(M,J,g, \omega)$ is a compact K\"ahler orbifold of complex dimension $m\ge 2$  with isolated singularities, $\alpha= \frac{1}{2\pi}[\omega] \in H^{1,1}(M, \bbR)$ is the corresponding K\"ahler class and $\beta\in H^{1,1}(M, \bbR)$ is another $(1,1)$ deRham class on $(M, J)$, such that
\begin{equation*}
   \beta\cdot \alpha^{m-1}[M]=0.
\label{la.1}\end{equation*}
We further suppose that the K\"ahler metric $(g, J, \omega)$ is $\Sc^{\beta}$-extremal in the sense of \cite[Definition~7.1]{ALL2026}, i.e.  the smooth function 
\begin{equation}
 \Sc^{\beta}(\omega):=  \Delta_{\omega}R_{\omega}+ \frac{R^2_{\omega}}2- \|\Ric_{\omega}\|^2_{\omega}-  \|\theta_{\omega}\|_{\omega}^2
\label{la.3}\end{equation}
is a potential of a  Killing vector field of $(g, \omega)$, i.e. $J\nabla^{\omega}\Sc^{\beta}(\omega)$ preserves $(g, \omega)$. 
In \eqref{la.3}, $R_\omega$ and $\Ric_{\omega}$ denote respectively the scalar curvature and Ricci tensor of $g$, $\Delta_{\omega}$  stands for the (non-negative) Laplace-Beltrami of $g$ acting on functions,  $\theta_{\omega}$ is  the harmonic representative of $2\pi\beta$ with respect to $g$, $\|\cdot\|_{\omega}$ is the pointwise tensorial norm with respect to $g$ and $\nabla^\omega$ is the Levi-Civita connection of $g$. 

By \cite{ALL2026}, we know that  $(g, J, \omega)$ is invariant under a maximal compact (connected) subgroup $K$ of the reduced automorphism group $\Aut_{\rm red}(M,J)$ of $(M,J)$. Let us fix a maximal torus $\bbT$ of $K$.  If $\gt$ denotes the Lie algebra of $\bbT$, then by \cite[Lemma~7.6]{ALL2026} $(g, \omega)$ is extremal if and only if 
\begin{equation*}
           J\nabla^{\omega}\Sc^{\beta}(\omega)\in \gt.
\label{la.2}\end{equation*}
In what follows the complex structure $J$ on $M$   will be fixed and we shall tacitly identify the K\"ahler metric $g$ and its K\"ahler form $\omega$ trough the pointwise relation $g=-\omega J$.

Let $\K^{\bbT}_{\alpha}$ denote the space of $\bbT$-invariant K\"ahler forms in the deRham class $2\pi \alpha \in H^2(M, \bbR)$. For $\omega\in \K^{\bbT}_\alpha$, we let $P_{\omega}$ denote the finite dimensional vector space of all Killing potentials with respect to $\omega$ for vector fields in $\gt$.  By \cite[Lemma~7.4]{ALL2026}, the $L^2$-orthogonal projection of $\Sc^{\beta}(\omega)$ with respect to $\omega$ on $P_{\omega}$ yields a Killing potential for a vector field $\xi^{\ext}_{\alpha, \beta}\in \gt$  only depending on $\alpha, \beta$ and not on the choice of representative $\omega\in \K^{\bbT}_{\alpha}$.  On the other hand, integration by parts shows that the $L^2$-orthogonal projection of $\Sc^{\beta}(\omega)$ onto the constant functions is given by the topological constant
\begin{equation*}
 c_{\alpha,\beta}:= 2m(m-1)\left(  \frac{(c_1(M)^2+\beta^2)\cdot\alpha^{m-2}}{\alpha^m} \right).
\label{la.4}\end{equation*}

Now, for each $\omega\in \K^{\bbT}_{\alpha}$, there is an associated normalized moment map 
$$
\mu_{\omega}: M\to \gt^*,\quad \int_M \mu_{\omega}(\omega)^{[m]}=0.
$$  
The corresponding $\bbT$-momentum polytope $\Pol_{\alpha}:= \mu_{\omega}(M)\subset \gt^*$ is independent of the choice of $\omega\in \K^{\bbT}_{\alpha}$ (see e.g. \cite{lahdili}). With respect to the linear affine function
$$
   \ell^{\ext}_{\alpha,\beta}(x):= \langle \xi_{\alpha,\beta}^{\ext},x\rangle + c_{\alpha,\beta}
$$ 
on $\Pol_{\alpha}$, a K\"ahler metric $\omega\in \K^{\bbT}_{\alpha}$ is extremal if and only if  
\[ \Sc^{\beta}(\omega)= \ell^{\ext}(\mu_{\omega}).\]  Writing $\omega_{\varphi} = \omega_0 + dd^c \varphi, \, \varphi\in C^{\infty}(M)^{\bbT}$ for the metrics in $\K^{\bbT}_{\alpha}$, we thus reduce the search of $\Sc^{\beta}$-extremal metrics in $2\pi \alpha$ to a $6$th order PDE problem  \eqref{PDE} for the  $\bbT$-invariant smooth K\"ahler potential $\varphi$:
\begin{equation*}\label{Z-extremal-PDE} \Sc^{\beta}(\omega_{\varphi})= \ell^{\ext}_{\alpha,\beta}(\mu_{\varphi})=\langle \xi_{\alpha,\beta}^{\ext},x\rangle + c_{\alpha,\beta}, \qquad \mu_{\varphi}= \mu_0+d^c\varphi. \end{equation*}
We introduce the \emph{reduced} $\Sc^{\beta}$-scalar curvature by
$$
  \mathring{\Sc}^{\beta}(\omega_{\varphi}):= \Sc^{\beta}(\omega_{\varphi})-\ell^{\ext}_{\alpha,\beta}(\mu_{\varphi}).
$$

\begin{convention}
In what follows, we shall assume that $M$ admits a $\bbT$-invariant $\Sc^{\beta}$-extremal
K\"ahler metric $\omega_0$. 
To simplify the notation,  we will denote by underscrip $0$ the geometric objects corresponding to $\omega_0$, by underscript $\varphi$ those with respect to $\omega_{\varphi} \in\K^{\bbT}_{\alpha}$ and by underscript $\omega$ these quantities for a generic element $\omega \in \K^{\bbT}_{\alpha}$. For instance, we shall write  $R_{\varphi}, {\rm Rc}_{\varphi}, \Delta^H_{\varphi}, \theta_{\varphi}$ and $R_0, {\rm Rc}_0, \Delta^H_0, \theta_0$ for the scalar curvature, Ricci tensor, Hodge Laplace operator, and harmonic representative of $\beta$ with respect to $\omega_{\varphi}$ and $\omega_0$. We shall also refer to the Hodge Laplacian acting of functions as the \emph{Laplace--Beltrami operator} and denote by $\Delta_{\varphi}$ the one associated to $\omega_{\varphi}$,  in accordance with tradition in the analysis literature.
\end{convention}
According to \cite[\S~7]{ALL2026}, the linearization of $ \mathring{\Sc}^{\beta}$ at $\varphi=0$ is given by the $6$th order elliptic pseudodifferential operator
\begin{multline}
\bbS(\dot{\varphi}):= (D_{0} \mathring{\Sc}^{\beta})(\dot{\varphi})=
-4\delta_{0}\delta_{0}(\nabla^{0,-}(\delta_{0}\nabla^{0,-}d\dot{\varphi}))-2\delta_{0}\delta_{0}(R_{0}\nabla^{0,-}d\dot{\varphi}) \\ 
+ 32\delta_{0}\delta_{0}(\theta_{0}\circ \bbG^{H}_{0}(\theta_{0}\circ \nabla^{0,-}d\dot{\varphi})^{\Skew}),
\label{la.6}\end{multline}
where $\bbG^{H}_{0}$ is the Green operator of the Friedrichs extension the Hodge Laplacian of $g_0$.   

The next lemma shows that the Friedrichs extension is the natural self-adjoint extension to consider for the definition of $\bbG^{H}_{0}$.  
\begin{lemma}
The Friedrichs extension of the Hodge Laplacian $\Delta^H_{0}$ is the orbifold $L^2$-Sobolev space of order $2$ $L^{2,2}_{\omega_0}(\Lambda^*(T^*M))$ associated to the metric $g_0$. 
\label{Fr.1}\end{lemma}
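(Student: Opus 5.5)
The plan is to use the standard description of the Friedrichs extension and reduce everything to local elliptic regularity in orbifold charts. The Friedrichs extension of $\Delta^H_0$, initially defined on smooth forms compactly supported in $M\setminus\{p_1,\ldots,p_\ell\}$, has two ingredients. Its form domain $\cQ$ is the closure of $\CI_c(M\setminus\{p_i\};\Lambda^*)$ in the norm $\|u\|^2_{L^2}+\|du\|^2_{L^2}+\|\delta_0 u\|^2_{L^2}$. Its operator domain is
$$
\{u\in\cQ \;|\; \exists f\in L^2 \text{ with } (du,dv)+(\delta_0u,\delta_0v)=(f,v)\ \forall v\in\cQ\}.
$$
I would show first that $\cQ=L^{2,1}_{\omega_0}$, and then that the operator domain equals $L^{2,2}_{\omega_0}$.

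\emph{Step 1: identifying the form domain.} Let $p$ be a singular point with chart $B\subset\bbC^m$ and group $\Gamma$. Orbifold Sobolev spaces near $p$ are by definition the $\Gamma$-invariant Sobolev spaces on $B$. On such a chart, $d+\delta_0$ is an elliptic first-order operator with smooth coefficients. A Gårding/Weitzenböck argument on the cover then shows that the graph norm of $d+\delta_0$ is equivalent to the $L^{2,1}$ norm on $\Gamma$-invariant forms, and the same holds on the regular part. Hence $\cQ\subset L^{2,1}_{\omega_0}$.

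For the reverse inclusion, I would show that $\CI_c(M\setminus\{p_i\})$ is dense in $L^{2,1}_{\omega_0}$. This is the key point and the main obstacle. Since isolated points have zero $H^1$-capacity in real dimension $2m\ge 4$, a plain cutoff suffices. Take $\chi_\epsilon(r)=\chi(r/\epsilon)$ with $\chi=0$ near $0$ and $\chi=1$ for $r\ge 1$. Then $\|d\chi_\epsilon\|^2_{L^2}\sim \epsilon^{-2}\cdot\epsilon^{2m}\to 0$. For $u$ smooth in the orbifold sense, which forms a dense subset, this gives $\|\chi_\epsilon u-u\|_{L^{2,1}}\to0$, using $|u|\le C$ for the cross term. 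Combined with density of orbifold-smooth forms in $L^{2,1}_{\omega_0}$, this yields $\cQ=L^{2,1}_{\omega_0}$.

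\emph{Step 2: identifying the operator domain.} Let $u\in\cQ=L^{2,1}_{\omega_0}$ lie in the domain, with associated $f\in L^2$. Take $v$ to be any $\Gamma$-invariant smooth form on a chart, which lies in $\cQ$ by Step 1. The weak identity then says that the lift $\tilde u$ satisfies $\Delta^H\tilde u=\tilde f$ weakly against $\Gamma$-invariant test forms. Averaging an arbitrary test form over $\Gamma$ shows that this holds against all test forms, so the equation holds in the sense of distributions on the whole ball, including the origin. Interior elliptic regularity for the Laplace-type operator $\Delta^H$ gives $\tilde u\in L^{2,2}_{\rm loc}$, so $u\in L^{2,2}_{\omega_0}$.

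Conversely, let $u\in L^{2,2}_{\omega_0}$. Then $u\in\cQ$, and integration by parts gives $(du,dv)+(\delta_0u,\delta_0v)=(\Delta^H_0u,v)$ for orbifold-smooth $v$. There are no boundary terms, because on the $\Gamma$-cover the identity is an ordinary one on a compact set. By density (Step 1) this extends to all $v\in\cQ$, so $u$ lies in the Friedrichs domain.

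I expect Step 1's density/capacity argument to be the only genuinely delicate point. Everything else is routine once orbifold Sobolev spaces are read on the finite covers.
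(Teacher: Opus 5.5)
Your argument is correct, but it takes a different route from the paper's. The paper's proof is two lines. It quotes from \cite{DR} that the Hodge--de Rham operator $\eth_0=d+\delta_0$ is essentially self-adjoint, with unique self-adjoint extension given by the orbifold space $L^{2,1}_{\omega_0}$. From this, the Friedrichs extension of $\Delta^H_0=\eth_0^2$ is $\{\eta\in L^{2,1}_{\omega_0}\,|\,\eth_0\eta\in L^{2,1}_{\omega_0}\}$, and elliptic regularity identifies this space with $L^{2,2}_{\omega_0}$.

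You instead work directly with the quadratic form. Your cutoff argument uses that a point has zero $H^1$-capacity when $2m\ge 4$. It proves exactly that the minimal domain of $\eth_0$, i.e.\ the graph-norm closure of forms compactly supported away from the $p_i$, equals $L^{2,1}_{\omega_0}$. You then identify the operator domain by local elliptic regularity on the uniformizing charts. There, the $\Gamma$-averaging of test forms is what lets the weak equation hold across the origin of the cover.

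This is all the Friedrichs construction requires. You never need the maximal domain of $\eth_0$ to also be $L^{2,1}_{\omega_0}$, so you avoid the external essential self-adjointness result. Your argument also makes the role of $m\ge 2$ explicit. It further shows the answer is the same whether $\Delta^H_0$ is initially defined on orbifold-smooth forms or on forms supported away from the singular points.

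The paper's route gives, in exchange, the stronger fact that $\eth_0$ itself has a unique self-adjoint extension, and it is shorter once that citation is granted.
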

\begin{proof}
The Hodge-deRham operator $\eth_{0}=d+\delta_{0}$ of $g_0$ is essentially self-adjoint with unique self-adjoint extension corresponding to the orbifold $L^2$-Sobolev space $L^{2,1}_{\omega_0}(M;\Lambda^*(T^*M))$, see for instance \cite[Remark~58]{DR}.  In particular, the Friedrichs extension of $\Delta^H_{0}= \eth_{0}^2$ is then given by 
$$
   \operatorname{Fr}(\Delta^H_{0})= \{ \eta\in L^{2,1}_{\omega_0}(M;\Lambda^*(T^*M)) \; | \; \eth_{0}\eta \in L^{2,1}_{\omega_0}(M;\Lambda^*(T^*M))\},
$$
which by elliptic regularity coincides with $L^{2,2}_{\omega}(\Lambda^*(T^*M))$.
\end{proof}
\begin{remark}
When $m\ge 3$, the Hodge Laplacian is in fact essentially self-adjoint.  Indeed, the positive part of the spectrum of the Hodge Laplacian on the sphere $S^{2m-1}$ with its canonical metric does not contain $]0,4[$ when $m\ge 3$ by \cite{GM1975}, so the Hodge Laplacian $\Delta_{\omega}^H$ is essentially self-adjoint by \cite[Corollary~2.7]{ARS3}. 
\label{Fr.2}\end{remark}

The operator in \eqref{la.6} is also formally self-adjoint with respect to the $L^2$-inner product induced by $\omega_0$.  Let 
\begin{equation}
    \bM:= [M;p_1,\ldots,p_{\ell}]
\label{bm.1}\end{equation} 
be the manifold with boundary obtained from $M$ by blowing up the singular points in the sense of Melrose \cite{MelroseAPS}.  Thus, if $p\in M$ is a singular point with singularity modelled on $\bbC^{m}/\Gamma$ for some finite subgroup $\Gamma\subset \operatorname{U}(m)$, then the boundary component $\pa_p \bM$ of $\bM$ corresponding to $p$ is identified with $S^{2m-1}/\Gamma$ with collar neighbourhood  $S^{2m-1}/\Gamma\times [0,\delta)$ for some $\delta>0$ corresponding to spherical coordinates on $\bbC^m/\Gamma$.  If $M$ has $\ell$ singular points $p_1,\ldots,p_{\ell}$ locally modelled respectively on $\bbC^m/\Gamma_1,\ldots,\bbC^m/\Gamma_{\ell}$ for $\Gamma_1,\ldots, \Gamma_{\ell}$ finite subgroups of $\operatorname{U}(m)$, then $\pa \bM$ has $\ell$ connected components $\pa_{p_1}\bM\cong S^{2m-1}/\Gamma_1, \ldots, \pa_{p_\ell}\bM\cong S^{2m-1}/\Gamma_\ell$.  Let $r\in\CI(\bM)$ be a boundary defining function for $\bM$.  Since the action of $\bbT$ on $M$ fixes the singular points, it lifts to a smooth action on $\bM$.  Averaging with respect to this action if necessary, we can assume without loss of generality that $r$ is a $\bbT$-invariant  function and that in the local neighborhood identified with $\bbC^{m}/\Gamma_i$ near $p_i$, $r$ corresponds to the Euclidean distance to the origin.  Consider then the weighted operator
\begin{equation*}
 \bbS_b:= r^6\bbS.
\label{la.7}\end{equation*}

To describe this operator as a pseudodifferential $b$-operator, let us first rewrite the las term on the right hand side of \eqref{la.6} in terms of the Green operator $\bbG_{0}$ of the Laplace-Beltrami operator (with nonnegative spectrum) $\Delta_{0}$ of the  K\"ahler metric $\omega_0$, that is, the operator such that 
\begin{equation}
     \Delta_{0}\bbG_{0}= \bbG_{0}\Delta_{0}= \Id - \Pi_{0},
\label{la.7b}\end{equation}
where 
$$
     \Pi_{0}u= \frac{\int_M u \omega_0^{[m]}}{\int_M \omega_0^{[m]}}
$$
is the $L^2$-orthogonal projection onto constant functions with respect to the metric $\omega_0$.

\begin{lemma}
The last term on the right hand side of \eqref{la.6} is also taking the form
$$
     -2\langle \theta_{0}, dd^c \bbG_{0}(E_{\theta_{0}}(dd^c \dot{\varphi})\rangle - F_{\theta_{0}}(dd^c\dot{\varphi}),
$$
where $E_{\theta_{0}}$ and $F_{\theta_{0}}$ are $(2,0)$ tensors depending on $\theta_{0}$ that are contracted with $dd^c\dot{\varphi}$.
\label{green.1}\end{lemma}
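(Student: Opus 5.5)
The plan is to rewrite the Green operator of the Hodge Laplacian acting on the $2$-form $(\theta_0\circ\nabla^{0,-}d\dot\varphi)^{\Skew}$ in terms of the scalar Green operator $\bbG_0$. The guiding observation is that on a K\"ahler manifold with $\theta_0$ harmonic, the skew part of the composition of the primitive harmonic $(1,1)$-form $\theta_0$ with the $J$-anti-invariant Hessian $\nabla^{0,-}d\dot\varphi$ is, up to algebraic terms, a $(1,1)$-form built from $dd^c$ of a function. This suggests writing it via the Hodge decomposition as
\[
 (\theta_0\circ\nabla^{0,-}d\dot\varphi)^{\Skew}=h + dd^c u + (\text{terms annihilated after }\delta_0\delta_0\text{ pairing}),
\]
with $h$ harmonic, and then using that on $(1,1)$-forms of type $dd^c u$ the Hodge Laplacian commutes with $dd^c$ (K\"ahler identities): $\Delta^H_0 dd^c u = dd^c\Delta_0 u$. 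Consequently $\bbG^H_0(dd^c u)=dd^c\bbG_0 u$.

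\textbf{Step 1 (algebra).} I will express $\nabla^{0,-}d\dot\varphi$ and $(\theta_0\circ\cdot)^{\Skew}$ in terms of $dd^c\dot\varphi$ and its contractions with $\theta_0$. Concretely, I define $E_{\theta_0}(dd^c\dot\varphi)$ as the function obtained by taking the trace (via $\Lambda_{\omega_0}$) of the $(1,1)$-part of $(\theta_0\circ\nabla^{0,-}d\dot\varphi)^{\Skew}$, i.e.\ a contraction $\langle \theta_0, \cdot\rangle$-type pairing with $dd^c\dot\varphi$ (up to a constant). By the $\partial\bar\partial$-lemma on the compact orbifold, a closed, or more precisely $\Delta^H_0$-suitably projected, $(1,1)$-form $\eta$ splits as $\eta = h + \frac{1}{2}(\Lambda\eta)_0\,\omega_0$-corrections $+ dd^c\bbG_0(\Lambda\eta) + (\text{co-exact part})$.

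\textbf{Step 2 (Green operator).} Apply $\bbG^H_0$ and use the commutation $\bbG^H_0 dd^c = dd^c\bbG_0$ together with $\bbG^H_0 h=0$ for the harmonic part. The co-exact component and the part proportional to $\omega_0$ must be handled next: after composing with $\theta_0\circ$ and $\delta_0\delta_0$, the contributions of the harmonic and co-exact pieces reduce, using $\delta_0\theta_0=0$, $d\theta_0=0$ and primitivity, to zeroth-order algebraic expressions in $dd^c\dot\varphi$. These are collected into $F_{\theta_0}(dd^c\dot\varphi)$. The remaining term $32\delta_0\delta_0(\theta_0\circ dd^c\bbG_0 E_{\theta_0})$ is converted, by integrating the pairing, i.e.\ using $\delta_0\delta_0(\theta_0\circ\psi)=\langle\theta_0,\psi\rangle$-type identities valid because $\theta_0$ is parallel-free but co-closed and primitive, into $-2\langle\theta_0, dd^c\bbG_0(E_{\theta_0}(dd^c\dot\varphi))\rangle$ after absorbing constants into the definition of $E_{\theta_0}$.

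\textbf{Main obstacle.} The delicate point is Step 2: justifying that $\bbG^H_0$ (Friedrichs, by Lemma~\ref{Fr.1} acting on $L^{2,2}$ orbifold forms) genuinely commutes with $dd^c$ on the orbifold, and that the non-$dd^c$ parts of the Hodge decomposition contribute only algebraic terms. I would first verify the commutation $\Delta^H_0 dd^c = dd^c\Delta_0$ pointwise via K\"ahler identities, then extend to the Friedrichs domain using that $dd^c$ maps $L^{2,4}$ functions into $L^{2,2}$ forms and the orbifold elliptic regularity, and finally track the projection $\Pi_0$ from \eqref{la.7b}, noting that $dd^c$ kills constants so no correction arises.
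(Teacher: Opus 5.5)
The step that fails is your guiding observation, and Steps~1--2 rest on it. The $2$-form $(\theta_0\circ\nabla^{0,-}d\dot\varphi)^{\Skew}$ is not of type $(1,1)$, even up to algebraic terms. As endomorphisms, $\theta_0$ commutes with $J$, while $\nabla^{0,-}d\dot\varphi$ (the $J$-anti-invariant part of the Hessian) anticommutes with $J$. Their composition therefore anticommutes with $J$, and its skew part is a $J$-anti-invariant form, i.e.\ of type $(2,0)+(0,2)$. For the same reason, Step~1 cannot express $\nabla^{0,-}d\dot\varphi$ through $dd^c\dot\varphi$ at all, since $dd^c\dot\varphi$ only sees the $J$-invariant part of the Hessian.

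On a K\"ahler orbifold $\Delta^H_0$, hence $\bbG^H_0$, preserves bidegree, and $(2,0)+(0,2)$-forms are pointwise orthogonal to every $dd^cu$. Consequently:
\begin{itemize}
\item the $dd^c u$-component of your decomposition is zero, and your $E_{\theta_0}$ (the trace of the $(1,1)$-part) vanishes identically;
\item the commutation $\bbG^H_0dd^c=dd^c\bbG_0$, which you single out as the main obstacle, is never used;
\item the whole term lives in the remaining pieces, and nothing supports your claim that these become zeroth-order algebraic expressions after applying $\theta_0\circ$ and $\delta_0\delta_0$. They are outputs of the nonlocal operator $\bbG^H_0$, and composing with a multiplication operator and a second-order differential operator cannot make them local.
\end{itemize}
Your closing ``identity'' $\delta_0\delta_0(\theta_0\circ\psi)=\langle\theta_0,\psi\rangle$ also fails pointwise: the left side differentiates $\psi$ twice, while the right side is algebraic in $\psi$. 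The orders do not even match: $\delta_0\delta_0\big(\theta_0\circ dd^c\bbG_0E_{\theta_0}(dd^c\dot\varphi)\big)$ has order four in $\dot\varphi$, whereas the term in \eqref{la.6} has order two.

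The missing idea is to stop manipulating the $\bbG^H_0$-expression and instead linearize the term $-\|\theta_\varphi\|^2_\varphi$ of $\Sc^\beta$ directly, with $J$ fixed; this is what the paper does. By the $dd^c$-lemma on the orbifold, $\theta_\varphi=\theta_0+dd^cv$ with $\int_M v\,\omega_\varphi^{[m]}=0$. Primitivity of $\beta$ with respect to $[\omega_\varphi]$ then yields the \emph{scalar} equation $0=\Lambda_\varphi\theta_\varphi=\Lambda_\varphi\theta_0-\Delta_\varphi v$, so $\theta_\varphi=\theta_0+dd^c\bbG_\varphi\Lambda_\varphi\theta_0$. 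Because $\Lambda_0\theta_0=0$, this gives
\[
\theta_{t\dot\varphi}=\theta_0+t\,dd^c\bbG_0\big(E_{\theta_0}(dd^c\dot\varphi)\big)+\cO(t^2),
\]
where $E_{\theta_0}(dd^c\dot\varphi)$ is the $t$-derivative of $\Lambda_{t\dot\varphi}\theta_0$ at $t=0$, an algebraic contraction of $\theta_0$ with $dd^c\dot\varphi$. Differentiating the pointwise norm then produces $2\langle\theta_0,dd^c\bbG_0E_{\theta_0}(dd^c\dot\varphi)\rangle$, plus the algebraic term $F_{\theta_0}(dd^c\dot\varphi)$ coming from the variation of the metric. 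So the scalar Green operator enters because the variation of the harmonic representative is $dd^c$-exact, with potential determined by a scalar Laplace equation. It does not come from a Hodge decomposition of the anti-invariant $2$-form. That form is natural in the picture where $J$ varies with $\omega$ fixed, and reaching the $\bbG_0$-form means changing to the fixed-$J$ picture rather than rewriting that expression algebraically.
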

\begin{proof}
If $\theta_{\varphi}$ is the harmonic representative of $\beta$ with respect to $\omega_{\varphi}$, then by the $dd^c$-lemma on compact  K\"ahler orbifolds (see e.g. \cite[Lemma~5.4]{BBFMT}  or \cite[Sec.4]{Boyer-Galicki-book}),  there exists a unique function $v$ such that
$$
 \theta_{{\varphi}}= \theta_{0}+dd^c v \quad \mbox{and}   \quad \int_M v\omega^{[m]}_{\varphi}=0.
$$
Since $\beta$ is primitive with respect to the K\"ahler class of $\omega_{\varphi}$, notice that 
$$
0= \Lambda_{\varphi}\theta_{{\varphi}}= \Lambda_{\varphi}\theta_{0}- \Delta_{{\varphi}}v,
$$
where $\Lambda_{\varphi}$ is the adjoint of 
$$
   \begin{array}{lccc}
    L_{\varphi}: & \Omega^p(M) &\to & \Omega^{p+2}(M) \\
                & \eta & \mapsto & \omega_{\varphi}\wedge \eta
   \end{array}
$$
with respect to the K\"ahler metric $\omega_{\varphi}$.  Hence, 
$$
     v= \bbG_{{\varphi}} \Lambda_{\varphi}\theta_{0}
$$
and
$$
   \theta_{{\varphi}}= \theta_{0} + dd^c\bbG_{{\varphi}}\Lambda_{\varphi}\theta_{0},
$$
where $\bbG_{{\varphi}}$ is the Green operator of (the Friedrichs extension of) the Laplace-Beltrami operator of $\omega_{\varphi}$.  Since $\beta$ is primitive with respect to the K\"ahler class of $\omega_0$, notice that 
$$
    \Lambda_{\epio\varphi}\theta_{0}= \cO(\epio) \quad \mbox{as} \; \epio\searrow 0.
$$
This means that 
$$
     \theta_{{\epio\varphi}}= \theta_{0}+ dd^c\bbG_{0}\Lambda_{\epio\varphi}\theta_{{0}}+ \cO(\epio^2)
     \quad \mbox{as} \; \epio\searrow 0.
$$
Using this, we get that 
$$
\|\theta_{{\epio\varphi}}\|_{{\epio\varphi}}^2= \|\theta_{0}\|^2_{0}+ \epio\lrp{2\lrp{\theta_{0}, dd^c\bbG_{0}E_{ \theta_0}(dd^c\varphi)}_{0}+ F_{\theta_0}(dd^c\varphi)} + \cO(\epio^2) \quad \mbox{as} \; \epio\searrow 0
$$
for $(2,0)$-tensors $E_{\theta_{0}}$ and $F_{\theta_{0}}$ depending on $\theta_{0}$ that are contracted with $dd^c\varphi$, from which the result follows.
\end{proof}

To describe $\bbG_{0}$ as a pseudodifferential $b$-operator, notice that near each connected component $\pa_{p_i}\bM$ of $\pa \bM$, we have that 
$$
        r^2\Delta_{0}- r^{2}\Delta_{\bbC^m/\Gamma_i}\in r\Diff^2_b(\bM),
$$
where $\Delta_{\bbC^m/\Gamma_i}$ is the Laplace-Beltrami operator of $\bbC^m/\Gamma_i$ with the quotient Euclidean metric.   In spherical coordinates, recall that
$$
   r^2\Delta_{\bbC^m/\Gamma_i}= \Delta_{S^{2m-1}/\Gamma_i}-\left(r \frac{\pa}{\pa r} \right)^2 -(2m-2)r\frac{\pa}{\pa r},
$$
where $\Delta_{S^{2m-1}/\Gamma_i}$ is the Laplace-Beltrami operator of $S^{2m-2}/\Gamma_i$ with its induced metric, so that
$$
  I(r^2\Delta_{\bbC^m/\Gamma_i},\lambda)= \Delta_{S^{2m-1}/\Gamma_i}-\lambda^2-(2m-2)\lambda.
$$
Hence, the set of indicial roots $\Spec_b(r^2\Delta_{\bbC^m/\Gamma_i})$ of $I(r^2\Delta_{\bbC^m/\Gamma_i},\lambda)$ is given by
\begin{equation}
  \Spec_b(r^2\Delta_{\bbC^m/\Gamma_i})= \left\{  -(m-1)\pm \sqrt{(m-1)^2+\nu} \; | \; \nu\in \Spec(\Delta_{S^{2m-1}/\Gamma_i})  \right\}.
\label{la.13}\end{equation}
Since elements of $\Spec(\Delta_{S^{2m-1}/\Gamma_i})$ are of the form $k(k+2(m-1))$ for $k\in\bbN$ (see e.g.  \cite{BGM}),  we see in particular that
\begin{equation}
  \Spec_b(r^2\Delta_{\bbC^m/\Gamma_i})\subset \bbZ\setminus \left( \bbZ\cap  ]-2(m-1),0[ \right).
\label{la.13a}\end{equation}
\begin{lemma}\label{green.2}
For $m\ge 2$, the Green operator $\bbG_{0}$ is an element of $\Psi^{-2,\cG}_{b}(\bM)$ for some real index family $\cG$ such that
$$
  \min \cG|_{\lf}\ge 0, \quad \min \cG|_{\rf}\ge 2m \quad \mbox{and} \quad \min\cG|_{\fb}\ge 2.
$$
\end{lemma}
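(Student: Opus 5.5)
The plan is to reduce the statement to the $b$-calculus results of \S\ref{b.0}. I would apply them to a conjugated, formally self-adjoint version of $r^2\Delta_0$ and then undo the conjugation, tracking how the weights shift the index sets at $\lf$, $\rf$ and $\fb$.

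\emph{Step 1: the conjugated operator.} Near $\pa\bM$ the Riemannian volume form is $\omega_0^{[m]} = r^{2m}\,\nu_b$ with $\nu_b$ a (polyhomogeneous) $b$-density. Hence $u\mapsto r^m u$ is an isometry $L^2_{\omega_0}\to L^2_b(\bM)$. I would set
$$
 T:= r^{m+1}\,\Delta_0\, r^{1-m}\in \Diff^2_{b,\cH}(\bM), \qquad \Re\cH\ge 0.
$$
A direct check gives $\int T v\, w\,\nu_b=\int \Delta_0(r^{1-m}v)(r^{1-m}w)\,\omega_0^{[m]}$, so $T$ is formally self-adjoint with respect to $\nu_b$, and it is $b$-elliptic. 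Moreover $I(T,\lambda)=I(r^2\Delta_{\bbC^m/\Gamma_i},\lambda+1-m)$. By \eqref{la.13}, its indicial roots are therefore $\pm\sqrt{(m-1)^2+\nu}$ with $\nu\in\Spec(\Delta_{S^{2m-1}/\Gamma_i})$. All of them are real, and the ones closest to $0$ are $\pm(m-1)$, which come from $\nu=0$. These roots are simple: the pole of $(\Delta_S-\lambda^2-2(m-1)\lambda)^{-1}$ at $\nu=0$ is simple because $m\ge 2$. So $0$ is not a critical weight, $]-(m-1),m-1[$ contains no critical weight, and $\ord(\pm(m-1))=1$.

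\emph{Step 2: the generalized inverse.} Corollary~\ref{b.18} applies to $T=T^*$ with $\delta=0$. It gives the generalized inverse $G_T\in\Psi^{-2,\cG_T}_b(\bM)$ with $\Re\cG_T(\lf),\Re\cG_T(\rf)>0$ and $\Re(\cG_T(\fb)\setminus\{0\})>0$. The $L^2_b$-nullspace of $T$ is spanned by $r^{m-1}$, i.e. by the image of the constants: the Friedrichs domain of Lemma~\ref{Fr.1} is used here to say that only the decaying solution at $\lambda=m-1$ occurs, not the one at $-(m-1)$. Hence $\Pi$ has kernel $c\,r^{m-1}r'^{m-1}$, and $G_T$ corresponds exactly to $\bbG_0$ through
$$
\bbG_0 = r^{1-m}\,G_T\, r^{m+1},
$$
where the right factor $r^{m+1}=r^{m-1}\cdot r^2$ accounts for both the $L^2$ identification and the factor $r^2$ in $\Delta_0=r^{-1-m}Tr^{m-1}$. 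This is consistent with \eqref{la.7b}.

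\emph{Step 3: sharpening the index sets (the main obstacle).} Corollary~\ref{b.18} only gives $\lf,\rf>0$, whereas the claim needs $\ge m-1$. I would redo the argument of Corollary~\ref{b.16}, writing $G_T=Q_2+Q_1R_2+R_1G_TR_2$ plus finite-rank terms built from $\Pi$. Here I use the parametrices of Theorem~\ref{b.11} and Corollary~\ref{b.12} with $\mu_L=\mu_R=\mu^*_L=\mu^*_R=m-1$. This is allowed because $T$ is a differential operator, so $\cE(\lf)=\cE(\rf)=\emptyset$ and the hypotheses of those results hold for every $\mu$ up to the first critical weight. The outcome is $\cG_T(\lf),\cG_T(\rf)\ge(m-1,0)$, with no logarithm since $\ord(m-1)=1$, and $\cG_T(\fb)\ge 0$. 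The term $\Pi$ has index $m-1$ at both faces, so it respects these bounds. The delicate points are checking that the very residual composites keep the sharp bound at exactly $m-1$, and that the $\fb$ index set stays $\ge 0$. For the latter I would invoke Proposition~\ref{b.6} and the fact that $r^{m-1}$ lies in the decaying part.

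\emph{Step 4: undoing the conjugation.} On $\bM^2_b$, with $\rho_{\fb}$ a defining function of $\fb$, write $r=\rho_{\lf}\rho_{\fb}$ and $r'=\rho_{\rf}\rho_{\fb}$. Left multiplication by $r^{1-m}$ and right multiplication by $r'^{m+1}$ then shift the index sets as follows:
$$
\lf:\ (m-1)+(1-m)=0,\qquad \rf:\ (m-1)+(m+1)=2m,\qquad \fb:\ 0+(1-m)+(m+1)=2.
$$
This gives $\bbG_0\in\Psi^{-2,\cG}_b(\bM)$ with $\min\cG|_{\lf}\ge0$, $\min\cG|_{\rf}\ge 2m$ and $\min\cG|_{\fb}\ge 2$, as claimed. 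All index sets involved are real because the indicial roots are real.
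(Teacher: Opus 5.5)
Your operator $T=r^{m+1}\Delta_0r^{1-m}$ is exactly the paper's $P_b$ in \eqref{green.2b}. Your Steps 3--4 also match the paper's proof: parametrices with $\mu=m-1$, then shifts of $1-m$ at $\lf$, $m+1$ at $\rf$ and $2$ at $\fb$.

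The step that fails as written is Step 2. If $G_T$ is the $L^2_b$-orthogonal generalized inverse from Corollary~\ref{b.18}, then $\bbG_0\neq r^{1-m}G_Tr^{m+1}$. The reason is that conjugation by $r^{m-1}$ is not the $L^2$ isometry; that isometry is $r^m$. For $u=r^{1-m}v$ one has $\|v\|^2_{L^2_b}=\int_M|u|^2r^{-2}\omega_0^{[m]}$. So $L^2_b$-orthogonality for $T$ corresponds to orthogonality in $L^2(r^{-2}\omega_0^{[m]})$, not in $L^2(\omega_0^{[m]})$. Explicitly, let $\Pi$ be the $L^2_b$-orthogonal projection onto $r^{m-1}$. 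Then
\[
\Delta_0\bigl(r^{1-m}G_Tr^{m+1}\bigr)u=u-r^{-m-1}\Pi(r^{m+1}u)=u-\frac{\int_M u\,\omega_0^{[m]}}{\int_M r^{-2}\omega_0^{[m]}}\,r^{-2}.
\]
This is not $u-\Pi_0u$, so the claimed consistency with \eqref{la.7b} is false. What $G_m:=r^{m-1}\bbG_0r^{-m-1}$ actually satisfies is \eqref{green.3}: $TG_m=\Id-P_R$ and $G_mT=\Id-P_L$, with the non-orthogonal projections $P_R=r^{m+1}\Pi_0r^{-m-1}$ and $P_L=r^{m-1}\Pi_0r^{1-m}$.

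The conclusion survives, but you have to account for this. One option is to do what the paper does: skip Corollary~\ref{b.18} and substitute the parametrix identities directly into \eqref{green.3}. This gives $G_m=(\Id-P_L)Q_1+Q_2(\Id-P_R)R_1+R_2G_mR_1$.

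The other option is to keep your $G_T$ and use $G_m=(\Id-P_L)G_T(\Id-P_R)$. This follows from \eqref{green.3}, $P_L\Pi=\Pi$ and $\Pi_0\bbG_0=0$. The correction $G_m-G_T$ then consists of rank-one terms. Their kernels with respect to the $b$-density are proportional to $r^{m-1}(G_Tr^{m+1})(r')$, $(G_Tr^{m+1})(r)\,r'^{m-1}$ and $r^{m-1}r'^{m-1}$. By Proposition~\ref{b.5} and your sharpened bounds, $G_Tr^{m+1}$ is polyhomogeneous with index set $\ge m-1$. Hence these terms have $\lf,\rf\ge m-1$ and $\fb\ge 2m-2$, and they do not spoil Step 4.

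A minor point: $r^{-(m-1)}\notin L^2_b$ already excludes the non-decaying solution. The Friedrichs domain, or essential self-adjointness, is needed for something else: to identify the spectrally defined $\bbG_0$ with the $b$-calculus object.
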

\begin{proof}
Thanks to \eqref{la.13a}, we know by \cite[Corollary3.17]{Gil-Mendoza}, see also \cite[Proposition~1.5]{ARS3}, that the Laplace-Beltrami operator $\Delta_{0}$ is essentially self-adjoint with respect to the orbifold metric $g_0$.  In terms of the conformally related $b$-metric, this means that 
\begin{equation}
    P_b:= r^m(r\Delta_{0}r)r^{-m}= r^{m-1}(r^2\Delta_{0})r^{-(m-1)}
\label{green.2b}\end{equation}
is a formally self-adjoint $b$-operator.  This suggests to rewrite \eqref{la.7b} as
\begin{equation}
  P_b G_m= \Id -r^{m+1}\Pi_{0}r^{-m-1} \quad \mbox{and} \quad G_mP_b=\Id- r^{m-1}\Pi_{0}r^{-(m-1)},
\label{green.3}\end{equation}
where $G_m=r^{m-1}\bbG_{0}r^{-m-1}$.  On the other hand, by Theorem~\ref{b.11} and Corollary~\ref{b.12}, there exists parametrices $Q_i\in \Psi^{-2,\cQ_i}_b(\bM)$ and error terms $R_i\in \Psi^{-\infty,\cR_i}(\bM)$ such that
\begin{equation}
 P_bQ_1= \Id- R_1 \quad \mbox{and} \quad Q_2P_b=\Id-R_2
\label{green.4}\end{equation}
with $\cQ_i$ and $\cR_i$ real index families such that 
$$
     \cQ_i(\fb)\ge 0, \quad \cQ_i(\lf)\ge m-1, \quad \cQ_i(\rf)\ge m-1,
$$
and
$$
 \cR_1(\fb)=\cR_1(\lf)= \cR_2(\fb)=\cR_2(\rf)=\emptyset,\quad \cR_1(\rf)\ge m-1 \quad \mbox{and} \quad \cR_2(\lf)\ge m-1.
$$
From \eqref{green.3} and \eqref{green.4}, we see that
\begin{equation}
 G_m= G_m\Id= G_m(P_bQ_1+R_1)= (\Id- r^{m-1}\Pi_{0}r^{-(m-1)})Q_1 +G_mR_1
\label{green.5}\end{equation}
and
\begin{equation}
 G_m= \Id G_m= (Q_2P_b+R_2)G_m= Q_2(\Id-r^{m+1}\Pi_{0}r^{-m-1})+ R_2G_m.
\label{green.6}\end{equation}
Substituting \eqref{green.6} in \eqref{green.5}, we find that
$$
\begin{aligned}
G_m&= (\Id- r^{m-1}\Pi_{0}r^{-(m-1)})Q_1 + \left[ Q_2(\Id-r^{m+1}\Pi_{0}r^{-m-1})+ R_2G_m\right]R_1 \\
  &= (\Id- r^{m-1}\Pi_{0}r^{-(m-1)})Q_1+ Q_2(\Id-r^{m+1}\Pi_{0}r^{-m-1})R_1 + R_2G_mR_1.
\end{aligned}
$$
Since $G_m$ is bounded on $L^2_b(M)$ and $R_1$ and $R_2$ are very residual in the sense of \cite{MazzeoEdge}, the term $R_2G_mR_1$ is very residual and is an element of $\Psi^{-\infty,\cR}_b(\bM)$ for some real index family $\cR$ such that
$$
   \cR(\lf)\ge m-1, \quad \cR(\rf)\ge m-1\quad \mbox{and} \quad \cR(\fb)\ge 2(m-1).
$$
Using Proposition~\ref{b.6} for the other terms, we deduce that $G_m\in \Psi^{-2,\cH}_b(\bM)$ for an index family $\cH$ with real index sets such that
$$
    \cH(\fb)\ge 0, \quad \cH(\lf)\ge m-1 \quad \mbox{and} \quad \cH(\rf)\ge m-1.
$$
Since $\bbG_{0}= r^{-(m-1)}G_m r^{m+1}$, the result follows.
\end{proof}

\begin{lemma}\label{la.8}
The operator $\bbS_b$ is an elliptic pseudodifferential $b$-operator of order $6$, more precisely
$$
    \bbS_b\in \Psi^{6,\cE}_b(\bM)
$$
with real index family $\cE$ such that $\cE(\lf)\ge 4$, $\cE(\fb)\ge 0$ and $\cE(\rf)\ge 2m-2$.
\end{lemma}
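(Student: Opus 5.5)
We split $\bbS$ as in \eqref{la.6}, using Lemma~\ref{green.1}, into a local part and a nonlocal part:
$$
\bbS= D + N, \qquad N\dot\varphi := -2\langle \theta_0, dd^c\bbG_0(E_{\theta_0}(dd^c\dot\varphi))\rangle,
$$
where $D$ is the sixth order differential operator collecting the first two terms of \eqref{la.6} together with the term $-F_{\theta_0}(dd^c\dot\varphi)$.

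The first step is to handle the differential part. Near each $p_i$ the metric $g_0$ is a smooth orbifold metric, so in the Euclidean coordinates of $\bbC^m/\Gamma_i$ its coefficients are smooth in $z$. In polar coordinates these coefficients become smooth functions of $(r,\omega)$ on $\bM$, since the functions $z_j$ are $r$ times smooth functions on $\bM$. Each $\pa_{z_j}$ is $r^{-1}$ times a $b$-vector field. Hence
$$
r^6 D\in \Diff^6_b(\bM),
$$
with smooth coefficients. Its leading part is $r^6\Delta^3_{\bbC^m/\Gamma_i}$, which is $b$-elliptic, and $r^6D - r^6\Delta_{\bbC^m/\Gamma_i}^3\in r\Diff^6_b$. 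By Remark~\ref{b.4}, $r^6D\in\Psi^{6,\cE_0}_b(\bM)$ with $\cE_0(\fb)=\bbN$ and $\cE_0(\lf)=\cE_0(\rf)=\emptyset$. The lower order terms involving $R_0$, $\theta_0$ and $F_{\theta_0}$ carry bounded smooth coefficients (since $\theta_0$ is a smooth orbifold form) and so only contribute positive powers of $r$. Since $r^6 N$ is of order $\le 4$, the $b$-principal symbol of $\bbS_b$ equals that of $r^6 D$, namely $|\xi|_b^6$ up to a positive factor. This gives $b$-ellipticity of order $6$.

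The second step treats the nonlocal term. We write
$$
r^6N = \bigl(r^6 A_1 r^{-2}\bigr)\circ \bigl(r^2\bbG_0 r^{2}\bigr)\circ \bigl(r^{-2}A_2 r^{-2}\bigr)\circ r^{2},
$$
with $A_1 = -2\langle\theta_0, dd^c\,\cdot\,\rangle$ and $A_2=E_{\theta_0}(dd^c\,\cdot\,)$. Each $A_j$ is a second order operator whose coefficients are smooth orbifold tensors. Hence $r^2A_j\in\Diff^2_b(\bM)$, and more precisely
$$
r^6A_1r^{-2} = r^2\cdot(r^2A_1)\in r^2\Diff^2_b, \qquad r^{-2}A_2r^{-2}\cdot r^2 = r^{-2}A_2\in r^{-4}\Diff^2_b.
$$
By Lemma~\ref{green.2}, the kernel of $\bbG_0$ has index sets $\ge 0$ at $\lf$, $\ge 2m$ at $\rf$ and $\ge 2$ at $\fb$. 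A conjugation $r^a\bbG_0 r^b$ shifts these to $\lf+a$, $\rf+b$ and $\fb+a+b$, because $x=r$ at $\lf$, $x'=r$ at $\rf$, and $xx'$ vanishes at $\fb$ to the total order.

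Assembling the factors, we take the total left weight $r^{4}$ and the right weight $r^{2}$ around $\bbG_0$, so that
$$
r^6 N = P_1\circ\bigl(r^4\,\bbG_0\,r^{2}\bigr)\circ P_2, \qquad P_1,P_2\in\Diff^2_b(\bM),
$$
which amounts to $r^6 A_1 = P_1 r^4$ and $A_2 = r^{-2}P_2$. The composite $r^4\bbG_0 r^2$ then has index sets $\ge 4$ at $\lf$, $\ge 2m+2$ at $\rf$ and $\ge 8$ at $\fb$. Composing with differential $b$-operators preserves these bounds, by Proposition~\ref{b.6} with $\cE_0(\fb)=\bbN$. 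So $r^6N\in \Psi^{-2+4,\cE_1}_b$, of order $2$, with $\cE_1(\lf)\ge 4$, $\cE_1(\rf)\ge 2m+2$ and $\cE_1(\fb)\ge 8$.

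The main obstacle is the bookkeeping of these weights. Along the way, commuting $r^{\pm a}$ past $b$-operators gives $r^{\pm a}Pr^{\mp a}\in\Diff_b$, so this causes no trouble. The real care is in verifying that the Green kernel's $\lf$ behaviour $\ge 0$ (which reflects constants in the range of $\Pi_0$) is lifted to $\ge 4$ by the $r^6$ prefactor, and that the right weight does not drop below $2m-2$. The stated $\rf$ bound $2m-2$ leaves slack, consistent with allowing less favourable splittings of the powers of $r$, for instance putting $r^{-2}$ on the right so that $\rf\ge 2m-2$. We would choose the conservative splitting that yields exactly $\lf\ge4$ and $\rf\ge 2m-2$.

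Summing the two contributions gives $\bbS_b\in\Psi^{6,\cE}_b(\bM)$ with $\cE=\cE_0\cup\cE_1$, so that $\cE(\fb)\ge0$, $\cE(\lf)\ge4$ and $\cE(\rf)\ge 2m-2$. The operator is $b$-elliptic by the first step.
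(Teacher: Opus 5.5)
Your route is the paper's. The $r^6$-weighted differential part gives $b$-ellipticity and has index set $\bbN$ at $\fb$ only. The Green-operator term of Lemma~\ref{green.1} is handled by Lemma~\ref{green.2}, conjugation by powers of $r$, and Proposition~\ref{b.6}. Your first step is fine (the constant in front of $|\xi|_b^6$ is actually negative, but only its nonvanishing matters).

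The weight bookkeeping in your second step, however, is wrong. Write $A_1=r^{-2}\tilde A_1$ and $A_2=r^{-2}P_2$ with $\tilde A_1,P_2\in\Diff^2_b(\bM)$. Then
\[
r^6N=r^4\tilde A_1\,\bbG_0\,r^{-2}P_2=P_1\bigl(r^4\bbG_0\,r^{-2}\bigr)P_2,\qquad P_1:=r^4\tilde A_1r^{-4}\in\Diff^2_b(\bM),
\]
so the weight to the right of $\bbG_0$ is $r^{-2}$, not $r^{2}$. Your identity $r^6N=P_1(r^4\bbG_0r^2)P_2$ inserts a spurious factor $r^4$ between $\bbG_0$ and $P_2$. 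It also contradicts your own first factorization, in which the right-hand factor $r^{-2}A_2$ lies in $r^{-4}\Diff^2_b$, so the net weight there is $r^2\cdot r^{-4}=r^{-2}$.

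Consequently your bounds $\cE_1(\rf)\ge 2m+2$ and $\cE_1(\fb)\ge 8$ are unjustified, and the second is false in general. The normal operator of $r^2A_1\bbG_0A_2$ at $\fb$ is obtained by freezing the coefficients of $A_1,A_2$ at $p_i$ and replacing $\bbG_0$ by the Euclidean Green operator on $\bbC^m/\Gamma_i$. This operator is dilation invariant and is nonzero when $\theta_0(p_i)\neq 0$, so $r^6N$ has leading order exactly $4$ at $\fb$. The correct count is $\lf\ge 0+4$, $\fb\ge 2+4-2=4$ and $\rf\ge 2m-2$. This is exactly the family $\cE'$ in the paper, and with it your final assembly goes through.

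For the same reason, drop the remark that $\rf\ge 2m-2$ ``leaves slack'' and that one may pick a ``conservative splitting''. The index family is a property of the operator, and any factorization you use to bound it must be an identity. With the correct identity, the $b$-calculus bookkeeping produces precisely $\rf\ge 2m-2$; it is not a matter of choice. Also, $r^6A_1r^{-2}\neq r^2\cdot(r^2A_1)$ as operators, although it does lie in $r^2\Diff^2_b(\bM)$.
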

\begin{proof}
Using Lemmas~\ref{green.1} and \ref{green.2} as well as the $r^6$ factor in \eqref{la.7}, the  
 composition formula for $b$-operators and the fact that $\theta_{0}$ is smooth in the sense of orbifolds, we see that the last term in the definition of $\bbS_b$ is a $b$-operator of order 2 in $\Psi^{2,\cE'}_b(\bM)$ for an index family $\cE'$ such that $ \cE'(\lf)\ge 4$, $ \cE'(\fb)\ge 4$ and $ \cE'(\rf)\ge 2m-2$.  Since the first term in the definition of $\bbS_b$ gives an elliptic differential $b$-operator of order 6, while the second terms gives a differential $b$-operator of order $4$, the result follows. \end{proof}

Since the term involving the Green function $\bbG_{0}$ in $\bbS_b$ restricts to zero on the boundary hypersurface $\fb$, notice that it does not contribute to the indicial family of $\bbS_b$.  For a similar reason, the second term in the right hand side of \eqref{la.6} does not contribute to the indicial family.  
Since the linearization of the scalar curvature at $\varphi=0$ is
$$
 (D_{0}R)(\dot{\varphi})= -\Delta_{0}^2\dot{\varphi} - 2(dd^c\dot{\varphi},\rho_{0}),
$$
where $\rho_{0}$ is the Ricci form of $\omega_0$, we see that the indicial family of $\bbS_b$ at $\pa_{p_i}\bM$ is given by
\begin{equation*}
    I(\bbS_b,\lambda)= \frac12 I(-r^6\Delta^3_{0},\lambda).
\label{la.11}\end{equation*}

Near each connected component of $\pa_{p_i}\bM$ of $\pa \bM$, we have that 
$$
        r^6\Delta_{0}^3- r^{6}\Delta^3_{\bbC^m/\Gamma_i}\in r\Diff^6_b(\bM),
$$
which means that 
$$
     I(r^6\Delta_{0})= \bigoplus_{i=1}^\ell I(r^6\Delta^3_{\bbC^m/\Gamma_i}).
$$

Since
$$
   r^6\Delta_{\bbC^m/\Gamma_i}^3= r^4(r^2\Delta_{\bbC^m/\Gamma_i})r^{-4}r^2(r^2\Delta_{\bbC^m/\Gamma_i})r^{-2}(r^2\Delta_{\bbC^m/\Gamma_i}),
$$ 
we see on the other hand that
\begin{equation}
   I(r^6\Delta_{\bbC^m/\Gamma_i}^3,\lambda)= I((r^2\Delta_{\bbC^m/\Gamma_i}),\lambda-4)I((r^2\Delta_{\bbC^m/\Gamma_i}),\lambda-2)I((r^2\Delta_{\bbC^m/\Gamma_i}),\lambda),
\label{ind.1}\end{equation}
which implies that the set of indicial roots of $\bbS_b$ is given by
\begin{equation}
  \Spec_b(\bbS_b)=\bigcup_{i=1}^{\ell}\left( \bigcup_{k=0}^{2} \left( \Spec_b((r^2\Delta_{\bbC^m/\Gamma_i}))+2k\right) \right).
\label{ind.2}\end{equation}
This yields in particular the following.
\begin{lemma}
The indicial set $\Spec_b(\bbS_b)$ consists only of integers and is symmetric with respect to $\lambda=-(m-3)$.  For $m>3$, $0$ and $6-2m$ are indicial roots of order $1$ and rank $\ell$ and 
$$
    \Spec_b(\bbS_b)\cap ]6-2m,0[=\emptyset,
$$
while for $m=3$, $0$ is an indicial root of order $2$ and rank $2\ell$.  For $m=2$, $0$ and $2$ are indicial roots of order $2$ with the part of order $2$ coming from locally constant functions on $\pa \bM$, namely the direct sum of the nullspaces of $\Delta_{S^{2m-1}/\Gamma_i}$ for $i\in\{1,\ldots,\ell\}$.
\label{la.14}\end{lemma}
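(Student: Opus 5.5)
The approach is to compute $\Spec_b(\bbS_b)$ explicitly from \eqref{la.13}, \eqref{ind.1} and \eqref{ind.2}, working one eigenspace of $\Delta_{S^{2m-1}/\Gamma_i}$ at a time. First I will rewrite \eqref{la.13} using the spectrum: for $\nu=k(k+2m-2)$ we have $\sqrt{(m-1)^2+\nu}=k+m-1$. So the indicial roots of $r^2\Delta_{\bbC^m/\Gamma_i}$ attached to the eigenvalue $\nu$ are exactly $k$ and $2-2m-k$. Here $k$ ranges over those $k\in\bbN$ for which $S^{2m-1}/\Gamma_i$ carries eigenfunctions of degree $k$, that is, $\Gamma_i$-invariant harmonic polynomials of degree $k$; $k=0$ always occurs, via the locally constant functions.

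By \eqref{ind.1}, on the $\nu$-eigenspace the indicial family $I(\bbS_b,\lambda)$ is a nonzero constant times a scalar polynomial in $\lambda$, namely the product of the three quadratics with roots
\[
\{k+j,\; j+2-2m-k\}, \qquad j\in\{0,2,4\}.
\]
Integrality of $\Spec_b(\bbS_b)$ is then immediate. For the symmetry I will check that the reflection $\lambda\mapsto 6-2m-\lambda$ sends $k+j$ to $(4-j)+2-2m-k$, and $4-j\in\{0,2,4\}$. Since $I(\bbS_b,\lambda)^{-1}$ acts diagonally on the eigenspace decomposition, the order of a root is its multiplicity as a zero of the relevant scalar polynomial, maximized over eigenspaces. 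The rank is the sum, over the eigenspaces where the root occurs, of that multiplicity times the dimension of the eigenspace: on an eigenspace where the root has multiplicity $p$, the admissible Laurent tails are $\sum_{j\le p} u_j(\lambda-z)^{-j}$ with all $u_j$ in that eigenspace.

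Next comes a case analysis on $m$. Every root of the form $k+j$ is $\ge 0$, with equality only for $k=j=0$. Every root of the form $j+2-2m-k$ is $\le 6-2m$, with equality only for $j=4$, $k=0$. For $m>3$ this shows that $]6-2m,0[$ contains no root. It also shows that $0$ arises only from $k=0$, $j=0$, since $j+2-2m-k=0$ would need $j\ge 2m-2>4$. So $0$ is a simple root on the locally constant functions, of rank $\ell$, and $6-2m$ follows by symmetry. For $m=3$, the root $0$ arises twice from $k=0$, namely from $k+j$ with $j=0$ and from $j+2-2m-k$ with $j=4$; it cannot arise from any $k>0$. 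Hence it has order $2$, carried by the constants, and rank $2\ell$.

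For $m=2$ the roots are $k+j$ and $j-2-k$, symmetric about $1$. On constants ($k=0$) the roots $0,2,4$ and $-2,0,2$ give double roots at $0$ and $2$. For $k\ge1$, the value $0$ can occur only as $j-2-k$ with $(j,k)=(4,2)$, and the value $2$ only as $k+j$ with $(k,j)=(2,0)$. Each of these is a simple root on the degree-$2$ eigenspace, if that eigenspace is nonzero. The order-$2$ part is therefore exactly the constants, as claimed.

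The main obstacle is this bookkeeping of order versus rank when different eigenspaces contribute to the same root, which happens in the case $m=2$. The fix is to insist on computing the order through the block-diagonal structure of $I(\bbS_b,\lambda)^{-1}$ rather than through the union \eqref{ind.2}.
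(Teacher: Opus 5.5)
Your proposal is correct and follows the same route as the paper, which simply reads the lemma off from \eqref{la.13}, \eqref{ind.1} and \eqref{ind.2} together with the fact that the eigenvalues of $\Delta_{S^{2m-1}/\Gamma_i}$ are $k(k+2m-2)$. Your eigenspace-by-eigenspace bookkeeping supplies the details the paper leaves implicit. You take the order of a root to be the multiplicity of the zero of the scalar polynomial on each eigenspace, and the rank to be $\sum_\nu p_\nu \dim E_\nu$. You also check the pairing $(k+j)\leftrightarrow(4-j)+2-2m-k$ for the symmetry, and your case analysis for $m>3$, $m=3$ and $m=2$ is sound.
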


On the other hand, the operator $\bbS$ is formally self-adjoint with respect to the $L^2$-inner product induced by the volume form of $\omega_0$.  If $L^2_w(\bM)$ is the corresponding $L^2$-space, then 
$$
        L^2_w(\bM)= r^{-m}L^2_b(\bM),
$$
which means that $r^m(r^3\bbS r^3)r^{-m}$ is formally self-adjoint as a $b$-operator acting formally on $L^2_b(\bM)$.  In particular, this means that 
$$
     r^{-3+m}\bbS_br^{3-m}= \left(  r^{-3+m} \bbS_b r^{3-m} \right)^*= r^{3-m}\bbS_b^*r^{-(3-m)},
$$
so that as a $b$-operator, the formal adjoint of $\bbS_b$ is given by
\begin{equation}
\bbS_b^*= r^{-(6-2m)}\bbS_b r^{6-2m}.
\label{la.18}\end{equation}
In particular, we deduce that 
\begin{equation}
    \Spec_b(\bbS^*_b)= \Spec(\bbS_b)+2m-6.
\label{dual.1}\end{equation}
We can use these results to characterize the nullspace of $\bbS_b$.

\begin{lemma}\label{la.15}
Suppose that $R_{0}\ge 0$ and $m>3$.  Then for $\nu\in ]6-2m,0[$, 
$$
     (\ker\bbS_b)\cap r^{\nu}L^2_b(M)^{\bbT}= P_{0},
$$  
where $L^2_b(M)^{\bbT}$ is the subspace of $\bbT$-invariant functions in $L^2_b(\bM)$.  
\end{lemma}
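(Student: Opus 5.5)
Both inclusions in the lemma come from one idea. We show that any $\bbT$-invariant $u\in(\ker\bbS_b)\cap r^{\nu}L^2_b$ is an honest smooth orbifold function. Then the global structure of the linearized operator on a compact Kähler orbifold forces $u\in P_0$.

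\textbf{Step 1: regularity.} Let $u$ be such an element, so $\bbS u=0$ on the regular part. By Lemma~\ref{la.8} and Lemma~\ref{la.14}, the hypotheses of Corollary~\ref{b.14} hold for $\kappa=\nu$. Lemma~\ref{la.8} gives $\cE(\lf)\ge 4$ and $\cE(\rf)\ge 2m-2$, and for $m>3$ and $\nu\in]6-2m,0[$ we have $\cE(\rf)>-\nu$ and $\cE(\lf)>\nu$. Hence $u$ is polyhomogeneous with index set $\cG$ satisfying $\cG>\nu$. Since $\Spec_b(\bbS_b)\cap]6-2m,0[=\emptyset$, the critical weights of $\bbS_b^*$ are shifted by $2m-6$ (see \eqref{dual.1}). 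So the first term allowed in the expansion is at the indicial root $0$, which has order $1$, and it has no logarithm. Thus $u\sim a_0+O(r^{\mu})$ with $a_0$ locally constant on $\pa\bM$ and $\mu>0$.

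One technical point concerns the pseudodifferential Green term in $\bbS$. It is lower order at $\fb$, so it does not alter the leading expansion; Lemma~\ref{la.8} already accounts for it. Iterating the expansion, and using that the remaining indicial roots are integers coming from the Euclidean Laplacian on $\bbC^m/\Gamma_i$, the terms of $u$ correspond to harmonic-type polynomial terms. I would argue that $u$ lies in the orbifold Sobolev space $L^{2,6}$. This holds because $u$ is bounded with $du=O(r^{\mu-1})$, so the ``removable singularity'' of the sixth-order equation applies: the singular solutions at $p_i$ have decay rates at most $6-2m<0$, and these are excluded by the weight. Orbifold elliptic regularity then makes $u$ smooth in the orbifold sense.

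\textbf{Step 2: global identification.} With $u$ smooth, we can integrate by parts on the orbifold without boundary terms. Following \cite[\S~7]{ALL2026}, the linearization of $\mathring{\Sc}^{\beta}$ at a $\bbT$-invariant extremal metric satisfies
\[\int_M u\,\bbS(u)\,\omega_0^{[m]} \;\le\; -c\int_M \Bigl(\|\cD u\|^2_{L^2}\text{-type terms weighted by } R_0\Bigr) \;\le\; 0,\]
where $\cD u=(\nabla^{0,-}du)$ is the Lichnerowicz operator. The Green-term contribution is a nonpositive square, since $\bbG^H_0\ge 0$, and the $R_0$-term is controlled by $R_0\ge0$. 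I expect this sign computation to be the main obstacle. The plan is to rewrite $\bbS=-4\cD^*(\nabla^{0,-}\cD)\dots$ in the form $-2\cD^*(\text{positive operator})\cD$, plus the $\theta_0$ term written as $-32\|\bbG^{H,1/2}_0(\theta_0\circ\cD u)^{\Skew}\|^2$. Here the positive operator is $2\delta\nabla^{0,-}+R_0$ acting on $J$-anti-invariant symmetric tensors, and it is positive when $R_0\ge0$.

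So $\bbS u=0$ forces $\cD u=0$, i.e.\ $J\nabla u$ is holomorphic. Since $u$ is $\bbT$-invariant, $J\nabla u$ is a Killing field commuting with $\bbT$. Because $\bbT$ is maximal in $\Aut_{\rm red}$ and $\omega_0$ is invariant under the maximal compact $K$, this Killing field lies in $\gt$, and therefore $u\in P_0$.

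\textbf{Step 3: converse inclusion.} Elements of $P_0$ are smooth, bounded, and $\bbT$-invariant, so they lie in $r^\nu L^2_b$ for $\nu<0$, because $r^{\nu}L^2_b\supset r^{0^-}L^\infty$ near the boundary. They are annihilated by $\bbS$ because $\cD u=0$ kills each term of \eqref{la.6}, as every term factors through $\nabla^{0,-}du$.
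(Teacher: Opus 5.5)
Your proof is correct and has the same skeleton as the paper's. You use the expansion $u=a_0+O(r^{\mu})$ with $a_0$ locally constant (Corollary~\ref{b.14}, Lemmas~\ref{la.8}, \ref{la.14} and \eqref{dual.1}). Then comes the energy identity $0=\langle u,\bbS u\rangle=-4\|\delta_0\nabla^{0,-}du\|^2-2\langle\nabla^{0,-}du,R_0\nabla^{0,-}du\rangle-32\langle X,\bbG^{H}_0X\rangle$ with $X=(\theta_0\circ\nabla^{0,-}du)^{\Skew}$. Finally you invoke maximality of $\bbT$.

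The real difference is how the integration by parts is justified. The paper never shows that $u$ is orbifold smooth. It only reads off from the expansion that $du\in r^{\delta-1}L^{2,\infty}_b$, $\nabla^{0,-}du\in r^{\delta-2}L^{2,\infty}_b$ and $\delta_0\nabla^{0,-}du\in r^{\delta-3}L^{2,\infty}_b$ for some $\delta>0$, and this already kills the boundary terms in $\langle u,\bbS u\rangle$. You instead upgrade $u$ to an orbifold-smooth function.

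That upgrade is true for $m>3$, but the reasons you give for it are not adequate. The indicial roots alone do not make the expansion polynomial. The $r\Diff^6_b$ part of $\bbS_b$ and the Green term feed inhomogeneous terms into every order, and logarithms would still have to be ruled out. Also, $u\in L^{2,6}$ does not follow from $\nabla^k(u-a_0)=O(r^{\mu-k})$ unless $\mu>6-m$.

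The correct removability argument uses exactly the conormal bounds the paper uses. Pairing with a test function, the boundary terms on $\{r=\epsilon\}$ are $O(\epsilon^{\mu+2m-6})$. Hence $\bbS u=0$ holds as an orbifold distribution across each $p_i$. Bootstrapping then gives smoothness, using the elliptic sixth-order differential part together with the fact that the Green term has order $2$.

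So in this lemma your detour costs an extra regularity argument for a nonlocal operator and buys only a boundary-free computation. The paper's direct version is also more robust. It is reused in Lemma~\ref{la2.3} for $m=2$, where kernel elements may a priori contain $r^2\log r$ terms, so removability fails. There the pairing $\langle u,\bbS u\rangle$ can still be integrated by parts because $u$ itself vanishes at the $p_i$.

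There is one small gap in Step 2. The operator should be $2\nabla^{0,-}\delta_0+R_0$; your $2\delta\nabla^{0,-}+R_0$ has the order reversed. With only $R_0\ge 0$ this operator is merely nonnegative, so vanishing of the form only gives $\delta_0\nabla^{0,-}du=0$. You still need the extra step $\|\nabla^{0,-}du\|^2=\langle du,\delta_0\nabla^{0,-}du\rangle=0$, as in the paper, to conclude $\nabla^{0,-}du=0$.
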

\begin{proof}
Let $u\in r^\nu L^2_b(\bM)^{\bbT}$ be such that $\bbS_bu=0$.  By Corollary~\ref{b.14}, Lemma~\ref{la.8} and \eqref{dual.1}, we deduce from Lemma~\ref{la.14} that
$$
    u\in \cA^{\cE}(\bM)
$$ 
for some nonnegative index set $\cE$.  Moreover, the term of order $0$ is a locally constant function on $\pa \bM$.  In particular,
$$
      du\in r^{\delta-1}L^{2,\infty}_b(\bM; {}^{w}T^*\bM)
$$
for some $\delta>0$.  More generally this implies that 
$$
  \nabla^{0,-}du\in r^{\delta-2}L^{2,\infty}_b(\bM;{}^{w}T^*\bM\otimes {}^{w}T^*\bM) \quad \mbox{and} \quad 
     \delta_{0}\nabla^{0,-}du\in r^{\delta-3}L^{2,\infty}_b(\bM;{}^{w}T^*\bM).
$$
This ensures that integration by parts in $\langle u, \bbS_b u\rangle_{L^2_w}$ can be used to obtain that
$$
0=\langle u,\bbS u\rangle_{L^2_w}= -4 \|\delta_{0}\nabla^{0,-}du\|^2_{L^2_w}- 2\langle \nabla^{0,-}du, R_{0} \nabla^{0,-}du\rangle_{L^2_w}
-32 \langle (\theta_{0}\circ\nabla^{0,-}du)^{\Skew}, \bbG_{0} (\theta_{0}\circ\nabla^{0,-}du)^{\Skew}\rangle_{L^2_w}.
$$
Since each term on the right is nonpositive, we get that
$$
  \|\delta_{0}\nabla^{0,-}du\|^2_{L^2_w}=0 \quad \mbox{and} \quad \langle \nabla^{0,-}du, R_{0} \nabla^{0,-}du\rangle_{L^2_w}=0.
$$
From the first equation, we deduce that $\delta_{0}\nabla^{0,-}du=0$, hence integrating by parts yields
$$
   0=\langle du, \delta_{0}\nabla^{0,-}du\rangle= \| \nabla^{0,-}du \|^2_{L^2_w},
$$
from which we conclude that $\nabla^{0,-}du=0$.  Since $u$ is $\bbT$-invariant, this means that $u\in P_{0}$ by the maximality of $\bbT$.  On the other hand, 
$$
     P_{0} \subset \ker \bbS_b\cap r^{\nu}L^2_b(\bM)^{\bbT},
$$
so the result follows.
\end{proof}

Setting 
$$
  (r^{\nu}L^{2,k}_b(\bM)^{\bbT})^{\perp}= \left\{ u \in r^{\nu}L^{2,k}_b(\bM)^{\bbT} \quad \Big| \quad \int_M uh \omega_0^{[m]} \; \, \, \forall h\in P_{0}^{\bbT} \right\}
$$
for $\nu>-2m$ has the following consequence for the mapping properties of the operator $\bbS$.
\begin{proposition}\label{la.16}
Suppose $m>3$ and $R_0\ge 0$.  Then for  $\nu \in]6-2m,0[$ and $k\in \bbN$, the operator $\bbS_b$ induces a continuous linear isomorphism
\begin{equation*}
\bbS: (r^{\nu}L^{2,k+6}_b(\bM)^{\bbT})^{\perp}\to (r^{\nu-6}L^{2,k}_b(\bM)^{\bbT})^{\perp}.
\label{la.16a}\end{equation*}
\end{proposition}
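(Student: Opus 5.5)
Since $\bbS=r^{-6}\bbS_b$ and $r^{-6}$ maps $r^{\nu}L^{2,k}_b(\bM)$ isomorphically onto $r^{\nu-6}L^{2,k}_b(\bM)$, it suffices to show that $\bbS_b: r^{\nu}L^{2,k+6}_b(\bM)^{\bbT}\to r^{\nu}L^{2,k}_b(\bM)^{\bbT}$ is Fredholm with kernel $P_0$ and cokernel dual to $P_0$, and that the orthogonality conditions match up. The plan is to combine Corollary~\ref{b.15}, Lemma~\ref{la.15} and a duality argument using \eqref{la.18}.

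First, Fredholmness. By Lemma~\ref{la.8}, $\bbS_b\in\Psi^{6,\cE}_b(\bM)$ is elliptic with $\cE(\lf)\ge 4$, $\cE(\rf)\ge 2m-2$, $\cE(\fb)\ge0$. Its indicial family is that of the differential operator $-\frac12 r^6\Delta_0^3$. By Lemma~\ref{la.14}, for $m>3$ there are no critical weights in $]6-2m,0[$. Hence for $\nu$ in that interval we may choose $\mu_L,\mu_R>0$ so that $]\nu-\mu_R,\nu+\mu_L[\subset]6-2m,0[$. Then $\Re\cE(\lf)\ge4>\mu_L+\nu$ (since $\nu<0$) and $\Re\cE(\rf)\ge 2m-2>\mu_R-\nu$ (since $\mu_R-\nu<2m-6$). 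Corollary~\ref{b.15} then gives Fredholmness for every $k$. Since $\bbT$ acts by isometries and $r$ is $\bbT$-invariant, $\bbS_b$ commutes with the action, so it restricts to a Fredholm map between the $\bbT$-invariant subspaces.

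Second, the kernel and cokernel. By Lemma~\ref{la.15}, the kernel on $r^{\nu}L^{2}_b(\bM)^{\bbT}$ is exactly $P_0$, and elliptic regularity gives the same answer for every $k$. For the cokernel, pair $r^{\nu}L^2_b$ with its dual $r^{-\nu}L^2_b$ via the $L^2_b$ pairing. The cokernel is then identified with the kernel of $\bbS_b^*$ on $r^{-\nu}L^2_b(\bM)^{\bbT}$, and it is independent of $k$ by regularity (Corollary~\ref{b.13}). By \eqref{la.18}, $u\in\ker\bbS_b^*$ exactly when $r^{6-2m}u\in\ker\bbS_b$, with $r^{6-2m}u\in r^{6-2m-\nu}L^2_b$. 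Since $6-2m-\nu\in]6-2m,0[$, Lemma~\ref{la.15} applies again and gives $\ker\bbS_b^*=r^{2m-6}P_0$.

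Third, we translate the conditions. The range of $\bbS_b$ consists of those $f$ with $\int f\,r^{2m-6}h\,dV_b=0$ for all $h\in P_0$. Since $dV_b=r^{-2m}\omega_0^{[m]}$ up to a smooth positive factor (which I would fix as the choice of $b$-density), this reads $\int r^{-6}f\,h\,\omega_0^{[m]}=0$. In other words, $\bbS u=r^{-6}\bbS_b u$ is $L^2(\omega_0)$-orthogonal to $P_0$. Hence $\bbS$ maps onto $(r^{\nu-6}L^{2,k}_b)^{\perp}$. Restricting the domain to $(r^{\nu}L^{2,k+6}_b)^{\perp}$ removes the kernel $P_0$, which is legitimate because $P_0$ consists of smooth functions and so the pairing is nondegenerate on it. This gives a bijection, and the open mapping theorem gives bicontinuity.

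The main obstacle is the weight bookkeeping: checking that the hypotheses of Corollary~\ref{b.15} hold with the index sets of Lemma~\ref{la.8} on the whole range $]6-2m,0[$, and checking that the dual weight $6-2m-\nu$ again lands in the range where Lemma~\ref{la.15} applies. Both are simple inequalities, but they are what forces $m>3$. I would also verify that the $L^2_b$ pairing identifies the cokernel correctly for $k>0$, which follows from regularity of elements of $\ker\bbS_b^*$.
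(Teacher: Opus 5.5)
Your proposal is correct and follows essentially the same route as the paper: Fredholmness from Lemma~\ref{la.8}, Lemma~\ref{la.14} and Corollary~\ref{b.15}; kernel $P_0$ from Lemma~\ref{la.15}; and the cokernel identified via \eqref{la.18} with the kernel at the dual weight $6-2m-\nu\in\,]6-2m,0[$, hence with $r^{2m-6}P_0$. The only cosmetic difference is that you read off the range directly as the annihilator of that cokernel, i.e.\ $L^2(\omega_0)$-orthogonality to $P_0$ after multiplying by $r^{-6}$. The paper instead first deduces that the index is zero, and then combines the $L^2_w$-orthogonality of the range to $P_0$ with a dimension count.
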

\begin{proof}
 Since $\nu\in]2m-6,0[$, we know by Lemma~\ref{la.14} that it is not an indicial root of $\bbS_b$.
 By Lemma~\ref{la.8} and Corollary~\ref{b.15}, we know that the map
 \begin{equation}
\bbS_b: r^{\nu}L^{2,k+6}_b(M)^{\bbT}\to r^{\nu}L^{2,k}_b(\bM)^{\bbT}
\label{la.17}\end{equation}
is Fredholm and that its nullspace is $P_{0}$ by Lemma~\ref{la.15}.

By \eqref{dual.1}, the formal adjoint of $r^{-\nu}\bbS_b r^{\nu}$ with respect to the $b$-metric $g_b= r^{-2}g_0$ is
\begin{equation}
     (r^{-\nu}\bbS_br^{\nu})^*= r^{\nu}\bbS_b^*r^{-\nu}= r^{\nu-(6-2m)}\bbS_b r^{-\nu+ 6-2m}.
\label{la.19}\end{equation}
Now, the cokernel of the Fredholm operator
\begin{equation}
  r^{-\nu}\bbS_b r^{\nu}: L^{2,k+6}_b(\bM)^{\bbT}\to L^{2,k}_b(\bM)^{\bbT}
\label{la.20}\end{equation}
is identified with the $L^2_b$-nullspace of its formal adjoint.  Since 
$$
  \nu \in ]6-2m,0[ \quad \Longrightarrow \quad 6-2m-\nu\in ]6-2m,0[,
$$
it follows from Lemma~\ref{la.15} that this nullspace is $r^{\nu-(6-2m)}P_{0}$.  In particular, the Fredholm indices of \eqref{la.17} and \eqref{la.20} are zero.  Moreover, as a subspace of $L^2_b(\bM)$, $r^{\nu-(6-2m)}P_{0}$ is in fact $L^2_b(\bM)$-orthogonal to the range of $r^{-\nu}\bbS_br^{\nu}$, which means that $P_{0}$ is $L^2_w(\bM)$-orthogonal to the range of 
\begin{equation}
 \bbS: r^{\nu}L^{2,k+6}_b(\bM)^{\bbT}\to r^{\nu-6}L^{2,k}_b(\bM)^{\bbT}.
\label{la.21}\end{equation}
On the other hand, the discussion above shows that the operator \eqref{la.21} is Fredholm of index zero, so its cokernel is of dimension $\dim P_{0}$.  Since its range is $L^2_w(\bM)$-orthogonal to $P_{0}$, the result follows.
\end{proof}

We can have a weaker result when $\nu>0$.  To formulate it, for $i\in\{1,\ldots,\ell\}$, let $\chi_i\in \CI(M)^{\bbT}$ be a cut-off function equal to $1$ near $p_i\in M$ and to $0$ near $p_j\in M$ for $j\ne i$.  Consider the spaces of functions
$$
    \cC_i= \{ c_i\chi_i\; | \; c_i\in \bbC\} \subset \CI(M)^{\bbT}
$$ 
and 
$$
   \cP_i= \{ \chi_i P_i\; | \; P_i \in \bbP_{1,\Gamma_i}^{\bbT}\}\subset \CI(M)^{\bbT},
$$
where $\bbP_{1,\Gamma_i}^{\bbT}$ is the space of $\Gamma_i$-invariants polynomials of degree $1$ on $\bbC^m$ which are also $\bbT$-invariant with respect to the linear action of $\bbT$ on $\bbC^m/\Gamma_i$  induced by the action of $\bbT$ on $M$ near $p_i$.
\begin{corollary}\label{la.16b}
Suppose $m>3$.  Then for $\nu\in]0,1[$, the operator $\bbS_b$ induces a surjective continuous linear map
\begin{equation}
  \bbS: (r^{\nu}L_b^{2,k}(\bM)^{\bbT})^{\perp}\oplus \lrp{\bigoplus_{i=1}^{\ell}\cC_i}\to  (r^{\nu-6}L_b^{2,k}(\bM)^{\bbT})^{\perp},
\label{la.16c}\end{equation}
while for $\nu\in]1,2[$ it induces a surjective continuous linear map
\begin{equation}
  \bbS: (r^{\nu}L_b^{2,k}(\bM)^{\bbT})^{\perp}\oplus \lrp{\bigoplus_{i=1}^{\ell}\cP_i}\to  (r^{\nu-6}L_b^{2,k}(\bM)^{\bbT})^{\perp}.
\label{la.16d}\end{equation}
\end{corollary}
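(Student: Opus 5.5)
The plan is to deduce Corollary~\ref{la.16b} from Proposition~\ref{la.16} by a standard change-of-weight argument: cross the indicial roots lying between a negative weight $\nu'\in]6-2m,0[$ and the target weight $\nu\in]0,1[$ (resp. $]1,2[$), and record the leading terms so produced. By Lemma~\ref{la.14}, for $m>3$ the indicial set of $\bbS_b$ consists of integers. The roots in $[0,2[$ are $0$ (order $1$, with eigenspace the locally constant functions on $\pa\bM$) and $1$, coming from $\Spec_b(r^2\Delta_{\bbC^m/\Gamma_i})$ with $\nu=2m-1$. The eigenfunctions for the root $1$ are the restrictions to $S^{2m-1}/\Gamma_i$ of $\Gamma_i$-invariant linear polynomials, so the corresponding solutions of the model equation are $r P_i$ with $P_i$ of degree one; imposing $\bbT$-invariance restricts these to $\bbP^{\bbT}_{1,\Gamma_i}$.

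Concretely, let $f\in (r^{\nu-6}L^{2,k}_b(\bM)^{\bbT})^{\perp}$ with $\nu\in]0,1[$. Fix $\nu'\in]6-2m,0[$ close to $0$. Since $r^{\nu-6}L^{2,k}_b\subset r^{\nu'-6}L^{2,k}_b$ and the orthogonality condition against $P_0$ is the same, Proposition~\ref{la.16} gives $u\in (r^{\nu'}L^{2,k+6}_b)^{\perp}$ with $\bbS u=f$, i.e. $\bbS_b u=r^6f\in r^{\nu}L^{2,k}_b$. I then improve the decay of $u$, using the parametrix machinery of Theorem~\ref{b.11}/Corollary~\ref{b.12} together with the standard relative index/Mellin argument (as in \cite[\S~5]{MelroseAPS}). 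Take a parametrix $Q$ for weight $\nu$ with $\bbS_bQ=\Id-R$ and compare $u$ with $Qr^6f\in r^{\nu}L^{2,k+6}_b$. The difference $w$ satisfies $\bbS_b w\in r^{\nu}L^2_b$ modulo residual terms, and $w\in r^{\nu'}L^2_b$. Taking the Mellin transform at the boundary and shifting the contour from $\Re\lambda=\nu'$ to $\Re\lambda=\nu$ picks up the residue at the indicial root $0$ only. That residue has the form $\sum_i c_i$ with $c_i$ constant on $\pa_{p_i}\bM$; it has no log term since the order is $1$, and $\bbT$-invariance is preserved because everything is $\bbT$-equivariant. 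Hence $u-\sum_i c_i\chi_i\in r^{\nu}L^{2,k+6}_b$. Finally I project off $P_0$ while staying in the domain: $P_0$ consists of smooth orbifold functions, hence lies in $r^{\nu}L^2_b$ for $\nu<2m$ (recall that the $L^2_b$ weight absorbs $r^{m}$), and $\bbS P_0=0$. Subtracting the orthogonal projection of $u-\sum c_i\chi_i$ onto $P_0$ therefore yields a preimage in the stated direct sum, proving \eqref{la.16c}. Continuity is immediate, since each summand maps continuously.

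For $\nu\in]1,2[$, I repeat the argument, crossing the root $1$ as well. This produces terms $\chi_i r P_i$, which are exactly the elements $\chi_i P_i$ with $P_i$ linear. The constants at the root $0$ must now also be accounted for. Here I would use the fact that $\bbS(\chi_i)$ is compactly supported away from $p_i$ and smooth (in the interior $\chi_i$ is smooth, and near $p_i$ it is constant, so the differential part vanishes). Its Green-operator term is smooth near the points by Lemma~\ref{green.2}, hence lies in $r^{\nu-6}L^2_b$. So constants belong to $r^{\nu}$-type remainders up to a smooth correction. It is cleaner to observe that $\cC_i$ can be absorbed: $\chi_i$ is in $r^{\nu}L^2_b$ for $\nu<m$, hence already lies in $r^{\nu}L^{2,k}_b$. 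This is consistent with the weight convention, and the same observation presumably explains why only $\cP_i$ appears in \eqref{la.16d}. I would double-check this against the $b$-weight normalisation $L^2_w=r^{-m}L^2_b$.

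I expect the main obstacle to be the contour-shift step for the pseudodifferential operator $\bbS_b$. The nonlocal Green term could in principle generate expansion terms not governed by the indicial roots. Lemma~\ref{la.8} shows this term has index sets $\ge 4$ at $\fb$ and $\lf$, and $\ge 2m-2>2$ at $\rf$. Applying Proposition~\ref{b.5}, its contribution to $\bbS_b w$ therefore lies in $r^{\min(4,\nu'+4)}$, which exceeds $\nu<2$. So only the differential indicial roots matter, and the argument closes by a finite iteration in steps of size less than $4$.
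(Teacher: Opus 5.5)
Your strategy is the paper's: solve at a weight $\nu'\in]6-2m,0[$ with Proposition~\ref{la.16}, then use regularity to peel off the terms attached to the indicial roots $0$ and $1$. Your contour shift is the content of Corollary~\ref{b.13}. The gap is in the weight bookkeeping. Since $L^2_b=r^mL^2_w$, the $b$-density $r^{-2m}\omega_0^{[m]}$ behaves like $r^{-1}dr\,d\theta$ near $p_i$. So a smooth orbifold function $h$ with $h(p_i)\ne0$ lies in $r^{\nu}L^2_b$ only for $\nu<0$, and one vanishing at $p_i$ only for $\nu<1$. Your claim $P_0\subset r^{\nu}L^2_b$ for $\nu<2m$ is false; this is exactly why the root $0$ produces the $\cC_i$, and why the paper separates $P_{0,+}$ from $P_0$.

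Consequently your last step for \eqref{la.16c} fails. The $P_0$-component of $\tilde u:=u-\sum_ic_i\chi_i$ generally contains Killing potentials not vanishing at the $p_i$ (e.g. constants), and subtracting it leaves $r^{\nu}L^{2,k+6}_b$. You must instead use the full freedom $u+h$, $h\in P_0$, and re-split $u+h=\sum_i(c_i+h(p_i))\chi_i+\bigl(\tilde u+h-\sum_ih(p_i)\chi_i\bigr)$. The bracket lies in $r^{\nu}L^{2,k+6}_b$ for $\nu<1$. Making it $L^2_w$-orthogonal to $P_0$ amounts to inverting $T:P_0\to P_0^*$, $Th=\langle h-\sum_ih(p_i)\chi_i,\cdot\rangle_{L^2_w}$. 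This holds if the $\chi_i$ have small support, since $T$ is then close to the Gram map. Some such condition is genuinely needed. The relative index theorem gives index $-\ell$ for $\bbS$ on $r^{\nu}L^{2,k+6}_b$, the two orthogonality constraints cancel, and the $\cC_i$ add $\ell$, so \eqref{la.16c} has index $0$. By Lemma~\ref{la.15} its kernel is $\{(h-\sum_ih(p_i)\chi_i,\sum_ih(p_i)\chi_i) : h\in\ker T\}$. For $\ell=1$, $P_0=\bbR$, $\chi_1\equiv1$, the map is not surjective.

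The same mistake invalidates your treatment of $\nu\in]1,2[$. Since $\chi_i\equiv1$ near $p_i$, $\chi_i\notin r^{\nu}L^2_b$ for any $\nu>0$, so the constants created by crossing the root $0$ cannot be absorbed into $(r^{\nu}L^{2,k+6}_b)^{\perp}$. If $\bbP^{\bbT}_{1,\Gamma_i}$ meant homogeneous linear polynomials, the count above would give \eqref{la.16d} index $-\ell$: the rank of the root $1$ is exactly compensated by $\dim\bigoplus_i\cP_i$. So it could not be surjective. For instance, when every $\Gamma_i$ is nontrivial there is no indicial root in $]0,2[$, and \eqref{la.16d} would just be \eqref{la.16c} with the $\cC_i$ removed.

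Hence \eqref{la.16d} holds only with $\bbP^{\bbT}_{1,\Gamma_i}$ read as invariant polynomials of degree at most one, i.e. $\cC_i\subset\cP_i$; this is also how it is used in Lemma~\ref{c2.1}. With that reading the proof is the one for $]0,1[$: peel off $\chi_i(c_i+P_i)$ when crossing the roots $0$ and $1$. Then restore orthogonality to $P_0$ as above, now subtracting from $h\in P_0$ the terms $\chi_i$ times its first-order Taylor polynomial at $p_i$, which is $\Gamma_i$- and $\bbT$-invariant. The remainder lies in $r^{\nu}L^{2,\infty}_b$ for $\nu<2$.
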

\begin{proof}
 Since $1$ is the only indicial root of $\bbS_b$ in $]0,2[$, the maps \eqref{la.16c} and \eqref{la.16d} are therefore Fredholm by Lemma~\ref{la.8}, \eqref{ind.2} and Corollary~\ref{b.15}.  To show that they are surjective, we can use  
 Proposition~\ref{la.16} and the regularity result of Corollary~\ref{b.13}.
\end{proof}

When $m=3$, the result we obtain is of different nature.  To describe it, set 
\begin{equation}
   P_{0,+}:= P_{0}\cap r^{\nu}L^2_b(\bM)^{\bbT} \quad \mbox{for} \quad \nu\in ]0,1[.
\label{la.16e}\end{equation}
Using the same integrations by parts that are used in the proof of Lemma~\ref{la.15}, we can also deduce that 
$$P_{\omega,+}= \ker \bbS\cap r^{\nu}L^2_b(\bM)^{\bbT} \quad \mbox{for} \quad  \nu\in ]0,1[.
$$  

By Lemmas~\ref{la.8}, \ref{la.14} and Corollary~\ref{b.14}, this shows in particular that the definition of $P_{0,+}$ is independent of the choice of $\nu$.  
Since $P_{0,+}$ does not contain the constant functions, notice also that it is strictly included in $P_0$. 
 On the other hand, by \eqref{la.18} (with $m=3$), notice that $\bbS_b$ is formally self-adjoint as a $b$-operator.  By Lemmas~\ref{la.8}, \ref{la.14} and Corollary~\ref{b.15}, for $\nu\in ]0,1[$ fixed, the operators $\bbS_{b,\pm\nu}:=r^{\mp\nu}\bbS_b r^{\pm\nu} $ induce  Fredholm operators
$$
    \bbS_{b,\pm\nu}: L^{2,k+6}_b(\bM)^{\bbT}\to L^{2,k}_b(\bM)^{\bbT}.
$$
Since $\bbS_b$  is formally self-adjoint as a $b$-operator, we know that there are canonical identifications
$$
    \coker_{L^2_b}(\bbS_{b,\pm\nu})= \ker_{L^2_b}\bbS_{b,\mp\nu},
$$
which implies that
\begin{equation}
 \ind (\bbS_{b,-\nu})= -\ind(\bbS_{b,\nu}).  
 \label{la.22}\end{equation}
 Set 
 $$
     \widetilde{\bbS}u:= -4\delta_{0}\delta_{0}(\nabla^{0,-}(\delta_{0}\nabla^{0,-}du))-2\delta_{0}\delta_{0}(R_{0}\nabla^{0,-}du),
 $$
 so that $\widetilde{\bbS}=\bbS$ when $\beta=0$. Set also $\widetilde{\bbS}_b= r^6\widetilde{\bbS}$ and $\widetilde{\bbS}_{b,\pm\nu}:=r^{\mp\nu}\widetilde{\bbS}_b r^{\pm\nu} $.  By Lemma~\ref{la.8}, for $\nu\in ]0,1[$, the operator $\widetilde{\bbS}_{b,\pm\nu}$ can be seen as a compact perturbation of $\bbS_{b,\pm \nu}$.  
Using Lemma~\ref{la.14} and the fact that $\widetilde{\bbS}_b$ has the same indicial family as $\bbS_b$, we see from the relative index theorem of Melrose \cite[Theorem~6.5]{MelroseAPS} that
 $$
     \ind(\bbS_{b,-\nu})= \ind(\widetilde{\bbS}_{b,-\nu})= \ind(\widetilde{\bbS}_{b,\nu})+ 2\ell= \ind(\bbS_{b,\nu})+ 2\ell.
 $$
Plugging this inside \eqref{la.22}, we thus find that 
\begin{equation*}
  \ind(\bbS_{b,\nu})=-\ell.
\label{la.22a}\end{equation*}
Since $\ker_{L^2_b}\bbS_{b,\nu}= r^{-\nu}P_{0,+}$, this means that 
\begin{equation}
\dim \ker_{L^2_b}\bbS_{b,-\nu}= \dim \coker_{L^2_b} \bbS_{b,\nu}= \dim \ker_{L^2_b}\bbS_{b,\nu}- \ind(\bbS_{b,\nu})= \dim P_{0,+}+ \ell.
\label{la.23}\end{equation}
Since $r^{\nu}P_{0}\subset \ker_{L^2_b}\bbS_{b,-\nu}$, we consider  the $L^2$-orthogonal subspace $K_0\subset r^{-\nu}\ker_{L^2_b}\bbS_{b,-\nu}$  to $P_{0}$ with respect to the $L^2$-inner product induced by the metric $g_0$, which yields the decomposition 
\begin{equation}
   \ker_{L^2_b} \bbS_{b,-\nu}= r^{\nu} (P_{0}\oplus K_{0}).
\label{la.24}\end{equation}
By \eqref{la.23} and \eqref{la.24}, the dimension of $K_{0}$ is given by 
$$
    \dim K_{0}=\ell + \dim P_{0,+} -\dim P_{0}.
$$
\begin{lemma}
For each non-trivial element $u\in K_0$, there is a non-vanishing term of order $\log r$ in its polyhomogeneous expansion at $\pa\bM$, namely there exists a non-vanishing locally constant function $a_{0,1}: \pa\bM\to \bbR$ such that $u-a_{0,1}\log r$ is a bounded function near $\pa \bM$.  
\label{la.25}\end{lemma}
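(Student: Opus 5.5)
The plan is to argue by contradiction: show that if the $\log r$ coefficient of $u\in K_0$ vanishes, then $u$ is regular enough for the integration by parts of Lemma~\ref{la.15} to go through, which forces $u\in P_{0}$ and hence $u=0$.

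\emph{Step 1: expansion of $u$.} Let $u\in K_0$ be non-trivial. By \eqref{la.24}, $r^{-\nu}\cdot r^{\nu}u$ is in the $L^2_b$-nullspace of $\bbS_{b,-\nu}$. Equivalently, $u\in r^{-\nu}L^2_b(\bM)^{\bbT}$ and $\bbS_b u=0$. Since $m=3$, \eqref{la.18} shows that $\bbS_b$ is formally self-adjoint as a $b$-operator, so its critical weights coincide with those of $\bbS_b^*$. I would apply Corollary~\ref{b.14}, using Lemma~\ref{la.8} for the index family of $\bbS_b$. Lemma~\ref{la.14} gives two facts: there is no indicial root in $]-1,0[$, and $0$ is an indicial root of order $2$. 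Together they give $u\in\cA^{\cG}_{\phg}(\bM)$ with $\cG\ge(0,1)$. Hence, near $\pa\bM$,
$$
u= a_{0,1}\log r + a_{0,0} + \cO(r^{\delta})
$$
for some $\delta>0$, and this expansion may be differentiated with the obvious loss of powers of $r$. The leading coefficients are constrained by the indicial equation $I(\bbS_b,\lambda)$ at $\lambda=0$. By \eqref{ind.1}, the order-$2$ part of this root comes only from the nullspace of $\Delta_{S^{5}/\Gamma_i}$ on each component (this is the statement in Lemma~\ref{la.14} about $0$ having rank $2\ell$). So $a_{0,1}$ is locally constant on $\pa\bM$. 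Moreover, if $a_{0,1}=0$, then $a_{0,0}$ is annihilated by $I(\bbS_b,0)$, which again forces $a_{0,0}$ to be locally constant. It remains to show $a_{0,1}\not\equiv 0$.

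\emph{Step 2: contradiction if $a_{0,1}=0$.} Suppose $a_{0,1}=0$. Then $u$ is bounded, with $u-a_{0,0}=\cO(r^{\delta})$ and $a_{0,0}$ locally constant, and these bounds persist under $b$-derivatives. Consequently
$$
du\in r^{\delta-1}L^{2,\infty}_b, \qquad \nabla^{0,-}du\in r^{\delta-2}L^{2,\infty}_b, \qquad \delta_0\nabla^{0,-}du\in r^{\delta-3}L^{2,\infty}_b,
$$
which is exactly the situation in the proof of Lemma~\ref{la.15}. The boundary terms of each integration by parts in $\langle u,\bbS u\rangle_{L^2_w}$ are of the form $\int_{r=\varepsilon}$ of products such as $u\cdot\delta_0\nabla^{0,-}du$ or $du\cdot\nabla^{0,-}du$. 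Against the volume element $r^{5}\,dr\,d\sigma$ these are $\cO(\varepsilon^{\delta})\to 0$, so they vanish in the limit. The Green operator term is handled through Lemma~\ref{green.2}: $(\theta_0\circ\nabla^{0,-}du)^{\Skew}$ lies in a weighted space on which $\bbG_0$ gives a finite, nonnegative pairing. Hence all three terms are nonpositive and sum to zero. Arguing as in Lemma~\ref{la.15}, we get $\nabla^{0,-}du=0$, and by $\bbT$-invariance and maximality of $\bbT$, $u\in P_{0}$. Since $u\in K_0$ is $L^2$-orthogonal to $P_{0}$, this gives $u=0$, a contradiction. Therefore $a_{0,1}\neq 0$, and $u-a_{0,1}\log r$ is bounded near $\pa\bM$.

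\emph{Main obstacle.} The delicate step is Step 1. One must justify that the coefficient of $\log r$, and of $r^0$ once the log term is absent, are genuinely locally constant, with no contribution from higher eigenfunctions of $\Delta_{S^5/\Gamma_i}$. One must also justify that the remainder carries a strictly positive power $r^{\delta}$, since this is what makes the boundary terms in Step 2 vanish. I would first establish this by writing the indicial family as the product \eqref{ind.1} and checking, eigenvalue by eigenvalue using \eqref{la.13}, that for $m=3$ only the eigenvalue $\nu=0$ produces roots at $\lambda=0$ (a double root coming from the factors $\lambda$ and $\lambda-4+4$), and that all other roots are separated from $0$. The Green operator term only affects the expansion at higher order, because its index family at $\fb$ is $\ge 4$ by Lemma~\ref{la.8}.
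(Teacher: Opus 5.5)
Your proposal is correct and is essentially the paper's argument. Corollary~\ref{b.14} with Lemma~\ref{la.14} gives the expansion $a_{0,1}\log r + a_{0,0} + \cO(r^{\delta})$ with locally constant leading coefficients; if $a_{0,1}\equiv 0$, the integrations by parts of Lemma~\ref{la.15} are justified, forcing $u\in P_{0}$, and hence $u=0$ since $K_0\perp P_{0}$. The only slip is notational: in Step~1 it is $r^{\nu}u$ (not $r^{-\nu}\cdot r^{\nu}u$) that lies in $\ker_{L^2_b}\bbS_{b,-\nu}$.
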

\begin{proof}
Let $\nu\in ]0,1[$ be given.  By Lemma~\ref{la.14} and the regularity result of Lemma~\ref{b.14}, we know that an element $u\in \ker_{L^2_b}\bbS_{b,-\nu}$ admits at the boundary component $\pa_{p_i}\bM$ of $\pa \bM$ a polyhomogeneous expansion of the form
$$
  r^{\nu}\left(a_{p_i,0,1}\log r + a_{p_i,0}+ \cO(r\log r)  \right)
$$
for some constants $a_{p_i,0,1}$ and $a_{p_i,0}$.  If $a_{p_i,0,1}=0$ for each boundary component of $\pa \bM$, then 
$$
     d(r^{-\nu}u)\in r^{\delta-1}L^2_b(\bM;{}^{w}T^*\bM) \quad \mbox{and} \quad \bbS (r^{-\nu}u)\in r^{-6+\delta}L^2_b(\bM)=r^{\delta-3}L^2_w(\bM)
$$
for all $\delta\in]0,1[$.  This shows that  integrations by parts as in the proof of Lemma~\ref{la.15} are valid and show that $r^{-\nu}u\in P_{0}$, namely that $u\in r^{\nu}P_{0}$.  Since $K_0$ is complementary to $P_{0}$ in $r^{-\nu}\ker_{L^2_b}\bbS_{b,-\nu}$, the result follows.    
\end{proof}

This can be used to deduce the following mapping properties of the operator $\bbS$ when $m=3$.  For $\nu\in ]0,1[$ and $k\in\bbN$, let us consider the spaces
$$
  (r^{\nu}L^{2,k}_b(\bM)^{\bbT})^{\perp_+}:= \left\{ u\in r^{\nu}L^{2,k}_b(\bM)^{\bbT}\quad \Big| \quad \int_{M} u h\omega^{[m]}=0 \quad \forall h\in P_{0,+} \right\}
$$
and
$$
   (\CI(M)^{\bbT})^{\perp}:= \left\{ u\in \CI(M)^{\bbT}\quad \Big| \quad \int_{M} u h\omega^{[m]}=0 \quad \forall h\in P_{0} \right\}.
$$

\begin{proposition}\label{la.26}
If $m=3$ and $R_0\ge 0$, there exist  $\bbT$-invariant functions $f_1,\ldots, f_{\dim K_0}\in (\CI(M)^{\bbT})^{\perp}$,  smooth {on $M$} in the sense of orbifolds,  such that for $\nu\in ]0,1[$, the operator $\bbS$ induces a continuous linear isomorphism
\begin{equation}
  \bbS: (r^{\nu}L^{2,k+6}_b(\bM)^{\bbT})^{\perp_+}\oplus \cD\to (r^{\nu-6}L^{2,k}_{b}(\bM)^{\bbT})^{\perp},
\label{la.26b}\end{equation}
where $\cD\subset (\CI(M)^{\bbT})^{\perp}$ is the finite dimensional subspace generated by $f_1,\ldots, f_{\dim K_{0}}$.  For $\nu\in]1,2[$, the operator $\bbS$ induces instead a surjective continuous linear map
\begin{equation}
  \bbS: (r^{\nu}L^{2,k+6}_b(\bM)^{\bbT})^{\perp_+}\oplus \lrp{\bigoplus_{i=1}^{\ell} \cP_i}\to (r^{\nu-6}L^{2,k}_{b}(\bM)^{\bbT})^{\perp}.
\label{la.26c}\end{equation}
\end{proposition}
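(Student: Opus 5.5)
The plan is to work with the weighted $b$-operator $\bbS_{b,\nu}=r^{-\nu}\bbS_b r^{\nu}$ for $\nu\in]0,1[$. By Lemma~\ref{la.8}, Lemma~\ref{la.14} and Corollary~\ref{b.15}, it is Fredholm from $L^{2,k+6}_b$ to $L^{2,k}_b$. We already know its index, $-\ell$, and that its kernel is $r^{-\nu}P_{0,+}$.

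\textbf{Step 1: construct $\cD$.} The cokernel of $\bbS_{b,\nu}$ is identified with $\ker_{L^2_b}\bbS_{b,-\nu}=r^{\nu}(P_0\oplus K_0)$, using formal self-adjointness of $\bbS_b$ when $m=3$ and \eqref{la.24}. Translating back to $\bbS$ with the $L^2_w$ pairing, the range of $\bbS$ on $r^{\nu}L^{2,k+6}_b(\bM)^{\bbT}$ is the $L^2_w$-annihilator of $P_0\oplus K_0$ inside $r^{\nu-6}L^{2,k}_b(\bM)^{\bbT}$. The target space $(r^{\nu-6}L^{2,k}_b)^\perp$ is only orthogonal to $P_0$, so the missing directions are dual to $K_0$.

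I will choose $f_j\in(\CI(M)^{\bbT})^\perp$ with the pairing matrix $\bigl(\langle \bbS f_j,u_i\rangle_{L^2_w}\bigr)$ nondegenerate, where $\{u_i\}$ is a basis of $K_0$. Here $\bbS f_j$ is smooth in the orbifold sense, hence lies in the target. The natural choice is $f_j=\chi_{p}\cdot h$ for cutoffs near singular points, adjusted by elements of $P_0$ to land in $(\CI)^\perp$. Integrating by parts, $\langle \bbS f,u\rangle=\langle f,\bbS u\rangle$ plus a boundary term at $\pa\bM$. Since $\bbS u=0$ away from the points, only this boundary term survives.

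By Lemma~\ref{la.25}, each nonzero $u\in K_0$ has a nonzero $a_{0,1}\log r$ term. The boundary pairing of the $\log r$ coefficient against the value of $f$ at $p_i$ is then a nonzero multiple of $a_{p_i,0,1}f(p_i)$, through the Green-type identity for the model $\Delta^3$ on $\bbC^3/\Gamma_i$. Hence the map $K_0\to\bbR^{\ell}$, $u\mapsto(a_{p_i,0,1})_i$, is injective.

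Taking $f$'s equal to prescribed constants near each $p_i$ then makes the pairing nondegenerate on $K_0$ after selecting $\dim K_0$ of them. Subtracting a $P_0$-projection changes $\bbS f$ only by $\bbS(P_0)=0$, so orthogonality is free.

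\textbf{Step 2: the isomorphism \eqref{la.26b}.}
\begin{itemize}
\item Surjectivity: the range of $\bbS$ on $(r^\nu L^{2,k+6}_b)^{\perp_+}$ equals the annihilator of $P_0\oplus K_0$, and adding $\bbS(\cD)$ fills the complementary directions by Step 1.
\item Injectivity: if $\bbS(u+f)=0$, pair with $K_0$ to get $f=0$. Then $u\in\ker\cap(r^\nu L^2)^{\perp_+}$, and $u\in P_{0,+}$ with $u\perp P_{0,+}$ forces $u=0$.
\item Continuity of the inverse follows from the open mapping theorem.
\end{itemize}

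\textbf{Step 3: the case $\nu\in]1,2[$, \eqref{la.26c}.} The only indicial root in $]0,2[$ is $1$, so the proof mirrors Corollary~\ref{la.16b}. Given $g$ in the target with weight $\nu$, it also lies in the $\nu'=\nu-1$ target. Solve it by \eqref{la.26b}, giving $u+f$.

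By Corollary~\ref{b.13}, $u$ has an expansion $c_i+P_i+O(r^{\nu})$ near $p_i$, with $P_i$ a degree-one homogeneous term. The constants $c_i$ need care here, since at weight $\nu>1$ they are not admissible. They must be absorbed into $f$, by arranging the $f_j$ to realize arbitrary constants at the points, modulo a smooth correction. I expect this reconciliation of the $\log$ and constant terms with $\cD$ to be the delicate point. The degree-one parts are absorbed by $\cP_i$, and the remainder lies in the weight-$\nu$ space.

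\textbf{Main obstacle.} The main obstacle is Step 1, namely computing the boundary pairing precisely enough to certify nondegeneracy from the nonvanishing $\log r$ coefficient of Lemma~\ref{la.25}.
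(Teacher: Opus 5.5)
Your Steps 1 and 2 follow the paper's proof of \eqref{la.26b}. The paper identifies the cokernel of $\bbS$ on $(r^{\nu}L^{2,k+6}_b(\bM)^{\bbT})^{\perp_+}$ with $K_0$, uses Lemma~\ref{la.25} for injectivity of $u\mapsto(a_{p_i,0,1})_i$, and chooses $\cD$ so that $(f,u)\mapsto\int_M(\bbS f)u\,\omega_0^{[3]}$ is nondegenerate on $\cD\times K_0$. The paper gets this nondegeneracy from Melrose's boundary pairing on the formal nullspace at $z=z'=0$ \cite[Prop.~6.2 and (6.9)]{MelroseAPS}. Your Green identity is a concrete version of the same fact and works: $\log r$ is a fundamental solution of $\Delta^3$ on $\bbC^3/\Gamma_i$, so for $f$ locally constant near the $p_i$ the pairing is $\sum_i c_i f(p_i)a_{p_i,0,1}(u)$ with $c_i\neq 0$. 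Note that injectivity of $u\mapsto(a_{p_i,0,1})_i$ is Lemma~\ref{la.25} itself, an input to the argument rather than a consequence of the pairing.

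Step 3 has a genuine gap, and your proposed fix points the wrong way.
\begin{itemize}
\item \emph{Where the constants live.} Solving at weight $\nu-1\in]0,1[$ gives $g=\bbS(u+f)$ with $u\in r^{\nu-1}L^{2,k+6}_b$. So $u$ has no constant term; the constants at the $p_i$ come from $f\in\cD$.
\item \emph{Absorbing them into $f$ leaves the domain.} Doing so leaves a preimage in $(r^{\nu}L^{2,k+6}_b(\bM)^{\bbT})^{\perp_+}\oplus\cD\oplus\bigoplus_i\cP_i$, which is not the domain of \eqref{la.26c}.
\item \emph{No reconciliation can work under your reading of $\cP_i$.} If $\cP_i$ consists of cut-offs of homogeneous linear forms, the domain of \eqref{la.26c} lies in $r^{\nu-1}L^{2,k+6}_b(\bM)^{\bbT}$. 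Its image under $\bbS$ is then $L^2$-orthogonal (for $\omega_0^{[3]}$) to $P_0\oplus K_0$. But $\bbS(\cD)$ lies in the target and pairs nondegenerately with $K_0$. So surjectivity would fail whenever $K_0\neq 0$, for instance when $\ell\ge 2$ and $\bbT$ is trivial.
\end{itemize}

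The missing point is that $\bbP^{\bbT}_{1,\Gamma_i}$ means $\Gamma_i$- and $\bbT$-invariant polynomials of degree \emph{at most} one, so $\cP_i\supset\cC_i$. This is how the paper uses it: the polynomial $P_{\delta,j,i}$ in Lemma~\ref{c2.1} must absorb constant terms. Also, for $\Gamma_i\neq\{\Id\}$ there are no nonzero invariant linear forms at all (cf.\ Lemma~\ref{la2.2}), so the homogeneous reading would make $\cP_i$ trivial.

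With this reading, Step 3 becomes the paper's two-line argument:
\begin{itemize}
\item $f$ is orbifold-smooth, so subtracting $\chi_i$ times its affine Taylor part at each $p_i$ leaves an $O(r^2)$ term in $r^{\nu}L^{2,k+6}_b$.
\item By Corollary~\ref{b.13}, $u-\sum_i\chi_iP_i\in r^{\nu}L^{2,k+6}_b$.
\item All the affine parts then go into $\bigoplus_i\cP_i$.
\end{itemize}
The $\perp_+$ normalization of the remainder, which neither you nor the paper spells out, is fixed by adding suitable elements of $P_{0,+}$. These are killed by $\bbS$ and are linear at the $p_i$ up to $O(r^2)$. The pairing needed to solve for them is nondegenerate once the cut-offs $\chi_i$ have small support.
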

\begin{proof}
By the discussion above, we know that for $\nu\in ]0,1[$, the operator
\begin{equation*}
  \bbS_b: r^{\nu}L^{2,k+6}_b(\bM)^{\bbT}\to r^{\nu}L^{2,k}_b(\bM)^{\bbT}
\label{la.27}\end{equation*}
is Fredholm with  nullspace  $P_{0,+}$, index $-\ell$ and cokernel canonically identified with 
$$
    \ker \bbS_b \cap r^{-\nu}L^2_b(\bM)^{\bbT}= \ker \bbS_b \cap r^{-\frac12}L^2_b(\bM)^{\bbT}= P_0\oplus K_{0}.
$$
In particular, the operator $\bbS_b$ induces an injective Fredholm operator
$$
   \bbS_b: (r^{\nu}L^{2,k+6}_b(\bM)^{\bbT})^{\perp_+}\to (r^{\nu}L^{2,k}_b(\bM)^{\bbT})^{\perp}.
$$
with cokernel identified with $K_{0}$.  By Lemma~\ref{la.25}, the map
$$
    \psi: K_{0}\to \bbR^{\ell}
$$
which to $u\in K_{0}$ associate $\psi(u)=(a_{p_1,0,1},\ldots,a_{p_{\ell},0,1})$ is injective, where $a_{p_i,0,1}\in \bbR$ is the coefficient such that $u-a_{p_i,0,1}\log r$ is bounded near $\pa_{p_i}\bM$.  On the other hand, since $\bbS_b$ is formally self-adjoint as a $b$-operator, the sesquilinear form \cite[(6.3)]{MelroseAPS} with $P=\bbS_b$ and $z=z'=0$ is anti-symmetric when restricted to the real part of the formal nullspace $F(\bbS_b,0)$ in the sense of \cite[(5.166)]{MelroseAPS}.  On the other hand this sesquilinear pairing is non-degenerate when $z=z'=0$ by \cite[Proposition~6.2]{MelroseAPS}.  Technically speaking, \cite[Proposition~6.2]{MelroseAPS} is formulated for elliptic differential $b$-operators, but the pseudodifferential part of $\bbS_b$ does not compromise this result as can be readily checked from the proof provided in \cite{MelroseAPS}.   This means using \cite[(6.9)]{MelroseAPS} that there exist functions $f_1,\ldots,f_{\dim K_0}\in \CI(M)^{\bbT}$ spanning a $\dim K_{0}$ subspace $\cD$ of $\CI(M)^{\bbT}$ such that the pairing 
\begin{equation}
         \begin{array}{lccc}
         B: & \cD\times K_{0}& \to & \bbC \\
         & (f,u) & \mapsto & \int_{\bM}((\bbS_b f)u)dg_b= \int_{\bM}(\bbS f)u \omega_0^{[m]}
         \end{array}
\label{la.27d}\end{equation}
is non-degenerate, where $dg_b=r^{-2m}\omega_0^{[m]}= r^{-6}\omega_0^{[3]}$ is the $b$-density naturally associated to the $b$-metric $g_b= r^{-2}g$.  This ensures in particular that the operator \eqref{la.26b} is an isomorphism.  As is clear from \eqref{la.27d}, we can assume without loss of generality that the functions $f_1,\ldots,f_{\dim K_{0}}$ are $L^2$-orthogonal to $P_{0}$ with respect to the metric $\omega_0$, so that $\cD\subset (\CI(M)^{\bbT})^{\perp}$. 

Since $]1,2[$ contains no indicial root, the map \eqref{la.26c} is Fredholm.  The surjectivity of \eqref{la.26b} and the regularity results of Corollary~\ref{b.13} then ensures that \eqref{la.26c} is surjective.
\end{proof}

In the case $m=2$, the following observation will be important.
\begin{lemma}
If $m=2$, then $\bbS_b$ has no indicial root in $]0,2[$ if $\Gamma_i\ne \{\Id\}$ for each $i\in\{1,\ldots,\ell\}.$  
\label{la2.2}\end{lemma}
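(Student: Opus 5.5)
The proof is a direct computation from \eqref{ind.2}. It reduces the question to whether the quotient spheres $S^{3}/\Gamma_i$ carry eigenfunctions of degree one, and these are excluded because the singularities are isolated.

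First I compute $\Spec_b(r^2\Delta_{\bbC^2/\Gamma_i})$ from \eqref{la.13} with $m=2$. The eigenvalues of $\Delta_{S^3/\Gamma_i}$ are of the form $\nu=k(k+2)$. Here $k\in\bbN$ runs over those degrees for which there is a nonzero $\Gamma_i$-invariant harmonic homogeneous polynomial of degree $k$ on $\bbC^2\cong\bbR^4$. Since $1+\nu=(k+1)^2$, the indicial roots are
$$-1\pm(k+1)\in\{k,\,-k-2\}.$$
By \eqref{ind.2}, the indicial roots of $\bbS_b$ are therefore the integers $k+2j$ and $-k-2+2j$, with $j\in\{0,1,2\}$ and $k$ an admissible degree for some $\Gamma_i$. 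Since all roots are integers, the only possible root in $]0,2[$ is $1$.

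Next I identify which pairs $(k,j)$ produce the value $1$. The equation $k+2j=1$ forces $k=1$ and $j=0$. The equation $-k-2+2j=1$ gives $k=2j-3$, and with $j\le 2$ the only solution is $k=1$, $j=2$. In both cases the root $1$ occurs only if $k=1$ is admissible for some $i$, that is, only if some $S^3/\Gamma_i$ carries the eigenvalue $3$. Equivalently, there must be a nonzero $\Gamma_i$-invariant harmonic polynomial of degree $1$, i.e.\ a nonzero $\Gamma_i$-invariant real linear form on $\bbR^4$.

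Finally I rule out such invariant linear forms. Since $\Gamma_i\subset U(2)\subset O(4)$ is orthogonal, a $\Gamma_i$-invariant linear form $\langle v,\cdot\rangle$ corresponds to a vector $v\neq 0$ fixed by all of $\Gamma_i$. Because $p_i$ is an isolated singular point, $\Gamma_i$ acts freely on $\bbC^2\setminus\{0\}$. Hence each nontrivial $\gamma\in\Gamma_i$ has no eigenvalue $1$ and fixes no $v\neq 0$. As $\Gamma_i\neq\{\Id\}$, such a $\gamma$ exists, so the only invariant linear form is zero and $k=1$ is not admissible for any $i$. Consequently $\bbS_b$ has no indicial root in $]0,2[$.

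The only point requiring care is the bookkeeping of the shifts $+2j$ from \eqref{ind.1}. In particular one must check that the negative branch $-k-2+2j$ with $j=2$ also reduces to the case $k=1$. Once that is verified, the argument is elementary.
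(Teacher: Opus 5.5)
Your proof is correct and follows the same route as the paper. Via \eqref{ind.2} and \eqref{la.13}, you reduce the question to whether some $\Delta_{S^3/\Gamma_i}$ has eigenvalue $3$, that is, whether some $\Gamma_i$ admits a nonzero invariant linear form; the free action of the nontrivial group $\Gamma_i$ on $\bbC^2\setminus\{0\}$ rules this out. Your explicit bookkeeping of the shifts $+2j$ (including the branch $-k-2+4=1$) and your fixed-vector argument simply spell out steps that the paper states without detail.
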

\begin{proof}
By \eqref{ind.2} and \eqref{la.13}, the only possible indicial root in $]0,2[$ is at $\nu=1$ and corresponds to $\Delta_{S^{2m-1}/\Gamma_i}$ having $3$ as an eigenvalue for some $i\in \{1,\ldots,\ell\}$.  By \cite{BGM}, this will be the case if and only if the space of $\Gamma_i$-invariant homogeneous polynomials of degree $1$ is non-trivial for some $i\in\{1,\ldots,\ell\}$.  However, since $\Gamma_i$ acts freely on $\bbC^{m}\setminus\{0\}$, there is no non-trivial $\Gamma_i$-invariant homogeneous polynomial of degree $1$. 
\end{proof}
\begin{remark}
Notice that the conclusion of  Lemma~\ref{la2.2} is not true when we allow some of the points $p_i$ to be smooth, that is, with $\Gamma_i=\{\Id\}$.  This is the technical reason why our gluing construction does not work in complex dimension $2$ when smooth points are blown up.
\label{nbu.1}\end{remark}

For $\nu\in ]0,2[$, this suggests to set 
\begin{equation}
   P_{0,+}:= P_{0}\cap r^{\nu}L^2_b(\bM)^{\bbT}.
\label{la2.2b}\end{equation}
The next lemma confirms, as the notation suggests, that $P_{0,+}$ does not depend on the choice of $\nu\in]0,2[$.
\begin{lemma}\label{la2.3}
Suppose that $m=2$ and $R_{0}\ge 0$.  Then for $\nu\in ]0,2[$,
$$
    \ker \bbS\cap r^{\nu}L^2_b(\bM)^{\bbT}= P_{0,+} \subset r^\lambda L^2_b(\bM)^{\bbT}\quad \forall \lambda<2.
$$ 
\end{lemma}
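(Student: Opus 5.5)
Fix $\nu\in]0,2[$ and $u\in \ker\bbS\cap r^{\nu}L^2_b(\bM)^{\bbT}$. The proof has three steps: regularity, integration by parts, and a comparison of decay rates.

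\textbf{Regularity.} We apply Corollary~\ref{b.14} to $\bbS_b$, using Lemma~\ref{la.8} for the index family. For $m=2$, \eqref{dual.1} gives $\Spec_b(\bbS_b^*)=\Spec_b(\bbS_b)-2$. By Lemma~\ref{la2.2} and Lemma~\ref{la.14}, the indicial roots of $\bbS_b$ closest to the window $]0,2[$ are $0$ and $2$, both of order $2$. Corollary~\ref{b.14} then shows that $u$ is polyhomogeneous with index set $\cG$ satisfying $\Re\cG>\nu>0$. Since there is no indicial root in $]0,2[$, and the next root above is $2$ with possible $\log$ terms, the expansion of $u$ at each $\pa_{p_i}\bM$ begins with $r^2\log r$ and $r^2$. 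In particular $u=\cO(r^{2}\log r)$, so $u\in r^{\lambda}L^2_b$ for every $\lambda<2$.

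\textbf{Integration by parts.} These asymptotics give $du\in r^{1-\epsilon}L^{2,\infty}_b$, $\nabla^{0,-}du\in r^{-\epsilon}L^{2,\infty}_b$ and $\delta_0\nabla^{0,-}du\in r^{-1-\epsilon}L^{2,\infty}_b$ for small $\epsilon>0$. With $L^2_w=r^{-2}L^2_b$ in dimension $m=2$, all boundary terms from the integrations by parts in the proof of Lemma~\ref{la.15} vanish in the limit. The term involving $\bbG_0$ requires care, and we handle it with Lemma~\ref{green.2}. The quantity $(\theta_0\circ\nabla^{0,-}du)^{\Skew}$ is bounded by $r^{-\epsilon}$, so it lies in $L^2_w$. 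Since $\bbG_0$ is the nonnegative Green operator of the Friedrichs extension, the pairing $\langle \cdot,\bbG_0\cdot\rangle$ is nonnegative on $L^2_w$.

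We therefore obtain
$$0=\langle u,\bbS u\rangle_{L^2_w}\le -4\|\delta_0\nabla^{0,-}du\|^2_{L^2_w},$$
using $R_0\ge0$. Hence $\delta_0\nabla^{0,-}du=0$. A further integration by parts gives $\nabla^{0,-}du=0$, and this is justified because $u\,\delta_0\nabla^{0,-}du$ has enough decay. So $J\nabla u$ is a $\bbT$-invariant Killing field commuting with $\bbT$. By maximality of $\bbT$ (as in Lemma~\ref{la.15}) we get $u\in P_0$, and therefore $u\in P_0\cap r^{\nu}L^2_b=P_{0,+}$.

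\textbf{Reverse inclusion and independence of $\nu$.} Every element of $P_0$ lies in $\ker\bbS$, since Killing potentials satisfy $\nabla^{0,-}du=0$ and each term of \eqref{la.6} vanishes. This gives $P_{0,+}\subset\ker\bbS\cap r^{\nu}L^2_b(\bM)^{\bbT}$. The regularity step, applied to elements of $P_{0,+}$, shows they lie in $r^{\lambda}L^2_b$ for all $\lambda<2$. Consequently $P_{0,+}$ is the same space for every $\nu\in]0,2[$.

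\textbf{Main obstacle.} The delicate point is the regularity step. The order-two roots at $0$ and $2$ can produce $\log$ terms, so we must check that the decay forced by Corollary~\ref{b.14} really places $u$ beyond the whole window $]0,2[$. This needs the precise statement about the highest critical weight of $\bbS_b^*$ in $]-2-\epsilon,-\nu[$, namely $-2$, with $\ord=2$ accounting for the $\log$. The same care is needed when verifying the boundary terms in the pseudodifferential pairing.
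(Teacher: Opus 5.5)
Your proposal is correct and follows the paper's proof. Both use Lemma~\ref{la2.2} (no indicial root of $\bbS_b$ in $]0,2[$) with Corollary~\ref{b.14} to get $u\in r^{\lambda}L^{2,\infty}_b(\bM)^{\bbT}$ for all $\lambda<2$, and then rerun the integrations by parts of Lemma~\ref{la.15}; you just make explicit the decay bookkeeping the paper leaves implicit. Note that, like the paper, you tacitly need $\Gamma_i\neq\{\Id\}$ for Lemma~\ref{la2.2}, and the boundary term in your second integration by parts is controlled by $du\otimes\nabla^{0,-}du$, not by $u\,\delta_0\nabla^{0,-}du$ (which is identically zero at that point).
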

\begin{proof}
It is clear  that $P_{0,+}\subset \ker \bbS\cap r^{\nu}L^2_b(\bM)^{\bbT}$, so we need to show that 
\begin{equation}
        \ker \bbS\cap r^{\nu}L^2_b(\bM)^{\bbT}\subset P_{0,+} \subset r^\lambda L^2_b(\bM)^{\bbT} \quad \forall \lambda<2
\label{la2.4}\end{equation}
to conclude that these two spaces coincide.  Since $\bbS_b$ has no indicial root in $]0,2[$ by Lemma~\ref{la2.2}, we know from Corollary~\ref{b.14} that
\begin{equation*}
     \ker \bbS\cap r^{\nu}L^2_b(\bM)^{\bbT}\subset r^{\lambda}L^{2,\infty}_b(\bM)^{\bbT} \quad \forall \; \lambda<2.
\label{la2.5}\end{equation*}
This ensures that the integrations by parts in the proof of Lemma~\ref{la.15} are still justified and yield the inclusion \eqref{la2.4}, from which the result follows.
\end{proof}

Thanks to this lemma, the space 
\begin{equation*}
 (r^\nu L^{2,k}_b(\bM)^\bbT)^{\perp_+}:= \left\{ u\in r^{\nu}L^{2,k}_b(\bM)^{\bbT}\; \Big| \; \int_M uh\omega^{[m]}=0 \quad \forall \; h\in P_{0,+} \right\}
\label{la2.6}\end{equation*}
is well-defined for $\nu>-6$.  This yields the following mapping property for the operator $\bbS$.
\begin{proposition}
Suppose that $m=2$, $R_{0}\ge 0$ and $\Gamma_i\ne \{\Id\}$ for each $i\in \{1,\ldots,\ell\}$.  Then  the operator $\bbS$ induces a continuous linear isomorphism
$$
     \bbS: (r^{\nu}L^{2,k+6}_b(\bM)^{\bbT})^{\perp_+} \to  (r^{\nu-6}L^{2,k}_b(\bM)^{\bbT})^{\perp_+}
$$
for $\nu\in ]0,2[$ and $k\in\bbN$.
\label{la2.7}\end{proposition}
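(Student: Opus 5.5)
We follow the proof of Proposition~\ref{la.16}, using Lemma~\ref{la2.2} and Lemma~\ref{la2.3} in place of Lemmas~\ref{la.14}--\ref{la.15}. When $m=2$, \eqref{la.18} gives $\bbS_b^*=r^{2}\bbS_br^{-2}$. Fix $\nu\in]0,2[$. By Lemma~\ref{la2.2}, $\bbS_b$ has no indicial root in $]0,2[$, so $\nu$ is not a critical weight. The index family of Lemma~\ref{la.8} satisfies $\cE(\lf)\ge 4$ and $\cE(\rf)\ge 2m-2=2$, so we may choose $\mu_L,\mu_R>0$ with $]\nu-\mu_R,\nu+\mu_L[\subset]0,2[$ that satisfy the hypotheses of Theorem~\ref{b.11}. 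Corollary~\ref{b.15} then shows that
$$
\bbS_b: r^{\nu}L^{2,k+6}_b(\bM)^{\bbT}\to r^{\nu}L^{2,k}_b(\bM)^{\bbT}
$$
is Fredholm. (Restricting to $\bbT$-invariant functions is harmless, since $\bbS$ commutes with $\bbT$.) By Lemma~\ref{la2.3}, its kernel is exactly $P_{0,+}$, and this does not depend on $\nu\in]0,2[$.

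Next we identify the cokernel. As in \eqref{la.19}, the formal $L^2_b$-adjoint of $r^{-\nu}\bbS_br^{\nu}$ is $r^{\nu-2}\bbS_b r^{2-\nu}$. Hence the cokernel of the conjugated operator on $L^2_b$ is identified with $r^{\nu-2}\left(\ker\bbS_b\cap r^{2-\nu}L^2_b(\bM)^{\bbT}\right)$. Since $\nu\in]0,2[$ implies $2-\nu\in]0,2[$, Lemma~\ref{la2.3} applied with weight $2-\nu$ shows this kernel is again $P_{0,+}$. So the cokernel of $\bbS_b$ on the weighted spaces has dimension $\dim P_{0,+}$, and the operator has index zero. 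The integrability statement in Lemma~\ref{la2.3}, namely $P_{0,+}\subset r^{\lambda}L^2_b$ for all $\lambda<2$, is what justifies the $L^2_w$ pairings used below. Unwinding the weights exactly as in the proof of Proposition~\ref{la.16}, using $L^2_w=r^{-m}L^2_b$ and $\bbS=r^{-6}\bbS_b$, the annihilator of the range of
$$
\bbS: r^{\nu}L^{2,k+6}_b(\bM)^{\bbT}\to r^{\nu-6}L^{2,k}_b(\bM)^{\bbT}
$$
with respect to the $L^2_w$ pairing $\int_M(\cdot)(\cdot)\,\omega_0^{[m]}$ is exactly $P_{0,+}$. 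Concretely, for $h\in P_{0,+}$ and $u$ in the domain we have $\int_M (\bbS u)h\,\omega_0^{[m]}=\int_M u\,\bbS h\,\omega_0^{[m]}=0$. This integration by parts is legitimate because $h$ decays like $r^{\lambda}$ for $\lambda$ close to $2$ and $u\in r^{\nu}L^{2,k+6}_b$, so the pairing is absolutely convergent by the adjoint weight computation above, and no boundary terms arise.

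To conclude, the range is contained in $(r^{\nu-6}L^{2,k}_b(\bM)^{\bbT})^{\perp_+}$ and has codimension $\dim P_{0,+}$ in the full weighted space. The subspace $(r^{\nu-6}L^{2,k}_b(\bM)^{\bbT})^{\perp_+}$ has the same codimension, since the pairing with $P_{0,+}$ is nondegenerate (the elements of $P_{0,+}$ themselves lie in $r^{\nu-6}L^2_b$). Hence $\bbS$ maps onto $(r^{\nu-6}L^{2,k}_b(\bM)^{\bbT})^{\perp_+}$. Restricting the domain to the complement $(r^{\nu}L^{2,k+6}_b(\bM)^{\bbT})^{\perp_+}$ of the kernel $P_{0,+}$ makes the map injective. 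Continuity of the inverse follows from the open mapping theorem. The step needing the most care is the weight bookkeeping in the duality: we must check that the adjoint weight $2-\nu$ lands back in $]0,2[$, which holds precisely because $m=2$ makes $6-2m=2$. This is where the hypothesis $\Gamma_i\neq\{\Id\}$ enters, through Lemma~\ref{la2.2}; without it, $\nu=1$ could be an indicial root and both the Fredholm property and the kernel identification would fail.
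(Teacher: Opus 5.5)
Your proposal is correct and is essentially the paper's argument. You get Fredholmness from Lemma~\ref{la2.2} and Corollary~\ref{b.15}, the kernel $P_{0,+}$ from Lemma~\ref{la2.3}, the cokernel $P_{0,+}$ from Lemma~\ref{la2.3} at the dual weight $2-\nu\in\,]0,2[$ via \eqref{la.19}, and conclude from the $L^2$-orthogonality of $P_{0,+}$ to the range, as in Proposition~\ref{la.16}. The paper also remarks that $r^{-1}\bbS_b r$ is formally self-adjoint (the case $\nu=1$), but your direct cokernel computation at each weight already gives index zero, so nothing is missing.
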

\begin{proof}
By Lemma~\ref{la2.2}, the operator $\bbS_b$ has no indicial root in $]0,2[$.  Moreover, by \eqref{la.19}, when $\nu=1$, $r^{-\nu}\bbS_br^\nu$ is formally self-adjoint when acting formally on $L^2_b(\bM)^{\bbT}$. Proceeding as in the proof of Proposition~\ref{la.16}, we can check therefore that $\bbS$ induces a Fredholm operator 
\begin{equation}
  \bbS: r^{\nu}L^{2,k+6}_b(\bM)^{\bbT}\to r^{\nu-6}L^{2,k}_b(\bM)^{\bbT}
\label{la2.8}\end{equation}
with kernel and cokernel identified with $P_{0,+}$.  In fact, $P_{0,+}$ is $L^2(\bM)$-orthogonal to the range of \eqref{la2.8}, from which the result follows.
\end{proof}

\subsection{Linear analysis for ${\Sc}^{0}$-extremal $\ALE$-metrics}

We will suppose through this subsection that the K\"ahler orbifold $(M,\omega)$ admits a $\bbT$-equivariant K\"ahler resolution %\footnote{\color{blue} We use $\pi$ to denote at least 3 different things: a real number, bundle projections and  resolutions. We should probably be a bit more cautious about this! I favour using $\bl$ instead of $\pi$ for resolutions and blowups.}
\begin{equation}
    \bl: \hM\to M.
\label{la.27a}\end{equation}
  In particular, the singular models $\bbC^m/\Gamma_i$ for $i\in \{1,\ldots,\ell\}$ admit $\bbT$-equivariant K\"ahler resolutions
\begin{equation}
    \bl_i: X_i \to \bbC^{m}/\Gamma_i.
\label{la.27b}\end{equation}
Let $\bX_i$ the radial compactification of $X_i$.  In this paper, we will use the following definition of K\"ahler asymptotically locally euclidean (ALE) metrics on $X_i$. %\vesti{We seems to use interchangably $\|z\|^2, \rho^2, r^2$ to denote essentially the same geometric quantity, possibly appearing on different ``parts'' before the gluing.  Is this reasonable? Maybe introduce a convention glossary explaining  each notation.} \fr{They are not quite the same and not use in the same context. I think it is reasonable and necessary.}
\begin{definition}
    A K\"ahler metric $\omega_i$ on $X_i$ is  called \emph{asymptotically locally Euclidean} ($\ALE$ for short)  if,  outside a compact set $K$ of $X_i$,  it takes the form 
$$
 \omega_i= dd^c\bl_i^*\lrp{ \frac14\|z\|^2 + \|z\|^2 f_i},
$$    
where $f_i=\cO(\|z\|^{-\sigma_i})$ for some $\sigma_i>0$  is a smooth function admitting  a \emph{polyhomogeneous expansion} at infinity in terms of the radial compactification of $\bbC^m/\Gamma_i$. 
\label{ALE.1}\end{definition}
\begin{remark}
There are different definitions of $\ALE$ K\"ahler metrics in the literature, see for instance \cite{Arezzo-Pacard2006, Joyce}.  Compared to these, Definition~\ref{ALE.1} requires lesser decay but more regularity (polyhomogeneity) for the potential function $f_i$.  However, by \cite{Goto}, see also \cite[(4.4)]{CH2013}, combined with \cite[Theorem~5.3]{CMR2015},  the potential function $f_i$ corresponding to a \emph{Ricci-flat} K\"ahler ALE metric in the sense of \cite{Joyce} is automatically polyhomogeneous.  
\label{ALE.2}\end{remark}

{ \begin{assumption}\label{a:overall} We will suppose  from now on that each model $X_i$ admits a $\bbT$-invariant  ALE K\"ahler metric $g_i$ with K\"ahler form $\omega_i$ (in the sense of Definition~\ref{ALE.1}), further  satisfying 
\begin{equation}
  \Sc^{0}(\omega_i)=0, \qquad R_{\omega_i}\ge 0.
\label{van.1}\end{equation}
\end{assumption}}
Let $\bX_i$ be the radial compactification of $X_i$, so that the resolution \eqref{la.27b} extends to a resolution
 \begin{equation}
    \bl_i: \bX_i \to \overline{\bbC^{m}/\Gamma_i}.
\label{la.27c}\end{equation}

    In particular, $\pa \bX_i$ is naturally identified with $S^{2m-1}/\Gamma_i$.  Let $\rho_i\in\CI(\bX_i)$ be a choice of boundary defining function for $\bX_i$ such that $\rho_i= \|z\|^{-1}$ outside a compact set.  
\begin{lemma}
On $(X_i,g_i, \omega_i)$, the only Killing potentials contained in $\rho_i^{\nu}L^2_b(\bX_i)$ for $\nu\in ]-1,0[$ are the constant functions.  
\label{la.28}\end{lemma}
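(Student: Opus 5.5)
Let $u\in \rho_i^{\nu}L^2_b(\bX_i)$ with $\nu\in]-1,0[$ be a Killing potential, i.e. $X:=J\nabla^{g_i}u$ is a Killing field of $g_i$. We must show $u$ is constant.

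First, we translate the weighted $L^2_b$ condition into pointwise growth. The $b$-density $\rho_i^{2m}\omega_i^{[m]}$ is comparable near infinity to $dr/r\, d\mathrm{vol}_{S^{2m-1}/\Gamma_i}$ with $r=\|z\|=\rho_i^{-1}$. So $u\in\rho_i^\nu L^2_b$ says that $u$ grows strictly slower than $r^{-\nu}$, hence slower than $r^{1}$, in an $L^2$-averaged sense. Since $u$ is a Killing potential, $\nabla^{g_i,-}du=0$, and in particular $\Delta_{g_i}u$ is constant (it is $\delta X^\flat$-type, and Killing fields of a K\"ahler metric preserving $\omega_i$ have potentials with $\Delta u$ determined by the holomorphic data; more simply, $\nabla^-du=0$ gives a fourth order elliptic equation $\delta\delta\nabla^- du=0$). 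We therefore apply Corollary~\ref{b.14} to the elliptic $b$-operator $\rho_i^{-4}\delta\delta\nabla^{-}d$, equivalently $\rho_i^{-4}\Delta_{g_i}^2$ up to lower order terms vanishing at $\pa\bX_i$. This operator is differential, with indicial family that of $r^4\Delta^2_{\bbC^m/\Gamma_i}$, asymptotically at infinity (here the ALE decay $f_i=\cO(\|z\|^{-\sigma_i})$ gives coefficients in $\cA_{\phg}$ with positive index at $\pa\bX_i$). The indicial roots in the variable $\rho_i$ are, by \eqref{la.13}, integers: the homogeneous harmonic degrees $k\ge 0$ (in $r$), and $k+2$, together with the decaying ones. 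Since $]-1,0[$ contains no critical weight, we conclude that $u$ is polyhomogeneous with leading term of order $r^0$, i.e. $u=a+\cO(r^{-\delta})$ for a locally constant $a$ on $\pa\bX_i$ (connected, so constant) and some $\delta>0$; the linear growth term $r^1$ is excluded by the weight.

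Next we show $u-a$ vanishes. Set $v=u-a$, still a Killing potential, with $v=\cO(r^{-\delta})$ and, by polyhomogeneity, $dv=\cO(r^{-\delta-1})$. Then $X=J\nabla v$ is a Killing field of the complete metric $g_i$ that tends to zero at infinity. A Killing field on a connected complete Riemannian manifold is determined by its $1$-jet at a point, and on an ALE end its component along the asymptotic cone is, modulo decaying errors, a Killing field of the flat cone $\bbC^m/\Gamma_i$, i.e. a linear vector field $Az$ with $A\in\mathfrak{u}(m)$. Such a field decays only if $A=0$, and a Killing field asymptotic to zero with all derivatives decaying vanishes by the Killing jet identity $\nabla\nabla X=R(X,\cdot)\cdot$ integrated along rays (or via $\Delta|X|^2\ge 0$ style maximum principle: $\frac12\Delta_{g_i}|X|^2=|\nabla X|^2-\Ric(X,X)$). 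Alternatively, and more robustly: $\Delta_{g_i}v$ is a constant $c$ (Killing potentials of Hamiltonian Killing fields on K\"ahler manifolds satisfy $\Delta v$ = const? not in general), so instead I will use integration by parts: since $\nabla^-dv=0$ and $v$ decays, $\Delta_{g_i}\Delta_{g_i}v$ ... The cleanest route is the first: $X\to0$ with $\nabla X\to 0$ at infinity forces $X\equiv0$, because $\nabla X$ satisfies $\nabla(\nabla X)=R(\cdot,X)$ and the curvature decays, so Gronwall along radial geodesics from infinity gives $X=0,\nabla X=0$ on the end, hence everywhere by unique continuation. Then $dv=0$, so $v$ constant, and decaying, so $v=0$.

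The main obstacle is the first step: justifying the polyhomogeneous expansion with no $r^{1}$ or $\log$ terms, which requires checking that the indicial roots of the chosen fourth-order operator in $[-1,0]$ (in terms of $\rho_i$-weights) are only $0$, arising from constants, and that the $\rho_i$-weight convention ($b$-density versus metric volume shifts weights by $m$) is handled correctly. I would redo the weight bookkeeping exactly as in Lemma~\ref{la.15}, using $L^2_w=\rho^{m}L^2_b$ at the ALE end.
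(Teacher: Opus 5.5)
Your proposal is sound in outline and takes a genuinely different route from the paper. Its last step, however, does not close as written and needs a specific fix.

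\textbf{The two routes.} The paper's proof is complex-analytic. The gradient of a Killing potential is a holomorphic vector field, and off a compact set $X_i$ is biholomorphic to $(\bbC^m\setminus \bar B)/\Gamma_i$. After lifting, Hartogs' theorem (this is where $m\ge 2$ enters) extends the components to entire functions. Under the weight condition these decay, so Liouville forces them to vanish.

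You instead argue in two stages:
\begin{itemize}
\item you apply Corollary~\ref{b.14} to the fourth-order Lichnerowicz operator $\rho_i^{-4}\delta\delta\nabla^{-}d$ to get an expansion of $u$ at infinity, hence $|X|=\cO(r^{-1})$ and $|\nabla X|=\cO(r^{-2})$ for $X=J\nabla u$;
\item you then use a purely Riemannian rigidity statement for decaying Killing fields on an ALE end.
\end{itemize}
This costs the $b$-calculus and a quantitative curvature bound. In exchange it gives the full polyhomogeneous structure of $u$ and never uses the holomorphic structure at infinity. Once you have your first stage, the paper's Hartogs--Liouville argument would also finish in one line.

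\textbf{The step that fails as written.} ``Gronwall along radial geodesics from infinity'' does not prove vanishing. Along a ray, the identity $\nabla_Y(\nabla X)=R(Y,X)$ only gives $|F'|\le CF$ for $F=|X|^2+|\nabla X|^2$. That yields $F(t)\ge F(t_0)e^{-C(t-t_0)}$, which is compatible with $F\to 0$. The reason is that the coupling $\nabla_{\dot\gamma}X=(\nabla X)(\dot\gamma)$ has a coefficient that does not decay.

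What works is the rate $|R_{g_i}|=\cO(r^{-2-\sigma_i})$, which holds under Definition~\ref{ALE.1} (cf.~\eqref{la.32}), used in the right order. Take a radial curve of bounded speed with $r\simeq t$, and set $A=|\nabla X|$, $B=|X|$, $M(t)=\sup_{s\ge t}B(s)$. Since $A,B\to 0$, integrating from infinity gives
\[
A(t)\le C\int_t^\infty s^{-2-\sigma_i}B(s)\,ds\le Ct^{-1-\sigma_i}M(t),
\]
and then
\[
B(t)\le\int_t^\infty A(s)\,ds\le C't^{-\sigma_i}M(t).
\]
Hence $M(t)\le C't^{-\sigma_i}M(t)$, so $M(t)=0$ for $t$ large. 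Then $X$ and $\nabla X$ vanish at a point, and $X\equiv 0$.

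\textbf{Two smaller corrections.}
\begin{itemize}
\item For $m=2$ the indicial root $0$ of your fourth-order operator is double. A term $a_{0,1}\log r$ is therefore a priori allowed, and your claim $v=\cO(r^{-\delta})$ is not justified. This is harmless: only $|X|,|\nabla X|\to 0$ is used, and $dv=0$ then removes the logarithm.
\item The aside that $\Delta_{g_i}u$ is constant for a Killing potential is false in general and should be deleted.
\end{itemize}
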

\begin{proof}
Using Hartogs' theorem as in the proof of \cite[Lemma~6.4]{Najafpour}, we see that there is no non-trivial holomorphic vector fields decaying at infinity, from which the result follows.  
\end{proof}

Before continuing, let us recall the following mapping properties for the Laplace-Beltrami operator of the metric $g_i$.

\begin{lemma}
For $\nu\in ]0,2(m-1)[$, the Laplace-Beltrami operator $\Delta_i$ induces an isomorphism
\begin{equation}
  \rho^{-2}_i\Delta_i: \rho_i^{\nu}L^{2,k+2}_b(\bX_i)\to  \rho_i^{\nu}L^{2,k}_b(\bX_i)
\label{la.28b}\end{equation}
for all $k\in \bbN$.  Moreover, for $\nu=m-1$, the inverse $G_i$ of the map
\begin{equation}
 \rho_i^{-(m-1)}(\rho^{-2}_i\Delta_i)\rho_i^{m-1}: L^{2,k+2}_b(\bX_i)\to  L^{2,k}_b(\bX_i)
\label{la.28c}\end{equation}
is an element $\Psi^{-2,\cG_i}_b(\bX_i)$ for a real index family $\cG_i$ such that 
$$
   \cG_i(\lf)\ge m-1, \quad \cG_i(\rf)\ge m-1 \quad \mbox{and} \quad \cG_i(\fb)\ge 0.
$$
Finally, for $\nu\in ]-1,0[$, the map \eqref{la.28b} is still surjective with kernel corresponding to the space of constant functions.  
\label{la.28a}\end{lemma}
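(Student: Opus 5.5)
We treat $P:=\rho_i^{-2}\Delta_i$ as an elliptic differential $b$-operator on $\bX_i$. The plan is to reduce everything to the $b$-calculus statements of Section~\ref{b.0} together with a maximum-principle/integration-by-parts argument.

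\textbf{Step 1: indicial data.} Since $\omega_i$ is ALE in the sense of Definition~\ref{ALE.1}, near $\pa\bX_i$ we have $P-\rho_i^{-2}\Delta_{\bbC^m/\Gamma_i}\in \rho_i^{\epsilon}\Diff^2_{b,\cE}(\bX_i)$ for some $\epsilon>0$. With $\rho_i=r^{-1}$ one gets $\rho_i\pa_{\rho_i}=-r\pa_r$, so $I(P,\lambda)=\Delta_{S^{2m-1}/\Gamma_i}-\lambda^2+(2m-2)\lambda$. By \eqref{la.13} and \eqref{la.13a}, the indicial roots are the negatives of those of $r^2\Delta_{\bbC^m/\Gamma_i}$. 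Hence the critical weights in $[-1,2m-1]$ are $0$ and $2m-2$ (both from constants), possibly together with $-1$ and $2m-1$ (from degree-one invariant polynomials). In particular $]0,2m-2[$ and $]-1,0[$ contain no critical weight. Corollary~\ref{b.15} then shows that \eqref{la.28b} is Fredholm for every $\nu\in]0,2(m-1)[\,\cup\,]-1,0[$ and all $k$.

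\textbf{Step 2: isomorphism for $\nu\in]0,2m-2[$.} For the volume form of $g_i$ we have $L^2_{g_i}=\rho_i^{m}L^2_b$. This means $\rho_i^{-(m-1)}P\rho_i^{m-1}$ is formally self-adjoint on $L^2_b$, and the weights $\nu$ and $2m-2-\nu$ are dual. Consequently the cokernel at weight $\nu$ is the $L^2_b$-kernel at weight $2m-2-\nu\in]0,2m-2[$, and it suffices to prove injectivity for all such $\nu$. Suppose $\Delta_iu=0$ with $u\in\rho_i^{\nu}L^2_b$. By Corollary~\ref{b.14}, $u$ is polyhomogeneous, and since there is no indicial root in $]0,2m-2[$ it decays like $\rho_i^{2m-2}$. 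In particular $u\to0$ at infinity, so the maximum principle (or integrating $|du|^2$ by parts, where the boundary term vanishes by the decay) gives $u\equiv0$. The operator is then Fredholm with trivial kernel and cokernel, hence an isomorphism.

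\textbf{Step 3: the inverse is a $b$-pseudodifferential operator.} Take $\nu=m-1$. The conjugated operator \eqref{la.28c} is formally self-adjoint, and the nearest critical weights are at distance $m-1$ on each side. It is also invertible $L^{2,k+2}_b\to L^{2,k}_b$ by Step~2. Corollary~\ref{b.16} with $\delta=0$ and $\mu=m-1$ therefore gives $G_i\in\Psi^{-2,\cG_i}_b$, with $\cG_i(\lf),\cG_i(\rf)\ge m-1$ (the possible log order from $\ord(m-1)$ is absorbed in the stated inequality) and $\cG_i(\fb)\ge0$.

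\textbf{Step 4: weights $\nu\in]-1,0[$.} Constants lie in $\rho_i^{\nu}L^2_b$ exactly when $\nu<0$, so they belong to the kernel. Conversely, if $\Delta_iu=0$, then Corollary~\ref{b.14} (whose next root below $0$ is at most $-1$) shows that $u=c+\cO(\rho_i^{\epsilon'})$. Applying Step~2 to $u-c$ gives $u=c$.

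For the cokernel, duality identifies it with the kernel at weight $2m-2-\nu\in]2m-2,2m-1[$. The main point to check is that this kernel is trivial, and the argument is the same as in Step~2: such $u$ decays, so the maximum principle forces $u\equiv0$. Surjectivity follows. The only delicate point in the whole argument is keeping the sign convention consistent when passing from $r$ to $\rho_i$ in the indicial roots.
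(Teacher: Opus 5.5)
Your proof is correct, and for the isomorphism on $]0,2(m-1)[$ and the description of $G_i$ it is the paper's proof. The ingredients match: Fredholmness from Corollary~\ref{b.15}, injectivity by integration by parts, the duality $\nu\leftrightarrow 2m-2-\nu$ coming from the formal adjoint $\rho_i^{\nu-2(m-1)}(\rho_i^{-2}\Delta_i)\rho_i^{2(m-1)-\nu}$ for the cokernel, and Corollary~\ref{b.16} with $\mu=m-1$.

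The one genuinely different step is the kernel for $\nu\in]-1,0[$. The paper first gets surjectivity by duality, then uses Melrose's relative index theorem (crossing the rank-one root $0$) to see that the kernel is one-dimensional, hence equal to the constants. You instead read off $u=c+\cO(\rho_i^{\epsilon'})$ from the expansion given by Corollary~\ref{b.14} and fall back on Step~2. Your route avoids the index theorem, but you should state why the leading coefficient is a constant: it lies in $\ker I(\rho_i^{-2}\Delta_i,0)=\ker\Delta_{S^{2m-1}/\Gamma_i}$, which consists of constants because $S^{2m-1}/\Gamma_i$ is connected. The same fact enters the paper's argument as the statement that the root $0$ has rank one.

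Finally, the parenthetical remark in Step~3 is inaccurate: no logarithm is being absorbed. The critical weights $\pm(m-1)$ of the conjugated operator are the shifts of the simple roots $0$ and $2m-2$ of $-\lambda(\lambda-2m+2)$ on constants. Hence $\ord(m-1)=1$, and Corollary~\ref{b.16} gives exactly $(m-1,0)$. With the paper's ordering on $\bbR\times\bbN$, a genuine $\log$ term at $m-1$ would actually be incompatible with $\cG_i(\lf)\ge m-1$.
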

\begin{proof}
By Corollary~\ref{b.15}, we know that the map \eqref{la.28b} is Fredholm for $\nu\in ]0,2m-2[$ and $\nu\in ]-1,0[$ since in this case $\nu$ is not a critical weight of the indicial family of $\rho_i^{-2}\Delta_i$.  A simple integration by parts argument then shows that \eqref{la.28b} has trivial kernel when $\nu\in ]0,2m-2[$.  Since the formal adjoint of  the $b$-operator $\rho_i^{-\nu}(\rho_i^{-2}\Delta_i)\rho_i^{\nu}$ is $\rho_i^{\nu-2(m-1)}(\rho_i^{-2}\Delta_i)\rho_i^{2(m-1)-\nu}$, we see that the cokernel of \eqref{la.28b} is also trivial when $\nu\in]0,2m-2[$, so that the map is invertible.  For the same reason, the map \eqref{la.28b} is surjective when $\nu\in ]-1,0[$.  Clearly, the constant functions are included in the kernel of this map.  By the relative index theorem of Melrose \cite[Theorem~6.5]{MelroseAPS}, this kernel must also be one dimensional, so must consist precisely of the constant functions.  Finally, the pseudodifferential characterization of the inverse \eqref{la.28c} can be deduced from Corollary~\ref{b.16}.
\end{proof}

Considering the linearization of the operator $\varphi\mapsto \Sc^{0}(\omega_i+dd^c\varphi)$ yields in this case a scattering pseudodifferential operator in the sense of \cite{MelroseGST}
taking the form
\begin{equation}
\bbS_{i,\sc}(u)=
-4\delta_{i}\delta_{i}(\nabla^{i,-}(\delta_{i}\nabla^{i,-}du))-2\delta_{i}\delta_{i}(R_{i}\nabla^{i,-}),
\label{la.30}\end{equation}
where $\delta_i$, $R_i$ and $\nabla^{i}$ denotes the divergence, scalar curvature and Levi-Civita connection of $\omega_i$.
\begin{lemma}
For $m\ge 2$, the operator 
$$
 \bbS_{i,b}:= \rho_i^{-6}\bbS_{i,\sc}
$$
is an elliptic differential $b$-operator of order $6$.
For $m>3$, the indicial family of $\bbS_{b,i}$  is contained in 
$$
   \bbZ\setminus ( \bbZ\cap ]0,2m-6[)
$$
with $0$ and $2m-6$ indicial roots of order $1$ and rank $1$.  If $m=3$, then $\Spec_b(\bbS_{i,b})\subset \bbZ$ with $0$ an indicial root of order $2$ and rank $2$.  In all cases, $\Spec_b(\bbS_{i,b})$ is symmetric with respect to $\lambda=m-3$.  
\label{la.31}\end{lemma}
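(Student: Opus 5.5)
The plan mirrors the orbifold computation of Lemmas~\ref{la.8} and \ref{la.14}, but now at infinity of $X_i$, where $\rho_i=\|z\|^{-1}$ is the boundary defining function. First I will check that $\bbS_{i,b}$ is a differential $b$-operator of order $6$. The term $\theta$ is absent since $\beta=0$ on $X_i$, so \eqref{la.30} is purely differential. Outside a compact set, $\omega_i$ differs from the flat metric by $dd^c(\|z\|^2 f_i)$ with $f_i=\cO(\|z\|^{-\sigma_i})$ polyhomogeneous. Hence the coefficients of $\rho_i^{-6}\bbS_{i,\sc}$ lie in $\cA_{\phg}^{\cE}(\bX_i)$ with $\Re\cE\ge 0$. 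Moreover,
$$
\rho_i^{-6}\bbS_{i,\sc}+\tfrac12\rho_i^{-6}\Delta_{\bbC^m/\Gamma_i}^3
$$
has coefficients vanishing at $\pa\bX_i$ to order $\rho_i^{\sigma_i}$. The same holds for the term $2\delta_i\delta_i(R_i\nabla^{i,-}d\,\cdot)$, since $R_i=\cO(\|z\|^{-2-\sigma_i})$.

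Ellipticity as a $b$-operator follows because the principal symbol is that of $-\tfrac12\Delta_i^3$, and $\rho_i^{-2}\Delta_i$ is $b$-elliptic (it is $r^2\Delta$ in the variable $r=\rho_i^{-1}$). The indicial family is therefore
$$
I(\bbS_{i,b},\lambda)=-\tfrac12 I\big(\rho_i^{-6}\Delta^3_{\bbC^m/\Gamma_i},\lambda\big).
$$

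Next I compute the indicial roots. Write $r=\rho_i^{-1}$, so that $\rho_i^{\lambda}=r^{-\lambda}$ and $\rho_i\pa_{\rho_i}=-r\pa_r$. Then $I(\rho_i^{-2}\Delta_{\bbC^m/\Gamma_i},\lambda)=I(r^2\Delta_{\bbC^m/\Gamma_i},-\lambda)$, and the roots of $\rho_i^{-2}\Delta$ are the negatives of those in \eqref{la.13}, namely $\{(m-1)\mp\sqrt{(m-1)^2+\nu}\}$. As in \eqref{ind.1}, factor
$$
\rho_i^{-6}\Delta^3=\rho_i^{-4}(\rho_i^{-2}\Delta)\rho_i^{4}\,\rho_i^{-2}(\rho_i^{-2}\Delta)\rho_i^{2}\,(\rho_i^{-2}\Delta)
$$
with the appropriate shifts. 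This gives
$$
\Spec_b(\bbS_{i,b})=\bigcup_{k=0}^2\big(-\Spec_b(r^2\Delta_{\bbC^m/\Gamma_i})-2k\big),
$$
which is the reflection $\lambda\mapsto-\lambda$ of \eqref{ind.2} for a single boundary component. Lemma~\ref{la.14}, applied with $\ell=1$, then transfers directly: the set consists of integers, and its symmetry about $-(m-3)$ becomes symmetry about $m-3$. For $m>3$ the gap $]6-2m,0[$ becomes $]0,2m-6[$, and $0$ and $2m-6$ are roots of order $1$ and rank $1$, coming from the constants, i.e.\ $\nu=0$ on the connected boundary $S^{2m-1}/\Gamma_i$. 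For $m=3$ the two roots coincide at $0$, giving order $2$ and rank $2$.

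The only real care needed is the orientation and shift bookkeeping between $r$ and $\rho_i=r^{-1}$. For $m>3$ I will double-check that the roots in the gap come out correctly. From $k=0$ the roots are $0$ and $2m-2$, with nothing between them beyond these when $\nu=0$, since the next value $\nu=2m-1$ gives $-1$ and $2m-1$. From $k=1$ they are $-2$ and $2m-4$, and from $k=2$ they are $-4$ and $2m-6$. The roots from $\nu>0$ lie outside $[0,2m-6]$, as in \eqref{la.13a}. I expect this step to be the main (mild) obstacle; everything else is a direct transcription of the orbifold computation.
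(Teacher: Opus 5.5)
Your proposal is correct and follows essentially the same route as the paper. The paper also discards the $R_i$ term and the metric perturbation because they decay like $\rho_i^{\sigma_i}$ at the boundary. It then identifies the indicial family with that of the flat operator $\rho_i^{-6}\Delta^3_{\bbC^m/\Gamma_i}$, and uses $\rho_i=r^{-1}$ to get $I(\bbS_{i,b},\lambda)=I(r^6\Delta^3_{\bbC^m/\Gamma_i},-\lambda)$ up to a nonzero constant. This reduces everything to Lemma~\ref{la.14} with $\lambda\mapsto-\lambda$ and a single boundary component. Your explicit factorization in the variable $\rho_i$, together with your root-by-root check of the gap $]0,2m-6[$ and of the orders and ranks, is a more detailed version of that same bookkeeping.
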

\begin{proof}
The first statement is clear.  Notice that
\begin{equation}
      R_{i}\in \rho_i^{2+\sigma_i}L^{2,\infty}_{b}(\bX_i),
\label{la.32}\end{equation}
where $R_i$ is the scalar curvature of $\omega_i$ and its  decay in \eqref{la.32} is a consequence of our assumption that $g_i$ is an $\ALE$-metric, so is asymptotic at infinity to $g_{\bbC^m/\Gamma_i}$ at a rate $\rho_i^{\sigma_i}$ for the  $\sigma_i>0$ of Definition~\ref{ALE.1}. 

This shows that the term involving $R_{i}$ in the definition of $\bbS_{i,b}$ decays to order $\sigma_i$ at the $b$-front face $\fb$.  In particular, this term does not contribute to the indicial family of $\bbS_{i,b}$.  The indicial family therefore depends only from the first term in the right hand side of \eqref{la.30}.  Proceeding as in the proof of Lemma~\ref{la.14}, we deduce that the indicial family of $\bbS_{i,b}$ is the same as the one of $\rho_i^{-6}\Delta^3_{\bbC^m/\Gamma_i}$ at $\rho_i=0$, where $\Delta_{\bbC^m/\Gamma_i}$ is the Laplace-Beltrami operator of the canonical metric on $\bbC^m/\Gamma_i$.  Since we can assume $\rho_i= r^{-1}$ outside a compact set, we have that
$$
    \rho_i^{-\lambda}\rho_i^{-6}\Delta^3_{\bbC^m/\Gamma_i}\rho_i^{\lambda}= r^{\lambda} r^6\Delta^3_{\bbC^m/\Gamma_i}r^{-\lambda},
$$
so that in terms of \eqref{ind.1},
\begin{equation}
    I(\bbS_{i,b},\lambda)= I(r^6\Delta_{\bbC^m/\Gamma_i}^3,-\lambda).
\label{la.32c}\end{equation}
The results about the indicial family of $\bbS_{i,b}$ corresponds therefore to those of Lemma~\ref{la.14} with the sign of $\lambda$ reversed.
\end{proof}

For $m> 3$, this can be used to deduce the following mapping properties.
\begin{proposition}
For $m>3$, the operator $\bbS_{i,b}$ induces a continuous linear isomorphism
\begin{equation}
\bbS_{i,b}: \rho_i^{\nu}L^{2,k+6}_b(\bX_i)\to \rho_i^{\nu}L^{2,k}_b(\bX_i)
\label{la.33a}\end{equation}
for $\nu\in ]0,2m-6[$ and $k\in\bbN$.  Alternatively, this says that $\bbS_{i,\sc}$ induces a continuous linear isomorphism
\begin{equation}
\bbS_{i,\sc}: \rho_i^{\nu}L^{2,k+6}_b(\bX_i)\to \rho_i^{\nu+6}L^{2,k}_b(\bX_i)
\label{la.33b}\end{equation}
for $\nu\in ]0,2m-6[$ and $k\in\bbN$.  
\label{la.33}\end{proposition}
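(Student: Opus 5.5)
The plan is to follow the template of Proposition~\ref{la.16}, now on the ALE space $\bX_i$, where there is only the boundary at infinity and no Green operator term. The operator $\bbS_{i,b}$ is an elliptic differential $b$-operator of order $6$ by Lemma~\ref{la.31}. For $\nu\in]0,2m-6[$ the weight $\nu$ is not a critical weight, since $\Spec_b(\bbS_{i,b})\cap]0,2m-6[=\emptyset$. Corollary~\ref{b.15}, or directly \cite[Theorem~5.60]{MelroseAPS}, then shows that \eqref{la.33a} is Fredholm for every $k$. The second formulation \eqref{la.33b} is an immediate rewriting, since $\bbS_{i,\sc}=\rho_i^6\bbS_{i,b}$. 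So it remains to show that the kernel and the cokernel both vanish.

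\textbf{Kernel.} Let $u\in\rho_i^{\nu}L^2_b(\bX_i)$ with $\bbS_{i,b}u=0$. By Corollary~\ref{b.14} (differential case), $u$ is polyhomogeneous at $\pa\bX_i$. The next indicial root above $\nu$ is at least $2m-6>\nu$, and $2m-6>0$, so $u=\cO(\rho_i^{\delta})$ with all $b$-derivatives, for some $\delta>0$. In Euclidean terms $u$ decays like $\|z\|^{-\delta}$, $du$ like $\|z\|^{-1-\delta}$, and so on. I want to integrate by parts in $\int_{X_i}u\,\bbS_{i,\sc}u\,\omega_i^{[m]}$, exactly as in Lemma~\ref{la.15}, to obtain
$$0=-4\|\delta_i\nabla^{i,-}du\|^2_{L^2}-2\int_{X_i}R_i|\nabla^{i,-}du|^2\omega_i^{[m]}.$$
The boundary terms on large spheres have size $\|z\|^{2m-1}\cdot\|z\|^{-5-2\delta}$ or similar, so they vanish only if $2m-6-2\delta<0$. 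This is where care is needed. I would first sharpen the decay. Since $\Spec_b$ is symmetric about $m-3$ and has no roots in $]0,2m-6[$, the expansion of $u$ begins at an exponent at least $2m-6$, whence $u=\cO(\rho_i^{2m-6}\log)$ at worst. Alternatively, one can pair $u$ against itself in the weighted $L^2_b$ duality, which is legitimate because the formal adjoint of $\rho_i^{-\nu}\bbS_{i,b}\rho_i^{\nu}$ is $\rho_i^{\nu-(2m-6)}\bbS_{i,b}\rho_i^{(2m-6)-\nu}$. In that pairing $u\in\rho_i^{\nu}L^2_b$ is paired with $u\in\rho_i^{(2m-6)-\nu}L^2_b$; the second membership holds because $u$ actually lies in every $\rho_i^{\lambda}L^2_b$ with $\lambda<2m-6$. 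This is the step I expect to be the main obstacle: getting the boundary contributions at the critical weight $2m-6$ to vanish rigorously, possibly by handling the $\rho_i^{2m-6}$ leading term separately.

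Once the identity holds, $R_i\ge0$ from Assumption~\ref{a:overall} gives $\delta_i\nabla^{i,-}du=0$. A further integration by parts gives $\nabla^{i,-}du=0$, so $J\nabla u$ is a holomorphic Killing field and $u$ is a Killing potential. By Lemma~\ref{la.28}, $u$ is then constant, and the decay forces $u=0$.

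\textbf{Cokernel.} The cokernel of \eqref{la.33a} is identified with the $L^2_b$-kernel of the adjoint. Using the conjugation formula above, this amounts to $\ker\bbS_{i,b}\cap\rho_i^{2m-6-\nu}L^2_b(\bX_i)$. Since $2m-6-\nu\in]0,2m-6[$, the kernel argument applies again and gives zero. Hence \eqref{la.33a} is an isomorphism for all $k$, and \eqref{la.33b} follows.
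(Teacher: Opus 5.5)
Your proposal is correct and is essentially the paper's proof: Fredholmness from the absence of critical weights in $]0,2m-6[$, the improved decay $u=\cO(\rho_i^{2m-6})$ of kernel elements, the identity of Lemma~\ref{la.15} with $R_i\ge 0$, and Lemma~\ref{la.28}. With that decay every boundary term in the sixth-order integration by parts is $\cO(\|z\|^{6-2m})$, so the step you flag as the main obstacle is not one. The only difference is that you kill the cokernel directly at the reflected weight $2m-6-\nu$ using $\bbS_{i,b}^*=\rho_i^{-(2m-6)}\bbS_{i,b}\rho_i^{2m-6}$. The paper instead gets index zero from formal self-adjointness at $\nu=m-3$ plus the relative index theorem; the two are equivalent, and yours avoids the relative index theorem.

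One point you pass over, as does the paper, is the final integration by parts $\langle du,\delta_i\nabla^{i,-}du\rangle=\|\nabla^{i,-}du\|^2$ when $m=4$. There a leading term $a\rho_i^{2}$ makes $\nabla^{i,-}du$ miss $L^2$ by a logarithm and leaves an $\cO(1)$ boundary term on large spheres. So you should first observe that $\delta_i\nabla^{i,-}du=0$ forces $a=0$: its leading part at infinity is $\frac12 d\Delta_{\Euc}u$, and $\Delta_{\Euc}\|z\|^{-2}\ne 0$ in real dimension $8$.
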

\begin{proof}
By Lemma~\ref{la.31} and Corollary~\ref{b.15}, the operator
\begin{equation}
     \rho_i^{-\nu}\bbS_{i,b}\rho_i^{\nu}: L^{2,k+6}_b(\bX_i)\to L^{2,k}_b(\bX_i)
\label{la.33c}\end{equation}
is Fredholm.  Moreover, by the relative index theorem of Melrose \cite[Theorem~.6.5]{MelroseAPS}, its index does not depend on the choice of $\nu\in]0,2m-6[$.

Clearly,  the proposition will follow provided we can show that its index vanishes and its nullspace is trivial.
 Now, using the fact that $\bbS_{i,\sc}$ is formally self-adjoint as a scattering operator, we can deduce that the formal adjoint of $\bbS_{i,b}$ as a $b$-operator is 
\begin{equation}
    \bbS_{i,b}^*= \rho_i^{-(2m-6)}\bbS_{i,b}\rho_i^{2m-6}.
\label{adj.1}\end{equation}
In particular, the formal adjoint of $\rho_i^{-\nu}(\bbS_{i,b})\rho_i^{\nu}$ as a $b$-operator is 
$$
      (\rho_i^{-\nu}\bbS_{i,b}\rho_i^{\nu})^*= \rho_i^{\nu}\rho_i^{-(2m-6)}\bbS_{i,b}\rho_i^{2m-6}\rho_i^{-\nu}.  
$$
For $\nu=m-3$, this implies that $(\rho_i^{-\nu}\bbS_{i,b}\rho_i^{\nu})$ is a formally self-adjoint $b$-operator, which implies that the index of \eqref{la.33a} vanishes for $\nu=m-3$, so for all $\nu\in ]0,2m-6[$.  Thus, it remains to show that the nullspace of \eqref{la.33a} is trivial.  Now, by the regularity result of Corollary~\ref{b.14}, the kernel of \eqref{la.33b} is the same for all $\nu\in ]0,2m-6[$ and $k\in\bbN$ and is contained in $\rho_i^{2m-6-\delta}L^{2,\infty}_b(\bX_i)$ for all $\delta>0$.  If $u$ is an element of this kenrel, this implies  that we can integrate by parts as in the proof of Lemma~\ref{la.15} to deduce that 
\begin{equation}
0= -4\| \delta_{i}\nabla^{i,-}du\|^2_{L^2_{\sc}}-2\|R_{i}^{\frac12}\nabla^{i,-}du\|^2_{L^2_{\sc}}.
\label{la.34}\end{equation}
  In particular, we deduce from \eqref{la.34} that 
$$
   \| \delta_{i}\nabla^{i,-}du\|^2_{L^2_{\sc}}=0,
$$
which implies that $\delta_{i}\nabla^{i,-}du=0$.  This in turn implies that
$$
   0= \langle du, \delta_{i}\nabla^{i,-}du\rangle_{L^2_{\sc}}= \|\nabla^{i,-}du \|^2_{L^2_{\sc}}=0,
$$
so that
$$
     \nabla^{i,-}du=0.
$$
This means that $u$ must be a Killing potential, but by Lemma~\ref{la.28} and the fact $u$ decays at infinity, this implies that $u=0$.  This means that the nullspace of \eqref{la.33a} is trivial, from which the result follows.
\end{proof}

\begin{corollary}
For $\nu\in ]-2,-1[\cup ]-1,0[$, the maps \eqref{la.33a} and \eqref{la.33b} are no longer injective, but they remain surjective.  
\label{la.33e}\end{corollary}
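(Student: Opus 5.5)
Throughout we work in the setting of Proposition~\ref{la.33}, so $m>3$; let $\nu\in ]-2,-1[\cup]-1,0[$. The idea is to treat surjectivity by duality and non-injectivity by exhibiting explicit elements of the kernel.

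\textbf{Fredholm property.} The indicial roots of $\bbS_{i,b}$ lie in $\bbZ\setminus(\bbZ\cap]0,2m-6[)$ by Lemma~\ref{la.31}. Hence $\nu$ is not a critical weight provided $-1$ is excluded, and the map \eqref{la.33a} is Fredholm by Corollary~\ref{b.15}.

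\textbf{Surjectivity.} The cokernel of $\rho_i^{-\nu}\bbS_{i,b}\rho_i^{\nu}$ on $L^2_b$ is the $L^2_b$-kernel of its formal adjoint. By \eqref{adj.1} this adjoint is $\rho_i^{\nu-(2m-6)}\bbS_{i,b}\rho_i^{2m-6-\nu}$, so the cokernel is identified with
\[
\ker\bbS_{i,b}\cap \rho_i^{2m-6-\nu}L^2_b(\bX_i).
\]
Since $\nu<0$, we have $2m-6-\nu>2m-6>0$, so this weight lies above the interval $]0,2m-6[$.
\begin{itemize}
\item Any element of this space also lies in $\rho_i^{\nu'}L^2_b$ for some $\nu'\in]0,2m-6[$.
\item Proposition~\ref{la.33} then forces it to vanish.
\end{itemize}
So the cokernel is trivial and \eqref{la.33a} is surjective. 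Since $\bbS_{i,\sc}=\rho_i^{6}\bbS_{i,b}$, the same holds for \eqref{la.33b}.

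\textbf{Non-injectivity.} The constant functions lie in $\rho_i^{\nu}L^2_b(\bX_i)$ exactly when $\nu<0$, and they are annihilated by $\bbS_{i,\sc}$, because every term of \eqref{la.30} contains $du$. So the kernel is nontrivial for every $\nu<0$. This is the easy half.

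\textbf{Size of the kernel.} For a more quantitative statement, the relative index theorem \cite[Theorem~6.5]{MelroseAPS} applies: crossing the order-one, rank-one root $0$ (Lemma~\ref{la.31}) raises the index by $1$. This is consistent with the kernel being the constants for $\nu\in]-1,0[$. Crossing the possible root $-1$, which comes from $\Gamma_i$- and $\bbT$-invariant linear functions via \eqref{la.13}, adds the corresponding rank. Since the cokernel stays trivial by the argument above, the index equals the dimension of the kernel.

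\textbf{Main obstacle.} The only delicate point is the surjectivity step. One must make sure that the regularity of Corollary~\ref{b.14} genuinely places a weighted kernel element of the adjoint inside the range $]0,2m-6[$ covered by Proposition~\ref{la.33}. This is exactly where the absence of indicial roots strictly between $2m-6$ and $2m-6-\nu$ (other than possibly $2m-6$ itself, which we stay above) is needed. I would verify this using the symmetry of $\Spec_b(\bbS_{i,b})$ about $m-3$.
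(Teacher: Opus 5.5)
Your proof is correct. The surjectivity half is the paper's argument. The cokernel of \eqref{la.33c} is the $L^2_b$-kernel of the formal adjoint $\rho_i^{\nu-(2m-6)}\bbS_{i,b}\rho_i^{2m-6-\nu}$, and its elements vanish by the injectivity behind Proposition~\ref{la.33}. The paper says ``by the same argument'', meaning the integration by parts; you invoke the proposition itself after an inclusion of weighted spaces, which amounts to the same thing.

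For non-injectivity you take a different and more elementary route. The paper uses the relative index theorem \cite[Theorem~6.5]{MelroseAPS} to see that the index becomes positive after crossing the root $0$. You instead note that constants lie in $\rho_i^{\nu}L^{2,k+6}_b(\bX_i)$ for $\nu<0$ and are annihilated by $\bbS_{i,\sc}$. That suffices for the statement. The index computation buys the extra information $\dim\ker=\ind$, hence that the kernel is exactly the constants on $]-1,0[$; your optional last paragraph reproduces this.

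Your ``main obstacle'' is not an obstacle, and the condition you propose to verify is unnecessary and in general false.
\begin{itemize}
\item It is unnecessary. Since $\rho_i$ is bounded on $\bX_i$, one has $\rho_i^{a}L^2_b\subset\rho_i^{b}L^2_b$ whenever $a\ge b$, with no reference to indicial roots. Elliptic regularity (or Corollary~\ref{b.14}) places the adjoint's $L^2_b$-kernel in $L^{2,\infty}_b$. After multiplication by $\rho_i^{2m-6-\nu}$, its elements therefore lie in $\rho_i^{\nu'}L^{2,k+6}_b(\bX_i)$ for every $\nu'\in]0,2m-6[$, so they vanish by Proposition~\ref{la.33}. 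Your two bullet points already complete the argument.
\item It is false in general. By the symmetry of $\Spec_b(\bbS_{i,b})$ about $m-3$, if $-1$ is an indicial root (i.e.\ $\Gamma_i$ has nonzero invariant linear forms), then so is $2m-5$. For $\nu\in]-2,-1[$, this root lies strictly between $2m-6$ and $2m-6-\nu$. Requiring the absence of such roots would therefore wrongly exclude that case.
\end{itemize}

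A small point in your kernel count: the spaces in \eqref{la.33a} are not restricted to $\bbT$-invariant functions. So the rank at $-1$ comes from all $\Gamma_i$-invariant linear functions, not only the $\bbT$-invariant ones.
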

\begin{proof}
For $\nu\in  ]-2,-1[\cup ]-1,0[$, the map \eqref{la.33c} remains Fredholm since $\nu$ is not an indicial root.  However, by the relative index theorem \cite[Theorem~6.5]{MelroseAPS}, its index is positive.    Its formal adjoint will also be Fredholm and its kernel will be trivial by the same argument used in the proof of Proposition~\ref{la.33}, implying that  \eqref{la.33c} is surjective.
\end{proof}

For $m=3$, we obtain a slightly different statement.

\begin{proposition}
For $m=3$, the operator $\bbS_{i,b}$ induces a surjective Fredholm operator
\begin{equation}
    \bbS_{i,b}: \rho_i^{\nu} L^{2,k+6}_b(\bX_i)\to \rho_i^{\nu}L^{2,k}_b(\bX_i)
\label{la.35a}\end{equation}
for $\nu\in ]-1,0[$ with nullspace corresponding to constant functions.  For $\nu\in]0,1[$, it induces instead an injective Fredholm operator with cokernel identified with the constant functions.
\label{la.35}\end{proposition}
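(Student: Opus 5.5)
Let $m=3$. By Lemma~\ref{la.31}, $\Spec_b(\bbS_{i,b})\subset\bbZ$ is symmetric about $\lambda=m-3=0$, and $0$ is an indicial root of order $2$ and rank $2$. So no weight $\nu\in]-1,0[\cup]0,1[$ is critical. Corollary~\ref{b.15} then shows that
$$
\rho_i^{-\nu}\bbS_{i,b}\rho_i^{\nu}: L^{2,k+6}_b(\bX_i)\to L^{2,k}_b(\bX_i)
$$
is Fredholm for these $\nu$. By Melrose's relative index theorem \cite[Theorem~6.5]{MelroseAPS}, the index is constant on each interval. When we pass from $\nu\in]0,1[$ to $\nu\in]-1,0[$, the index jumps by the rank of the critical weight $0$, so
$$
\ind_{-\nu}=\ind_{\nu}+2 .
$$
By \eqref{adj.1} with $m=3$, $\bbS_{i,b}$ is formally self-adjoint as a $b$-operator. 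Hence the cokernel at weight $\nu$ is identified with the $L^2_b$-kernel at weight $-\nu$, and $\ind_{-\nu}=-\ind_{\nu}$. Together these give $\ind_{\nu}=-1$ for $\nu\in]0,1[$ and $\ind_{-\nu}=1$.

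We next compute the kernels. For $\nu\in]0,1[$, take $u$ in the kernel. By the regularity result Corollary~\ref{b.14}, $u$ is polyhomogeneous and decays at infinity like $\rho_i^{\delta}$ for some $\delta>0$ (in fact up to $\rho_i^{1-}$, since the next indicial root above $0$ is $\ge1$). This decay justifies the integrations by parts of Proposition~\ref{la.33}, which yield \eqref{la.34}. Since $R_i\ge0$, we get $\nabla^{i,-}du=0$, so $u$ is a decaying Killing potential. Lemma~\ref{la.28} then forces $u=0$, so the operator is injective for $\nu\in]0,1[$. Its cokernel has dimension $1$, and by self-adjointness it equals the kernel at weight $-\nu$.

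For $\nu\in]-1,0[$, the constants lie in the kernel: $\bbS_{i,\sc}(1)=0$, and $1\in\rho_i^{\nu}L^2_b$ because $\nu<0$. The kernel has dimension equal to the cokernel dimension at $+\nu$, namely $0+1=1$, so it is exactly the constants. Surjectivity at $-\nu$ is equivalent to triviality of the kernel at $+\nu$, which we have already proved. The cokernel at $+\nu$ is then identified with the constants, viewed as the $L^2_b$-pairing dual through the $b$-density.

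The main obstacle is the integration by parts in the injectivity step. With only $\rho_i^{0+}$ decay, the boundary terms in $\langle u,\bbS_{i,\sc}u\rangle$ must vanish. A term like $u\,\delta_i\nabla^{i,-}du$ paired over a sphere of radius $R$ in real dimension $6$ behaves like $R^{-\delta}R^{-3-\delta}R^{5}$, which does not obviously tend to zero. I would first use the polyhomogeneous expansion: an element of the kernel at $\nu>0$ has no $\rho_i^0$ or $\log$ term, so its leading term is at order $\ge1$, which comes from the integer indicial roots. That gives enough decay, of the form $R^{-2}R^{-5}R^5\to0$ and similarly for the other boundary terms.

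If this estimate fails, the fallback is to pair $u$ against the kernel at the dual weight. In that case I would invoke the nondegenerate boundary pairing \cite[Proposition~6.2]{MelroseAPS}, as in Proposition~\ref{la.26}.
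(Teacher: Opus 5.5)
Your proof is correct and follows the paper's argument: Fredholmness from Lemma~\ref{la.31} and Corollary~\ref{b.15}, and the relative index jump $\ind_{-\nu}=\ind_{\nu}+2$ combined with formal self-adjointness to get indices $\mp 1$. You then prove injectivity for $\nu\in]0,1[$ via the integrations by parts of Proposition~\ref{la.33} and Lemma~\ref{la.28}, and a dimension count identifies the kernel at $-\nu$, and the cokernel at $\nu$, with the constants.

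The obstacle you flag is not actually there, and the term $u\,\delta_i\nabla^{i,-}du$ you worry about never appears. Each boundary term in $\langle u,\bbS_{i,\sc}u\rangle$ is a product $\nabla^j u\cdot\nabla^{5-j}u$, with five derivatives in total, plus lower-order terms weighted by $R_i$. On $S_R$ this is $O(R^{-2a})$ when $u=O(R^{-a})$ conormally. So any decay $a>0$ suffices, which is why the paper simply invokes Proposition~\ref{la.33}, and neither your refined expansion nor the boundary-pairing fallback is needed.
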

\begin{proof}
For $m=3$ and $\nu>0$, the integrations by parts involved in the proof of Proposition~\ref{la.33} can still be used to show that \eqref{la.35a} has trivial nullspace when $\nu>0$.  For $\nu\in ]0,1[$ fixed, we know also from Lemma~\ref{la.31}, Corollary~\ref{b.15}  and \cite[Theorem~6.5]{MelroseAPS} that the operators
\begin{equation}
     \bbS_{i,b,\pm\nu}:= \rho_i^{\mp\nu}\bbS_{i,b}\rho_{i}^{\pm\nu}: L^{2,k+6}_b(\bX_i)\to L^{2,k}_b(\bX_i)
\label{la.35b}\end{equation}
are Fredholm with 
$$
   \ind \bbS_{i,b,-\nu}= \ind \bbS_{i,b,\nu}+2.
$$
Since $\bbS_{i,b,-\nu}$ is the adjoint of $\bbS_{i,b,\nu}$, this means that
$$
     \ind \bbS_{i,b,\nu}=-1 \quad \mbox{and} \quad \ind\bbS_{i,b,-\nu}=1.
$$

This implies that $\bbS_{i,b,-\nu}$ is surjective, since its cokernel is identified with the kernel of $\bbS_{i,b,\nu}$, which is trivial.  Hence, the kernel of $\bbS_{i,b,-\nu}$ has dimension $1$.  Since constant functions are obviously contained in it, this implies that $\ker \bbS_{i,b,-\nu}$ only contains constant functions.  Correspondingly, the cokernel of $\bbS_{i,b,\nu}$, which is identified with $\ker \bbS_{i,b,-\nu}$, is therefore identified with the space of constant functions.
\end{proof}

Finally, to treat the case $m=2$, let us first make the following observation.  
\begin{lemma}
If $m=2$ and $\Gamma_i\ne \{\Id\}$ for all $i\in\{1,\ldots,\ell\}$, then $\bbS_{i,b}$ has no indicial root in $]-2,0[$. 
\label{lasc.1}\end{lemma}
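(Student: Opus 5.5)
We reduce the statement to the indicial root computation already carried out for the orbifold side and then specialize to $m=2$. By \eqref{la.32c} in the proof of Lemma~\ref{la.31}, the indicial family of $\bbS_{i,b}$ at $\pa\bX_i$ is $I(\bbS_{i,b},\lambda)=I(r^6\Delta^3_{\bbC^m/\Gamma_i},-\lambda)$. Hence $\lambda$ is an indicial root of $\bbS_{i,b}$ if and only if $-\lambda$ is an indicial root of $r^6\Delta^3_{\bbC^m/\Gamma_i}$. By \eqref{ind.1} and \eqref{ind.2}, the latter roots are the sets $\Spec_b(r^2\Delta_{\bbC^m/\Gamma_i})+2k$ for $k\in\{0,1,2\}$. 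So we must show that, for $m=2$, no element of $\bigcup_{k=0}^{2}\left(\Spec_b(r^2\Delta_{\bbC^2/\Gamma_i})+2k\right)$ lies in $]0,2[$. This is precisely the single-model content of Lemma~\ref{la2.2}, so the argument mirrors its proof.

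Concretely, for $m=2$, \eqref{la.13} gives $\Spec_b(r^2\Delta_{\bbC^2/\Gamma_i})=\{-1\pm\sqrt{1+\nu}\}$ with $\nu=k'(k'+2)$ an eigenvalue of $\Delta_{S^3/\Gamma_i}$. Thus $\sqrt{1+\nu}=k'+1$, and the roots are the integers $k'$ and $-k'-2$. Shifting by $2k\in\{0,2,4\}$, the only integer in $]0,2[$ is $1$. We check which shifts can produce it:
\begin{itemize}
\item the shift $0$ of the root $k'=1$ gives $1$;
\item the shift $2$ of the root $-1$ would require $k'=-1$ or $k'+2=1$, both impossible since $k'\in\bbN$;
\item the shift $4$ of the root $-3$ gives $1$ via $-k'-2=-3$, i.e.\ again $k'=1$.
\end{itemize}
So $1$ is an indicial root exactly when $3=1\cdot(1+2)$ belongs to $\Spec(\Delta_{S^3/\Gamma_i})$.

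By \cite{BGM}, the eigenspace of $\Delta_{S^3}$ for the eigenvalue $3$ consists of restrictions of harmonic homogeneous polynomials of degree $1$, i.e.\ real linear functions on $\bbC^2=\bbR^4$. The eigenvalue $3$ therefore survives on $S^3/\Gamma_i$ only if there is a nonzero $\Gamma_i$-invariant linear function. Suppose $\ell$ is such a function. Then its kernel is a $\Gamma_i$-invariant hyperplane, and its dual vector is a nonzero fixed vector of $\Gamma_i$ in $\bbR^4$. Since $\Gamma_i\ne\{\Id\}$ acts freely on $\bbC^2\setminus\{0\}$, no nontrivial element fixes a nonzero vector, which gives a contradiction. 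Hence $1$ is not an indicial root, and $\bbS_{i,b}$ has none in $]-2,0[$ after the sign reversal $\lambda\mapsto-\lambda$.

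The only delicate points are the sign bookkeeping in \eqref{la.32c} and the check that the shifted roots $\pm$ really only reach $1$ via $k'=1$. Both are straightforward integer checks.
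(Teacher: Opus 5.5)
Your proof is correct and follows the paper's route: the paper's proof is the single line combining \eqref{la.32c} with Lemma~\ref{la2.2}. You carry out the same reduction explicitly and then re-derive the content of Lemma~\ref{la2.2}: the only candidate root in $]0,2[$ is $1$, which arises from the eigenvalue $3$ of $\Delta_{S^3/\Gamma_i}$, and this eigenvalue is excluded because a nontrivial $\Gamma_i$ acting freely on $\bbC^2\setminus\{0\}$ admits no nonzero invariant linear function.
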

\begin{proof}
This follows from Lemma~\ref{la2.2} and \eqref{la.32c}.
\end{proof}
This can be used to identify the kernel of $\bbS_{i,\sc}$ with the constant functions.  
\begin{lemma}
Suppose that $m=2$ and $R_{\omega_i}\ge 0$.  Suppose also that $\Gamma_i\ne \{\Id\}$ for all $i\in\{1,\ldots,\ell\}$.   Then for $\nu\in ]-2,0[$,
$$
    \ker \bbS_{i,\sc}\cap \rho_i^{\nu}L^2_b(\bX_i)^{\bbT}
$$
coincides with the space of constant functions on $X_i$.  
\label{lasc.2}\end{lemma}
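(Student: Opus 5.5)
We follow the strategy of the proof of Lemma~\ref{la2.3}, transplanted to the ALE end. Constants clearly lie in the kernel. Moreover, since $\nu<0$, we have $1\in \rho_i^{\nu}L^2_b(\bX_i)$: the $b$-volume near infinity is $d\rho_i/\rho_i\,d\theta$, so $\rho_i^{-\nu}\cdot 1$ is $L^2_b$ when $-\nu>0$. Hence it remains to show the reverse inclusion. Let $u\in\ker\bbS_{i,\sc}\cap\rho_i^{\nu}L^2_b(\bX_i)^{\bbT}$, so $\bbS_{i,b}u=0$.

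\emph{Step 1: asymptotics of $u$.} By Lemma~\ref{lasc.1}, $\bbS_{i,b}$ has no indicial root in $]-2,0[$. By Lemma~\ref{la.31} and \eqref{la.32c}, the spectrum is that of Lemma~\ref{la.14} with sign reversed, so $0$ is an indicial root of order $2$ whose order-two part comes only from locally constant functions on $\pa\bX_i$. Applying the regularity Corollary~\ref{b.14}, we get that $u$ is polyhomogeneous at $\pa\bX_i$ with index set $\ge(0,1)$. That is, near infinity
\[
u=a_{0,1}\log\rho_i+a_0+\cO(\rho_i^{\delta}),\qquad \delta>0,
\]
with constants $a_{0,1},a_0$. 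Here the indicial root $0$ for $m=2$ carries the $\log r=-\log\rho_i$ term, as for Lemma~\ref{la.14} with $m=2$. Since $\bbS_{i,b}$ is self-adjoint up to conjugation by $\rho_i^{2m-6}=\rho_i^{-2}$ (cf.\ \eqref{adj.1}), the dual roots are shifted by $-2$. Choosing the weight $\delta$ of Corollary~\ref{b.14} in the gap $]-2,0[$ still gives this expansion.

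\emph{Step 2: integration by parts.} For $m=2$ the Euclidean volume grows like $r^{3}dr$. With $u\sim a_{0,1}\log r+a_0$, we have $du=\cO(r^{-1})$, $\nabla^{i,-}du=\cO(r^{-2})$ and $\delta_i\nabla^{i,-}du=\cO(r^{-3})$. The boundary terms in integrating $\langle u,\bbS_{i,\sc}u\rangle$ by parts over $\{r\le R\}$ are of size $R^{3}\cdot\log R\cdot R^{-5}$ or $R^3\cdot R^{-1}\cdot R^{-4}$ and smaller. All of these tend to $0$, so the identity \eqref{la.34} is valid:
\[
0=-4\|\delta_i\nabla^{i,-}du\|^2-2\|R_{\omega_i}^{1/2}\nabla^{i,-}du\|^2 .
\]
This uses $R_{\omega_i}\ge0$ and the scalar curvature decay \eqref{la.32}. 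One should be careful that the leading $\log$ term is harmonic for the flat model, so the only danger is the $a_{0,1}$ coefficient in boundary fluxes. This must be checked term by term; we expect this to be the main technical obstacle. If a flux proportional to $a_{0,1}^2$ survived, we would first show $a_{0,1}=0$ by pairing $\bbS_{i,\sc}u=0$ against the constant $1$ via Green's formula on $\{r\le R\}$, where the flux of $\Delta^2$-type terms of $\log r$ across the sphere is computed explicitly.

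\emph{Step 3: conclusion.} From the identity we get $\delta_i\nabla^{i,-}du=0$. A further integration by parts, justified by the same decay, gives $\|\nabla^{i,-}du\|^2=0$, so $u$ is a $\bbT$-invariant Killing potential. We need Lemma~\ref{la.28} to apply, but it is stated for weights in $]-1,0[$. Our $u$ lies in $\rho_i^{\nu'}L^2_b$ for every $\nu'<0$ by Step 1, in particular for $\nu'\in]-1,0[$. Hence $u$ is constant, as claimed.
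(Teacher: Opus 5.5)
Your overall route (regularity, two integrations by parts, then Lemma~\ref{la.28}) is the paper's, and Step 2 is fine. With $u=\cO(\log r)$, every boundary term in the first integration by parts is $\cO(R^{-2}\log R)$, so $\delta_i\nabla^{i,-}du=0$ follows.

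The gap is in Step 3. The second integration by parts,
\[
\int_{\{r\le R\}}\langle du,\delta_i\nabla^{i,-}du\rangle=\int_{\{r\le R\}}|\nabla^{i,-}du|^2+\int_{\{r=R\}}\nabla^{i,-}du(\partial_r,du^\sharp),
\]
is \emph{not} justified by the same decay. With $du=\cO(r^{-1})$ and $\nabla^{i,-}du=\cO(r^{-2})$, the boundary term has size $R^3\cdot R^{-1}\cdot R^{-2}=\cO(1)$. Moreover, $\nabla^{i,-}du$ is not a priori in $L^2$, since its squared norm could grow like $\log R$. This is not an artifact of crude bounds. For the model function $\log r$ on the flat cone, $\nabla^-d\log r=-r^{-2}(dr\otimes dr-Jdr\otimes Jdr)$, and the boundary term tends to a nonzero constant.

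So the danger you anticipated sits in Step 3, not Step 2, and your fallback does not remove it. Pairing $\bbS_{i,\sc}u=0$ against $1$ only produces the flux of $\delta_i(\cdots)$ through $\{r=R\}$. That flux is $\cO(R^{-2})$ whatever $a_{0,1}$ is, so it gives no information.

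The missing idea, which is how the paper proceeds, is to use that $\delta_i\nabla^{i,-}du=0$ holds identically.
\begin{itemize}
\item Green's formula on $\{r\le R\}$ then says $\int_{\{r\le R\}}|\nabla^{i,-}du|^2$ equals minus a boundary term that stays bounded (it converges to a constant $c$ by polyhomogeneity). Hence $\nabla^{i,-}du\in L^2$.
\item $\nabla^{i,-}du$ is polyhomogeneous with leading term of order $r^{-2}$. A nonzero $r^{-2}$ term is not square integrable against $r^3dr$, so that coefficient vanishes and $\nabla^{i,-}du=\cO(r^{-2-\lambda})$ for some $\lambda>0$.
\item The boundary term is then $\cO(R^{-\lambda})$, so $c=0$ and $\nabla^{i,-}du=0$. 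The rest of your Step 3 goes through unchanged.
\end{itemize}

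A smaller inaccuracy is in Step 1: $a_0$ need not be constant. Only the order-two part of the indicial root $0$ comes from constants. $\Gamma_i$-invariant degree-two spherical harmonics on $S^3/\Gamma_i$ also produce $r^0$ terms; for instance $\Delta^3(P_2r^{-2})=0$ on $\bbR^4\setminus\{0\}$ for $P_2$ a harmonic quadratic. So in general $a_0\in\CI(\pa\bX_i)$, as in the paper. This does not affect the decay rates you use.
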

\begin{proof}
Suppose that $u\in \ker \bbS_{i,\sc}\cap \rho_i^{\nu}L^2_b(\bX_i)$.  By the regularity results of Corollary~\ref{b.14} and the description of the indicial set of $\bbS_{i,b}$, we know that 
\begin{equation}
   u- a_0 + a_{0,1}\log \rho_i \in \rho_i^{\lambda}L^{2,\infty}(\bX_i)
\label{lasc.3}\end{equation}
for some $a_0\in \CI(\pa \bX_i)$, $a_{0,1}\in\bbR$ and $\lambda>0$.  This implies that we can integrate by parts as in \eqref{la.34} to conclude that 
$$
    \delta_{i}\nabla^{i,-}du=0.
$$
Using \eqref{lasc.3}, we can use integration parts once more to check that
\begin{equation}
0 = \langle du, \delta_{i}\nabla^{i,-}du\rangle_{L^2_{\sc}}= \| \nabla^{i,-}du\|^2_{L^2_{\sc}}-c
\label{lasc.4}\end{equation}
for some constant $c$ corresponding to the limiting behaviour of the boundary term.  The constant $c$ comes from the top order term
$$
  a_0+ a_{0,1}\log \rho_i
$$
in the asymptotic expansion of $u$ at $\pa X_i$.  In particular, we deduce from \eqref{lasc.4} that 
\begin{equation}
   \nabla^{i,-}du\in L^2_{\sc}(\bX_i; S^2({}^{\sc}T^*\bX_i)).
\label{lasc.5}\end{equation}
Without loss of generality, $\lambda>0$ in \eqref{lasc.3} can be chosen as small as we want. For $\lambda>0$ small enough, this means that \eqref{lasc.5} is only possible if 
$$
   \nabla^{i,-}du\in \rho_i^{2+\lambda} \lrp{\cA_{\phg}(\bX_i; S^2({}^{\sc}T^*\bX_i)) \cap L^{\infty}(\bX_i; S^2({}^{\sc}T^*\bX_i))}.
$$
In that case,
$$
\delta_{i}\nabla^{i,-}du\in \rho_i^{3+\lambda} \lrp{\cA_{\phg}(\bX_i; {}^{\sc}T^*\bX_i) \cap L^{\infty}(\bX_i; {}^{\sc}T^*\bX_i)}.
$$
Since on the other hand \eqref{lasc.3} implies that
$$
  du\in \rho_i \lrp{\cA_{\phg}(\bX_i; {}^{\sc}T^*\bX_i) \cap L^{\infty}(\bX_i; {}^{\sc}T^*\bX_i)},
$$
this means that integration by parts can be used to conclude that
$$
  0 =\langle du, \delta_{i}\nabla^{i,-}du\rangle_{L^2_{\sc}}= \| \nabla^{i,-}du\|_{L^2_{\sc}},
$$
that is, that
$$
\nabla^{i,-}du=0.
$$
This implies that $u$ is a Killing potential.  By Lemma~\ref{la.28} and \eqref{lasc.3}, this means that $u$ must be a constant function.
\end{proof}

This allows us to obtain the following mapping property of the operator $\bbS_{i,\sc}$.  

\begin{proposition}
Suppose that $m=2$ and $R_{\omega_i}\ge 0$ with $\Gamma_i\ne \{\Id\}$ for all $i\in\{1,\ldots,\ell\}$.  Then for $\nu\in ]-2,0[$, the operator $\bbS_{i,\sc}$ induces a Fredholm operator
\begin{equation}
 \bbS_{i,\sc}: \rho_i^{\nu}L^{2,k+6}_b(\bX_i)^{\bbT}\to \rho_i^{\nu+6}L^{2,k}_b(\bX_i)^{\bbT}
\label{lasc.6a}\end{equation}
with kernel and cokernel both identified with the space of constant functions.
\label{lasc.6}\end{proposition}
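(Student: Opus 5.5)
The plan is to copy the structure of the proofs of Propositions~\ref{la.33} and \ref{la.35}. First I establish the Fredholm property, then I compute the index using duality and the relative index theorem, and finally I identify the kernel with Lemma~\ref{lasc.2}. Since $\bbS_{i,\sc}=\rho_i^{6}\bbS_{i,b}$, the map \eqref{lasc.6a} is equivalent to
\[
\bbS_{i,b}: \rho_i^{\nu}L^{2,k+6}_b(\bX_i)^{\bbT}\to \rho_i^{\nu}L^{2,k}_b(\bX_i)^{\bbT},
\]
that is, to $\bbS_{i,b,\nu}:=\rho_i^{-\nu}\bbS_{i,b}\rho_i^{\nu}$ acting on unweighted $b$-Sobolev spaces. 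By Lemma~\ref{lasc.1}, no $\nu\in ]-2,0[$ is an indicial root. Lemma~\ref{la.31} shows that $\bbS_{i,b}$ is an elliptic differential $b$-operator, so Corollary~\ref{b.15} gives the Fredholm property for every $k$. The relative index theorem \cite[Theorem~6.5]{MelroseAPS} then shows that the index is independent of $\nu\in]-2,0[$. Restricting to $\bbT$-invariant functions is harmless, because $\bbS_{i,b}$ commutes with the $\bbT$-action and averaging over $\bbT$ is a bounded projection commuting with everything in sight.

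For the index, I use \eqref{adj.1} with $m=2$, which reads $\bbS_{i,b}^*=\rho_i^{2}\bbS_{i,b}\rho_i^{-2}$. It follows that $(\bbS_{i,b,\nu})^*=\bbS_{i,b,-2-\nu}$. The weight $\nu=-1$ is fixed by the reflection $\nu\mapsto -2-\nu$, so $\bbS_{i,b,-1}$ is formally self-adjoint and its index is zero. Hence the index is zero for all $\nu\in]-2,0[$.

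Lemma~\ref{lasc.2} says that the kernel in $\rho_i^{\nu}L^2_b(\bX_i)^{\bbT}$ is exactly the constants, which lie in $\rho_i^\nu L^2_b$ because $\nu<0$. Since the index is zero, the cokernel is one-dimensional. To identify it, I note that the cokernel of $\bbS_{i,\sc}$ on these weighted spaces corresponds, via the scattering $L^2$ pairing (volume form $\omega_i^{[2]}$, under which $\bbS_{i,\sc}$ is formally self-adjoint), to the kernel of $\bbS_{i,\sc}$ in the dual weight $\rho_i^{-2-\nu}L^2_b$. Because $-2-\nu\in]-2,0[$, this kernel is again the constants by Lemma~\ref{lasc.2}. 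Concretely, $1\in\rho_i^{-2-\nu}L^2_b$, and the range of $\bbS_{i,\sc}$ lies in the $\omega_i$-orthogonal complement of $1$. The latter holds because $\bbS_{i,\sc}u$ is a total divergence, $\delta_i\delta_i(\cdots)$, whose boundary terms vanish for $u\in\rho_i^\nu L^{2,k+6}_b$ with $\nu>-2$. One still has to check that $\int \bbS_{i,\sc}u\,\omega_i^{[2]}$ is absolutely convergent, which follows from $\rho_i^{\nu+6}L^2_b\subset L^1(\omega_i^{[2]})$ for $\nu>-2$.

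The main obstacle is the vanishing of boundary terms at the endpoint of the weight range. I expect to handle it by density: $\bbS_{i,\sc}u$ depends continuously on $u$, and the pairing with $1$ vanishes for compactly supported $u$, so it vanishes on the closure, which is the whole space.
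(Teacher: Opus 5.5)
Your proposal is correct and follows the paper's argument. Fredholmness comes from the absence of indicial roots in $]-2,0[$ (Lemma~\ref{lasc.1}), the kernel from Lemma~\ref{lasc.2}, and the cokernel is identified with the kernel of the formal adjoint $\rho_i^{2+\nu}\bbS_{i,b}\rho_i^{-2-\nu}$, i.e. the kernel in the reflected weight $-2-\nu\in]-2,0[$, which is again the constants by Lemma~\ref{lasc.2}. Your index-zero computation at $\nu=-1$ and your explicit check that the range is orthogonal to $1$ are correct but redundant once this duality is used.
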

\begin{proof}
Since $\nu\in]-2,0[$ is not an indicial root by Lemma~\ref{lasc.1}, we know that the map \eqref{lasc.6a} is Fredholm.  By Lemma~\ref{lasc.2}, its kernel corresponds to constant functions.  Its cokernel is identified with the $L^2_b$-kernel of the formal adjoint of $\rho_i^{-\nu}\bbS_{i,b}\rho_i^{\nu}$, namely the $L^2_b$-kernel of  
$$
(\rho_i^{-\nu}\bbS_{i,b}\rho_i^{\nu})^*= \rho_i^{2+\nu} \bbS_{i,b} \rho_i^{-2-\nu}.
$$
Since $-2-\nu\in ]-2,0[$, Lemma~\ref{lasc.2} implies that its kernel is given by 
$
\{ \rho_i^{2+\nu}c \; | \; c\in \bbR\}.
$
\end{proof}
Setting 
$$
  (\rho_i^{\nu+6}L^{2,k}_b(\bX_i)^{\bbT})^{\perp}= \left\{ u\in  \rho_i^{\nu-6}L^{2,k}_b(\bX_i)^{\bbT}\; \Big| \; \int_{X_i} u \omega_i^{m}=0\right\},
$$
we obtain in particular the following.
\begin{corollary}
Suppose that $m=2$ and  $R_{\omega_i}\ge 0$.  Then for $\nu\in ]-2,0[$, the operator $\bbS_{i,\sc}$ induces a surjective Fredholm operator
\begin{equation*}
 \bbS_{i,\sc}: \rho_i^{\nu}L^{2,k+6}_b(\bX_i)^{\bbT}\to (\rho_i^{\nu+6}L^{2,k}_b(\bX_i)^{\bbT})^{\perp}
\label{lasc.7a}\end{equation*}
with kernel identified with the space of constant functions.
\label{lasc.7}\end{corollary}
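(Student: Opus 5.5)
This follows from Proposition~\ref{lasc.6}: it suffices to identify the range of \eqref{lasc.6a} with the subspace $(\rho_i^{\nu+6}L^{2,k}_b(\bX_i)^{\bbT})^{\perp}$ of functions $u$ with $\int_{X_i}u\,\omega_i^{m}=0$. The plan has three steps: show the range sits inside this subspace, then compare codimensions, then read off the kernel.

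\emph{Step 1: the range lies in the subspace.} Let $\nu\in]-2,0[$ and $u\in\rho_i^{\nu}L^{2,k+6}_b(\bX_i)^{\bbT}$. First note the integral makes sense. For $f\in\rho_i^{\nu+6}L^{2,k}_b$, the volume form $\omega_i^m$ is comparable to $\rho_i^{-2m}\,dg_b=\rho_i^{-4}\,dg_b$. Hence $f\omega_i^m\in\rho_i^{\nu+2}L^2_b\,dg_b$, which is integrable because $\nu+2>0$ (Cauchy--Schwarz against $\rho_i^{\nu+2-\epsilon}$ with small $\epsilon$).

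Next, $\bbS_{i,\sc}u$ has the divergence form $\delta_i\delta_i(\cdot)$ from \eqref{la.30}. So $\int_{X_i}\bbS_{i,\sc}u\,\omega_i^m$ is the limit of boundary integrals over large spheres $\{\rho_i=\epsilon\}$. The integrand there is built from $\nabla^{i,-}(\delta_i\nabla^{i,-}du)$ and $R_i\nabla^{i,-}du$ paired with $d\rho_i$ or with $1$. Using the weight $\rho_i^{\nu}$ and the indicial analysis of Lemma~\ref{lasc.1}, $u$ has an expansion whose leading terms are constants or $\log\rho_i$ plus terms of positive order. Polyhomogeneity is available either by approximation from $\CI_c$ or by the regularity of Corollary~\ref{b.13}.

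Third derivatives of such $u$ decay like $\rho_i^{3}$, while the sphere area grows like $\rho_i^{-3}$. For $m=2$ the $\log\rho_i$ term gives $\Delta\log r=0$ on $\bbC^2$ only at leading order. Its $\Delta^3$-type flux is therefore $O(\rho_i^{\lambda})$ for some $\lambda>0$, so the boundary terms vanish in the limit.

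The cleanest way to organize this is by density. Prove $\int\bbS_{i,\sc}u\,\omega_i^m=0$ for $u\in\CI_c(X_i)^{\bbT}$, where it is immediate. Then extend by continuity of the functional $f\mapsto\int f\omega_i^m$ on $\rho_i^{\nu+6}L^{2,k}_b$ and density of $\CI_c$ in $\rho_i^{\nu}L^{2,k+6}_b$. This avoids any fine expansion and is the route I would take.

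\emph{Step 2: surjectivity.} By Proposition~\ref{lasc.6}, the range is closed and of codimension one. The subspace $(\rho_i^{\nu+6}L^{2,k}_b)^{\perp}$ is also closed of codimension one, being the kernel of the nonzero continuous functional of Step 1; it is nonzero since it pairs positively with any nonnegative bump. The range is contained in it by Step 1, so the two coincide.

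\emph{Step 3: kernel and hypotheses.} The kernel is the constants, by Proposition~\ref{lasc.6} (equivalently Lemma~\ref{lasc.2}). I would use the standing hypothesis $\Gamma_i\ne\{\Id\}$ from Proposition~\ref{lasc.6}. The main delicate point is the continuity and density argument in Step 1, specifically that $\nu+2>0$ makes $\int f\omega_i^m$ a bounded functional; everything else is formal.
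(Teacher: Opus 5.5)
Your proof is correct and is essentially the paper's: both read the corollary off Proposition~\ref{lasc.6}. The paper identifies the cokernel explicitly as $\rho_i^{2+\nu}\bbR$, the $L^2_b$-kernel of the formal adjoint, whose $L^2_b$-annihilator unweights exactly to the condition $\int_{X_i}u\,\omega_i^{m}=0$; you instead check by density of $\CI_c$ that the range annihilates constants and then compare codimensions, and you are right that the hypothesis $\Gamma_i\neq\{\Id\}$, omitted from the statement, is needed here (without it $-1$ is an indicial root in $]-2,0[$).
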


The following result will also be useful in the construction of a formal solution.

\begin{corollary}
Suppose that $m=2$ and $R_{\omega_i}\ge 0$ with $\Gamma_i\ne \{\Id\}$ for all $i\in\{1,\ldots,\ell\}$.   Then for $\nu\in ]-3,-2[$, the operator $\bbS_{i,\sc}$ induces a surjective Fredholm operator
\begin{equation}
 \bbS_{i,\sc}: \rho_i^{\nu}L^{2,k+6}_b(\bX_i)^{\bbT}\to \rho_i^{\nu+6}L^{2,k}_b(\bX_i)^{\bbT}.
\label{lasc.8a}\end{equation}
Moreover, given $f\in \rho_i^{\nu+7}(\cA_{\phg}(\bX_i)\cap L^{\infty}(X_i))^{\bbT}$, there exists an index set $\cE_i$ with $\cE_i\ge (-2,1)$ and a $\bbT$-invariant function $u\in\cA_{\phg}^{\cE_i}(\bX_i)$ such that
$\bbS_{i,\sc}u=f$.
\label{lasc.8}\end{corollary}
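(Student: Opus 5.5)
The plan has two parts: surjectivity and Fredholmness of \eqref{lasc.8a}, then a polyhomogeneous solution for the special right-hand sides.

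\textbf{Fredholmness and surjectivity.} For $m=2$ the indicial set of $\bbS_{i,b}$ is obtained from Lemma~\ref{la.14} by reversing the sign of $\lambda$, via \eqref{la.32c}. It consists of integers, is symmetric about $\lambda=-1$, and contains no root in $]-2,0[$ by Lemma~\ref{lasc.1}. Hence $]-3,-2[$ contains no critical weight, and Corollary~\ref{b.15} shows that \eqref{lasc.8a} is Fredholm for $\nu\in]-3,-2[$.

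The cokernel is the $L^2_b$-kernel of the formal adjoint $(\rho_i^{-\nu}\bbS_{i,b}\rho_i^{\nu})^*=\rho_i^{2+\nu}\bbS_{i,b}\rho_i^{-2-\nu}$, as in the proof of Proposition~\ref{lasc.6}. It is therefore identified with $\ker\bbS_{i,\sc}\cap\rho_i^{-2-\nu}L^2_b(\bX_i)^{\bbT}$, where now $-2-\nu\in]0,1[$. Since $\rho_i^{-2-\nu}L^2_b\subset\rho_i^{\nu'}L^2_b$ for every $\nu'\in]-2,0[$, Lemma~\ref{lasc.2} forces such an element to be constant. A nonzero constant is not in $\rho_i^{\mu}L^2_b$ for $\mu>0$, so the kernel is trivial. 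Thus \eqref{lasc.8a} is surjective.

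\textbf{Polyhomogeneous solution.} Let $f\in\rho_i^{\nu+7}(\cA_{\phg}\cap L^\infty)^{\bbT}$. Then $f\in\rho_i^{\nu+6}L^{2,\infty}_b$, so surjectivity gives $u\in\rho_i^{\nu}L^{2,\infty}_b(\bX_i)^{\bbT}$ with $\bbS_{i,\sc}u=f$, i.e. $\bbS_{i,b}u=\rho_i^{-6}f$. Apply Corollary~\ref{b.13}. Since $\bbS_{i,b}$ is a differential $b$-operator, the left parametrix of Corollary~\ref{b.12} exists with no restriction from $\cE(\lf),\cE(\rf)$. We may take $\delta$ so that $u\in\rho_i^{-\mu^*_L-\delta}L^2_b$ and the index set of $\rho_i^{-6}f$, which is $>\nu+1$, exceeds $-\mu^*_L-\delta$. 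The conclusion is $u\in\cA^{\cG}_{\phg}$, with $\cG$ built from $\cQ(\lf)$, $\cQ(\fb)+\cF$ and $\cR(\lf)$. The leading exponents come from indicial roots of $\bbS_{i,b}$ above the weight $\nu$, together with the terms produced by $\cF$.

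The main obstacle is identifying the precise lower bound $\cE_i\ge(-2,1)$. The indicial roots at and above $-2$ are $-2$ (order $2$ by Lemma~\ref{la.14} with sign reversed, coming from constants), then $0$ (also order $2$), and so on. An order-$2$ root at $-2$ produces terms $\rho_i^{-2}\log\rho_i$ and $\rho_i^{-2}$, which is exactly $(-2,1)$.

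One must also check that the forcing term does not raise the log multiplicity. Here $\rho_i^{-6}f$ has leading exponent $>\nu+1>-2$, and the extended union in Corollary~\ref{b.13} adds a log only at coincidences with indicial roots. Since $\cF$ starts strictly above $-2$, no extra log arises at $-2$. Higher terms are harmless for the stated bound $\cE_i\ge(-2,1)$ in the ordering of $\bbR\times\bbN$.

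If this bookkeeping in Corollary~\ref{b.13} proves delicate, I would instead peel off the expansion term by term. The approach is to solve the indicial equation $I(\bbS_{i,b},\lambda)$ successively on each power of $\rho_i$ in the expansion of $\rho_i^{-6}f$. The remainder is then handled by surjectivity at a better weight.
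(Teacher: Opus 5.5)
Your proposal is correct and follows the paper's own proof. Fredholmness comes from the absence of (integer) indicial roots in $]-3,-2[$. Surjectivity holds because the cokernel is the $\bbT$-invariant kernel of $\bbS_{i,\sc}$ in $\rho_i^{-2-\nu}L^2_b$ with $-2-\nu\in\,]0,1[$, which Lemma~\ref{lasc.2} forces to vanish. The polyhomogeneous solution comes from Corollary~\ref{b.13} together with the order-$2$ indicial root at $-2$, giving exactly $\cE_i\ge(-2,1)$.

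One harmless imprecision: $\rho_i^{-6}f$ has index set $\ge(\nu+1,0)$ rather than strictly $>\nu+1$. Since $\nu+1>-2$, this does not affect the logarithmic count at $-2$.
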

\begin{proof}
Since $\bbS_{i,b}$ has no indicial root in $]-3,-2[$, we know that \eqref{lasc.8a} is Fredholm for $\nu\in]-3,-2[$.  On the other hand, for such a $\nu$, we know by Lemma~\ref{lasc.2} that
$$
      \ker\bbS_{i,\sc}\cap \rho_i^{-2-\nu}L^2_b(\bX_i)^{\bbT}=\{0\} 
$$
since $-2-\nu>0$.  By \eqref{adj.1} this corresponds to the cokernel of \eqref{lasc.8a}, showing that \eqref{lasc.8a} is indeed surjective.  The second part of the assertion follows from the surjectivity  of \eqref{lasc.8a}, the regularity results of Corollary~\ref{b.13}  and the description of the indicial set of $\bbS_{i,b}$ given by \eqref{la.32c} and  Lemma~\ref{la.14}. \end{proof}

\subsection{The $b$-surgery space} \label{fs.0}

Proceeding as in \cite[(2.10)]{CDR2019}, consider the orbifold with corners
\begin{equation*}
 \cM:= [M\times [0,1[_{\epio}; \{p_1\}\times \{0\},\ldots, \{p_{\ell}\}\times \{0\}]
\label{fs.1}\end{equation*}
obtained from $M\times [0,1[_{\epio}$ by blowing up the singular points of $M$ at $\epio=0$ in the sense of Melrose.  Let $\cB:\cM\to M\times[0,1[$ be the corresponding blow-down map.  This has $\ell+1$ boundary hypersurfaces.  Denote by $H_0$ the boundary hypersurface corresponding to the lift of $M\times \{0\}$ and by $H_i$ the boundary hypersurface arising from the blow-up of $\{p_i\}\times \{0\}$.  Clearly, there are natural diffeomorphisms
$$
    H_{0}\cong \bM \quad \mbox{and} \quad H_i= \overline{\bbC^m/\Gamma_i} \quad \mbox{for}\; i>0,
$$
where $\overline{\bbC^m/ \Gamma_i}$ is the radial compactification of $\bbC^{m}/ \Gamma_i$ obtained by adding $S^{2m-1}/\Gamma_i$ at infinity.  The resolutions \eqref{la.27a} and \eqref{la.27c} naturally combine to induce a resolution 
\begin{equation}
 \cbl: \chM \to \cM
\label{fs.2}\end{equation}
with $\chM$ a manifold with corners with boundary hypersurfaces $\hH_0,\ldots,\hH_{\ell}$ coming with natural identifications $\hH_0\cong \bM$ and $\hH_{i}\cong \bX_i$ for $i\in\{1,\ldots,\ell\}$ inducing commutative diagrams 
$$
\xymatrix{
     \hH_0 \ar[r]^{\cbl} \ar[d] & H_0 \ar[d] \\
     \bM \ar[r]^{\Id} & \bM 
}
\quad \quad \mbox{and} \quad \quad  \xymatrix{
     \hH_i \ar[r]^{\cbl} \ar[d] & H_i \ar[d] \\
      \bX_i \ar[r]^{\bl_i} & \overline{\bbC^m\setminus\Gamma_i} 
}
$$ 
respectively.  For $\epio>0$, the natural identifications 
$$
   \chM\setminus \left(\bigcup_{i=0}^{\ell}\hH_i \right)= \hM\times ]0,1[_{\epio} \quad \mbox{and} \quad \cM\setminus \left(\bigcup_{i=0}^{\ell}H_i\right)= M\times ]0,1[_{\epio}
$$
also induce the commutative diagram 
$$
\xymatrix{
  \chM\setminus \left(\bigcup_{i=0}^{\ell}\hH_i\right) \ar[r]^{\cbl} \ar[d] & \cM\setminus \left(\bigcup_{i=0}^{\ell}H_i\right) \ar[d] \\
  \hM\times ]0,1[\ar[r]^{\bl\times \Id_{]0,1[}} & M\times ]0,1[.
}
$$
As a manifold with corners, $\chM$ admits a Lie algebra of $b$-vector fields $\cV_b(\chM)$.  Denoting also by $\epio$ the function 
$$
    \pr_2\circ \mathcal{B}\circ\cbl: \chM\to [0,1[,
$$
where $\pr_2: M\times [0,1[\to [0,1[$ is the projection on the second factor, we can proceed as in \cite{Mazzeo-MelroseETA} and consider the Lie subalgebra of $b$-surgery vector fields 
$$
   \cV_{b,s}(\chM):= \{ \xi\in \cV_b(\chM)\; | \; \cL_{\xi}\epio=0\}.
$$
There is a corresponding Lie algebroid ${}^{b,s}T\chM\to \chM$ with anchor map $\iota_{b,s}: {}^{b,s}T\chM \to T\chM$ inducing an isomorphism of Lie algebras 
$$
   (\iota_{b,s})_*: \CI(\chM;{}^{b,s}T\chM) \to \cV_{b,s}(\chM).
$$
It comes with natural identifications
$$
     {}^{b,s}T\chM|_{\hH_i} = {}^{b}T\hH_i\quad \mbox{for} \quad i\in\{0,1,\ldots,\ell\}.
$$

The associated space of differential $b$-surgery operators $\Diff^k_{b,s}(\chM)$ of order $k$ corresponds to a finite linear combination of differential operators generated by multiplication by elements of $\CI(\chM)$ and the composition of up to $k$ $b$-surgery vector fields.  As for differential $b$-operators, given vector bundles $E_1\to \chM$ and $E_2\to \chM$, we get a corresponding space $\Diff^k_{b,s}(\chM;E_1,E_2)$ of \emph{differential $b$-surgery operators}.  If $\cE$ is an index family for $\chM$ assigning the index set $\cE_{\hH_i}$ to the boundary hypersurface $\hH_i$, we can also consider the space of differential $b$-surgery operators
$$
  \Diff^k_{b,s,\cE}(\chM;E_1,E_2)= \cA_{\phg}^{\cE}(\chM)\otimes_{\CI(\chM)}\Diff^k_{b,s}(\chM;E_1,E_2).
$$

Let $\rho_+\in \CI(\chM)$ be a nonnegative function vanishing only on $\hH_i$ for $i>0$ in such a way that its differential is nowhere zero on $\hH_i$ for $i>0$.  In other words, $\rho_+$ is a boundary defining function for the (disconnected) boundary hypersurface
$$
   \hH_+:= \bigcup_{i=1}^{\ell} \hH_i.
$$
The action of $\bbT$ on $\hM$ naturally lifts to an action on $\chM$.  Averaging with respect to this action if necessary, we can assume that $\rho_+$ is a $\bbT$-invariant function.  We can also assume that $\rho_+|_{\hH_0}=r$, where $r\in \CI(\bM)$ is the boundary defining function that we previously used on $\bM\cong \hH_0$.  A $\bbT$-invariant boundary defining function for $\hH_0$ is given by  $\rho_0:= \frac{\epio}{\rho_+}$.

Away from $\pa\chM:= \bigcup_{i=0}^{\ell} \hH_i$ and under the identification $\chM\setminus \pa \chM = \hM\times ]0,1[_{\epio}$, notice that the anchor map induces an isomorphism 
$$
  \iota_{b,s}: {}^{b,s}T\chM|_{\chM\setminus \pa\chM} \to \pr_1^*T\hM,
$$
where $\pr_1: \hM\times ]0,1[_{\epio}\to \hM$ is the projection on the first factor.  From \cite{Mazzeo-MelroseETA}, a $b$-surgery metric is a family of metrics $g_{\epio}$ on $\hM$ parametrized by $\epio\in ]0,1[$ such that 
$$
     \iota_{b,s}^*g_{\epio}= g_{b,\epio} 
$$
for some bundle metric $g_{b,\epio}\in \CI(\chM; S^2({}^{b,s}T^*\chM))$ for the vector bundle ${}^{b,s}T\chM\to \chM$.  Trusting this will lead to no confusion, we will usually directly denote such a family of degenerating metrics by $g_{b,\epio}$.  More generally, we can relax the regularity of $g_{b,\epio}$ and only require that 
$$
    g_{b,\epio}\in \cA_{\phg}(\chM; S^{2}({}^{b,s}T^*\chM))
$$
with well-defined restrictions to $\hH_i$ inducing a $b$-metric for all $i\in \{0,1,\ldots,\ell\}$.  

Notice that a $b$-metric on $\chM$ induces by restriction on ${}^{b,s}T\chM$ a $b$-surgery metric.  If $g_b$ is a choice of $b$-metric on $\chM$ and $E_1\to \chM$ is a vector bundle  equipped with a bundle metric, then there are associated $b$-Sobolev spaces
$$
   L^{2,k}_{b}(\chM;E_1):=\{ u\in L^2(\chM;E_1)\; | \; Pu \in L^2_{b}(\chM;E_1)\quad \forall \; P\in \Diff^k_{b}(\chM;E_1,E_1)\},
$$
where $L^2_b(\chM;E_1)$ is the space of square integrable sections of $E_1$ with respect to $g_b$ and the bundle metric.  However, fixing a $b$-surgery metric $g_{\epio}$, it will be more useful to us to consider the family of $L^2$-spaces 
$$
L^2_{b,\epio}(\hM;E_1):= L^2(\hM; E_1|_{\hM\times \{\epio\}})
$$
parametrized by $\epio>0$ under the identification $\chM\setminus \pa\chM=\hM\times ]0,1[_{\epio}$ with norm induced by $g_{\epio}$ and the bundle metric of $E_1$, as well as the corresponding \emph{$b$-surgery Sobolev spaces}
$$
 L^{2,k}_{b,\epio}(\hM;E_1):= \{ u\in L^2_{b,\epio}(\hM;E_1)\; | \; Pu\in L^2_{b,\epio}(\hM;E_1) \quad \forall P\in \Diff^k_{b,s}(\chM;E_1)\}
$$
with norm induced by $g_{\epio}$ and the bundle metric of $E_1$.  Notice that the $L^2$-norm of $L^{2,k}_{b,\epio}(\hM;E_1)$ degenerates as $\epio\searrow 0$ with $L^{2,k}_{b,\epio}(\hM;E_1)$
corresponding to 
$$
  L^{2,k}_{b,0}(\hM;E_1):= \bigoplus_{i=0}^\ell L^{2,k}_b(\hH_i;E_1|_{\hH_i})
$$
in the limite $\epio\searrow 0$.  The Sobolev spaces $L^{2,k}_{b,\epio}(\hM;E_1)$ differ from the Sobolev spaces of \cite[(85)]{Mazzeo-MelroseETA}, those being more an hybrid between $L^{2,k}_b(\chM;E_1)$ and $L^{2,k}_{b,\epio}(\chM;E_1)$.  Thinking of the Sobolev spaces $L^{2,k}_{b,\epio}(\hM;E_1)$ as forming a vector bundle $L^{2,k}_{b,s}(\chM;E_1)\to ]0,1[$ of infinite rank with fiber above $\epio\in]0,1[$ given by $L^{2,k}_{b,\epio}(\hM;E_1)$, we can for $\epio_0\in]0,1]$ also consider the space
\begin{equation*}
 \cC^q(]0,\epio_0[;L^{2,k}_{b,s}(\chM;E_1))
\label{sob.1}\end{equation*}
of $q$-differentiable sections $\zeta$  of $L^{2,k}_{b,s}(\chM;E_1)$ with $q$th differential continuous in $\epio$ (as a section of $L^{2,k}_{b,s}(\chM;E_1)$) such that
\begin{equation}
\|\zeta\|_{\cC^qL^{2,k}_{b,s}}:= \sup_{\epio\in ]0,\epio_0[} \lrp{\sum_{j=0}^q \left\| \lrp{\epio\frac{\pa}{\pa\epio}}^j\zeta \right\|_{L^{2,k}_{b,\epio}}}<\infty,
\label{sob.2}\end{equation}
where $\zeta$ in the expression above is seen as a section of $E_1$ on $\chM$ and the derivatives in $\epio$ are with respect to a choice of connection for $E_1$.  Clearly the norm \eqref{sob.2} makes $\cC^q(]0,\epio_0[;L^{2,k}_{b,s}(\chM;E_1)])$ a Banach space.

As in \cite{ARS3, Najafpour, Benabida}, the class of families of degenerating metrics we need to consider is not quite the one of $b$-surgery metrics, but the one of conical surgery metrics.
\begin{definition}
A \emph{conical surgery metric} is a family of metrics $g_{c,\epio}$ on $\hM$ for $\epio\in ]0,1[$ of the form
$$
      g_{c,\epio}= \rho_+^2 g_{b,\epio}
$$
for some $b$-surgery metric $g_{b,\epio}$.
\label{fs.3}\end{definition}
We define a corresponding vector bundle ${}^{c,s}T\chM= \rho_+^{-1}\lrp{{}^{b,s}T\chM}$, so that a conical surgery metric $g_{c,\epio}$ can be seen as a bundle metric for ${}^{c,s}T\chM$.  On $\hH_0=\bM$, a conical surgery metric restricts to a metric $g_{c,0}$ with conical singularities at $\pa\bM$.  An example of such a metric with conical singularities is given by a metric smooth in the sense of orbifolds as considered in the previous section.  On the other hand, on $\hH_+$, the restriction of $g_{b,\epio}$ induces a $b$-metric $g_{b,+}$.  In terms of $g_{c,\epio}$, we can instead consider the restriction 
$$
     g_{c,+}:=\epio^{-2}g_{c,\epio}|_{\hH_+}= \rho_0^{-2}g_{b,+}.
$$
This is a scattering metric on $\hH^+$ in the sense of \cite{MelroseGST}.  Restricted to one of the connected components $\hH_i$ of $\hH_+$, this is in fact an $\ALE$-metric.

Given the extremal orbifold metric $g_0$ on $M$ and the extremal $\ALE$-metrics $g_i$ on $X_i$ for each $i\in\{1,\ldots,\ell\}$, we want to find a conical surgery metric $g_{c,\epio}$ which is K\"ahler for $\epio>0$ and such that
$$
     g_{c,\epio}|_{\hH_0}=g_0, \quad \epio^{-2}g_{c,\epio}|_{\hH_i}=g_i, \quad i\in\{1,\ldots,\ell\}.
$$  
To do so, we can proceed as in \cite[\S~5.3.1]{Benabida}.  First, since $g_0$ is smooth in the sense of orbifolds, we can assume (see e.g. \cite[p.190]{Arezzo-Pacard2006}) that its K\"ahler form $\omega_0$ is
$$
     \omega_0= dd^c\lrp{\frac{r^2}4+f_0}
$$
near $\pa\bM$ with $f_0\in r^{4}\CI(M)\subset r^4\CI_b(\bM)$.  For the K\"ahler form $\omega_i$, recalling that $\rho_i= \frac{1}{\|z\|}$ outside a compact set of $X_i$, we have from Definition~\ref{ALE.1} 
that
$$
   \omega_i= dd^c\left(\rho_i^{-2}\left(\frac14+f_i\right)\right)
$$
outside a compact set. In the above expression 
$f_i$ is a $\bbT$-invariant polyhomogeneous function of order $\mathcal{O}(\rho_i^{\sigma_i})$ for some $\sigma_i>0$. Since we can assume that $\rho_i$ is the restriction of $\rho_0$ to $\hH_i$, notice that
$$
      \epio^2\omega_i= \epio^2 dd^c\left(\rho_i^{-2}\left(\frac14+f_i\right)\right)=dd^c\left(\rho_+^2\left(\frac14+f_i\right)\right),
$$
so that the top order term of $\epio^2\omega_i$ at $\hH_i\cap\hH_0$ agrees with the one of $\omega_0$.  In a small neighbourhood of $\hH_i\cap \hH_0$ in $\chM$, we can thus find a polyhomogeneous function $\psi_i$ such that
$$
 \frac{\psi_i}{\rho_+^2}|_{\hH_i}= \frac14+f_i \quad \mbox{and} \quad \psi_i|_{\hH_0}= \frac{r^2}{4}+ f_0.
$$
Indeed, if $\chi\in\CI_c([0,\infty[)$ is a function such that $\chi(x)=1$ for $x<\frac12$ and $\chi(x)=0$ for $x>1$, then setting $\chi_{\delta}(x):=\chi(\frac{x}{\delta})$ for $\delta>0$, it suffices to take
\begin{equation}
    \psi_i= \frac{\rho_+^2}{4}+ \chi_{\delta}(\rho_+)\rho_+^2f_i + \chi_{\delta}(\rho_0)f_0
\label{fs.4a}\end{equation}
for $\delta>0$ small enough, where $f_i$ is extended smoothly off $\hH_i$  and $f_0$ is extended smoothly off $\hH_0$.  Averaging with respect to the action of $\bbT$, we can assume without loss of generality that $\psi_i$ is $\bbT$-invariant. Proceeding as in \cite[\S~5.3.1]{Benabida}, we can therefore consider a degenerating family of closed $(1,1)$-forms $\omega_{c,\epio}$ on $\hM$ corresponding on $\chM$ to a section of $\Lambda^2({}^{c,s}T^*\chM)$ given by
\begin{equation}
  \omega_{c,\epio}= \left\{ \begin{array}{ll}
     dd^c\psi_i, & \mbox{near $\hH_i\cap \hH_0$ for $i>0$}, \\
     \epio^2\omega_i, & \mbox{near $\hH_i$ for $i>0$, but away from $\hH_0$}, \\
     \omega_0, & \mbox{near $\hH_0$, but away from $\hH_i$ for $i>0$}.
     \end{array}
   \right.
\label{fs.5}\end{equation}  
In particular, $\omega_{c,\epio}$ is automatically $\bbT$-invariant.  By construction, 
$$
   \omega_{c,\epio}|_{\hH_0}=\omega_0 \quad \mbox{and} \quad \frac{\omega_{c,\epio}}{\epio^2}|_{\hH_i}=\omega_i,  \quad i\in \{1,\ldots,\ell\}.
$$
Hence, if $\hat{J}$ is the complex structure on $\hM$, this means that 
$$
    g_{c,\epio}:= \omega_{c,\epio}(\cdot, \hat{J}\cdot)
$$
is a bounded polyhomogeneous section of $S^2({}^{c,\epio}T^*\chM)$ such that 
$$
     g_{c,\epio}|_{\hH_0}=g_0 \quad \mbox{and} \quad    \frac{g_{c,\epio}}{\epio^2}|_{\hH_i}=g_i \quad i\in\{1,\ldots,\ell\}.
$$
This ensures that $g_{c,\epio}$ is a positive definite section of $S^2({}^{c,\epio}T^*\chM)$ when restricted to $\hH_{+}$ and $\hH_0$.  Since being positive definite is an open condition, this implies that $g_{c,\epio}\in \cA_{\phg}(\chM; S^2({}^{c,\epio}T^*\chM))\cap L^{\infty}(\chM; S^2({}^{c,\epio}T^*\chM))$ defines a K\"ahler metric on $\hM$ for $\epio>0$ small enough with K\"ahler form $\omega_{c,\epio}$.  In the remaining of this section, we will always implicitly assume that $\epio$ is small enough so that $\omega_{c,\epio}$  is the K\"ahler form of a conical surgery metric.  Similarly, whenever we will consider a $2$-form $\omega_{c,\epio}+dd^c\phi$ for some potential $\phi$ decaying at $\hH_0$ and $\hH_+$, we will assume that  $\epio>0$ is small enough (in a way depending on $\phi$) so that    $\omega_{c,\epio}+dd^c\phi$ is the K\"ahler form of a conical surgery metric.

We explain now how to extend classes in $H^{1,1}(M)$   and $H^{1,1}(X_i)$ to  classes  in $H^{1,1}(\hM)$.  
\begin{lemma}
There is a canonical isomorphism of de Rham cohomologies
$$
    H^k(\hM)= \left\{ \begin{array}{ll}
    H^k(M)\oplus \left( \bigoplus_{i=1}^{\ell} H^k(X_i)\right) = H^k(\bM)\oplus \left( \bigoplus_{i=1}^{\ell} H^k(X_i)\right), & \mbox{if} \; k\in\{1,\ldots,2m-1\}, \\
    H^k(M), & \mbox{if} \; k\in\{0,2m\},
\end{array}\right.    
$$
where $\bM$ is the manifold with boundary defined in \eqref{bm.1}.
\label{fs.6}\end{lemma}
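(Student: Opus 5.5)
The plan is to use the Mayer--Vietoris sequence for the decomposition $\hM=U\cup V$. Here $U=\hM\setminus\bigcup_i E_i$ deformation retracts onto $\bM$, since $\bl$ identifies $U$ with $M\setminus\{p_1,\ldots,p_\ell\}$. The set $V=\bigsqcup_i V_i$ is a union of small neighbourhoods of the exceptional sets, where each $V_i\cong \bl_i^{-1}(B_\delta/\Gamma_i)$ deformation retracts onto $X_i$ (equivalently onto $E_i$, since $X_i$ is itself the total space of such a neighbourhood up to a radial collar). The overlap $U\cap V$ is a disjoint union of collars $S^{2m-1}/\Gamma_i\times (0,\delta)$, so $H^k(U\cap V)=\bigoplus_i H^k(S^{2m-1}/\Gamma_i)$ with real coefficients.

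The first key input is that $S^{2m-1}/\Gamma_i$ is a real homology sphere. Its real cohomology is the $\Gamma_i$-invariant part of $H^*(S^{2m-1})$, and $\Gamma_i\subset U(m)$ acts orientation-preservingly, so $H^k(U\cap V)=0$ unless $k\in\{0,2m-1\}$. Hence for $1\le k\le 2m-2$ the Mayer--Vietoris sequence directly gives isomorphisms $H^k(\hM)\cong H^k(\bM)\oplus\bigoplus_i H^k(X_i)$.

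Next I identify $H^k(\bM)$ with $H^k(M)$ for $1\le k\le 2m-1$. For this I apply Mayer--Vietoris to $M=(M\setminus\{p_i\})\cup\bigsqcup_i B_\delta/\Gamma_i$. The balls $B_\delta/\Gamma_i$ are contractible cones, and the links again have the real cohomology of spheres. This gives $H^k(M)\cong H^k(\bM)$ for $1\le k\le 2m-2$. The canonical choice of maps comes from $\bl^*$ on the first summand and from restriction/extension on the $X_i$ summands; the latter uses that classes on $X_i$ in these degrees are represented by compactly supported forms near $E_i$, because the link cohomology vanishes.

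The remaining work is the low and high degrees. For $k=0$ everything is connected. In the sequence segment
\[
0\to H^0(\hM)\to H^0(U)\oplus H^0(V)\to H^0(U\cap V)\to H^1(\hM)\to H^1(U)\oplus H^1(V)\to 0,
\]
the map $H^0(U)\oplus H^0(V)=\bbR\oplus\bbR^\ell\to\bbR^\ell$ is surjective, so $H^1(\hM)$ splits as claimed.

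For $k=2m-1$ and $k=2m$ I will use the real homology sphere property again, now comparing the two sequences. The endpoint maps are $H^{2m-1}(U)\oplus H^{2m-1}(V)\to H^{2m-1}(U\cap V)\to H^{2m}(\hM)\to 0$. Here $H^{2m}(V)=H^{2m}(X_i)=0$ since the $X_i$ are open, $H^{2m}(U)=0$, and $H^{2m}(\hM)=\bbR=H^{2m}(M)$. A count of dimensions shows that $H^{2m-1}(U)\oplus H^{2m-1}(V)\to\bbR^\ell$ has rank $\ell-1$, and the same count holds for $M$. Alternatively, Poincar\'e duality on the closed manifold $\hM$ and on the rational homology manifold $M$ reduces degree $2m-1$ to degree $1$. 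This bookkeeping in the top degrees, including checking that $H^{2m-1}(X_i)=0$ via duality with compactly supported $H^1_c$, is where I expect the only real care to be needed. In degree $2m-1$ the stated form then holds because the contributions from the $X_i$ vanish.
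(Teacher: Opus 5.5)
Your route is essentially the paper's: Mayer--Vietoris for $\hM=\cU\cup\cV$, driven by the fact that the links $S^{2m-1}/\Gamma_i$ are real homology spheres. The paper obtains $H^k(M)\cong H^k(\bM)$ from the Poincar\'e lemma and the long exact sequence of the pair $(\bM,\pa\bM)$, whereas you run a second Mayer--Vietoris on $M$; the two amount to the same thing. Your treatment of $k\in\{0,2m\}$, of $1\le k\le 2m-2$, and of the first equality in degree $2m-1$ is fine. For $H^{2m-1}(X_i)=0$ you still need $H^1(X_i)=0$. This holds because $E_i$ has real codimension at least two, so $\pi_1(X_i)$ is a quotient of $\pi_1(X_i\setminus E_i)=\Gamma_i$, which is finite.

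The step that fails is the identification $H^{k}(\bM)\cong H^{k}(M)$, which you announce for $1\le k\le 2m-1$ and invoke at the end (``the stated form then holds''). In degree $2m-1$ it is false as soon as $\ell\ge 2$, and your own rank count shows it. The cones are contractible and $H^{2m-2}(S^{2m-1}/\Gamma_i)=0$, so the sequence for $M$ reads
\[
0\to H^{2m-1}(M)\to H^{2m-1}(\bM)\to \bigoplus_{i=1}^{\ell}H^{2m-1}(S^{2m-1}/\Gamma_i)\cong\bbR^{\ell}\to H^{2m}(M)\cong\bbR\to 0 .
\]
Hence $H^{2m-1}(M)$ has codimension $\ell-1$ in $H^{2m-1}(\bM)$. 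Dually, $H^{2m-1}(\bM)\cong H_1(\bM,\pa\bM)$ detects arcs joining distinct boundary components.

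For instance, take $M=(\bbP^1\times\bbP^1)/\bbZ_2$ with its four $A_1$ points. Then $H^3(M)=H^3(\bbP^1\times\bbP^1)^{\bbZ_2}=0$, while $\dim H^3(\bM)=3$.

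What comparing your two sequences actually proves is that $H^{2m-1}(\hM)$ and $H^{2m-1}(M)$ both map isomorphically onto the kernel of the restriction $H^{2m-1}(\bM)\to\bigoplus_i H^{2m-1}(S^{2m-1}/\Gamma_i)$. So $H^{2m-1}(\hM)\cong H^{2m-1}(M)$ canonically, but the expression $H^{2m-1}(\bM)\oplus\bigoplus_i H^{2m-1}(X_i)$ is correct only when $\ell=1$.

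The paper's proof contains the matching slip: its assertion $H^k(\pa\bM)=\{0\}$ fails for $k=0$ and $k=2m-1$. This is harmless there, since the decomposition is only used for $k=2$. You should state the degree-$(2m-1)$ conclusion in this corrected form rather than asserting the lemma verbatim.
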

\begin{proof}
For $k\in\{0,2m\}$, the isomorphism is induced by the pull-back by $\bl$, so we can assume that $k\in\{1,\ldots,2m-1\}$.
By the Poincar\'e lemma applied to $\Gamma_i$-invariant forms on $\bbC^m$, we have  that $H^k(M)= H^k(\bM,\pa \bM)$.  Since $H^k(\pa \bM)=\{0\}$, we see from the long exact sequence in cohomology associated to the pair $(\bM,\pa \bM)$ that 
$$
    H^k(M)= H^k(\bM,\pa \bM)= H^k(\bM),
$$
so it remains to show that $H^k(\hM)= H^k(\bM)\oplus \left( \bigoplus_{i=1}^{\ell} H^k(X_i)\right)$.  This follows from the Mayer-Vietoris long exact sequence associated to the decomposition $\hM= \cU \cup \cV$, where $\cU= \pi^{-1}(M\setminus \{p_1,\ldots,p_{\ell}\})$ and $\cV$ is an open neighbourhood of $\pi^{-1}(\{p_1,\ldots,p_{\ell}\})$ with $\cU\cap \cV$ homotopic to the disjoint union 
$$
   \bigsqcup_{i=1}^{\ell} \lrp{S^{2m-1}/\Gamma_i}.
$$
\end{proof}

\begin{remark}%\footnote{VA: Should we  rather say wedge product?}
Since $H^k(\cU)=H^k_c(\cU)$ and $H^k(X_i)=H^k_c(X_i)$ for $k\in \{1,\ldots,2m-1\}$ and $i\in \{1,\ldots,\ell\}$, notice that in terms of the decomposition of Lemma~\ref{fs.6}, the cup product on $\hM$ takes the form 
\begin{equation}
  \left( \sum_{i=0}^{\ell} \gamma_i \right)\cdot  \left( \sum_{i=0}^{\ell} \gamma_i' \right) = \sum_{i=0}^{\ell} \gamma_i \cdot \gamma_i',
\label{fs.6b}\end{equation}
where $\gamma_0\in H^k(\bM)$, $\gamma_0'\in H^{k'}(\bM)$, $\gamma_i\in H^{k}(X_i)$ and $\gamma_i'\in H^{k'}(X_i)$ for $k,k'\in \{1,\ldots,2m-1\}$ and $i\in \{1,\ldots,\ell\}$.  When $k+k'=2m$, notice that \eqref{fs.6b} still makes sense with $\gamma_i\cdot\gamma_i'$ represented by a compactly supported closed form on $\bM\setminus \pa\bM$ when $i=0$ and on $X_i$ when $i>0$.  The resulting cup product is then obtained by pushing forward those classes to $H^{2m}(\hM)$ and taking the sum.
\label{fs.7}\end{remark}

Using the decomposition of Lemma~\ref{fs.6}, we consider the class
\begin{equation}
    \hbeta:= \beta\in H^2(\hM).
\end{equation}
Since $\beta\cdot\alpha^{m-1}=0$, notice by Remark~\ref{fs.7} that $\hbeta$ is automatically primitive with respect to the K\"ahler class
\begin{equation}
    2\pi \halpha_{\epio} := [\omega_{c,\epio}]= [\omega_0]+\epio^2\sum_{i=1}^{\ell} [\omega_i].
\label{fs.9b}\end{equation}

%{\color{blue} \begin{convention} Using the decomposition of Lemma~\ref{fs.6}, and with a slight abuse of notation, in the reminder of this section we shall drop the hat uperscripts for deRham calsses in $H^2(\widehat{M})$. Furthermore, for  deRham classes $\alpha, \beta \in H^{2}(M)$,  we shall continue to denote by $\alpha, \beta$ the induced deRham class on $\widehat{M}$ via the embedding $H^2(M)\subset H^2(\hM)$; similarly, for deRham classes $\alpha_i \in H^2(X_i)$  we shall denote the same way the induced deRham classes on $\widehat{M}$ via the embedding $H^2(X_i) \subset H^2(\widehat{M})$. Thus, we shall write \[  2\pi \alpha_{\epio}:= [\omega_{c,\epio}]= [\omega_0]+ \epio^2\sum_{i=1}^{\ell} [\omega_i]  = 2\pi \alpha + 2\pi \epsilon^2 \sum_{i=1}^k\alpha_i. \] for the decomposition of the corresponding K\"ahler classe on $\widehat{M}$.  \end{convention} Using the decomposition of Lemma~\ref{fs.6},  and  since $\beta\cdot\alpha^{m-1}=0$ on $M$, we see by Remark~\ref{fs.7} that  $\beta$ is automatically primitive on $\widehat{M}$ with respect to $\alpha_{\epsilon}$. } \fr{We can remove this blue part and save space!  I will add all the hats...}

\subsection{The $b$-surgery calculus of Mazzeo-Melrose}

The $b$-surgery calculus of Mazzeo-Melrose \cite{Mazzeo-MelroseETA} will be a key tool in our gluing construction.  This will require however some small adaptations to our setting.  For this reason, we will make a quick review of the $b$-surgery calculus and prove the results that we will need.

Compared to \cite{Mazzeo-MelroseETA}, the main difference is that we will need the $b$-surgery calculus for the space $\chM$ of \eqref{fs.2} instead of the single surgery space of \cite{Mazzeo-MelroseETA}.  This requires introducing a suitable $b$-surgery double space.  We can first consider the double space
\begin{multline*}
\cM^2_{b,s}:= [ M^2\times [0,1[; \{(p_i,p_j,0)\} \; \mbox{for} \; i,j\in \{1,\ldots,\ell\}, \{p_1\}\times M\times \{0\}, \ldots, \{p_\ell\}\times M\times \{0\}, \\
 M\times \{p_1\}\times \{0\},\ldots M\times \{p_\ell\}\times \{0\}]
\end{multline*}
with blow-down map 
$$
     \cB_{b,s}^2: \cM^2_{b,s}\to M^2\times [0,1[
$$
obtained from $M^2\times [0,1[$ by blowing up the singular strata of $M\times M$ at $\epio=0$ in the sense of Melrose in an order compatible with inclusion.  This is an orbifold with corners in the sense of \cite[\S~2]{CDR2019}.  Let us denote by $H_{ij}$ the boundary hypersurface created by the blow-up of $\{(p_i,p_j)\}\times \{0\}$ for $i,j\in \{1,\ldots,\ell\}$, by $H_{i0}$ and $H_{0i}$ the boundary hypersurfaces created respectively by the blow-ups of $\{p_i\}\times M\times \{0\}$ and $M\times \{p_i\}\times \{0\}$ for $i\in \{1,\ldots,\ell\}$ and by $H_{00}$ the lift of the boundary hypersurface $M^2\times \{0\}$ of $M^2\times [0,1[$ to $\cM^2_{b,s}$.  Notice that there is a natural identification 
$$
     H_{ij}\cong [ \overline{(\bbC^m/\Gamma_i)\times (\bbC^m/\Gamma_j) }; \{0\}\times \pa \overline{\bbC^m/\Gamma_j}, \pa\overline{\bbC^m/\Gamma_i}\times \{0\}]
$$
for $i,j\in \{1,\ldots,\ell\}$, where 
$$
    \overline{(\bbC^m/\Gamma_i)\times (\bbC^m/\Gamma_j) }= \overline{\bbC^m\times \bbC^m}/(\Gamma_i\times \Gamma_j)
$$  
is the radial compactification of 
$$
  \bbC^m/\Gamma_i \times \bbC^m/\Gamma_j= (\bbC^m\times \bbC^m)/(\Gamma_i\times \Gamma_j).
$$
Similarly, there are natural identifications 
$$
    H_{i0}= \overline{\bbC^m/\Gamma_i}\times \bM \quad \mbox{and} \quad H_{0i}= \bM\times \overline{\bbC^m/\Gamma_i}
$$
for $i\in \{1,\ldots,\ell\}$.  

Now, the resolution \eqref{la.27a} induces a resolution $\bl^2: \hM^2\to M^2$ while for $i\in\{1,\ldots,\ell\}$,  the resolution \eqref{la.27b} induces a resolution
$$
    \bl_i\times \bl_j: [ \overline{X}_i\times \overline{X}_j; \pa\overline{X}_i\times \pa \overline{X}_j ] \to [\overline{\bbC^m/\Gamma_i}\times \overline{\bbC^m/\Gamma_j}; \pa(\overline{\bbC^m/\Gamma_i})\times \pa(\overline{\bbC^m}/\Gamma_j)]\cong  H_{ij}.
$$ 
These resolutions combine to give a resolution 
\begin{equation}
 \cbl^2_{b,s}: \chM^2_{b,s}\to \cM^2_{b,s}
\label{bs.1}\end{equation}
with $\chM^2_{b,s}$ a manifold with corners with boundary hypersurfaces $\hH_{ij}$ for $i,j\in\{0,1,\ldots,\ell\}$ corresponding respectively to resolutions of $H_{ij}$ via the map \eqref{bs.1}.  In particular, for $i\in\{1,\ldots,\ell\}$, there are natural identifications
\begin{equation}
\hH_{ij}= [\bX_i\times \bX_j; \pa \bX_i\times \pa\bX_j], \quad \hH_{i0}= \bX_i\times \bM, \quad \hH_{0i}=\bM\times \bX_i \quad \mbox{and} \quad \hH_{00}=\bM^2_b.
\label{bs.2}\end{equation}

To strengthen the analogy with the $b$-double space, it will be convenient to combine these boundary hypersurfaces into disjoint unions as follows,
\begin{equation}
\hH_{\lf}:= \bigsqcup_{i=1}^{\ell} \hH_{i0}, \quad \hH_{\rf}:= \bigsqcup_{i=1}^{\ell} \hH_{0i}, \quad \hH_{\fb}:= \bigsqcup_{i,j\ge 1} \hH_{ij} \quad \mbox{and} \quad
\hH_{\mf}:=\hH_{00} .
\label{bs.3}\end{equation}
As in \cite[(71)]{Mazzeo-MelroseETA}, the projections 
$$
\begin{array}{lccc}
\pr_L: & M\times M\times [0,1[ & \to & M\times [0,1[ \\
 & (q,q',\epio) & \mapsto & (q,\epio)
 \end{array}
 \quad \mbox{and} \quad 
 \begin{array}{lccc}
\pr_R: & M\times M\times [0,1[ & \to & M\times [0,1[ \\
 & (q,q',\epio) & \mapsto & (q',\epio)
 \end{array}
$$
naturally lift to maps 
$$
     \pi_{L,s}: \cM^2_{b,s}\to \cM \quad \mbox{and} \quad \pi_{R,s}: \cM^2_{b,s}\to\cM.
$$
As can be readily checked in local coordinates, these maps further lift on the resolution $\chM^2_{b,s}$ to $b$-fibrations
$$
     \widehat{\pi}_{L,s}: \chM^2_{b,s}\to \chM \quad \mbox{and} \quad  \widehat{\pi}_{R,s}: \chM^2_{b,s}\to \chM
$$
inducing the commutative diagram %\vesti{Change $\beta_{b,s}$ with $\bl_{b,s}$ in the diagram below?} 
$$
\xymatrix{
\chM \ar[d]^{\cbl} & \ar[l]_{\widehat{\pi}_{L,s}} \chM^2_{b,s}\ar[d]^{\cbl^2_{b,s}} \ar[r]^{\widehat{\pi}_{R,s}} & \chM \ar[d]^{\cbl} \\
\cM \ar[d]^{\cB} & \ar[l]_{\pi_{L,s}} \cM^2_{b,s} \ar[r]^{\pi_{R,s}} \ar[d]^{\cB_{b,s}^2} & \cM \ar[d]^{\cB} \\
M\times [0,1[ & \ar[l]_{\pr_L^2} M^2\times [0,1[  \ar[r]^{\pr_R^2} & M\times [0,1[.
}
$$

If we denote by
$$
   \pa \chM^2_{b,s}:= \hH_{\lf}\cup \hH_{\rf}\cup \hH_{\fb}\cup \hH_{\mf}
$$
the boundary of $\chM^2_{b,s}$, then notice that there is a natural identification
$$
    \chM_{b,s}^2\setminus\pa \chM^2_{b,s}= \hM^2\times ]0,1[.
$$
If $\hD$ is the diagonal in $\hM^2$, then let $\chD_{b,s}$ be the closure of $\hD\times ]0,1[$ in $\chM_{b,s}$.  Notice that $\widehat{\pi}_{L,s}$ and $\widehat{\pi}_{R,s}$ induce identifications between $\chD_{b,s}$ and $\chM$.  If 
$$
{}^{b,s}\Omega(\chM)= |\Lambda^{2m}({}^{b,s}T^*\chM)|
$$
is the bundle of $b$-surgery densities on $\chM$, then set
$$
  {}^{b,s}\Omega_R(\chM):= (\widehat{\pi}_{R,s})^*({}^{b,s}\Omega(\chM)).
$$
Similarly, if $E_1$ and $E_2$ are vector bundles on $\chM$, consider the vector bundle 
$$
      \Hom_{b,s}(E_1,E_2):= \widehat{\pi}_{L,s}^*E_2 \otimes \widehat{\pi}^*_{R,s}(E_1^*).
$$
Following \cite[(76)]{Mazzeo-MelroseETA}, we will consider on $\chM^2_{b,s}$ the space of conormal distributions 
$$
  I^{q}(\chM^2_{b,s};\chD_{b,s}; \Hom_{b,s}(E_1,E_2)\otimes {}^{b,s}\Omega_R(\chM))
$$
of order $q$ with respect to $\chD_{b,s}$ in the sense of \cite[Definition~18.2.6]{Hormander3}.  Given an index family $\cE$ on $\chM^2_{b,s}$, one can then define the associated space of pseudodifferential $b$-surgery operators of order $q$ acting from sections of $E_1$ to sections of $E_2$ by
$$
  \Psi^{q,\cE}_{b,s}(\chM;E_1,E_2):= \cA^{\cE}_{\phg}(\chM^2_{b,s})\otimes_{\CI(\chM^2_{b,s})}  I^{q-\frac14}(\chM^2_{b,s};\chD_{b,s}; \Hom_{b,s}(E_1,E_2)\otimes {}^{b,s}\Omega_R(\chM)).
$$

Using the principal symbol for conormal distributions, there is a corresponding notion of principal symbol for pseudodifferential $b$-surgery operators.  If $\widehat{\cQ}: {}^{b,s}T^*\chM\to \chM$ is the bundle map, then one can define the space 
$$
  \cS^q({}^{b,s}T^*\chM; \widehat{\cQ}^*\Hom(E_1,E_2))
$$
proceeding as in \eqref{b.9} using local trivializations of $E_1,E_2$ and ${}^{b,s}T^*\chM$.  If $\iota: \chD_{b,s}\to \chM^2_{b,s}$ is the natural inclusion, then there is a principal symbol map
$$
  {}^{b,s}\sigma_q: \Psi^{q,\cE}_{b,s}(\chM;E_1,E_2)\to \cA_{\phg}^{\iota^\#\cE}(\chM)\otimes_{\CI(\chM)} \cS^{[q]}({}^{b,s}T^*\chM; \widehat{\cQ}^*\Hom(E_1,E_2)),
$$
where $\iota^\#\cE$ is the index family of $\cD_{b,s}\cong \chM$ such that 
$$
   \iota^\#\cE(\hH_i)= \cE(\hH_{ii}) \quad \mbox{for} \; i \in \{0,1,\ldots,\ell\},
$$
and where 
$$
 \cS^{[q]}({}^{b,s}T^*\chM; \widehat{\cQ}^*\Hom(E_1,E_2)):= \cS^q({}^{b,s}T^*\chM; \widehat{\cQ}^*\Hom(E_1,E_2))/ \cS^{q-1}({}^{b,s}T^*\chM; \widehat{\cQ}^*\Hom(E_1,E_2)).
$$

Using the pushforward theorem of Melrose \cite[Theorem~5]{Melrose1992}, we can describe the action of pseudodifferential $b$-surgery operators on polyhomogeneous sections.  To ease the comparison with Proposition~\ref{b.5}, we will only consider index families on $\chM^2_{b,s}$ of the form 
$$
    \cE=(\cE(\lf), \cE(\rf), \cE(\fb), \cE(\mf))
$$ 
with $\cE(\bullet)$ denoting an index set for boundary hypersurface $\hH_{\bullet}$ described in \eqref{bs.3}.  Similarly, we will treat $\chM$ as a manifold with corners with two (possibly disconnected) boundary surfaces, namely $\hH_+$ and $\hH_0$.  

\begin{proposition}\label{bs.4}
Let $\cE=(\cE(\lf), \cE(\rf), \cE(\fb), \cE(\mf))$ and $\cF= (\cF(\hH_+),\cF(\hH_0))$ be index families for $\chM^2_{b,s}$ and $\chM$ respectively.  Then for $A\in \Psi^{q,\cE}_{b,s}(\chM;E_1,E_2)$ and $\zeta\in \cA^{\cF}_{\phg}(\chM;E_1)$,
$$
         A\zeta:= (\widehat{\pi}_{L,s})_*(A\widehat{\pi}^*_{R,s}\zeta)
$$ 
is well-defined as an element of $\cA^{\cG}_{\phg}(\chM;E_2)$ for the index family $\cG$ given by
$$
  \cG(\hH_0)=(\cE(\mf)+\cF(\hH_0)) \overline{\cup} (\cE(\rf)+ \cF(\hH_+))) \quad \mbox{and} \quad \cG(\hH_+)=(\cE(\lf)+\cF(\hH_0)) \overline{\cup} (\cE(\fb)+ \cF(\hH_+)). 
$$
\end{proposition}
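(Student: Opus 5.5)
The plan is to apply Melrose's pushforward theorem \cite[Theorem~5]{Melrose1992} to the $b$-fibration $\widehat{\pi}_{L,s}:\chM^2_{b,s}\to\chM$, exactly as in the proof of Proposition~\ref{b.5}. First, I note that $\widehat{\pi}_{R,s}$ is a $b$-fibration. Hence the pullback $\widehat{\pi}^*_{R,s}\zeta$ is polyhomogeneous on $\chM^2_{b,s}$. Its index sets are read off from the boundary exponents of $\widehat{\pi}_{R,s}$, which I would check in local projective coordinates near each corner. The hypersurfaces $\hH_{\mf}=\hH_{00}$ and $\hH_{\lf}=\bigsqcup\hH_{i0}$ are mapped by $\widehat{\pi}_{R,s}$ onto $\hH_0$. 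The hypersurfaces $\hH_{\rf}=\bigsqcup\hH_{0i}$ and $\hH_{\fb}=\bigsqcup\hH_{ij}$ are mapped onto $\hH_+$. Each of these maps has exponent one. So $\widehat{\pi}^*_{R,s}\zeta$ has index set $\cF(\hH_0)$ at $\mf$ and $\lf$, and $\cF(\hH_+)$ at $\rf$ and $\fb$.

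Next, the product $A\,\widehat{\pi}^*_{R,s}\zeta$ is a polyhomogeneous section, conormal to $\chD_{b,s}$, with values in $\widehat{\pi}^*_{L,s}E_2\otimes{}^{b,s}\Omega_R(\chM)$. Its index family is $\cE(\mf)+\cF(\hH_0)$, $\cE(\lf)+\cF(\hH_0)$, $\cE(\rf)+\cF(\hH_+)$ and $\cE(\fb)+\cF(\hH_+)$. The conormal singularity along $\chD_{b,s}$ is harmless for the pushforward: $\chD_{b,s}$ is transversal to the fibers of $\widehat{\pi}_{L,s}$, and the diagonal meets only $\mf$ and $\fb$, being identified with $\chM$. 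Integrating the conormal singularity in the fibre directions therefore produces a smooth contribution, as in the standard $b$-calculus argument.

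The pushforward theorem then gives the image index sets. For a hypersurface $H$ of $\chM$, the index set of the pushforward at $H$ is the extended union of the index sets at those hypersurfaces of $\chM^2_{b,s}$ that $\widehat{\pi}_{L,s}$ maps onto $H$. This holds provided the remaining hypersurfaces, which $\widehat{\pi}_{L,s}$ maps into the interior, carry integrable index sets. Checking the boundary exponents in the same local coordinates, $\widehat{\pi}_{L,s}$ maps $\hH_{\mf}$ and $\hH_{\rf}$ onto $\hH_0$, and $\hH_{\lf}$ and $\hH_{\fb}$ onto $\hH_+$, each with exponent one. This gives exactly
\[
\cG(\hH_0)=(\cE(\mf)+\cF(\hH_0))\overline{\cup}(\cE(\rf)+\cF(\hH_+)),\qquad \cG(\hH_+)=(\cE(\lf)+\cF(\hH_0))\overline{\cup}(\cE(\fb)+\cF(\hH_+)).
\]
No integrability condition arises. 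Unlike the $b$-double space, where $\rf$ lies in the interior of a fibre of $\pi_L$ and forces $\Re(\cE(\rf)+\cF)>0$, every boundary hypersurface of $\chM^2_{b,s}$ here maps onto a boundary hypersurface of $\chM$. This is the $b$-surgery phenomenon of \cite{Mazzeo-MelroseETA}: $\epio$ is preserved by both projections. The use of the right $b$-surgery density is what makes the fibre densities $b$-densities, so that the $b$-fibration pushforward theorem applies directly without weight shifts.

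The main obstacle is verifying that $\widehat{\pi}_{L,s}$ and $\widehat{\pi}_{R,s}$ are genuinely $b$-fibrations with the claimed boundary exponents on the resolved space $\chM^2_{b,s}$. In particular, this requires that no boundary hypersurface is mapped into a corner of $\chM$. I would check this first on the orbifold level $\cM^2_{b,s}\to\cM$ in projective coordinates near $\hH_{ij}\cap\hH_{i0}\cap\hH_{00}$, for example $(\epio/|z|, |z|, z'/|z|, \ldots)$. I would then argue that the resolutions $\cbl$ and $\cbl^2_{b,s}$ are diffeomorphisms near the boundary faces, away from compact interior regions of $\bX_i$. This lets the $b$-fibration property lift, with the interior of $\hH_{ij}$ handled by the product structure $\bX_i\times\bX_j$ blown up at the corner.
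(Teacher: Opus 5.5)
Your proposal is correct and takes the same route as the paper. The paper's proof just invokes Melrose's pullback and pushforward theorems in the manner of \cite[\S~8.2]{Mazzeo-MelroseETA}, and your bookkeeping fills in what that one line leaves implicit: under either projection, each boundary hypersurface of $\chM^2_{b,s}$ is mapped with exponent one onto exactly one of $\hH_0$, $\hH_+$, so no integrability conditions arise since $\epio$ is preserved. The density identification $\widehat{\pi}_{L,s}^*\big({}^{b}\Omega(\chM)\big)\otimes{}^{b,s}\Omega_R(\chM)\cong{}^{b}\Omega(\chM^2_{b,s})$, which you assert without checking, does hold: blowing up the centres inside $\{\epio=0\}$ produces factors $\rho_{\hH_{ij}}^{4m}\rho_{\hH_{i0}}^{2m}\rho_{\hH_{0j}}^{2m}$, and these are exactly cancelled by the $\rho_+^{-2m}$ weights of the left and right densities.
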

\begin{proof}
It suffices to use the pullback and pushforward theorems of Melrose and proceed as in \cite[\S~8.2]{Mazzeo-MelroseETA}.
\end{proof}

We can also obtain the following composition result.
\begin{proposition}\label{bs.5}
If $\cE=(\cE(\lf), \cE(\rf), \cE(\fb), \cE(\mf))$ and $\cF=(\cF(\lf), \cF(\rf), \cF(\fb), \cF(\mf))$ are indicial families for $\chM^2_{b,s}$, then 
$$
    \Psi^{q,\cE}_{b,s}(\chM;E_2,E_3)\circ \Psi^{q',\cF}_{b,s}(\chM;E_1,E_2) \subset \Psi^{q+q',\cG}_{b,s}(\chM;E_1,E_3)
$$
where $\cG$ is the index family given by
$$
\begin{aligned}
\cG(\fb)= (\cE(\fb)+\cF(\fb))\overline{\cup} (\cE(\lf)+ \cF(\rf)), &\quad
\cG(\lf)= (\cE(\fb)+\cF(\lf))\overline{\cup}(\cE(\lf)+ \cF(\mf)), \\
\cG(\rf)= (\cE(\rf)+ \cF(\fb))\overline{\cup} (\cE(\mf)+ \cF(\rf)), & \quad  
\cG(\mf)= (\cE(\mf)+ \cF(\mf))\overline{\cup}(\cE(\rf)+ \cF(\lf)).
\end{aligned}
$$
\end{proposition}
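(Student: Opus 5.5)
We prove this the same way as Proposition~\ref{b.6} and as the composition theorem in \cite[\S~8]{Mazzeo-MelroseETA}. The idea is to write the composition as a pullback--multiply--pushforward through a suitable $b$-surgery triple space. We then apply Melrose's pullback and pushforward theorems \cite[Theorem~5]{Melrose1992} and read off the index sets at each boundary face.

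\textbf{Construction of the triple space.} Start from $M^3\times[0,1[$. Blow up, in an order compatible with inclusion, the singular strata at $\epio=0$:
\begin{itemize}
\item first the triple points $\{(p_i,p_j,p_k)\}\times\{0\}$;
\item then the codimension-two strata $\{(p_i,p_j)\}\times M\times\{0\}$ and their permutations;
\item then the strata $\{p_i\}\times M^2\times\{0\}$ and their permutations.
\end{itemize}
Finally, resolve using $\bl$ and the $\bl_i$, exactly as in the construction of $\chM^2_{b,s}$, to obtain $\chM^3_{b,s}$. The three projections $\pi_{LC},\pi_{LR},\pi_{CR}$ from $M^3\times[0,1[$ to $M^2\times[0,1[$ should then lift to maps $\widehat\pi_{LC},\widehat\pi_{LR},\widehat\pi_{CR}:\chM^3_{b,s}\to\chM^2_{b,s}$.

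\textbf{Key geometric checks.} The first step is to verify, in local projective coordinates near each corner, that these lifts are $b$-fibrations. This is the analogue of the statements for the $b$-triple space \cite[(5.1)]{Mazzeo-MelrosePhi} and for the surgery triple space of \cite{Mazzeo-MelroseETA}. Because the resolution is a local product near the exceptional sets, it suffices to check this on the orbifold space before resolving, and there it is local and reduces to the single-point surgery case.

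The second check concerns the lifted diagonals. The lifts of $\chD_{b,s}$ under $\widehat\pi_{LC}$ and $\widehat\pi_{CR}$ must be $p$-submanifolds that intersect transversally. Moreover, $\widehat\pi_{LR}$ must be transversal to their intersection. This is what guarantees that the conormal singularities compose to a conormal distribution of order $q+q'$ along $\chD_{b,s}$.

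\textbf{Bookkeeping of boundary faces.} Next we identify the exponent matrices of the three maps. Each boundary hypersurface of $\chM^3_{b,s}$ is labelled by a triple $(a,b,c)$ with $a,b,c\in\{0,+\}$. Here $0$ means the corresponding factor lies in the main face $\hH_0$, and $+$ means it has been blown up into some $\hH_i$ with $i\geq1$. The face $(a,b,c)$ maps under $\widehat\pi_{LC}$ to the double-space face indexed by $(a,b)$, under $\widehat\pi_{CR}$ to $(b,c)$, and under $\widehat\pi_{LR}$ to $(a,c)$. In the double space the labels translate as $(0,0)\leftrightarrow\mf$, $(+,0)\leftrightarrow\lf$, $(0,+)\leftrightarrow\rf$ and $(+,+)\leftrightarrow\fb$.

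The pullback $\widehat\pi_{LC}^*A\cdot\widehat\pi_{CR}^*B$ is then polyhomogeneous with index set $\cE(ab)+\cF(bc)$ at face $(a,b,c)$. Pushing forward along $\widehat\pi_{LR}$ takes the extended union over the middle label $b$. For example, the target face $\fb=(+,+)$ receives $b=+$, giving $\cE(\fb)+\cF(\fb)$, and $b=0$, giving $\cE(\lf)+\cF(\rf)$. The other three faces are handled in the same way, and this yields exactly the stated $\cG$.

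\textbf{Densities.} The $b$-surgery density factor ${}^{b,s}\Omega_R$ is handled as in \cite{Mazzeo-MelroseETA}. Because we work with $b$-surgery densities, the pushforward produces no shifts in the exponents.

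\textbf{Integrability condition.} Unlike Proposition~\ref{b.6}, the statement carries no hypothesis such as $\Re(\cE(\rf)+\cF(\lf))>0$. The reason is that the only faces of $\chM^3_{b,s}$ collapsed by $\widehat\pi_{LR}$ are those where the middle factor is blown up. These are boundary faces of the $b$-fibration whose images are again boundary faces, so the pushforward theorem requires no integrability condition. This differs from the $b$-case, where a face is mapped into the interior. We should nonetheless double-check this point in coordinates.

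\textbf{Main obstacle.} The main difficulty is the first geometric step: proving the $b$-fibration property and the transversality of the lifted diagonals on the multi-point resolved triple space. In particular, the interaction between the $(+,+,+)$ faces coming from distinct points $p_i,p_j,p_k$ and the resolutions $\bX_i$ must be controlled. The plan is to localize to neighbourhoods of the product singular strata and reduce each case to the one-point construction of \cite{Mazzeo-MelroseETA}.
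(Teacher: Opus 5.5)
Your proposal is correct and takes the same route as the paper. The paper's proof only invokes the $b$-surgery triple space, obtained from $M^3\times[0,1[$ by blowing up the singular strata of $M^3\times\{0\}$ in an order compatible with inclusion and then resolving, and proceeds as in \cite[\S~8.3]{Mazzeo-MelroseETA} with Melrose's pullback and pushforward theorems, leaving the details to the reader; your bookkeeping over the middle label and your observation that no face of the triple space maps into the interior under $\widehat\pi_{LR}$ correctly recover the stated $\cG$ and explain why no integrability hypothesis appears.
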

\begin{proof}
We can proceed as in \cite[\~8.3]{Mazzeo-MelroseETA} by introducing a suitable $b$-surgery triple space $\chM^3_{b,s}$ corresponding to the natural resolution of the $b$-surgery triple space $\cM^3_{b,s}$ of $\cM$ obtained from $M^3\times [0,1[$ by blowing up in the sense of Melrose the singular strata of $M^3\times \{0\}$ in an order compatible with inclusion.  We leave the details to the reader.
\end{proof}

As in \cite[\S~4]{Mazzeo-MelroseETA}, one can check via Schur's test and the pushforward theorem as well as H\"ormander $L^2$-trick that  a pseudodifferential $b$-surgery operator $P\in \Psi^{q,\cE}_{b,s}(\chM;E_1,E_2)$ induces a uniformly  bounded family of operators
$$
   P: L^{2,q+j}_{b,\epio}(\hM;E_1)\to L^{2,j}_{b,\epio}(\hM;E_2)
$$
in $\epio\in ]0,\epio_0[$ for $\epio_0\in]0,1[$ fixed
provided $\Re(\cE(\lf))>0$, $\Re(\cE(\rf))>0$, $\Re(\cE(\fb)\setminus\{0\})>0$ and $\Re(\cE(\mf)\setminus\{0\})>0$.  Similarly, since  pseudodifferential $b$-surgery operators are stable with respect to taking $\epio\frac{\pa}{\pa \epio}$ derivatives, we see that for $\epio_0\in ]0,1[$ fixed, an operator $P\in \Psi^{q,\cE}_{b,s}(\chM;E_1,E_2)$ also induces a bounded operator
$$
   P: \cC^q(]0,\epio_0[;L^{2,q+j}_{b,s}(\chM;E_1))\to \cC^q(]0,\epio_0[;L^{2,j}_{b,s}(\chM;E_2))
$$
provided $\Re(\cE(\lf))>0$, $\Re(\cE(\rf))>0$, $\Re(\cE(\fb)\setminus\{0\})>0$ and $\Re(\cE(\mf)\setminus\{0\})>0$.

Notice from \eqref{bs.2} and \eqref{bs.3} that there are natural identifications 
$$
  \hH_{\mf}= \bM^2_b \quad \mbox{and} \quad \hH_{\fb}= (\hH_+)^2_b= [\hH^2_+; (\pa \hH_+)^2].
$$
In fact, for an index family $\cE$ such that $\Re(\cE(\fb)\setminus\{0\})>0$ and $\Re(\cE(\mf)\setminus\{0\})>0$, restrictions to $\hH_{\fb}$ and $\hH_{\mf}$ induce normal homomorphisms
$$
N_{\fb}: \Psi^{q,\cE}_{b,s}(\chM;E_1,E_2)\to \Psi_b^{q, \cE_{\fb}}(\hH_+;E_1,E_2) \quad \mbox{and} \quad N_{\mf}: \Psi^{q,\cE}_{b,s}(\chM;E_1,E_2)\to \Psi_b^{q, \cE_{\mf}}(\bM;E_1,E_2), 
$$
where $\cE_{\fb}$ and $\cE_{\mf}$ are the index families for $(\hH_+)^2_b$ and $\bM^2_b$ given by
$$
    \cE_{\fb}(\lf)=\cE_{\mf}(\lf)=\cE(\lf), \quad \cE_{\fb}(\rf)=\cE_{\mf}(\rf)=\cE(\rf), \quad \cE_{\fb}(\fb)= \cE(\mf), \quad \mbox{and} \quad \cE_{\mf}(\fb)=\cE(\fb).
$$
These normal homomorphisms are compatible with composition in the sense that
$$
   N_{\fb}(A\circ B)= N_{\fb}(A)\circ N_{\fb}(B) \quad \mbox{and} \quad N_{\mf}(A\circ B)= N_{\mf}(A)\circ N_{\mf}(B)
$$
whenever both sides are defined.    Using these normal homomorphisms, we can obtain the following invertibility result.
\begin{proposition}\label{bs.6}
Let $P\in \Psi^{q,\cE}_{b,s}(\chM;E_1,E_2)$ be an elliptic pseudodifferential $b$-surgery operator for a real index family $\cE$ such that 
$$
\cE(\lf)> \tau, \quad \cE(\rf)> \tau, \quad \cE(\fb)\ge 0 \quad \mbox{and} \quad \cE(\mf)\ge 0
$$
for some $\tau >0$.  Suppose that $N_{\fb}(P)$ and $N_{\mf}(P)$ are invertible with inverses in $\Psi^{-q,\cF}_{b}(\hH_+;E_2,E_1)$ and $\Psi^{-q,\cH}_{b,s}(\bM;E_2,E_1)$ for real index families $\cF$ and $\cH$ such that 
$$
   \cF(\lf)\ge \tau, \quad \cF(\rf)\ge\tau, \quad, \cF(\fb)\ge 0, \quad  \cH(\lf)\ge \tau, \quad \cH(\rf)\ge\tau \quad \mbox{and} \quad \cH(\fb)\ge 0.
$$
Then there exists $\epio_0>0$ such that for $\epio<\epio_0$, $P$ admits an inverse $G \in \Psi^{-q,\cG}_{b,s}(\chM;E_2,E_1)$ for some real index family $\cG$ such that
$$
      \cG(\lf)\ge\tau, \quad \cG(\rf)\ge\tau, \quad \cG(\fb)\ge 0 \quad \mbox{and} \quad \cG(\mf)\ge 0.
$$
\end{proposition}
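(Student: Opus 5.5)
We construct a parametrix for $P$ by a symbolic step followed by successive corrections at $\hH_{\fb}$ and at $\hH_{\mf}$, in the spirit of \cite[\S~8--9]{Mazzeo-MelroseETA}, and then remove the residual error by a Neumann series in $\epio$.

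\emph{Step 1: symbolic parametrix.} Since $P$ is elliptic, there is $Q_0\in\Psi^{-q,\cE'}_{b,s}(\chM;E_2,E_1)$, with $\cE'$ having the same lower bounds as $\cE$, such that
$$
PQ_0=\Id-R_0, \qquad R_0\in\Psi^{-\infty,\cE''}_{b,s}(\chM;E_2,E_2).
$$
This is the standard construction: invert the principal symbol ${}^{b,s}\sigma_q(P)$ and sum asymptotically over lower-order terms, working near $\chD_{b,s}$ with smooth cutoffs. The index family $\cE''$ satisfies $\cE''(\lf),\cE''(\rf)>\tau$ and $\cE''(\fb),\cE''(\mf)\ge 0$.

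\emph{Step 2: correction at the faces.} Let $G_{\fb}:=N_{\fb}(P)^{-1}$ and $G_{\mf}:=N_{\mf}(P)^{-1}$. We want an operator $Q_1$ with
$$
N_{\fb}(Q_1)=G_{\fb}, \qquad N_{\mf}(Q_1)=G_{\mf}.
$$
These two prescriptions must agree on $\hH_{\fb}\cap\hH_{\mf}$. Both inverses are polyhomogeneous $b$-operators whose expansions at the corners are determined by the inverse of the indicial family, and on each side that indicial family is the one common to $N_{\fb}(P)$ and $N_{\mf}(P)$ along $\hH_{\fb}\cap \hH_{\mf}$. The leading terms should therefore match there. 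Checking this matching, including the log terms, is the main obstacle; I would verify it by comparing the Mellin transforms in the variable $\rho_0/\rho_+$.

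Granting the matching, extend $G_{\fb}$ and $G_{\mf}$ off their faces, compatibly at the corner, and add $Q_0$ away from the boundary. The result is $Q_1\in\Psi^{-q,\cG_1}_{b,s}$ with $PQ_1=\Id-R_1$ and
$$
N_{\fb}(R_1)=0, \qquad N_{\mf}(R_1)=0.
$$
By Proposition~\ref{bs.5} and the bounds on $\cF$ and $\cH$, the error satisfies $R_1\in\Psi^{-\infty,\cR}_{b,s}$ with $\cR(\fb),\cR(\mf)\ge\delta>0$ and $\cR(\lf),\cR(\rf)\ge\tau$. Since $\epio=\rho_0\rho_+$ vanishes to first order at both $\hH_{\fb}$ and $\hH_{\mf}$, this means $R_1=\epio^{\delta'}R_1'$ for some uniformly bounded $R_1'$ and some $\delta'>0$.

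\emph{Step 3: inversion.} By the uniform $L^2_{b,\epio}$ boundedness discussed above, $\|R_1\|\le C\epio^{\delta'}<1$ for $\epio<\epio_0$, so $\Id-R_1$ is invertible by a Neumann series. Moreover $(\Id-R_1)^{-1}=\Id+S$, where the asymptotic sum $\sum_k R_1^k$ converges in the calculus via Proposition~\ref{bs.5}, because the powers $R_1^k$ gain $\epio^{k\delta'}$. Setting $G=Q_1(\Id+S)$ gives a right inverse with the index bounds of Proposition~\ref{bs.5}. Running the same construction on the left, using $N(P)^{-1}$ composed on the left, gives a left inverse, so the two coincide and $G$ is the inverse.

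Reality of the index families is preserved throughout, because every index set arising comes from sums and extended unions of real index sets.
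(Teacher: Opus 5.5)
Your overall architecture matches the paper's. You build a parametrix whose normal operators at $\hH_{\fb}$ and $\hH_{\mf}$ are the prescribed inverses, get an error vanishing to positive order at both faces, and finish with a Neumann series. The gap is in Step~3, where you conflate two different sums. Proposition~\ref{bs.5} only tracks index sets. So the fact that $R_1^k$ vanishes to order $k\delta$ at $\hH_{\fb}$ and $\hH_{\mf}$ lets you form an \emph{asymptotic} (Borel) sum $S\sim\sum_k R_1^k$, and nothing more. Such an $S$ is determined only modulo $\dot{\Psi}^{-\infty}$ and satisfies $(\Id-R_1)(\Id+S)=\Id-T$ with $T\in\dot{\Psi}^{-\infty}$, not $T=0$. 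It is not the operator-norm limit of the Neumann series, so $(\Id-R_1)^{-1}=\Id+S$ is false as stated, and $Q_1(\Id+S)$ is only a right parametrix. Nor does the norm-convergent series land in the calculus for free. In $(\Id-R_1)^{-1}=\Id+R_1+R_1(\Id-R_1)^{-1}R_1$, the middle factor is only known to be uniformly bounded, while $R_1$ has nontrivial expansions at $\hH_{\fb}$ and $\hH_{\mf}$. So nothing you say controls the polyhomogeneity of the last term as $\epio\searrow 0$. A direct proof along your lines would need geometric bounds on conormal seminorms of $R_1^k$, which Proposition~\ref{bs.5} does not provide.

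The missing step is the one the paper uses to finish. After the asymptotic summation, $T$ vanishes to infinite order at every boundary face of $\chM^2_{b,s}$, so its $L^2_{b,\epio}$-norm tends to $0$. Hence for $\epio<\epio_0$ you have $(\Id-T)^{-1}=\Id+U$ with $U=T+T(\Id-T)^{-1}T\in\dot{\Psi}^{-\infty}$. Here the two outer factors of $T$ absorb the mere boundedness (and smooth dependence on $\epio>0$) of $(\Id-T)^{-1}$. Then $G=Q_1(\Id+S)(\Id+U)$ is a genuine right inverse in the calculus, and the same construction on the left gives a left inverse. Your estimate $\|R_1\|\le C\epio^{\delta'}$ is then unnecessary.

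By contrast, the corner matching you single out in Step~2 as the main obstacle is automatic; the paper takes it for granted. Restrict the identities $N_{\fb}(P)^{-1}N_{\fb}(P)=\Id=N_{\fb}(P)N_{\fb}(P)^{-1}$, and the analogous ones for $N_{\mf}(P)$, to $\hH_{\fb}\cap\hH_{\mf}$. This shows both restrictions are two-sided inverses of the same indicial operator there. Both decay to order $\tau>0$ at the two ends of that face, so their Mellin transforms are holomorphic on a common strip about $\Re\lambda=0$ and invert the same indicial family there. Hence they coincide.
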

\begin{proof}
By ellipticity and the invertibility of $N_{\fb}(P)$ and $N_{\mf}(P)$, there exists a parametrix 
$$
Q_0\in \Psi^{-q,\cQ_0}_{b,s}(\chM;E_2,E_1)
$$ 
such that 
$$
    N_{\fb}(Q_0)=N_{\fb}(P)^{-1}, \quad N_{\mf}(Q_0)= N_{\mf}(P)^{-1}
$$
with 
$$
      PQ_0=\Id-R_2 \quad \mbox{and} \quad Q_0P=\Id-R_1
$$
for operators $R_i\in \Psi^{-\infty,\cR_i}_{b,s}(\chM;E_i)$, where the $\cQ_0$, $\cR_1$ and $\cR_2$ are real index families such that
$$
\cQ_0(\lf)\ge \tau, \quad \cQ_0(\rf)\ge \tau, \quad \cQ(\fb)\ge 0, \quad \cQ(\mf)\ge 0
$$
and
$$
\cR_i(\lf)\ge \tau, \quad \cR_i(\rf)\ge \tau, \quad \cR_i(\fb)> 0, \quad \cR(\mf)> 0.  
$$
Using Proposition~\ref{bs.5}, we see in particular that there exists $\delta>0$ such that for $j\in\bbN$ with $j\ge2$, $R_i^j\in \Psi^{-\infty,\cR_{i,j}}_{b,s}(\chM;E_i)$ for a real index family $\cR_{i,j}$ such that
$$
   \cR_{i,j}(\lf)>\tau+ j\delta, \quad \cR_{i,j}(\rf)>\tau+j\delta, \quad \cR_{i,j}(\fb)>j\delta \quad \mbox{and} \quad \cR_{i,j}(\mf)>j\delta.
$$
This means that there exists an asymptotic sum 
$$
   S_i\sim \sum_{j=1}^{\infty} R_{i}^j,
$$
with $S_i\in \Psi^{-\infty,\cS_i}_{b,s}(\chM;E_i)$ for a real index family $\cS_i$ such that
$$
\cS_i(\lf)\ge\tau, \quad \cS_i(\rf)\ge \tau, \quad \cS_{i}(\fb)>0 \quad \mbox{and} \quad \cS_i(\mf)>0.
$$
In particular,
$$
   (\Id+S_1)(\Id-R_1)= \Id-T_1 \quad \mbox{and} \quad (\Id-R_2)(\Id+S_2)=\Id -T_2 
$$
for $T_i\in \dot{\Psi}^{-\infty}(\chM;E_i):= \Psi^{-\infty,\cH}_{b,s}(\chM;E_i)$ for $\cH$ the index family corresponding to the empty set for each boundary hypersurface of $\chM$.  Setting
$$
  Q_2= Q_0(\Id+S_2) \quad \mbox{and} \quad Q_1= (\Id+S_1)Q_0,
$$
we see that 
$$
      PQ_2=\Id-T_2 \quad \mbox{and} \quad Q_1P=\Id-T_1.
$$
Furthermore, by Proposition~\ref{bs.5}, $Q_i\in\Psi^{-q,\cQ_i}_{b,s}(\chM;E_2,E_1)$ with real index families $\cQ_i$ such that
$$
    \cQ_i(\lf)\ge \tau, \quad \cQ_i(\rf)\ge \tau, \quad \cQ(\fb)\ge 0 \quad \mbox{and} \quad \cQ(\mf)\ge 0.
$$

Since $T_i\in \dot{\Psi}^{-\infty}(\chM;E_i)$, its norm as an operator acting on $L^2_{b,s}(\chM;E_i)$ can be taken as small as we want by restricting $\epio$ to be smaller or equal to $\epio_0$ for some small $\epio_0>0$.  Choosing such an $\epio_0$, we can thus assume that $\Id-T_i$ is invertible, in which case its inverse is of the form $\Id+U_i$ for some $U_i\in \dot{\Psi}^{-\infty}(\chM;E_i)$.  Hence, finally, we find that
$$
    P^{-1}= Q_2(\Id+U_2)= (\Id+U_1)Q_1
$$
with $P^{-1}\in \Psi^{-q,\cG}_{b,s}(\chM;E_2,E_1)$ with real index family $\cG$ as claimed in the statement of the proposition.
\end{proof}

This can be used to  give a pseudodifferential characterization of the Green operator of the Laplace-Beltrami operator $\Delta_{c,\epio,\phi}$  of a K\"ahler conical surgery metric $g_{c,\epio,\phi}$ with K\"ahler form
\begin{equation}
\omega_{c,\epio,\phi}:= \omega_{c,\epio}+ dd^c\phi
\label{fs.10}\end{equation}
for a potential $\phi\in \cA_{\phg}(\chM)\cap L^{\infty}(\chM)$ with $dd^c\phi$ bounded as a section of $\Lambda^{2}({}^{c,\epio}T^*\chM)$.  Let 
$$
    \Pi_{c,\epio,\phi}: L^2_{c,\epio}(\hM)\to L^2_{c,\epio}(\chM)
$$     
be the orthogonal $L^2$-projection onto constant functions with respect to the metric $g_{c,\epio,\phi}$, where  $L^2_{c,\epio}(\hM)$ is the space of $L^2$-functions on $\hM$ with respect to the metric $g_{c,\epio,\phi}$.

\begin{corollary}\label{MM.1}
 The Green operator $\bbG_{c,\epio,\phi}$ such that
 \begin{equation}
  \Delta_{c,\epio,\phi}\bbG_{c,\epio,\phi}= \bbG_{c,\epio,\phi}  \Delta_{c,\epio,\phi}= \Id-\Pi_{c,\epio,\phi}
 \label{MM.1a}\end{equation}
 is an element of $\Psi^{-2,\cG}_{b,s}(\chM)$ for an index family $\cG$ such that
 $$
   \cG|_{\lf}\ge 0, \quad  \cG_{\rf}\ge 2m, \quad  \cG|_{\fb}\ge 2\quad \mbox{and} \quad \cG|_{\mf}\ge 0.
 $$
\end{corollary}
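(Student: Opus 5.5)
We mimic the proof of Lemma~\ref{green.2}, replacing the $b$-calculus on $\bM$ by the $b$-surgery calculus of Proposition~\ref{bs.6}, and conjugating by the appropriate powers of $\rho_+$ so that the operators involved become $b$-surgery operators. Since $g_{c,\epio,\phi}=\rho_+^2 g_{b,\epio}$ is conical surgery, $\rho_+^2\Delta_{c,\epio,\phi}\in \Diff^2_{b,s}$ with polyhomogeneous coefficients. We then consider the conjugated operator
$$
P_{b,s}:=\rho_+^{m-1}(\rho_+^2\Delta_{c,\epio,\phi})\rho_+^{-(m-1)},
$$
which is formally self-adjoint with respect to the $b$-surgery density, exactly as in \eqref{green.2b}. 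The identities \eqref{MM.1a} are then rewritten as
$$
P_{b,s}G_m=\Id-\rho_+^{m+1}\Pi_{c,\epio,\phi}\rho_+^{-m-1},\qquad G_mP_{b,s}=\Id-\rho_+^{m-1}\Pi_{c,\epio,\phi}\rho_+^{-(m-1)},
$$
where $G_m:=\rho_+^{m-1}\bbG_{c,\epio,\phi}\rho_+^{-m-1}$.

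\textbf{Normal operators.} At $\hH_{\mf}=\bM^2_b$ we have $N_{\mf}(P_{b,s})=r^{m-1}(r^2\Delta_0)r^{-(m-1)}$, since $\phi$ decays at $\hH_0$. At $\hH_{\fb}$, using $\epio^{-2}g_{c,\epio}|_{\hH_i}=g_i$, we get $N_{\fb}(P_{b,s})=\rho_i^{-(m-1)}(\rho_i^{-2}\Delta_i)\rho_i^{m-1}$ on each $\bX_i$, with $\rho_+|_{\hH_i}$ identified with $\rho_i^{-1}$ up to $\epio$. By Lemma~\ref{la.28a}, the latter is invertible with inverse in $\Psi^{-2,\cG_i}_b$, $\cG_i(\lf),\cG_i(\rf)\ge m-1$. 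The former is \emph{not} invertible: it has the constants (conjugated) as kernel and cokernel. This is the main obstacle, and we handle it by working modulo the finite rank projection.

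\textbf{Handling the kernel.} We apply Proposition~\ref{bs.6}, or rather its proof, to the perturbed operator
$$
\tilde P:=P_{b,s}+\rho_+^{m+1}\Pi_{c,\epio,\phi}\rho_+^{-(m-1)}.
$$
The added term has Schwartz kernel $\rho_+^{m+1}\otimes\rho_+'^{\,m+1}$ times the volume normalization. Using $\omega_{c,\epio,\phi}^{[m]}=\rho_+^{2m}\times$ (a $b$-surgery density) and $\operatorname{vol}\to\operatorname{vol}(M,\omega_0)$, this kernel is a very residual $b$-surgery operator with index sets $\ge m+1$ at $\lf,\rf$, $\ge 2m+2$ at $\fb$, and $0$ at $\mf$. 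Consequently:
\begin{itemize}
\item its $\mf$-normal operator is the corresponding projection term of Lemma~\ref{green.2}, so $N_{\mf}(\tilde P)$ is invertible, with inverse $G_m^{(0)}+(\text{projection})$ lying in $\Psi^{-2}_b(\bM)$ with $\lf,\rf\ge m-1$, $\fb\ge0$, by the proof of Lemma~\ref{green.2};
\item its $\fb$-normal operator is unchanged.
\end{itemize}
Proposition~\ref{bs.6} with $\tau=m-1$ (allowing $\ge$ rather than $>$ at $\lf,\rf$, which the proof tolerates since the perturbation is in fact $\ge m+1$ there) gives $\tilde P^{-1}\in\Psi^{-2,\cH}_{b,s}$ with $\cH(\lf),\cH(\rf)\ge m-1$ and $\cH(\fb),\cH(\mf)\ge0$. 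Since $\tilde P$ maps constants (conjugated) to a multiple of themselves, we obtain
$$
G_m=\tilde P^{-1}\bigl(\Id-\rho_+^{m+1}\Pi\rho_+^{-m-1}\bigr)
$$
and by Proposition~\ref{bs.5} this lies in the same class.

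\textbf{Undoing the conjugation.} Finally $\bbG_{c,\epio,\phi}=\rho_+^{-(m-1)}G_m\rho_+^{m+1}$. Multiplication by $\rho_+$ on the left (resp.\ right) raises the index sets at $\lf$ and $\fb$ (resp.\ $\rf$ and $\fb$) by one, while $\mf$ is unaffected because $\rho_+$ does not vanish there. This yields the shifts $\lf\ge0$, $\rf\ge2m$ and $\fb\ge2$, exactly as at the end of Lemma~\ref{green.2}, with $\mf\ge0$.
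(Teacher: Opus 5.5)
Your proposal is correct and follows the paper's proof: your $\tilde P$ is exactly the paper's $P_{b,s}=\rho_+^{m+1}(\Delta_{c,\epio,\phi}+\Pi_{c,\epio,\phi})\rho_+^{-(m-1)}$, inverted by Proposition~\ref{bs.6} with $\tau=m-1$, using Lemma~\ref{green.2} for $N_{\mf}$ and Lemma~\ref{la.28a} for $N_{\fb}$, after which the conjugation is undone. The only difference is cosmetic: you recover $\bbG_{c,\epio,\phi}$ via $G_m=\tilde P^{-1}(\Id-\rho_+^{m+1}\Pi_{c,\epio,\phi}\rho_+^{-m-1})$ and Proposition~\ref{bs.5}, whereas the paper writes directly $\bbG_{c,\epio,\phi}=\rho_+^{-(m-1)}P_{b,s}^{-1}\rho_+^{m+1}-\Pi_{c,\epio,\phi}$; both rest on the normalization $\Pi_{c,\epio,\phi}\bbG_{c,\epio,\phi}=0$, and it is this normalization, rather than $\tilde P$ preserving conjugated constants, that justifies your identity.
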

\begin{proof}
To use the previous proposition, it is convenient to replace \eqref{MM.1a} by
$$
    P_{b,s}G_{m,s}=G_{m,s}P_{b,s}=\Id,
$$
where 
$$
  P_{b,s}:= \rho_+^{m+1}(\Delta_{c,\epio,\phi}+ \Pi_{c,\epio,\phi})\rho_{+}^{-(m-1)}= \rho_+^{m-1}(\rho_+^2(\Delta_{c,\epio,\phi}+ \Pi_{c,\epio,\phi}))\rho_{+}^{-(m-1)}
$$
and
$$
  G_{m,s}:=\rho_+^{m-1}(\bbG_{c,\epio,\phi}+ \Pi_{c,\epio,\phi})\rho_+^{-m-1}.
$$
Now, $P_{b,s}\in \Psi^{2,\cE}_{b,s}(\chM)$ for a real indicial family $\cE$ such that 
$$
  \cE(\lf)\ge m+1, \quad \cE(\rf)\ge m+1, \quad \cE(\fb)\ge 0 \quad \mbox{and} \quad \cE(\mf)\ge 0.
$$
Moreover, 
$$
  N_{\mf}(P_{b,s})= P_b+ r^{m+1}\Pi_{0}r^{-(m-1)},
$$
where $P_b$ is the $b$-operator of \eqref{green.2b}.  By the proof of  Lemma~\ref{green.2}, its inverse is 
$$
  G_m+ r^{m-1}\Pi_{0}r^{-m-1}\in \Psi^{-2,\cH}_b(\bM)
$$
for a real index family $\cH$ such that
$$
  \cH(\lf)\ge m-1, \quad \cH(\rf)\ge m-1 \quad \mbox{and} \quad \cH(\fb)\ge 0.
$$
Similarly, 
$$
N_{\fb}(P_{b,s}) = N_{\fb}(\rho_+^{m-1}(\rho_+^2\Delta_{c,\epio,\phi})\rho_+^{-(m-1)}) 
 = N_{\fb}(\rho_0^{-(m-1)} (\rho_0^{-2} \epio^2 \Delta_{c,\epio,\phi})\rho_0^{m-1} ) 
$$
with
$$
 N_{\fb}(P_{b,s})|_{\hH_{ii}}= \rho_0^{-(m-1)}(\rho_0^{-2}\Delta_i)\rho_0^{m-1},
$$
where $\Delta_i$ is the Laplace-Beltrami operator of the $\ALE$-metric $g_i$ on $X_i$.  By Lemma~\ref{la.28a},  the $b$-operator $\rho_0^{-(m-1)}(\rho_0^{-2}\Delta_i)\rho_0^{m-1}$  is invertible when acting on $b$-Sobolev spaces with  inverse in $\Psi^{-2,\cG_i}_{b}(\bX_i)$ for some real index family $\cG_i$ such that
$$
     \cG_i(\lf)\ge m-1, \quad \cG_i(\rf)\ge m-1 \quad \mbox{and} \quad \cG_i(\fb)\ge 0.
$$
This means that we can apply Proposition~\ref{bs.6} to $P_{b,s}$ taking $\tau=m-1$.  The result then follows from
$$
  \bbG_{c,\epio,\phi}= \rho_{+}^{-(m-1)} P_{b,s}^{-1}\rho_+^{m+1}- \Pi_{c,\epio,\phi}.
$$
\end{proof}

This allows us to obtain  a regularity result closely related to the one of \cite[\S~5.2.1]{Benabida}.  To formulate it, it will be convenient to use the notation $(\epio)_*(f\omega^{[m]}_{c,\epio,\phi})$ to denote the pushforward by $\epio: \cM\to [0,1[$ of the volume form $f\omega_{c,\epio,\phi}^{[m]}$ for $f$ a function, that is, $(\epio)_*(f\omega_{c,\epio,\phi}^{[m]})$ is the function on $[0,1[$ obtained by integrating $f\omega^{[m]}_{c,\epio,\phi}$ along the level sets of $\epio$ (when the integral is defined).  In particular, for $x\in ]0,1[$, we have that
$$
       (\epio)_*(f\omega_{c,\epio,\phi}^{[m]})(x)= \int_{\epio^{-1}(x)} f\omega_{c,\epio,\phi}^{[m]}.
$$
\begin{corollary}
Let $\phi\in \left( \cA_{\phg}(\chM)\cap L^{\infty}(\chM) \right)$ be such that $\phi$ vanishes at $\hH_0$ and $\hH_+$ with $dd^c\phi$ vanishing on $\hH_0$ and $\hH_+$ as  a section of $\Lambda^2(T^*\chM)$.  Let $f\in\cA_{\phg}(\chM)$ be such that $(\epio)_*(f\omega_{c,\epio,\phi}^{[m]})=0$ with $\rho_+^2f$ bounded and vanishing  on $\hH_+$.  Then the unique function $u$ such that
$$
      \Delta_{c,\epio,\phi} u=f \quad \mbox{and} \quad (\epio)_*u=0
$$
is in $\cA_{\phg}(\chM)\cap L^{\infty}(\chM)$.  If instead $\rho_+^2f$ does not necessarily vanish on $\hH_+$, but is such that   $\rho^2_+f=\cO(\epio^{\kappa}\rho_0^{\delta})$ for some $\kappa\ge 0$ and  $\delta\in ]0,2m-2[$, then $u$ is polyhomogeneous and $u=\cO(\epio^{\kappa}\rho_0^{\delta})$, while if $\rho^2_+f=o(\epio^{\kappa})$ for some $\kappa>0$, then $u=o(\epio^{\kappa})$.
\label{fs.11}\end{corollary}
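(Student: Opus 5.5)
We will write $u=\bbG_{c,\epio,\phi}f$ and read off its polyhomogeneity and decay from the pseudodifferential description of the Green operator in Corollary~\ref{MM.1}, combined with the mapping property of Proposition~\ref{bs.4}.

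First, the normalization $(\epio)_*(f\omega^{[m]}_{c,\epio,\phi})=0$ says that $\Pi_{c,\epio,\phi}f=0$ for each $\epio$. Then \eqref{MM.1a} gives $\Delta_{c,\epio,\phi}\bbG_{c,\epio,\phi}f=f$. The function $\bbG_{c,\epio,\phi}f$ is orthogonal to constants with respect to $\omega^{[m]}_{c,\epio,\phi}$, so it differs from the $u$ of the statement only by a function of $\epio$ alone, fixed by the condition $(\epio)_*u=0$. By uniqueness of solutions modulo constants on each slice, it suffices to show that $\bbG_{c,\epio,\phi}f$ and this correcting function of $\epio$ have the stated regularity.

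Next, we apply Proposition~\ref{bs.4}. By Corollary~\ref{MM.1}, $\bbG_{c,\epio,\phi}\in\Psi^{-2,\cG}_{b,s}(\chM)$ with $\cG(\lf)\ge 0$, $\cG(\rf)\ge 2m$, $\cG(\fb)\ge 2$ and $\cG(\mf)\ge 0$. We write $f=\rho_+^{-2}(\rho_+^2f)$. If $\rho_+^2 f$ vanishes at $\hH_+$ and is bounded and polyhomogeneous, then $\cF(\hH_+)>-2$ and $\cF(\hH_0)\ge 0$. The index sets of $\bbG f$ are then
\[
\cG(\hH_0)=(\cG(\mf)+\cF(\hH_0))\,\overline{\cup}\,(\cG(\rf)+\cF(\hH_+))\ge 0
\]
and
\[
\cG(\hH_+)=(\cG(\lf)+\cF(\hH_0))\,\overline{\cup}\,(\cG(\fb)+\cF(\hH_+))>0 .
\]
Here one must make sure the pushforward is defined: the density factor on the right makes the $\rf$ integrability require $\cG(\rf)+\cF(\hH_+)>0$, which holds since $2m-2>0$. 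In the worst case the extended union produces a log term at exponent $0$. To rule this out and obtain boundedness, we note that the leading term at $\hH_0$ is $N_{\mf}$ applied to $f|_{\hH_0}$, which is bounded by Lemma~\ref{green.2}. Similarly, at $\hH_+$ the leading part comes from $N_{\fb}$, that is from the inverse of $\rho_i^{-2}\Delta_i$ in Lemma~\ref{la.28a}, which preserves weights in $]0,2m-2[$. The correction term in $\epio$ is $(\epio)_*(\bbG f\,\omega^{[m]})$ divided by the volume. It is polyhomogeneous by the pushforward theorem and bounded, which gives $u\in\cA_{\phg}(\chM)\cap L^\infty(\chM)$.

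For the weighted statements, $\rho_+^2 f=\cO(\epio^\kappa\rho_0^\delta)=\cO(\rho_+^\kappa\rho_0^{\kappa+\delta})$, so $\cF(\hH_+)\ge\kappa-2$ and $\cF(\hH_0)\ge\kappa+\delta$. Proposition~\ref{bs.4} gives the required order $\kappa+\delta$ at $\hH_0$, using $\cG(\rf)+\kappa-2\ge \kappa+2m-2>\kappa+\delta$ since $\delta<2m-2$. At $\hH_+$ the $\fb$ contribution gives $\cG(\fb)+\kappa-2\ge\kappa$. The $\lf$ contribution is a priori only $\ge 0$ from the bound $\cG(\lf)\ge 0$, but combined with $\cF(\hH_0)\ge\kappa+\delta$ it yields order $\ge\kappa+\delta$, hence $\cO(\epio^\kappa\rho_0^\delta)$ after rewriting $\rho_+^\kappa\rho_0^{\kappa+\delta}$.

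The \textbf{main obstacle} is the limiting weight at $\hH_+$. There the leading order is exactly $\kappa$, and we must show that the $\hH_+$ part of $u$ carries the factor $\rho_0^\delta$, not merely $\cO(\epio^\kappa)$. For this we would use that on $\hH_i$ the model problem is solved by the isomorphism \eqref{la.28b} with weight $\nu=\delta\in]0,2m-2[$, which produces decay $\rho_0^\delta$. For the $o(\epio^\kappa)$ case we would apply the previous result with a slightly smaller exponent together with the vanishing of the leading coefficient, which we expect to go through identically.
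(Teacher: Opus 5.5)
Your proposal is correct and is the paper's argument: the proof there consists exactly of writing $u=\bbG_{c,\epio,\phi}f$ and combining the index family of Corollary~\ref{MM.1} with the pushforward formula of Proposition~\ref{bs.4}. Your additional worries are unnecessary. In each extended union one of the two index sets lies strictly above the target exponent ($0$, $\kappa+\delta$ or $\kappa$; e.g.\ $2m+\kappa-2>\kappa+\delta$ at $\hH_0$), so no logarithm can appear there. Since Proposition~\ref{bs.4} gives a joint expansion at the corner, $u=\cO(\rho_+^{\kappa}\rho_0^{\kappa+\delta})=\cO(\epio^{\kappa}\rho_0^{\delta})$ follows without a separate model argument on $\hH_+$. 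The same arithmetic with index sets $>\kappa$ gives the $o(\epio^{\kappa})$ case. Note also that in the first case the index set of $u$ at $\hH_+$ is only $\ge 0$, not $>0$ as you wrote, which still gives boundedness.
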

\begin{proof}
 The results follows from the fact that $u=\bbG_{c,\epio,\phi} f$,  Corollary~\ref{MM.1}  and Proposition~\ref{bs.4}.
\end{proof}

We will also need later on the following more robust version of Proposition~\ref{bs.6}.

\begin{proposition}\label{rob.1}
Let $P\in \Psi^{q,\cE}_{b,s}(\chM;E_1,E_2)$ be an elliptic pseudodifferential $b$-surgery operator for a real index family $\cE$ such that 
$$
\cE(\lf)> \tau, \quad \cE(\rf)> \tau, \quad \cE(\fb)\ge 0 \quad \mbox{and} \quad \cE(\mf)\ge 0
$$
for some $\tau >0$.  For $i\in \{1,2\}$, let $\Pi_i\in\Psi^{0,\cP_i}_{b,s}(\chM;E_i)$ be a finite rank projection (for each fixed $\epio$) for some  real index family $\cP_i$ such that
$$
   \cP_i(\lf)> \tau, \quad \cP_i(\rf)> \tau, \quad \cP_i(\fb)\ge 0, \quad \cP_i(\mf)\ge 0.
$$
Suppose that there exist  $Q_{\fb}\in\Psi^{-q\cF}_{b}(\hH_+;E_2,E_1)$ and $Q_{\mf}\in\Psi^{-q,\cH}_{b,s}(\bM;E_2,E_1)$ such that
\begin{multline*}
       Q_{\fb}N_{\fb}(P)= \Id -N_{\fb}(\Pi_1), \quad N_{\fb}(P)Q_{\fb}= \Id -N_{\fb}(\Pi_2), \quad  Q_{\mf}N_{\mf}(P)= \Id -N_{\mf}(\Pi_1), \\ 
       \mbox{and} \quad  N_{\mf}(P)Q_{\mf}= \Id -N_{\mf}(\Pi_2),
\end{multline*}
where $\cF$ and $\cH$ are real index families such that 
$$
   \cF(\lf)> \tau, \quad \cF(\rf)>\tau, \quad, \cF(\fb)\ge 0, \quad  \cH(\lf)> \tau, \quad \cH(\rf)>\tau \quad \mbox{and} \quad \cH(\fb)\ge 0.
$$
Then there exists $\epio_0>0$ such that for $\epio<\epio_0$, there exists  $G \in \Psi^{-q,\cG}_{b,s}(\chM;E_2,E_1)$ such that
\begin{equation}
      G(\Id-\Pi_2)P(\Id-\Pi_1)= \Id-\Pi_1 \quad \mbox{and} \quad (\Id-\Pi_2)P(\Id-\Pi_1)G=\Id-\Pi_2
\label{rob.2}\end{equation}
for some real index family $\cG$ such that
$$
      \cG(\lf)>\tau, \quad \cG(\rf)>\tau, \quad \cG(\fb)\ge 0,  \quad \mbox{and} \quad \cG(\mf)\ge 0.
$$
\end{proposition}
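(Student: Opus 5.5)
We adapt the parametrix construction in the proof of Proposition~\ref{bs.6}, applying it to the operator $\widetilde{P}:=(\Id-\Pi_2)P(\Id-\Pi_1)+\Pi_2 A\Pi_1$, or more simply to the restriction of $P$ to the range of $\Id-\Pi_1$ with values in the range of $\Id-\Pi_2$.

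\textbf{Step 1: model inverse.} Set $P_\Pi:=(\Id-\Pi_2)P(\Id-\Pi_1)$. By Proposition~\ref{bs.5} and the assumptions on $\cP_i$, we have $P_\Pi\in\Psi^{q,\cE'}_{b,s}(\chM;E_1,E_2)$ with $\cE'(\lf),\cE'(\rf)>\tau$ and $\cE'(\fb),\cE'(\mf)\ge 0$. Its normal operators are $(\Id-N_\bullet(\Pi_2))N_\bullet(P)(\Id-N_\bullet(\Pi_1))$ for $\bullet\in\{\fb,\mf\}$, by multiplicativity of the normal homomorphisms.

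Replace $Q_\bullet$ by $Q'_\bullet:=(\Id-N_\bullet(\Pi_1))Q_\bullet(\Id-N_\bullet(\Pi_2))$. From the hypotheses, using that $N_\bullet(\Pi_i)$ are projections, we get
\[
Q'_\bullet N_\bullet(P_\Pi)=\Id-N_\bullet(\Pi_1) \quad\text{and}\quad N_\bullet(P_\Pi)Q'_\bullet=\Id-N_\bullet(\Pi_2).
\]
For instance, $Q_\bullet N_\bullet(P)(\Id-N_\bullet(\Pi_1))=(\Id-N_\bullet(\Pi_1))^2$, and the extra factor $\Id-N_\bullet(\Pi_2)$ is absorbed once one notes that $N_\bullet(P)Q_\bullet=\Id-N_\bullet(\Pi_2)$ forces $N_\bullet(\Pi_2)N_\bullet(P)(\Id-N_\bullet(\Pi_1))=0$ up to the identities above. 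This needs a short check and, if necessary, a symmetrization $Q'_\bullet=(\Id-N(\Pi_1))Q_\bullet(\Id-N(\Pi_2))$ on both sides.

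\textbf{Step 2: uniform parametrix.} Combine a symbolic inverse of $P$ on the interior diagonal, using ellipticity, with $Q'_{\fb}$ and $Q'_{\mf}$ at the two faces. This produces $Q_0\in\Psi^{-q,\cQ_0}_{b,s}$ with $N_\bullet(Q_0)=Q'_\bullet$, and we may assume $Q_0=(\Id-\Pi_1)Q_0(\Id-\Pi_2)$ after composing. The compatibility of the conormal singularity with the face data is exactly the standard step in Proposition~\ref{bs.6}. Then
\[
Q_0P_\Pi=\Id-\Pi_1-R_1 \quad\text{and}\quad P_\Pi Q_0=\Id-\Pi_2-R_2,
\]
with $R_i$ smoothing, vanishing at $\fb$ and $\mf$ (so $\cR_i(\fb),\cR_i(\mf)>0$), having lf/rf orders $>\tau$, and satisfying $R_i=(\Id-\Pi_i)R_i(\Id-\Pi_i)$.

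\textbf{Step 3: Neumann series.} As in Proposition~\ref{bs.6}, asymptotically sum $S_i\sim\sum_j R_i^j$ using Proposition~\ref{bs.5}, whose index sets improve by $j\delta$. This reduces the errors to $T_i\in\dot\Psi^{-\infty}$, still compressed by $\Id-\Pi_i$. For $\epio<\epio_0$ the operator norm of $T_i$ on $L^2_{b,\epio}$ is small, so $\Id-\Pi_i-T_i$ is invertible on the range of $\Id-\Pi_i$ with inverse $(\Id-\Pi_i)+U_i$, where $U_i\in\dot\Psi^{-\infty}$.

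This gives a left inverse $G_1$ and a right inverse $G_2$ of $P_\Pi$ between these ranges. They coincide, since $G_1=G_1(\Id-\Pi_2)=G_1P_\Pi G_2=(\Id-\Pi_1)G_2=G_2$, and this common $G$ satisfies \eqref{rob.2}. Its index family follows from Proposition~\ref{bs.5}: the strict inequalities at lf/rf hold because every factor has orders $>\tau$.

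The main obstacle is Step 1/2. One has to verify that the compressed model inverses are consistent and that the parametrix can be arranged to commute with the projections $\Pi_i$, which are only polyhomogeneous and not residual at $\fb$ and $\mf$. The first thing to try is to compress $Q_0$ by $\Id-\Pi_i$ on both sides and absorb the resulting corrections, which are of order $>0$ at the faces, into the remainders $R_i$.
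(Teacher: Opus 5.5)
Your proposal is correct and follows the paper's own proof: build a parametrix $Q_0$ with the prescribed normal operators, reduce the errors to $\dot{\Psi}^{-\infty}$ by an asymptotic Neumann series as in Proposition~\ref{bs.6}, and then invert $\Id-T_i$ for $\epio$ small. Your extra step of compressing $Q_\bullet$ and $Q_0$ by $\Id-\Pi_i$ is a useful refinement that the paper omits. The check in Step~1 does go through, since the hypotheses force $N_\bullet(\Pi_2)N_\bullet(P)(\Id-N_\bullet(\Pi_1))=0$. This compression is exactly what justifies identifying the left and right inverses, a point the paper's final formula $G=Q_2(\Id+U_2)=(\Id+U_2)Q_1$ passes over.
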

\begin{proof}
By ellipticity and our assumptions on $N_{\fb}(P)$ and $N_{\mf}(P)$, there exists a parametrix  \\ 
$Q_0\in \Psi^{-q,\cQ_0}_{b,s}(\chM;E_2,E_1)$ such that 
$$
    N_{\fb}(Q_0)=Q_{\fb}, \quad N_{\mf}(Q_0)= Q_{\mf}
$$
with 
$$
      (\Id-\Pi_2)P(\Id-\Pi_1)Q_0= (\Id-\Pi_2)(\Id-R_2) \quad \mbox{and} \quad Q_0(\Id-\Pi_2)P(\Id-\Pi_1)=(\Id-R_1)(\Id-\Pi_1)
$$
for operators $R_i\in \Psi^{-\infty,\cR_i}_{b,s}(\chM;E_i)$, where $\cQ_0$, $\cR_1$ and $\cR_2$ are real index families such that
$$
\cQ_0(\lf)> \tau, \quad \cQ_0(\rf)> \tau, \quad \cQ(\fb)\ge 0, \quad \cQ(\mf)\ge 0
$$
and
$$
\cR_i(\lf)> \tau, \quad \cR_i(\rf)> \tau, \quad \cR_i(\fb)> 0, \quad \cR(\mf)> 0.  
$$
Proceeding as in the proof of Proposition~\ref{bs.6}, we see that there exists 
 $S_i\in \Psi^{-\infty,\cS_i}_{b,s}(\chM;E_i)$ for a real index family $\cS_i$ with 
$$
\cS_i(\lf)>\tau, \quad \cS(\rf)> \tau, \quad \cS_{i}(\fb)>0 \quad \mbox{and} \quad \cS_i(\mf)>0
$$
such that
$$
   (\Id+S_1)(\Id-R_1)= \Id-T_1 \quad \mbox{and} \quad (\Id-R_2) (\Id+S_2)=\Id -T_2 
$$
for $T_i\in \dot{\Psi}^{-\infty}(\chM;E_i)$.  Setting
$$
  Q_2= Q_0(\Id+S_2) \quad \mbox{and} \quad Q_1= (\Id+S_1)Q_0,
$$
we see that 
$$
      (\Id-\Pi_2)P(\Id-\Pi_1)Q_2=(\Id-\Pi_2)(\Id-T_2) \quad \mbox{and} \quad Q_1(\Id-\Pi_2)P(\Id-\Pi_1)=(\Id-T_1)(\Id-\Pi_1).
$$
Furthermore, by Proposition~\ref{bs.5}, $Q_i\in\Psi^{-k,\cQ_i}_{b,s}(\chM;E_2,E_1)$ with real index families $\cQ_i$ such that
$$
    \cQ_i(\lf)> \tau, \quad \cQ_i(\rf)> \tau, \quad \cQ(\fb)\ge 0 \quad \mbox{and} \quad \cQ(\mf)\ge 0.
$$

Since $T_i\in \dot{\Psi}^{-\infty}(\chM;E_i)$, its norm as an operator acting on $L^2_{b,s}(\chM;E_i)$ can be taken as small as we want by restricting $\epio$ to be smaller or equal to $\epio_0$ for some small $\epio_0>0$.  Choosing such an $\epio_0$, we can thus assume that $\Id-T_i$ is invertible, in which case its inverse is of the form $\Id+U_i$ for some $U_i\in \dot{\Psi}^{-\infty}(\chM;E_i)$.  Hence, finally, we find that \eqref{rob.2} holds with 
$$
    G= Q_2(\Id+U_2)= (\Id+U_2)Q_1 \in \Psi^{-q,\cG}_{b,s}(\chM;E_2,E_1)
$$
with real index family $\cG$ as claimed in the statement of the proposition.
\end{proof}

\subsection{Construction of a formal solution}
We want to find $\phi\in \cA_{\phg}(\chM)\cap L^{\infty}(\cM)$ such that 
$\mathring{\Sc}^{\hbeta}(\omega_{c,\epio,\phi})$ vanishes rapidly at $\hH_0$ and $\hH_+$ for the K\"ahler form $\omega_{c,\epio,\phi}$ of \eqref{fs.10}.  
To do this, we first need to check that $\mathring{\Sc}^{\hbeta}(\omega_{c,\epio,\phi})$ is polyhomogeneous whenever $\phi$ is.   
Using Corollary~\ref{fs.11}, we can first obtain the following regularity result for the harmonic representative $\theta_{c,\epio,\phi}$ of $2\pi\hbeta$ with respect to $\omega_{c,\epio,\phi}$.
\begin{lemma}
The harmonic representative $\theta_{c,\epio,\phi}$ of $2\pi\hbeta$ is polyhomogeneous with $\rho_+^2\theta_{c,\epio,\phi}$ bounded as a section of $\Lambda^2({}^{c,\epio}T^*\chM)$.  
\label{fs.12}\end{lemma}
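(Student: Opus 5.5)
The natural approach mirrors the proof of Lemma~\ref{green.1}: write the harmonic representative as a fixed polyhomogeneous representative of $2\pi\hbeta$ corrected by $dd^c$ of a function obtained from the Green operator of Corollary~\ref{MM.1}. First we construct a closed $(1,1)$-form $\theta_{c,\epio}$ on $\hM$ representing $2\pi\hbeta$ which is polyhomogeneous on $\chM$. By Lemma~\ref{fs.6}, $\hbeta$ comes from $H^2(\bM)=H^2(M)$. Since $\theta_0$ is smooth in the orbifold sense, the $dd^c$-lemma on a small ball $\bbC^m/\Gamma_i$ around $p_i$ gives $\theta_0=dd^c h_i$ there, with $h_i=\cO(r^2)$ smooth in the orbifold sense and $\bbT$-invariant after averaging. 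We then set
\[
\theta_{c,\epio}:=\theta_0-dd^c\Big(\sum_{i}\chi_\delta(\rho_+)\,h_i\Big),
\]
with $h_i$ pulled back to $\chM$ via $\cB\circ\cbl$. This form is closed, in the class $2\pi\hbeta$, and vanishes identically near $\hH_+$ away from $\hH_0$. Near $\hH_0\cap\hH_+$ its coefficients are smooth functions of $r=\rho_+\rho_0^{\,0}$-type variables. Hence $\theta_{c,\epio}$ is polyhomogeneous and, as a section of $\Lambda^2({}^{c,\epio}T^*\chM)$, it is bounded, in fact $\cO(1)$ on $\hH_0$ and zero on $\hH_+$. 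In particular $\rho_+^2\theta_{c,\epio}$ is bounded.

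Next, exactly as in Lemma~\ref{green.1}, we write $\theta_{c,\epio,\phi}=\theta_{c,\epio}+dd^c v$. Primitivity of $\hbeta$ with respect to $\halpha_\epio$ (noted after \eqref{fs.9b}) implies $\Lambda_{\phi}\theta_{c,\epio,\phi}=0$, which yields $\Delta_{c,\epio,\phi}v=f$ with $f:=\Lambda_{c,\epio,\phi}\theta_{c,\epio}$ and $(\epio)_*(f\,\omega_{c,\epio,\phi}^{[m]})=0$. We normalize $(\epio)_*v=0$. Since $\omega_{c,\epio,\phi}$ is a bounded polyhomogeneous conical surgery metric, $f$ is polyhomogeneous and bounded, so $\rho_+^2 f$ is bounded and vanishes on $\hH_+$. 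Corollary~\ref{fs.11} then gives $v\in\cA_{\phg}(\chM)\cap L^\infty(\chM)$; more precisely $v=\bbG_{c,\epio,\phi}f$ with $\bbG_{c,\epio,\phi}\in\Psi^{-2,\cG}_{b,s}(\chM)$.

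The main obstacle is controlling $dd^c v$ as a section of $\Lambda^2({}^{c,\epio}T^*\chM)$, since boundedness of $v$ alone loses two conical derivatives. We would write
\[
\rho_+^2\,dd^c v=\big(\rho_+^2\,dd^c\rho_+^{-2}\big)\big(\rho_+^2 v\big),
\]
observing that $\rho_+^2\,dd^c\circ\rho_+^{-2}$, acting into $\Lambda^2({}^{c,\epio}T^*\chM)$, is a differential $b$-surgery operator of order $2$ (conical surgery $1$-forms being $\rho_+$ times $b$-surgery ones). Hence $\rho_+^2 dd^c v$ is obtained by applying the composite operator $\rho_+^2 dd^c\bbG_{c,\epio,\phi}$, which lies in $\Psi^{0}_{b,s}$ by Proposition~\ref{bs.5}. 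Proposition~\ref{bs.4}, applied to $f$, then shows that $\rho_+^2 dd^c v$ is polyhomogeneous with nonnegative index sets at $\hH_0$ and $\hH_+$, hence bounded. The index bounds of Corollary~\ref{MM.1} ($\lf\ge0$, $\fb\ge2$, $\mf\ge0$) combined with the factor $\rho_+^2$ suffice for this. Adding back $\rho_+^2\theta_{c,\epio}$ proves the lemma.
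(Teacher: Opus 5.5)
Your proposal is correct and follows the paper's proof. Like you, the paper takes a representative $\widetilde{\eta}$ of $2\pi\hbeta$ supported away from $\hH_+$; it invokes $H^2_c(\hH_0\setminus\pa\hH_0)=H^2(\hH_0)$ where you use a local $dd^c$-lemma and a cut-off. It then writes $\theta_{c,\epio,\phi}=\widetilde{\eta}-dd^c\widetilde{u}$ with $\Delta_{c,\epio,\phi}\widetilde{u}=-\Lambda_{c,\epio,\phi}\widetilde{\eta}$, gets $\widetilde{u}$ bounded and polyhomogeneous from Corollary~\ref{fs.11}, and treats boundedness of $\rho_+^2dd^c\widetilde{u}$ as immediate, since $\rho_+^2dd^c$ is a differential $b$-surgery operator into $\Lambda^2({}^{c,\epio}T^*\chM)$ and so preserves bounded polyhomogeneous functions.

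One slip in your extra justification: it is $\rho_+^2dd^c$ itself that is such an operator, not the conjugate $\rho_+^2dd^c\circ\rho_+^{-2}$, which is $\rho_+^{-2}$ times a $b$-surgery operator. Your conclusion is unaffected, because the composite you actually use is $\rho_+^2dd^c\,\bbG_{c,\epio,\phi}\in\Psi^{0}_{b,s}(\chM)$.
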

\begin{proof}
The class  $\hbeta$ is represented by a closed $2$-form $\widetilde{\eta}$ supported away from $\hH_+$ 
with $\widetilde{\eta}|_{\hH_0}\in \CI_c(\hH_0\setminus \pa \hH_0; \Lambda^2(T^*\hH_0))$ representing the class $2\pi\beta\in H^2_c(\hH_0\setminus \pa \hH_0)= H^2(\hH_0)$.  In particular, $\rho_+^2\widetilde{\eta}$ is clearly polyhomogeneous and bounded as a section of $\Lambda^2({}^{c,\epio}T^*\chM)$.  By the $dd^c$-lemma, there exists a unique function $\widetilde{u}$ such that
$$
      dd^c\widetilde{u}= \widetilde{\eta}-\theta_{c,\epio,\phi} \quad \mbox{with} \quad (\epio)_*(\widetilde{u}\omega^{[m]}_{c,\epio,\phi})=0.
$$
Since $\hbeta$ is primitive, this implies that 
$$
    \Delta_{c,\epio,\phi} \widetilde{u}= -\Lambda_{c,\epio,\phi} dd^c\widetilde{u}= - \Lambda_{c,\epio,\phi} \widetilde{\eta},
$$ 
where $\Lambda_{c,\epio,\phi}$ is the adjoint of the operator $L_{c,\epio,\phi}$ defined by 
$$
    L_{c,\epio,\phi}(\psi)= \psi\wedge \omega_{c,\epio,\phi}
$$
for $\psi$ a form.  Now, $\rho_+^2\Lambda_{c,\epio,\phi} \widetilde{\eta}$ is bounded, polyhomogeneous and vanishes at $\hH_+$. 
By Corollary~\ref{fs.11}, it follows that $\widetilde{u}$ is bounded and polyhomogeneous.

This implies that
$$
   \theta_{c,\epio,\phi}= \widetilde{\eta}- dd^c\widetilde{u}
$$
is polyhomogeneous with $\rho^2_+\theta_{c,\epio,\phi}$ bounded as a section of $\Lambda^2({}^{c,\epio}T^*\chM)$.
\end{proof}
We will need also the following estimate.
\begin{lemma}\label{fs.12a}
Let $\phi_1, \phi_2 \in (\cA_{\phg}(\chM)\cap L^{\infty}(\chM))$ be two potentials vanishing at $\hH_+$ and $\hH_0$ such that $dd^c\phi_1$ and $dd^c\phi_2$ vanish on $\hH_+$ and $\hH_-$.  If $dd^c(\phi_1-\phi_2)=o(\epio^\kappa)$ as a section of $\Lambda^{2}({}^{c,\epio}T^*\chM)$ near $\hH_+$ and $\hH_0$ for some $\kappa>0$, then
$$
     \rho_+^2(\theta_{c,\epio,\phi_1}-\theta_{c,\epio,\phi_2})= o(\epio^{\kappa})
$$
near $\hH_+$ and $\hH_0$.  If instead $dd^c(\phi_1-\phi_2)$ does not necessarily vanish at $\hH_+$ but is such that $dd^c(\phi_1-\phi_2)= \cO(\epio^{\kappa}\rho_0^\delta)$ for some $\kappa\ge 0$ and $\delta\in]0,2m-2[$ as a section of $\Lambda^{2}({}^{c,\epio}T^*\chM)$, then 
$$
   \rho_+^2(\theta_{c,\epio,\phi_1}-\theta_{c,\epio,\phi_2} )= \cO(\epio^{\kappa}\rho_0^{\delta}).
$$
\end{lemma}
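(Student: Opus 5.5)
We follow the proof of Lemma~\ref{fs.12}, writing each harmonic representative as a fixed representative corrected by a $dd^c$-exact term, and then compare the two corrections through the Green operators of Corollary~\ref{MM.1}. Write $\omega_j:=\omega_{c,\epio,\phi_j}$ and $\theta_j:=\theta_{c,\epio,\phi_j}$ for $j=1,2$. Let $\widetilde{\eta}$ be the closed form of Lemma~\ref{fs.12}, supported away from $\hH_+$. Then $\theta_j=\widetilde{\eta}-dd^c\widetilde{u}_j$, where
$$
\Delta_{c,\epio,\phi_j}\widetilde{u}_j=-\Lambda_{c,\epio,\phi_j}\widetilde{\eta}, \qquad (\epio)_*(\widetilde{u}_j\omega_j^{[m]})=0.
$$
It follows that $\theta_1-\theta_2=-dd^c(\widetilde{u}_1-\widetilde{u}_2)$, so it suffices to estimate $w:=\widetilde{u}_1-\widetilde{u}_2$, together with two derivatives in the conical surgery sense.

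Subtracting the two equations gives
$$
\Delta_{c,\epio,\phi_1}w=-(\Lambda_{c,\epio,\phi_1}-\Lambda_{c,\epio,\phi_2})\widetilde{\eta}-(\Delta_{c,\epio,\phi_1}-\Delta_{c,\epio,\phi_2})\widetilde{u}_2=:f.
$$
Both differences $\Lambda_{c,\epio,\phi_1}-\Lambda_{c,\epio,\phi_2}$ and $\Delta_{c,\epio,\phi_1}-\Delta_{c,\epio,\phi_2}$ are smooth algebraic expressions in the metric difference $g_{c,\epio,\phi_1}-g_{c,\epio,\phi_2}$, which is governed by $dd^c(\phi_1-\phi_2)$. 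More precisely, the difference of the Laplacians equals the operator $\rho_+^{-2}\Diff^2_{b,s}$ with coefficients linear in $dd^c(\phi_1-\phi_2)$, plus terms involving its first derivative. Since $\widetilde{u}_2$ is bounded and polyhomogeneous by Lemma~\ref{fs.12}, and polyhomogeneous $o$ or $\cO$ bounds persist under $b$-surgery differentiation, we get $\rho_+^2 f=o(\epio^\kappa)$ in the first case and $\rho_+^2 f=\cO(\epio^\kappa\rho_0^\delta)$ in the second. In both cases the term involving $\widetilde{\eta}$ is supported away from $\hH_+$, so it is harmless there.

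Next we normalize $w$. The condition $(\epio)_*(w\,\omega_1^{[m]})=0$ fails, because $\widetilde{u}_2$ is normalized with respect to $\omega_2$ rather than $\omega_1$. We therefore subtract the constant
$$
c(\epio)=\frac{(\epio)_*(w\,\omega_1^{[m]})}{(\epio)_*(\omega_1^{[m]})},
$$
which is $o(\epio^\kappa)$, respectively $\cO(\epio^\kappa)$, since $\omega_1^{[m]}-\omega_2^{[m]}$ is controlled by $dd^c(\phi_1-\phi_2)$. Constants do not affect $dd^c$, so this changes nothing in $\theta_1-\theta_2$. The mean-zero condition on $f$ then holds automatically, because $f$ lies in the range of $\Delta_{c,\epio,\phi_1}$. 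Corollary~\ref{fs.11}, applied with $\phi=\phi_1$, now gives $w-c=o(\epio^\kappa)$, respectively $w-c=\cO(\epio^\kappa\rho_0^\delta)$, and this function is polyhomogeneous.

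Finally we pass from $w$ to $dd^c w$. Since $u=\bbG_{c,\epio,\phi_1}f$ with $\bbG_{c,\epio,\phi_1}\in\Psi^{-2,\cG}_{b,s}$ by Corollary~\ref{MM.1}, applying $\rho_+^2 dd^c\in\Diff^2_{b,s}$ and using the composition result Proposition~\ref{bs.5} with the mapping result Proposition~\ref{bs.4} gives the same bounds for $\rho_+^2dd^c w$. Alternatively, one can differentiate the polyhomogeneous expansion directly.

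The main obstacle is this last step together with the bookkeeping of index sets: one must check that the order-zero part of $\rho_+^2dd^c\bbG$ at $\fb$ and $\mf$ does not destroy the gain $\epio^\kappa\rho_0^\delta$ for $\delta\in]0,2m-2[$. This is exactly the weight range in which the normal operators are invertible (Lemma~\ref{la.28a}), so the $\lf$ and $\rf$ index sets $\ge m-1$ of the rescaled operator $\rho_+^{m-1}\bbG_{c,\epio,\phi_1}\rho_+^{-m-1}$ should suffice.
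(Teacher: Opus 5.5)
Your proposal is correct and is essentially the paper's argument. The paper writes $dd^c u=\theta_{c,\epio,\phi_1}-\theta_{c,\epio,\phi_2}$ directly and uses primitivity to get $\Delta_{c,\epio,\phi_2}u=-(\Lambda_{c,\epio,\phi_2}-\Lambda_{c,\epio,\phi_1})\theta_{c,\epio,\phi_1}$, then applies Corollary~\ref{fs.11}. Your source $f$ is the same thing with $\phi_1,\phi_2$ swapped: since $\Delta=-\Lambda dd^c$ on a K\"ahler manifold, your two terms combine into $-(\Lambda_{c,\epio,\phi_1}-\Lambda_{c,\epio,\phi_2})\theta_{c,\epio,\phi_2}$, and the final step you flag as the main obstacle is not one, because $b$-surgery differentiation preserves the index sets of the polyhomogeneous function $w-c$, so $\rho_+^2dd^cw$ inherits its bound directly.
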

\begin{proof}
By the $dd^c$-lemma, there exists a unique function $u$ such that
$$
    dd^cu= \theta_{c,\epio,\phi_1}-\theta_{c,\epio,\phi_2} \quad \mbox{and} \quad (\epio)_* \lrp{u \omega^{[m]}_{c,\epio,\phi_2}}.
$$
Since $\hbeta$ is primitive, this implies that
$$
  \Delta_{\omega_{c,\epio,\phi_2}}u= -\Lambda_{c,\epio,\phi_2}dd^c u= -\Lambda_{c,\epio,\phi_2}\theta_{c,\epio,\phi_1}
$$
with $\rho_+^2\Lambda_{c,\epio,\phi_2}\theta_{c,\epio,\phi_1}\in \cA_{\phg}(\chM)\cap L^{\infty}(\chM)$. If $dd^c(\phi_2-\phi_1)=o(\epio^\kappa)$ as a section of $\Lambda^{2}({}^{c,\epio}T^*\chM)$, we see on the other hand that 
$$
    \Lambda_{c,\epio,\phi_2}-\Lambda_{c,\epio,\phi_1}=o(\epio^\kappa)
$$ 
as a section of $(\Lambda^2({}^{c,\epio}T^*\chM))^{*}$, so $\rho^2_+\Lambda_{c,\epio,\phi_2}\theta_{c,\epio,\phi_1}=o(\epio^{\kappa})$ near $\hH_0$ and $\hH_1$.  By Corollary~\ref{fs.11}, this means that $u$ is polyhomogeneous and bounded with $u=o(\epio^{\kappa})$, so that
$$
   \rho^2_+\lrp{\theta_{c,\epio,\phi_2}-\theta_{c,\epio,\phi_2}}= -\rho^2_+(dd^cu)=o(\epio^{\kappa})
$$
as a section of $\Lambda^2({}^{c,\epio}T^*\chM)$.  If instead $dd^c(\phi_1-\phi_2)=  \cO(\epio^{\kappa}\rho_0^\delta)$ for some $\kappa\ge 0$ and $\delta\in]0,2m-2[$, then $\rho^2_+\Lambda_{c,\epio,\phi_2}\theta_{c,\epio,\phi_1}=\cO(\epio^{\kappa}\rho_0^{\delta})$ as a section of $\Lambda^2({}^{c,\epio}T^*\chM)$, so Corollary~\ref{fs.11} shows that $u=\cO(\epio^{\kappa}\rho_0^{\delta})$, so that
$$
   \rho^2_+\lrp{\theta_{c,\epio,\phi_2}-\theta_{c,\epio,\phi_2}}= -\rho^2_+(dd^cu)=\cO(\epio^{\kappa}\rho_0^{\delta})
$$
as a section of $\Lambda^2({}^{c,\epio}T^*\chM)$.
\end{proof}

For $\epio>0$ fixed, let $\mu_{c,\epio,\phi}$ be the moment map of the $\bbT$-action on $\chM$ with respect to $\omega_{c,\epio,\phi}$ with the normalization $(\epio)_*(\mu_{c,\epio,\phi}\omega^{[m]}_{c,\epio,\phi})=0$.  This induces an application $\mu_{c,\cdot,\phi}: \chM\to \mathfrak{t}^*$ when we allow $\epio$ to vary.
\begin{lemma}
The moment map  $\mu_{c,\cdot,\phi}$ is a bounded and polyhomogeneous $\mathfrak{t}^*$-valued function on $\chM$.
\label{fs.13}\end{lemma}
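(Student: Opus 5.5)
The plan is to build the moment map from a fixed reference piece plus an explicit correction coming from the potential $\phi$, and then check polyhomogeneity and boundedness piece by piece. For $\xi\in\gt$, write $\mu^{\xi}_{c,\epio,\phi}:=\langle\mu_{c,\epio,\phi},\xi\rangle$. Since $\omega_{c,\epio,\phi}=\omega_{c,\epio}+dd^c\phi$ with $\phi$ being $\bbT$-invariant, the standard formula gives
$$
\mu^{\xi}_{c,\epio,\phi}=\mu^{\xi}_{c,\epio}+d^c\phi(\xi)-\kappa_{\xi}(\epio),
$$
up to the usual sign convention. Here $\mu_{c,\epio}$ is any moment map for $\omega_{c,\epio}$, and the constant $\kappa_\xi(\epio)$ is fixed by the normalization $(\epio)_*(\mu_{c,\epio,\phi}\omega^{[m]}_{c,\epio,\phi})=0$. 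It therefore suffices to show three things: a polyhomogeneous bounded $\mu_{c,\epio}$ exists, $d^c\phi(\xi)$ is polyhomogeneous and bounded, and the normalizing constant is a polyhomogeneous bounded function of $\epio$.

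\textbf{Step 1: the reference moment map.} I would construct $\mu_{c,\epio}$ directly from \eqref{fs.5}. Near $\hH_0$ away from $\hH_+$, take $\mu^{\xi}_0$, the orbifold moment map of $\omega_0$, which is smooth in the orbifold sense. Near $\hH_i$ away from $\hH_0$, take $\epio^2\mu^\xi_i$, where $\mu_i^\xi$ is a moment map for $\omega_i$ on $X_i$. Near the corner, where $\omega_{c,\epio}=dd^c\psi_i$, take $d^c\psi_i(\xi)$. To make these agree on overlaps, I would fix the additive constants so that $\mu^\xi_0(p_i)=0$ at the fixed point $p_i$. Since the $\bbT$-action is linear near $p_i$ and the potential $\frac{r^2}{4}+f_0$ is $\bbT$-invariant, $\mu_0^\xi=d^c(\frac{r^2}4+f_0)(\xi)$ there, and likewise for $\omega_i=dd^c\Phi_i$ at infinity. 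The three local expressions are then each of the form $d^c(\text{local potential})(\xi)$, and the cutoff construction \eqref{fs.4a} glues them consistently.

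\textbf{Step 2: estimates for the reference piece.} Since $\xi$ is a $b$-surgery vector field that is tangent to the level sets of $\epio$ and vanishes at the fixed points, the quantity $d^c\psi(\xi)$ is controlled by $\rho_+^2$ times a polyhomogeneous bounded factor. Concretely, the linear Euclidean piece $d^c(\|z\|^2/4)(\xi)$ is a quadratic form in $z$, hence of order $r^2=\rho_+^2\cdot(\text{bounded})$. Near $\hH_i$, the function $\epio^2\mu_i^\xi$ has growth $\epio^2\|z\|^2=\rho_+^2$. So $\mu_{c,\epio}$ is polyhomogeneous and bounded, and it in fact vanishes at $\hH_+$.

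\textbf{Step 3: the correction and the constant.} The term $d^c\phi(\xi)$ is polyhomogeneous because $\phi$ is and $\xi$ is a smooth $b$-vector field. Its boundedness follows from $\phi\in L^\infty$ together with polyhomogeneity: applying a $b$-vector field to a bounded polyhomogeneous function yields a bounded polyhomogeneous function. Finally, $\kappa_\xi(\epio)$ is the ratio of $(\epio)_*$ pushforwards. These are polyhomogeneous in $\epio$ by Melrose's pushforward theorem, since $\epio:\chM\to[0,1[$ is a $b$-fibration and the integrands are bounded polyhomogeneous densities. The volume $(\epio)_*\omega^{[m]}_{c,\epio,\phi}$ is the cohomological constant $(2\pi\halpha_\epio)^m/m!$, which tends to a positive limit, so the ratio is bounded.

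The step I expect to be the main obstacle is the consistent gluing in Step 1 near the corners $\hH_0\cap\hH_i$. It needs the moment maps of the three local models to match exactly where the cutoffs overlap, and that matching relies on the invariance of $f_0$, $f_i$ and $\chi_\delta$, so I would verify it carefully.
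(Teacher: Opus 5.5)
Your argument is correct, but it takes a different route from the paper. The paper never writes the moment map down. For $\tau\in\gt$ it characterizes $\langle\mu_{c,\epio,\phi},\tau\rangle$ as the solution of a Poisson equation for $\Delta_{c,\epio,\phi}$, whose right-hand side is the divergence of the gradient field $\hat J\xi_\tau$ of the Hamiltonian, normalized by $(\epio)_*(u\,\omega^{[m]}_{c,\epio,\phi})=0$. It checks that this right-hand side is polyhomogeneous with $\rho_+$ times it bounded, and then invokes Corollary~\ref{fs.11}, i.e.\ the uniform Green operator of Corollary~\ref{MM.1} together with Proposition~\ref{bs.4}.

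You instead write $\mu$ explicitly as $d^c$ of $\bbT$-invariant local potentials evaluated on $\xi$, plus $d^c\phi(\xi)$. You need only that $\hat J\xi$ is a $b$-surgery vector field (a fact the paper also uses in Lemma~\ref{fs.13a}) and Melrose's pushforward theorem for the normalizing constant. This avoids the Green operator entirely and gives finer information, e.g.\ that $\mu_{c,\cdot,\phi}$ restricts to a constant on each $\hH_i$. The price is that it depends on the explicit gluing \eqref{fs.4a}--\eqref{fs.5}. The paper's argument is indifferent to how $\omega_{c,\epio}$ was built and reuses the same mechanism as Lemma~\ref{fs.12}.

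One correction is needed in Step~1: you cannot impose $\mu_0^\xi(p_i)=0$ for every $i$. $\mu_0^\xi$ is fixed up to a single global constant, and its values at distinct fixed points generally differ; think of the vertices of the moment polytope in the toric examples of Section~\ref{s:2}. Instead:
\begin{itemize}
\item near $p_i$, write $\mu_0^\xi=\mu_0^\xi(p_i)+d^c(\frac{r^2}{4}+f_0)(\xi)$;
\item at the $i$-th corner, take $\mu_0^\xi(p_i)+d^c\psi_i(\xi)$;
\item near $\hH_i$, take $\mu_0^\xi(p_i)+\epio^2\mu_i^\xi$, with $\mu_i^\xi$ normalized to coincide with $d^c\Phi_i(\xi)$ outside a compact set.
\end{itemize}
Where the cut-offs in \eqref{fs.4a} equal $0$ or $1$, $\psi_i$ reduces exactly to $\frac{r^2}{4}+f_0$, respectively to $\rho_+^2(\frac14+f_i)=\epio^2\Phi_i$. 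So these pieces agree identically on overlaps, and the gluing step you flagged as the main obstacle is immediate. The only change to your conclusions is that $\mu_{c,\epio}$ restricts to the constant $\mu_0^\xi(p_i)$ on $\hH_i$ rather than vanishing there. This is harmless for boundedness and polyhomogeneity.
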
 
\begin{proof}
Given $\tau\in \mathfrak{t}$, we need to show that $u:=\langle \mu_{c,\epio,\phi},\tau\rangle$ is a bounded polyhomogeneous function on $\chM$.  If $\xi_{\tau}$ is the Hamiltonian Killing field associated to $\tau$, then the function $u$ is obtained by solving 
$$
      \Delta_{c,\epio,\phi}u= \delta_{c,\epio,\phi}\xi_\tau \quad \mbox{with normalization} \quad (\epio)_*(u \omega^{[m]}_{c,\epio,\phi})=0,
$$ 
where $\delta_{c,\epio,\phi}$ is the divergence operator of $\omega_{c,\epio,\phi}$.  Since $\xi_{\tau}$ is smooth as a section of ${}^{c,\epio}T\chM$, we see that $\delta_{c,\epio,\phi}\xi_\tau$ is a polyhomogeneous function with $\rho_+\delta\xi_{\tau}$ bounded.  Hence, we can use Corollary~\ref{fs.11} to conclude that $u$ is polyhomogeneous and bounded.  
\end{proof}
Similarly, we can estimate the convergence of $\mu_{c,\cdot,\phi}$ near $\hH_0$ and $\hH_+$ as follows.
\begin{lemma}
Let $\phi_1,\phi_2\in \lrp{\cA_{\phg}(\chM)\cap L^{\infty}(\chM)}$ be potentials. If for some $\kappa\ge 0$ and $\delta>0$,
\begin{equation}
    \phi_1-\phi_2= \cO(\rho_0^{\kappa}\rho_+^{\delta})
\label{fs.13b}\end{equation}
near $\hH_0$ and $\hH_+$, then
$$
    \mu_{c,\epio,\phi_1}-\mu_{c,\epio,\phi_2}= \cO(\rho_0^{\kappa}\rho_+^{\delta})
$$
near $\hH_0$ and $\hH_+$ as a function taking values in $\mathfrak{t}^*$.
\label{fs.13a}\end{lemma}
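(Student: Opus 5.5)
The plan is to write the difference of moment maps explicitly in terms of $\phi_1-\phi_2$, rather than going back through the Green operator as in Lemma~\ref{fs.13}. For a fixed $\tau\in\mathfrak{t}$ with induced Killing field $\xi_\tau$, the $\bbT$-invariance of $\phi_j$ and the formula $\mu_{\varphi}=\mu_0+d^c\varphi$ recalled in the introduction give, up to an additive constant depending only on $\epio$,
$$
\langle\mu_{c,\epio,\phi_j},\tau\rangle = \langle\mu_{c,\epio},\tau\rangle + d^c\phi_j(\xi_\tau), \qquad j=1,2 .
$$
Hence
$$
\langle\mu_{c,\epio,\phi_1}-\mu_{c,\epio,\phi_2},\tau\rangle = d^c(\phi_1-\phi_2)(\xi_\tau) + c_\tau(\epio),
$$
where the constant $c_\tau(\epio)$ is fixed by the two normalizations $(\epio)_*(\mu_{c,\epio,\phi_j}\omega^{[m]}_{c,\epio,\phi_j})=0$.

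The first step is to estimate $d^c(\phi_1-\phi_2)(\xi_\tau)=-d(\phi_1-\phi_2)(J\xi_\tau)$. The vector field $J\xi_\tau$ lifts to $\chM$ as a $b$-surgery vector field. It is tangent to the level sets of $\epio$, and near each $\hH_i$ it comes from the linear action on $\bbC^m/\Gamma_i$ and on $X_i$, which is tangent to the boundary. So $J\xi_\tau\in\cV_{b,s}(\chM)$. Since $b$-vector fields preserve polyhomogeneous functions together with their orders of vanishing at each boundary hypersurface, \eqref{fs.13b} gives $d^c(\phi_1-\phi_2)(\xi_\tau)=\cO(\rho_0^\kappa\rho_+^\delta)$. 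Polyhomogeneity is what allows conormal bounds to be differentiated by $b$-vector fields without loss.

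The second step, which I expect to be the main obstacle, is controlling $c_\tau(\epio)$. Subtracting the two normalizations gives
$$
c_\tau(\epio)\,(\epio)_*\omega^{[m]}_{c,\epio,\phi_1} = -(\epio)_*\bigl(d^c(\phi_1-\phi_2)(\xi_\tau)\,\omega^{[m]}_{c,\epio,\phi_1}\bigr) - (\epio)_*\bigl(\langle\mu_{c,\epio,\phi_2},\tau\rangle(\omega^{[m]}_{c,\epio,\phi_1}-\omega^{[m]}_{c,\epio,\phi_2})\bigr).
$$
The total volume is the cohomological quantity $\halpha_\epio^m$, which is bounded below. For the second integral, $\omega^{[m]}_{c,\epio,\phi_1}-\omega^{[m]}_{c,\epio,\phi_2}$ is $dd^c(\phi_1-\phi_2)$ wedged with forms. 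I would integrate by parts, moving $dd^c$ onto $\langle\mu_{c,\epio,\phi_2},\tau\rangle$, which is bounded and polyhomogeneous by Lemma~\ref{fs.13}. This reduces everything to integrals of $\phi_1-\phi_2$ against bounded densities.

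The pushforward theorem then gives $c_\tau(\epio)=\cO(\epio^{\min(\kappa,\delta)})$. Since $\epio=\rho_0\rho_+$, we have $\epio^{\min(\kappa,\delta)}\lesssim\rho_0^\kappa\rho_+^\delta$ only if $\kappa\le\delta$. The case $\kappa>\delta$ therefore needs care. There I would use that the constant has to be computed near $\hH_0$, where $\rho_+$ is bounded below, and rely on the $\rho_0^\kappa$ decay dominating the volume, aiming for $c_\tau=\cO(\epio^\kappa)$. If that fails, I would settle for proving the estimate with $\min(\kappa,\delta)$ in place of the weaker exponent, which is likely what is actually used in the formal-solution construction.

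Finally, summing the two contributions and letting $\tau$ range over a basis of $\mathfrak{t}$ gives the stated estimate for the $\mathfrak{t}^*$-valued map.
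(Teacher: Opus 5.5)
Your first step is the paper's entire argument. The paper writes $\langle\mu_{c,\epio,\phi_2}-\mu_{c,\epio,\phi_1},\tau\rangle=\cL_{\hat J\xi_\tau}(\phi_2-\phi_1)$ and concludes from the fact that $\hat J\xi_\tau$ is a $b$-surgery vector field.

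The gap is in your second step, which you rightly flag as the obstacle but do not resolve. The constant $c_\tau(\epio)$ should not be estimated, because it vanishes identically. This is the standard fact the paper cites, \cite[Lemma~4.5.1]{Gauduchon}. It is also what the introduction means by calling $\mu_\varphi=\mu_0+d^c\varphi$ the momentum map of zero mean. Precisely, for $\bbT$-invariant $\psi$ the quantity $\int_{\hM}\bigl(\langle\mu,\tau\rangle+d^c\psi(\xi_\tau)\bigr)(\omega+dd^c\psi)^{[m]}$ does not depend on $\psi$. To see this, take $\omega_t=\omega+t\,dd^c\psi$ and $\mu_t=\langle\mu,\tau\rangle+t\,d^c\psi(\xi_\tau)$. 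The $t$-derivative is $\int d^c\psi(\xi_\tau)\,\omega_t^{[m]}+\int\mu_t\,dd^c\psi\wedge\omega_t^{[m-1]}$. One integration by parts on the closed manifold $\hM$, for fixed $\epio>0$, turns the second integral into $-\int\langle d\mu_t,d\psi\rangle_{\omega_t}\,\omega_t^{[m]}=-\int d^c\psi(\xi_\tau)\,\omega_t^{[m]}$, so the derivative is zero. With $\omega=\omega_{c,\epio,\phi_2}$ and $\psi=\phi_1-\phi_2$, the two normalizations force $c_\tau(\epio)=0$, and the lemma follows from your first step alone.

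Without this exact cancellation your strategy cannot succeed, and the problem is not confined to $\kappa>\delta$. A function of $\epio$ alone is $\cO(\rho_0^\kappa\rho_+^\delta)$ near $\hH_0\cup\hH_+$ only if it is $\cO(\epio^{\max(\kappa,\delta)})$. To see this, look in the interior of $\hH_0$, where $\rho_+\sim 1$ and $\rho_0\sim\epio$, and in the interior of $\hH_+$, where $\rho_0\sim1$ and $\rho_+\sim\epio$.

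Hence a bound $c_\tau=\cO(\epio^{\min(\kappa,\delta)})$ suffices only when $\kappa=\delta$. Your claim that it suffices for $\kappa\le\delta$ is wrong: for $\kappa<\delta$, the function $\epio^\kappa=\rho_0^\kappa\rho_+^\kappa$ is not $\cO(\rho_0^\kappa\rho_+^\delta)$ near $\hH_+$. Size estimates of your two integrals cannot do better in general. Near the interior of $\hH_0$ you have $\phi_1-\phi_2\sim\epio^\kappa$, so that region contributes at order $\epio^\kappa$ to each integral separately; only their exact cancellation helps.

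Your fallback with $\min(\kappa,\delta)$ would prove a strictly weaker statement, and it would not suffice downstream. In the second case of Lemma~\ref{fs.16a}, where $\delta\le\kappa-4<\kappa$, a constant of size $\epio^\delta$ contributes $\rho_+^4\epio^\delta=\rho_0^\delta\rho_+^{\delta+4}$ to $\rho_+^4\mathring{\Sc}^{\hbeta}$. That term is not $o(\rho_0^\kappa\rho_+^{\delta+4})$.
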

\begin{proof}
Fix $\tau\in \mathfrak{t}$ and let $\xi_\tau$ be the corresponding Killing field on $\chM$. By a standard result, see for instance \cite[Lemma~4.5.1]{Gauduchon}, we know that
$$
  \langle \mu_{c,\epio,\phi_2}-\mu_{c,\epio,\phi_1},\tau\rangle= \cL_{\hat{J}\xi_\tau}(\phi_2-\phi_1)=\cO(\rho_0^{\kappa}\rho_+^{\delta})
$$
with the estimate following from \eqref{fs.13b} and the fact $\hat{J}\xi_{\tau}$ is a $b$-surgery vector fields, so decay to order one at $\hH_+$ as a section of ${}^{c,\epio}T\chM$.  Since $\tau\in \mathfrak{t}$ was arbitrary, the result follows.
\end{proof}

We can show that the $\Sc^{\hbeta}$-scalar curvature is polyhomogeneous.
\begin{lemma}
   Let $\phi\in (\cA_{\phg}(\chM)\cap L^{\infty}(\chM))$ be a function vanishing at $\hH_0$ and $\hH_+$ such that $dd^c\phi$ vanishes on $\hH_+$ and $\hH_0$ as a section of $\Lambda^2({}^{c,\epio}T^*\chM)$. Then the function $\rho_+^4\mathring{\Sc}^{\hbeta}(\omega_{c,\epio,\phi})$ is polyhomogeneous and bounded.  Moreover, it vanishes on $\hH_+$.
\label{fs.14}\end{lemma}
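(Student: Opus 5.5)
The plan is to decompose $\mathring{\Sc}^{\hbeta}(\omega_{c,\epio,\phi})$ into its four pieces,
$$
\Delta_{c,\epio,\phi}R_{c,\epio,\phi}+\tfrac12 R^2_{c,\epio,\phi}-\|\Ric_{c,\epio,\phi}\|^2-\|\theta_{c,\epio,\phi}\|^2-\ell^{\ext}_{\halpha_\epio,\hbeta}(\mu_{c,\epio,\phi}),
$$
and to treat each one separately.

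\textbf{Curvature terms.} Since $g_{c,\epio,\phi}=\rho_+^2g_{b,\epio,\phi}$ is a polyhomogeneous, bounded and positive definite section of $S^2({}^{c,s}T^*\chM)$, its curvature tensor, measured with respect to $g_{c,\epio,\phi}$, is $\rho_+^{-2}$ times a bounded polyhomogeneous tensor. This is because the Levi-Civita connection of a conical surgery metric is expressed through $\rho_+^{-1}$ times $b$-surgery derivatives, and the hypotheses on $\phi$ ensure that $dd^c\phi$ is a bounded polyhomogeneous section of $\Lambda^2({}^{c,s}T^*\chM)$. It follows that $\rho_+^2R$ and $\rho_+^2\Ric$ are bounded and polyhomogeneous. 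Moreover $\rho_+^2\Delta_{c,\epio,\phi}\in\rho_+^{0}\Diff^2_{b,s,\cE}$ with polyhomogeneous coefficients, so $\rho_+^4\Delta R=\rho_+^2\Delta(\rho_+^{-2}\cdot\rho_+^2R)$ is again bounded and polyhomogeneous. By Lemma~\ref{fs.12}, $\rho_+^4\|\theta_{c,\epio,\phi}\|^2=\|\rho_+^2\theta\|^2$ is bounded and polyhomogeneous.

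\textbf{Moment-map term.} By Lemma~\ref{fs.13}, $\ell^{\ext}(\mu_{c,\epio,\phi})$ is bounded and polyhomogeneous, so its product with $\rho_+^4$ is bounded and vanishes on $\hH_+$. The extremal data $\xi^{\ext}_{\halpha_\epio,\hbeta}$ and $c_{\halpha_\epio,\hbeta}$ depend polynomially on $\epio^2$, via the cup-product formula of Remark~\ref{fs.7} and \eqref{fs.9b}, and are therefore smooth in $\epio$. Combining the two observations, $\rho_+^4\mathring{\Sc}^{\hbeta}$ is bounded and polyhomogeneous.

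\textbf{Vanishing on $\hH_+$.} Near $\hH_i$ we have $\omega_{c,\epio,\phi}=\epio^2(\omega_i+\epio^{-2}dd^c\phi)$. Since the leading part of $\epio^{-2}\omega_{c,\epio,\phi}$ at $\hH_i$ is $\omega_i$, and $\rho_+=\epio/\rho_0$, the scaling of $\Sc$ under $\omega\mapsto\epio^2\omega$ (which multiplies it by $\epio^{-4}$) gives
$$
\rho_+^4\,\Sc^{0}(\omega_{c,\epio,\phi})\big|_{\hH_i}=\rho_0^{-4}\,\Sc^{0}(\omega_i)=0
$$
by Assumption~\ref{a:overall}. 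Here the hypothesis that $dd^c\phi$ vanishes on $\hH_+$ guarantees that $\phi$ does not alter this restriction.

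The $\theta$ term also drops out at $\hH_+$, and this is the step I expect to be the main obstacle. The representative $\widetilde\eta$ is supported away from $\hH_+$, and $\rho_+^2\Lambda\widetilde\eta$ vanishes there. The first part of Corollary~\ref{fs.11} shows only that $\widetilde u$ is bounded and polyhomogeneous; boundedness alone does not force $\rho_+^2dd^c\widetilde u$ to vanish at $\hH_+$. The plan is to use the refined statement of Corollary~\ref{fs.11}: the source has decay $\rho_+^2f=\cO(\epio^{\kappa}\rho_0^\delta)$, so the solution satisfies $\widetilde u=\cO(\rho_0^{\delta})$ near $\hH_+$ with $\delta>0$. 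Concretely, I would check from the index sets of $\bbG_{c,\epio,\phi}$ in Corollary~\ref{MM.1} (namely $\cG(\lf)\ge0$ but $\cG(\fb)\ge2$, combined with the vanishing of $\rho_+^2f$ at $\hH_+$) that $\widetilde u|_{\hH_+}$ solves $\Delta_i\widetilde u=0$ on $X_i$ with decay at infinity, hence vanishes by Lemma~\ref{la.28a}. Then $\rho_+^2dd^c\widetilde u$ restricts to zero on $\hH_+$.

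Once this is settled, the remaining contributions are immediate: $\rho_+^4\ell^{\ext}(\mu)$ vanishes on $\hH_+$ because $\rho_+$ does, so $\rho_+^4\mathring{\Sc}^{\hbeta}(\omega_{c,\epio,\phi})$ vanishes on $\hH_+$, as claimed.
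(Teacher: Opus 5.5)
Your decomposition matches the paper's, and so does most of the bookkeeping. The three curvature terms follow from the polyhomogeneity of $\phi$ and the boundedness of $dd^c\phi$, the $\theta$-term from Lemma~\ref{fs.12}, and the vanishing on $\hH_+$ from $\epio^4\Sc^{\hbeta}(\omega_{c,\epio,\phi})|_{\hH_i}=\Sc^0(\omega_i)=0$.

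You are right that this last identity tacitly needs $\rho_+^2\theta_{c,\epio,\phi}$ to vanish on $\hH_+$, a point the paper does not spell out. However, your argument for it fails.
\begin{itemize}
\item The source $f=-\Lambda_{c,\epio,\phi}\widetilde\eta$ does not vanish on $\hH_0$. So the refined hypothesis $\rho_+^2f=\cO(\epio^{\kappa}\rho_0^{\delta})$ of Corollary~\ref{fs.11} is not satisfied.
\item Its conclusion $\widetilde u=\cO(\rho_0^{\delta})$ would force $\widetilde u|_{\hH_0}=0$. In fact $\widetilde u|_{\hH_0}=-\bbG_0\Lambda_0(\widetilde\eta|_{\hH_0})$, which is nonzero in general.
\item Likewise, $\cG(\lf)\ge 0$ in Corollary~\ref{MM.1} together with Proposition~\ref{bs.4} gives a leading term of order $\rho_+^0$ at $\hH_+$, not decay. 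Indeed $\widetilde u|_{\hH_i}$ is the constant $\widetilde u|_{\hH_0}(p_i)$, which need not be zero (it shifts if $\widetilde\eta$ is replaced by $\widetilde\eta+dd^ch$).
\end{itemize}

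The repair is short. Since $\widetilde\eta=0$ near $\hH_+$ and $dd^c\phi$ vanishes on $\hH_+$, the restriction of $\rho_+^2\Delta_{c,\epio,\phi}$ to $\hH_i$ is $\rho_0^{-2}\Delta_i$. Hence $\widetilde u|_{\hH_i}$ is a bounded polyhomogeneous $\Delta_i$-harmonic function. It therefore lies in $\rho_i^{\nu}L^2_b(\bX_i)$ for $\nu\in]-1,0[$, and is constant by the last part of Lemma~\ref{la.28a}. Then $\rho_+^2dd^c\widetilde u$, and with it $\rho_+^2\theta_{c,\epio,\phi}$, vanishes on $\hH_+$. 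What you need is constancy, not decay.

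Your handling of $\ell^{\ext}$ is the other weak point. Remark~\ref{fs.7} computes non-equivariant intersection numbers. That covers $c_{\halpha_\epio,\hbeta}$, which is rational in $\epio^2$ with nonzero denominator $\alpha^m$ at $\epio=0$. But $\xi^{\ext}_{\halpha_\epio,\hbeta}$ is defined through $L^2$-pairings of $\Sc^{\hbeta}$ with the $\epio$-dependent normalized moment maps, and nothing in the paper gives it by a cup-product formula.

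The paper proceeds in the opposite order.
\begin{enumerate}
\item It first shows that $\rho_+^4\Sc^{\hbeta}(\omega_{c,\epio,\phi})$ is bounded, polyhomogeneous, and vanishes on $\hH_+$.
\item Only then does it use Lemma~\ref{fs.13} and the pushforward theorem. These show that the $L^2$-projection of $\Sc^{\hbeta}$ onto $P_{c,\epio,\phi}$ is bounded and polyhomogeneous, hence so are $\xi^{\ext}_{\halpha_\epio,\hbeta}$ and $\ell^{\ext}_{\halpha_\epio,\hbeta}(\mu_{c,\epio,\phi})$.
\end{enumerate}
The vanishing on $\hH_+$ is genuinely used in step 2. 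For $m=2$ the volume form contributes only $\rho_+^{2m}=\rho_+^4$, which merely cancels the $\rho_+^{-4}$ of $\Sc^{\hbeta}$. A bounded integrand not vanishing on $\hH_+$ would then make the fibre integrals $(\epio)_*(\Sc^{\hbeta}\mu\,\omega^{[m]}_{c,\epio,\phi})$ diverge like $\log\epio$ at the corner $\hH_+\cap\hH_0$. So you should establish the vanishing of $\rho_+^4\Sc^{\hbeta}$ on $\hH_+$, with the corrected $\theta$ argument, before treating the extremal term.
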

\begin{proof}
Notice first that $\rho_+^4\Sc^{\hbeta}(\omega_{c,\epio,\phi})$ takes the form
\begin{equation}
\rho_+^4\Sc^{\hbeta}(\omega_{c,\epio,\phi})= \rho^4_+\Delta_{c,\epio,\phi}R_{c,\epio,\phi}+
\frac12 \left( \rho_+^2R_{c,\epio,\phi}\right)^2 - \| \rho_+^2 \Ric_{c,\epio,\phi}\|^2_{\omega_{c,\epio,\phi}}-  \| \rho^2_+ \theta_{c,\epio,\phi}\|_{\omega_{c,\epio,\phi}}^2,
\label{fs.15}\end{equation}
so that 
\begin{equation}
\rho_+^4\mathring{\Sc}^{\hbeta}(\omega_{c,\epio,\phi})= \rho_+^4\Sc^{\hbeta}(\omega_{c,\epio,\phi})- \rho_+^4 \ell^{\ext}_{\halpha_\epio,\hbeta}(\mu_{c,\epio,\phi})
\label{fs.16}\end{equation}
with
$$
   \ell^{\ext}_{\halpha_\epio,\hbeta}(x):= \langle \xi^{\ext}_{\halpha_{\epio}, \hbeta},x\rangle + c_{\halpha_{\epio},\hbeta},
$$
where $\xi^{ext}_{\halpha_{\epio},\hbeta}$ is the Killing vector fields whose Killing potential is the $L^2$-projection of $\Sc^{\hbeta}(\omega_{c,\epio,\phi})$ onto $P_{\omega_{c,\epio,\phi}}$,  and
$$
    c_{\halpha_{\epio},\hbeta}= 2m(m-1)\lrp{\frac{(c_1(\hM)^2+\hbeta^2)\cdot \halpha_{\epio}^{m-2}}{\halpha_{\epio}^m}}.
$$ 
Let us first show that $\rho_+^4\Sc^{\hbeta}(\omega_{c,\epio,\phi})$ is a bounded and polyhomogeneous function on $\cM_+$.  For the first three terms in \eqref{fs.15}, polyhomogeneity follows from the polyhomogeneity of $\phi$, while boundedness is ensured by the factor of $\rho_+^4$ and the fact that $dd^c\phi$ is bounded (in fact even decay as $\epio\searrow 0$) as a section of $\Lambda^2({}^{c,\epio}T^*\chM)$.  For the last term, polyhomogeneity and boundedness are a consequence of Lemma~\ref{fs.12}.  Thus, the polyhomogeneity and boundedness of $\rho_+^4\mathring{\Sc}^{\hbeta}(\omega_{c,\epio,\phi})$ will follow from the one of $\rho_+^4 \ell^{\ext}_{\halpha_\epio,\hbeta}(\mu_{c,\epio,\phi})$.  Moreover, since
\begin{equation}
\epio^4\Sc^{\hbeta}(\omega_{c,\epio,\phi})|_{\hH_i}= \Sc^{0}(\omega_i)=0 \quad \forall \; i\in \{1,\ldots,\ell\}
\label{fs.16b}\end{equation}
by \eqref{van.1}, we see that it vanishes on $\hH_+$.  

Now,  the boundedness of $\rho_+^4\Sc^{\hbeta}(\omega_{c,\epio,\phi})$, its polyhomogeneity, its vanishing at $\hH_+$ and Lemma~\ref{fs.13} ensure that the $L^2$-projection of $\Sc^{\hbeta}(\omega_{c,\epio,\phi})$  on $P_{c,\epio,\phi}$ is well-defined and is bounded and polyhomogeneous as a function taking values in $\mathfrak{t}$, where $P_{c,\epio,\phi}$ is the space of Killing potentials with respect to $\omega_{c,\epio,\phi}$ for the $\bbT$-action on $\hM$.   Invoking Lemma~\ref{fs.13} again, this ensures that $\ell^{\ext}_{\halpha_\epio, \hbeta}(\mu_{c,\epio,\phi})$ is bounded and polyhomogeneous.  In particular, $\rho_+^4\ell^{\ext}_{\halpha_{\epio},\hbeta}(\mu_{c,\epio,\phi})$ vanishes at $\hH_+$.  Hence, we see that
$$
   \rho_+^4\mathring{\Sc}^{\hbeta}(\omega_{c,\epio,\phi})|_{\hH_+}= \rho_+^4\Sc^{\hbeta}(\omega_{c,\epio,\phi})|_{\hH_+}= 0
$$
by \eqref{fs.16b}.  The result follows.
\end{proof}

The rate of decay of $\rho_+^4\mathring{\Sc}^{\hbeta}(\omega_{c,\epio,\phi})|_{\hH_i}$ as $\epio\searrow 0$ can be estimated as follows.
\begin{lemma}\label{fs.16a}
Let $\phi_1,\phi_2\in \lrp{\cA_{\phg}(\chM)\cap L^{\infty}(\chM)}$ be two potentials vanishing on $\hH_+$ and $\hH_0$ with $dd^c(\phi_1)$ and $dd^c\phi_2$ vanishing on $\hH_+$ and $\hH_0$ as sections of $\Lambda^2({}^{c,\epio}T^*\chM)$.  If for some $0< \kappa\le 4$, $\phi_1-\phi_2=o(\rho_0^{\kappa})$ and  
$$
    dd^c(\phi_1-\phi_2)= o(\epio^{\kappa})
$$
as a section of $\Lambda^2({}^{c,\epio}T^*\chM)$, then 
$$
    \rho_+^4\mathring{\Sc}^{\hbeta}(\omega_{c,\epio,\phi_1}) -    \rho_+^4\mathring{\Sc}^{\hbeta}(\omega_{c,\epio,\phi_2})  = o(\epio^{\kappa})
$$
near $\hH_0$ and $\hH_+$.  If instead for $\kappa>4$ and $0<\delta\le \kappa -4$, 
$$
    dd^c(\phi_1-\phi_2)= o(\epio^{\kappa})
$$
and 
$$
  \phi_1-\phi_2\in o(\rho_0^{\kappa}\rho_+^{\delta}),
$$
then 
$$
    \rho_+^4\mathring{\Sc}^{\hbeta}(\omega_{c,\epio,\phi_1}) -    \rho_+^4\mathring{\Sc}^{\hbeta}(\omega_{c,\epio,\phi_2})  = o(\rho_0^{\kappa}\rho_+^{\delta+4}).
$$
Finally, if $dd^c(\phi_1-\phi_2)$ and $\phi_1-\phi_2$ do not necessarily vanish on $\hH_+$, but
$$
  dd^c(\phi_1-\phi_2)= \cO(\epio^{\kappa}\rho_0^{\delta})
$$
and 
$$
 \phi_1-\phi_2\in \cO(\epio^{\kappa}\rho_0^{\delta}\rho_+^{-4})
$$
for some $\kappa\ge 0$  and $\delta\in ]0,2m-2[$, then
$$
  \rho_+^4\mathring{\Sc}^{\hbeta}(\omega_{c,\epio,\phi_1}) -    \rho_+^4\mathring{\Sc}^{\hbeta}(\omega_{c,\epio,\phi_2})  = \cO(\epio^{\kappa}\rho_0^{\delta}).
$$
\end{lemma}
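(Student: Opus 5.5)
Throughout, $\phi_t:=\phi_2+t(\phi_1-\phi_2)$ and $\dot\phi:=\phi_1-\phi_2$. The plan is to expand $\rho_+^4\mathring{\Sc}^{\hbeta}(\omega_{c,\epio,\phi})$ term by term, using the decomposition \eqref{fs.15}--\eqref{fs.16}, and to estimate the difference between $\phi_1$ and $\phi_2$ separately for each of the four curvature terms and for the term $\rho_+^4\ell^{\ext}_{\halpha_\epio,\hbeta}(\mu_{c,\epio,\phi})$. The basic observation is that each of $\rho_+^2R$, $\rho_+^2\Ric$ and $\rho_+^4\Delta R$, computed for $\omega_{c,\epio,\phi}$, is a smooth (polyhomogeneous) function of the rescaled metric coefficients and their $b$-surgery derivatives. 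Indeed, $\rho_+^{-1}dd^c\phi$ measured in ${}^{b,s}$-frames is just $dd^c\phi$ measured in ${}^{c,s}$-frames, and $\rho_+\cdot({}^{c,s}\text{-derivative})$ is a $b$-surgery derivative.

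\emph{Curvature terms.} I would write the difference via the fundamental theorem of calculus, $\int_0^1\frac{d}{dt}\bigl(\cdot\bigr)(\omega_{c,\epio,\phi_t})\,dt$. The $t$-derivative is a linear $b$-surgery differential operator with bounded polyhomogeneous coefficients, uniformly in $t$ by Lemma~\ref{fs.14}. It acts on $dd^c\dot\phi$ through the differential terms, with up to four further $b$-surgery derivatives after the weight $\rho_+^4$ is distributed; schematically this is \eqref{la.6} rescaled by $\rho_+^4$. In the first regime, $dd^c\dot\phi=o(\epio^\kappa)$ is preserved by $b$-surgery derivatives, since these commute with $\epio$ and preserve polyhomogeneous $o(\epio^\kappa)$ bounds. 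The curvature differences are therefore $o(\epio^\kappa)$.

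\emph{$\theta$ and $\ell^{\ext}$ terms.} For the $\theta$ term I would write $\|\rho_+^2\theta_1\|^2_1-\|\rho_+^2\theta_2\|^2_2$ as the sum of the change of metric in the norm, which is $O(dd^c\dot\phi)$, and $\langle\rho_+^2(\theta_1-\theta_2),\rho_+^2(\theta_1+\theta_2)\rangle$. The latter is controlled by Lemma~\ref{fs.12a}. For the $\ell^{\ext}$ term, the constant $c_{\halpha_\epio,\hbeta}$ is cohomological, so it cancels. The $\mu$ part is handled by Lemma~\ref{fs.13a} with the hypothesis $\dot\phi=o(\rho_0^\kappa)$; here $\rho_0^\kappa\le\epio^\kappa\rho_+^{-\kappa}$, and the factor $\rho_+^4$ absorbs $\rho_+^{-\kappa}$ because $\kappa\le4$. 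This is exactly why the first case requires $\kappa\le 4$. The $\xi^{\ext}$ coefficient is in fact independent of $\phi$ (it depends only on the classes), so only $\mu$ varies.

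\emph{Second and third regimes.} For $\kappa>4$ the same bookkeeping goes through, but $\rho_+^4\rho_0^\kappa\rho_+^\delta$ must be tracked instead. The moment map term gives $\rho_+^4\cdot o(\rho_0^\kappa\rho_+^\delta)$ directly by Lemma~\ref{fs.13a}. The curvature and $\theta$ terms give $o(\epio^\kappa)=o(\rho_0^\kappa\rho_+^\kappa)$, and this is contained in $o(\rho_0^\kappa\rho_+^{\delta+4})$ since $\delta+4\le\kappa$. The third regime is identical, with $\cO(\epio^\kappa\rho_0^\delta)$ hypotheses, using the second halves of Lemmas~\ref{fs.12a} and \ref{fs.13a} together with Corollary~\ref{fs.11}. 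For the $\mu$ term, $\rho_+^4\cdot\cO(\epio^\kappa\rho_0^\delta\rho_+^{-4})=\cO(\epio^\kappa\rho_0^\delta)$.

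The main obstacle I expect is justifying uniformly in $t$ that the linearized curvature operators preserve $o(\epio^\kappa)$ and weighted $\cO$ bounds near the corners $\hH_0\cap\hH_+$. In particular, one has to check that $b$-surgery derivatives of a polyhomogeneous $o(\epio^\kappa)$ function stay $o(\epio^\kappa)$. I would handle this by polyhomogeneity: the leading exponents in the expansion are unchanged by $b$-vector fields.
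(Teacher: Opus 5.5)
Your proposal is correct and follows the paper's proof. The first three terms of \eqref{fs.15} are controlled directly by the decay of $dd^c(\phi_1-\phi_2)$, and you usefully spell out why $b$-surgery derivatives preserve the polyhomogeneous bounds. The $\theta$-term is handled by Lemma~\ref{fs.12a} and the $\ell^{\ext}$-term by Lemma~\ref{fs.13a}, with the weight $\rho_+^4$ absorbing $\rho_0^\kappa=\epio^\kappa\rho_+^{-\kappa}$ exactly when $\kappa\le 4$.

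There are only cosmetic slips. The uniform bounds on the linearized coefficients come from $\omega_{c,\epio,\phi_t}=(1-t)\omega_{c,\epio,\phi_2}+t\,\omega_{c,\epio,\phi_1}$ being a uniformly bounded conical surgery K\"ahler metric, not from Lemma~\ref{fs.14}. Also, Lemma~\ref{fs.13a} has only one statement, and in the first regime you need its equally valid $o$-version with $\delta=0$.
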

\begin{proof}
The result is clear for the parts of $\rho_+^4\mathring{\Sc}^{\hbeta}(\omega_{c,\epio,\phi_1})-\rho_+^4\mathring{\Sc}^{\hbeta}(\omega_{c,\epio,\phi_2})$ coming from the first three terms of \eqref{fs.15} and only depends on the decay of $dd^c(\phi_1-\phi_2)$. It follows from Lemma~\ref{fs.12a} for the part coming from the fourth term of \eqref{fs.15}, again in this case just relying on the decay of $dd^c(\phi_1-\phi_2)$. For the part coming from the second term on the right hand side of \eqref{fs.16}, it follows from Lemma~\ref{fs.13a} using the decay of $\phi_1-\phi_2$.
\end{proof}

Given these results, we want to find $\phi$ such that $\mathring{\Sc}^{\hbeta}(\omega_{c,\epio,\phi})\in \dot{\cC}^{\infty}(\chM)$. Our strategy to construct such a $\phi$ is a step by step elimination of all the terms in the expansion of $\mathring{\Sc}^{\hbeta}(\omega_{c,\epio,0})$ at $\hH_0$ and $\hH_+$.  Let us first explain how the decay at $\hH_0$ can be improved.

\begin{lemma}\label{c2.1}
Suppose that $m\ge 2$ and that there exists $\phi\in\cA_{\phg}(\chM)\cap L^{\infty}(\chM)$ vanishing on $\cH_0$ and $\cH_+$ with $dd^c\phi\in \cA(\chM;\Lambda^2({}^{c,\epio}T^*\chM))\cap L^{\infty}(\chM;\Lambda^2({}^{c,\epio}T^*\chM))$ vanishing on $\hH_0$ and $\hH_+$  such that
\begin{equation*}
    \rho_+^4\mathring{\Sc}^{\hbeta}(\omega_{c,\epio,\phi})= o(\epio^\kappa)
\label{c2.1a}\end{equation*}
for some $\kappa\ge 0$. 
Then there exists $u\in\cA_{\phg}(\chM)\cap L^{\infty}(\chM)$ vanishing on $\cH_0$ and $\cH_+$ with $dd^c u\in \cA(\chM;\Lambda^2({}^{c,\epio}T^*\chM))\cap L^{\infty}(\chM;\Lambda^2({}^{c,\epio}T^*\chM))$ vanishing on $\hH_0$ and $\hH_+$ such that
\begin{equation*}
    \rho_+^4\mathring{\Sc}^{\hbeta}(\omega_{c,\epio,\phi+\epio^{\kappa}u})= o(\rho_0^{\lambda}\epio^\kappa) \quad \forall \;\lambda<2.
\label{c2.1b}\end{equation*}
\end{lemma}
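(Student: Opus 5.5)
The plan is to remove the leading term of $\rho_+^4\mathring{\Sc}^{\hbeta}(\omega_{c,\epio,\phi})$ at $\hH_0$ by solving the linearized equation on $\bM\cong\hH_0$. Polyhomogeneity (Lemma~\ref{fs.14}) together with the hypothesis $o(\epio^\kappa)$ gives
$$
\rho_+^4\mathring{\Sc}^{\hbeta}(\omega_{c,\epio,\phi}) = \epio^{\kappa}\, g + o(\epio^{\kappa}),
$$
where the leading coefficient $g$ may carry $\log\epio$ factors; we treat each log power separately, top power first. Since $\epio=\rho_0\rho_+$, dividing by $\rho_+^4$ and restricting to $\hH_0$ gives a function $f:=\rho_+^{-4}\epio^{-\kappa}(\ldots)|_{\hH_0}$ on $\bM$. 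Lemma~\ref{fs.14} shows that $\rho_+^4\mathring{\Sc}^{\hbeta}$ vanishes at $\hH_+$, so $f$ is polyhomogeneous on $\bM$ with $f=\cO(r^{-4-\kappa+\mu})$ for some $\mu>0$. Since $\kappa$ need not be an integer, the factor $\epio^\kappa=\rho_0^\kappa\rho_+^\kappa$ contributes $r^{\kappa}$ at $\hH_0$. For this reason I would work with the ansatz $u$ whose restriction to $\hH_0$ is $r^{-\kappa}v$ times a cutoff, so that $\epio^\kappa u|_{\hH_0}$ behaves like $\rho_0^\kappa v$.

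The key step is to solve $\bbS v=-f$ on $\bM$. First, $f$ is $L^2$-orthogonal to $P_0$ with respect to $\omega_0$. This holds because $\mathring{\Sc}^{\hbeta}$ is orthogonal to the Killing potentials $P_{c,\epio,\phi}$ for each $\epio$ (by the definition of $\ell^{\ext}$). Expanding that orthogonality in $\epio$ and using the decay of $f$ near $\pa\bM$ to justify the pushforward, the leading coefficient is orthogonal to $P_0$. Once orthogonality holds, Proposition~\ref{la.16} applies when $m>3$, and Proposition~\ref{la2.7} when $m=2$, where orthogonality to $P_{0,+}\subset P_0$ suffices. Choosing the weight $\nu$ so that $f\in (r^{\nu-6}L^2_b)^{\perp}$, these give a $\bbT$-invariant solution $v\in r^{\nu}L^{2,\infty}_b$. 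For $m=3$, Proposition~\ref{la.26} instead gives $v$ modulo the finite-dimensional space $\cD$ of smooth functions; I would absorb that correction into $u$ as an additional smooth orbifold term. Corollary~\ref{b.13} then makes $v$ polyhomogeneous. The indicial analysis (Lemma~\ref{la.14}, Lemma~\ref{la2.2}) shows that the expansion of $v$ near $\pa\bM$ has no terms between the leading one and $r^2$.

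Next, extend $v$ (times $\rho_+^{\text{appropriate}}$) off $\hH_0$ to a polyhomogeneous $u$ on $\chM$ vanishing on $\hH_+$, cutting off near $\hH_+$ if needed. By Lemma~\ref{fs.16a}, the change in $\rho_+^4\mathring{\Sc}^{\hbeta}$ caused by $\epio^\kappa u$ equals $\epio^\kappa\rho_+^4\bbS v$ at $\hH_0$ plus $o(\epio^\kappa)$. The leading coefficient at $\hH_0$ therefore cancels. Iterating over the finitely many exponents and log powers in $[0,2)$ of the expansion in $\rho_0$ at $\hH_0$ gives decay $o(\rho_0^\lambda\epio^\kappa)$ for all $\lambda<2$. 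The restriction $\lambda<2$ appears because the next indicial root (the integer $2$, or $1$ when degree-one invariant polynomials exist) would obstruct further improvement.

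The main obstacle I expect is the bookkeeping at the corners $\hH_0\cap\hH_+$. We must ensure that $u$ and $dd^c u$ vanish at $\hH_+$ and stay bounded as sections of the conical bundle, and that the error terms at $\hH_+$ do not degrade below $o(\epio^\kappa)$. This requires choosing $\nu$ in the allowed window so that $r^{-\kappa}v$ decays suitably at $\pa\bM$, and I would first try to get this from the weight ranges in Propositions~\ref{la.16}, \ref{la2.7} and \ref{la.26}. A second delicate point is the orthogonality to $P_0$, which I would obtain from the $\epio$-expansion of the normalization defining $\ell^{\ext}$.
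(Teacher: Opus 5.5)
Your outline matches the paper's: orthogonality from the pushforward of the extremal normalization, solving $\bbS$ on $\bM$, extending off $\hH_0$, applying Lemma~\ref{fs.16a}, and iterating over exponents below $2$. The central step, however, is set up wrongly and as written gives no gain.

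\textbf{The source and the correction.} Under the hypothesis, $\epio^{-\kappa}\rho_+^4\mathring{\Sc}^{\hbeta}(\omega_{c,\epio,\phi})$ already vanishes on $\hH_0$. So there is no coefficient at order $\epio^{\kappa}$, and your $f$ is zero. What must be removed is the top term $\sum_j v_{\delta,j}\rho_0^{\delta}(\log\rho_0)^j$, with $0<\delta<2$, of $\epio^{-\kappa}\mathring{\Sc}^{\hbeta}(\omega_{c,\epio,\phi})$ at $\hH_0$.
\begin{itemize}
\item Use $\rho_0=\epio/\rho_+$ and expand $(\log\epio-\log\rho_+)^j$ binomially. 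This rewrites the term as $\sum_j\widetilde v_{\delta,j}\,\epio^{\delta}(\log\epio)^j$, where $\widetilde v_{\delta,j}$ is a function on $\hH_0$ built from the $v_{\delta,i}\,r^{-\delta}(\log r)^{i-j}$ and is $O(r^{-4-\delta+\mu})$.
\item Solve $\bbS w_{\delta,j}=\widetilde v_{\delta,j}$ and take $u=-\sum_j\epio^{\delta}(\log\epio)^j w_{\delta,j}$.
\item The factor $\epio^{\delta}$ makes $u$ vanish on $\hH_0$. Since $\bbS$ acts at fixed $\epio$, it passes through this factor, and Lemma~\ref{fs.16a} leaves an error $o(\rho_0^{\delta})$, which is a real gain.
\end{itemize}
Your bound $f=O(r^{-4-\kappa+\mu})$ suggests you intended this source with $\kappa$ in the role of $\delta$. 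In that case the twist $r^{-\kappa}$ is already inside $f$ and must not be applied again to $u$. Your ansatz $u|_{\hH_0}=r^{-\kappa}v\chi$ fails in three ways:
\begin{itemize}
\item it does not vanish on $\hH_0$, which the statement requires;
\item it changes the order-$\epio^{\kappa}$ coefficient by $\epio^{\kappa}\rho_+^4\bbS(r^{-\kappa}v)$, not $\epio^{\kappa}\rho_+^4\bbS v$, and so breaks the hypothesis;
\item the remainder $o(\epio^{\kappa})$ you claim is no better than what you started with.
\end{itemize}

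\textbf{The corner.} You cannot resolve the corner difficulty by making the solution decay at $\pa\bM$, because in general it does not. The weight must be positive, $\nu\in\,]2-\delta,2-\delta+\mu[\,\subset\,]0,2[$ with $\nu\neq1$; this is where $\delta<2$ is used. The relevant results are Corollary~\ref{la.16b}, Proposition~\ref{la.26} and Proposition~\ref{la2.7}. For $m>3$, your use of Proposition~\ref{la.16} plus regularity gives an equally good $w$, provided you accept its behaviour at the $p_i$. Those domains contain the non-decaying pieces $\cC_i$, $\cP_i$, $\cD$, so near $p_i$ you only get $w_{\delta,j}=\chi_iP_{\delta,j,i}+O(r^{b})$, with $\deg P_{\delta,j,i}\le 1$ and $b>2-\delta$.

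The extension works for two reasons: $dd^cP_{\delta,j,i}=0$, and $\epio^{\delta}r^{b}=\rho_0^{\delta}\rho_+^{\delta+b}$ with $\delta+b>2$. Together these make $dd^c\bigl(\epio^{\delta}(\log\epio)^jw_{\delta,j}\bigr)$ vanish on $\hH_0$ and $\hH_+$ as a section of $\Lambda^2({}^{c,\epio}T^*\chM)$. The factor $\epio^{\delta}$ also makes $u$ itself vanish on $\hH_+$. A cutoff near $\hH_+$ would instead leave the $\rho_0^{\delta}$ term uncancelled near $\hH_0\cap\hH_+$, where the conclusion must also hold.

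A minor point: for $m=2$, the leading term of the pushforward is the pairing with $\mu_0(\tau)$ only when $\mu_0(\tau)\in P_{0,+}$; otherwise the integral need not converge. So orthogonality to all of $P_0$ is not available there, though orthogonality to $P_{0,+}$, which is all you use, is.
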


\begin{proof}
By the previous result, 
$$
   \frac{ \rho_+^4\mathring{\Sc}^{\hbeta}(\omega_{c,\epio,\phi})}{\epio^{\kappa}}\in\cA_{\phg}(\chM)\cap L^{\infty}(\chM)
$$
and its restrictions to $\hH_+$ and $\hH_0$ vanish by assumption.  In the polyhomogeneous expansion of  
$$
  \frac{\mathring{\Sc}^{\hbeta}(\omega_{c,\epio,\phi})}{\epio^{\kappa}}
$$
at $\hH_0$, suppose that the top order term is 
$$
     \sum_{j=0}^k v_{\delta,j}\rho_0^{\delta}(\log\rho_0)^j
$$
for some $\delta\in]0,2[$ and $k\in\bbN$, so that
$$
 \rho_+^4\left( \frac{\mathring{\Sc}^{\hbeta}(\omega_{c,\epio,\phi})}{\epio^{\kappa}}- \sum_{j=0}^k v_{\delta,j}\rho_0^{\delta}(\log\rho_0)^j\right)= o(\rho_0^{\delta})
$$
near $\hH_0$.  This top order term can be rewritten
$$
\begin{aligned}
 \sum_{j=0}^k v_{\delta,j}\rho_0^{\delta}(\log\rho_0)^j &=  \sum_{j=0}^k v_{\delta,j}\lrp{\frac{\epio}{\rho_+}}^{\delta}(\log\epio- \log\rho_+)^j \\
  &= \sum_{j=0}^k v_{\delta,j}\lrp{\frac{\epio}{\rho_+}}^{\delta} \sum_{i=0}^j (-1)^i \lrp{\begin{matrix} j \\ i \end{matrix}} (\log\epio)^{j-i}(\log\rho_+)^i \\
  &= \sum_{j=0}^k \widetilde{v}_{\delta,j}\epio^{\delta}(\log\epio)^{j},
 \end{aligned}
$$
where 
$$
 \widetilde{v}_{\delta,j}= \sum_{p=0}^{k-j} (-1)^p \lrp{\begin{matrix} j+p \\ p \end{matrix}} v_{\delta,p+j}\rho_+^{-\delta} (\log\rho_+)^{p}.
$$
Since $\frac{\rho_+^4\mathring{\Sc}^{\hbeta}(\omega_{c,\epio,\phi})}{\epio^{\kappa}}$ vanishes at $\hH_+$, there exists $\lambda>0$ such that
$$
    v_{\delta,j}\in \rho_+^{\lambda-4} L^{2,\infty}_b(\hH_0) \quad \forall \; j,
$$
hence such that 
$$
   \widetilde{v}_{\delta,j}\in \rho_+^{b}L^{2,\infty}_b(\hH_0) \quad \forall \; j \quad \mbox{and} \quad \forall \; b\in ]-\delta-4, -\delta-4+\lambda[.
$$
  Since $\omega_{c,\epio,0}^{[m]}\in \rho_+^{2m}\CI(\chM; \Lambda^{2m}({}^{b,\epio}T^*\chM))$ decays to order $\rho_+^{2m}$ at $\hH_+$ as a $b$-surgery density, we can proceed as in \cite[\S~5.2.1]{Benabida} and use Melrose's pushforward theorem \cite[Theorem~5]{Melrose1992} to see that
$$
   (\epio)_*\left( \mathring{\Sc}^{\hbeta}(\omega_{c,\epio,0})\mu_{\epio,\phi}(\tau)\omega_{c,\epio,0}^{[m]}  \right)
$$ 
is polyhomogeneous for all $\tau\in\mathfrak{t}$.  Furthermore, when $m\ge 3$,  its top order term at $\epio=0$ is given by
\begin{equation}
   \sum_{j=0}^{k} \lrp{ \int_{\hH_0} \widetilde{v}_{\delta,j} \mu_{\epio,\phi}(\tau) \omega_0^{[m]}  } \epio^{\delta}(\log\epio)^j.
\label{c2.3}\end{equation}
For $m=2$, the top order term is still given by \eqref{c2.3} provided $\tau\in \mathfrak{t}$ is such that $\mu_{\epio,\phi}(\tau)|_{\hH_0}=\mu_0(\tau)\in P_{0,+}$.
Since  on the other hand, by the definition of $\mathring{\Sc}^{\hbeta}(\omega_{c,\epio,\phi})$, 
$$
   (\epio)_*\left( \mathring{\Sc}^{\hbeta}(\omega_{c,\epio,\phi})\mu_{\epio,\phi}(\tau)\omega_{c,\epio,\phi}^{[m]}  \right)=0,
$$
we have that 
$$
  \int_{\hH_0} \widetilde{v}_{\delta,j} \mu_{\epio,0}(\tau) \omega_0^{[m]}=0 \quad \forall \tau\in \mathfrak{t}, \quad, \forall \; j
$$
when $m\ge 3$. For $m=2$, this holds for $\tau\in \mathfrak{t}$ such that $\mu_{\epio,\phi}(\tau)|_{\hH_0}=\mu_0(\tau)\in P_{0,+}$, 
which implies that $\widetilde{v}_{\delta,i}\in (r^bL^{2,\infty}_{b}(\hH_0)^{\bbT})^{\perp_+}$ for all $b\in ]-\delta-4, -\delta-4+\lambda[$.  Since $\delta\le 2$, we can apply 
Corollary~\ref{la.16b} when $m>3$, Proposition~\ref{la.26} when $m=3$ and Proposition~\ref{la2.7} when $m=2$  with   $\nu\in]0,1[\cup ]1,2[$  with
\begin{equation*}
    -\delta-4<\nu-6< -\delta-4+ \lambda
\label{sol.2}\end{equation*}
to find  $w_{\delta,j}$ such that 
\begin{equation}
   \bbS w_{\delta,j}= \widetilde{v}_{\delta,j}.
\label{sol.1}\end{equation}
By the regularity results of Corollary~\ref{b.13}, $w_{\delta,j}$ is also polyhomogeneous.
   Moreover, near $\pa_{p_i}\bM= \hH_i\cap \hH_0$, we can find a $\Gamma_i$-invariant polynomial $P_{\delta,j,i}$ of degree $1$ on $\bbC^m/\Gamma_i$ such that 
$$
        w_{\delta,j}- \chi_iP_{\delta,j,i}\in r^b L^{2,\infty}_b(\hH_0) \quad \mbox{for any } \quad -\delta+2<b<\min\{2,-\delta+2+\lambda\},
$$  
where $\chi_i\in \CI(\hH_0)$ is a cut-off function equal to $1$ near $\hH_0\cap\hH_i$ and $0$ outside some small neighbourhood of $\hH_0\cap\hH_i$ in $\hH_0$.
Extending $w_{\delta,j}$ smoothly off $\hH_0$, this means, using the fact that $dd^cP_{\delta,j,i}=0$, that $dd^c (\epio^{\delta}(\log\epio)^j w_{\delta,j})$ vanishes on $\hH_+$ and $\hH_0$ as a section of $\Lambda^2({}^{c,\epio}T^*\chM)$.

Setting 
$$
    u_{\delta,j}:= w_{\delta,j}\epio^{\delta}(\log\epio)^j \quad \mbox{and} \quad u_{\delta}:= -\sum_{j=0}^k u_{\delta,j},
$$
we then obtain using Lemma~\ref{fs.16a} that
$$
\begin{aligned}
\frac{\rho_+^4\Sc^{\hbeta}(\omega_{c,\epio,\phi+\epio^{\kappa}u_{\delta}})}{\epio^{\kappa}}&= \frac{\rho_+^4\Sc^{\hbeta}(\omega_{c,\epio,\phi})}{\epio^{\kappa}}- \sum_{j=0}^{k} \rho_+^4\epio^{\delta}(\log\epio)^{j} \bbS w_{\delta,j} + o(\rho_0^{\delta}) \\
              &= \frac{\rho_+^4\Sc^{\hbeta}(\omega_{c,\epio,\phi})}{\epio^{\kappa}}- \rho_+^4 \lrp{\sum_{j=0}^{k} \widetilde{v}_{\delta,j} \epio^{\delta}(\log \epio)^{k} } +o(\rho_0^{\delta}) \\
              &=  o(\rho_0^{\delta})
\end{aligned}
$$
near $\hH_0$.  Replacing $\omega_{c,\epio,\phi}$ by $\omega_{c,\epio,\phi+\epio^{\kappa}u_{\delta}}$, we can iterate this argument and find $u_0\in \cA_{\phg}(\chM)\cap L^{\infty}(\chM)$ vanishing on $\hH_0$ and $\hH_+$  with $dd^cu_0\in \cA_{\phg}(\chM;\Lambda^2({}^{c,\epio}T^*\chM))\cap L^{\infty}\chM;\Lambda^2({}^{c,\epio}T^*\chM))$ vanishing on $\hH_0$ and $\hH_+$ such that
\begin{equation*}
   \frac{\rho_+^4\mathring{\Sc}^{\hbeta}(\omega_{c,\epio,\phi+\epio^{\kappa} u_0})}{\epio^{\kappa}}\in \rho_0^{\lambda}\cA_{\phg}(\chM)\cap L^{\infty}(\chM)  \quad \forall \;  \lambda<2
\label{fs.18}\end{equation*}
 vanishes on $\hH_+$.  
\end{proof}

To improve the decay at $\hH_+$, we can instead proceed as follows.
\begin{lemma}\label{c2.4}
Suppose that $m\ge 2$ and that there exists $\phi\in\cA_{\phg}(\chM)\cap L^{\infty}(\chM)$ vanishing on $\cH_0$ and $\cH_+$ with $dd^c\phi\in \cA(\chM;\Lambda^2({}^{c,\epio}T^*\chM))\cap L^{\infty}(\chM;\Lambda^2({}^{c,\epio}T^*\chM))$ vanishing on $\hH_0$ and $\hH_+$  such that
\begin{equation}
    \rho_+^4\mathring{\Sc}^{\hbeta}(\omega_{c,\epio,\phi})= o(\epio^\kappa \rho_0^{\frac32})
\label{c2.4a}\end{equation}
for some $\kappa\ge 0$.  
Then there exists $u\in\cA_{\phg}(\chM)\cap L^{\infty}(\chM)$ vanishing on $\cH_0$ and $\cH_+$ with $dd^c u\in \cA(\chM;\Lambda^2({}^{c,\epio}T^*\chM))\cap L^{\infty}(\chM;\Lambda^2({}^{c,\epio}T^*\chM))$ vanishing on $\hH_0$ and $\hH_+$ such that
\begin{equation*}
    \rho_+^4\mathring{\Sc}^{\hbeta}(\omega_{c,\epio,\phi+\epio^{\kappa}u})= o(\epio^{\kappa+\frac12}).
\label{c2.4b}\end{equation*}
\end{lemma}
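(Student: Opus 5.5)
Lemma~\ref{c2.4} is the counterpart of Lemma~\ref{c2.1}, with the roles of $\hH_0$ and $\hH_+$ exchanged. Instead of the orbifold operator $\bbS$ on $\bM$, we use the model operators $\bbS_{i,\sc}$ on the ALE spaces $X_i$, that is, the normal operator of the linearization at $\hH_{\fb}$.

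\textbf{Step 1 (the leading term at $\hH_+$).} By Lemma~\ref{fs.14} and the assumption \eqref{c2.4a}, the function $f:=\epio^{-\kappa}\rho_+^4\mathring{\Sc}^{\hbeta}(\omega_{c,\epio,\phi})$ is polyhomogeneous, bounded, and $o(\rho_0^{3/2})$ near $\hH_0$. Let $\rho_+^{\delta}\sum_{j\le k} v_{\delta,j}(\log\rho_+)^j$ be its leading term at $\hH_+$, with $\delta>0$. Since $\epio=\rho_0\rho_+$, we can rewrite it, exactly as in the proof of Lemma~\ref{c2.1} but with $\rho_0$ and $\rho_+$ swapped, as $\sum_j \widetilde v_{\delta,j}\,\epio^{\delta}(\log\epio)^j$. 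Here each $\widetilde v_{\delta,j}$ is a function on $\hH_+=\bigsqcup_i \bX_i$ which is $\cO(\rho_i^{3/2-\delta-0})$ at infinity, up to logarithms.

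It suffices to treat $\delta\le \frac12$. Once all exponents $\le\frac12$ at $\hH_+$ have been removed, the remainder is $o(\rho_+^{1/2}\rho_0^{3/2})=o(\epio^{1/2})$, because $\rho_0^{3/2}\le\rho_0^{1/2}$. So every $\widetilde v_{\delta,j}$ we need to handle decays at least like $\rho_i^{1+}$ in $\bX_i$. Dividing out the $\rho_+^4$ normalization, this means $\epio^{-4}\widetilde v_{\delta,j}\rho_+^{-4}$ decays like $\rho_i^{5+}$, where the extra $\rho_i^4$ comes from the scattering scaling.

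\textbf{Step 2 (solving the model equation on each $X_i$).} On each $X_i$ we solve $\bbS_{i,\sc}w_{\delta,j}=\widetilde v_{\delta,j}$, understood in the rescaled sense. We use Proposition~\ref{la.33}/Corollary~\ref{la.33e} when $m>3$, Proposition~\ref{la.35} when $m=3$, and Corollary~\ref{lasc.7} or Corollary~\ref{lasc.8} when $m=2$. The weight is $\nu$ slightly negative, in $]-1,0[$, or in $]-2,0[$ when $m=2$, chosen so that the right-hand side lies in $\rho_i^{\nu+6}L^{2,\infty}_b$. At these weights the operators are surjective, and the cokernel obstruction (constants, in the cases $m=3$ with $\nu>0$ and $m=2$) does not occur.

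The main subtlety is the case $m=2$ with Corollary~\ref{lasc.7}. There the right-hand side must have zero integral against $\omega_i^{[m]}$. We would get this as in Lemma~\ref{c2.1}: the normalization $(\epio)_*(\mathring{\Sc}^{\hbeta}\omega^{[m]}_{c,\epio,\phi})=0$ holds, and by the pushforward theorem its coefficient of $\epio^{\delta}(\log\epio)^j$ comes from $\int_{X_i}\widetilde v_{\delta,j}\omega_i^{[m]}$. The hypothesis $o(\rho_0^{3/2})$ at $\hH_0$ is what kills the competing $\hH_0$ contribution in that coefficient. If this fails we instead use Corollary~\ref{lasc.8} with weight in $]-3,-2[$, at the cost of a $\log$ term.

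By Corollary~\ref{b.13}, $w_{\delta,j}$ is polyhomogeneous. Its expansion at infinity begins with a constant (or $\log\rho_i$) term followed by decaying terms. The constant is harmless, since $dd^c$ of a constant vanishes. Subtracting it, and cutting off with $\chi_\delta(\rho_0)$, we extend $w_{\delta,j}$ off $\hH_+$ so that it vanishes at $\hH_0$ and $dd^c(\epio^{\delta}(\log\epio)^j w_{\delta,j})$ vanishes at both faces.

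\textbf{Step 3 (correction and iteration).} Set $u_\delta=-\sum_j \epio^{\delta}(\log\epio)^j w_{\delta,j}$. Near $\hH_+$, the linearization of $\rho_+^4\mathring{\Sc}^{\hbeta}$ restricts at leading order to $\epio^{-?}$ times $\bbS_{i,\sc}$ in the appropriate scaling. The $\theta$ term and the $\ell^{\ext}$ term contribute at higher order at $\hH_+$, by Lemmas~\ref{fs.12a} and~\ref{fs.13a}. Lemma~\ref{fs.16a} then shows that the quadratic remainder is of higher order.

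Consequently $\epio^{-\kappa}\rho_+^4\mathring{\Sc}^{\hbeta}(\omega_{c,\epio,\phi+\epio^\kappa u_\delta})=o(\rho_+^{\delta})$ near $\hH_+$, while the bound $o(\rho_0^{3/2})$ at $\hH_0$ is preserved up to the cut-off errors, which are supported away from $\hH_0$. We iterate over the finitely many exponents $\le\frac12$ and take $u=\sum u_\delta$.

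\textbf{Main obstacle.} I expect the hardest step to be the compatibility across the corner $\hH_0\cap\hH_+$. Two things must be checked: that the growth of $w_{\delta,j}$ at infinity in $X_i$ does not spoil the $\rho_0^{3/2}$ decay at $\hH_0$, and that the orthogonality condition needed when $m=2$ holds. This is where the exponent $3/2$ and the restriction to gaining only $\epio^{1/2}$ enter.
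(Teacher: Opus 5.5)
\textbf{Case $m\ge 3$.} Here your route is the paper's: solve $\bbS_{i,\sc}w_{\delta,j}=\widetilde v_{\delta,j}$ on each $X_i$ with a weight $\nu\in]-1,0[$ (Corollary~\ref{la.33e} or Proposition~\ref{la.35}), then correct and control the error with Lemma~\ref{fs.16a}. Two details in Steps 2--3 need fixing.

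First, the power of $\epio$ in the correction. The linearization near $\hH_+$ is $\epio^{-6}\bbS_{i,\sc}$, and $\rho_+^4=\epio^4\rho_0^{-4}$. So in your normalization the correction must be $\epio^{\delta+2}(\log\epio)^jw_{\delta,j}$ with $\bbS_{i,\sc}w_{\delta,j}=\rho_0^4\widetilde v_{\delta,j}$, not $\epio^{\delta}(\log\epio)^jw_{\delta,j}$. This matches the paper's $\epio^{\delta+6}$, where its $\delta$ is the exponent of $\mathring{\Sc}^{\hbeta}$ itself rather than of $\rho_+^4\mathring{\Sc}^{\hbeta}$.

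Second, the expansion of $w_{\delta,j}$ does not begin with a constant. The particular solution grows like $\rho_i^{-\frac12-\delta}$ at infinity, up to a small gain. It is exactly this growth, multiplied by $\epio^{\delta+2}$, that gives $u_\delta=\cO(\rho_+^{\delta+2}\rho_0^{\frac32+})$. Hence the error is $\cO(\rho_+^{\delta+\sigma}\rho_0^{\frac32+})$, which keeps the control at $\hH_0$. Also, the cut-off errors are supported away from $\hH_+$, not away from $\hH_0$; they are harmless for the same reason.

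\textbf{Case $m=2$: a genuine gap.} The normalization of $\mathring{\Sc}^{\hbeta}$ gives $(\epio)_*\bigl(\mathring{\Sc}^{\hbeta}(\omega_{c,\epio,\phi})\,h\,\omega^{[2]}_{c,\epio,\phi}\bigr)=0$ only for $h\in P_{c,\epio,\phi}$. Such an $h$ tends to the constant $h(p_i)$ on $\hH_i$. So the $\epio^{\delta}(\log\epio)^j$ coefficients yield only
$\sum_{i=1}^{\ell}h(p_i)\int_{X_i}\rho_0^4\widetilde v_{\delta,j}\,\omega_i^{[2]}=0$ for $h\in P_0$.
In other words, the vector of the $\ell$ integrals is orthogonal to $\{(h(p_1),\ldots,h(p_{\ell}))\,:\,h\in P_0\}$; it is not zero. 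Corollary~\ref{lasc.7} needs each integral to vanish separately. That fails as soon as this subspace is not all of $\bbR^{\ell}$, for instance in the examples of Section~\ref{s:2}, where $\ell=2k\ge 4$.

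Your fallback does not close the gap. Corollary~\ref{lasc.8} produces $w_{\delta,j}$ with index set $\ge(-2,1)$, so its leading term is $\rho_i^{-2}\log\rho_i\sim\|z\|^2\log\|z\|$, not $\log\rho_i$. The correction then has a term of order $\rho_0^{\delta}(\log\rho_0)^{j+1}$ at $\hH_0$. The new error is only $\cO(\rho_+^{\delta+\sigma}\rho_0^{\delta}(\log\rho_0)^{k+1})$ with $\delta\le\frac12$. This destroys the $o(\rho_0^{\frac32})$ control you need both for the conclusion and for iterating.

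The missing step is the paper's back-and-forth. After applying Corollary~\ref{lasc.8}, rerun the argument of Lemma~\ref{c2.1} to clear the $\hH_0$ expansion up to $\rho_0^{\frac32}$. That argument uses Proposition~\ref{la2.7} on $\bM$, which requires orthogonality only to $P_{0,+}$. This reintroduces terms of order $\rho_+^{\delta}(\log\rho_+)^j$ at $\hH_+$. These new terms come from the linearization at $\hH_+$, so they lie in the range of Corollary~\ref{lasc.7}. They can then be removed using its growth-controlled solutions at weight $\nu\in]-2,0[$.
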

\begin{proof}
To improve the decay at $\hH_i$ for $i>0$, suppose that for some $\delta\in ]-4,-\frac72]$, the top order term in the polyhomogeneous expansion of $\frac{\mathring{\Sc}^{\hbeta}(\omega_{c,\epio,\phi})}{\epio^{\kappa}}$ at $\hH_i$ is of the form
$$
 \begin{aligned}
 \sum_{j=0}^k v_{\delta,j}\rho_+^{\delta}(\log\rho_+)^{j}&= \sum_{j=0}^k v_{\delta,j} \lrp{\frac{\epio}{\rho_0}}^\delta (\log \epio- \log \rho_0)^j \\
 &= \sum_{j=0}^{k} \widetilde{v}_{\delta,j} \epio^{\delta}(\log\epio)^j,
 \end{aligned}
$$
where 
$$
    \widetilde{v}_{\delta,j}:= \sum_{p=0}^{k-j} (-1)^p \lrp{\begin{matrix} j+p \\ p\end{matrix}}v_{\delta,p+j} \rho_0^{-\delta}(\log\rho_0)^p.
$$
By \eqref{c2.4a}, we know that
$$
      v_{\delta,j}\in \rho_0^{\frac32+\delta'}L^{2,\infty}_b(\hH_i) \quad \mbox{for some} \quad \delta'>0,
$$
so
$$
   \widetilde{v}_{\delta,j}\in \rho_0^{\frac32-\delta+\frac{\delta'}{2}}L^{2,\infty}_b(\hH_i).
$$
Without loss of generality, we can assume that $\delta'<\delta+4$.  Suppose first that $m\ge 3$.  Then
taking $\nu\in ]-1,0[$ such that $\nu<-\frac92-\delta+\frac{\delta'}2$, we know by Corollary~\ref{la.33e} when $m>3$ and by Proposition~\ref{la.35} when $m=3$,  combined with the regularity result of Corollary~\ref{b.13}, that there exists 
$$
   w_{\delta,j}\in \cA_{\phg}(\hH_i)\cap \rho_0^{-\frac92-\delta+\frac{\delta'}4}L^{2,\infty}_b(\hH_i)
$$
such that 
\begin{equation}
   \bbS_{i,\sc}w_{\delta,j}= \widetilde{v}_{\delta,j}.
\label{c2.7}\end{equation}
Extending $w_{\delta,j}$ smoothly off $\hH_i$, notice that
\begin{equation}
   \epio^{\delta+6}(\log\epio)^jw_{\delta,j}=\cO(\rho_+^{\delta+6-\sigma}\rho_0^{\frac32+\frac{\delta'}4}) \quad \forall \sigma>0
\label{c2.7b}\end{equation}
near $\hH_i$,
which ensures that $dd^c(\epio^{\delta+6}(\log\epio)w_{\delta,j})$ vanishes on $\hH_i$ and $\hH_0$ as a section of $\Lambda^2({}^{c,\epio}T^*\chM)$.    Setting
$$
u_{\delta,j}:= \epio^{\delta+6}(\log\epio)^j w_{\delta,j} \quad \mbox{and} \quad u_\delta:= -\sum_{j=0}^k u_{\delta,j},
$$
we see from Lemma~\ref{fs.16a} that there exists $\sigma>0$ sufficiently small such that
\begin{equation}
\begin{aligned}
\frac{\mathring{\Sc}^{\hbeta}(\omega_{c,\epio,\phi_0+\epio^{\kappa}u_{\delta}})}{\epio^{\kappa}}&= \frac{\mathring{\Sc}^{\hbeta}(\omega_{c,\epio,\phi_0})}{\epio^{\kappa}}- \sum_{j=0}^k \epio^{\delta}(\log\epio)^j \bbS_{i,\sc}w_{\delta,j} + \cO(\rho_+^{\delta+\sigma}\rho_0^{\frac32+\frac{\delta'}4}) \\
&= \frac{\mathring{\Sc}^{\hbeta}(\omega_{c,\epio,\phi_0})}{\epio^{\kappa}}- \sum_{j=0}^k \epio^{\delta}(\log\epio)^j \widetilde{v}_{\delta,j} + \cO(\rho_+^{\delta+\sigma}\rho_0^{\frac32+\frac{\delta'}4}) \\
&= \cO(\rho_+^{\delta+\sigma}\rho_0^{\frac32+\frac{\delta'}4})
\end{aligned}
\label{fs.19}\end{equation}
near $\hH_i$.  Performing a similar argument for each connected component of $\hH_+$ ensures that \eqref{fs.19} holds near $\hH_+$.  Iterating this argument, we can find $u\in \cA_{\phg}(\chM)\cap L^{\infty}(\chM)$ with $\frac{u}{\rho_+^2}$ vanishing at $\hH_+$ and $\hH_0$ such that 
$$
    \frac{\rho_+^4\mathring{\Sc}^{\hbeta}(\omega_{c,\epio,\phi+\epio^{\kappa}u})}{\epio^{\kappa}}\in \epio^{\frac12}\lrp{ \cA_{\phg}(\chM)\cap L^{\infty}(\chM) }
$$ 
with $ \epio^{-\kappa-\frac12}\rho_+^4\mathring{\Sc}^{\hbeta}(\omega_{c,\epio,\phi+\epio^{\kappa}u})$ vanishing at $\hH_+$ and $\hH_0$.   

When $m=2$, we need to work directly with $\hH_+$. To obtain $w_{\delta,j}$ as in \eqref{c2.7} for each $i$, we can apply Corollary~\ref{lasc.8}, which yields such a $w_{\delta,j}\in \cA^{\cE}_{\phg}(\hH_+)$ for an index set $\cE$ such that $\min\cE\ge (-2,1)$.  In this case, the top order term of $\epio^{\delta+6} (\log\epio)^j w_{\delta,j}$ at $\hH_0$ is of order $\rho_0^{\delta+4}(\log\rho_0)^{j+1}$, so that \eqref{fs.19} is replaced by
\begin{equation*}
 \frac{\mathring{\Sc}^{\hbeta}(\omega_{c,\epio,\phi_0+\epio^{\kappa}u_{\delta}})}{\epio^{\kappa}}= \cO(\rho_+^{\delta+\sigma}\rho_0^{\delta+4}(\log\rho_0)^{k+1})
\label{c2.8}\end{equation*}
for some $\sigma>0$.  This improves the decay at $\hH_+$, but since $\delta+4\le \frac12$, it worsen the decay at $\hH_0$ by adding a term of order $\rho_0^{\delta+4}(\log\rho_0)^{k+1}$.

However, we can eliminate this term by applying the argument of Lemma~\ref{c2.1}.  Doing so, this add terms of order $\rho_+^{\delta}(\log\rho_+)^j$ for $j\in\{0,\ldots,k+1\}$ at $\hH_+$.  We can also eliminate higher order terms in the expansion at $\hH_0$ up to order $\rho_0^{\frac32}$ at the cost of adding terms of order $o(\rho_+^{\delta})$ at $\hH_+$.  Now, the new terms of order $\rho_+^{\delta}(\log\rho_0)^j$ comes from the linearization of $\mathring{\Sc}^{\hbeta}$ at $\hH_+$, so they lie in the image of the operator in Corollary~\ref{lasc.7}.  This ensures that these new terms of order $\rho_+^{\delta}(\log\rho_+)^j$ can be eliminated using Corollary~\ref{lasc.7} instead of Corollary~\ref{lasc.8} to obtain \eqref{fs.19}.  The whole argument can then be iterated to find $u\in \cA_{\phg}(\chM)\cap L^{\infty}(\chM)$ with $dd^cu$ vanishing at $\hH_+$ and $\hH_0$ such that 
$$
    \frac{\rho_+^4\mathring{\Sc}^{\hbeta}(\omega_{c,\epio,\phi+\epio^{\kappa}u})}{\epio^{\kappa}}\in \epio^{\frac12}\lrp{ \cA_{\phg}(\chM)\cap L^{\infty}(\chM) }
$$ 
with $ \epio^{-\kappa-\frac12}\rho_+^4\mathring{\Sc}_{\hbeta}(\omega_{c,\epio,\phi+\epio^{\kappa}u})$ vanishing at $\hH_+$ and $\hH_0$.  \end{proof}

The last two lemmas can be combined as follows to construct a formal solution to the equation
$$
\mathring{\Sc}^{\hbeta}(\omega_{c,\epio,\phi})=0.
$$
\begin{theorem}\label{fs.17}
Suppose that $m\ge 2$.  Then there exists $\phi\in\cA_{\phg}(\chM)\cap L^{\infty}(\chM)$ vanishing on $\cH_0$ and $\cH_+$ with $dd^c\phi\in \cA(\chM;\Lambda^2({}^{c,\epio}T^*\chM))\cap L^{\infty}(\chM;\Lambda^2({}^{c,\epio}T^*\chM))$ vanishing on $\hH_0$ and $\hH_+$  such that
\begin{equation}
    \mathring{\Sc}^{\hbeta}(\omega_{c,\epio,\phi})\in \dot{\cC}^{\infty}(\chM).
\label{fs.17a}\end{equation}
\end{theorem}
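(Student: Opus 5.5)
We will construct $\phi$ as an asymptotic sum $\phi\sim\sum_{n\ge 0}\epio^{n/2}u_n$ obtained by alternately applying Lemma~\ref{c2.1} and Lemma~\ref{c2.4}, and then use Borel summation to pass from finite to infinite order.

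\textbf{Starting point.} We take $\phi_0=0$. By Lemma~\ref{fs.14}, $\rho_+^4\mathring{\Sc}^{\hbeta}(\omega_{c,\epio,0})$ is bounded, polyhomogeneous and vanishes on $\hH_+$. On $\hH_0$ it restricts to $\rho_+^4\mathring{\Sc}^{\beta}(\omega_0)=0$, because $\omega_0$ is $\Sc^\beta$-extremal. Here we use that $\halpha_\epio\to\alpha$ as $\epio\searrow 0$, so that $c_{\halpha_\epio,\hbeta}$ and $\xi^{\ext}$ converge to their orbifold counterparts and the moment maps restrict correctly by Lemma~\ref{fs.13}. Since the expansion is polyhomogeneous, we therefore have $\rho_+^4\mathring{\Sc}^{\hbeta}(\omega_{c,\epio,0})=o(\epio^{\kappa_0})$ for $\kappa_0=0$, which is the hypothesis of Lemma~\ref{c2.1}.

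\textbf{Inductive step.} Suppose $\phi_n$ has the required vanishing properties and satisfies $\rho_+^4\mathring{\Sc}^{\hbeta}(\omega_{c,\epio,\phi_n})=o(\epio^{\kappa_n})$, with $\kappa_n=n/2$. First apply Lemma~\ref{c2.1}. This gives $u$ such that the error becomes $o(\rho_0^\lambda\epio^{\kappa_n})$ for every $\lambda<2$, and in particular $o(\epio^{\kappa_n}\rho_0^{3/2})$. Then apply Lemma~\ref{c2.4} to $\phi_n+\epio^{\kappa_n}u$. This gives $u'$ with $\phi_{n+1}:=\phi_n+\epio^{\kappa_n}(u+u')$ satisfying the improved bound $o(\epio^{\kappa_n+1/2})$.

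Each correction has the form $\epio^{\kappa_n}$ times a bounded polyhomogeneous function that vanishes on $\hH_0$ and $\hH_+$, and whose $dd^c$ also vanishes there. So the hypotheses on $\phi$ propagate from step to step. One point needs care: in Lemma~\ref{c2.4} we must know the expansion at $\hH_+$ starts at order $\ge\rho_+^{-4}$ and proceeds in steps, so that the iteration inside that lemma terminates. This holds because polyhomogeneous index sets are discrete, and we should check that finitely many exponents in $]-4,-7/2]$ are handled at each step.

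\textbf{Passing to the limit.} Borel-sum the corrections: choose $\phi\in\cA_{\phg}(\chM)\cap L^\infty(\chM)$ with $\phi-\phi_n=o(\epio^{\kappa_n})$ for all $n$, with the corresponding statements for $dd^c$ and for the $\rho_0$-weights. This is possible because the index sets at both $\hH_0$ and $\hH_+$ are obtained by adding multiples of $1/2$ together with log powers, so the accumulated index family remains a genuine index family. By Lemma~\ref{fs.16a}, in both its $\kappa\le4$ and $\kappa>4$ forms, $\rho_+^4\mathring{\Sc}^{\hbeta}(\omega_{c,\epio,\phi})=o(\epio^{\kappa})$ for every $\kappa$. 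Since $\epio=\rho_0\rho_+$, rapid decay in $\epio$ of a polyhomogeneous function means rapid decay at both $\hH_0$ and $\hH_+$. Dividing by $\rho_+^4$ preserves this, which gives \eqref{fs.17a}.

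\textbf{Main obstacle.} The delicate step is the Borel summation together with control of the index family. We must check that the infinitely many corrections (including the $m=2$ back-and-forth within Lemma~\ref{c2.4}, which introduces extra log powers) produce index sets at $\hH_0$ and $\hH_+$ that are still discrete with real parts tending to infinity. We must also check that the error estimates of Lemma~\ref{fs.16a} remain uniform when applied to the summed potential rather than the finite partial sums.
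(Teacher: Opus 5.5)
Your proposal is correct and follows the paper's proof: the paper also starts from $\phi_0=0$, gains a factor $\epio^{1/2}$ at each inductive step by combining Lemmas~\ref{c2.1} and \ref{c2.4}, and concludes by taking an asymptotic sum of the corrections. The index-set concern you flag is harmless. The $n$-th correction is $\epio^{n/2}$ times a single polyhomogeneous function, so only finitely many corrections contribute below any given order at $\hH_0$ or $\hH_+$.
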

\begin{proof}
For $k\in \bbN$, we want to proceed by induction to show that we can find $\phi_k\in \cA_{\phg}(\chM)\cap L^{\infty}(\chM)$ vanishing at $\hH_+$ and $\hH_0$ with $dd^c\phi_k\in\cA_{\phg}(\chM;\Lambda^2({}^{c,\epio}T^*\chM))\cap L^{\infty}(\chM;\Lambda^2({}^{c,\epio}T^*\chM))$ vanishing at $\hH_+$ and $\hH_-$ such that
$$
    \rho_+^4\mathring{\Sc}^{\hbeta}(\omega_{c,\epio,\phi_k})\in \epio^{\frac{k}{2}}\lrp{\cA_{\phg}(\chM)\cap L^{\infty}(\chM) }
$$ 
with $\epio^{-\frac{k}{2}}\rho_+^4\mathring{\Sc}^{\hbeta}(\omega_{c,\epio,\phi_k})$ vanishing at $\hH_+$ and $\hH_-$. The case $k=0$ is trivial, since it suffices to take $\phi_0=0$. So suppose we have found $\phi_k$ and let us show how to construct $\phi_{k+1}$.  The idea is to consider a function  
$$
   u\in  \cA_{\phg}(\chM)\cap L^{\infty}(\chM) 
$$
with $u$ and $dd^cu$ vanishing at $\hH_0$ and $\hH_+$ and  the functional 
$$
    u\mapsto \frac{\rho_+^4\mathring{\Sc}^{\hbeta}(\omega_{c,\epio,\phi_k+\epio^{\frac{k}{2}}u})}{\epio^{\frac{k}{2}}}.
$$
But by Lemmas~\ref{c2.1} and \ref{c2.4}, we can find $u$ as above such that
$$
  \frac{\rho_+^4\mathring{\Sc}^{\hbeta}(\omega_{c,\epio,\phi_k+\epio^{\frac{k}{2}}u})}{\epio^{\frac{k+1}{2}}}  \in\lrp{\cA_{\phg}(\chM)\cap L^{\infty}(\chM)}
$$
and is vanishing at $\hH_+$ and $\hH_0$.  It suffices therefore to take $\phi_{k+1}= \phi_k+ \epio^{\frac{k}{2}}u$ to complete the inductive step of this proof by induction on $k$.  Finally, taking an asymptotic sum
$$
   \phi\sim  \phi_1+ \sum_{k=2}^{\infty} (\phi_k-\phi_{k-1}),
$$
we obtain a function $\phi\in \lrp{\cA_{\phg}(\chM)\cap L^{\infty}(\chM)}$  vanishing at $\hH_+$ and $\hH_0$ with $dd^c\phi$ vanishing on $\hH_+$ and $\hH_0$ such that \eqref{fs.17a} holds. \end{proof}

\subsection{Linear analysis on the $b$-surgery space} \label{MM.0}
Consider as in \eqref{fs.10} a K\"ahler conical surgery metric $\omega_{c,\epio,\phi}$ for some  $\bbT$-invariant function $\phi\in \cA_{\phg}(\chM)\cap L^{\infty}(\chM)$ vanishing at $\hH_0$ and $\hH_+$ with $dd^c\phi$ vanishing at $\hH_0$ and $\hH_+$ as a section of $\Lambda^2({}^{c,\epio}T^*\chM)$.    In this subsection, we want to study various linear elliptic geometric operators associated to this metric in the limit $\epio\searrow 0$.  More precisely, using the $b$-surgery calculus of Mazzeo-Melrose \cite{Mazzeo-MelroseETA}, we will invert them uniformly in $\epio$ by providing a suitable pseudodifferential characterization of their inverses.

Notice first that by  Lemma~\ref{green.1} and Corollary~\ref{MM.1}, the family of operators 
\begin{multline*}
\bbS_{\hbeta,\epio,\phi}(u):= 
-2\delta_{c,\epio,\phi}\delta_{c,\epio,\phi}(\nabla^{\omega_{c,\epio,\phi},-}(\delta_{c,\epio,\phi}\nabla^{\omega_{c,\epio,\phi},-}du))-\delta_{c,\epio,\phi}\delta_{c,\epio,\phi}(R_{c,\epio,\phi}\nabla^{\omega_{c,\epio,\phi},-}du) \\ 
+ 16\delta_{c,\epio,\phi}\delta_{c,\epio,\phi}(\theta_{c,\epio,\phi}\circ \bbG^{H}_{c,\epio,\phi}(\theta_{c,\epio,\phi}\circ \nabla^{\omega_{c,\epio,\phi},-}du)^{\Skew})
\label{MM.2}\end{multline*}
is in $\Psi^{6,\cS}_{b,s}(\chM)$ for an index family $\cS$ such that
$$
\cS|_{\lf}\ge -2, \quad \cS|_{\rf}\ge 2m-2, \quad \cS|_{\fb}\ge-6, \quad \mbox{and} \quad \cS|_{\mf}\ge 0.
$$

Let $\Pi_{P_{c,\epio,\phi}}$ denote the $L^2$-orthogonal projection onto the space $P_{c,\epio,\phi}$ of Killing potentials for vector fields in $\mathfrak{t}$  with respect to the metric $\omega_{c,\epio,\phi}$.

\begin{proposition}\label{MM.3}
Suppose that $m>3$.  Suppose also that $dd^c\phi$ vanishes on $\hH_0$ and $\hH_+$.  Then for $\epio>0$ sufficiently small, the $\bbT$-invariant $L^2$-kernel of $\bbS_{\hbeta,\epio,\phi}$ is $P_{\omega_{c,\epio,\phi}}$.  Moreover, there exists an operator $\bbG_{\hbeta,\epio,\phi}\in \Psi^{-6,\cG}_{b,s}(\chM)$ with 
$$
\cG(\lf)\ge 0, \quad \cG(\rf)\ge2m, \quad \cG(\fb)\ge 6, \quad \mbox{and} \quad \cG(\mf)\ge 0
$$
such that
$$
  \bbG_{\hbeta,\epio,\phi}\bbS_{\hbeta,\epio,\phi}=\bbS_{\hbeta,\epio,\phi}  \bbG_{\hbeta,\epio,\phi} = \Id - \Pi_{P_{c,\epio,\phi}}.
$$
In particular, for $\nu\in ]6-2m,0[$, the operator $\bbS_{\hbeta,\epio,\phi}$ induces a bijection
\begin{equation*}
   \bbS_{\hbeta,\epio,\phi}: (\rho_+^\nu L^{2,k+6}_{b,\epio}(\hM)^{\bbT})^{\perp}\to (\rho_+^{\nu-6} L^{2,k}_{b,\epio}(\hM)^{\bbT})^{\perp}
\label{MM.3b}\end{equation*}
with inverse uniformly bounded as $\epio\searrow 0$, where 
$$
  (\rho_+^\nu L^{2,k+6}_{b,\epio}(\hM)^{\bbT})^{\perp}=\{ u\in \rho_+^\nu L^{2,k+6}_{b,\epio}(\hM)^{\bbT}\; | \;  \Pi_{P_{c,\epio,\phi}}u=0\}.
$$
\end{proposition}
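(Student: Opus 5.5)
We would follow the template of Corollary~\ref{MM.1}: conjugate $\bbS_{\hbeta,\epio,\phi}$ into a $b$-surgery operator whose two normal operators are (generalized-)invertible, and then apply Proposition~\ref{rob.1} with projections onto Killing potentials. Concretely, set
$$
P_{b,s}:=\rho_+^{m+3}\,\bbS_{\hbeta,\epio,\phi}\,\rho_+^{-(m-3)}=\rho_+^{m-3}\bigl(\rho_+^{6}\bbS_{\hbeta,\epio,\phi}\bigr)\rho_+^{-(m-3)}.
$$
By the index family $\cS$ stated before the proposition, this lies in $\Psi^{6,\cE}_{b,s}(\chM)$ with $\cE(\fb)\ge 0$, $\cE(\mf)\ge 0$ and $\cE(\lf),\cE(\rf)>\tau$ for some $\tau>0$ close to $m-3$. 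The weight $m-3$ is the centre of symmetry of the indicial roots in Lemma~\ref{la.14} and Lemma~\ref{la.31}, so both normal operators are formally self-adjoint on $L^2_b$. We take $\Pi_1=\rho_+^{m-3}\Pi_{P_{c,\epio,\phi}}\rho_+^{-(m-3)}$ and $\Pi_2=\rho_+^{m+3}\Pi_{P_{c,\epio,\phi}}\rho_+^{-(m+3)}$, restricted to $\bbT$-invariant functions. By Lemma~\ref{fs.13}, Killing potentials are bounded and polyhomogeneous, so these are finite-rank elements of $\Psi^{0,\cP_i}_{b,s}$ with the required decay at $\lf$ and $\rf$.

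\textbf{The $\mf$ face.} Since $\phi$ and $dd^c\phi$ vanish at $\hH_0$, and $\theta_{c,\epio,\phi}$ and $\bbG^H$ restrict correctly by Lemma~\ref{fs.12} and Corollary~\ref{MM.1}, we get $N_{\mf}(P_{b,s})=r^{m-3}\bbS_b r^{-(m-3)}$. By Proposition~\ref{la.16} with $\nu=0$ in the conjugated picture (i.e.\ $\nu=3-m\in]6-2m,0[$), this operator is invertible modulo $N_{\mf}(\Pi_i)$, whose range corresponds to $P_0$. Arguing as in Corollary~\ref{b.16} and Corollary~\ref{b.18}, the generalized inverse $Q_{\mf}$ is a $b$-pseudodifferential operator with $\lf$ and $\rf$ index sets $>\tau$.

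\textbf{The $\fb$ face.} Here $N_{\fb}(P_{b,s})$ is, on each $\hH_i$, the operator $\rho_i^{-(m-3)}\bbS_{i,b}\rho_i^{m-3}$. The $\theta$-term drops out, because $\rho_+^2\theta$ vanishes at $\hH_+$ (the class $\hbeta$ is supported away from $\hH_+$). This operator is invertible by Proposition~\ref{la.33} with $\nu=m-3\in]0,2m-6[$. The restrictions of $\Pi_i$ to $\hH_+$ vanish, since Killing potentials are bounded and Lemma~\ref{la.28} leaves no decaying Killing potentials on $X_i$ in this weight. The inverse has the required index family by the analogue of Corollary~\ref{b.16}.

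Proposition~\ref{rob.1} then gives $G$ with $G(\Id-\Pi_2)P_{b,s}(\Id-\Pi_1)=\Id-\Pi_1$, and symmetrically on the other side, for $\epio$ small. We undo the conjugation via $\bbG_{\hbeta,\epio,\phi}=\rho_+^{-(m-3)}G\rho_+^{m+3}$. Since $\bbS$ annihilates $P_{c,\epio,\phi}$ and is self-adjoint for $\omega_{c,\epio,\phi}$, its range is orthogonal to $P$, so $\Pi\bbS=\bbS\Pi=0$ and the identities become $\bbG\bbS=\bbS\bbG=\Id-\Pi_{P}$. The resulting index sets are $\cG(\lf)\ge\tau-(m-3)\ge 0$, $\cG(\rf)\ge 2m$ from the $\rho_+^{m+3}$ factor, and $\cG(\fb)\ge 6$. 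The index bookkeeping has to be checked exactly as in Corollary~\ref{MM.1}.

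\textbf{Kernel and mapping properties.} The kernel statement follows: any $L^2$ element $u$ of the kernel satisfies $u=\Pi u$, hence lies in $P_{c,\epio,\phi}$, and conversely $P\subset\ker$. The uniform bijection between the weighted spaces follows from the boundedness of $b$-surgery operators on $L^{2,k}_{b,\epio}$ recorded after Proposition~\ref{bs.5}, after conjugating by $\rho_+^{\nu-(3-m)}$. This conjugation is harmless for $\nu\in]6-2m,0[$, which lies in the same indicial gap.

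The main obstacle is the $\mf$ face. We must check that $N_{\mf}$ of the projection really equals the projection onto $P_0$, which requires the dimension of $P_{c,\epio,\phi}$ to be constant in $\epio$, with $\bbT$ maximal on $\hM$ and no new potentials appearing on $X_i$. We must also verify that the pseudodifferential $\theta$-term, involving $\bbG^H_{c,\epio,\phi}$ for which we only control the scalar Green operator, stays in the $b$-surgery calculus with $\fb$-decay. For the latter, we would first try rewriting that term via Lemma~\ref{green.1} in terms of the scalar Green operator $\bbG_{c,\epio,\phi}$, and then apply Corollary~\ref{MM.1} together with Proposition~\ref{bs.5}.
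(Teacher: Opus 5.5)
Your route is sound but differs from the paper's. The paper does not use generalized inverses here. It copies Corollary~\ref{MM.1} literally and adds the projection to the operator, $P_{b,s}=\rho_+^{m-3}\rho_+^{6}\bigl(\bbS_{\hbeta,\epio,\phi}+\Pi_{P_{c,\epio,\phi}}\bigr)\rho_+^{-(m-3)}$. Its $\mf$ normal operator is invertible by Proposition~\ref{la.16}. Its $\fb$ normal operator $\rho_0^{-(m-3)}\bbS_{i,b}\rho_0^{m-3}$ is invertible by Proposition~\ref{la.33}, since the added term vanishes there. Proposition~\ref{bs.6} then gives $P_{b,s}^{-1}$, and one sets $\bbG=(\bbS+\Pi)^{-1}-\Pi$.

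You instead keep $\bbS$ and apply Proposition~\ref{rob.1} with conjugated projections. Your checks are right: $P_{b,s}\Pi_1=0=\Pi_2P_{b,s}$ follows from $\bbS P=0$ and formal self-adjointness for $\omega_{c,\epio,\phi}$. Also $N_{\fb}(\Pi_i)=0$, though the real reason is that $\omega^{[m]}_{c,\epio,\phi}$ contributes $\rho_+'^{2m}$ to the kernel of $\Pi_P$, so $\Pi_1$ vanishes to order $2m$ at $\fb$. Your approach is the more general one. It is what the paper must do for $m=3,2$ (Propositions~\ref{c3.2}, \ref{c2.9}), where $N_{\fb}$ has constants in its (co)kernel and adding $\Pi_P$ cannot help.

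For $m>3$, however, the paper's device gives sharp weights, and this exposes a flaw in your bookkeeping. Proposition~\ref{rob.1} requires \emph{strict} inequalities $\cP_i,\cF,\cH>\tau$ at $\lf$ and $\rf$. But $\Pi_1$, $\Pi_2$, $Q_{\fb}$ and $Q_{\mf}$ all have leading exponent exactly $m-3$ at $\lf$ or $\rf$:
\begin{itemize}
\item $\Pi_1$ has kernel $\rho_+^{m-3}\rho_+'^{m+3}\sum_j h_j(z)h_j(z')$, and $h=1$ is among the Killing potentials;
\item $Q_{\fb}$ and $Q_{\mf}$ inherit indicial roots at $\pm(m-3)$.
\end{itemize}
So you are forced to take $\tau<m-3$, and your line $\cG(\lf)\ge\tau-(m-3)\ge 0$ is then inconsistent. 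As stated, Proposition~\ref{rob.1} only yields $\cG(\lf)>\tau-(m-3)$ and $\cG(\rf)>\tau+m+3$, which fall short of $0$ and $2m$.

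The mapping statement for each fixed $\nu\in]6-2m,0[$ survives, by taking $\tau$ close enough to $m-3$. The claimed index bounds, however, require reopening the proof of Proposition~\ref{rob.1} to track the exact index sets. In the paper's version the added term $\rho_+^{m+3}\Pi_P\rho_+^{-(m-3)}$ decays to order $m+3$ at both $\lf$ and $\rf$, and Proposition~\ref{bs.6} only asks $\cF,\cH\ge\tau$. So $\tau=m-3$ is admissible and the bounds come out directly.

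Finally, the obstacles you flag are not real ones. $P_{c,\epio,\phi}$ is spanned by constants and the components of $\mu_{c,\epio,\phi}$, so it has dimension $1+\dim\mathfrak{t}$ for every $\epio$ and restricts to $P_0$ on $\hH_0$. The $\theta$-term is already covered by the index family $\cS$ recorded just before the proposition.
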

\begin{proof}
The $\bbT$-invariant kernel of $\bbS_{\hbeta,\epio,\phi}$ contains $P_{c,\epio,\phi}$.  Since $\phi|_{\hH_0}=0$, it is precisely $P_{0}$ at $\epio=0$.  So using a parametrix, we see that the $\bbT$-invariant part of the kernel of $\bbS_{\hbeta,\epio,\phi}$ is precisely  $P_{c,\epio,\phi}$ for $\epio>0$ sufficiently small. 

To obtain the result, it suffices then to apply Proposition~\ref{bs.6} to the operator
$$
   P_{b,s}:= \rho_+^{m-3}(\rho_+^6)(\bbS_{\hbeta,\epio,\phi}+P_{c,\epio,\phi})\rho_+^{-(m-3)}.
$$
Indeed, by Propositions~\ref{la.16} and \ref{la.33} and by Corollary~\ref{b.16}, we see that Proposition~\ref{bs.6} applies to $P_{b,s}$ with $\tau=m-3$.
\end{proof}

However, even if $\phi|_{\hH_0}=0$,  $\omega_{c,\epio,\phi}$ is not necessarily extremal for $\epio>0$, so the linearization of 
$$
    u\mapsto \mathring{\Sc}^{\hbeta}(\omega_{c,\epio,\phi}+dd^c u),
$$
is not quite the operator $\bbS_{\hbeta,\epio,\phi}$, but instead the operator
\begin{equation}
\begin{aligned}
\check{\bbS}_{\hbeta,\epio,\phi}(u)&:= \bbS_{\hbeta,\epio,\phi}(u) + \langle d\Sc^{\hbeta}(\omega_{c,\epio,\phi}), du\rangle_{\omega_{c,\epio,\phi}}-d^cu(V^{\ext}_{\halpha_{\epio}}) \\
&= \bbS_{\hbeta,\epio,\phi}(u) + \langle d(\Id-\Pi_{P_{c,\epio,\phi}}){ \Sc^{\hbeta}}(\omega_{c,\epio,\phi}), du\rangle_{\omega_{c,\epio,\phi}}.
\end{aligned}
\label{MM.4}\end{equation}
The extra term $u\mapsto \langle d(\Id-\Pi_{P_{c,\epio,\phi}}){\Sc^{\hbeta}}(\omega_{c,\epio,\phi}), du\rangle_{\omega_{c,\epio,\phi}}$ is a differential operator of order $1$.  In particular, for $\epio>0$ fixed, it can be seen as a compact perturbation of $\bbS_{\hbeta,\epio,\phi}$.    When $\phi$ is chosen as in Theorem~\ref{fs.17}, this extra terms decays rapidly as $\epio\searrow 0$ since $\mathring{\Sc}^{\hbeta}(\omega_{c,\epio,\phi})\in \dot{\cC}^{\infty}(\chM)$.  In particular, from Proposition~\ref{MM.3}, we see that
$$
       \bbG_{\hbeta,\epio,\phi}\check{\bbS}_{\hbeta,\epio,\phi}= \Id-\Pi_{P_{c,\epio,\phi}}+ R_1 \quad \mbox{and} \quad \check{\bbS}_{\hbeta,\epio,\phi}\bbG_{\hbeta,\epio,\phi}=\Id-\Pi_{P_{c,\epio,\phi}}+R_2
$$ 
with error terms $R_1,R_2\in \dot{\Psi}^{-5}(\chM)$ vanishing rapidly as $\epio\searrow 0$.  In particular, we deduce the following from  Proposition~\ref{MM.3}. 
\begin{corollary}\label{MM.6}
Suppose that $m>3$.  For $\phi$ as in Theorem~\ref{fs.17} and $\nu\in ]6-2m,0[$, the composition
\begin{equation*}
(\Id-\Pi_{P_{c,\epio,\phi}})\circ\check{\bbS}_{\hbeta,\epio,\phi}: (\rho_+^\nu L^{2,k+6}_{b,\epio}(\hM)^{\bbT})^{\perp}\to (\rho_+^{\nu-6} L^{2,k}_{b,\epio}(\hM)^{\bbT})^{\perp}
%\label{MM.5}
\end{equation*}
is a bijection for $\epio>0$ small enough with inverse uniformly bounded as $\epio\searrow 0$.  When acting on $\bbT$-invariant functions, there is in fact an inverse $\check{\bbG}_{\hbeta,\epio,\phi}\in \Psi^{-6,\check{\cG}}_{b,\epio}(\chM)$ such that
\begin{equation*}
   \check{\bbG}_{\hbeta,\epio,\phi}\check{\bbS}_{\hbeta,\epio,\phi}=  \check{\bbS}_{\hbeta,\epio,\phi} \check{\bbG}_{\hbeta,\epio,\phi}= \Id - \Pi_{P_{c,\epio,\phi}}
%\label{MM.6}
\end{equation*}
 with real index family $\check{\cG}$ such that
 $$
 \check{\cG}(\lf)\ge 0, \quad \check{\cG}(\rf)\ge 2m, \quad \check{\cG}(\fb)\ge 6, \quad \mbox{and} \quad \check{\cG}(\mf)\ge 0.
 $$  
 \end{corollary}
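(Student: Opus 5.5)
We treat $\check{\bbS}_{\hbeta,\epio,\phi}$ as a rapidly decaying perturbation of $\bbS_{\hbeta,\epio,\phi}$ and correct the inverse $\bbG_{\hbeta,\epio,\phi}$ of Proposition~\ref{MM.3} by a Neumann series. Concretely, we use the identities just derived,
$$
 \bbG_{\hbeta,\epio,\phi}\check{\bbS}_{\hbeta,\epio,\phi}= \Id-\Pi_{P_{c,\epio,\phi}}+ R_1, \qquad \check{\bbS}_{\hbeta,\epio,\phi}\bbG_{\hbeta,\epio,\phi}=\Id-\Pi_{P_{c,\epio,\phi}}+R_2,
$$
where $R_1,R_2$ vanish to infinite order at $\hH_0$ and $\hH_+$.

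\textbf{Step 1: properties of the error terms.} The first thing to check is that $R_1,R_2$ respect the splitting by $\Pi:=\Pi_{P_{c,\epio,\phi}}$. Since $\check{\bbS}-\bbS= \langle d(\Id-\Pi)\Sc^{\hbeta}(\omega_{c,\epio,\phi}), d\cdot\rangle$, and $(\Id-\Pi)\Sc^{\hbeta}=\mathring{\Sc}^{\hbeta}+\text{(Killing potential)}$, this correction is $\langle d\mathring{\Sc}^{\hbeta}, d\cdot\rangle$ up to terms annihilating Killing potentials. Hence $R_1=\bbG(\check{\bbS}-\bbS)$, and it vanishes on $P_{c,\epio,\phi}$, because elements of $P_{c,\epio,\phi}$ are $\bbT$-invariant and $\mathring{\Sc}^{\hbeta}$ is $\bbT$-invariant. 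The range of $\bbG$ lies in $(\Id-\Pi)$. This gives $R_1=(\Id-\Pi)R_1(\Id-\Pi)$ on $\bbT$-invariant functions, and the analogous identity for $R_2$ after composing with $\Id-\Pi$ on the left.

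Because $\mathring{\Sc}^{\hbeta}(\omega_{c,\epio,\phi})\in\dot{\cC}^\infty(\chM)$ by Theorem~\ref{fs.17}, the kernel of $\check{\bbS}-\bbS$ vanishes to infinite order at every face. By Proposition~\ref{bs.5}, $R_i\in\dot\Psi^{-5}(\chM)$ as well. So the operator norms of $R_i$ on the weighted spaces $\rho_+^\nu L^{2,k}_{b,\epio}$ are $\cO(\epio^N)$ for every $N$.

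\textbf{Step 2: Neumann series.} For $\epio$ small, $\Id+R_1$ is invertible on $(\Id-\Pi)$, with inverse $\Id+U_1$ and $U_1\in\dot\Psi^{-5}$. That $U_1$ stays in this residual class follows, as in the proof of Proposition~\ref{bs.6}, from $U_1=-R_1+R_1(\Id+R_1)^{-1}R_1$ together with the ideal property of $\dot\Psi^{-\infty}$-type operators sandwiching bounded ones. We then set
$$
\check{\bbG}_{\hbeta,\epio,\phi}:=(\Id+U_1)\bbG_{\hbeta,\epio,\phi}.
$$
This is a left inverse modulo $\Pi$. Constructing a right inverse $\bbG(\Id+U_2)$ in the same way and comparing the two shows they coincide. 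By Proposition~\ref{bs.5}, since the residual terms have empty index sets, $\check{\bbG}$ has the same index family bounds as $\cG$ in Proposition~\ref{MM.3}.

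\textbf{Step 3: consequences.} The bijection and the uniform bound on $(\rho_+^\nu L^{2,k+6}_{b,\epio})^\perp$ follow from the uniform boundedness of $\bbG$ in Proposition~\ref{MM.3} and from $\|U_i\|=\cO(\epio^\infty)$. Composing with $\Id-\Pi$ makes the target the orthogonal complement.

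\textbf{Main obstacle.} The delicate point is the residual-class bookkeeping. The perturbation is a differential operator of order $1$, not smoothing, so one must verify that $\bbG(\check{\bbS}-\bbS)$ really lies in a class that is rapidly vanishing in $\epio$ and has order $-5$. It must also stay bounded on the weighted spaces, since the weight $\rho_+^\nu$ could interfere. I would handle this by using the infinite-order vanishing of the coefficient $d\mathring{\Sc}^{\hbeta}$ at all faces, which absorbs any power of $\rho_+$.
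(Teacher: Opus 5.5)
Your route is the paper's: it obtains the corollary from Proposition~\ref{MM.3} only by noting that $\bbG_{\hbeta,\epio,\phi}\check{\bbS}_{\hbeta,\epio,\phi}-(\Id-\Pi)$ and $\check{\bbS}_{\hbeta,\epio,\phi}\bbG_{\hbeta,\epio,\phi}-(\Id-\Pi)$ lie in $\dot{\Psi}^{-5}(\chM)$ and vanish rapidly as $\epio\searrow 0$. However, two of the details you add are wrong.

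\textbf{Step~1 is false.} You claim that $\check{\bbS}_{\hbeta,\epio,\phi}-\bbS_{\hbeta,\epio,\phi}=\langle d\mathring{\Sc}^{\hbeta},d\,\cdot\,\rangle$ annihilates $P_{c,\epio,\phi}$ by $\bbT$-invariance. Take $f\in P_{c,\epio,\phi}$ with Killing field $X_f=\hat{J}\nabla f\in\mathfrak{t}$. The term equals $d\mathring{\Sc}^{\hbeta}(\nabla f)=-d\mathring{\Sc}^{\hbeta}(\hat{J}X_f)$, whereas $\bbT$-invariance only gives $d\mathring{\Sc}^{\hbeta}(X_f)=0$. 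In the toric case, for instance, it vanishes for all such $f$ only if $\mathring{\Sc}^{\hbeta}(\omega_{c,\epio,\phi})=0$. Hence in general $R_1\Pi\neq 0$, and your $\check{\bbG}=(\Id+U_1)\bbG_{\hbeta,\epio,\phi}$ only satisfies $\check{\bbG}\check{\bbS}=\Id-\Pi+(\Id+U_1)R_1\Pi$.

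In fact no operator can satisfy both identities $\check{\bbG}\check{\bbS}=\check{\bbS}\check{\bbG}=\Id-\Pi$ literally unless $\mathring{\Sc}^{\hbeta}(\omega_{c,\epio,\phi})=0$. Together they give $\Pi\check{\bbS}u=0$ for every $u\in\ker\Pi$. But $u:=\mathring{\Sc}^{\hbeta}(\omega_{c,\epio,\phi})$ lies in $\ker\Pi$ and satisfies $\int_{\hM}\check{\bbS}_{\hbeta,\epio,\phi}u\,\omega^{[m]}_{c,\epio,\phi}=\int_{\hM}|du|^2\,\omega^{[m]}_{c,\epio,\phi}$. This holds because $\bbS_{\hbeta,\epio,\phi}u$ is a double divergence and constants lie in $P_{c,\epio,\phi}$.

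What your argument does give, dropping subscripts, are the projected identities $\check{\bbG}(\Id-\Pi)\check{\bbS}(\Id-\Pi)=\Id-\Pi=(\Id-\Pi)\check{\bbS}(\Id-\Pi)\check{\bbG}$, as in Proposition~\ref{rob.1}. These are also all that the bijection and the fixed point argument of \S\ref{nla.0} use. For them you only need $\Pi R_1=0$, which does hold because the range of $\bbG_{\hbeta,\epio,\phi}$ lies in $\ker\Pi$.

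\textbf{Step~2 does not establish the residual class.} The ideal property you invoke (very residual $\circ$ bounded $\circ$ very residual is very residual) holds in $\dot{\Psi}^{-\infty}(\chM)$. But $R_1$ only has order $-5$, and $R_1(\Id+R_1)^{-1}R_1$ with $(\Id+R_1)^{-1}$ merely bounded need not have a conormal kernel. So neither $U_1\in\dot{\Psi}^{-5}(\chM)$ nor $\check{\bbG}\in\Psi^{-6,\check{\cG}}_{b,s}(\chM)$ is established; the norm-level conclusions of your Step~3 are unaffected.

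There are two ways to fix this.
\begin{enumerate}
\item Take an asymptotic sum $S\sim\sum_{j\ge1}(-R_1)^j$, using $R_1^j\in\dot{\Psi}^{-5j}(\chM)$, to reduce the error to $\dot{\Psi}^{-\infty}(\chM)$, and then conclude as in the proof of Proposition~\ref{bs.6}.
\item More directly, apply Proposition~\ref{bs.6} exactly as in the proof of Proposition~\ref{MM.3} to the conjugate of $A:=(\Id-\Pi)\check{\bbS}_{\hbeta,\epio,\phi}(\Id-\Pi)+\Pi$. Since $\bbS_{\hbeta,\epio,\phi}$ is formally self-adjoint and annihilates $P_{c,\epio,\phi}$, one has $A-(\bbS_{\hbeta,\epio,\phi}+\Pi)=(\Id-\Pi)\langle d\mathring{\Sc}^{\hbeta},d\,\cdot\,\rangle(\Id-\Pi)$. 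This vanishes to infinite order at every face, so the normal operators are the ones already inverted there. Then $\check{\bbG}:=A^{-1}-\Pi$ satisfies the projected identities and has the index bounds of Proposition~\ref{MM.3}.
\end{enumerate}
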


For $m\in\{2,3\}$, we need to proceed slightly differently.  Suppose first that $m=3$.  If $f_1,\ldots,f_{\dim K_0}\in (\CI(M)^{\bbT})^{\perp}$ are the functions of Proposition~\ref{la.26}, let $\widehat{f}_1,\ldots, \widehat{f}_{\dim K_0}\in (\cA_{\phg}(\chM)\cap L^{\infty}(\chM)^{\bbT})^{\perp}$ be polyhomogeneous extensions off $\hH_0$ to $\chM$ that are locally constant on $\hH_+$, where
$$
(\cA_{\phg}(\chM)\cap L^{\infty}(\chM)^{\bbT})^{\perp}:= \{ u\in \cA_{\phg}(\chM)\cap L^{\infty}(\chM)^{\bbT}\; | \;  \Pi_{P_{c,\epio,\phi}}u=0\}.
$$  Denote by $\chD$ the finite dimensional vector space generated by $\widehat{f}_1,\ldots, \widehat{f}_{\dim K_0}$.  On the level set $\epio=\lambda$, let also $\chD_{\lambda}$ be the finite dimensional vector space generated by $\widehat{f}_1|_{\epio^{-1}(\lambda)
},\ldots, \widehat{f}_{\dim K_0}|_{\epio^{-1}(\lambda)
}$.   For $\nu\in ]0,1[$ and $k\in \bbN$, consider the space 
\begin{equation}
\lrp{  \rho_+^{\nu} L^{2,k}_{b,\epio}(\hM)^{\bbT} }^{\perp_{\chD+ P_{c,\epio,\phi}}}= \left\{  u\in (\rho_+^{\nu} L^{2,k}_{b,\epio}(\hM)^{\bbT})^{\perp}\; | \;
     (\epio)_*\lrp{ uv \omega^{[m]}_{c,\epio,\phi}}=0 \quad \forall \;v\in \chD \right\}.
\label{c3.1}\end{equation}
\begin{proposition}\label{c3.2}
Suppose that $m=3$ and $\phi$ is such that $dd^c\phi$ vanishes on $\hH_0$ and $\hH_+$ as a section of $\Lambda^2({}^{c,\epio}T^*\chM)$.   Then  for  $\epio>0$ sufficiently small, the operator $\bbS_{\hbeta,\epio,\phi}$ induces an isomorphism
\begin{equation*}
   \bbS_{\hbeta,\epio,\phi}: (\rho_+^\nu L^{2,k+6}_{b,\epio}(\hM)^{\bbT})^{\perp_{\chD+P_{c,\epio,\phi}}}\oplus \chD_{\epio} \to (\rho_+^{\nu-6} L^{2,k}_{b,\epio}(\chM)^{\bbT})^{\perp}
%\label{v3.2a}
\end{equation*}
for $\nu\in ]0,1[$ and $k\in\bbN$ with inverse $\bbG_{\hbeta,\epio,\phi}\in\Psi^{-6,\cG}_{b,\epio}(\chM)$ with $\cG(\lf)\ge 0$, $\cG(\rf)>6-\nu$, $\cG(\fb)\ge 6$ and $\cG(\mf)\ge 0$ such that
$$
  \bbG_{\hbeta,\epio,\phi}\bbS_{\hbeta,\epio,\phi}=  \bbS_{\hbeta,\epio,\phi} \bbG_{\hbeta,\epio,\phi}= \Id - \Pi_{P_{\omega,\epio,\phi}}.
$$
\end{proposition}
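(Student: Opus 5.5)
We plan to follow the proof of Proposition~\ref{MM.3}, using Proposition~\ref{rob.1} in place of Proposition~\ref{bs.6}. The two model operators are no longer invertible on the natural weighted spaces. At the face $\hH_{\mf}$, Proposition~\ref{la.26} gives an isomorphism only after adjoining the finite-dimensional space $\cD$ and imposing orthogonality to $P_{0,+}$. At the faces $\hH_{ii}$, Proposition~\ref{la.35} shows that $\bbS_{i,b}$ is injective with cokernel spanned by the constants when $\nu\in]0,1[$. So we will conjugate and set
$$
P_{b,s}:=\rho_+^{-\nu}\rho_+^{6}\,\bbS_{\hbeta,\epio,\phi}\,\rho_+^{\nu},
$$
an elliptic element of $\Psi^{6,\cE}_{b,s}(\chM)$ with $\cE(\lf),\cE(\rf)>\tau$ for some small $\tau>0$; this is possible since $\nu$ lies strictly between indicial roots. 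We then look for finite-rank projections $\Pi_1,\Pi_2$ in the calculus that realize the kernel and cokernel structure on both faces simultaneously.

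The first step is to choose the projections. We take $\Pi_1$ to be the projection onto $\rho_+^{-\nu}P_{c,\epio,\phi}$, with the $\chD_\epio$ directions recorded separately, and $\Pi_2$ to be the projection onto the span of $\rho_+^{-\nu}P_{c,\epio,\phi}$ and of the elements of $K_0$, suitably weighted. The point is that the polyhomogeneous extensions $\widehat f_j$, chosen locally constant on $\hH_+$, restrict on $\hH_{ii}$ to constants. This is exactly the cokernel described in Proposition~\ref{la.35}. By Proposition~\ref{la.26}, the pairing \eqref{la.27d} between $\cD$ and $K_0$ is non-degenerate, and the logarithmic terms of Lemma~\ref{la.25} match, at each $p_i$, the single constant cokernel element of $\bbS_{i,\sc}$. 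On this basis we will verify that, after adjoining $\chD_\epio$, the normal operators $N_{\mf}(P_{b,s})$ and $N_{\fb}(P_{b,s})$ satisfy the hypotheses of Proposition~\ref{rob.1}. At $\mf$ we use Proposition~\ref{la.26} together with Corollaries~\ref{b.16} and~\ref{b.18} for the pseudodifferential description; at $\fb$ we use Proposition~\ref{la.35} together with Corollary~\ref{b.18}.

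Proposition~\ref{rob.1} then produces, for small $\epio$, an operator $G$ inverting $(\Id-\Pi_2)P_{b,s}(\Id-\Pi_1)$. We will undo the conjugation by $\rho_+^{\pm\nu}$ and the factor $\rho_+^6$ to obtain $\bbG_{\hbeta,\epio,\phi}$. The index bounds $\cG(\rf)>6-\nu$, $\cG(\fb)\ge 6$, $\cG(\lf)\ge 0$ and $\cG(\mf)\ge 0$ follow by bookkeeping with the weights, exactly as in Corollary~\ref{MM.1}. The kernel statement, namely that the $\bbT$-invariant kernel equals $P_{c,\epio,\phi}$ for small $\epio$, is proved as in Proposition~\ref{MM.3}. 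The kernel contains $P_{c,\epio,\phi}$, and by the parametrix and a dimension count its dimension cannot jump for small $\epio$, because at $\epio=0$ the kernel on the relevant face is $P_{0,+}$ and the faces $\hH_{ii}$ contribute nothing for $\nu>0$.

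The main obstacle is the bookkeeping for the finite-rank corrections. Adding $\chD_\epio$ to the domain is not a projection structure of the type in Proposition~\ref{rob.1}, so we must show that the finite-rank block $\bbS_{\hbeta,\epio,\phi}\colon\chD_\epio\to$ cokernel remains non-degenerate uniformly in $\epio$. We would first try a Schur complement argument: write the operator as a $2\times2$ block matrix with respect to $(\Id-\Pi_1)$ plus $\chD_\epio$ in the domain and $(\Id-\Pi_2)$ plus the cokernel in the target. The off-diagonal finite-rank pieces are polyhomogeneous on $\chM$, so the Schur complement is a matrix that is polyhomogeneous in $\epio$. Its limit at $\epio=0$ is the matrix of the pairing $B$ in \eqref{la.27d}, which is non-degenerate. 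Hence the Schur complement is invertible for small $\epio$, and its inverse stays in the calculus with the stated index sets.
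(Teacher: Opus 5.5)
Your overall architecture matches the paper's: you conjugate to $P_{b,s}=\rho_+^{-\nu}\rho_+^{6}\bbS_{\hbeta,\epio,\phi}\rho_+^{\nu}$, apply Proposition~\ref{rob.1} with finite-rank projections, adjoin $\chD_\epio$, and use the non-degenerate pairing \eqref{la.27d}. However, you have the front-face model backwards, and this breaks the bookkeeping. Near $\hH_+$ one has $\rho_+=\epio/\rho_0$, so $\epio^{-\nu}\rho_+^{\nu}L^2_{b,\epio}(\hM)$ restricts on $\hH_i$ to $\rho_0^{-\nu}L^2_b(\bX_i)$. Proposition~\ref{la.35} must therefore be used with weight $-\nu\in\,]-1,0[$, not $\nu$.

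Hence $N_{\fb}(P_{b,s})$ is \emph{surjective}, with kernel $\rho_0^{\nu}V$, where $V$ is the space of locally constant functions on $\hH_+$. Indeed, constants lie in $\rho_0^{-\nu}L^2_b$ for $\nu>0$ and are killed by $\bbS_{i,\sc}$. Several of your claims fail as a result:
\begin{itemize}
\item The faces $\hH_{ii}$ do contribute to the limiting kernel, so your dimension count is wrong. If only $P_{0,+}$ survived at $\epio=0$, the kernel for small $\epio$ would have dimension at most $\dim P_{0,+}<\dim P_0$, contradicting $P_{c,\epio,\phi}\subset\ker\bbS_{\hbeta,\epio,\phi}$.
\item The restrictions of the $\widehat f_j$ to $\hH_{ii}$ are \emph{kernel} directions of the front-face model, not cokernel directions.
\item The pairing $B$ matches this part of the $\fb$-kernel with the $\mf$-cokernel $K_0$. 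Lemma~\ref{la.25} is what makes $B$ non-degenerate inside the $\mf$ analysis; there is no $\fb$-cokernel for the log terms to be matched with.
\end{itemize}

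Proposition~\ref{rob.1} needs $N_{\fb}(\Pi_1)$ to project onto $\rho_0^{\nu}V$, $N_{\mf}(\Pi_1)$ to project onto $r^{-\nu}P_{0,+}$, and $N_{\fb}(\Pi_2)=0$. Your $\Pi_1$ (onto $\rho_+^{-\nu}P_{c,\epio,\phi}$, with $\chD$ also removed) in fact does this. After normalization, $\rho_+^{-\nu}(\hP_{c,\epio,\phi,-}\oplus\chD)$ concentrates on $\hH_+$ as $\rho_0^{\nu}V$. So your $\Pi_1$ agrees with the paper's, which projects onto $\rho_0^{\nu}\hV\oplus\rho_+^{-\nu}\hP_{c,\epio,\phi,+}$ with $\hV=\hP_{c,\epio,\phi,-}\oplus\chD$.

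Your $\Pi_2$, however, is wrong. For $m=3$, $\rho_+^{6}\bbS_{\hbeta,\epio,\phi}$ is formally self-adjoint with respect to the $b$-surgery density. So the $L^2_b$-cokernel directions of $P_{b,s}$ are $\rho_+^{\nu}$ times kernel elements, not $\rho_+^{-\nu}$ times. Moreover, $\rho_+^{-\nu}P_{c,\epio,\phi}$ has a non-zero $\fb$ normal part, which is incompatible with surjectivity there.

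The paper instead takes $\Pi_2$ onto $\rho_+^{\nu}P_{c,\epio,\phi}\oplus\rho_+^{6-\nu}\bbS_{\hbeta,\epio,\phi}\chD$. Its normal operator vanishes at $\fb$, and at $\mf$ its range complements the range of $N_{\mf}(P_{b,s})$, by non-degeneracy of $B$. One then sets $\bbG_{\hbeta,\epio,\phi}=\rho_+^{\nu}G\rho_+^{6-\nu}+R_\epio$, where $R_\epio$ is the finite-dimensional inverse of $\bbS_{\hbeta,\epio,\phi}:\chD_\epio\to\bbS_{\hbeta,\epio,\phi}(\chD_\epio)$. Because $\bbS\chD$ itself is put into $\Pi_2$, the system is block-triangular and no Schur complement is needed. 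Your Schur-complement variant could be made to work, but only once the face analysis is corrected.
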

\begin{proof}
Fix $\nu\in ]0,1[$ and consider the $b$-surgery operator
$$
   P_{b,s}:= \rho_+^{-\nu}(\rho_+^6\bbS_{\hbeta,\epio,\phi})\rho_+^{\nu}.
$$
By Proposition~\ref{la.26}, $N_{\mf}(P_{b,s})$ induces a Fredholm operator
$$
    N_{\mf}(P_{b,s}): L^{2,k+6}_b(\bM)^{\bbT}\to L^{2,k}_{b}(\bM)^{\bbT}
$$
with kernel corresponding to $\rho_+^{-\nu} P_{0,+}$ and cokernel corresponding to $\rho_+^{\nu}(P_{0}\oplus K_0)$.  Similarly, by Proposition~\ref{la.35}, $N_{\fb}(P_{b,s})$ induces a surjective Fredholm operator
$$
    N_{\fb}(P_{b,s}): L^{2,k+6}_b(\hH_+)^{\bbT}\to L^{2,k}_{b}(\hH_+)^{\bbT}
$$
with nullspace corresponding to $\rho_0^{\nu}V$, where $V$ is the space locally constant functions on $\hH_+$.  Extends elements of $P_{0,+}$ polyhomogeneously off $\hH_0$ so that they generate a subspace $\widehat{P}_{c,\epio,\phi,+}\subset P_{c,\epio,\phi}$ for each $\epio>0$.  Similarly, extends elements of $V$ polyhomogeneously off $\hH_+$ so that they generate a subspace $\widehat{V}= \widehat{P}_{\omega_{c,\epio,-}}\oplus \chD$ for $\epio>0$, where   $\widehat{P}_{c,\epio,\phi,-}$ is such that 
$$
      \widehat{P}_{c,\epio,\phi,-}\subset  \widehat{P}_{c,\epio,\phi} \quad \mbox{and} \quad  \widehat{P}_{c,\epio,\phi,+}\oplus  \widehat{P}_{c,\epio,\phi,-}= \widehat{P}_{c,\epio,\phi}
$$
for $\epio>0$.  Let $\Pi_1$ be the $L^2$-projection (for each fixed $\epio$) onto $\rho_0^{\nu}\widehat{V}\oplus \rho_+^{-\nu}P_{0,+}$ with respect to the $b$-surgery metric $\rho_+^{-2}g_{c,\epio,\phi}$.  Similarly, let $\Pi_2$ be the $L^2$-projection (for each fixed $\epio$) onto $\rho_+^{\nu}\widehat{P}_{c,\epio,\phi}\oplus  \rho_+^{-\nu+6}\bbS_{\hbeta,\epio,\phi}\chD$. By Proposition~\ref{rob.1} and Corollary~\ref{b.18}, there is an operator $G\in \Psi^{-6,\cH}_{b,s}(\chM)$ such that
$$
    (\Id-\Pi_2)P_{b,s}(\Id-\Pi_1)G= \Id-\Pi_2 \quad \mbox{and} \quad G(\Id-\Pi_2)P_{b,s}(\Id-\Pi_1)= \Id-\Pi_1
$$
for some real index family $\cH$ such that
$$
  \cH(\lf)>0, \quad \cH(\rf)>0, \quad \cH(\fb)\ge 0 \quad \mbox{and} \quad \cH(\mf)\ge 0.
$$
Hence, the result follows by taking 
$\bbG_{\hbeta,\epio,\phi}= \rho_{+}^{\nu}G\rho_+^{-\nu+6}+R_{\epio} 
$
with $R_{\epio}$ the inverse of the map
$ \bbS_{\hbeta,\epio,\phi}: \chD_{\epio}\to \bbS_{\hbeta,\epio,\phi}(\chD_{\epio}).
$
\end{proof}

As in the case $m>3$, this has the following consequence.
\begin{corollary}
For $m=3$ and $\phi$ as in Theorem~\ref{fs.17} and $\nu\in ]0,1[$, the composition
\begin{equation*}
(\Id-\Pi_{P_{c,\epio,\phi}})\circ\check{\bbS}_{\hbeta,\epio,\phi}: (\rho_+^\nu L^{2,k+6}_{b,\epio}(\hM)^{\bbT})^{\perp_{\chD+P_{c,\epio,\phi}}}\oplus \chD_{\epio}  \to (\rho_+^{\nu-6} L^{2,k}_{b,\epio}(\hM)^{\bbT})^{\perp}
\end{equation*}
is a bijection for $\epio>0$ small enough with inverse uniformly bounded as $\epio\searrow 0$.  When acting on $\bbT$-invariant functions, there is in fact an inverse $\check{\bbG}_{\hbeta,\epio,\phi}\in \Psi^{-6,\check{\cG}}_{b,\epio}(\chM)$ such that
\begin{equation*}
   \check{\bbG}_{\hbeta,\epio,\phi}\check{\bbS}_{\hbeta,\epio,\phi}=\check{\bbS}_{\hbeta,\epio,\phi}  \check{\bbG}_{\hbeta,\epio,\phi} = \Id - \Pi_{P_{c,\epio,\phi}}
%\label{c3.4}
\end{equation*}
 with index family $\check{\cG}$ such that
 $$
 \check{\cG}(\lf)\ge 0, \quad \check{\cG}(\rf)> 6-\nu, \quad \check{\cG}(\fb)\ge 6, \quad \mbox{and} \quad \check{\cG}(\mf)\ge 0.
 $$  
 \label{c3.5}\end{corollary}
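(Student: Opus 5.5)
The plan mirrors the derivation of Corollary~\ref{MM.6} from Proposition~\ref{MM.3}, with Proposition~\ref{c3.2} playing the role of Proposition~\ref{MM.3}. By \eqref{MM.4},
\[
\check{\bbS}_{\hbeta,\epio,\phi}=\bbS_{\hbeta,\epio,\phi}+L_\epio, \qquad L_\epio u:=\langle d(\Id-\Pi_{P_{c,\epio,\phi}})\Sc^{\hbeta}(\omega_{c,\epio,\phi}),du\rangle_{\omega_{c,\epio,\phi}} .
\]
Since $\phi$ is as in Theorem~\ref{fs.17}, we have $(\Id-\Pi_{P_{c,\epio,\phi}})\Sc^{\hbeta}(\omega_{c,\epio,\phi})=\mathring{\Sc}^{\hbeta}(\omega_{c,\epio,\phi})\in\dot{\cC}^{\infty}(\chM)$. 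Hence $L_\epio$ is a first order $b$-surgery differential operator whose coefficients vanish to infinite order at $\hH_0$ and $\hH_+$. In particular its Schwartz kernel has empty index sets at $\fb$ and $\mf$, and it is rapidly decaying in $\epio$.

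First, compose with the inverse $\bbG_{\hbeta,\epio,\phi}$ of Proposition~\ref{c3.2}. Using Proposition~\ref{bs.5} and the index bounds on $\cG$, this gives
\[
\bbG_{\hbeta,\epio,\phi}\check{\bbS}_{\hbeta,\epio,\phi}=\Id-\Pi_{P_{c,\epio,\phi}}+R_1, \qquad \check{\bbS}_{\hbeta,\epio,\phi}\bbG_{\hbeta,\epio,\phi}=\Id-\Pi_{P_{c,\epio,\phi}}+R_2,
\]
where $R_1=\bbG_{\hbeta,\epio,\phi}L_\epio$ and $R_2=L_\epio\bbG_{\hbeta,\epio,\phi}$ lie in $\dot\Psi^{-5}(\chM)$ and are $\cO(\epio^\infty)$. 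This holds once one notes that $\Pi_{P_{c,\epio,\phi}}$ commutes appropriately. Indeed, elements of $P_{c,\epio,\phi}$ are Killing potentials, so $\bbS_{\hbeta,\epio,\phi}$ annihilates them. The term $L_\epio$ applied to a Killing potential $h$ yields $\langle d\mathring{\Sc}^{\hbeta},dh\rangle$, which is a $\bbT$-derivative of a $\bbT$-invariant function up to a $J$-rotation. I would check that it is rapidly decaying, which is all that is needed.

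Next, restrict to the spaces in the statement. The space $\chD_\epio$ is finite dimensional, and $L_\epio$ maps $\chD_\epio$ into $\rho_+^{\nu-6}L^{2,k}_{b,\epio}$ with norm $\cO(\epio^\infty)$, because the $\widehat f_j$ are polyhomogeneous and bounded. So on
\[
(\rho_+^\nu L^{2,k+6}_{b,\epio}(\hM)^{\bbT})^{\perp_{\chD+P_{c,\epio,\phi}}}\oplus\chD_\epio
\]
the operator $(\Id-\Pi_{P_{c,\epio,\phi}})\check{\bbS}_{\hbeta,\epio,\phi}$ differs from the isomorphism of Proposition~\ref{c3.2} by a perturbation of operator norm $o(1)$ as $\epio\searrow0$. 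These norms are uniform because $b$-surgery operators with the stated index families are uniformly bounded on these weighted spaces, as remarked after Proposition~\ref{bs.5}. A Neumann series then shows that the composition is bijective for small $\epio$, with uniformly bounded inverse
\[
\check{\bbG}_{\hbeta,\epio,\phi}=\bbG_{\hbeta,\epio,\phi}\sum_{j\ge0}(-R_2)^j.
\]

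Finally, the pseudodifferential characterization. As in the last step of the proof of Proposition~\ref{bs.6}, the operator $\Id+R_2$ restricted to the range is invertible with inverse $\Id+U$, where $U\in\dot\Psi^{-\infty}$ modulo the order $-5$ part. I would handle this using the fact that $\dot\Psi^{-5}$ rapidly vanishing operators form an ideal, and that an iterated parametrix reduces the order to $-\infty$. Composition via Proposition~\ref{bs.5} then preserves the index family $\cG$ of $\bbG_{\hbeta,\epio,\phi}$, giving $\check{\cG}$ with the claimed bounds. The main obstacle I expect is the bookkeeping around the projection $\Pi_{P_{c,\epio,\phi}}$ and the auxiliary space $\chD_\epio$. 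One must verify that the orthogonality constraints defined in \eqref{c3.1} are respected by the perturbation up to $\cO(\epio^\infty)$ errors, so that the Neumann series stays within the correct complements. This I would do by projecting each error term back with $\Id-\Pi_{P_{c,\epio,\phi}}$ and absorbing the discrepancy, which is rapidly decaying, into $R_2$.
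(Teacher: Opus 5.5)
Your proposal is correct and is essentially the paper's argument. The paper derives this corollary from Proposition~\ref{c3.2} exactly as it derives Corollary~\ref{MM.6} from Proposition~\ref{MM.3}: the extra first-order term in $\check{\bbS}_{\hbeta,\epio,\phi}$ is built from $\mathring{\Sc}^{\hbeta}(\omega_{c,\epio,\phi})\in\dot{\cC}^{\infty}(\chM)$, so composing with $\bbG_{\hbeta,\epio,\phi}$ gives $\Id-\Pi_{P_{c,\epio,\phi}}$ plus errors in $\dot{\Psi}^{-5}(\chM)$ that vanish rapidly as $\epio\searrow 0$, and your Neumann series, projection bookkeeping and parametrix step for $\check{\bbG}_{\hbeta,\epio,\phi}$ just spell out details the paper leaves implicit.
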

 
 Finally, for $m=2$, let us consider  the map
 $$
 \begin{array}{lccl}
  \iota_+: & P_{0} &\to & \bbR^{\ell} \\
   & v &\mapsto & (v(p_1),\ldots, v(p_{\ell})),
 \end{array}
 $$
so that $P_{0,+}$ defined in \eqref{la2.2b} corresponds to the kernel of the map $\iota_+$ by Lemma~\ref{la2.3}.  Let $\cD$ be a complement of $\iota_+(P_{0})$ in $\bbR^{\ell}$ such that $\cD$ is spanned by 
$$(f_1(p_1),\ldots, f_1(p_{\ell})),\ldots (f_{\dim\cD}(p_1),\ldots, f_{\dim\cD}(p_{\ell}))
$$ 
for  functions $f_1,\ldots f_{\dim \cD}\in \CI(M)^{\bbT}$ that are constant near $p_i$ for $i\in\{1,\ldots,\ell\}$.  Notice that
$$
    \dim P_{0}+ \dim \cD= \dim P_{0,+}+\ell.
$$
Let  $\widehat{f}_{1},\ldots, \widehat{f}_{\dim\cD}\in \CI(\chM)^{\bbT}$ be smooth extensions of $f_1,\ldots, f_{\dim \cD}$ to $\chM$ that are locally constant near $\hH_+$.
 Let $\chD$ be the finite dimensional vector space generated by $\widehat{f}_1,\ldots, \widehat{f}_{\dim \cD}$.  Let also $\chD_{\lambda}$ be the corresponding finite dimensional vector space generated by $\widehat{f}_1|_{\epio^{-1}(\lambda)},\ldots, \widehat{f}_{\dim \cD}|_{\epio^{-1}(\lambda)}$ on the level set $\epio=\lambda$.  Extends elements of $P_{0,+}$ polyhomogeneously off $\hH_0$ so that they generate a subspace $\hP_{c,\epio,\phi,+}\subset P_{c,\epio,\phi}$ of dimension $\dim P_{0,+}$ for each $\epio$.  Let $\hP_{c,\epio,\phi,-}$ denote a complementary subspace so that 
 $$
      P_{c,\epio,\phi}= \hP_{c,\epio,\phi,+}\oplus \hP_{c,\epio,\phi,-}.
 $$

 Consider the spaces
$ (\rho_+^\nu L^{2,k+6}_{b,\epio}(\hM)^{\bbT})^{\perp_{\chD+P_{c,\epio,\phi}}}$ defined as in \eqref{c3.1}, as well as the spaces 
\begin{multline*}
 (\rho_+^\nu L^{2,k}_{b,\epio}(\hM)^{\bbT})^{\perp}= \left\{ u\in (\rho_+^\nu L^{2,k}_{b,\epio}(\hM)^{\bbT}\; | \; (\epio)_*\lrp{ u v\omega^{[m]}_{c,\epio,\phi}}= (\epio)_*\lrp{ u (\epio^{2-\nu}w)\omega^{[m]}_{c,\epio,\phi}}  =0  \right.\\
  \left. \forall \;v\in \hP_{c,\epio,\phi,+}, \; \forall \;  w \in \hP_{c,\epio,\phi,-} \right\}
\end{multline*}
and 
\begin{equation}
(\rho_+^{\nu-6} L^{2,k}_{b,\epio}(\hM)^{\bbT})^{\perp_{\bbS_{\hbeta,\epio,\phi}(\chD)+P_{c,\epio,\phi}}}:= \left\{  u\in (\rho_+^{\nu-6} L^{2,k}_{b,\epio}(\hM)^{\bbT})^{\perp}\; | \;
     (\epio)_*\lrp{ \rho_+^{8-2\nu} u \bbS_{\hbeta,\epio,\phi}(v) \omega^{[m]}_{c,\epio,\phi}}=0 \quad \forall \;v\in \chD_{\epio} \right\}.
\label{c2.77}\end{equation}
Even though an element $v\in \hD$ is supported away from $\hH_+$, notice that it is not necessarily the case for $\bbS_{\hbeta,\epio,\phi}v$ when $\hbeta\ne 0$ due to the pseudodifferential term in $\bbS_{\hbeta,\epsilon,\phi}$.  The factor $\rho^{8-2\nu}$ in \eqref{c2.77} is to ensure that $(\epio)_*\lrp{ \rho_+^{8-2\nu} u \bbS_{\hbeta,\epio,\phi}(v) \omega^{[m]}_{c,\epio,\phi}}=0$ makes sense also on $\hH_0$ when $\epio=0$ and corresponds to the $L^2_{b,s}$-projection of $\rho_+^{6-\nu}u$ onto $\rho_+^{6-\nu}\bbS_{\hbeta,\epio,\phi}(\chD_{\epio})$ with respect to the $b$-surgery metric $\rho_+^{-2}g_{c,\epio,\phi}$.
  This yields the following analog of Proposition~\ref{c3.2}.
 
\begin{proposition}\label{c2.9}
Suppose that $m=2$ and  that $\phi$ is such that $dd^c\phi$ vanishes on $\hH_0$ and $\hH_+$ as a section of $\Lambda^2({}^{c,\epio}T^*\chM)$.   Then  for  $\epio>0$ sufficiently small, the operator $\bbS_{\hbeta,\epio,\phi}$ induces an isomorphism
\begin{equation*}
  \begin{array}{lccc} \bbS_{\hbeta,\epio,\phi}: & (\rho_+^\nu L^{2,k+6}_{b,\epio}(\chM)^{\bbT})^{\perp_{\chD+P_{c,\epio,\phi}}}\oplus \chD & \to & (\rho_+^{\nu-6} L^{2,k}_{b,\epio}(\chM)^{\bbT})^{\perp_{\bbS_{\hbeta,\epio,\phi}(\chD)+P_{c,\epio,\phi}}} \oplus \bbS_{\hbeta,\epio,\phi}\chD\\
  &(u,f) & \mapsto &  \bbS_{\hbeta,\epio,\phi}(u+f)
 \end{array} 
%\label{v3.2a}
\end{equation*}
for $\nu\in ]0,2[$ and $k\in\bbN$ with inverse $\bbG_{\hbeta,\epio,\phi}\in\Psi^{-6,\cG}_{b,\epio}(\chM)$ with $\cG(\lf)\ge 0$, $\cG(\rf)> 6-\nu$, $\cG(\fb)\ge 6$ and $\cG(\mf)\ge 0$ such that
$$
  \bbG_{\hbeta,\epio,\phi} \bbS_{\hbeta,\epio,\phi}=  \bbS_{\hbeta,\epio,\phi} \bbG_{\hbeta,\epio,\phi}= \Id - \Pi_{P_{c,\epio,\phi}}.
$$
\end{proposition}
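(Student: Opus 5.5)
We follow the proof of Proposition~\ref{c3.2}, using the $m=2$ mapping properties from \S\ref{la.0} and the $\ALE$ subsection. Fix $\nu\in]0,2[$ and consider the $b$-surgery operator
$$
   P_{b,s}:= \rho_+^{-\nu}(\rho_+^6\bbS_{\hbeta,\epio,\phi})\rho_+^{\nu}\in \Psi^{6,\cE}_{b,s}(\chM),
$$
whose index family has nonnegative order at $\fb$ and $\mf$ and order strictly positive at $\lf,\rf$ (coming from Lemma~\ref{la.8} and the Green operator description of Corollary~\ref{MM.1}).

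\textbf{Normal operators.} The first step is to identify the two normal operators of $P_{b,s}$ and their generalized inverses. Since $\phi$ and $dd^c\phi$ vanish on $\hH_0$, we have $N_{\mf}(P_{b,s})=r^{-\nu}\bbS_b r^{\nu}$. By Lemma~\ref{la2.2}, $\nu$ is not an indicial root. By Lemma~\ref{la2.3} and Proposition~\ref{la2.7}, its kernel and cokernel in $L^2_b$ are both $r^{\mp\nu}P_{0,+}$ (up to the weights fixed by \eqref{la.19}).

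On $\hH_+$ the situation is different. Using $\rho_0=\epio/\rho_+$, the normal operator $N_{\fb}(P_{b,s})$ is $\rho_0^{\nu}\rho_0^{-6}\bbS_{i,\sc}\rho_0^{-\nu}$ on each $\hH_i$, acting at weight $-\nu\in]-2,0[$. By Proposition~\ref{lasc.6}, its kernel and cokernel are both the constants, that is, the space $V\cong\bbR^{\ell}$ of locally constant functions on $\hH_+$, suitably weighted.

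Corollary~\ref{b.18} then gives generalized inverses $Q_{\mf}$ and $Q_{\fb}$ in the $b$-calculus, with the required positivity of the index sets at $\lf$ and $\rf$.

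\textbf{Matching the finite-dimensional obstructions.} The total formal defect is $\dim P_{0,+}+\ell$ on each side. It must be matched by the actual $\epio>0$ data, which have dimension $\dim P_{c,\epio,\phi}+\dim\chD=\dim P_0+\dim\cD=\dim P_{0,+}+\ell$, by the choice of $\cD$ as a complement of $\iota_+(P_0)$.

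The constants on $\hH_+$ are accounted for as follows. Part of them are limits of elements of $\hP_{c,\epio,\phi,-}$, whose restriction to $\hH_+$ is $\iota_+(P_0)$. The remaining part is spanned by the restrictions of $\widehat f_j$, which are locally constant near $\hH_+$.

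Accordingly, we take $\Pi_1$ to be the $L^2_{b,s}$-projection onto $\rho_+^{-\nu}\hP_{c,\epio,\phi,+}\oplus\rho_0^{\nu}(\hP_{c,\epio,\phi,-}\oplus\chD)$. We take $\Pi_2$ to be the projection onto the dual spaces $\rho_+^{\nu}\hP_{c,\epio,\phi}\oplus\rho_+^{6-\nu}\bbS_{\hbeta,\epio,\phi}(\chD)$. The factor $\epio^{2-\nu}$ in the definition of $(\cdot)^{\perp}$ and the weight $\rho_+^{8-2\nu}$ in \eqref{c2.77} are exactly what make these the $b$-surgery $L^2$ pairings. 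We then check that $N_{\fb}(\Pi_i)$ and $N_{\mf}(\Pi_i)$ are the projections onto the normal kernels and cokernels, and that the $\Pi_i$ lie in $\Psi^{0,\cP_i}_{b,s}$ with the required index sets. Proposition~\ref{rob.1} then yields $G\in\Psi^{-6,\cH}_{b,s}(\chM)$ inverting $(\Id-\Pi_2)P_{b,s}(\Id-\Pi_1)$.

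\textbf{Assembling the inverse.} We set $\bbG_{\hbeta,\epio,\phi}=\rho_+^{\nu}G\rho_+^{6-\nu}+R_\epio$, where $R_\epio$ inverts $\bbS_{\hbeta,\epio,\phi}\colon\chD_\epio\to\bbS_{\hbeta,\epio,\phi}(\chD_\epio)$. This map is injective for small $\epio$: functions in $\chD$ are not Killing potentials, since their values at the $p_i$ lie outside $\iota_+(P_0)$, and $\ker\bbS_{\hbeta,\epio,\phi}=P_{c,\epio,\phi}$ for small $\epio$ by the parametrix argument of Proposition~\ref{MM.3}. The weight shifts then give the stated index family: $\lf\ge0$, $\rf>6-\nu$, $\fb\ge 6$, $\mf\ge 0$.

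\textbf{Main obstacle.} The step I expect to be hardest is verifying that the normal operators of the projections match the normal kernels. This requires two facts. First, the pairing between $\chD$ and $\bbS_{\hbeta,\epio,\phi}(\chD)$ must be uniformly nondegenerate as $\epio\searrow0$; because of the pseudodifferential $\theta$-term, $\bbS_{\hbeta,\epio,\phi}\widehat f_j$ is not supported away from $\hH_+$, which complicates this. Second, the $\hP_{c,\epio,\phi,-}$ directions, which are of order $\rho_0^{\nu}$ at $\fb$, must combine with $\chD$ to span exactly $V$ in the limit. I would handle both by an explicit leading-order computation of these pairings at $\hH_0\cap\hH_+$, using Lemma~\ref{la2.3} and the polyhomogeneous expansions of Corollary~\ref{b.13}.
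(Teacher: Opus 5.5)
\textbf{Verdict.} Your overall route is the paper's. The one real defect is the choice of $\Pi_2$, which as written breaks the application of Proposition~\ref{rob.1}.

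\textbf{What matches.} You use the same conjugated operator $P_{b,s}=\rho_+^{-\nu}(\rho_+^{6}\bbS_{\hbeta,\epio,\phi})\rho_+^{\nu}$ and the normal operators from Propositions~\ref{la2.7} and \ref{lasc.6}. Your $\Pi_1$ is the paper's, you apply Proposition~\ref{rob.1}, and you assemble $\bbG_{\hbeta,\epio,\phi}=\rho_+^{\nu}G\rho_+^{6-\nu}+R_\epio$ as the paper does.

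\textbf{The gap: your $\Pi_2$.} You have carried over the $m=3$ weights $\rho_+^{\nu}\hP_{c,\epio,\phi}$, which are wrong for $m=2$.
\begin{itemize}
\item At $\mf$: for $m=2$, \eqref{la.19} gives $(r^{-\nu}\bbS_b r^{\nu})^*=r^{\nu-2}\bbS_b r^{2-\nu}$. By Lemma~\ref{la2.3}, the cokernel of $N_{\mf}(P_{b,s})$ is therefore $r^{\nu-2}P_{0,+}$, not $r^{\nu}P_{0,+}$. Your $N_{\mf}(\Pi_2)$ projects onto a space containing $r^{\nu}P_0$, and $r^\nu P_0$ is not $L^2_b$-orthogonal to the range of $N_{\mf}(P_{b,s})$.
\item At $\fb$: the cokernel is $\rho_0^{2-\nu}V$, but $\rho_+^{\nu}\hP_{c,\epio,\phi}$ vanishes at $\hH_+$. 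Already when $\cD=0$ (so $\chD=0$), your $N_{\fb}(\Pi_2)$ is zero. Then no $Q_{\fb}$ with $N_{\fb}(P_{b,s})Q_{\fb}=\Id-N_{\fb}(\Pi_2)$ can exist, so the hypotheses of Proposition~\ref{rob.1} fail.
\end{itemize}

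\textbf{The paper's choice.} The paper projects onto
\[
\rho_+^{\nu-2}\hP_{c,\epio,\phi,+}\oplus\rho_0^{2-\nu}\hP_{c,\epio,\phi,-}\oplus\rho_+^{6-\nu}\bbS_{\hbeta,\epio,\phi}(\chD).
\]
Here $\rho_+^{\nu-2}\hP_{c,\epio,\phi,+}$ gives the $\mf$-cokernel $r^{\nu-2}P_{0,+}$. The summand $\rho_0^{2-\nu}\hP_{c,\epio,\phi,-}$ supplies the $\fb$-cokernel directions $\rho_0^{2-\nu}\iota_+(P_0)$.

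This is exactly what the target space in the statement encodes. Conjugate by $\rho_+^{6-\nu}$ and use the $b$-surgery density $\rho_+^{-4}\omega^{[2]}_{c,\epio,\phi}$. Since $\epio=\rho_0\rho_+$, orthogonality to $\rho_+^{\nu-2}v$ and to $\rho_0^{2-\nu}w$ becomes precisely
\[
(\epio)_*(uv\,\omega^{[2]}_{c,\epio,\phi})=0 \quad\text{and}\quad (\epio)_*(u\,\epio^{2-\nu}w\,\omega^{[2]}_{c,\epio,\phi})=0 .
\]
Orthogonality to your $\rho_+^{\nu}v$ would instead read $(\epio)_*(\rho_+^{2}uv\,\omega^{[2]}_{c,\epio,\phi})=0$. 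That condition does not cut out the range of the self-adjoint operator $\bbS_{\hbeta,\epio,\phi}$. So your remark that the factor $\epio^{2-\nu}$ makes your spaces the $b$-surgery pairings is true for the paper's weights, not for yours.

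\textbf{A lesser point: injectivity on $\chD_\epio$.} You invoke Proposition~\ref{MM.3}, which is the $m>3$ statement. Its normal-operator inputs are not available for $m=2$: kernel exactly $P_0$ at $\mf$ (here only $P_{0,+}$) and injectivity at $\fb$ (here the constants are in the kernel).

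The paper argues directly on $\hH_0$. Elements of $\cD$ are orbifold-smooth and constant near the $p_i$, so the integration by parts of Lemma~\ref{la.15} gives $\bbS u=0\Rightarrow u\in P_0$. Hence $u=0$, because $\cD\cap\iota_+(P_0)=\{0\}$. So $\bbS_{\hbeta,\epio,\phi}(\chD)|_{\hH_0}$ has dimension $\dim\cD$. The paper uses this count to control $\bbS_{\hbeta,\epio,\phi}(\chD)$ uniformly, in place of the corner computation you proposed for your ``main obstacle''.
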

\begin{proof}
  Fix $\nu\in ]0,2[$ and consider the operator 
$$
    P_{b,s}:= \rho_+^{-\nu}(\rho_+^{6}\bbS_{\hbeta,\epio,\phi})\rho_+^{\nu}.
$$
By Proposition~\ref{la2.7}, the operator $N_{\mf}(P_{b,s})$ induces a Fredholm operator
$$
    N_{\mf}(P_{b,s}): L^{2,k+6}_b(\bM)^{\bbT}\to L^{2,k}_b(\bM)^{\bbT}
$$
with kernel corresponding to $\rho_+^{-\nu}P_{0,+}$ and cokernel corresponding to $\rho_+^{2-\nu}P_{0,+}$.  Similarly, by Proposition~\ref{lasc.6}, $N_{\fb}(P_{b,s})$ induces a Fredholm operator 
$$
   N_{\fb}(P_{b,s}): L^{2,k+6}_b(\hH_+)^{\bbT}\to L^{2,k}_b(\hH_+)^{\bbT}
$$
with kernel and cokernel identified with $\rho_0^{\nu} V$ and $\rho_0^{2-\nu}V$ respectively, where $V$ is again the space of locally constant functions on $\hH_+$.  Extends elements of $V$ polyhomogeneously off $\hH_+$ so that they generate the space
$$
   \hV= \hP_{c,\epio,\phi,-}\oplus \chD
$$
for $\epio>0$.  Let $\Pi_1$ be the $L^2$-projection onto 
$$
\rho_0^{\nu}\chD\oplus \rho_0^{\nu}\hP_{c,\epio,\phi,-} \oplus \rho_+^{-\nu}P_{c,\epio,\phi,+}
$$
for each $\epio>0$ with respect to the $b$-surgery metric $\rho_+^{-2}g_{c,\epio,\phi}$ associated to the conical surgery metric $g_{c,\epio,\phi}$ with K\"ahler form $\omega_{c,\epio,\phi}$.  Similarly, let $\Pi_2$ be the $L^2_{b,s}$-projection (for each fixed $\epio$)  onto 
$$
     \rho_+^{\nu-2}\hP_{c,\epio,\phi,+}\oplus \rho_0^{2-\nu}\hP_{c,\epio,\phi,-}\oplus \rho_+^{-\nu+6}\bbS_{\hbeta,\epio,\phi}(\chD)
$$
with respect to $\rho_+^{-2}g_{c,\epio,\phi}$.  Notice that  $\bbS_{\hbeta,\epio,\phi}(\chD)$ restricts on $\hH_0$ to a space of functions of dimension $\dim \cD$.   Indeed, if 
$$
     \bbS(u)=0 \quad \mbox{for some} \; u\in \chD|_{\hH_0},
$$
then since $u$ is compactly supported on $\bM\setminus \pa \bM$ and $u$ is smooth in the sense of orbifold, we can integrate by parts as in the proof of Lemma~\ref{la.15} to deduce that $u\in P_{0}$.  Since $\cD$ is the complement of $\iota_+(P_{0})$ in $\bbR^{\ell}$, this means that $u=0$.  Hence,  $\bbS_{\hbeta,\epio,\phi}(\chD)|_{\hH_0}$ does define a subspace of functions of dimension $\dim \cD$.  

By Proposition~\ref{rob.1}, there is an operator $G\in \Psi^{-6,\cH}_{b,s}(\chM)$ such that 
$$
 (\Id-\Pi_2)P_{b,s}(\Id-\Pi_1)G= \Id-\Pi_2 \quad \mbox{and} \quad   G(\Id-\Pi_2)P_{b,s}(\Id-\Pi_1)= \Id-\Pi_1
$$
for some real index family $\cH$ such that
$$
   \cH(\lf)>0, \quad \cH(\rf)>0, \quad \cH(\fb)\ge 0 \quad \mbox{and} \quad \cH(\mf)\ge 0.
$$
Hence, the result follows by taking 
$$
  \bbG_{\hbeta,\epio,\phi}= \rho_+^{\nu}G \rho_+^{6-\nu}+R_{\epio}
$$
with $R_{\epio}$ the inverse of the map
$$
    \bbS_{\hbeta,\epio,\phi}: \chD_{\epio}\to \bbS_{\hbeta,\epio,\phi}(\chD_{\epio}).
$$
\end{proof}

As in the case $m\ge 3$, this has the following consequence.
\begin{corollary}
Suppose that $m=2$. If $\phi$ is as in Theorem~\ref{fs.17} and $\nu\in ]0,2[$, then the operator
\begin{equation*}
(\Id-\Pi_{P_{c,\epio,\phi}})\circ\check{\bbS}_{\hbeta,\epio,\phi}: (\rho_+^\nu L^{2,k+6}_{b,\epio}(\hM)^{\bbT})^{\perp_{\chD+P_{c,\epio,\phi}}}\oplus \chD_{\epio}  \to (\rho_+^{\nu-6} L^{2,k}_{b,\epio}(\hM)^{\bbT})^{\perp_{\bbS_{\hbeta,\epio,\phi}(\chD)+P_{c,\epio,\phi}}} \oplus \bbS_{\hbeta,\epio,\phi}(\chD_{\epio})
\end{equation*}
is a bijection for $\epio>0$ small enough.  When acting on $\bbT$-invariant functions, there is in fact an inverse $\check{\bbG}_{\hbeta,\epio,\phi}\in \Psi^{-6,\check{\cG}}_{b,\epio}(\chM)$ such that
\begin{equation*}
   \check{\bbG}_{\hbeta,\epio,\phi}\check{\bbS}_{\hbeta,\epio,\phi}= \check{\bbS}_{\hbeta,\epio,\phi} \check{\bbG}_{\hbeta,\epio,\phi}= \Id - \Pi_{P_{c,\epio,\phi}}
%\label{c3.4}
\end{equation*}
 with index family $\check{\cG}$ such that
 $$
 \check{\cG}(\lf)\ge 0, \quad \check{\cG}(\rf)\ge2m, \quad \check{\cG}(\fb)\ge 6, \quad \mbox{and} \quad \check{\cG}(\mf)\ge 0.
 $$  
 \label{c2.10}\end{corollary}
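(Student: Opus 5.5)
The proof follows the same perturbation pattern already used to derive Corollaries~\ref{MM.6} and \ref{c3.5} from Propositions~\ref{MM.3} and \ref{c3.2}, now starting from Proposition~\ref{c2.9}. The idea is to treat $\check{\bbS}_{\hbeta,\epio,\phi}$ as a rapidly vanishing perturbation of $\bbS_{\hbeta,\epio,\phi}$, and to invert it by a Neumann series built on the inverse $\bbG_{\hbeta,\epio,\phi}$ of Proposition~\ref{c2.9}.

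First, by \eqref{MM.4},
\[
\check{\bbS}_{\hbeta,\epio,\phi}-\bbS_{\hbeta,\epio,\phi}=\langle d(\Id-\Pi_{P_{c,\epio,\phi}})\Sc^{\hbeta}(\omega_{c,\epio,\phi}),d\,\cdot\,\rangle_{\omega_{c,\epio,\phi}}
\]
is a first order differential operator. Since $\phi$ is chosen as in Theorem~\ref{fs.17}, $(\Id-\Pi_{P_{c,\epio,\phi}})\Sc^{\hbeta}(\omega_{c,\epio,\phi})=\mathring{\Sc}^{\hbeta}(\omega_{c,\epio,\phi})$ lies in $\dot{\cC}^{\infty}(\chM)$, so the coefficients of this operator vanish to infinite order at $\hH_0$ and $\hH_+$. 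Composing with $\bbG_{\hbeta,\epio,\phi}$ and using Proposition~\ref{bs.5}, we then obtain
\[
\bbG_{\hbeta,\epio,\phi}\check{\bbS}_{\hbeta,\epio,\phi}=\Id-\Pi_{P_{c,\epio,\phi}}+R_1,\qquad \check{\bbS}_{\hbeta,\epio,\phi}\bbG_{\hbeta,\epio,\phi}=\Id-\Pi_{P_{c,\epio,\phi}}+R_2,
\]
with $R_1,R_2\in\dot{\Psi}^{-5}(\chM)$ vanishing rapidly as $\epio\searrow0$.

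Next, we check that these identities are compatible with the splittings of Proposition~\ref{c2.9}. The Killing potentials in $P_{c,\epio,\phi}$ lie in the kernel of $\bbS_{\hbeta,\epio,\phi}$, and they are also annihilated by $\check{\bbS}_{\hbeta,\epio,\phi}$ up to $\dot{\cC}^\infty$ terms, because the extra term pairs the gradient of $\mathring{\Sc}^{\hbeta}$ with $du$. It follows that $(\Id-\Pi_{P_{c,\epio,\phi}})\check{\bbS}_{\hbeta,\epio,\phi}$ maps $(\rho_+^\nu L^{2,k+6}_{b,\epio})^{\perp_{\chD+P_{c,\epio,\phi}}}\oplus\chD_\epio$ into the stated target. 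Note that $\chD_\epio$ is finite dimensional and consists of functions locally constant near $\hH_+$, so on it the perturbation is a finite rank operator of norm $\cO(\epio^\infty)$.

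Since $R_1$ and $R_2$ are residual and their norms on the weighted $b$-surgery Sobolev spaces tend to $0$ as $\epio\searrow0$ (by the uniform boundedness discussion after Proposition~\ref{bs.5}), the operators $\Id+R_i$ are invertible for $\epio$ small. Their inverses have the form $\Id+U_i$ with $U_i\in\dot{\Psi}^{-\infty}$, exactly as in the proofs of Propositions~\ref{bs.6} and \ref{rob.1}. We then set $\check{\bbG}_{\hbeta,\epio,\phi}:=(\Id+U_1)\bbG_{\hbeta,\epio,\phi}$, suitably sandwiched with $\Id-\Pi_{P_{c,\epio,\phi}}$. This gives a left inverse; the corresponding right inverse built from $U_2$ agrees with it by the usual argument. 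Bijectivity follows, and the index family of $\check{\bbG}_{\hbeta,\epio,\phi}$ is that of $\bbG_{\hbeta,\epio,\phi}$ enlarged only by rapidly vanishing terms, by Proposition~\ref{bs.5}.

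The main obstacle is the bookkeeping of the index family at $\rf$. Proposition~\ref{c2.9} only gives $\cG(\rf)>6-\nu$, while the statement asks for $\check{\cG}(\rf)\ge 2m=4$. Since $\nu<2$ we have $6-\nu>4$, so the weaker-looking bound is in fact implied. One must also check that the finite rank piece $R_\epio$ inverting $\bbS_{\hbeta,\epio,\phi}$ on $\chD_\epio$ does not spoil this bound. This holds because $\chD$ is supported away from $\hH_+$, so its kernel behaves well at $\rf$, and composing with the rapidly decaying $U_i$ does not worsen the index sets.
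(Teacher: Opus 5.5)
Your proposal is correct and follows the paper's route. The paper obtains this corollary exactly as it obtains Corollaries~\ref{MM.6} and \ref{c3.5}: $\check{\bbS}_{\hbeta,\epio,\phi}-\bbS_{\hbeta,\epio,\phi}$ is a first-order operator with $\dot{\cC}^{\infty}(\chM)$ coefficients, so composing with the inverse $\bbG_{\hbeta,\epio,\phi}$ of Proposition~\ref{c2.9} gives $\Id-\Pi_{P_{c,\epio,\phi}}$ up to rapidly vanishing errors in $\dot{\Psi}^{-5}(\chM)$, a Neumann series removes these for small $\epio$, and $\check{\cG}(\rf)\ge 4$ follows from $6-\nu>4$ as you note.

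There are three harmless slips. The corrections $U_i$ lie in $\dot{\Psi}^{-5}$ rather than $\dot{\Psi}^{-\infty}$, since $R_i$ is only of order $-5$. Elements of $\chD$ are locally constant near $\hH_+$, not vanishing there. Finally, no separate check of the finite rank piece $R_{\epio}$ is needed, because it is already part of the operator $\bbG_{\hbeta,\epio,\phi}$ of Proposition~\ref{c2.9}, whose index family you are inheriting.
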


\subsection{Non-linear analysis and the extremal solution on the resolution}\label{nla.0}

  Let $\phi$ be the formal solution of Theorem~\ref{fs.17} and  set 
$$
      \homega_{c,\epio}= \omega_{c,\epio,\phi},
$$
so that 
$$
   \mathring{\Sc}^{\hbeta}(\homega_{c,\epio})\in \dot{\cC}^{\infty}(\chM).
$$
If we set 
$$
  \homega_{c,\epio,u}:= \homega_{c,\epio}+ dd^cu
$$
for $u$ a function, we want to find $u$ such that 
\begin{equation}
   \mathring{\Sc}^{\hbeta}(\homega_{c,\epio,u})=0.
\label{nla.1a}\end{equation}
To solve this equation, let us first notice that
\begin{equation}
   \mathring{\Sc}^{\hbeta}(\homega_{c,\epio,u})=  \mathring{\Sc}^{\hbeta}(\homega_{c,\epio})+ \check{\bbS}_{\hbeta,\epio,\phi}(u) +Q(u),
\label{nla.1}\end{equation}
where $Q(u)$ is nonlinear in $u$ and $\check{\bbS}_{\hbeta,\epio,\phi}$ is defined in \eqref{MM.4}.
\begin{lemma}
The nonlinear term $Q(u)$ is of the form 
\begin{multline*}
Q(u)=   B_{2,2}(\nabla^2u,\nabla^2u; \nabla^2u) +
\underset{i+j+k\le 10}{\sum_{i,j,k\ge 2}} B_{i,j,k}(\nabla^iu,\nabla^ju,\nabla^ku) 
+ B_{3,3,3,3}(\nabla^3u,\nabla^3u,\nabla^3u,\nabla^3u)
\label{nla.2b}\end{multline*}
where the functions $B_{2,2}$, $B_{i,j,k}$ and $B_{3,3,3,3}$ are smooth in the last entry and linear in the other entries.  
\label{nla.2}\end{lemma}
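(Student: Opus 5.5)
The structure of $Q(u)$ comes from tracking, term by term in \eqref{la.3}, how each geometric quantity depends on $u$ through the metric $\homega_{c,\epio,u}=\homega_{c,\epio}+dd^cu$. Write $h:=dd^cu$, viewed as a symmetric endomorphism via $\homega_{c,\epio}$. The inverse metric is then $g_u^{-1}=(\Id+h)^{-1}g^{-1}$, a smooth (in fact rational) function of $\nabla^2u$. I will expand $\mathring{\Sc}^{\hbeta}(\homega_{c,\epio,u})$ in $u$, and $Q(u)$ is by definition the part that is at least quadratic after removing the constant term $\mathring{\Sc}^{\hbeta}(\homega_{c,\epio})$ and the linear term $\check{\bbS}_{\hbeta,\epio,\phi}(u)$ from \eqref{MM.4}.

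First I record the derivative orders of each ingredient. The Ricci form is $\rho_u=\rho-dd^c\log\det(\Id+h)$, so $\Ric_u$ involves $\nabla^4u$ linearly, with coefficients that are smooth functions of $\nabla^2u$, plus quadratic expressions in $\nabla^3u$ with coefficients smooth in $\nabla^2u$. The scalar curvature $R_u=\operatorname{tr}_{g_u}\rho_u$ has the same structure. The term $\Delta_uR_u$ is $g_u^{-1}\ast(\nabla^2 R_u+\Gamma_u\ast\nabla R_u)$, where the difference tensor $\Gamma_u-\Gamma$ is linear in $\nabla^3u$ with coefficients smooth in $\nabla^2u$. Expanding $\Delta_uR_u$ therefore gives terms linear in $\nabla^6u$, terms of the form $\nabla^3u\ast\nabla^5u$ and $\nabla^4u\ast\nabla^4u$, terms $\nabla^3u\ast\nabla^3u\ast\nabla^4u$, and a quartic term in $\nabla^3u$. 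The coefficients of all of these are smooth functions of $\nabla^2u$. The terms $R_u^2/2$ and $\|\Ric_u\|_u^2$ contribute products in which the derivative orders sum to at most $8$, plus terms quartic in $\nabla^3u$.

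Next I Taylor expand in the variable $\nabla^2u$ appearing in the coefficients, so that each monomial takes the form (multilinear in higher derivatives) $\times$ (smooth function of $\nabla^2u$). After subtracting the linear part, the remainder splits into three types. There is a piece at least quadratic purely in $\nabla^2u$, which I write as $B_{2,2}(\nabla^2u,\nabla^2u;\nabla^2u)$ using Taylor's formula with integral remainder. There are pieces with two or three higher derivative factors of total order at most $10$, which is the count for $\nabla^2u\ast\nabla^2u\ast\nabla^6u$ and similar terms; these are absorbed into the $B_{i,j,k}$. Finally there is the quartic $\nabla^3u$ piece, which gives $B_{3,3,3,3}$. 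In each case the coefficient is smooth in the last slot, which is $\nabla^2u$. The term $\ell^{\ext}(\mu_u)$ is affine in $d^cu$ by Lemma~\ref{fs.13a}, so it contributes nothing to $Q(u)$.

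The main obstacle is the nonlocal term $\|\theta_u\|_u^2$. I expect to handle it with the formula from the proof of Lemma~\ref{green.1}, namely $\theta_u=\theta+dd^c\bbG_u\Lambda_u\theta$. Here $\Lambda_u$ is smooth in $\nabla^2u$, and $\bbG_u$ depends on $u$ through $\Delta_u$. I will expand using the resolvent identity $\bbG_u-\bbG=-\bbG(\Delta_u-\Delta)\bbG_u$ together with $\Pi$ corrections, and then treat the resulting operators $dd^c\bbG_u$ as coefficients. This produces terms of the form $B(\nabla^2u,\nabla^2u;\nabla^2u)$ in which the coefficient depends smoothly on $\nabla^2u$ but nonlocally, which is in the spirit of the stated form once "smooth in the last entry" is read as allowing such pseudodifferential coefficients. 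I would first check that the quadratic remainder of $u\mapsto\|\theta_u\|^2_u$ really lands in the $B_{2,2}$ slot.
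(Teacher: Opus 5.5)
Your proposal is correct and follows essentially the paper's route: the paper cites Arezzo--Pacard's expansion for the scalar-curvature terms, which you carry out by hand, treating the Ricci term analogously. For $\|\theta_u\|_u^2$ the paper argues exactly as you do, via $\theta_u=\theta+dd^c\bbG_u\Lambda_u\theta$ and an explicit Neumann-type inverse formula for $\bbG_u$ in terms of $\bbG$, $\Delta_u-\Delta$ and the projections, so that this contribution lands in the $B_{2,2}$ slot with coefficients depending smoothly but nonlocally on $dd^cu$. (Minor point: $R_u^2$ and $\|\Ric_u\|_u^2$ also produce terms $\nabla^4u\ast\nabla^3u\ast\nabla^3u$ of total order $10$, not only terms of order at most $8$, but these fit the $B_{i,j,k}$ slot anyway.)
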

\begin{proof}
For the terms in \eqref{la.3} coming from the scalar curvature, this comes from \cite[(21),(22)]{Arezzo-Pacard2009}.  The term involving the Ricci tensor has a nonlinear behaviour in $u$ similar to the one coming from the square of the scalar curvature.  For the last term, notice that
$$
     \theta_{c,\epio,\phi+u}= \theta_{c,\epio,\phi}+ dd^c v
$$
for some function $v$ such that 
$(\epio)_*(v \homega^{[m]}_{c,\epio,u})=0.
$
Since $\hbeta$ is a primitive class, 
$$
   0= \Lambda_{c,\epio,\phi+u}\theta_{c,\epio,\phi+u}= \Lambda_{c,\epio,\phi+u}\theta_{c,\epio,\phi}- \Delta_{c,\epio,\phi+u}v,
$$
so
$ v= \bbG_{c,\epio,\phi+u} \Lambda_{c,\epio,\phi+u}\theta_{c,\epio,\phi}
$
and 
$$
    \theta_{c,\epio,\phi+u}= \theta_{\homega_{c,\epio}}+ dd^c\bbG_{c,\epio,\phi+u} \Lambda_{c,\epio,\phi+u}\theta_{\homega_{c,\epio}}.
$$
Now, one can check that 
\begin{multline*}
\bbG_{c,\epio,\phi+u}=  \lrp{\Id -\Pi_{c,\epio,\phi+u} -\lrp{\Id-\Pi_{c,\epio,\phi+u}}\lrp{\Id-\Pi_{c,\epio,\phi}}  }^{-1}(\Id-\Pi_{c,\epio,\phi}) \\ \lrp{\Id - \Delta_{c,\epio,\phi}^{-1}(\Delta_{c,\epio,\phi}-\Delta_{c,\epio,u,\phi})  }^{-1}
 \bbG_{c,\epio,\phi} \lrp{\Id -\Pi_{c,\epio,\phi}},
\end{multline*}
so $dd^cu\mapsto \bbG_{c,\epio,\phi+u}$ can be seen as an operator valued functional  smooth in terms of $d d^c u$.  Hence, the contribution to $Q$ in the last term in \eqref{la.3} is included in the term
$$
 B_{2,2}(\nabla^2u,\nabla^2u; \nabla^2u).
$$
\end{proof}

Now, if we have $u$ such that \eqref{nla.1a} holds, then
\begin{equation}
  \lrp{\Id -\Pi_{P_{\homega_{c,\epio}}}} \mathring{\Sc}^{\hbeta}(\homega_{c,\epio,u})=0,
\label{nla.3}\end{equation}
so that from \eqref{nla.1}, we obtain that
\begin{equation*}
\lrp{\Id -\Pi_{P_{\homega_{c,\epio}}}} \check{\bbS}_{\hbeta,\epio,\phi}(u)=  -\lrp{ \Id -\Pi_{P_{\homega_{c,\epio}} }}  \lrp{ \mathring{\Sc}^{\hbeta}(\homega_{c,\epio})+ Q(u) }.
\label{nla.4}\end{equation*}
Applying the inverse $\check{\bbG}_{\hbeta,\epio,\phi}$ of Corollaries~\ref{MM.6}, \ref{c3.5} or \ref{c2.10} depending on whether $m>3$, $m=3$ or $m=2$, we obtain that 
\begin{equation*}
  u= -\check{\bbG}_{\hbeta,\epio,\phi}  \lrp{ \mathring{\Sc}^{\hbeta}(\homega_{c,\epio})+ Q(u) },
\label{nla.5}\end{equation*}
so that a solution to \eqref{nla.3} corresponds to a fixed point of the operator
\begin{equation*}
   \widehat{\cN}(u):= -\check{\bbG}_{\hbeta,\epio,\phi}  \lrp{ \mathring{\Sc}_{\hbeta}(\homega_{c,\epio})+ Q(u) }.
\label{nla.6}\end{equation*}
To obtain a solution, let us therefore show  that $  \widehat{\cN}$ is a contraction when acting on a suitable space.  
\begin{lemma}
For $k>m$, multiplication on continuous functions extends to a uniformly continuous (in $\epio$)  bilinear map 
$$
          \begin{array}{ccc}
            L^{2,k}_{b,\epio}(\hM)\times L^{2,\epio}_{b,\epio}(\hM) & \to & L^{2,k}_{b,\epio}(\hM) \\
            (u,v) & \mapsto & uv
          \end{array}
$$
such that 
$$
    \| uv\|_{L^{2,k}_{b,\epio}(\hM)} \le C_k \| u\|_{L^{2,k}_{b,\epio}(\hM)} \| v\|_{L^{2,k}_{b,\epio}(\hM)}
$$
for a constant $C_k>0$ (independent of $\epio$), as well as to a continuous bilinear map
$$
          \begin{array}{ccc}
            \cC^q(]0,\epio_0[;L^{2,k}_{b,s}(\chM))\times \cC^q(]0,\epio_0[;L^{2,k}_{b,s}(\chM)) & \to & \cC^q(]0,\epio_0[; L^{2,k}_{b,s}(\chM)) \\
            (u,v) & \mapsto & uv
          \end{array}
$$
such that 
$$
    \| uv\|_{\cC^qL^{2,k}_{b,s}} \le C_{q,k} \| u\|_{\cC^qL^{2,k}_{b,s}} \| v\|_{\cC^qL^{2,k}_{b,s}}
$$
for a constant $C_{q,k}$ depending on $q,k\in\bbN$.
\label{nla.7}\end{lemma}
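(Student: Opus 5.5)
The plan is to reduce the statement to the standard Sobolev algebra property on manifolds of bounded geometry, uniformly in $\epio$. The $b$-surgery metric $g_{b,\epio}$ (for which the spaces $L^{2,k}_{b,\epio}(\hM)$ are defined, via $b$-surgery vector fields) is a family of metrics on $\hM$ whose restrictions to $\hH_0$ and to each $\hH_i$ are $b$-metrics. Near any corner or boundary hypersurface they are uniformly comparable to products of cylindrical metrics $dt^2+h$ with $t=\log\rho_0$ or $t=\log\rho_+$. The first step is therefore to show that the family $(\hM,g_{b,\epio})$, $\epio\in]0,\epio_0[$, has \emph{uniformly bounded geometry}: injectivity radius bounded below and all covariant derivatives of curvature bounded, with constants independent of $\epio$. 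In addition, $\Diff^k_{b,s}$ should be generated, uniformly in $\epio$, by the $b$-surgery vector fields, which are uniformly bounded and orthonormalizable for $g_{b,\epio}$.

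To establish this, I would cover $\chM$ by finitely many coordinate patches: interior ones, ones near $\hH_0$ away from $\hH_+$, ones near $\hH_+$ away from $\hH_0$, and ones near the corners $\hH_0\cap\hH_i$. Near the corners the coordinates are $(\rho_0,\rho_+,y)$ with $\rho_0\rho_+=\epio$. On each level set $\epio=\mathrm{const}$, the $b$-surgery vector fields are spanned by $\rho_0\partial_{\rho_0}-\rho_+\partial_{\rho_+}$ and $\partial_y$. In the logarithmic variable $t=\log\rho_+$ this becomes a cylinder segment $\partial_t,\partial_y$ of length $|\log\epio|$, and the metric there is a smooth (polyhomogeneous, bounded) perturbation of the product metric. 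This gives uniform unit-size charts (balls of fixed radius), in which $L^{2,k}_{b,\epio}$ is uniformly equivalent to the ordinary $H^k$ norm on the chart, with constants independent of $\epio$ and of the chart.

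The second step is the Sobolev estimate. For $k>m=\tfrac12\dim_\bbR\hM$, the uniform local Sobolev embedding $H^k(B)\hookrightarrow C^0(B)$ on unit balls gives $\sup|u|\le C\|u\|_{L^{2,k}_{b,\epio}}$ uniformly. More generally, each derivative of order $j\le k$ lies locally in $L^{p_j}$ with the Gagliardo--Nirenberg/Moser exponents. Then Leibniz's rule applied to $P(uv)$ for products of $b$-surgery vector fields yields $\sum\|\partial^a u\,\partial^b v\|_{L^2}$, which I would bound by the Moser inequality $\|uv\|_{H^k}\le C(\|u\|_{L^\infty}\|v\|_{H^k}+\|u\|_{H^k}\|v\|_{L^\infty})$ on each chart. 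Summing over a uniformly locally finite cover with a uniformly bounded partition of unity gives the first estimate with $C_k$ independent of $\epio$.

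For the $\cC^q$ version, apply $(\epio\partial_\epio)^j$ and use Leibniz again: $(\epio\partial_\epio)^j(uv)=\sum\binom{j}{i}(\epio\partial_\epio)^iu\,(\epio\partial_\epio)^{j-i}v$. Since $\epio\partial_\epio$ is a $b$-vector field on $\chM$ that commutes appropriately with $\cV_{b,s}$ up to $b$-surgery vector fields, the first estimate applied to each term gives the bound with $C_{q,k}$. Continuity in $\epio$ follows from continuity of each factor.

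The main obstacle is the uniformity in the first step near the corners $\hH_0\cap\hH_+$, where the neck degenerates. I would handle it with the logarithmic cylinder picture, checking that the polyhomogeneous metric coefficients, being bounded with bounded $b$-derivatives, give uniformly $C^\infty$-bounded metric coefficients in the unit charts.
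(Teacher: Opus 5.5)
Your proposal is correct and takes essentially the same route as the paper. Both reduce, via coordinate charts and a partition of unity on each level set, to the Euclidean Sobolev algebra property for $k>m$, and both get the $\cC^q$ bound from the Leibniz rule for $(\epio\partial_\epio)^j$. Your uniform-bounded-geometry argument along the necks near $\hH_0\cap\hH_+$ makes explicit the $\epio$-independence of $C_k$, which the paper only asserts. The remark about $\epio\partial_\epio$ commuting with $\cV_{b,s}$ is not needed: Leibniz reduces each term directly to the first estimate.
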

\begin{proof}
On $\bbR^{2m}$, this is well-known and follows from the Sobolev embedding and the Sobolev inequality.  Using coordinate charts and a partition of unity on $\hM$, the first assertion can be reduced to this case.  Moreover the constant $C_k$ can be chosen to be independent of $\epio>0$ as long as $\epio$ is bounded above.  The second assertion follows from the first and induction on $q$ using Leibniz rule.
\end{proof}
If $m>3$, consider for $q,k\in \bbN$, $\mu\ge 0$, $\nu\in ]6-2m,0[$ and $\epio_0\in]0,1[$ the Banach space
$$
      \cB_{q,k,\mu,\nu,\epio_0}:=\{ u\in \epio^{\mu}\rho_+^{\nu}\cC^q(]0,\epio_0[;L^{2,k}_{b,s}(\chM)) \; |\;  \epio^{-\mu}\rho_+^{-\nu}u|_{\epio=\lambda}\in (L^{2,k}_{b,\lambda}(\hM)^{\bbT} )^{\perp} \quad \forall \lambda\in]0,\epio_0[\}
$$
with norm 
$$
    \|u\|_{\cB_{q,k,\mu,\nu,\epio_0}}= \| \epio^{-\mu}\rho_+^{-\nu}u\|_{\cC^qL^{2,k}_{b,s}}.
$$
For $m\in\{2,3\}$, consider instead for $\nu\in]0,1[$, $q,k\in\bbN$, $\mu\ge 0$ and $\epio_0\in]0,1[$ the Banach spaces
$$
  \cB^1_{q,k,\mu,\nu,\epio_0}:= \{ u\in \epio^{\mu}\rho_+^{\nu}\cC^q(]0,\epio_0[;L^{2,k}_{b,s}(\chM)) \; |\;  \epio^{-\mu}\rho_+^{-\nu}u|_{\epio=\lambda}\in (L^{2,k}_{b,\lambda}(\hM)^{\bbT} )^{\perp_{\chD+P_{\widehat{\omega}_{c,\epio}}}} \quad \forall \lambda\in]0,\epio_0[\}
$$
with norm
$$
    \|u\|_{\cB^1_{q,k,\mu,\nu,\epio_0}}= \| \epio^{-\mu}\rho_+^{-\nu}u\|_{\cC^qL^{2,k}_{b,s}},
$$
$$
\cB^2_{q,k,\mu,\nu,\epio_0}:= \{ u\in \epio^{\mu}\cC^q(]0,\epio_0[;L^{2,k}_{b,s}(\chM)) \; |\;  \epio^{-\mu}u|_{\epio=\lambda}\in \chD_{\lambda} \quad \forall \lambda\in]0,\epio_0[\}
$$
with norm
$$
   \|u\|_{\cB^2_{q,k,\mu,\nu,\epio_0}}= \| \epio^{-\mu}u\|_{\cC^qL^{2,k}_{b,s}}  
$$
and 
$$
\cB_{q,k,\nu,\epio_0}:= \cB^1_{q,k,\nu,\epio_0}\oplus \cB^2_{q,k,\nu,\epio_0}
$$
with norm induced by those of $\cB^1_{q,k,\nu,\epio_0}$ and $\cB^2_{q,k,\nu,\epio_0}$.

For $c>0$, set 
\begin{equation*}
  \cU_{q,k,\mu,\nu, \epio_0,c}:= \{ u\in \cB_{q,k,\mu,\nu,\epio_0} \; | \;  \| u\|_{\cB_{q,k+6,\mu,\nu,\epio_0}}<c\}.
\label{nla.9}\end{equation*}
The Banach algebra property of Lemma~\ref{nla.7} ensures, as in \cite{LeBrun-Simanca1994},  that the functional
$$
\begin{array}{lccc}
          Q: &  \cU_{q,k,\mu,\nu,\epio_0,c} & \to & \epio^{\mu}\rho_+^{\nu-6}\cC^q(]0,\epio_0[;L^{2,k-6}_{b,s}(\chM)) \\
        &     q & \mapsto & Q(u)
          \end{array} 
          $$
is well-defined for 
$$\nu\in  \left\{ \begin{array}{ll} ]6-2m,0[, & m>3, \\ ]0,1[, & m\in\{2,3\},
\end{array}\right.
$$ and $k>m+6$ with $\mu+\nu\ge 2$ provided that $c>0$ is small enough.  For $m\in\{2,3\}$, we also use the fact that the derivatives of elements of $\chD$ are bounded with respect  to $\omega_{c,\epio}$.  From Corollaries~\ref{MM.6}, \ref{c3.5} and \ref{c2.10} as well as from Lemma~\ref{nla.2}, we also see that there is a constant $C_{q,k,\mu,\nu}>0$ such that 
\begin{equation}
\| \check{\bbG}_{\hbeta,\epio,\phi} \lrp{  Q(u_1)-Q(u_2)}\|_{\cB_{q,k,\mu,\nu,\epio_0}}\le C_{q,k,\mu,\nu} \lrp{ 
\|u_1\|_{\cB_{q,k,\mu,\nu,\epio_0}} +   \|u_2\|_{\cB_{q,k,\mu,\nu,\epio_0}} } \lrp{ \|u_1-u_2\|_{\cB_{q,k,\mu,\nu,\epio_0}} }
\label{nla.8}\end{equation}
for all $u_1, u_2\in \cU_{q,k,\mu,\nu,\epio_0,c}$.  
\begin{proposition}
For $m>3$, pick $\nu\in ]6-2m, 0[$, while for $m\in\{2,3\}$, pick $\nu\in ]0,1[$.  Then for $k>m+6$ and $\mu>2-\nu$, there exists $c>0$ and $\epio_0>0$ such that $\widehat{\cN}$ induces a contraction
$$
   \widehat{\cN}: \cU_{q,k,\mu,\nu,\epio_0,c}\to \cU_{q,k,\mu,\nu,\epio_0,c}
$$ 
for $\epio<\epio_0$.  
\label{nla.10}\end{proposition}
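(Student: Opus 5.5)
This is the standard Banach fixed point argument. It has two parts: $\widehat{\cN}$ maps the small ball $\cU_{q,k,\mu,\nu,\epio_0,c}$ into itself, and it is Lipschitz there with constant strictly less than $1$. I would write
$$
\widehat{\cN}(u)= -\check{\bbG}_{\hbeta,\epio,\phi}\mathring{\Sc}^{\hbeta}(\homega_{c,\epio}) -\check{\bbG}_{\hbeta,\epio,\phi}Q(u)
$$
and treat the inhomogeneous term and the nonlinear term separately. The range condition needs a word. The pseudodifferential characterizations of Corollaries~\ref{MM.6}, \ref{c3.5} and \ref{c2.10} give an inverse $\check{\bbG}_{\hbeta,\epio,\phi}$ mapping $(\rho_+^{\nu-6}L^{2,k-6}_{b,\epio})^{\perp}$, or the relevant $\perp$-space when $m\in\{2,3\}$, onto the weighted orthogonal space (plus $\chD_\epio$ when $m\in\{2,3\}$). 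Since $\check{\bbG}$ contains the projection $\Id-\Pi_{P_{c,\epio,\phi}}$, the image automatically satisfies the orthogonality constraints built into $\cB_{q,k,\mu,\nu,\epio_0}$. Uniformity in $\epio$, including the $\epio\partial_\epio$ derivatives used in the $\cC^q$ norms, comes from the fact that these inverses lie in the $b$-surgery calculus with the stated index families. This is exactly the boundedness remark after Proposition~\ref{bs.5}, applied after conjugating by $\rho_+^{\nu}$ and multiplying by $\epio^{-\mu}$; the latter commutes with $b$-surgery operators.

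For the inhomogeneous term I would use Theorem~\ref{fs.17}. It gives $\mathring{\Sc}^{\hbeta}(\homega_{c,\epio})\in\dot{\cC}^{\infty}(\chM)$, so this term vanishes to infinite order at $\hH_0$ and $\hH_+$, and hence $\epio^{-\mu}\rho_+^{6-\nu}\mathring{\Sc}^{\hbeta}(\homega_{c,\epio})$ is $\cO(\epio^{N})$ in $\cC^qL^{2,k-6}_{b,s}$ for every $N$. Consequently $\|\check{\bbG}\mathring{\Sc}^{\hbeta}(\homega_{c,\epio})\|_{\cB_{q,k,\mu,\nu,\epio_0}}\le C_N\epio_0^{N}$, which is smaller than $c/2$ once $\epio_0$ is small, for any fixed $c$.

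For the nonlinear term I would use estimate \eqref{nla.8} with $u_2=0$ (note $Q(0)=0$). It gives $\|\check{\bbG}Q(u)\|\le C\|u\|^2\le Cc^2$ and $\|\check{\bbG}(Q(u_1)-Q(u_2))\|\le 2Cc\|u_1-u_2\|$. I would choose $c$ with $2Cc\le 1/2$ and $Cc^2<c/2$. This is where the hypotheses are actually used. The condition $k>m+6$ makes the multiplicative estimate of Lemma~\ref{nla.7} available for the top-order derivatives appearing in Lemma~\ref{nla.2}. The condition $\mu+\nu\ge 2$, which follows from $\mu>2-\nu$, makes the weights compatible. Concretely, a second derivative $\nabla^2u$, measured in the conical surgery metric, of an element of $\epio^\mu\rho_+^\nu L^{2,k}$ is bounded by $\epio^{\mu}\rho_+^{\nu-2}\le \epio^{\mu+\nu-2}\rho_0^{2-\nu}$. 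This is uniformly bounded, so $dd^cu$ stays small, $\homega_{c,\epio,u}$ remains Kähler, and the smooth dependence of $B_{2,2}$ and $B_{3,3,3,3}$ on their last argument is controlled uniformly. Each extra factor of $u$ then supplies the needed weight $\epio^{\mu}\rho_+^{\nu}$ times a bounded quantity, so $Q(u)\in\epio^{\mu}\rho_+^{\nu-6}\cC^qL^{2,k-6}_{b,s}$ with quadratic bounds.

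The main obstacle I anticipate is verifying \eqref{nla.8} uniformly in $\epio$. There are two difficulties. First, the pseudodifferential contribution of $\|\theta\|^2$ to $Q$ involves $\bbG_{c,\epio,\phi+u}$, so its smooth dependence on $dd^cu$ has to hold as a map between weighted $b$-surgery Sobolev spaces. I would handle this with the Neumann-series formula for $\bbG_{c,\epio,\phi+u}$ from the proof of Lemma~\ref{nla.2}, together with Corollary~\ref{MM.1}. Second, when $m\in\{2,3\}$ the finite-dimensional component in $\chD_\epio$ carries no $\rho_+^\nu$ decay. There I would use that elements of $\chD$ are locally constant near $\hH_+$, so their derivatives are bounded, and in fact vanish near $\hH_+$, with respect to $\homega_{c,\epio}$. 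Their contributions to $Q$ therefore obey the same weighted bounds. With these two points settled, the contraction mapping principle closes the argument.
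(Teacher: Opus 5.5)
Your proposal is correct and follows the paper's own proof. The paper likewise takes \eqref{nla.8} as given, picks $c<1/(4C_{q,k,\mu,\nu})$ to get Lipschitz constant $\tfrac12$, and uses $\mathring{\Sc}^{\hbeta}(\homega_{c,\epio})\in\dot{\cC}^{\infty}(\chM)$ to shrink $\epio_0$ until $\|\check{\bbG}_{\hbeta,\epio,\phi}\mathring{\Sc}^{\hbeta}(\homega_{c,\epio})\|<c/2$, which gives the self-map bound $c/2+c/4<c$. Your extra commentary on why \eqref{nla.8} holds uniformly in $\epio$ goes beyond the proof itself; the paper only states that estimate, with a brief justification, just before the proposition.
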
 
\begin{proof}
If $C_{q,k,\mu, \nu}$ is the constant in \eqref{nla.8}, then choose $c>0$ such that
\begin{equation}
       c<\frac{1}{4C_{q,k,\mu,\nu}}.
\label{nla.10a}\end{equation}
Then by \eqref{nla.8}, we see that for $u_1,u_2\in \cU_{q,k,\mu,\nu,\epio_0,c}$, 
\begin{equation}
\begin{aligned}
   \| \widehat{\cN}(u_1)- \widehat{\cN}(u_2) \|_{\cB_{q,k,\mu,\nu,\epio_0}} &= \| \check{\bbG}_{\hbeta,\epio,\phi}\lrp{ Q(u_1)- Q(u_2)} \|_{\cB_{q,k,\mu,\nu,\epio_0}} \\ 
   & \le C_{q,k,\mu,\nu}(2c)  \|u_1-u_2\|_{\cB_{q,k,\mu,\nu,\epio_0}} \\
     & \le \frac12  \|u_1-u_2\|_{\cB_{q,k,\mu,\nu,\epio_0}}.
\end{aligned}   
\label{nla.11}\end{equation}
On the other hand, from the rapid decay of $\mathring{\Sc}^{\hbeta}(\homega_{c,\epio})$, we can find $\epio_0>0$ such that
\begin{equation}
   \|  \check{\bbG}_{\hbeta,\epio,\phi} \mathring{\Sc}_{\hbeta}(\homega_{c,\epio}) \|_{\cB_{q,k,\mu,\nu}}<\frac{c}{2}
\label{nla.12}\end{equation}
for $\epio<\epio_0$.  This means that for such a $\epio_0$ and  $u\in \cU_{q,k,\mu,\nu,\epio_0,c}$, 
\begin{equation}
\begin{aligned}
\| \widehat{\cN}(u)  \|_{\cB_{q,k,\mu,\nu,\epio_0}} & \le  \|  \check{\bbG}_{\hbeta,\epio,\phi} \mathring{\Sc}^{\hbeta}(\homega_{c,\epio}) \|_{\cB_{q,k,\mu,\nu,\epio_0}} + \|  \check{\bbG}_{\hbeta,\epio,\phi}\lrp{ Q(u)- Q(0)} \|_{\cB_{q,k,\mu,\nu,\epio_0}} \\
 & < \frac{c}{2}+ C_{q,k,\mu, \nu} c^2 \quad \mbox{by \eqref{nla.8} and \eqref{nla.12}}, \\
    &< \frac{c}{2}+ \frac{c}{4}< c, \quad \mbox{by \eqref{nla.10a}}.
\end{aligned}
\label{nla.13}\end{equation}
Thus, \eqref{nla.11} and \eqref{nla.13} confirms that we have found $c$ and $\epio_0>0$ such that 
$$
   \widehat{\cN}: \cU_{q,k,\mu,\nu,\epio_0,c}\to \cU_{q,k,\mu,\nu,\epio_0,c}
$$ 
induces a contraction.  
\end{proof}

This allows us to achieve our gluing construction as follows. 
\begin{theorem}\label{thm:general}{Let $(M, \omega_0, \bbT)$ be a compact $\Sc^{\beta}$-extremal orbifold  of dimension  $m\ge 2$ and of nonnegative scalar curvature, with only isolated singular points,  each of which satisfying Assumption~\ref{a:overall}.} Let $\phi$ be the formal solution of Theorem~\ref{fs.17}.  Then there exists $\epio_0>0$ and $u\in \dot{\cC}^{\infty}(\chM)$ such that $\homega_{c,\epio,u}$ is a polyhomogeneous family of $\Sc^{\hbeta}$-extremal metrics on $\widehat{M}$ for $\epio<\epio_0$.
\label{nla.14}\end{theorem}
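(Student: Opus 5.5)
The plan is to combine the formal solution of Theorem~\ref{fs.17} with the contraction of Proposition~\ref{nla.10}, and then upgrade the resulting fixed point to a rapidly vanishing polyhomogeneous correction. Let $\homega_{c,\epio}=\omega_{c,\epio,\phi}$ with $\mathring{\Sc}^{\hbeta}(\homega_{c,\epio})\in\dot{\cC}^{\infty}(\chM)$. Fix $\nu$ as in Proposition~\ref{nla.10}, $k>m+6$, $q\in\bbN$ and $\mu>2-\nu$. By Banach's fixed point theorem there is a unique $u_\mu\in\cU_{q,k,\mu,\nu,\epio_0,c}$ with $u_\mu=\widehat{\cN}(u_\mu)$.

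Applying $\check{\bbS}_{\hbeta,\epio,\phi}$ and using Corollaries~\ref{MM.6}, \ref{c3.5} or \ref{c2.10}, the fixed point equation gives \eqref{nla.3}:
\[
(\Id-\Pi_{P_{\homega_{c,\epio}}})\mathring{\Sc}^{\hbeta}(\homega_{c,\epio,u})=0 .
\]
Hence $\mathring{\Sc}^{\hbeta}(\homega_{c,\epio,u})$ lies in $P_{\homega_{c,\epio}}$. To conclude that it vanishes, we use the characterization of extremality from \cite{ALL2026}. Because $u$ is $\bbT$-invariant and small, $\homega_{c,\epio,u}$ is a $\bbT$-invariant Kähler form in $2\pi\halpha_\epio$. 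Its Killing potentials differ from those of $\homega_{c,\epio}$ only by the $d^cu$ shift of the moment map, so the projection of $\Sc^{\hbeta}(\homega_{c,\epio,u})$ onto its own space of Killing potentials equals $\ell^{\ext}_{\halpha_\epio,\hbeta}(\mu_{c,\epio,u})$. If this identification fails to be exact, I would instead run the fixed point with projection onto $P_{\homega_{c,\epio,u}}$. The difference of the projections is controlled by $u$ and is absorbed into $Q$. Either way we obtain $\mathring{\Sc}^{\hbeta}(\homega_{c,\epio,u})=0$, so the metric is $\Sc^{\hbeta}$-extremal.

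It remains to show $u\in\dot{\cC}^\infty(\chM)$ and that the family is polyhomogeneous. By uniqueness of the fixed point on nested balls, the solutions for larger $\mu$ coincide with $u_\mu$ once $\epio_0$ is shrunk. Since the source term $\mathring{\Sc}^{\hbeta}(\homega_{c,\epio})$ vanishes to infinite order, \eqref{nla.12} holds for every $\mu$, so $u=\cO(\epio^{\mu})$ in $\cC^qL^{2,k}_{b,s}$ for all $\mu,q,k$. Rapid decay at $\hH_+$ follows because $\epio^{\mu}\rho_+^{\nu}\le\rho_+^{\mu+\nu}$. Rapid decay at $\hH_0$ follows from $\epio=\rho_0\rho_+$, together with Sobolev embedding in the $b$-surgery Sobolev spaces. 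This yields $u\in\dot{\cC}^\infty(\chM)$, and $\homega_{c,\epio,u}$ inherits the polyhomogeneity of $\homega_{c,\epio}$.

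The main obstacle is the uniformity in $\mu$, namely that $\epio_0$ and $c$ do not degenerate as $\mu\to\infty$. To handle it, I would bootstrap rather than re-solve. Starting from a fixed $u$, write
\[
u=-\check{\bbG}_{\hbeta,\epio,\phi}\bigl(\mathring{\Sc}^{\hbeta}(\homega_{c,\epio})+Q(u)\bigr).
\]
Since $Q$ is at least quadratic by Lemma~\ref{nla.2} and $\check{\bbG}_{\hbeta,\epio,\phi}$ preserves $\epio$-weights by Proposition~\ref{bs.4}, each iteration gains a factor $\epio^{\mu}$. Iterating gives arbitrary decay without shrinking $\epio_0$.
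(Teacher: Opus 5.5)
Your proposal is correct and follows the paper's proof essentially step by step. Banach's fixed point gives $(\Id-\Pi_{P_{\homega_{c,\epio}}})\mathring{\Sc}^{\hbeta}(\homega_{c,\epio,u})=0$, the class-invariance of $\ell^{\ext}_{\halpha_\epio,\hbeta}$ gives $\Pi_{P_{\homega_{c,\epio,u}}}\mathring{\Sc}^{\hbeta}(\homega_{c,\epio,u})=0$, and the closeness of the two projections for small $\epio$ then forces $\mathring{\Sc}^{\hbeta}(\homega_{c,\epio,u})=0$; you should state that closeness step explicitly rather than covering it with ``either way''. Uniqueness of the fixed point on nested balls for larger $q,k,\mu$ then gives $u\in\dot{\cC}^{\infty}(\chM)$. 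The ``main obstacle'' you flag is not one: rapid vanishing is an asymptotic statement, and the paper simply shrinks $\epio_0$ at each stage, so your bootstrap is a valid but optional alternative (its gain per iteration is $\epio^{\mu+\nu-2}$, not $\epio^{\mu}$).
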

\begin{proof}
If $m>3$, pick $\nu\in ]6-2m,0[$, while if $m\in \{2,3\}$, pick $\nu\in]0,1[$.  Pick $k>m+6$ and $\mu>2-\nu$ and choose $q\in\bbN$.
Choosing $c>0$ and $\epio_0>0$ as in Proposition~\ref{nla.10}, we know by the Banach fixed point theorem that there exists a unique $u\in \cU_{q,k,\mu,\nu,\epio_0,c}$ such that
$$
    \widehat{\cN}(u)=u \quad \Longrightarrow \quad  \lrp{\Id -\Pi_{P_{\homega_{c,\epio}}}} \mathring{\Sc}^{\hbeta}(\homega_{c,\epio,u})=0.
$$
Since $\Pi_{P_{\homega_{c,\epio,u}}}$ is asymptotically close to $\Pi_{P_{\homega_{c,\epio}}}$ as $\epio\searrow 0$ and since on the other hand
$$
  \Pi_{P_{\homega_{c,\epio,u}}} \mathring{\Sc}^{\hbeta}(\homega_{c,\epio,u})=0,
$$
this means that taking $\epio_0$ smaller if needed, we have that
$$
    \mathring{\Sc}^{\hbeta}(\homega_{c,\epio,u})=0.
$$
As in \cite{ALL2026}, we conclude that for fixed $\epio$, $u$ is smooth on $\chM$.  By the Banach space implicit function theorem and local uniqueness, the family $u$ is also smooth in $\epio$ for $\epio>0$ small enough.  %\fr{I think this is correct}

Taking $q,k\in\bbN$ and $\mu>2-\nu$ larger, we can get another solution $\widetilde{u}$.  Taking $\epio_0>0$ smaller if needed, we can assume that $\widetilde{u}$ is contained in the original set $\cU_{q,k,\mu,\nu,\epio_0,c}$ used to construct $u$.  By the uniqueness of the fixed point in the Banach fixed point theorem, $u$ and $\widetilde{u}$ must agree.  Since $q$, $k$ and $\mu$ can be taken as large as we want, this implies that the derivatives of $u$ in $\epio$ all exist at $\epio=0$ and vanish.  
This shows that the solution $u$ we have produced is in fact in $\dot{\cC}^{\infty}(\chM)$ (for $\epio<\epio_{0}$).  In particular, the family of extremal metrics $\homega_{c,\epio,u}$ is polyhomogeneous as a conical surgery metric.
\end{proof}

We can show that the scalar curvature of these extremal metrics is positive in two different situations.  The first one is when the $\ALE$-metrics $\omega_i$ are Ricci-flat.

\begin{corollary}\label{c:Ricci-flat ALE}
{Suppose furthermore in the hypothesis of Theorem~\ref{thm:general} that the $\Sc^{\beta}$-extremal metric $\omega_0$ on $M$ has positive scalar curvature and that the $\Sc^0$-extremal metrics $\omega_i$ on $X_i$ are Ricci-flat.}  Then there exists $\epio_0$ so that the corresponding $\Sc^{\hbeta}$-extremal K\"ahler metrics $\homega_{c,\epio,u}$ on $\widehat{M}$  have positive scalar curvature for $\epio<\epio_0$.
\label{pos.1}\end{corollary}  
\begin{proof}
Since $R_{\omega_i}=0$ and $f_0\in r^4\CI(M)$ in \eqref{fs.4a}, notice that the scalar curvature $R_{\omega_{c,\epio}}$ of the family of K\"ahler metrics $\omega_{c,\epio}$ is bounded.  On the other hand, by \cite[Theorem~8.2.3]{Joyce},  $\sigma_i=2m\ge 4$ in Definition~\ref{ALE.1}, so
$$
    \psi_i-\frac{\rho_+^2}{4}= \cO(\epio^4) \quad \mbox{as} \quad \epio\searrow 0,
$$
for the function $\psi_i$ in \eqref{fs.4a}.  This ensures in particular that 
$$
     \rho_+^4\Sc^{\hbeta}(\omega_{c,\epio})= \cO(\rho_+^2\rho_0^4)= \cO(\epio^2\rho_0^2).
$$
This means that the construction of the formal solution in the proof of Theorem~\ref{fs.17} can start by applying Lemma~\ref{c2.4} with $\kappa=\frac32$, which ensures that the formal solution $\phi$ in Theorem~\ref{fs.17} is such that $\phi=o(\epio^{\frac32})$ with 
$$
     \frac{dd^c\phi}{\epio^{\frac32}}\in \cA_{\phg}(\chM;\Lambda^2({}^{c,\epio}T^*\chM))\cap L^{\infty}(\chM;\Lambda^2({}^{c,\epio}T^*\chM))
$$
vanishing on $\hH_0$ and $\hH_+$.
From this, we cannot deduce that $R_{c,\epio,\phi}$ is bounded, but we can deduce at least that
$$
    R_{c,\epio,\phi}= o(\rho_+^{-\frac12}).
$$
Similarly, we find that 
$$
   \| \Ric_{c,\epio,\phi}\|_{c,\epio,\phi}= o(\rho_+^{-\frac12}) \quad \mbox{and} \quad \| \theta_{c,\epio,\phi}\|_{c,\epio,\phi}= o(\rho_+^{-\frac12}).
$$

Since $u\in\dot{\cC}^{\infty}(\chM)$ in Theorem~\ref{nla.14}, notice that
\begin{multline*}
   R_{c,\epio,\phi+u}- R_{c,\epio,\phi}= o(1), \quad \|\Ric_{c,\epio,\phi+u}\|_{\homega_{c,\epio,u}}- \|\Ric_{c,\epio,\phi}\|_{\omega_{c,\epio,\phi}}= o(1) \\
   \mbox{and} \quad \| \theta_{c,\epio,\phi+u}\|_{\homega_{c,\epio,u}}-\| \theta_{c,\epio,\phi}\|_{\omega_{c,\epio,\phi}}=o(1),
\label{pos.2}\end{multline*}
so the same estimates hold for $\homega_{c,\epio,u}$, namely
\begin{equation}
R_{c,\epio,\phi+u}= o(\rho_+^{-\frac12}), \quad \| \Ric_{c,\epio,\phi+u}\|_{\homega_{c,\epio,u}}= o(\rho_+^{-\frac12}) \quad \mbox{and} \quad \| \theta_{c,\epio,\phi+u}\|_{\homega_{c,\epio,u}}= o(\rho_+^{-\frac12}).
\label{pos.3}\end{equation}

Moreover, $R_{c,\epio,\phi+u}$, $\| \Ric_{c,\epio,\phi+u}\|^2_{\homega_{c,\epio,u}}$ and $\| \theta_{c,\epio,\phi+u}\|^2_{\homega_{c,\epio,u}}$ are polyhomogeneous.  On the other hand, by construction,  
$$
     \Sc^{\hbeta}(\homega_{c,\epio,u})- \ell^{\ext}_{\halpha_{\epio}, \hbeta}(\mu_{c,\epio,\phi+u})= \mathring{\Sc}^{\hbeta}(\homega_{c,\epio,u})=0,
$$
so $\Sc^{\hbeta}(\homega_{c,\epio,u})$
is bounded by Lemma~\ref{fs.13}.    Since 
\begin{equation}
\Sc^{\hbeta}(\homega_{c,\epio,u})= \Delta_{c,\epio,\phi+u} R_{c,\epio,\phi+u}+ \frac{R^2_{c,\epio,\phi+u}}{2} -
 \| \Ric_{c,\epio,\phi+u}\|^2_{\homega_{c,\epio,u}} - \| \theta_{c,\epio,\phi+u}\|^2_{\homega_{c,\epio,u}},
\label{pos.4}\end{equation}
this means that the potential singular terms  in the expansions of the terms on the right hand side of \eqref{pos.4} must cancel out.      By \eqref{pos.3}, this can be rewritten more simply as 
\begin{equation}
\Sc^{\hbeta}(\homega_{c,\epio,u})= \Delta_{c,\epio,\phi+u} R_{c,\epio,\phi+u}+ o(\rho_+^{-1}).
\label{pos.5}\end{equation}

This can be used to deduce that $R_{c,\epio,\phi+u}$ is bounded.  Indeed, suppose not.  Then, let $\delta$ be the smallest number in $]-\frac12,0[$ such that $R_{c,\epio,\phi+u}$ has a non-trivial term of the form
$$
  v_{\delta}= \sum_{j=0}^{k} v_{\delta,j}\rho_+^{\delta}(\log\rho_+)^k \quad \mbox{for some} \quad v_{\delta,j}\in \cA_{\phg}(\hH_+)\cap L^{\infty}(\hH_+) \quad \mbox{and} \quad k\in\bbN
$$
in its polyhomogeneous expansion at $\hH_+$.  As in the proof of Lemma~\ref{c2.4}, this term can be rewritten
$$
      v_{\delta}= \sum_{j=0}^k \widetilde{v}_{\delta,j}\epio^\delta(\log\epio)^j
$$
with 
$$
     \widetilde{v}_{\delta,j}:= \sum_{p=0}^{k-j} (-1)^p \lrp{\begin{array}{c} j+p \\ p \end{array}} v_{\delta,p+j} \rho_0^{-\delta} (\log\rho_0)^p\in L^2_b(\hH_+).
$$
In particular, near $\hH_i$ for $i>0$,
$$
\begin{aligned}
\Delta_{c,\epio,\phi+u}v_{\delta}&= \epio^{-2}(\epio^2\Delta_{c,\epio,\phi+u} v_{\delta} )= \sum_{j=0}^{k} \epio^{-2+\delta}(\log\epio)^j (\epio^2\Delta_{c,\epio,\phi+u}\widetilde{v}_{\delta,j}) \\
 &= \sum_{j=0}^k \epio^{-2+\delta}(\log\epio)^j \Delta_{i} \widetilde{v}_{\delta,j}+ o(\rho_+^{\delta-2}).
\end{aligned}
$$
From \eqref{pos.5} and the fact that $\Sc^{\hbeta}(\homega_{c,\epio,u})$ is bounded, we deduce that $\Delta_{i} \widetilde{v}_{\delta,j}=0$ for each $i>0$ and each $j$.  Since $\widetilde{v}_{\delta,j}$ decays at infinity, we deduce from Lemma~\ref{la.28} that in fact $\widetilde{v}_{\delta,j}=0$ for all $j$, contradicting our assumption that $v_{\delta}$ was a non-trivial term in the partial polyhomogeneous expansion of $R_{c,\epio,\phi+u}$ at $\hH_+$.  To avoid a contradiction, we must therefore conclude that $R_{c,\epio,\phi+u}$ is bounded. 

If $v_0:= R_{c,\epio,\phi+u}|_{\hH_+}$ is the restriction of $R_{c,\epio,\phi+u}$ to $\hH_+$, then \eqref{pos.5} and the boundedness of $\Sc^{\hbeta}(\homega_{c,\epio,u})$ also imply that 
$$
     \Delta_{i}\lrp{v_0|_{\hH_i}}=0 \quad \mbox{for each} \; i\in \{1,\ldots,\ell\}.
$$
By Lemma~\ref{la.28}, this implies that $v_0|_{\hH_i}$ must be constant.  By continuity of $R_{c,\epio,\phi+u}$, we conclude that $v_0$ is equal to $R_{0}(p_i)$ on $\hH_i$ since $R_{c,\epio,\phi+u}|_{\hH_0}$ is equal to the lift of $R_0$ to $\bM=\hH_0$.  Since $R_{0}$ is positive by assumption, this means that $R_{c,\epio,\phi+u}|_{\pa\chM}$ is positive.  By continuity of  $R_{c,\epio,\phi+u}$, this means that there exists $\epio_0>0$ such that $R_{c,\epio,\phi+u}$ is positive for $\epio<\epio_0$.
\end{proof}

When the $\ALE$-metrics $\omega_i$ are not Ricci-flat, we can also ensure positivity of the scalar curvature making the following assumption.

\begin{assumption} {Suppose that $m\ge 3$ and  that in the Assumption~\ref{a:overall} the ALE K\"ahler metric $\omega_i$ on each $X_i$ has \emph{positive} scalar curvature.  Suppose also that the function $f_i$ in Definition~\ref{ALE.1}  satisfies 
\[ f_i = \begin{cases} c_i\|z\|^{-2}\log\|z\|+o(\|z\|^{-2}) \,  \text{with} \,  c_i\ne 0 \, \text{when} \; m=3; \\
\cO(\|z\|^{-\sigma_i}) \, \, \text{with}  \, \,\sigma_i=2(m-1)-2 \, \text{when} \; m>3.
\end{cases}
\]
Finally,  suppose that the top order term of the asymptotic expansion of the scalar curvature of $\omega_i$ at infinity is of the form $b_i\|z\|^{-2(m-1)}$ for some positive constant $b_i$. %\fr{change of notation at some point...}
\label{psc.1}}\end{assumption}

However, we need first to derive some preliminary results about the family of metrics $\omega_{c,\epio}$.  
\begin{lemma}
If Assumption~\ref{psc.1} holds, then near $\hH_i\cap\hH_0$, the scalar curvature $R_{c,\epio}$ of $\omega_{c,\epio}$ is such that
$$
 \rho_+^2 R_{c,\epio}= a_i\rho_+^2+ b_i\rho_0^{2m-4}+ o(\rho_+^2+ \rho_0^{2m-4})
$$
with $a_i=R_{0}(p_i)>0$ the value of the scalar curvature of $\omega_0$ at $p_i$ and $b_i$ the positive constant of Assumption~\ref{psc.1}.
\label{psc.2}\end{lemma}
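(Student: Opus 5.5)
We will compute $R_{c,\epio}$ directly from the explicit local potential $\psi_i$ of \eqref{fs.4a}, which gives $\omega_{c,\epio}=dd^c\psi_i$ in a neighbourhood of $\hH_i\cap\hH_0$. Since $\rho_0=\epio/\rho_+$, in this region we have $\epio\ll\rho_+$ small, and we can use the Euclidean coordinate $z$ on $\bbC^m/\Gamma_i$ with $\rho_+=\|z\|$ and $\rho_0=\epio/\|z\|$. Here $\chi_\delta(\rho_+)=\chi_\delta(\rho_0)=1$, so
$$
\psi_i=\frac{\|z\|^2}{4}+\|z\|^2 f_i(z/\epio)+f_0(z),
$$
where $f_i$ is the ALE correction evaluated at the rescaled point, so that $\|z\|^2f_i(z/\epio)=\cO(\|z\|^2\rho_0^{\sigma_i})$. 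We then view $\omega_{c,\epio}$ as a perturbation of the flat metric by two independent small terms, one coming from $f_0$ (orbifold side) and one from $f_i$ (ALE side). We will expand the scalar curvature $R=-\Delta\log\det(\omega)$ to first order in each perturbation; the cross terms are quadratic and will go into the error.

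\emph{Step 1: the orbifold contribution.} Setting $f_i=0$ gives exactly $\omega_0$, whose scalar curvature is smooth in the orbifold sense, so it equals $R_0(p_i)+\cO(\|z\|)$. After multiplying by $\rho_+^2$, this produces $a_i\rho_+^2+o(\rho_+^2)$ with $a_i=R_0(p_i)$.

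\emph{Step 2: the ALE contribution.} Setting $f_0=0$ gives $\epio^2\omega_i$ written in the coordinates $w=z/\epio$, so its scalar curvature is $\epio^{-2}R_{\omega_i}(w)$. By Assumption~\ref{psc.1}, $R_{\omega_i}(w)=b_i\|w\|^{-2(m-1)}+o(\|w\|^{-2(m-1)})$. Since $\|w\|^{-1}=\rho_0$, this gives $\epio^{-2}b_i\rho_0^{2m-2}$. Multiplying by $\rho_+^2$ and using $\epio^{-2}\rho_+^2=\rho_0^{-2}$ yields $b_i\rho_0^{2m-4}+o(\rho_0^{2m-4})$. The stated form of $f_i$ in Assumption~\ref{psc.1} (including the $\log$ term when $m=3$) is what guarantees that this top order term of $R_{\omega_i}$ is the first nonzero one.

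\emph{Step 3: the cross terms, which we expect to be the main obstacle.} The linear part of $\psi\mapsto R(dd^c\psi)$ at the flat metric is $-\Delta^2$, so the linear contributions simply add and are already accounted for in Steps 1 and 2. The quadratic and higher terms involve products of the form $\nabla^j(dd^c f_0)\cdot\nabla^k(dd^c(\|z\|^2f_i))$. Here $dd^cf_0=\cO(\|z\|^2)$, and the ALE curvature quantities are bounded by $\rho_+^{-2}\rho_0^{\sigma_i}$ times $b$-derivatives. The mixed terms therefore scale like $\rho_+^{-2}\cdot\rho_+^2\cdot\rho_0^{\sigma_i}=\rho_0^{\sigma_i}$, which after multiplication by $\rho_+^2$ becomes $\cO(\rho_+^2\rho_0^{\sigma_i})=o(\rho_+^2)$. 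The quadratic terms coming purely from $f_0$ or purely from $f_i$ are included in Steps 1 and 2. To make this rigorous we will work with $\rho_+^2R_{c,\epio}$ as a polyhomogeneous function on $\chM$, since it is a $b$-surgery object. We will then check that its restrictions to $\hH_0$ and $\hH_i$ match the two models, namely $\rho_+^2R_0$ and $\rho_0^{-2}R_{\omega_i}$, and that the remainder vanishes to positive order at both faces beyond the two displayed terms. When $m=3$ the exponent is $2m-4=2$, so the log-term produced by $f_i$ must be tracked carefully to confirm that it contributes only at order $o(\rho_0^2)$ in $\rho_+^2R_{c,\epio}$.
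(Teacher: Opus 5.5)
Your proposal is correct and follows essentially the paper's route. The paper also works with the local potential $\psi_i=\frac14\rho_+^2+\rho_+^2f_i+f_0$ near $\hH_i\cap\hH_0$. It writes $R_{c,\epio}$ as $\frac12\Delta^2_{\Euc}$ of the two perturbations plus a quadratic remainder (via Arezzo--Pacard), and it identifies the two leading coefficients by restricting to $\hH_0$ (giving $R_0(p_i)$) and to $\hH_i$ (giving $b_i$).

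The only difference is bookkeeping. You absorb the pure-$f_0$ and pure-$f_i$ nonlinear terms into the exact model curvatures $R_0$ and $\epio^{-2}R_{\omega_i}$, and estimate only the mixed terms. That is slightly cleaner: for this lemma it needs only $\sigma_i>0$ plus the assumed asymptotics of $R_{\omega_i}$, not the precise form of $f_i$ in Assumption~\ref{psc.1}.
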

\begin{proof}
By \eqref{fs.5}, we assume that near $\hH_i\cap\hH_0$, $\chi_{\delta}(\rho_+)=\chi_{\delta}(\rho_0)=1$, so that 
\begin{equation}
\begin{aligned}
\omega_{c,\epio}&= dd^c\psi_i= dd^c(\rho_+^2(\frac14+f_i)+ f_0) \\
&= \frac14 dd^c\rho_+^2+ \epio^2dd^c\rho_i^{-2}f_i+ dd^cf_0.
\end{aligned}
\label{psc.3}\end{equation}
By \cite[Proposition~4.1 and (70)]{Arezzo-Pacard2006}, near $\hH_i\cap \hH_0$,
$$
R_{c,\epio}= \frac12\Delta^2_{\Euc}(\epio^2\rho_i^{-2}f_i+ f_0)+ \cO(|\nabla^2(\epio^2\rho_i^{-2}f_i+f_0)|_{g_{Euc}}^2),
$$
where $\Delta_{\Euc}$ is the Laplacian of the Euclidean metric $g_{\Euc}$ with K\"ahler form $dd^c \rho_+^2$.  From Assumption~\ref{psc.1} and the fact that $f_0\in r^4\CI(M)$, 
$$
\frac12\Delta^2_{\Euc}(\epio^2\rho_i^{-2}f_i+ f_0)=  \widetilde{c}_i\rho_+^{-2}\rho_0^{2m-4} +d_i + o(\rho_+^{-2}\rho_0^{2m-4}+1)
$$
for some bounded function $\widetilde{c}_i$ and $d_i$ on $\hH_i\cap\hH_0$.  On the other hand,
$$
|\nabla^2(\epio^2\rho_i^{-2}f_i+f_0)|_{\Euc}^2= \cO((\rho_0^{2m-4}+\rho_+^2)^2),
$$
so 
$$
\rho_+^2R_{c,\epio}= \widetilde{c}_i\rho_0^{2m-4} +d_i\rho_+^2+ o(\rho_0^{2m-4}+\rho_+^{2}).
$$
Since $R_{c,\epio}|_{\hH_0}=R_{0}$, we deduce that $d_i=a_i=R_{0}(p_i)$.  Similarly, since $\epio^2R_{c,\epio}|_{\hH_i}= R_{i}$, we deduce that $\widetilde{c}_i$ is equal to the constant $b_i$ introduced in Assumption~\ref{psc.1}.
\end{proof}

When $\omega_0$ has positive scalar curvature, the previous result allows us to conclude that $\omega_{c,\epio}$ has positive scalar curvature for $\epio>0$ sufficiently small. To establish the same result for the extremal metrics, we will need the following estimate for $\mathring{\Sc}^{\hbeta}(\omega_{c,\epio})$.
\begin{lemma}
If Assumption~\ref{psc.1} holds, then 
$
\rho_+^2\mathring{\Sc}^{\hbeta}(\omega_{c,\epio})= \cO(\epio^2\rho_0^{2m-6}).
$
\label{psc.4}\end{lemma}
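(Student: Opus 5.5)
We estimate $\mathring{\Sc}^{\hbeta}(\omega_{c,\epio})$ separately in the three regions of the definition \eqref{fs.5}: near $\hH_0$ away from $\hH_+$, near $\hH_i$ away from $\hH_0$, and in the transition region near $\hH_i\cap\hH_0$. The first two regions are the easy ones, and the transition region is where the work lies.

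Near $\hH_0$ away from $\hH_+$, $\omega_{c,\epio}=\omega_0$ is exactly $\Sc^\beta$-extremal. We then only need to control how the harmonic form $\theta_{c,\epio}$ and the extremal affine function $\ell^{\ext}_{\halpha_\epio,\hbeta}(\mu_{c,\epio})$ differ from $\theta_0$ and $\ell^{\ext}_{\alpha,\beta}(\mu_0)$.

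Near $\hH_i$ away from $\hH_0$, we have $\epio^4\Sc^{\hbeta}(\omega_{c,\epio})=\epio^4\Sc^{\hbeta}(\epio^2\omega_i)$. This equals $\Sc^0(\omega_i)$ up to the $\theta$-term, and $\Sc^0(\omega_i)=0$. So in both of these regions $\rho_+^2\mathring{\Sc}^{\hbeta}$ reduces to $\rho_+^2$ times (i) the $\theta$-term, (ii) the constant $c_{\halpha_\epio,\hbeta}-c_{\alpha,\beta}$, and (iii) the moment-map term.

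For (ii), we use the cup-product decomposition of Remark~\ref{fs.7} together with \eqref{fs.9b}. This gives $c_{\halpha_\epio,\hbeta}-c_{\alpha,\beta}=\cO(\epio^{2})$ when $m\ge3$: the $X_i$ contributions to $\halpha^{m-2}_\epio$ and $\halpha^m_\epio$ carry factors $\epio^{2(m-2)}$ and $\epio^{2m}$, while $c_1(\hM)^2$ contributes through $\epio^{2(m-2)}$ only. The vector $\xi^{\ext}$ varies in the same way.

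For (i), the form $\theta_{c,\epio}$ differs from $\theta_0$ by $dd^c$ of the solution of the Laplace problem in Lemma~\ref{fs.12}. Here the source $\Lambda_{c,\epio}\widetilde\eta$ changes only where $\omega_{c,\epio}\ne\omega_0$. Applying Corollary~\ref{fs.11} with the decay $\rho_+^2\cdot(\cdot)=\cO(\epio^2\rho_0^{2m-6})$ then yields the same bound for $\rho_+^2\|\theta\|^2$.

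The transition region near $\hH_i\cap\hH_0$ is the main step. We write $\omega_{c,\epio}=dd^c\psi_i$ with $\psi_i$ as in \eqref{psc.3}, and expand $\Sc^{\hbeta}$ around the flat Euclidean potential, as in Lemma~\ref{psc.2}. The linear term is $-\tfrac12\Delta^3_{\Euc}(\epio^2\rho_i^{-2}f_i+f_0)$, plus quadratic and cubic corrections in $\nabla^2(\epio^2\rho_i^{-2}f_i+f_0)$.

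We then treat the two perturbations separately. The $f_0$ part reproduces $\Sc^\beta(\omega_0)$ up to terms of size $\cO(1)$. The $f_i$ part reproduces $\epio^{-4}\Sc^0(\omega_i)=0$ to top order in $\epio^2\rho_i^{-2}f_i$. What remains are cross terms between the two potentials. Under Assumption~\ref{psc.1}, the Hessian of the $f_i$ potential is $\cO(\rho_0^{2m-4})$, or $\cO(\rho_0^{2}\log\rho_0)$ when $m=3$, and the Hessian of $f_0$ is $\cO(\rho_+^2)$. Cross terms with two extra derivatives therefore give
\[
\rho_+^2\cdot \rho_+^{-4}\cdot\rho_+^2\rho_0^{2m-4}\;=\;\cO(\rho_0^{2m-4}),
\]
which is $\cO(\epio^2\rho_0^{2m-6})$ because $\rho_0^2\le\epio^2/\rho_+^2$. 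More carefully, we use $\rho_+\rho_0=\epio$ to write $\rho_0^{2m-4}\rho_+^2=\epio^2\rho_0^{2m-6}$.

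The expected obstacle is to confirm that no cross term of the form $\rho_+^{-2}\rho_0^{2m-4}$ survives after multiplication by $\rho_+^2$. To check this, we will first use that $\Delta_{\Euc}^3(\rho^{2-2m}\cdot\text{harmonic})=0$, so the leading part of $f_i$ is annihilated. Second, the positive-scalar-curvature term $b_i\|z\|^{-2(m-1)}$ enters $\Sc$ only through $R^2$, $\|\Ric\|^2$ and $\Delta R$; these are cross-multiplied by $R_0(p_i)$, giving the $\rho_+^2\rho_0^{2m-4}$ scale above.

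Finally, the moment-map term is handled with Lemma~\ref{fs.13a}, since $\psi_i-\rho_+^2/4$ has the same decay. Combining the three regions gives the claimed bound $\rho_+^2\mathring{\Sc}^{\hbeta}(\omega_{c,\epio})=\cO(\epio^2\rho_0^{2m-6})$.
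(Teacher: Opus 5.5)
The transition-region step fails as written. Your count gives a bilinear $f_0$--$f_i$ cross term of size $\rho_+^{-4}\cdot\rho_+^{2}\cdot\rho_0^{2m-4}=\rho_+^{-2}\rho_0^{2m-4}$ in $\mathring{\Sc}^{\hbeta}(\omega_{c,\epio})$. That is, $\rho_+^2\mathring{\Sc}^{\hbeta}(\omega_{c,\epio})=\cO(\rho_0^{2m-4})$ near $\hH_i\cap\hH_0$, and this is \emph{not} $\cO(\epio^2\rho_0^{2m-6})$.

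Indeed $\rho_0=\epio/\rho_+$ exactly, so your inequality is an equality, and $\rho_0^{2m-4}=\rho_+^{-2}\,\epio^2\rho_0^{2m-6}$. This exceeds $\epio^2\rho_0^{2m-6}$ by the unbounded factor $\rho_+^{-2}$. The identity $\rho_+^2\rho_0^{2m-4}=\epio^2\rho_0^{2m-6}$ that you then quote needs an extra $\rho_+^2$, which your count does not supply.

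To get the bound with prefactor $\rho_+^2$, you would have to prove that the total coefficient of $\rho_+^{-2}\rho_0^{2m-4}$ in $\mathring{\Sc}^{\hbeta}(\omega_{c,\epio})$ vanishes. That is an exact cancellation among the cross terms from $\Delta R$, $\frac12R^2$ and $\|\Ric\|^2$. Nothing in the proposal does this, and the $R_0(p_i)\,b_i$ cross terms in your last paragraph sit precisely at that order.

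What your power counting does give, once the bookkeeping is corrected, is
\[
\rho_+^4\mathring{\Sc}^{\hbeta}(\omega_{c,\epio})=\cO(\rho_+^2\rho_0^{2m-4})=\cO(\epio^2\rho_0^{2m-6}).
\]
This is exactly the last display of the paper's proof, and it is the form in which the lemma is used in Corollary~\ref{psc.8}. So the $\rho_+^2$ in the statement should be read as $\rho_+^4$.

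With that normalization your route is a fair variant of the paper's. The paper bounds each term of $\rho_+^4\Sc^{\hbeta}(\omega_{c,\epio})$ by $\cO(\rho_+^2+\rho_0^{2m-4})$ near the corner. It then invokes vanishing on $\hH_+$ and $\hH_0$ plus polyhomogeneity to reach the product bound. Subtracting both model metrics and estimating the bilinear remainder instead exhibits the product $\rho_+^2\rho_0^{2m-4}$ directly.

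Two smaller repairs are still needed.
\begin{itemize}
\item Corollary~\ref{fs.11} requires $\delta\in\,]0,2m-2[$, so $\delta=2m-6$ is excluded for $m=3$. The paper handles that case via Corollary~\ref{MM.1}, getting $v=\lambda\epio^2\log\rho_++\cO(\epio^2)$ and still $dd^cv=\cO(\rho_0^2)$.
\item The bound $c_{\halpha_\epio,\hbeta}-c_{\alpha,\beta}=\cO(\epio^2)$ is too weak near $\hH_0$ for $m\ge4$. There $\rho_+\sim1$ and $\cO(\epio^{2m-4})$ is required; your own count of the $\epio^{2(m-2)}$ factors gives this. For the moment-map term, Lemma~\ref{fs.13a} only compares two potentials over the same background $\omega_{c,\epio}$, so it does not by itself relate $\mu_{c,\epio}$ to $\mu_0$. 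The paper instead gets $\ell^{\ext}_{\halpha_\epio,\hbeta}(\mu_{c,\epio})=\ell^{\ext}_{\alpha,\beta}(\mu_0)+\cO(\epio^{2m-4})$ from Lemma~\ref{fs.14} and the pushforward theorem.
\end{itemize}
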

\begin{proof}
The proof consists in looking carefully at the contribution of each term occurring in the definition of the functional $\Sc^{\hbeta}(\omega_{c,\epio})$.  First, from Lemma~\ref{psc.2} and \eqref{psc.3}, we see that
$$
\rho_+^2 \Delta_{\omega_{c,\epio}}R_{\omega_{c,\epio}}+ \frac12 (\rho_+^2R_{c,\epio})^2= \cO(\rho_+^2+ \rho_0^{2m-4})
$$
near $\hH_+\cap\hH_0$.  From \eqref{psc.3} and the formula for the Ricci form
$$
  \rho_{c,\epio}=-\frac12 dd^c\log\lrp{\frac{\omega_{c,\epio}^m}{\omega^m_{\Euc}}},
$$
we also see that
$$
 \|\rho_+^2\Ric_{\omega_{c,\epio}}\|^2_{\omega_{c,\epio}}=\cO((\rho_+^2+\rho_0^{2m-4})^2)= \cO(\rho_+^4+ \rho_0^{4m-8})
$$
near $\hH_+\cap\hH_0$.  For the term $\|\theta_{c,\epio}\|_{\omega_{c,\epio}}$, notice first by the local $dd^c$-lemma that near $p_i$ on $M$, 
$$
  \theta_{0}=dd^cu_i
$$
for some smooth function $u_i$ such that $u_i=\cO(\rho_+^2)$.  Doing that for each singular point $p_i$ and choosing a cut-off function $\chi\in\CI_c([0,\infty[)$ such that $\chi\equiv 1$ near $0$, this means that we can extend $\theta_{0}$ to a family of closed $(1,1)$-forms $\widetilde{\theta}$ on $\chM$ with 
$$
  \widetilde{\theta}=dd^c (\chi(\rho_0)u_i)
$$
near $\hH_i$ and $\widetilde{\theta}=\cbl^*\mathcal{B}^*\pr_1^*\theta_{0}$ near $\hH_0$, where $\pr_1: M\times [0,1[$ is the projection on the first factor while $\cbl$ and $\mathcal{B}$ are defined at the beginning of \S~\ref{fs.0}.  Since $\theta_{0}$ is the harmonic representative of $\beta$ with respect to $\omega_0$, we see from \eqref{psc.3} that 
$$
 \Lambda_{c,\epio}\widetilde{\theta}=\cO(\rho_0^{2m-4}).
$$
Hence, when $m>3$, Corollary~\ref{fs.11} with $\kappa=2$ and $\delta=2(m-1)-4$ yields a unique polyhomogeneous function $v$ such that 
$$
\Delta_{c,\epio}v=\Lambda_{c,\epio}\widetilde{\theta}, \quad \epio_*(v)=0 \quad \mbox{and} \quad v=\cO(\epio^{\kappa}\rho_0^{\delta})= \cO(\rho_+^2\rho_0^{2m-4}).
$$
Since $\theta_{c,\epio}= \widetilde{\theta}+dd^cv,$ this means that 
\begin{equation}
    \|\rho_+^2\theta_{c,\epio}\|^2_{\omega_{c,\epio}}=
    \|\rho_+^2\theta_{0}\|^2_{\omega_0}+ \cO(\rho_+^2+\rho_0^{2m-4}).
\label{psc.5}\end{equation}
When $m=3$, we would need to consider the situation where $\kappa=2$ and $\delta=0$ in Corollary~\ref{fs.11}, in which case the conclusion does not quite hold, but we get from Corollary~\ref{MM.1} that
$$
 v=\cO(\epio^2\log\rho_+).
$$
Since $\Delta_{c,\epio}v$ is bounded, we have more precisely that $v=\lambda\epio^2\log\rho_+ +\cO(\epio^2)$ for some constant $\lambda$.
Hence, $dd^cv= \cO(\rho_0^2)$, from which we can proceed as in the case $m>3$ to conclude that \eqref{psc.5} also holds when $m=3$.  Combining these results together yields
\begin{equation}
    \rho_+^4\Sc^{\hbeta}(\omega_{c,\epio})=\cO(\rho_+^2+ \rho_0^{2m-4}).
\label{psc.6}\end{equation}

Using Lemma~\ref{fs.14} and the pushforward theorem, we then see that 
\begin{equation}
    \ell^{\ext}_{\halpha_{\epio},\hbeta}(\mu_{c,\epio})= \ell^{\ext}_{\alpha,\beta}(\mu_{\omega})+ \cO(\epio^{2m-4}).
\label{psc.7}\end{equation}
Combining \eqref{psc.6} and \eqref{psc.7} then shows that 
$$
\rho_+^4\mathring{\Sc}^{\hbeta}(\omega_{c,\epio})=\cO(\rho_+^2 + \rho_0^{2m-4})
$$
since 
$\rho_+^4\mathring{\Sc}^{\hbeta}(\omega_{c,\epio})$ vanishes on $\hH_+$ and $\hH_0$.  In fact, because of that we finally conclude that 
$$
\rho_+^4\mathring{\Sc}^{\hbeta}(\omega_{c,\epio})= \cO(\rho_+^2\rho_0^{2m-4}).
$$
\end{proof}

 We can now check that Assumption~\ref{psc.1} implies that the scalar curvatures of the extremal metrics of Theorem~\ref{nla.14} are positive for $\epio>0$ sufficiently small.

\begin{corollary}\label{c:blow-up} {Suppose furthermore in the hypothesis of Theorem~\ref{thm:general} that the $\Sc^{\beta}$-extremal K\"ahler metric $\omega_0$ on $M$ has positive scalar curvature and   Assumption~\ref{psc.1} holds.}  Then the $\Sc^{\hbeta}$-extremal K\"ahler metrics on $\widehat{M}$ provided by Theorem~\ref{nla.14} have positive scalar curvature for $\epio>0$ small enough.
\label{psc.8}\end{corollary}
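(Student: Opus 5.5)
The plan mirrors the proof of Corollary~\ref{c:Ricci-flat ALE}, with Lemmas~\ref{psc.2} and \ref{psc.4} replacing the Ricci-flat input. The first goal is to show that the formal solution $\phi$ of Theorem~\ref{fs.17} perturbs the scalar curvature only mildly. By Lemma~\ref{psc.4}, $\rho_+^4\mathring{\Sc}^{\hbeta}(\omega_{c,\epio})=\cO(\rho_+^2\rho_0^{2m-4})=\cO(\epio^2\rho_0^{2m-6})$. Since $2m-6\ge 0$, we can start the inductive construction of Theorem~\ref{fs.17} at the level $\kappa$ just below $2$, and in particular at $\kappa=\frac32$. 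We alternate Lemma~\ref{c2.1} and Lemma~\ref{c2.4} to eliminate terms at $\hH_0$ and $\hH_+$. This gives $\phi=o(\epio^{3/2})$ with $\epio^{-3/2}dd^c\phi$ bounded, polyhomogeneous, and vanishing on $\hH_0\cup\hH_+$ as a section of $\Lambda^2({}^{c,\epio}T^*\chM)$. Consequently $\rho_+^2(R_{c,\epio,\phi}-R_{c,\epio})=o(\rho_+^{3/2})$. Since $u\in\dot{\cC}^{\infty}(\chM)$ in Theorem~\ref{nla.14}, the same estimate holds for $R_{c,\epio,\phi+u}$. Combining this with Lemma~\ref{psc.2}, near $\hH_i\cap\hH_0$ we expect
\[
\rho_+^2R_{c,\epio,\phi+u}= a_i\rho_+^2+b_i\rho_0^{2m-4}+o(\rho_+^{3/2})+o(\rho_+^2+\rho_0^{2m-4}).
\]

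The main obstacle is the error term $o(\rho_+^{3/2})$. It does not obviously beat $a_i\rho_+^2+b_i\rho_0^{2m-4}$ in the corner region where $\rho_0$ is small. To handle it, I would use the extremal equation exactly as in \eqref{pos.4}--\eqref{pos.5}. The function $\Sc^{\hbeta}(\homega_{c,\epio,u})=\ell^{\ext}_{\halpha_\epio,\hbeta}(\mu_{c,\epio,\phi+u})$ is bounded by Lemma~\ref{fs.13}. The Ricci and $\theta$ norms are controlled as in \eqref{pos.3}, so $\Delta_{c,\epio,\phi+u}R_{c,\epio,\phi+u}=\cO(1)+o(\rho_+^{-1})$.

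Next, take the leading term $\sum_j \widetilde v_{\delta,j}\epio^{\delta}(\log\epio)^j$ of $R_{c,\epio,\phi+u}-R_{c,\epio}$ at $\hH_+$ with $\delta\in]-\frac12,0]$. Restricting to $\hH_i$ shows that $\Delta_i\widetilde v_{\delta,j}=0$, using that $\Delta_{\omega_i}R_{\omega_i}$ cancels with the model part. The functions $\widetilde v_{\delta,j}$ decay at infinity, because $\phi$ and $u$ vanish to positive order at $\hH_0$. Lemma~\ref{la.28}, or the injectivity part of Lemma~\ref{la.28a}, then forces them to vanish. Iterating this argument shows that $R_{c,\epio,\phi+u}-R_{c,\epio}$ is bounded and vanishes on $\hH_+$ and $\hH_0$. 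Since $R_{c,\epio,\phi+u}-R_{c,\epio}$ is polyhomogeneous, it is then $\cO(\rho_+^{\lambda}\rho_0^{\lambda'})$ for some $\lambda,\lambda'>0$. The part of the difference lying in the range of $\Delta_i$ should inherit the $\rho_0^{2m-4}$ decay from Lemma~\ref{psc.4} via the mapping property of Lemma~\ref{la.28a}, applied with weight $\nu=2m-4\in]0,2m-2[$.

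Finally, we compare with $R_{c,\epio}$ on each face, arguing by positivity on the boundary of $\chM$. On $\hH_0$, the restriction of $\rho_+^2 R$ is $r^2R_0$, with $R_0>0$. On $\hH_i$, the restriction of $\epio^2R$ is $R_{\omega_i}>0$ by Assumption~\ref{psc.1}. At the corner, Lemma~\ref{psc.2} gives $\rho_+^2R\sim a_i\rho_+^2+b_i\rho_0^{2m-4}$ with $a_i,b_i>0$, and the perturbation is of strictly lower order there. Hence the rescaled function $R_{c,\epio,\phi+u}\,\rho_+^2/(\rho_+^2+\rho_0^{2m-4})$ extends continuously to $\chM$ and is positive on $\hH_0\cup\hH_+$. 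By compactness and continuity, it is positive for $\epio<\epio_0$. For $m=3$ the same scheme applies; the $\log$ term in $f_i$ produces the $b_i\rho_0^{2}$ behaviour, and the $\cO(\epio^2\log\rho_+)$ correction appearing in the proof of Lemma~\ref{psc.4} must be tracked.
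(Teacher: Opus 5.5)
The gap is in your second paragraph: the bootstrap borrowed from Corollary~\ref{pos.1} does not survive without Ricci-flatness. There, \eqref{pos.5} comes from \eqref{pos.3}, i.e.\ from $R$, $\|\Ric\|$, $\|\theta\|$ being $o(\rho_+^{-1/2})$, and this uses $R_{\omega_i}=0$ and $\Ric_{\omega_i}=0$. Under Assumption~\ref{psc.1}, $\epio^2R_{c,\epio}$ and $\epio^2\|\Ric_{c,\epio}\|$ restrict on $\hH_i$ to $R_{\omega_i}>0$ and $\|\Ric_{\omega_i}\|\ne0$. Hence $\frac12R^2$ and $\|\Ric\|^2$ are of order $\rho_+^{-4}$, and your claim $\Delta_{c,\epio,\phi+u}R_{c,\epio,\phi+u}=\cO(1)+o(\rho_+^{-1})$ is false. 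Moreover, by Lemma~\ref{psc.4}, $\mathring{\Sc}^{\hbeta}(\omega_{c,\epio})$ is only $\cO(\rho_+^{-2}\rho_0^{2m-4})$. Subtracting the background does not repair this. A term $\epio^{\delta}\tilde v$ of $R_{c,\epio,\phi+u}-R_{c,\epio}$ at $\hH_i$ comes from a potential term $\epio^{\delta+4}w$ with $\tilde v=(DR)_{\omega_i}(w)$. At order $\epio^{\delta-2}$, the cross terms $R_{\omega_i}\tilde v$, $-2\langle \Ric_{\omega_i},(D\Ric)_{\omega_i}(w)\rangle$, and the variation of the Laplacian applied to $\epio^{-2}R_{\omega_i}$ all sit at the same order as $\epio^{\delta-2}\Delta_i\tilde v$. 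What the equation gives is $\bbS_{i,\sc}w=0$ (with a source when $\delta=0$), not $\Delta_i\tilde v=0$. Since $w$ may grow at infinity, where $\bbS_{i,\sc}$ has nontrivial kernel (Corollary~\ref{la.33e}, Proposition~\ref{la.35}), nothing forces $\tilde v=0$. Your target is also stronger than what holds in general. A nonzero $\rho_+^{-2}$ term of $\mathring{\Sc}^{\hbeta}(\omega_{c,\epio})$ at $\hH_+$ is removed by a correction $\epio^4w$ with $\bbS_{i,\sc}w\ne0$, and this shifts $R$ on $\hH_+$ by $(DR)_{\omega_i}(w)$. So there is no reason for $R_{c,\epio,\phi+u}-R_{c,\epio}$ to vanish on $\hH_+$. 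The last step (range of $\Delta_i$, Lemma~\ref{la.28a} with $\nu=2m-4$) is not an argument. The corner estimate $\rho_+^2(R_{c,\epio,\phi+u}-R_{c,\epio})=o(\rho_+^2+\rho_0^{2m-4})$ is therefore never established.

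What you need is the joint decay you discarded when you replaced $o(\epio^{3/2})$ by $o(\rho_+^{3/2})$. Since $\epio=\rho_+\rho_0$, the correction also carries a factor $\rho_0^{3/2}$. The paper's proof exploits exactly this and never uses the extremal equation. It reruns the construction of Theorem~\ref{fs.17}, started via Lemma~\ref{psc.4} at $\kappa$ close to $2$. It records the joint order $\rho_+^a\rho_0^b$ of each correction produced by Lemmas~\ref{c2.1} and \ref{c2.4}, and checks that each contributes $o(\rho_+^2+\rho_0^{2m-4})$ to $\rho_+^2R$. It then uses $u\in\dot{\cC}^{\infty}(\chM)$ to transfer the expansion of Lemma~\ref{psc.2} to $\homega_{c,\epio,u}$. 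Your first step closes the argument in the same way if you keep the factor. Starting at $\kappa<2$ gives $\phi=\epio^{\kappa}\tilde\phi$ with $\tilde\phi$ and $dd^c\tilde\phi$ bounded and polyhomogeneous, hence $\rho_+^2(R_{c,\epio,\phi}-R_{c,\epio})=\cO(\rho_+^{\kappa}\rho_0^{\kappa})$. Then $\rho_+^{\kappa}\rho_0^{\kappa}\le(\rho_+^2+\rho_0^{2m-4})^{\kappa(m-1)/(2m-4)}$, which is $o(\rho_+^2+\rho_0^{2m-4})$ at the corner once $\kappa>2(m-2)/(m-1)$; for $m=3$, your $\kappa=\frac32$ already works. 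Finally, $\rho_+^2R/(\rho_+^2+\rho_0^{2m-4})$ does not extend continuously to the corner. It is, however, bounded below there by $\min(a_i,b_i)-o(1)$, which is all you need.
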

\begin{proof}
Since the models on $\hH_+$ and $\hH_0$ have positive scalar curvature, this will be the case as well for the extremal metrics $\homega_{c,\epio,u}$ of Theorem~\ref{nla.14} outside an arbitrary small neighborhood $\cV$ of $\hH_+\cap\hH_0$ for $\epio>0$ small enough (depending on the choice of $\cV$).  Hence, to obtain the result, we need to show that $\homega_{c,\epio,u}$ has positive scalar curvature near $\hH_+\cap\hH_0$.

By Lemma~\ref{psc.2}, this is the case for $\omega_{c,\epio}$.  Let us check that this is also the case for the formal solution $\omega_{c,\epio,\phi}$ of Theorem~\ref{fs.17}.  This requires going carefully through the construction of $\phi$ using Lemma~\ref{psc.4}.  In the construction of the formal solution, one needs first to apply Lemma~\ref{c2.1} to ensure that $\rho^4_+\mathring{\Sc}^{\hbeta}(\omega_{c,\epio,\phi})$ decays at least quadratically at $\hH_0$.  By Lemma~\ref{psc.4}, for $m>3$, this step is not needed since $2m-6\ge 2$.  For $m=3$, it can be used to improve the decay at $H_0$ by applying Lemma~\ref{c2.1} with $\kappa<2$ as close as we want to $2$ and $\delta\in ]0,2[$ with $\delta\ge 2-\kappa$.  The terms added to the K\"ahler metric are then of order
$$
 \cO(\epio^{\kappa+\delta-\sigma}\rho_+^{-\delta})=\cO(\rho_+^{\kappa-\sigma}\rho_0^{\kappa+\delta-\sigma}) \quad \forall \; \sigma>0.
$$
This in turn contributes a term of order negligible with respect to $\rho_+^2+\rho_0^{2m-4}= \rho_+^2+\rho_0^2$.

The next step in the construction is to apply Lemma~\ref{c2.4} to improve the decay at $\hH_+$.  By Lemma~\ref{psc.4}, when $m>3$, we can in fact apply Lemma~\ref{c2.4} with $\kappa\in]0,2[$ as close as we want to $2$, while the initial  $\delta\in ]-4,-\frac72[$ is 
$$
 \delta=-2-\kappa
$$
with $k=0$ (no logarithmic term) and $\nu\in]-1,0[$ with 
$$
\nu<-\frac92-\delta+\delta'=-\frac52+\kappa+\delta',
$$
where $\delta'$ is the small positive number occurring in the proof of Lemma~\ref{c2.4}.  At this stage, the correction term added to the potential is then of order
$$
\cO(\epio^{\kappa+\delta+6}\rho_0^{-\frac92-\delta+\frac{\delta'}4})=\cO(\rho_+^4\rho_0^{\frac32+\kappa+\frac{\delta'}4}),
$$
where we used the fact that we can take $\sigma=0$ in \eqref{c2.7b} since $k=0$ and there is no logarithmic term.  For the scalar curvature multiplied by $\rho_+^2$, this contributes a term of order $\cO(\rho_+^2\rho_0^{\frac32+\kappa+\frac{\delta'}4})$ which is negligible with respect to $\rho_+^2+ \rho_0^{2m-4}$.  Iterating the argument with $\delta\in]-4,-\frac72[$ and $\delta>-2-\kappa$, we complete the improvement of the decay of $\mathring{\Sc}^{\hbeta}$ at $\hH_+$ by adding terms which only change the scalar curvature (multiplied by $\rho_+^2$) by terms of order negligible with resect to $\rho_+^2+\rho_0^{2m-4}$.

If instead $m=3$, we can still apply Lemma~\ref{c2.4} with $\kappa\in]0,2[$ as close as we want to $2$ by the discussion above, but $\delta\in ]-4,-\frac72[$ cannot necessarily be assumed to be greater or equal to $2-\kappa$.  The correction term added to the potential will therefore be of order
$$
\cO(\epio^{\kappa+\delta+6-\sigma}\rho_0^{-\frac92-\delta+\frac{\delta'}4})=\cO(\rho_+^{\kappa+\delta+6-\sigma}\rho_0^{\kappa+\frac32+\frac{\delta'}4-\sigma}) \quad \forall \; \sigma>0,
$$
contributing this time to $\rho_+^2R_{c,\epio,\phi}$ a term of order 
$$
 \cO(\rho_+^{\kappa}\rho_0^{\kappa+\frac32})
$$
which is negligible with respect $\rho_+^2+\rho_0^{2m-4}$ since we assume that $m=3$.

The next step is to use Lemma~\ref{c2.1} to improve the decay of $\mathring{\Sc}^{\hbeta}$ at $\hH_0$.  We can apply this lemma with $\kappa\in]2,\frac52[$.  For $\delta\in ]0,2[$, the construction adds correction terms to the K\"ahler metric of order
$$
\cO(\epio^{\kappa+\delta-\sigma}\rho_+^{-\delta})= \cO(\rho_+^{\kappa-\sigma}\rho_0^{\kappa+\delta-\sigma}) \quad \forall \ \sigma>0.
$$
Again, this contributes to the scalar curvature (multiplied by $\rho_+^2$) a term of order negligible with respect to $\rho_+^2+\rho_0^{2m-4}$ since $\kappa>2$. 

In the proof of Theorem~\ref{fs.17}, the other iterations of Lemmas~\ref{c2.1} and \ref{c2.4} contribute even more negligible terms to $\rho_+^2R_{c,\epio,\phi}$
with respect to $\rho_+^2+\rho_0^{2m-4}$.  This means that the conclusion of Lemma~\ref{psc.2} also holds for the formal solution of Theorem~\ref{fs.17}.  Since $u\in\dot{\cC}^{\infty}(\chM)$ in Theorem~\ref{nla.14},  the conclusion of Lemma~\ref{psc.2} also holds for the solutions $\homega_{c,\epio,u}$ of Theorem~\ref{nla.14}, namely
$$
\rho_+^2R_{c,\epio,\phi+u}= a_i\rho_+^2+ b_i\rho_0^{2m-4}+ o(\rho_+^2+\rho_0^{2m-4})
$$
near $\hH_i\cap\hH_0$ with $a_i$ and $b_i$ positive constants. In particular, $R_{c,\epio,\phi+u}$ is positive near $\hH_+\cap\hH_0$.
\end{proof}

\subsection{Proof of Theorem~\ref{thm:desingularization}} %\fr{More needs to be added?} 
The existence of the extremal metrics of Theorem~\ref{thm:desingularization} is provided by Theorem~\ref{nla.14} and the positivity of their scalar curvature is a consequence of Corollary~\ref{pos.1}. \hfill{$\square$}

\section{A $\Sc^0$-scalar flat ALE model on $\cO_{\bbP^{m-1}}(-(m-2))$ and proofs of Theorems~\ref{thm:blow-up} and \ref{thm:boxed}}\label{s:5}
\subsection{Examples of $\Sc^0$-flat K\"ahler metrics on $\cO_{\bbP^{m-1}}(-(m-2))$}
We use the Calabi ansatz~\cite{Calabi},  expressed in terms of a  \emph{profile function} $\Theta(x)$~\cite{ACG,HS}.  
To this end,  we consider a K\"ahler structure of the form
\begin{equation} \label{ansatz1}
g=x \ell g_{\bbP^{m-1}} + \frac{1}{\Theta(x)} dx^2 + \Theta(z) \xi^2, \qquad \omega = x \ell \omega_{\bbP^{m-1}} + dx\wedge \xi, \qquad d\xi = \ell \omega_{\bbP^{m-1}},
\end{equation}
where:
\begin{itemize}
\item $\Theta(x)>0$ on $(\edp, \infty)$  ($\edp >0$) and satisfies the end-point condition
\begin{equation} \label{boundary-k} \Theta(\edp)=0, \qquad \Theta'(\edp) = 2; \end{equation}
\item $(g_{\bbP^{m-1}}, \omega_{\bbP^{m-1}})$ is a Fubini-Study metric on $\bbP^{m-1}$ of scalar curvature $2m(m-1)$ and  $\ell \geq 1$ is an integer number;
\item $\xi$ is a connection $1$-form on the   $S^1$-principal bundle $P_\ell$ over $\PP^{m-1}$ of degree $\ell$,   with curvature $\ell\omega_{\bbP^{m-1}}$. 
\end{itemize}
The ansatz \eqref{ansatz1} describes an $S^1$-invariant K\"ahler structure on $\mathring{X}:= (\edp, \infty) \times P_{\ell} \to \Sigma$, with corresponding momentum map $x: X \to (\edp, \infty)$. It is well-known (see e.g. \cite{HS}) that the end-point conditions \eqref{boundary-k} for $\Theta$ ensure that this K\"ahler metric smoothly extends on $X$, the total space of  $\cO_{\bbP^{m-1}}(-\ell)$. We also notice that  for each $\nu>0$, the variable change $x := \nu \tilde x, \quad \tilde \Theta(\tilde x):= \frac{1}{\nu}\Theta(x)$ gives rise to a homothetic K\"ahler structure of the form \eqref{ansatz1}, i.e. the parameter $\edp >0$ can be interpreted as the homothety factor of the construction. Further letting 
\begin{equation*}\label{V} V(x):=x^{m-1}\Theta(x),  \end{equation*}
the ansatz \eqref{ansatz1}-\eqref{boundary-k} become
\begin{equation}\label{solution} 
\begin{split} 
&g= x \ell g_{\bbP^{m-1}} + \frac{x^m}{V(x)} dx^2 + \frac{V(x)}{x^m} \xi^2, \qquad \omega= z \ell \omega_{\bbP^{m-1}} + dx\wedge \xi,\qquad d\xi = \ell \omega_{\bbP^{m-1}},\\
& V(x)>0 \, \,  \text{on} \, \,  (\edp, \infty),  \qquad V(\edp)=0, \, V'(\edp) =2\edp^{m-1}. 
\end{split}
\end{equation}
The Ricci form and scalar curvature are given by (see e.g. \cite{ACG})
\begin{equation}\label{Calabi-curvature}
\begin{split}
\rho_\omega &= m\omega_{\PP^{m-2}} -\frac{1}{2}dd^c\log V(x) \\
&= \left(\frac{m}{\ell}-\frac{1}{2}\frac{V'(x)}{x^{m-1}}\right)\ell\omega_{\PP^1}-\frac{1}{2}\left(\frac{V'(x)}{x^{m-1}} \right)' dx \wedge \xi, \\
R_{\omega}&=\frac{2m(m-1)}{x\ell} -\frac{V''(x)}{x^{m-1}},
\end{split}\end{equation}
whereas for any smooth function $fxz)$,  we have
\[dd^c f = \left(\frac{f'(x)V(x)}{x^{m-1}}\right)' dx \wedge \xi +\left(\frac{f'(x)V(x)}{x^{m-1}}\right)\ell\omega_{\PP^{m-1}}, \qquad \Delta_g f  = - \frac{1}{x^{m-1}}\left(V(x)f'(x)\right)'. \]
By a straightforward computation we obtain
\begin{align*}
   \Delta_\omega R_\omega&=\frac{2m(m-1)V'}{\ell x^{m+1}}+\frac{V'V'''}{x^{2m-2}}-(m-1)\frac{V'V''}{x^{2m-1}}-\frac{4m(m-1)V}{\ell x^{m+2}}+\frac{VV''''}{x^{2m-2}}\\ &-2(m-1)\frac{VV'''}{x^{2m-1}}+m(m-1)\frac{VV''}{z^{2m}}
   \\ R_{\omega}^2&=\frac{4m^2(m-1)^2}{\ell^2x^2}-\frac{4m(m-1)V''}{\ell x^m}+\frac{(V'')^2}{x^{2m+2}}
   \\\|\Ric_{\omega}\|_\omega^2&=\frac{2(m-1)}{x^2}\left(\frac{m}{\ell}-\frac{V'}{2x^{m-1}}\right)^2+\frac{1}{2}\left(\left(\frac{V'}{x^{m-1}}\right)'\right)^2,
   %\\\|\theta\|^2&=\frac{2(n-1)}{n}\frac{\lambda^2}{z^{2n}}.
\end{align*}
leading to the following
\begin{lemma} A K\"ahler metric of the form \eqref{solution} has zero $\Sc^0$-scalar curvature iff $V(z)$ satisfies
\begin{equation} \label{ODE-V}
\begin{split}
    & \frac{2m^2(m-1)(m-2)}{\ell^2} x^{2m-2}+\frac{2m(m-1)} {\ell}\Big(2V'x^{m-1}-2Vx^{m-2}-V''x^m\Big) \\ &+\big(V'V'''+VV''''\big)x^2
    -\left(2(m-1)VV'''\right)x+m(m-1)VV''{-\frac{m(m-1)}{2}(V')^2}=0.
    \end{split}    
\end{equation}
In particular, the following polynomial solutions satisfy \eqref{ODE-V} and the boundary conditions in \eqref{solution}:
\begin{itemize}
\item $\ell= m$ and $V(x)= \frac{2}{m}(x^m - \edp^{m})$;
\item $\ell=(m-2), \, m\geq 3$ and $V(x)=\frac{2}{(m-2)}\left(x^m - \edp^{m-2}x^2\right).$
\end{itemize}
\end{lemma}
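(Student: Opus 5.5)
The plan is to substitute the curvature expressions recorded just before the statement into the definition of $\Sc^0$ and simplify, then verify the two polynomial families by direct substitution. Since $\beta=0$, the quantity to compute is
$$
\Sc^0(\omega)=\Delta_\omega R_\omega+\tfrac12 R_\omega^2-\|\Ric_\omega\|^2_\omega .
$$
Using the displayed formulas for $\Delta_\omega R_\omega$, $R_\omega^2$ and $\|\Ric_\omega\|_\omega^2$ in terms of $V$ and its derivatives, every term is a rational expression in $x$ with denominators of the form $x^{k}$. The first step is to multiply through by $x^{2m}$, which clears all denominators. The next step is to collect terms by type: the pure constants in $x^{2m-2}$, the terms linear in $V$ carrying the factor $1/\ell$, and the quadratic terms in $V$.

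For the bookkeeping I would group as follows.

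\begin{enumerate}
\item The pure $1/\ell^2$ terms come from $\frac12 R^2$ and from the $(m/\ell)^2$ piece of $\|\Ric\|^2$. They give
$$
\Bigl(\tfrac{2m^2(m-1)^2}{\ell^2}-\tfrac{2(m-1)m^2}{\ell^2}\Bigr)x^{2m-2}=\tfrac{2m^2(m-1)(m-2)}{\ell^2}x^{2m-2}.
$$
\item The $1/\ell$ cross terms come from $\Delta R$, from $R^2$, and from the cross term $-\frac{2(m-1)}{x^2}\cdot\bigl(-\frac{mV'}{\ell x^{m-1}}\bigr)$ in $\|\Ric\|^2$. Together they should produce $\frac{2m(m-1)}{\ell}(2V'x^{m-1}-2Vx^{m-2}-V''x^m)$.
\item The quadratic terms are $VV''''$, $V'V'''$, $VV'''$, $VV''$, $(V'')^2$ and $(V')^2$. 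Here the $(V'')^2$ contributions from $\frac12R^2$ and from $\frac12((V'/x^{m-1})')^2$ should cancel. The cross terms $V'V''$ from $\Delta R$ and from the expansion of $((V'/x^{m-1})')^2$ should also cancel, leaving the stated combination.
\end{enumerate}

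I expect this step to be the main obstacle. It is where sign and exponent errors (including the apparent typos in the displayed $R^2$ formula) must be reconciled, so I would redo the expansions of $R^2$ and $\|\Ric\|^2$ from \eqref{Calabi-curvature} carefully rather than trust the intermediate displays.

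For the explicit solutions, I would first check the boundary conditions of \eqref{solution}. In both cases $V(\edp)=0$ and $V>0$ for $x>\edp$. For $V=\frac2m(x^m-\edp^m)$ we get $V'(\edp)=2\edp^{m-1}$. For $V=\frac{2}{m-2}(x^m-\edp^{m-2}x^2)$ we get $V'(\edp)=\frac{2}{m-2}(m-2)\edp^{m-1}=2\edp^{m-1}$.

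Then I would substitute each $V$ into \eqref{ODE-V}. The constant parts can be verified independently as follows.

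\begin{itemize}
\item For $\ell=m$, the term $\frac{2m}{m}x^m$ in $V$ has $R_\omega$ contribution zero: this is the flat case. In general, linearity in the $\edp$ terms suggests checking separately the $x^m$ part (which must balance the $1/\ell^2$ and $1/\ell$ terms) and the cross terms in $\edp$, which reduce to identities for the monomial exponents. Concretely, write $V=Ax^m+Bx^k$ and plug in. The coefficients of $x^{2m-2}$, $x^{m+k-2}$ and $x^{2k-2}$ each give polynomial identities in $m,\ell,k$. For the $B^2$ coefficient with $k=0$ or $k=2$, I would check vanishing directly, e.g. $k=2$ gives $m(m-1)\cdot2-\frac{m(m-1)}2\cdot4=0$.
\item The linear-in-$B$ coefficient reduces to a quadratic identity in $k$. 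I expect its roots to be $k=0$ when $\ell=m$ (with $A=2/m$), and $k=2$ when $\ell=m-2$ (with $A=2/(m-2)$).
\end{itemize}
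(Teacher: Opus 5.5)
Your plan is the paper's proof: substitute the curvature formulas into $\Sc^0=\Delta_\omega R_\omega+\frac12R_\omega^2-\|\Ric_\omega\|^2_\omega$ and clear denominators with $x^{2m}$. Your grouping checks out. The $1/\ell^2$ and $1/\ell$ pieces come out as you state, and once $R_\omega^2$ is corrected to carry $(V'')^2/x^{2m-2}$, the $(V'')^2$ and $V'V''$ terms cancel and the $(V')^2$ terms combine to $\frac{m(m-1)}{2}(V')^2/x^{2m}$. Your boundary-condition checks are also correct.

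One correction to what you expect from the polynomial check. In both cases $\ell A=2$, and this is what kills the $x^{2m-2}$ coefficient. The cross term is then $AB\,k(k-1)(k-2)(k-m-1)\,x^{m+k-2}$, which is quartic in $k$, not quadratic. The $B^2$ term is $B^2k(k-2)\bigl[(k-1)(2k-2m-1)+\frac{m(m-1)}{2}\bigr]x^{2k-2}$. So both $\frac{2}{\ell}x^m+B$ and $\frac{2}{\ell}x^m+Bx^2$ solve the ODE for every $\ell$. It is the condition $V'(\edp)=2\edp^{m-1}$, not the ODE, that selects $\ell=m$, respectively $\ell=m-2$.
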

\begin{proof} By a direct computation of $\Sc^0(\omega)=\Delta_g R_{\omega} + \frac{R_{\omega}^2}{2} - \|\Ric_{\omega}\|^2_g$ 
%\fr{Should there be a + sign in front of the first term and should it be the scalar curvature square?}, 
based on the expressions above.
\end{proof}
If $\ell=m$, $V(x):= \frac{2}{m}(x^m-\edp^m)$ corresponds to Calabi's original construction~\cite{Calabi} of Ricci-flat K\"ahler metrics on the canonical bundle $\cO_{\bbP^{m-1}}(-m)$ of $\bbP^{m-1}$. In complex dimension $m=2$,  this is also the Eguchi--Hanson metric on $\cO_{\bbP^1}(-2)$. It is by now a well-established fact that in this case, \eqref{solution} gives rise to an ALE Ricci-flat K\"ahler metric compatible with the standard complex structure of  $X=\cO_{\bbP^{m-1}}(-m)$ which, suitably rescaled,  has a K\"ahler potential on $\mathring{X} \cong \bbC^m/\bbZ_m$ which decays as  
$\bl^*\left(\frac{1}{4}\|z\|^2 + \cO(\|z\|^{3-2m})\right)$ when  $\|z\|^2 \to \infty$. 

\bigskip
We focus from now on the case $\ell=(m-2)>0$ and $V(x)=\frac{2}{(m-2)}\left(x^m - \edp^{m-2}x^2\right)$ and study the global properties of the K\"ahler metric given by \eqref{solution}.
\begin{proposition}\label{p:new ALE} Let $X=\cO_{\bbP^{m-1}}(-(m-2))\xrightarrow{\bl}\bbC^m/\ \bbZ_{m-2}, \, m\geq 3$ denote the smooth $U(m)$-equivariant resolution of $\bbC^m/\bbZ_{m-2}$, where 
$\bbZ_{m-2}$ is the linear action on $\bbC^m$ of the cyclic group  $\left\langle e^{\frac{2\pi i}{m-2}}{\rm Id} \right\rangle$.  Then the K\"ahler structure \eqref{solution} with $V(x)=\frac{2}{(m-2)}\left(x^m - \edp^{m-2}x^2\right)$ is  isometric to a $U(m)$-invariant K\"ahler metric $\tilde \omega$ compatible with the standard complex structure of $X$. Furthermore, $\tilde \omega$  has zero $\Sc^{0}$-scalar curvature,  everywhere positive scalar curvature and, on the complement $\mathring{X}$ of the zero section of $X$,  admits a K\"ahler potential of the form
\[\tilde\omega = dd^c \Phi, \qquad \Phi= \begin{cases}\bl^*\left(\frac{1}{2}\|z\|^2 + \frac{\edp}{2} \log \|z\|^2\right) \, \text{when}\, \,  m=3,\\ \bl^*\left(\frac{(m-2)}{2}\|z\|^2 +  \frac{\edp^{m-2}}{2}\|z\|^{4-2(m-1)} + \cO(\|z\|^{3-2(m-1)})\right) \, \text{when}\,  m\geq 4, \end{cases} \] %\fr{Should we add a factor of $\frac12$ in the definition of $\Phi$ so that $\widetilde{\omega}$ is asymptotic to the "canonical" Euclidean metric? \vesti{I believe the expression for $\Phi$ is correct. You will need to rescale or change chart to satify the ``canonical'' condition.}}
where $\|z\|$ stands for the Euclidean norm on $\bbC^m$, seen by $\bbZ_{m-2}$ invariance as a function on $\bbC^{m}/\bbZ_{m-2}$.
The scalar curvature of $\tilde{\omega}$  on $\mathring{X}$ is 
\[ R_{\tilde \omega}= \bl^*\left(\frac{4\edp^{m-2}}{(m-2)(\edp^{m-2}+ \|z\|^{2(m-2)})^{\frac{m-1}{m-2}}}\right).\]
\end{proposition}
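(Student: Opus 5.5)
We first check the zero $\Sc^0$-scalar curvature and the curvature formula directly from the ansatz, and then identify the structure with a metric on $X$ with explicit potential. With $\ell=m-2$ and $V(x)=\frac{2}{m-2}(x^m-\edp^{m-2}x^2)$ we have $V(\edp)=0$ and $V'(\edp)=\frac{2}{m-2}(m\edp^{m-1}-2\edp^{m-1})=2\edp^{m-1}$. So the end-point conditions in \eqref{solution} hold, and $V>0$ on $(\edp,\infty)$. By the standard fact recalled after \eqref{solution} (cf.~\cite{HS}), the metric therefore extends smoothly over the zero section of $\cO_{\bbP^{m-1}}(-(m-2))$. Substituting $V$ into \eqref{ODE-V} is a polynomial identity in $x$, which we verify by collecting coefficients of $x^{2m-2}$, $x^{m}$ and $x^{2m-4}$ (the lemma already asserts this). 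The scalar curvature comes from \eqref{Calabi-curvature}: $V''=\frac{2}{m-2}(m(m-1)x^{m-2}-2\edp^{m-2})$, so
\[ R_\omega=\frac{2m(m-1)}{(m-2)x}-\frac{2m(m-1)}{(m-2)x}+\frac{4\edp^{m-2}}{(m-2)x^{m-1}}=\frac{4\edp^{m-2}}{(m-2)x^{m-1}}>0. \]
This is positive everywhere, including at $x=\edp$.

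Next we identify the complex structure and the potential. On $\mathring X\cong(\bbC^m\setminus\{0\})/\bbZ_{m-2}$ we look for a $U(m)$-invariant potential $\Phi=F(t)$ with $t=\log\|z\|^2$. For such potentials, $x$ is the momentum map of the fibre $S^1$-action, so up to normalisation $x=F'(t)$. Moreover the Calabi-ansatz relation $dt=\frac{x^{m-1}}{V}dx$ holds up to a constant factor fixed by the $\ell$-scaling of $\xi$. We determine the exact constants by matching the horizontal part $x\ell\omega_{\bbP^{m-1}}$ with $F'(t)\,dd^c t$, which gives $x=(m-2)F'(t)$ or a similar normalisation.

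We then integrate:
\[ \frac{x^{m-1}}{V}=\frac{(m-2)x^{m-3}}{2(x^{m-2}-\edp^{m-2})}, \]
so that up to the normalising constant $t=\frac12\log(x^{m-2}-\edp^{m-2})$. This gives $x^{m-2}=\edp^{m-2}+\|z\|^{2(m-2)}$, up to the chosen scaling of $\|z\|$. Substituting into $R_\omega$ yields the stated formula $\frac{4\edp^{m-2}}{(m-2)(\edp^{m-2}+\|z\|^{2(m-2)})^{(m-1)/(m-2)}}$, which confirms the normalisation. Since the metric is $U(m)$-invariant and $t$ is a holomorphic-coordinate function on the fibres, the compatible complex structure is the standard one on $\mathring X$. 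It extends to $X$ because the end-point condition gives smooth extension in the holomorphic coordinates of the line bundle.

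Finally we integrate $F'(t)\propto x(t)=(\edp^{m-2}+e^{(m-2)t})^{1/(m-2)}$ to obtain $\Phi$. For $m=3$ we get $x=\edp+\|z\|^2$, so $F=\frac12 e^t+\frac{\edp}{2}t$ up to constants, which gives $\frac12\|z\|^2+\frac{\edp}{2}\log\|z\|^2$. For $m\ge4$ we expand $x=\|z\|^2\bigl(1+\frac{1}{m-2}\edp^{m-2}\|z\|^{-2(m-2)}+\cO(\|z\|^{-4(m-2)})\bigr)$ and integrate term by term in $t$. This produces the leading $\|z\|^2$ term and the correction $\propto\edp^{m-2}\|z\|^{4-2(m-1)}$, with remainder $\cO(\|z\|^{-4(m-2)+2})\subset\cO(\|z\|^{3-2(m-1)})$.

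The main obstacle is bookkeeping the normalising constants between $x$, $F'$, $\ell$ and the scaling of $\|z\|$ so that the coefficients $\frac{m-2}{2}$ and $\frac{\edp^{m-2}}{2}$ come out exactly. We will fix these by computing $dd^c F(\log\|z\|^2)$ in the form $F''\,dt\wedge d^ct+F'\,dd^ct$ and matching it with \eqref{solution}.
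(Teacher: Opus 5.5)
Your plan is the paper's proof in different notation. The paper sets $v=\int dx/\Theta=\frac12\log(x^{m-2}-\edp^{m-2})$ and $H=\int x\,dx/\Theta$, checks $d^cv=\xi$ and $dd^cH=\omega$, and transplants $\Phi(v):=H(x(v))$ (so $d\Phi/dv=x$) to $X$ via $v\leftrightarrow\log r_{m-2}=\frac{m-2}{2}\log\|z\|^2$. Your $F(t)$ is $\Phi(\frac{m-2}{2}t)$. Pinning your constants this way gives $F'(t)=\frac{m-2}{2}x$, not $x=(m-2)F'$; your $m=3$ computation already uses the correct relation. It also gives $e^{(m-2)t}=x^{m-2}-\edp^{m-2}$. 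A small slip: the polynomial identity coming from \eqref{ODE-V} involves only the powers $x^{2m-2}$, $x^m$ and $x^2$, not $x^{2m-4}$.

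What you should change is the target of your last step. For $m\ge 4$ the bookkeeping will not produce the coefficient $\frac{\edp^{m-2}}{2}$. Carrying out your term-by-term integration gives
\[
F'(t)=\tfrac{m-2}{2}e^{t}+\tfrac{\edp^{m-2}}{2}e^{-(m-3)t}+\cO\bigl(e^{(5-2m)t}\bigr)
\quad\Longrightarrow\quad
\Phi=\tfrac{m-2}{2}\|z\|^2-\tfrac{\edp^{m-2}}{2(m-3)}\|z\|^{4-2(m-1)}+\cO\bigl(\|z\|^{10-4m}\bigr)+\mathrm{const}.
\]
For $m=4$ you can check this in closed form. There $H=x+\frac{\edp}{2}\log\frac{x-\edp}{x+\edp}$ with $x=\sqrt{\edp^2+\|z\|^4}$, so $\Phi=\|z\|^2-\frac{\edp^2}{2}\|z\|^{-2}+\cO(\|z\|^{-6})$. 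This has the opposite sign to the statement.

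This coefficient cannot be adjusted. A $U(m)$-invariant potential is unique up to an additive constant, because $dd^cf(t)=f''\,dt\wedge d^ct+f'\,dd^ct$ vanishes only when $f'=0$. Hence once the leading term is $\frac{m-2}{2}\|z\|^2$, the coefficient is forced.

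The $+\frac{\edp^{m-2}}{2}$ in the statement is exactly what the linear part $\frac{m-2}{2}(x-\edp)$ of $H$ in \eqref{F} contributes on its own. The integral part also contributes at the same order, since up to an additive constant
\[
\tfrac{(m-2)\edp^{m-2}}{2}\int^x\frac{ds}{s^{m-2}-\edp^{m-2}}=-\tfrac{(m-2)\edp^{m-2}}{2(m-3)}x^{3-m}+\cO(x^{5-2m}),
\]
and $x^{3-m}$ is of order $\|z\|^{6-2m}$.

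So prove the expansion with coefficient $-\frac{\edp^{m-2}}{2(m-3)}$. The following are all as stated: the decay order $\|z\|^{4-2(m-1)}$, the remainder, the $m=3$ potential, and the scalar curvature formula $R=\frac{4\edp^{m-2}}{(m-2)x^{m-1}}$. Only these are used later in the paper, for instance $\sigma_i=2m-4$ in Assumption~\ref{psc.1}. Also, derive the normalisation from $dd^c\log r_{m-2}=(m-2)\omega_{\bbP^{m-1}}$, rather than treating agreement with the claimed formula for $R$ as confirmation.
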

\begin{proof} We shall tacitly use the identifications  $\bl^{-1} : \bbC^m\setminus\{0\}/\bbZ_{m-2} \cong \cO_{\PP^{m-1}}(-(m-2))^{\times} \cong \mathring{X}$, where $\cO_{\PP^{m-1}}(-(m-2))^{\times}$ is the total space of the bundle minus the zero section. The first identification  can be explicitly written as
\[ \bl^{-1}(z_1, \ldots, z_m) = \left((z_1,\ldots, z_m)^{\otimes (m-2)}; [z_1, \ldots, z_m]\right).\]
Denote by $r_{m-2}$ the norm function on the line bundle $\cO_{\bbP^{m-1}}(-(m-2))$, induced by the Hermitian metric  with curvature $(m-2)\omega_{\bbP^{m-1}}$. It follows from the above identification that 
\begin{equation*}\label{euc-norm} r_{m-2}= \bl^* \| z \|^{m-2}, \qquad dd^c \log r_{m-2}= (m-2) \omega_{\bbP^{m-1}}, \end{equation*}
where the $dd^c$ operator is  with respect to the standard complex structure $J_0$ on $X$.

\smallskip
With the above understood, we shall define a $U(m)$-invariant smooth function $\Phi$ on $ \cO_{\PP^{m-1}}(-(m-2))^{\times}$,  such that $\tilde \omega=dd^c \Phi $ defines a global  $U(m)$-invariant  K\"ahler metric on the complex manifold $\cO_{\PP^{m-1}}(-(m-2))$, equivariantly isometric to the one given by \eqref{solution}. To this end, define functions
\begin{equation}\label{F}
\begin{split}
v(x) &:= \int^{x} \frac{1}{\Theta(t)} dt= \frac{(m-2)}{2}\int^x \frac{t^{m-3}}{(t^{m-2} -\edp^{m-2})} dt \\
&= \frac{1}{2} \log(x^{m-2}-\edp^{m-2});\\
H(x) &:= \int^x \frac{t}{\Theta(t)}dt=\frac{(m-2)}{2}\int^x \frac{t^{m-2}}{(t^{m-2} -\edp^{m-2})} dt \\
&= \frac{(m-2)}{2}(x-\edp) + \frac{(m-2)\edp^{m-2}}{2} \int^x\frac{1}{(t^{m-2} -\edp^{m-2})} dt
\end{split}
\end{equation}
One directly checks that on $\mathring{X} =(\edp, \infty) \times P_{m-2}$:
\[ d^c v = \xi, \qquad dd^c v = (m-2)\omega_{\bbP^{m-1}}, \qquad dd^c H = \omega.\]
Notice that $v$ is strictly monotone and  $v(\mathring{X})= \bbR$: expressing $x=x(v)= \left(e^{2v} + \edp^{m-2}\right)^{\frac{1}{(m-2)}}$, we introduce the function 
\[ \Phi : \bbR \to \bbR, \qquad \Phi(v) := H(x(v)).\]
It is important for us that $\Phi(v)$ is defined on the real line $\bbR$ and  satisfies
\[ dd^c \Phi (v) = \omega.\]
The above holds on $ X \setminus x^{-1}(\edp)$ but the closed  $(1,1)$-form  $dd^c \Phi(v)$ extends smoothly and positively over $E= x^{-1}(\edp)$ by the first order boundary conditions $\Theta(x)$ satisfied at $x=\edp$.

\smallskip
As $\log r_{m-2}$ and $v$ have the same image $\bbR$, we can let
\begin{equation}\label{Kahler-potential} \tilde{\omega} := dd^c \Phi(\log r_{m-2}).\end{equation}
It follows from the above discussion that  $\tilde\omega$ is a $(1,1)$-form on $\cO_{\PP^{m-1}}(-(m-2))^{\times}$ which defines a K\"ahler structure isometric to $(g, \omega)$, under an isometry which sends   $e^v$ to $r_{m-2}$.
In particular,  the K\"ahler metric $\tilde g:=-\tilde \omega J_0$ extends over the zero section of $\cO_{\PP^{m-1}}(-(m-2))$ (as $g$ does over $x^{-1}(\edp)$). Thus \eqref{Kahler-potential}
defines a  K\"ahler metric on $\cO_{\PP^{m-1}}(-(m-2))$ isometric to $(M, g, \omega)$. By the expression of $H(x)=\Phi(v)$ in \eqref{F}   and using
\begin{equation*}\label{x-norm} x^{m-2} = \edp^{m-2} + e^{2v} \, \qquad e^{2v} \sim r_{m-2}^2=\bl^*\|z\|^{2(m-2)},  \end{equation*} we get  the expansion in $\pi^*\|z\|^{m-2}$ of $\Phi(\log r_{m-2})$ near infinity as claimed.

Finally, by \eqref{Calabi-curvature} and \eqref{x-norm}, we compute $R_{\omega} = \frac{4\edp^{m-2}}{(m-2)x^{m-1}}$, 
showing that 
\[ R_{\tilde{\omega}}= \bl^* \left(\frac{4\edp^{m-2}}{(m-2)\left( \edp^{m-2} + \|z\|^{2(m-2)}\right)^{\frac{m-1}{m-2}}} \right) >0,\]
as claimed.
\end{proof} 
\begin{remark} The decay of the K\"ahler potential of the K\"ahler metric on $X=\cO_{\bbP^{m-1}}(-(m-2))$  constructed in Proposition~\ref{p:new ALE} is one dimension ``off'' of the usual decay of potentials of ALE K\"ahler metrics in $m$-complex dimensions, see \cite{Arezzo-Pacard2006,  Arezzo-Pacard2009}. It however yields that the metric is complete, has Euclidean volume growth of balls, $\cO(\|z\|^{-2(m-1)})$ curvature decay at infinity, and has a K\"ahler potential with polyhomogeneous expansion in $\|z\|$ at infinity. This is enough to apply Theorem~\ref{thm:general} and Corollary~\ref{c:blow-up}.
\end{remark}
\subsection{Proof of Theorem~\ref{thm:blow-up}} 

The family of K\"ahler metrics $\omega_a$ defines by restriction a family of Euclidean metrics $\omega_a(p_i)$ on $T_{p_i}M$ for $i\in\{1,\ldots,\ell\}$.  Those metrics are obviously  K\"ahler with respect to the restriction $J_{M}(p_i)$ to $T_{p_i}M$ of the complex structure $J_M$ of $M$.  Taking $\delta_0$ smaller in Theorem~\ref{thm:blow-up} if needed, we can assume that there exists a smooth family $a\mapsto \psi_{i,a}\in \Hom(T_{p_i}M, J_M(p_i))$ such that 
$$
 \omega_a(p_i)=\psi^*_{i,a}\omega_{a_0}(p_i) \quad \mbox{and} \quad \psi_{i,a_0}=\Id.
$$
By Proposition~\ref{p:new ALE}, the blow-up $X_i$ of $T_{p_i}M$ at the origin admits an asymptotically Euclidean  K\"ahler metric $\omega_{i,a_0}$ asymptotic to $\omega_{a_0}(p_i)$ at infinity with zero $\Sc^0$-curvature and positive scalar curvature.  Since $\psi_{i,a}$ lifts to $X_i$, we see that 
$$
 \omega_{i,a}:=\psi_{i,a}^*\omega_{i,a_0}
$$
is a smooth family of $\ALE$ K\"ahler metrics asymptotic to $\omega_a(p_i)$ at infinity with zero $\Sc^0$-curvature and positive scalar curvature.

This ensures that we can construct a smooth family of formal solutions $\omega_{a,c,\epio,\phi}$ using Theorem~\ref{fs.17}.  Indeed, the models $\omega_{i,a}$ have the same polyhomogeneous expansion at infinity, so each step in the construction of the formal solution admits a family version.  More precisely, for the family version to work, we need to check that the results involved in the proof of Lemmas~\ref{c2.1} and \ref{c2.4}, namely Propositions~\ref{la.26} and \ref{la.35} are applied in a systematic way.  

For Proposition~\ref{la.26}, this is possible thanks to the fact that $\Aut_{\rm red}(M)=\{1\}$, since then $P_{\omega_a,+}=\{0\}$ in \eqref{la.16e} does not depend on $a$, while choosing $\cD$ for $a=a_0$ in the statement of Proposition~\ref{la.26}, we will be able to take the same $\cD$ in \eqref{la.26b} for other values of $a$ by taking $\delta_0$ smaller if needed.  For the map \eqref{la.26c}, the dimension of the nullspace does not depend on $a$, so the nullspaces combine to form a smooth vector bundle on $B(a_0,\delta_0)$ as $a$ varies.  Using the Hilbert norms of $(r^{\nu}L^{2,k+6}_b(\bM))^{\bbT}$ induced by $\omega_a$, the orthogonal complements to the various nullspaces vary smoothly in $a$, allowing us to extract a smooth family of isomorphisms from \eqref{la.26c} as $a$ varies.  Similarly, since the nullspace of \eqref{la.35a} is identified with the constant functions, it does not depend on $a$, so we can extract in the same way from Proposition~\ref{la.35} a smooth family of isomorphisms.  This means that we can indeed apply Theorem~\ref{fs.17} to construct a smooth family of formal solutions $\homega_{a,c,\epio}$. Then taking $\delta_0'$ sufficiently small, the constant $C_{q,k,\mu,\,\nu}$ in \eqref{nla.8}, for $q,k,\mu$ and $\nu$ fixed, can be chosen to be independent of $a\in B(a_0,\delta_0')$.  For $q,k,\mu$ and $\nu$ fixed, we can also take the constant $c$ and $\epio_0$ independent of $a\in B(a_0,\delta_0')$ by taking $\delta_0'$ possibly smaller.  This means that we can take $\epio_0$ in Theorem~\ref{nla.14} to be independent of $a\in B(a_0,\delta_0')$, from which the existence of the family of $\Sc^0$-extremal  K\"ahler metrics $\hat\omega_{\la, \epsilon}$ follows. As $\Aut_{\rm red}(M)=\{1\}$ by assumption,  $\Aut_{\rm red}(\widehat{M})=\{1\}$ too and, therefore,  $\hat\omega_{\la, \epsilon}$ have constant $\Sc^0$-scalar curvature. {The K\"ahler classes of these metrics are \[[\hat\omega_{\la, \epsilon}]= \bl_{p_1, \ldots, p_k}^*[\omega_{\la}]+ \epsilon^2\sum_{i=1}^k [\omega_i],\] where $[\omega_i] \in H^2(\cO_{\PP^2}(-1), \bbR)$ are the cohomology classes of the (suitable rescalled) model ALE metrics,  seen as cohomology classes on $\widehat{M}$ via Lemma~\ref{fs.6}. Let $E$ denote the zero section of $X=\cO_{\PP^2}(-1)$. As  $H^2(X, \bbR) =\bbR$ and  $\cO_X(E)_{|_E}\cong\cO_{\PP^2}(-1)$, $[\omega_i]$   is a negative multiple of $[E]=c_1(\cO_X(E))$.   We thus conclude that, after possibly rescaling the parameter $\epsilon$,  
\[ [\hat\omega_{\la, \epsilon}]= \bl_{p_1, \ldots, p_k}^*[\omega_{\la}]- \epsilon^2\sum_{i=1}^k [E_i].\]}
The positivity of the scalar curvatures in that family then follows from Corollary~\ref{psc.8},  by taking $\delta_0'$ and $\epio_0$ possibly smaller.
\hfill{$\square$}
\subsection{Proof of Theorem~\ref{thm:boxed}} (a) Let $(S, \omega_S)$ ba K\"ahler--Einstein del Pezzo surface of scalar curvature $R_S=4$ and $(\Sigma, \omega_{\Sigma})$ a Riemann surface of scalar curvature $R_{\Sigma}= 2(1-{\bf g})$. 
Let $M:= S \times \Sigma$.  By the classification of K\"ahler--Einstein del Pezzo surfaces~\cite{tian-surfaces}, $S$ is a blow-up of $\PP^2$ at $k$ points  $0\leq k \leq 8$ in generic position, and  thus $\Aut_{\rm red}(S)=\Aut_\circ(S)$ is trivial iff $k\geq 4$, i.e. $1\leq c_1^2(S)\leq 5$.  On $M$ we consider the K\"ahler metrics
\[ \omega_a = \omega_S + \la \omega_{\Sigma}, \qquad \la >0.\]
Notice that
\[R_{\omega_\la} = 4 + \frac{2(1-{\bf g})}{\la}, \qquad \Sc^0(\omega_\la)= 4\left(\frac{2(1-{\bf g})}{\la} + 1 \right).\]
Thus, for $\la> -\left(\frac{1-{\bf g}}{2}\right)$, $\omega_\la$ is $\Sc^0$-extremal with constant $\Sc^0$-scalar curvature and positive scalar curvature:  when  $\la \in ]\frac{({\bf g}-1)}{2}, 2({\bf g}-1)[$, $\Sc^0(\omega_{\la})<0$, for $\la_0=  2({\bf g}-1)$,   $\Sc^0(\omega_{\la})=0$ and when $\la >2({\bf g}-1)$, we have $\Sc^0(\omega_{\la})>0$. As $\Aut_\circ(M)= \{ 1\}$, we can apply Theorem~\ref{thm:blow-up} to the family $\omega_{\la}$  to obtain a  family $\hat \omega_{\la, \epi}$ of K\"ahler metrics of constant $\Sc^0$-scalar curvature and positive scalar curvature on the blow up $\widehat M$ of $M$ at $\{p_1, \ldots, p_k\}$ (where the parameter $\la$ belongs to an interval around $\la_0:=2({\bf g}-1)$). Using that
\[ c_1(\widehat M) = Bl_{p_1, \ldots, p_k}^*(c_1(M))- \sum_{i=1}^k [E_i], \qquad [\hat\omega_{\la, \epi}]= Bl_{p_1, \ldots, p_k}^*[\omega_{\la}] - \epi^2 \sum_{i=1}^k [E_i],\]
we find that the sign of $\Sc^0(\hat \omega_{\la, \epi})$ equals the sign of
\[ \la \Sc^0(\omega_{\la}) - k\epi^2.\]
We can make this zero by taking 
\[ \epi^2 := \frac{\la}{k} \Sc^0(\omega_\la)= \frac{4(\la-\la_0)}{k},\]
for suitable $0<\la-\la_0$ small enough.

\smallskip
(b) In the case $M=\bbP(E)$,  we can run a similar argument, noting that $\Aut_{\circ}(M)=\{1\}$ (as the vector bundle $E$ is stable and therefore simple and $\Aut_\circ(\Sigma)=\{1\}$)   and  that  $M$ admits locally symmetric K\"ahler metrics $\omega_{\la}$ which are covered by the product  a Fubini-Study metric on $\bbP^2$ (of scalar curvature $4$) and  a hyperbolic metric of constant scalar curvature $\frac{2(1-{\bf g})}{\la}$ on $\bbH^2$ (by the Narasimhan--Seshadri theorem~\cite{Nar-Sesh}). \hfill{$\square$}

\bigskip
\noindent
{\bf Acknowledgements.} V.A. is  grateful to X.X. Chen, T. Collins, M. Garcia Fernandez, G. Grantcharov, J. Streets and S. Sun  for their interest and discussions.

\bigskip
\noindent
{\bf Statements and Declarations.} V.A. was supported in part by an NSERC Discovery Grant RGPIN-2023-03316. 
K.-H. L. was supported in part by a CIRGET post-doctoral fellowship and a FRQ Team Grant. F.R. was supported in part by an NSERC Discovery Grant RGPIN-2025-04223. Data sharing and AI or other authomated tools were not used  during the current study.

%\bibliography{Gluing}

\begin{thebibliography}{999}

\bibitem{ARS3}
P.~Albin, F.~Rochon, and D.~Sher, \emph{A {C}heeger-{M}\"uller theorem for manifolds with wedge singularities}, Anal. PDE \textbf{15} (2022),  567--642. 

\bibitem{ALN04}
B.~Ammann, R.~Lauter, and V.~Nistor, \emph{On the geometry of {R}iemannian manifolds with a {L}ie structure at infinity}, Internat. J. Math. (2004), 161--193.

\bibitem{ABLS}
V.~Apostolov, G.~Barbaro, K.-H. Lee, and J.~Streets, \emph{Rigidity results for non-K\"ahler Calabi-Yau geometries on threefolds}, Math. Ann. \textbf{393} (2025), 3609--3637.

\bibitem{ACG}
V.~Apostolov, D.M.J. Calderbank, and P.~Gauduchon, \emph{Hamiltonian 2-forms in K\"ahler geometry I: General theory}, J. Differential Geom. \textbf{73} (2006), 359--412.

\bibitem{ALL2026}
V.~Apostolov, A.~Lahdili, and K.-H. Lee, \emph{Pluriclosed 3-folds with vanishing {B}ismut {R}icci form: general theory in the quasi-regular case}, to appear in Comm. Math. Phys., arXiv: 2601.04937v2.

\bibitem{Arezzo-Pacard2006}
C.~Arezzo and F.~Pacard, \emph{Blowing up and desingularizing constant scalar curvature {K}\"ahler manifolds}, Acta Math. \textbf{196} (2006),  179--228. 

\bibitem{Arezzo-Pacard2009}
\bysame, \emph{Blowing up {K}\"ahler manifolds with constant scalar curvature. {II}}, Ann. of Math. (2) \textbf{170} (2009),  685--738. 

\bibitem{arezzo-pacard-singer}
C.~Arezzo, F.~Pacard, and M.~Singer, \emph{Extremal metrics on blowups}, Duke Math. J. \textbf{157} (2011),  1--51.

\bibitem{BBFMT}
G.~Bazzoni, I.~Biswas, M.~Fern\'andez, V.~Mu\~noz, and A.~Tralle, \emph{Homotopic properties of {K}\"ahler orbifolds}, Special metrics and group actions in geometry, Springer INdAM Ser., vol.~23, Springer, Cham, 2017, pp.~23--57. \MR{3751961}

\bibitem{Benabida}
A.O. Benabida, \emph{Asymptotics for resolutions and smoothings of {C}alabi-{Y}au conifolds}, Comm. Math. Phys. \textbf{407} (2026),  Art.71.

\bibitem{BGM}
M.~Berger, P.~Gauduchon, and E.~Mazet, \emph{Le spectre d'une vari\'et\'e riemannienne}, Lecture Notes in Mathematics, vol. 194, Springer Berlin, Heidelberg, 1971.

\bibitem{Boyer-Galicki-book}
C.~Boyer and K.~Galicky, \emph{Sasakian geometry}, Oxford Mathematical Monographs, Oxford University Press, 2008.

\bibitem{fino-et-al}
B.~Brienza, A.~Fino, U.~Fowdar, and G.~Grantcharov, \emph{Sasaki with torsion manifolds and string backgrounds}, arXiv:2608.08781 (2026).

\bibitem{Bui}
Q.-T. Bui, \emph{Injectivity radius of manifolds with a {L}ie structure at infinity}, Ann. Math. Blaise Pascal \textbf{29} (2022), no.~2, 235--246. \MR{4552719}

\bibitem{Calabi}
E.~Calabi, \emph{M\'etriques k\"ahl\'eriennes et fibr\'es holomorphes}, Ann. Sci. \'Ecole Norm. Sup. (4) \textbf{12} (1979), no.~2, 269--294.

\bibitem{Chen-Tian1}
X.X Chen and G.~Tian, \emph{Ricci flow on K\"ahler-Einstein surfaces}, Invent. math. \textbf{147} (2002), 487--544.

\bibitem{Chen-Wang}
X.X. Chen and B.~Wang, \emph{K\"ahler–Ricci flow with small initial energy}, GAFA Geom. Funct. Anal. \textbf{18} (2009), 1525--1563.

\bibitem{CDR2019}
R.~Conlon, A.~Degeratu, and F.~Rochon, \emph{Quasi-asymptotically conical {C}alabi--{Y}au manifolds}, Geom. Topol. \textbf{23} (2019),  29--100. 

\bibitem{CH2013}
R.~J. Conlon and H.-J. Hein, \emph{Asymptotically conical {C}alabi-{Y}au manifolds, {I}}, Duke Math. J. \textbf{162} (2013),  2855--2902. 

\bibitem{CMR2015}
R.~J. Conlon, R.~Mazzeo, and F.~Rochon, \emph{The moduli space of asymptotically cylindrical {C}alabi-{Y}au manifolds}, Comm. Math. Phys. \textbf{338} (2015),  953--1009. 

\bibitem{CGMS}
C.~Couzens, J.-P Gauntlett, D.~Martelli, and J.~Sparks, \emph{A geometric dual of c-extremization}, J. High Energ. Phys. (2019), Art. 212.

\bibitem{Dervan}
R.~Dervan, \emph{Stability conditions for polarised varieties}, Forum Math. Sigma \textbf{11} (2024), 1--57.

\bibitem{Dervan-Hallem}
R.~Dervan and M.~Hallem, \emph{The universal structure of moment maps in complex geometry}, J. \'Ecole polytecnique  {\bf 13} (2026), 399--436.

\bibitem{DR}
P.~Dimakis and F.~Rochon, \emph{Asymptotic geometry at infinity of quiver varieties}, arXiv:2410.15424.

\bibitem{donaldson-K3}
S.~Donaldson, \emph{Calabi-Yau metrics on Kummer surfaces as a model glueing problem}, Adv. Geom. Anal. \textbf{21} (2012), 109--118.

\bibitem{donaldson-moment}
S.~K. Donaldson, \emph{Remarks on gauge theory, complex geometry and 4-manifold topology}, Fields Medallists' Lectures, World Sci. Ser. 20th Century Math., vol.~5, World Sci. Publ. River Edge NJ, 1997, pp.~384--403.

\bibitem{topology-S1}
H.~Duan and C.~Liang, \emph{Circle bundles over 4-manifolds}, Arch. Math. (Basel) \textbf{85} (2005), 278--282.

\bibitem{EH}
H.~Eguchi and A.J. Hanson, \emph{Selfdual solutions to Euclidean gravity}, Ann. Phys. \textbf{120} (1979), 82--105.

\bibitem{fernandez2014non}
M.~Fern{\'a}ndez, S.~Ivanov, L.~Ugarte, and D.~Vassilev, \emph{Non-{K}{\"a}hler heterotic string solutions with non-zero fluxes and non-constant dilaton}, Journal of High Energy Physics \textbf{2014} (2014),  1--23.

\bibitem{finosurvey}
A.~Fino, \emph{Canonical metrics in complex geometry}, Bollettino del'Unione Matematics Italiana {\bf 18} (2024), 185--198.

\bibitem{fujiki-moment}
A.~Fujiki, \emph{Moduli space of polarized algebraic manifolds and K\"ahler metrics}, Translations of Sugaku \textbf{42} (1993),  231--243.

\bibitem{GM1975}
S.~Gallot and D.~Meyer, \emph{Op\'erateur de courbure et laplacien des formes diff\'erentielles d'une vari\'et\'e{} riemannienne}, J. Math. Pures Appl. (9) \textbf{54} (1975),  259--284. 

\bibitem{JFS}
M.~Garcia-Fernandez, J.~Jordan, and J.~Streets, \emph{Non-{K}\"{a}hler {C}alabi-{Y}au geometry and pluriclosed flow}, J. Math. Pures Appl. (9) \textbf{177} (2023), 329--367. 

\bibitem{garciafern2018canonical}
M.~Garcia-Fernandez, R.~Rubio, C.~Shahbazi, and C.~Tipler, \emph{Canonical metrics on holomorphic {C}ourant algebroids}, Proc. Lond. Math. Soc. (3) \textbf{125} (2022),  700--758. 

\bibitem{GRFBook}
M.~Garcia-Fernandez and J.~Streets, \emph{Generalized {R}icci flow}, University Lecture Series, vol.~76, American Mathematical Society, Providence, RI, 2021.

\bibitem{Gauduchon}
P.~Gauduchon, \emph{Calabi's extremal {K}\"ahler metrics: {A}n elementary introduction}, Book available upon request.

\bibitem{GauduchonIvanov}
P.~Gauduchon and S.~Ivanov, \emph{Einstein-{H}ermitian surfaces and {H}ermitian {E}instein-{W}eyl structures in dimension {$4$}}, Math. Z. \textbf{226} (1997), 317--326. 

\bibitem{GK}
J.P. Gauntlett and N.~Kim, \emph{Geometries with Killing spinors and supersymmetric ADS solutions}, Comm. Math. Phys. \textbf{284} (2008), 897--918.

\bibitem{GibbonsHawking}
G.~Gibbons and S.~Hawking, \emph{Gravitational multi-instantons}, Physics Letters B. \textbf{78} (1978), 430--432.

\bibitem{Gil-Mendoza}
J.~B. Gil and G.~A. Mendoza, \emph{Adjoints of elliptic cone operators}, Amer. J. Math. \textbf{125} (2003), 357--408.

\bibitem{Goto}
R.~Goto, \emph{Calabi-{Y}au structures and {E}instein-{S}asakian structures on crepant resolutions of isolated singularities}, J. Math. Soc. Japan \textbf{64} (2012),  1005--1052. 

\bibitem{grantcharov2008calabi}
D.~Grantcharov, G.~Grantcharov, and Y.~S. Poon, \emph{Calabi-Yau connections with torsion on toric bundles}, J. Differential Geom. \textbf{78} (2008),  13--32.

\bibitem{Grieser}
D.~Grieser, \emph{Scales, blow-up and quasi-mode construction}, Contemp. Math., AMS \textbf{700} (2017), 207--266.

\bibitem{HHM1995}
A.~Hassell, R.~Mazzeo, and R.~B. Melrose, \emph{Analytic surgery and the accumulation of eigenvalues}, Comm. Anal. Geom. \textbf{3} (1995),  115--222.

\bibitem{HHM2004}
T.~Hausel, E.~Hunsicker, and R.~Mazzeo, \emph{Hodge cohomology of gravitational instantons}, Duke Math. J. \textbf{122} (2004),  485--548.

\bibitem{Hitchin-inst}
N.J. Hitchin, \emph{Polygons and gravitons}, Math. Proc. Cambridge Math. Soc. \textbf{85} (1979), 465--476.

\bibitem{topology-T2}
T.~H\"ofer, \emph{Remarks on torus principal bundles}, J. Math. Kyoto Univ. \textbf{33} (1993),  227--259.

\bibitem{Hormander3}
L.~H{\"o}rmander, \emph{The analysis of linear partial differential operators. vol. 3}, Springer-Verlag, Berlin, 1985.

\bibitem{HS}
A.D. Hwang and M.A. Singer, \emph{A momentum construction for circle-invariant K\"ahler metrics}, Trans. Amer. Math. Soc. \textbf{349} (1997), 4201--4230.

\bibitem{ivanov2010heterotic}
S.~Ivanov, \emph{Heterotic supersymmetry, anomaly cancellation and equations of motion}, Physics Letters B \textbf{685} (2010),  190--196.

\bibitem{Joyce}
D.~D. Joyce, \emph{Compact manifolds with special holonomy}, Oxford Mathematical Monographs, Oxford University Press, 2000. 

\bibitem{kobayashi-K3}
R.~Kobayashi, \emph{Moduli of Einstein metrics on a $K3$ surface and degeneration of type I.}, K\"ahler metric and moduli spaces (Takushiro Ochia, ed.), Advanced Studies in Pure Mathematics 2, vol.~18, Mathe. Soc. Japan, 1990, pp.~257--311.

\bibitem{KR1}
C.~Kottke and F.~Rochon, \emph{Quasi-fibered boundary pseudodifferential operators}, arXiv:2103.16650.

\bibitem{KR0}
\bysame, \emph{Low energy limit for the resolvent of some fibered boundary operators}, Comm. Math. Phys. \textbf{390} (2022),  231--307. 

\bibitem{Kronheimer1989}
P.~B. Kronheimer, \emph{The construction of {ALE} spaces as hyper-{K}\"ahler quotients}, J. Differential Geom. \textbf{29} (1989),  665--683. 

\bibitem{lahdili}
A.~Lahdili, \emph{K\"ahler metrics with constant weighted scalar curvature and weighted K-stability}, Proc. London Math. Soc. \textbf{119} (2019),  1065--1114.

\bibitem{LeBrun-Simanca1994}
C.~LeBrun and S.~R. Simanca, \emph{Extremal {K}\"ahler metrics and complex deformation theory}, Geom. Funct. Anal. \textbf{4} (1994),  298--336. 

\bibitem{lebrun-singer}
C.~LeBrun and M.~Singer, \emph{A Kummer-type construction of self-dual 4-manifolds}, Math. Ann. \textbf{300} (1994), 165--180.

\bibitem{MazzeoEdge}
R.~Mazzeo, \emph{Elliptic theory of differential edge operators. {I}}, Comm. Partial Differential Equations \textbf{16} (1991),  1615--1664.

\bibitem{Mazzeo-MelroseETA}
R.~Mazzeo and R.~B. Melrose, \emph{Analytic surgery and the eta invariant}, Geom. Funct. Anal. \textbf{5} (1995),  14--75.

\bibitem{Mazzeo-MelrosePhi}
R.~Mazzeo and R.~B. Melrose, \emph{Pseudodifferential operators on manifolds with fibred boundaries}, Asian J. Math. \textbf{2} (1999),  833--866.

\bibitem{mehta-seshadri}
V.B. Mehta and C.S. Seshadri, \emph{Moduli of vector bundles of curves with parabolic structures}, Math. Ann. \textbf{248} (1980), 205--239.

\bibitem{Melrose1992}
R.~B. Melrose, \emph{Calculus of conormal distributions on manifolds with corners}, Int. Math. Res. Not. (1992),  51--61.

\bibitem{MelroseAPS}
\bysame, \emph{The {A}tiyah-{P}atodi-{S}inger index theorem}, A. K. Peters, Wellesley, Massachusetts, 1993.

\bibitem{MelroseGST}
\bysame, \emph{Geometric scattering theory}, Cambridge University Press, Cambridge, 1995.

\bibitem{Najafpour}
M.~Najafpour, \emph{Constant scalar curvature {K}\"ahler metrics on resolutions of an orbifold singularity of depth 1}, J. Geom. Anal. \textbf{35} (2025),  Art. 186.

\bibitem{Nar-Sesh}
M.S. Narasimhan and C.S. Seshadri, \emph{Stable and unitary vector bundles on a compact Riemann surface}, Ann. Math. (2) \textbf{82} (1965), 540--567.

\bibitem{OSS2016}
Y.~Odaka, C.~Spotti, and S.~Sun, \emph{Compact moduli spaces of {D}el {P}ezzo surfaces and {K}\"ahler-{E}instein metrics}, J. Differential Geom. \textbf{102} (2016),  127--172. 

\bibitem{phong2019geometric}
D.~H. Phong, \emph{Geometric partial differential equations from unified string theories},  arXiv:1906.03693 (2019).

\bibitem{picard2024strominger}
S.~Picard, \emph{The {S}trominger system and flows by the {R}icci tensor}, arXiv:2402.17770.

\bibitem{rollin-singer1}
Y.~Rollin and M.~Singer, \emph{Non-minimal scalar-flat surfaces and parabolic stability}, Invent. math. \textbf{162} (2005), 235--270.

\bibitem{rollin-singer2}
\bysame, \emph{Construction of K\"ahler surfaces with constant scalar curvature}, J. Eur. Math. Soc. \textbf{11} (2009), 979--997.

\bibitem{burns-simanca}
S.R. Simanca, \emph{K\"ahler metrics of constant scalar curvature on bundles over ${\mathbb P}^n$}, Math. Ann. \textbf{291} (1991), 239--246.

\bibitem{Song-Weinkove}
J.~Song and B.~Weinkove, \emph{Energy functionals and canonical K\"ahler metrics}, Duke Math. J. \textbf{137} (2007),  159--184.

\bibitem{streets-topo}
J.~Streets, \emph{Topology of low dimensional generalized Ricci solitons}, arXiv:2608.25829.

\bibitem{st-geom}
\bysame, \emph{Pluriclosed flow and the geometrization of complex surfaces}, Geometric analysis, Progr. Math., vol. 333, Birkh\"{a}user/Springer, Cham, 2020, pp.~471--510. 

\bibitem{PCF}
J.~Streets and G.~Tian, \emph{A parabolic flow of pluriclosed metrics}, Int. Math. Res. Not. IMRN (2010),  3101--3133. 

\bibitem{PCFReg}
\bysame, \emph{Regularity results for pluriclosed flow}, Geom. Topol. \textbf{17} (2013), 2389--2429. 

\bibitem{StromingerSST}
A.~Strominger, \emph{Superstrings with torsion}, Nuclear Phys. B \textbf{274} (1986), 253--284. 

\bibitem{Suvaina}
I.~Suvaina, \emph{ALE Ricci-flat K\"ahler metrics and deformations of quotient surface singularities}, Ann. Glob. Anal. Geom. \textbf{41} (2012),  109--123.

\bibitem{Swan1962}
R.~G. Swan, \emph{Vector bundles and projective modules}, Trans. Amer. Math. Soc. \textbf{105} (1962), 264--277. 

\bibitem{sz1}
G.~Sz\'ekelyhidi, \emph{On blowing up extremal {K}\"ahler manifolds}, Duke Math. J. \textbf{161} (2012), no.~8, 1411--1453.

\bibitem{sz2}
\bysame, \emph{Blowing up extremal {K}\"ahler manifolds II}, Invent. math. \textbf{200} (2015),  925--977.

\bibitem{tian-surfaces}
G.~Tian, \emph{On Calabi’s conjecture for complex surfaces with positive first chern class}, Invent. Math. \textbf{101} (1990), 101--172.

\bibitem{tosatti2015non}
V.~Tosatti, \emph{Non-{K}{\"a}hler {C}alabi-{Y}au manifolds}, in Analysis, Complex Geometry, and Mathematical Physics: In Honor of Duong H. Phong. (P.M.N. Feehan, J. Song, B. Weinkove, R. A. Wentworth Eds.) Contemporary Mathematics \textbf{644} (2015), Amer. Math. Soc., pp. 261--277.

\end{thebibliography}
%\bibliographystyle{amsplain}

{
\def\cprime{$'$}
\providecommand{\bysame}{\leavevmode\hbox to3em{\hrulefill}\thinspace}
\providecommand{\MR}{\relax\ifhmode\unskip\space\fi MR }

\providecommand{\MRhref}[2]{%
  \href{https://mathscinet.ams.org/mathscinet-getitem?mr=#1}{#2}
}

}

\end{document}